\DeclareTextSymbolDefault{\textquotedbl}{T1}
\newcommand{\lyxadded}[3]{#3}
\newcommand{\lyxdeleted}[3]{}
\numberwithin{equation}{section}
\theoremstyle{plain}
\newtheorem{thm}{\protect\theoremname}[section]
\theoremstyle{definition}
\newtheorem{problem}[thm]{\protect\problemname}
\theoremstyle{remark}
\newtheorem{rem}[thm]{\protect\remarkname}
\theoremstyle{plain}
\newtheorem{lem}[thm]{\protect\lemmaname}
\theoremstyle{remark}
\newtheorem*{rem*}{\protect\remarkname}
\theoremstyle{definition}
\newtheorem{defn}[thm]{\protect\definitionname}
\theoremstyle{plain}
\newtheorem{assumption}[thm]{\protect\assumptionname}
\theoremstyle{definition}
\newtheorem{example}[thm]{\protect\examplename}
\theoremstyle{plain}
\newtheorem{cor}[thm]{\protect\corollaryname}
\theoremstyle{remark}
\newtheorem{notation}[thm]{\protect\notationname}
\theoremstyle{plain}
\newtheorem{criterion}[thm]{\protect\criterionname}
\theoremstyle{plain}
\newtheorem{conjecture}[thm]{\protect\conjecturename}
\providecommand{\conjecturename}{Conjecture}
\providecommand{\assumptionname}{Assumption}
\providecommand{\corollaryname}{Corollary}
\providecommand{\criterionname}{Criterion}
\providecommand{\definitionname}{Definition}
\providecommand{\examplename}{Example}
\providecommand{\lemmaname}{Lemma}
\providecommand{\notationname}{Notation}
\providecommand{\problemname}{Problem}
\providecommand{\remarkname}{Remark}
\providecommand{\theoremname}{Theorem}
\begin{document}
\global\long\def\divery{\mathrm{div}_{y}}%
\global\long\def\diver{\mathrm{div}}%
\global\long\def\diverx{\mathrm{div}_{x}}%

\global\long\def\nablay{\nabla_{y}}%
\global\long\def\nablax{\nabla_{x}}%

\global\long\def\support{\mathrm{supp}}%

\global\long\def\epi{\mathrm{epi}}%
\global\long\def\short{\mathrm{short}}%
\global\long\def\dom{\mathrm{dom}}%
\global\long\def\Pois{\mathrm{Pois}}%
\global\long\def\mat{\mathrm{mat}}%
\global\long\def\REM{\mathrm{rem}}%
\global\long\def\NEW{\mathrm{new}}%
\global\long\def\pois{\mathrm{pois}}%
\global\long\def\Length{\mathrm{Length}}%

\global\long\def\rmD{\mathrm{D}}%
\global\long\def\rmM{\mathrm{M}}%
\global\long\def\rmP{\mathrm{P}}%
\global\long\def\rmR{\mathrm{R}}%
\global\long\def\rmT{\mathrm{T}}%

\global\long\def\id{\mathrm{id}}%

\global\long\def\eps{\varepsilon}%

\global\long\def\borelB{\mathcal{B}}%
\global\long\def\lebesgueL{\mathcal{L}}%
\global\long\def\hausdorffH{\mathcal{H}}%

\global\long\def\ue{u^{\eps}}%
\global\long\def\ve{v^{\eps}}%

\global\long\def\vel{\boldsymbol{\upsilon}}%
\global\long\def\v{\upsilon}%

\global\long\def\bB{\boldsymbol{\mathrm{B}}}%
\global\long\def\bE{\boldsymbol{\mathrm{E}}}%
\global\long\def\bQ{\boldsymbol{\mathrm{Q}}}%
\global\long\def\bT{\boldsymbol{\mathrm{T}}}%
\global\long\def\bU{\boldsymbol{\mathrm{U}}}%
\global\long\def\bP{\boldsymbol{\mathrm{P}}}%
\global\long\def\bY{\boldsymbol{\mathrm{Y}}}%
\global\long\def\boldnu{\boldsymbol{\nu}}%

\global\long\def\be{\boldsymbol{\mathrm{e}}}%
\global\long\def\bj{\boldsymbol{\mathrm{j}}}%

\global\long\def\A{\mathbb{A}}%
\global\long\def\B{\mathbb{B}}%
\global\long\def\D{\mathbb{D}}%
\global\long\def\E{\mathbb{E}}%
\global\long\def\G{\mathbb{G}}%
\global\long\def\I{\mathbb{I}}%
\global\long\def\M{\mathbb{M}}%
\global\long\def\N{\mathbb{N}}%
\global\long\def\O{\mathbb{O}}%
\global\long\def\P{\mathbb{P}}%
\global\long\def\Q{\mathbb{Q}}%
\global\long\def\R{\mathbb{R}}%
\global\long\def\S{\mathbb{S}}%
\global\long\def\T{\mathbb{T}}%
\global\long\def\U{\mathbb{U}}%
\global\long\def\X{\mathbb{X}}%
\global\long\def\Y{\mathbb{Y}}%
\global\long\def\Z{\mathbb{Z}}%

\global\long\def\Apaths{\A\!\X}%

\global\long\def\closedsets{\mathfrak{F}}%

\global\long\def\sB{\mathscr{B}}%
\global\long\def\sF{\mathscr{F}}%
\global\long\def\sI{\mathscr{I}}%
\global\long\def\sT{\mathscr{T}}%
\global\long\def\ttopology#1{\sT_{#1}}%

\global\long\def\fA{\mathfrak{A}}%
\global\long\def\fB{\mathfrak{B}}%
\global\long\def\fC{\mathfrak{C}}%
\global\long\def\fI{\mathfrak{I}}%
\global\long\def\fM{\mathfrak{M}}%
\global\long\def\fm{\mathfrak{m}}%
\global\long\def\fP{\mathfrak{P}}%
\global\long\def\fp{\mathfrak{p}}%
\global\long\def\fr{\mathfrak{r}}%
\global\long\def\fs{\mathfrak{s}}%
\global\long\def\ft{\mathfrak{t}}%
\global\long\def\fY{\mathfrak{Y}}%
\global\long\def\fa{\mathfrak{a}}%
\global\long\def\fb{\mathfrak{b}}%
\global\long\def\fd{\mathfrak{d}}%

\global\long\def\sfR{\mathsf{R}}%
\global\long\def\sfS{\mathsf{S}}%

\global\long\def\Rd{\mathbb{R}^{d}}%
\global\long\def\Rdd{\mathbb{R}^{d\times d}}%
\global\long\def\Zd{\mathbb{Z}^{d}}%
\global\long\def\I{\mathbb{I}}%

\global\long\def\d{\mathrm{d}}%
\global\long\def\v{\upsilon}%

\global\long\def\weakto{\rightharpoonup}%

\global\long\def\tsto{\stackrel{2s}{\to}}%
\global\long\def\tsweakto{\stackrel{2s}{\weakto}}%

\global\long\def\simple{\mathrm{simple}}%
\global\long\def\fl{\mathrm{flat}}%
\global\long\def\loc{\mathrm{loc}}%
\global\long\def\pot{\mathrm{pot}}%
\global\long\def\sol{\mathrm{sol}}%
\global\long\def\argmin{\mathrm{argmin}}%
\global\long\def\hull{\mathrm{hull}}%

\global\long\def\dist{\mathrm{dist}}%
\global\long\def\conv{\mathrm{conv}}%
\global\long\def\co{\mathfrak{co}}%

\global\long\def\boldY{\boldsymbol{\mathrm{Y}}}%

\global\long\def\mugammapalm{\mu_{\Gamma,\mathcal{P}}}%
\global\long\def\mupalm{\mu_{\mathcal{P}}}%
\global\long\def\nupalm{\nu_{\mathcal{P}}}%

\global\long\def\cA{\mathcal{A}}%
\global\long\def\cB{\mathcal{B}}%
\global\long\def\cC{\mathcal{C}}%
\global\long\def\cE{\mathcal{E}}%
\global\long\def\cF{\mathcal{F}}%
\global\long\def\cG{\mathcal{G}}%
\global\long\def\cH{\mathcal{H}}%
\global\long\def\cI{\mathcal{I}}%
\global\long\def\cL{\mathcal{L}}%
\global\long\def\cM{\mathcal{M}}%
\global\long\def\cO{\mathcal{O}}%
\global\long\def\cP{\mathcal{P}}%
\global\long\def\cQ{\mathcal{Q}}%
\global\long\def\cR{\mathcal{R}}%
\global\long\def\cS{\mathcal{S}}%
\global\long\def\cT{\mathcal{T}}%
\global\long\def\cU{\mathcal{U}}%
\global\long\def\cV{\mathcal{V}}%
\global\long\def\cX{\mathcal{X}}%
\global\long\def\cY{\mathcal{Y}}%

\global\long\def\sE{\mathscr{E}}%
\global\long\def\sI{\mathscr{I}}%

\global\long\def\mapA{\mathcal{A}}%

\global\long\def\muomega{\mu_{\omega}}%
\global\long\def\muomegaeps{\mu_{\omega}^{\eps}}%
\global\long\def\mugammaomega{\mu_{\Gamma(\omega)}}%
\global\long\def\mugammaomegaeps{\mu_{\Gamma(\omega)}^{\eps}}%
\global\long\def\Ball#1#2{\mathbb{B}_{#1}{\left(#2\right)}}%
\newcommandx\Balldim[3][usedefault, addprefix=\global, 1=]{\mathbb{B}_{#2}^{#1}{\left(#3\right)}}%

\newcommandx\Balldimclosed[3][usedefault, addprefix=\global, 1=]{\overline{\mathbb{B}}_{#2}^{#1}{\left(#3\right)}}%
\global\long\def\cone{\mathbb{C}}%

\global\long\def\norm#1{\left\Vert #1\right\Vert }%

\global\long\def\scp#1#2{\left\langle #1,#2\right\rangle }%

\global\long\def\LOM{L^{2}(\Omega;\Rdd)}%
\global\long\def\LOMns{L_{n,s}^{2}(\Omega)}%
\global\long\def\LOMn{L_{n}^{2}(\Omega)}%
\global\long\def\e{\mathrm{e}}%
\global\long\def\diam{\mathrm{diam}}%

\global\long\def\of#1{{\left(#1\right)}}%


\title{Stochastic homogenization on randomly perforated domains}

\author{Martin Heida}
\maketitle
\begin{abstract}
We study the existence of uniformly bounded extension and trace operators
for $W^{1,p}$-functions on randomly perforated domains, where the
geometry is assumed to be stationary ergodic. Such extension and trace
operators are important for compactness in stochastic homogenization.
In contrast to former approaches and results, we use very weak assumptions
on the geometry which we call local $(\delta,M)$-regularity, isotropic
cone mixing and bounded average connectivity. The first concept measures
local Lipschitz regularity of the domain while the second measures
the mesoscopic distribution of void space. The third is the most tricky
part and measures the \textquotedbl mesoscopic\textquotedbl{} connectivity
of the geometry.

In contrast to former approaches we do not require a minimal distance
between the inclusions and we allow for globally unbounded Lipschitz
constants and percolating holes. We will illustrate our method by
applying it to the Boolean model based on a Poisson point process
and to a Delaunay pipe process.

We finally introduce suitable Sobolev spaces on $\Rd$ and $\Omega$
in order to construct a stochastic two-scale convergence method and
apply the resulting theory to the homogenization of a $p$-Laplace
problem on a randomly perforated domain.
\end{abstract}



\tableofcontents{}

\section{Introduction}

In 1979 Papanicolaou and Varadhan \cite{papanicolaou1979boundary}
and Kozlov \cite{kozlov1979averaging} for the first time independently
introduced concepts for the averaging of random elliptic operators.
At that time, the periodic homogenization theory had already advanced
to some extend (as can be seen in the book \cite{papanicolau1978asymptotic}
that had appeared one year before) dealing also with non-uniformly
elliptic operators \cite{marcellini1978homogenization} and domains
with periodic holes \cite{cioranescu1979homogenization}.

Even though the works \cite{kozlov1979averaging,papanicolaou1979boundary}
clearly guide the way to a stochastic homogenization theory, this
theory advanced quite slowly over the past 4 decades. Compared to
the stochastic case, periodic homogenization developed very strong
with methods that are now well developed and broadly used. The most
popular methods today seem to be the two-scale convergence method
by Allaire and Nguetseng \cite{allaire1992homogenization,Nguetseng1989}
in 1989/1992 and the periodic unfolding method \cite{cioranescu2002periodic}
by Cioranescu, Damlamian and Griso in 2002. Both methods are conceptually
related to asymptotic expansion and very intuitive to handle. It is
interesting to observe that the stochastic counterpart, the stochastic
two-scale convergence, was developed only in 2006 by Zhikov and Piatnitsky
\cite{Zhikov2006}, with the stochastic unfolding developed only recently
in \cite{neukammvarga2018stochastic,heida2019lambda}.

\lyxadded{heida}{Fri Jun 05 16:36:23 2020}{A further work by Bourgeat,
Mikelic and }Wright\lyxadded{heida}{Fri Jun 05 16:36:23 2020}{ \cite{Bourgeat1994}
introduced two-scale convergence in the mean. This }sense\lyxadded{heida}{Fri Jun 05 16:36:23 2020}{
of two-scale convergence is indeed a special case of the stochastic
unfolding, which can only be applied in an averaged }sense\lyxadded{heida}{Fri Jun 05 16:36:23 2020}{,
too. This leads us to a fundamental difference between the periodic
and the stochastic homogenization. In stochastic homogenization we
distinguish between quenched convergence, i.e. for almost every realization
one can prove homogenization, and homogenization in the mean, which
means that homogenization takes place in expectation.}

\lyxadded{heida}{Fri Jun 05 16:36:23 2020}{In particular in nonlinear
}non-convex\lyxadded{heida}{Fri Jun 05 16:36:23 2020}{ problems (that
is: we cannot rely on weak convergence methods) the quenched convergence
is of uttermost importance, as this }sense\lyxadded{heida}{Fri Jun 05 16:36:23 2020}{
of convergence allows to use - for each fixed $\omega$ - compactness
in the spaces $H^{1}(\bQ)$. On the other hand, convergence in the
mean deals with convergence in $L^{2}(\Omega;H^{1}(\bQ))$, which
goes in hand with a loss of compactness.}

\lyxadded{heida}{Fri Jun 05 16:36:23 2020}{The results presented below
are meant for application in }quenched\lyxadded{heida}{Fri Jun 05 16:36:23 2020}{
convergence. The estimates for the extension and trace operators which
are derived strongly depends on the realization of the geometry -
thus on $\omega$. Nevertheless, if the geometry is stationary, a
corresponding estimate can be achieved for almost every $\omega$.}

\subsubsection*{The Problem}

The discrepancy in the speed of progress between periodic and stochastic
homogenization is due to technical problems that arise from the randomness
of parameters. In this work we will consider uniform extension operators
for randomly perforated \lyxadded{heida}{Sat Jun 06 10:51:40 2020}{stationary
domains. We use stationarity (see Def. \ref{def:stationary}) as this
is the standard way to cope with the lack of periodicity. }Let us
first have a look at a typical application to illustrate the need
of the extension operators that we construct below.

Let $\bP(\omega)\subset\Rd$ be a stationary random open set and let
$\eps>0$ be the smallness parameter and let $\tilde{\bP}\of{\omega}$
be a connected component of $\bP\of{\omega}$. For a bounded open
domain, we consider $\bQ_{\tilde{\bP}}^{\eps}(\omega):=\bQ\cap\eps\tilde{\bP}(\omega)$
and $\Gamma^{\eps}(\omega):=\bQ\cap\eps\partial\tilde{\bP}(\omega)$
with outer normal $\nu_{\Gamma^{\eps}(\omega)}$. We study the following
problem in Section \ref{sec:Homogenization-of-p-Laplace}:
\begin{align}
-\diver\left(a\left|\nabla\ue\right|^{p-2}\nabla\ue\right) & =g &  & \mbox{on }\bQ_{\tilde{\bP}}^{\eps}(\omega)\,,\nonumber \\
u & =0 &  & \mbox{on }\partial\bQ\,,\label{eq:system-eps-p-Laplace}\\
\left|\nabla\ue\right|^{p-2}\nabla\ue\cdot\nu_{\Gamma^{\eps}(\omega)} & =f(\ue) &  & \mbox{on }\Gamma^{\eps}(\omega)\,.\nonumber 
\end{align}
Note that for simplicity of illustration, the only randomness that
we consider in this problem is due to $\bP(\omega)$, i.e. we assume
$a\equiv const$.

Problem (\ref{eq:system-eps-p-Laplace}) can be recast into a variational
problem, i.e. solutions of (\ref{eq:system-eps-p-Laplace}) are local
minimizers of the energy functional 
\[
\cE_{\eps,\omega}(u)=\int_{\bQ_{\tilde{\bP}}^{\eps}(\omega)}\left(\frac{1}{p}\left|\nabla u\right|^{p}-gu\right)+\int_{\Gamma^{\eps}(\omega)}\int_{0}^{u}F(s)\d s\,,
\]
where $F$ is convex with $\partial F=f$. This problem will be treated
in Theorem \ref{thm:Final-homogenization-Theorem} and the final Remark
\ref{rem:final-remark}.

One way to prove homogenization of (\ref{eq:system-eps-p-Laplace})
is to prove $\Gamma$-convergence of $\cE_{\eps,\omega}$. Conceptually,
this implies convergence of the minimizers $\ue$ to a minimizer of
the limit functional. However, the minimizers are elements of $W^{1,p}(\bQ_{\tilde{\bP}}^{\eps})$
and since this space changes with $\eps$, we lack compactness in
order to pass to the limit in the nonlinearity. The canonical path
to circumvent this issue in \emph{periodic }homogenization is via
uniformly bounded extension operators $\cU_{\eps}:\,W^{1,p}(\bQ_{\tilde{\bP}}^{\eps})\to W^{1,p}(\bQ)$,
see \cite{hopker2014diss,hopker2014note}, combined with uniformly
bounded trace operators, see \cite{gahn2016homogenization,guillen2015quasistatic}.

The first proof for the existence of periodic extension operators
was due to Cioranescu and Paulin \cite{cioranescu1979homogenization}
in 1979, while the proof in its full generality was provided only
recently by H\"opker and B\"ohm \cite{hopker2014note} and H\"pker \cite{hopker2016extension}.
In this work we will generalize parts of the results of \cite{hopker2016extension}
to a stochastic setting. A modified version of the original proof
of \cite{hopker2016extension} is provided in Section \ref{sec:Periodic-extension-theorem}.
It relies on three ingredients: the local Lipschitz regularity of
the surface, the periodicity of the geometry and the connectedness.
Local Lipschitz regularity together with periodicity imply global
Lipschitz regularity of the surface. In particular, one can construct
a local extension operator on every cell $z+(-\delta,1+\delta)^{d}$,
$z\in\Zd$ which might then be glued together using a periodic partition
of unity of $\Rd$. The connectedness of the geometry assures that
the difference of the average of a function $u$ on two different
cells $z_{1}$ and $z_{2}$ can be computed from the gradient along
a path connecting the two cells and being fully comprised in $z_{1}+(-1,2)^{d}$.

In the stochastic case the proof of existence of suitable extension
operators is much more involved and not every geometry will eventually
allow us to be successful. In fact, we will not be able - in general
- to even provide extension operators $\cU_{\eps}:\,W^{1,p}\left(\bQ_{\tilde{\bP}}^{\eps}(\omega)\right)\to W^{1,p}(\bQ)$
but rather obtain $\cU_{\eps}:\,W^{1,p}\left(\bQ_{\tilde{\bP}}^{\eps}(\omega)\right)\to W^{1,r}(\bQ)$,
where $r<p$ depends (among others) on the dimension and on the distribution
of the Lipschitz constant of $\partial\tilde{\bP}(\omega)$. This
is due to the presence of arbitrarily ``bad'' local behavior of
the geometry.

The theory developed below also allows to provide estimates on the
trace operator 
\[
\cT_{\omega}:\,C^{1}\of{\overline{\bP}(\omega)}\to C\of{\partial\bP(\omega)}
\]
when seen as an operator $\cT_{\omega}:\,W_{\loc}^{1,p}\of{\bP(\omega)}\to L_{\loc}^{r}\of{\partial\bP(\omega)}$,
where again $1\leq r<p$ in general.

We summarize the above discussion in the following.
\begin{problem}
\label{prob:Problem-1}Find (computationally or rigorously) verifiable
conditions on stationary random geometries that allow to prove existence
of extension operators 
\[
\cU_{\eps}:\,W_{0,\partial\bQ}^{1,p}\left(\bQ_{\tilde{\bP}}^{\eps}(\omega)\right)\to W^{1,r}(\bQ)\quad\text{s.t.}\quad\norm{\nabla\cU_{\eps}u}_{L^{r}(\bQ)}\leq C\norm{\nabla u}_{L^{p}\left(\bQ_{\tilde{\bP}}^{\eps}(\omega)\right)}\,,
\]
where $r\geq1$ and $C>0$ are independent of $\eps$ and where 
\[
W_{0,\partial\bQ}^{1,p}\left(\bQ_{\tilde{\bP}}^{\eps}(\omega)\right)=\left\{ u\in W^{1,p}\left(\bQ_{\tilde{\bP}}^{\eps}(\omega)\right)\,:\;u|_{\partial\bQ}\equiv0\right\} \,.
\]
\end{problem}

\begin{problem}
\label{prob:Problem-2}Find (computationally or rigorously) verifiable
conditions on stationary random geometries that allow to prove an
estimate
\[
\eps\norm{\cT_{\eps}u}_{L^{r}(\bQ\cap\eps\partial\bP)}^{r}\leq C\left(\norm u_{L^{p}\of{\bQ\cap\eps\bP(\omega)}}^{r}+\eps^{r}\norm{\nabla u}_{L^{p}\of{\bQ\cap\eps\bP(\omega)}}^{r}\right)\,,
\]
where $r\geq1$ and $C>0$ are independent of $\eps$.
\end{problem}

Let us mention at this place existing results in literature. In recent
years, Guillen and Kim \cite{guillen2015quasistatic} have proved
existence of uniformly bounded extension operators $\cU_{\eps}:\,W^{1,p}\left(\bQ_{\tilde{\bP}}^{\eps}(\omega)\right)\to W^{1,p}(\bQ)$
in the context of minimally smooth surfaces, i.e. uniformly Lipschitz
and uniformly bounded inclusions with uniform minimal distance. A
homogenization result of integral functionals on randomly perforated
domains with uniformly bounded inclusions was provided by Piat and
Piatnitsky \cite{piat2010gamma}. Concerning unbounded inclusions
and non-uniformly Lipschitz geometries, the present work seems to
be the first approach. Since Problem \ref{prob:Problem-2} is easier
to handle, we first explain our concept of microscopic regularity
in view of $\cT_{\omega}$ and then go on to extension operators.

\subsubsection*{$\left(\delta,M\right)$-Regularity and the Trace Operator}

We introduce two concepts which are suited for the current and potentially
also for further studies. The first of these two concepts is inspired
by the concept of minimal smoothness \cite{stein2016singular} and
accounts for the local regularity of $\partial\bP$. Deviating from
\cite{stein2016singular} we will call it \emph{local $\left(\delta,M\right)$-regularity}
(see Definition \ref{def:loc-del-M-reg}). Although this assumption
is very weak, its consequences concerning local coverings of $\partial\bP$
are powerful. Based on this concept, we introduce the functions $\delta$,
$\hat{\rho}$ and $\rho$ on $\partial\bP$ as well as $M_{[\eta]}$
and $M_{[\eta],\Rd}$ for $\eta\in\left\{ \delta,\hat{\rho},\rho\right\} $
in Lemmas \ref{lem:properties-delta-M-regular}, \ref{lem:rho-p-lsc},
\ref{lem:M-eta} and \ref{lem:delta-rho-M-measurable} and make the
following assumptions:
\begin{assumption}
\label{assu:Trace}Let $\bP(\omega)$ be a random open set such that
for $1\leq r<p_{0}<p$ and $\eta\in\left\{ \rho,\hat{\rho},\delta\right\} $
it holds either 
\begin{align*}
\int_{\Omega}\eta^{-\frac{1}{p_{0}-r}}\d\mugammapalm+\E\of{M_{[\frac{1}{8}\eta],\Rd}^{\left(\frac{1}{p_{0}}+1\right)\frac{p}{p-p_{0}}}} & <\infty\,,\\
\text{or }\int_{\Omega}\left(\eta M_{[\frac{1}{16}\eta],\Rd}\right)^{-\frac{1}{p-r}}\d\mugammapalm & <\infty\,.
\end{align*}
\end{assumption}

Having studied the properties of $\left(\delta,M\right)$-regular
sets in detail in Sections \ref{subsec:Microscopic-Regularity} and
\ref{subsec:Local--Regularity} it is very easy to prove the following
trace theorem (for notations we refer to Section \ref{sec:Preliminaries}
and Section \ref{subsec:Microscopic-Regularity}). Note that via a
simple rescaling, this provides a solution to Problem \ref{prob:Problem-1}.
\begin{thm}[Solution of Problem \ref{prob:Problem-1}]
\label{thm:Main-Thm-1}Let $\bP(\omega)$ be a stationary and ergodic
random open set which is almost surely locally $\left(\delta,M\right)$
regular and let Assumption \ref{assu:Trace} hold. For given $\omega$
let $\cT_{\omega}:\,C^{1}\of{\overline{\bP}(\omega)}\to C\of{\partial\bP(\omega)}$
be the trace operator. Then for almost every $\omega$ the extension
$\cT_{\omega}:\,W_{\loc}^{1,p}\of{\bP(\omega)}\to L_{\loc}^{r}\of{\partial\bP(\omega)}$
is continuous and there exists a constant $C_{\omega}>0$ s.t. it
holds for every bounded Lipschitz domain $\bQ\supset\Ball 10$ and
every $n\in\N$
\[
\norm{\cT_{\omega}u}_{L^{r}(\partial\bP\cap n\bQ)}\leq C_{\omega}\norm u_{W^{1,p}\left(\Ball{\fr}{n\bQ}\cap\bP\right)}.
\]
\end{thm}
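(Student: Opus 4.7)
The plan is to reduce the global trace bound to a Palm-measure integral via three steps: (i) a local trace estimate on each $(\delta,M)$-regular Lipschitz chart, (ii) a Vitali covering of $\partial\bP(\omega)\cap n\bQ$ combined with a H\"older argument, and (iii) Birkhoff's ergodic theorem for the stationary random boundary measure $\mu_{\Gamma(\omega)}$. First, at every $x\in\partial\bP(\omega)$ the local $(\delta,M)$-regularity means that, after a rotation, $\bP\cap\Ball{\delta(x)}{x}$ is the epigraph of a Lipschitz function with constant $M(x)$. A standard scaled trace theorem on a Lipschitz ball of radius $\delta(x)$ therefore gives
\[
\norm{u}_{L^{p_0}(\partial\bP\cap\Ball{\rho(x)/8}{x})}^{p_0}
\le C(1+M(x))^{\kappa}\Bigl(\delta(x)^{-1}\norm{u}_{L^{p_0}}^{p_0}+\delta(x)^{p_0-1}\norm{\nabla u}_{L^{p_0}}^{p_0}\Bigr),
\]
with right-hand side norms over $\Ball{\delta(x)}{x}\cap\bP$ and $\kappa$ depending only on the dimension.

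Second, I would extract a Vitali subcover $\{\Ball{\eta(x_i)/8}{x_i}\}_i$ of $\partial\bP\cap n\bQ$ for $\eta\in\{\delta,\hat{\rho},\rho\}$ whose constant dilates still cover the boundary and whose overlap number on $\Rd$ is bounded by $M_{[\eta/8],\Rd}(x_i)$; this is exactly the role of the auxiliary quantities introduced in Lemmas \ref{lem:rho-p-lsc}--\ref{lem:delta-rho-M-measurable}. Summing the local estimate and applying H\"older's inequality twice, once to convert the $L^{p_0}$ boundary norm into $L^r$ and once to convert the $W^{1,p_0}$ volume norm into $W^{1,p}$, yields
\[
\norm{\cT_\omega u}_{L^r(\partial\bP\cap n\bQ)}^{r}
\le C\,\Sigma_n(\omega)^{(p-r)/p}\,\norm{u}_{W^{1,p}(\Ball{\fr}{n\bQ}\cap\bP)}^{r},
\]
where $\Sigma_n(\omega)=\sum_{x_i\in n\bQ}w(x_i)$ and the weight $w$ is either $\eta^{-1/(p_0-r)}\,M_{[\eta/8],\Rd}^{(1/p_0+1)p/(p-p_0)}$ (first alternative of Assumption \ref{assu:Trace}, corresponding to a two-step splitting that decouples $\delta$ from $M$) or $(\eta\,M_{[\eta/16],\Rd})^{-1/(p-r)}$ (second alternative, corresponding to a single combined H\"older step with exponent $p/(p-r)$).

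Third, $\Sigma_n(\omega)$ is, up to a constant, the integral of $w$ against $\mu_{\Gamma(\omega)}$ over $n\bQ$. By stationarity, ergodicity and the Palm-form of Birkhoff's ergodic theorem for stationary random measures, $n^{-d}\Sigma_n(\omega)$ converges almost surely to $\int_\Omega w\,\d\mugammapalm$, which is finite by hypothesis. This supplies a deterministic constant $C_\omega<\infty$ independent of $n$ and completes the estimate. The main obstacle is the precise choice of H\"older exponents so that the weight $w$ appearing in the summed local estimate matches exactly one of the two expressions whose Palm integrability is assumed; the two alternatives in Assumption \ref{assu:Trace} reflect the two natural ways of splitting the product of local weights, and both must be shown to produce a finite bound. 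Beyond this bookkeeping one has to verify that the local Lipschitz trace constant genuinely scales polynomially in $M(x)$, which is already encoded in the structural results on $(\delta,M)$-regular sets cited before the theorem; the remaining ergodic-theoretic step is then standard.
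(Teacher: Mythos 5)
Your proposal follows essentially the same route as the paper: the scaled local trace bound plus a covering of $\partial\bP$ by $\eta$-balls plus a H\"older split are packaged there as Theorem \ref{thm:uniform-trace-estimate-1}, and the paper's proof of the statement in question then just cites that theorem together with stationarity, ergodicity and the ergodic theorem, which is exactly your third step. One small bookkeeping discrepancy: in the first alternative of Assumption \ref{assu:Trace} the H\"older split produces a \emph{product} of a Palm (surface) average of $\eta^{-1/(p_0-r)}$ and a volume average of $\tilde{M}_{[\eta/8],\Rd}^{(1/p_0+1)p/(p-p_0)}$ over $\bP$, rather than a single Palm sum of a product weight as your $\Sigma_n$ suggests, but you already flag this decoupling and it does not affect the validity of the argument.
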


\begin{proof}
This is a consequence of Theorem \ref{thm:uniform-trace-estimate-1},
stationarity and ergodicity and the ergodic theorem.
\end{proof}

\subsubsection*{Construction of Extension Operators}

The main results of this work is on extension operators on randomly
perforated domains. In order to construct a suitable extension operator,
we use

\paragraph*{Step 1: $\left(\delta,M\right)$-regularity}

Concerning extension results, the concept of $\left(\delta,M\right)$-regularity
suggests the naive approach to use a local open covering of $\partial\bP$
and to add the local extension operators via a partition of unity
in order to construct a global extension operator. We call this ansatz
naive since one would not chose this approach even in the periodic
setting, as it is known to lead to unbounded gradients. Nevertheless,
this ansatz is followed in Section \ref{subsec:Microscopic-Extension-dm}
for two reasons. The first reason is illustration of an important
principle: The extension operator $\cU=\tilde{\cU}+\hat{\cU}$ can
be split up into a \emph{local }part $\tilde{\cU}$, whose norm can
be estimated by local properties of $\partial\bP$, and a \emph{global
}part $\hat{\cU}$ whose norm is determined by \emph{connectivity},
an issue which has to be resolved afterwards, and corresponds to Step
2 in the proof of Theorem \ref{thm:Ext-per-connected} below (periodic
case), where one glues together the local extension operators on the
periodic cells. The second reason is that this first estimate, although
it cannot be applied globally, is very well suited for constructing
a local extension operator. \lyxadded{heida}{Sat Jun 06 10:59:19 2020}{Lemma
\ref{lem:local-delta-M-extension-estimate} hence provides estimates
of a certain extension operator which has the property that the constant
in the estimate tends to $+\infty$ as the domain grows.}

However, this first ansatz grants some insight into the structure
of the extension problem. In particular, we find the following result
which will provide a better understanding of the Sobolev spaces $W^{1,p}(\Omega)$
and $W^{1,r,p}(\Omega,\bP)$ on the probability space $\Omega$.
\begin{assumption}
\label{assu:local-extension}Let $\bP(\omega)$ be a random open set
such that Assumption \ref{assu:M-alpha-bound} hold and let $\hat{d}$
be the constant from (\ref{eq:lem:properties-local-rho-convering-4}).
\begin{enumerate}
\item Assume for $r<p$ that 
\begin{equation}
\E\of{\tilde{M}_{[\frac{1}{8}\hat{\rho}]}^{\frac{p\left(\hat{d}+1\right)}{p-r}}}+\E\of{\tilde{M}_{[\frac{1}{8}\hat{\rho}]}^{\frac{p\left(\hat{d}+\alpha\right)}{p-r}}}<\infty\label{eq:assu:local-extension-1}
\end{equation}
\item Assume for $r<p_{0}<p_{1}<p$ that (\ref{eq:assu:local-extension-1})
and either
\[
\E\of{\tilde{M}_{[\frac{1}{8}\hat{\rho}]}^{\frac{p_{1}\left(d-2\right)(p_{0}-r)}{r(p_{1}-p_{0})}}}+\E\of{\tilde{M}_{[\frac{1}{8}\hat{\rho}]}^{\frac{\alpha p_{1}p}{p-p_{1}}}}+\E\of{\rho^{1-\frac{rp_{0}}{p_{0}-r}}}<\infty
\]
or 
\[
\E\of{\tilde{M}_{[\frac{1}{8}\hat{\rho}]}^{\alpha p_{0}p}}+\E\of{\rho_{\mathrm{bulk}}^{-\frac{rp_{0}}{p_{0}-r}}}<\infty\,,
\]
where 
\[
\rho_{\mathrm{bulk}}\of x:=\inf\left\{ \rho\of{\tilde{x}}\,:\;\tilde{x}\in\partial\bP\,\text{s.t. }x\in\Ball{\frac{1}{8}\rho\of{\tilde{x}}}{\tilde{x}}\right\} \,,
\]
\end{enumerate}
\end{assumption}

\begin{thm}
\label{thm:Main-Thm-2}Let Assumption \ref{assu:local-extension}
hold and let $\tau$ be ergodic. Then for almost every $\omega$ the
extension operator $\cU:\,W_{\loc}^{1,p}\of{\bP(\omega)}\to W_{\loc}^{1,r}\of{\Rd}$
provided in (\ref{eq:def:cU-Q-2}) is well defined and for $\bQ\subset\Rd$
a bounded domain with Lipschitz boundary there exists a constant $C(\omega)$
such that for every positive $n\geq1$ and every $u\in W^{1,p}(\bP(\omega)\cap\Ball{\fr}{n\bQ})$
\begin{align*}
\frac{1}{n^{d}\left|\bQ\right|}\int_{n\bQ}\left(\left|\cU u\right|^{r}+\left|\nabla\cU u\right|^{r}\right) & \leq C(\omega)\left(\frac{1}{n^{d}\left|\bQ\right|}\int_{\bP(\omega)\cap\Ball{\fr}{n\bQ}}\left|u\right|^{p}+\left|\nabla u\right|^{p}\right)^{\frac{r}{p}}\,.
\end{align*}
\end{thm}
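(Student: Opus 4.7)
The plan is to assemble $\cU$ from local extensions provided by Lemma \ref{lem:local-delta-M-extension-estimate}, using the partition of unity subordinate to the locally finite covering of a neighbourhood of $\partial\bP(\omega)$ by balls $\Ball{\frac{1}{8}\hat{\rho}(x_0)}{x_0}$ with $x_0\in\partial\bP(\omega)$, exactly as in the construction of (\ref{eq:def:cU-Q-2}). On each such ball I would apply Lemma \ref{lem:local-delta-M-extension-estimate} to bound $|\cU u|^p+|\nabla \cU u|^p$ by $|u|^p+|\nabla u|^p$ on the enlarged ball, paying a factor that is a polynomial expression in $\tilde{M}_{[\frac{1}{8}\hat{\rho}]}$ together with negative powers of $\hat{\rho}$ and either of $\rho$ or $\rho_{\mathrm{bulk}}$. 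The covering has bounded multiplicity by (\ref{eq:lem:properties-local-rho-convering-4}) with constant $\hat{d}$, so summation is unproblematic and leads to a pointwise-in-$x$ bound whose weight is a sum of geometric quantities carried by the portion of $\partial\bP$ near $x$.

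Next I would integrate this pointwise bound over $n\bQ$ and apply Hölder's inequality to bridge the gap $r<p$. The three alternatives in Assumption \ref{assu:local-extension} are tailored precisely to this Hölder step: case (i) pays only in $\tilde{M}$ with the bulk-volume exponent $\frac{p(\hat{d}+1)}{p-r}$ (and its $\alpha$-variant) that comes from the multiplicity estimate; case (ii) uses an intermediate exponent $p_0$ (and a further interpolation exponent $p_1$) to distribute moments between $\tilde{M}$ and the surface factor $\rho^{1-\frac{rp_0}{p_0-r}}$ appearing on $\partial\bP$; and the alternative version of (ii) transfers the surface weight entirely to the bulk via $\rho_{\mathrm{bulk}}$, which is why the corresponding moment is an $\E$-expectation rather than a $\mugammapalm$-expectation. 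In all three cases Hölder with exponent $p/r$ on the $|u|^p+|\nabla u|^p$ factor and the complementary exponent on the geometric factor produces exactly the moments asked for.

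Then I would invoke the ergodic theorem for the stationary measure $\mathbb{P}$ together with its Palm version for $\mugammapalm$ to pass from the spatial averages $\frac{1}{n^d|\bQ|}\int_{n\bQ}(\cdot)$ of these geometric factors to the corresponding expectations in Assumption \ref{assu:local-extension}, which are finite by hypothesis. Ergodicity of $\tau$ gives this convergence on a full-measure set of $\omega$, uniformly in $n\geq n_0(\omega)$ in the sense that the spatial averages are bounded by $2\E(\cdot)$; the finitely many remaining $n$ are absorbed into the constant $C(\omega)$. Since the right-hand side of the claimed inequality is already written as an $L^p$-average raised to the power $r/p$, the concluding step is simply the combination of the pointwise bound, Hölder, and the ergodic estimate.

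The hard part will be the Hölder bookkeeping: every factor produced by Lemma \ref{lem:local-delta-M-extension-estimate}, by the multiplicity $\hat{d}$ of the covering, by the rescaling associated with $\hat{\rho}$, and by the $\alpha$-dependence of Assumption \ref{assu:M-alpha-bound} must simultaneously appear with an exponent that is integrable against either $\mathbb{P}$ or $\mugammapalm$. In particular, whenever $\rho$ or $\rho_{\mathrm{bulk}}$ enters with a negative exponent, the Hölder conjugate must be matched to produce precisely $-\frac{1}{p_0-r}$ or $-\frac{rp_0}{p_0-r}$ so that the resulting integral is controlled by Assumption \ref{assu:local-extension}. Once these exponents are aligned the estimate follows, and the passage to arbitrary bounded Lipschitz $\bQ$ is obtained by the standard scaling $\eps=1/n$ together with a covering of $\bQ$ by unit cubes.
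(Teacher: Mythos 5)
Your proposal follows essentially the same route as the paper: apply Lemma~\ref{lem:local-delta-M-extension-estimate}, pay with moments of $\tilde M$ and negative powers of $\hat\rho$/$\rho_{\mathrm{bulk}}$, interpolate with intermediate exponents $p_{0},p_{1}$, and close with the ergodic theorem (spatial and Palm versions) to turn the spatial averages into the expectations of Assumption~\ref{assu:local-extension}. Two technical points are worth making explicit, since they are precisely what the paper's proof cites beyond Lemma~\ref{lem:local-delta-M-extension-estimate}. First, the covering multiplicity from (\ref{eq:lem:properties-local-rho-convering-4}) is not uniformly bounded but only controlled by $C\tilde M(x)^{\hat d}$; it is this pointwise, random bound that is responsible for the $\hat d$ appearing in the exponents of Assumption~\ref{assu:local-extension}, so ``summation is unproblematic'' understates what actually happens. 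Second, and more important, the reason the right-hand side contains $|u|^{p}$ at all (and not only $|\nabla u|^{p}$) is the dangling term in (\ref{lem:local-delta-M-extension-estimate-estim-1-a}) involving $|\nabla\phi_{0}|\,|\tau_{j}u|$ rather than differences $|\tau_{i}u-\tau_{j}u|$. That term is exactly what Lemma~\ref{lem:first-estim-nabla-phi-0} controls, and the observation $\nabla\phi_{0}\leq C\rho_{\mathrm{bulk}}^{-1}$ is what enables the second alternative in Assumption~\ref{assu:local-extension}.2. Your ``pointwise bound whose weight is a sum of geometric quantities'' implicitly covers this, but the $\nabla\phi_{0}$ obstruction is the single mechanism behind the $|u|^{p}$-dependence and deserves to be named; without it the Hölder bookkeeping cannot be matched to the assumed moments, and the transfer between bulk and surface integrals in cases (ii) and its variant runs through Lemma~\ref{lem:delta-tilde-construction-estimate}, which you also leave implicit.
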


\begin{proof}
This follows from Lemmas \ref{lem:local-delta-M-extension-estimate},
\ref{lem:first-estim-nabla-phi-0} and \ref{lem:delta-tilde-construction-estimate}
on noting that $\nabla\phi_{0}\leq C\rho_{\mathrm{bulk}}^{-1}$.
\end{proof}
Theorem \ref{thm:Main-Thm-2}, though useful, is not satisfactory
for homogenization, as $\nabla\cU u$ is bounded by $u$ and not solely
$\nabla u$. Therefore, some more work is needed.

\paragraph*{Step 2: isotropic cone mixing}

In order to account for the issue of connectedness in a proper way
on the macroscopic level, we propose our second fundamental concept
of \emph{isotropic cone mixing geometries} (see Definition \ref{def:iso-cone-mix}),
which allow to construct a global Voronoi tessellation of $\Rd$ with
good local covering properties. This definition, though being rather
technical, can be verified rather easily using Criterion \ref{cri:stat-erg-ball}.

In short, isotropic cone mixing allows to distribute balls $B_{i}=\Ball{_{\frac{\fr}{2}}}{x_{i}}$
of a uniform minimal radius $\frac{1}{2}\fr$ within $\bP$ such that
the centers $x_{i}$ of the balls $B_{i}$ generate a Voronoi mesh
of cells $G_{i}$ with diameter $d_{i}$, distributed according to
a function $f(d)$ (see Lemma \ref{lem:Iso-cone-geo-estimate}). These
Voronoi cells in general might be of arbitrary large diameter $d_{i}$,
although they are bounded in the statistical average. Due to this
lack of a uniform bound, we call the distribution of Voronoi cells
the \emph{mesoscopic regularity }of the geometry.

\paragraph*{Step 3: gluing}

The Voronoi cells resulting from an isotropic cone mixing geometry
are well suited for the gluing of local extension operators. We will
construct the macroscopic extension operator in an analogue way to
\cite{hopker2016extension}, replacing the periodic cells by the Voronoi
cells (see Figure \ref{fig:sketch-extension}). In Theorem \ref{thm:Fine-meso-estimate}
we provide a first abstract result how the norm of the glued operator
can be estimated from the distribution of $M$, the geometry of the
Voronoi mesh and the connectivity, even though the last two properties
enter rather indirectly. To make this more clear, we note at this
points that the extension operator depends on two types of local averages:
To each Voronoi cell $G_{i}$ we take the average $\cM_{i}u$ over
$B_{i}$. Furthermore, to every local microscopic extension operator
chosen in Section \ref{sec:Extension-and-Trace-d-M} there corresponds
a local average $\tau_{j}u$ close to the boundary. We will see that
the norm of the extension operator strongly depends on the differences
$\left|\cM_{i}u-\cM_{j}u\right|$ and $\left|\cM_{j}u-\tau_{k}u\right|$.

In Theorem \ref{thm:Rough-meso-estimate} we will see that the dependence
on $\left|\cM_{i}u-\cM_{j}u\right|$ can be eliminated with the price
to increase the cost of ``unfortunate distributions'' of $G_{i}$
and of the local $\left(\delta,M\right)$ regularity. The remaining
dependence which we leave unresolved is the dependence on $\left|\cM_{j}u-\tau_{k}u\right|$.
This dependence is linked to quantitative connectedness properties
of the geometry. By this we mean more than the topological question
of connectedness. In particular, we need an estimate of the type $\left|\cM_{j}u-\tau_{k}u\right|^{r}\leq\int_{G_{i}}C(x)\left|\nabla u(x)\right|^{r}\d x$
which will finally allow us an estimate of $\sum_{j,k}\left|\cM_{j}u-\tau_{k}u\right|^{r}$
in terms of $\nabla u$. Unfortunately, the classical percolation
theory, which deals with connectedness of random geometries, is not
developed to answer this question. In this paper, we will use two
workarounds which we call ``statistically harmonic'' and ``statistically
connected''. However, further research has to be conducted. We state
our first main theorem.
\begin{thm}
\label{thm:Main-Theorem-3}Let $\bP(\omega)$ be a stationary ergodic
random open set which is almost surely $\left(\delta,M\right)$-regular
(Def. \ref{def:loc-del-M-reg}) and isotropic cone mixing for $\fr>0$
and $f(R)$ (Def. \ref{def:iso-cone-mix}) and statistically harmonic
(Def. \ref{def:statis-mixing}) and let $1\leq r<p\leq\infty$. Let
$\bQ\subset\Rd$ be bounded open with Lipschitz boundary as well as
$s\in(r,p)$ such that 
\begin{align*}
\E\of{\tilde{M}^{\frac{2pd}{p-r}}} & <+\infty\,,\\
\sum_{k=1}^{\infty}\left(k+1\right)^{d\left(\frac{2p-s}{p-s}\right)+\left(d+1\right)(2r+2)\frac{p}{p-s}+r\left(\frac{p}{p-s}-1\right)}f\of k & <+\infty\,,\\
\E\of{\sup_{R}\frac{1}{R^{d}}\int_{\Ball R0}\left(\sum_{k}P\of{d_{k}}\chi_{\fA_{4,k}}C_{k}\right)^{\frac{p}{p-s}}} & <+\infty\,.
\end{align*}
Then for almost every $\omega$ the extension operator $\cU:\,W_{\loc}^{1,p}\of{\bP(\omega)}\to W_{\loc}^{1,r}\of{\Rd}$
provided in (\ref{eq:def-global-U}) is well defined with a constant
$C(\omega)$ such that for every positive $n\geq1$
\begin{align*}
\frac{1}{n^{d}\left|\bQ\right|}\int_{n\bQ}\left|\cU u\right|^{r} & \leq C(\omega)\left(\frac{1}{n^{d}\left|\bQ\right|}\int_{\bP(\omega)\cap n\bQ}\left|u\right|^{p}\right)^{\frac{r}{p}}\\
\frac{1}{n^{d}\left|\bQ\right|}\int_{n\bQ}\left|\nabla\cU u\right|^{r} & \leq C(\omega)\left(\frac{1}{n^{d}\left|\bQ\right|}\int_{\bP(\omega)\cap n\bQ}\left|\nabla u\right|^{p}\right)^{\frac{r}{p}}\,.
\end{align*}
\end{thm}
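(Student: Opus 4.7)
The plan is to assemble the global extension operator $\cU$ of (\ref{eq:def-global-U}) in three nested stages, following the template of the periodic construction (Theorem \ref{thm:Ext-per-connected}) but with the periodic lattice replaced by the random Voronoi tessellation generated by the isotropic cone mixing. First I would use the local $(\delta,M)$-regularity to produce, on every boundary patch, a local extension with quantitative bounds in terms of $\tilde M$ and $\hat\rho$, as supplied by Lemma \ref{lem:local-delta-M-extension-estimate}. Second, isotropic cone mixing (combined with Criterion \ref{cri:stat-erg-ball} and Lemma \ref{lem:Iso-cone-geo-estimate}) gives a tessellation $(G_i)$ of $\Rd$ whose centres admit safe balls $B_i = \Ball{\fr/2}{x_i} \subset \bP(\omega)$ and whose diameters $d_i$ have tail $f$. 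Third, I would glue the patch extensions with a partition of unity subordinate to a mild dilation of the $G_i$, producing $\cU u$ on $\Rd$.

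The main work is the estimate of $\nabla\cU u$. By the abstract decomposition of Theorems \ref{thm:Fine-meso-estimate} and \ref{thm:Rough-meso-estimate}, the gradient splits into three kinds of contributions: (i) the microscopic $(\delta,M)$-regularity errors, whose size is controlled by $\tilde M$ raised to the power $\tfrac{2pd}{p-r}$ after Hölder at exponent $\tfrac{p}{p-r}$; (ii) the differences $|\cM_i u-\cM_j u|$ between averages of $u$ over safe balls of neighbouring Voronoi cells, which telescope along a chain of overlapping balls in $\bP(\omega)$ and thus reduce to $\int|\nabla u|^r$ weighted by a combinatorial factor depending on the mesoscopic diameter scale $k$; and (iii) the microscopic-to-mesoscopic differences $|\cM_j u - \tau_k u|$ which bridge the bulk safe ball to the reference average used by the local extension near $\partial\bP$.

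Contribution (iii) is the genuinely delicate piece. This is where the statistically harmonic hypothesis (Definition \ref{def:statis-mixing}) enters: it provides, for every pair $(j,k)$ meeting at diameter scale $d_k$, a scalar weight $C_k$ supported in the annulus $\fA_{4,k}$ such that $|\cM_j u-\tau_k u|^r \le \int C_k |\nabla u|^r$. Summing over all such pairs produces the stationary random field $\sum_k P(d_k)\chi_{\fA_{4,k}} C_k$, which by the third moment assumption has a finite spatial supremum average in $L^{p/(p-s)}$. Performing Hölder at the conjugate pair $(p/s,\,p/(p-s))$, where $s\in(r,p)$ is the interpolation exponent, converts weighted $L^r$-control of the gradient into $L^p$-control of $|\nabla u|$ times an integrable constant. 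The series condition on $f$ enters precisely to dominate, after interpolation loss $\tfrac{p}{p-s}$, the combinatorial count of Voronoi cells of mesoscale $k$; the exponent $d\tfrac{2p-s}{p-s}+(d+1)(2r+2)\tfrac{p}{p-s}+r(\tfrac{p}{p-s}-1)$ is just the bookkeeping of how many pairs arise at diameter $k$ crossed with the interpolation weight.

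To upgrade these cell-wise stationary bounds to the claimed integral inequality on $n\bQ$, I would apply Birkhoff's ergodic theorem to each of the three stationary integrands above. By ergodicity, their spatial averages on $n\bQ$ converge almost surely to their finite expectations, producing a deterministic $C(\omega)$ valid for every $n\ge 1$ (after a routine absorption of the $\fr$-thick boundary layer using the Lipschitz regularity of $\bQ$). The hardest part, I expect, is neither the ergodic passage nor any single Hölder step, but the careful propagation of exponents through the three-scale decomposition: the unusual combinatorial factor in the hypothesis on $f$ is precisely the residue of coupling the microscopic $(\delta,M)$-losses with the mesoscopic diameter statistics and with the harmonic connectivity weight, and preserving its integrability without wasting moments is the main technical obstacle.
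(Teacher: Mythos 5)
Your proposal follows the paper's own route: the three-scale decomposition handled by Theorems \ref{thm:Fine-meso-estimate}, \ref{thm:Rough-meso-estimate} and \ref{thm:final-estimate}, with the statistically harmonic hypothesis supplying the $C_k$ weights for the bridging terms, Lemma \ref{lem:estim-E-fa-fb} converting the series condition on $f$ into a finite moment, and the ergodic theorem for the large-$n$ passage. So the overall plan is the paper's plan.

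One phrase is worth tightening because it misdescribes the mechanism in a way that, taken literally, would fail. You treat contribution (ii), the mesoscopic differences $|\cM_i u - \cM_j u|$, as obtained by ``telescoping along a chain of overlapping balls in $\bP(\omega)$.'' But the safe balls $B_i,B_j$ of neighbouring Voronoi cells are mutually disjoint (the centres have mutual distance at least $2\fr$ and the averaging balls have radius $\fr/16$), and in a general stationary geometry there is no uniformly short connecting chain inside $\bP(\omega)$ available for such a telescoping. What Theorem \ref{thm:Rough-meso-estimate} actually does is apply the scaled Poincar\'e inequality on $\fA_{2,k}$ to $\cU u$, then invoke the local extension estimate, which reintroduces bridging differences $|\tau_j u - \cM_k u|$ at the cost of extra powers of $d_k$; these residual terms are then absorbed, together with your category (iii), by the statistically harmonic hypothesis via Theorem \ref{thm:final-estimate}. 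The whole point of that hypothesis is that a naive telescoping is not available. You also do not explicitly record that the general case requires taking $\alpha=\hat d=d$ in Assumption \ref{assu:M-alpha-bound} and (\ref{eq:lem:properties-local-rho-convering-4}), which is what produces the exponent $\frac{2pd}{p-r}$ on $\tilde M$; since your exponent is correct, this is a citation gap rather than a logical one.
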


\begin{proof}
This follows from Theorems \ref{thm:Fine-meso-estimate}, \ref{thm:Rough-meso-estimate}
and \ref{thm:final-estimate} on noting that in the general case we
have to assume $\alpha=\hat{d}=d$. Furthermore, we need Lemma \ref{lem:estim-E-fa-fb}.
\end{proof}
In practical applications, one would need to verify whether $\bP$
is statistically harmonic via numerical simulations. The problem particularly
results in the numerical evaluation of a Laplace operator.

Based on this insight, we develop an alternative approach: The connectedness
of $\bP$ is quantified by introducing directly a discrete graph on
$\bP$ and a discrete Poisson equation on this graph. The construction
of the graph and the evaluation of the Poisson equation can be done
numerically, but with the advantage that the discrete quantities are
now directly connected to the analytical theory. Additionally to the
$(\delta,M)$-regularity we have to deal with the average diameter
$d_{j}$ of the cells of a the global Voronoi tessellation and the
local stretch factor $\sfS_{j}$. We impose the following assumptions:
\begin{assumption}
\label{assu:dirichlet-extension}Let $\bP(\omega)$ be a random open
set such that Assumption \ref{assu:M-alpha-bound} hold and let $\hat{d}$
be the constant from (\ref{eq:lem:properties-local-rho-convering-4}).
Let (\ref{eq:assu:local-extension-1}) and for $r<\tilde{s}<s<p$
let either 
\[
\E\of{\tilde{M}_{[\frac{1}{8}\hat{\rho}]}^{\frac{p_{1}\left(d-2\right)(\tilde{s}-r)}{r(s-\tilde{s})}}}+\E\of{\rho^{1-\frac{\tilde{s}r}{\tilde{s}-r}}}<\infty
\]
or 
\[
\E\of{\rho_{\mathrm{bulk}}^{-\frac{sr}{s-r}}}<\infty\,.
\]
Furthermore, let $\bP(\omega)$ be almost surely isotropic cone mixing
for $\fr>0$ and $f(R)$ (Def. \ref{def:iso-cone-mix}) as well as
locally connected and let the local stretch factor (see Definition
Theorem \ref{thm:strech-main-thm} and Definition \ref{def:thm:strech-main-thm})
satisfy $\P\of{\sfS>S_{0}}\leq f_{s}(S_{0})$ such that 
\begin{align*}
\sum_{k=1}^{\infty}\left(k+1\right)^{2d+\frac{r(d-1)+drs}{s-r}}f\of k & <+\infty\,,\\
\sum_{k,N=1}^{\infty}\left[\left(N+1\right)\left(k+1\right)\right]^{d\frac{2p-s}{p-s}+\frac{s-1}{s}\frac{p}{p-s}+r\frac{s}{p-s}}\left(k+1\right)^{d\frac{p}{p-s}}f(k)f_{S}(N) & <+\infty\,.
\end{align*}
\end{assumption}

The second main theorem can be formulated as follows:
\begin{thm}
\label{thm:Main-Theorem-4}Let $\bP(\omega)$ be a stationary ergodic
random open set which is almost surely $\left(\delta,M\right)$-regular
(Def. \ref{def:loc-del-M-reg}) and isotropic cone mixing for $\fr>0$
and $f(R)$ (Def. \ref{def:iso-cone-mix}) as well as locally connected
and satisfy $\P\of{\sfS>S_{0}}\leq f_{s}(S_{0})$ such that Assumption
\ref{assu:dirichlet-extension} holds. For $1\leq r<\tilde{s}<s<p\leq\infty$
and $\bQ\subset\Rd$ a bounded domain with Lipschitz boundary. Then
for almost every $\omega$ the extension operator $\cU:\,W_{\loc}^{1,p}\of{\bP(\omega)}\to W_{\loc}^{1,r}\of{\Rd}$
provided in (\ref{eq:def-global-U}) is well defined with a constant
$C(\omega)$ such that for every positive $n\geq1$ and every $u\in W_{0,\partial\bQ}^{1,p}(\bP(\omega)\cap n\bQ)$
\begin{align}
\frac{1}{n^{d}\left|\bQ\right|}\int_{n\bQ}\left|\cU u\right|^{r} & \leq C(\omega)\left(\frac{1}{n^{d}\left|\bQ\right|}\int_{\bP(\omega)\cap n\bQ}\left|u\right|^{p}\right)^{\frac{r}{p}}\,,\label{eq:thm:Main-Theorem-4-1}\\
\frac{1}{n^{d}\left|\bQ\right|}\int_{n\bQ}\left|\nabla\cU u\right|^{r} & \leq C(\omega)\left(\frac{1}{n^{d}\left|\bQ\right|}\int_{\bP(\omega)\cap n\bQ}\left|\nabla u\right|^{p}\right)^{\frac{r}{p}}\,.\label{eq:thm:Main-Theorem-4-2}
\end{align}
\end{thm}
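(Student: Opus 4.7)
The plan is to follow exactly the three-step architecture that underlies Theorem \ref{thm:Main-Theorem-3} --- local extension on a covering adapted to the $(\delta,M)$-regular boundary, gluing on the Voronoi mesh produced by the isotropic cone mixing, and controlling the jumps of the local averages --- but to replace the statistically harmonic hypothesis by an estimate coming from the discrete graph and discrete Poisson equation associated with local connectedness and the stretch factor $\sfS$. The extension operator itself is still the one defined in (\ref{eq:def-global-U}), so no new construction is required; what changes is the final accounting of $|\cM_j u - \tau_k u|^r$.

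First I would use the isotropic cone mixing property and Lemma \ref{lem:Iso-cone-geo-estimate} to obtain, for a.e.\ $\omega$, the Voronoi tessellation $\{G_i\}$ with centers $x_i$, inscribed balls $\Ball{\fr/2}{x_i}\subset\bP(\omega)$ and diameters $d_i$ satisfying $\P(d_i>k)\le f(k)$. On a neighborhood of $\partial\bP(\omega)$ I use the local $(\delta,M)$-covering and the local extensions supplied by Lemma \ref{lem:local-delta-M-extension-estimate}, and glue everything by a partition of unity subordinate to this combined covering, yielding $\cU u$ in (\ref{eq:def-global-U}). Theorem \ref{thm:Fine-meso-estimate} then gives a first cellwise bound of the $L^r$-norms of $\cU u$ and $\nabla\cU u$ on $n\bQ$ in terms of $\|u\|_{L^p}$, $\|\nabla u\|_{L^p}$ on $\bP(\omega)\cap n\bQ$ plus weighted sums of the bulk--bulk mismatches $|\cM_i u-\cM_j u|^r$ and bulk--trace mismatches $|\cM_j u-\tau_k u|^r$. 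The bulk--bulk part is then absorbed by Theorem \ref{thm:Rough-meso-estimate}, which re-expresses $|\cM_i u-\cM_j u|^r$ as an integral of $|\nabla u|^p$ against weights involving $\tilde M$, $d_j$ and $\rho$; this is where the first moment alternative in Assumption \ref{assu:dirichlet-extension} (either the one involving $\tilde M_{[\tfrac{1}{8}\hat\rho]}$ and $\rho$, or the one involving $\rho_{\mathrm{bulk}}$) enters and provides a finite constant in expectation.

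The genuinely new step --- and the one I expect to be the main obstacle --- is the estimate of $\sum_{j,k}|\cM_j u-\tau_k u|^r$ by $\|\nabla u\|_{L^p}^r$ via the discrete graph on the Voronoi centers. For each pair $(j,k)$ local connectedness provides a path inside $\bP(\omega)$ of discrete length controlled by $\sfS_j$ and $d_j$; writing $\cM_j u-\tau_k u$ as a telescoping sum along such a path, applying Jensen and Fubini, and then summing over $(j,k)$ produces a weight on $|\nabla u|^p$ of the form $\sum_k P(d_k)\chi_{\fA_{4,k}}C_k$ with $C_k$ polynomial in $d_k$ and $\sfS_k$. The double summability condition of Assumption \ref{assu:dirichlet-extension}, with the product exponents in $f(k)f_S(N)$, is precisely what is required so that a Hölder argument with exponents $r<\tilde s<s<p$ converts this into a random variable with finite expectation; the three-exponent gap $r<\tilde s<s<p$ is exactly the device that buys the two successive applications of Hölder (one to pass from $p$ to $s$ in the gradient, one to sum the geometric weights). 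The Dirichlet condition baked into $W_{0,\partial\bQ}^{1,p}$ is what anchors the discrete potential on the graph and makes the resulting discrete Poincaré inequality well-posed, so that (\ref{eq:thm:Main-Theorem-4-1}) follows from (\ref{eq:thm:Main-Theorem-4-2}) by summing the same construction without weights.

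The whole argument is carried out for a fixed $\omega$, so the constant $C(\omega)$ is a spatial average over $n\bQ$ of a random variable with finite expectation. Stationarity and ergodicity, applied in the same way as in Lemma \ref{lem:estim-E-fa-fb} and in the proof of Theorem \ref{thm:Main-Theorem-3}, then yield a single $\omega$-dependent but $n$-independent constant on a set of full measure, which concludes both (\ref{eq:thm:Main-Theorem-4-1}) and (\ref{eq:thm:Main-Theorem-4-2}). The hardest part of the whole plan is, as already noted, matching the polynomial-in-$(d_k,\sfS_k)$ weight generated by the discrete Poisson estimate to the exact product moment bound of Assumption \ref{assu:dirichlet-extension}; getting the exponents $2d+\tfrac{r(d-1)+drs}{s-r}$ and $d\tfrac{2p-s}{p-s}+\tfrac{s-1}{s}\tfrac{p}{p-s}+r\tfrac{s}{p-s}$ to be precisely the ones appearing there is a bookkeeping task where the stretch factor and the Voronoi diameter must be kept track of separately, and any sloppy Hölder step would produce a spurious moment that Assumption \ref{assu:dirichlet-extension} would not cover.
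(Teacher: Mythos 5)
Your overall architecture is the one the paper uses: start from Theorem \ref{thm:Fine-meso-estimate}, isolate the discrete jumps $\left|\cM_{k}u-\cM_{j}u\right|$ and $\left|\tau_{i}u-\cM_{j}u\right|$, collapse them by telescoping along admissible paths in the graph built on $\bP$ when $\bP$ is locally connected, and feed the resulting weights into Lemma \ref{lem:estim-E-fa-fb}. The paper's own proof is exactly this chain: Theorem \ref{thm:Fine-meso-estimate} + Lemmas \ref{lem:6-4}, \ref{lem:6-5}, \ref{lem:delta-tilde-construction-estimate}, \ref{lem:estim-E-fa-fb} for the first two moment conditions of Assumption \ref{assu:dirichlet-extension}, and Theorem \ref{thm:strech-main-thm} + Lemma \ref{lem:estim-E-fa-fb} for the third. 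That is the right picture.

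There is, however, a concrete misstep in the middle of your outline that would make a literal execution fail. You propose to ``absorb'' the bulk--bulk differences $\left|\cM_{i}u-\cM_{j}u\right|^{r}$ via Theorem \ref{thm:Rough-meso-estimate}. That theorem belongs to the proof of Theorem \ref{thm:Main-Theorem-3} (the statistically harmonic route), not to the proof at hand. Its constant $C\of{\bP(\omega)}$ contains $\left(\fint\tilde{M}^{\frac{2p\hat{d}}{s-r}}\right)^{\frac{s-r}{p}}$, which for $\hat{d}=d$ requires finiteness of $\E\of{\tilde{M}^{\frac{2pd}{s-r}}}$; since $s<p$ gives $s-r<p-r$, this is a strictly higher moment than the $\E\of{\tilde{M}^{\frac{p(\hat{d}+\alpha)}{p-r}}}$ moment appearing in Assumption \ref{assu:local-extension}/\ref{assu:dirichlet-extension}, so Assumption \ref{assu:dirichlet-extension} does not cover it. The correct tool for Theorem \ref{thm:Main-Theorem-4} is Lemma \ref{lem:6-5}: it \emph{does not} re-expand $\left|\cM_{k}u-\cM_{j}u\right|^{r}$ through the local extension operator; it merely trades the partition-of-unity weights for the factor $\sum_{j}d_{j}^{\frac{r(d-1)+drs}{s-r}}\chi_{\nabla\Phi_{j}\neq0}$ and leaves the jump $\left|\cM_{k}u-\cM_{j}u\right|^{s}$ intact and discrete, which is what lets Theorem \ref{thm:strech-main-thm} treat it by telescoping along admissible paths, with the weight $d_{j}^{d+\frac{s-1}{s}}\sfS_{j}^{\frac{s-1}{s}}$ and indicator $\chi_{\Ball{\sfS_{j}d_{j}}{x_{j}}}$. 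The $\rho$ and $\rho_{\mathrm{bulk}}$ moment alternatives in Assumption \ref{assu:dirichlet-extension} come from Lemma \ref{lem:6-4} (the $\left|\partial_{l}\phi_{0}\right|$ factor), which you identified correctly in substance, just under the wrong theorem's label.

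Your final remark about the Dirichlet condition is also off. The $W_{0,\partial\bQ}^{1,p}$ hypothesis does not anchor a discrete Poincar\'e inequality, and \eqref{eq:thm:Main-Theorem-4-1} is not deduced from \eqref{eq:thm:Main-Theorem-4-2}. The $L^{r}$ bound \eqref{eq:thm:Main-Theorem-4-1} follows directly from inequality \eqref{eq:thm:Fine-meso-estimate-2} of Theorem \ref{thm:Fine-meso-estimate}. The boundary condition is used for something else entirely: by Corollary \ref{cor:support-extension} it forces $\cU u$ to be supported in $\Apaths(n\bQ)$, and Theorem \ref{thm:ergodic-Apaths} then controls $\left|\Apaths(n\bQ)\right|/\left|n\bQ\right|\to1$, which is what turns the $\Ball{\fr}{n\bQ}$ integration regions from Theorem \ref{thm:Fine-meso-estimate} into integrals over $\bP(\omega)\cap n\bQ$ as stated. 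Without the Dirichlet condition one would still get the two inequalities, but with $n\bQ$ replaced on the right by a slightly fattened domain.
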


\begin{proof}
We combine Theorem \ref{thm:Fine-meso-estimate} with Lemmas \ref{lem:6-4}
and \ref{lem:6-5} as well as Lemmas \ref{lem:delta-tilde-construction-estimate}
and \ref{lem:estim-E-fa-fb} to obtain the first and second condition.
The remaining condition is inferred from Theorem \ref{thm:strech-main-thm}
and Lemma \ref{lem:estim-E-fa-fb}.
\end{proof}

\subsection*{Sobolev Spaces on $\Omega$}

Besides the evident benefit of the above extension and trace theorems,
let us note that these theorems are also needed for the construction
of the suitable Sobolev spaces on $\Omega$. In Section \ref{sec:Sobolev-spaces-on}
we recall some standard construction of Sobolev spaces on the probability
space $\Omega$ and provide some links between two major approaches
which seem to be hard to find in one place. We will need this summing
up in order to better illustrate the generalization to perforated
domains.

To understand our ansatz, we recall a result from \cite{heida2011extension}
that there exist $\bP\subset\Omega$ and $\Gamma\subset\Omega$ such
that for almost every $\omega\in\Omega$ $\chi_{\bP(\omega)}(x)=\chi_{\bP}(\tau_{x}\omega)$
and $\chi_{\Gamma(\omega)}(x)=\chi_{\Gamma}(\tau_{x}\omega)$, where
$\Gamma(\omega):=\partial\bP(\omega)$. The random set $\bP(\omega)$
leads to Sobolev spaces $W^{1,p}(\bP(\omega))$, e.g. by defining
$W^{1,p}(\bP(\omega)):=\left\{ \chi_{\bP(\omega)}u:\,u\in W^{1,p}(\Rd)\right\} $.
We will see that we can introduce spaces $W^{1,p}(\bP)$, but this
construction is more involved than in $\Rd$ and heavily relies on
the almost sure extension property guarantied by Theorem \ref{thm:Main-Thm-2}.
Once we have introduced the spaces $W^{1,p}(\bP)$ we can also introduce
``trace''-operators $\cT_{\Omega}:\,W^{1,p}(\bP)\to L^{r}(\Gamma)$,
where $\Gamma\subset\Omega$ with $\chi_{\Gamma(\omega)}(x)=\chi_{\Gamma}(\tau_{x}\omega)$,
and $L^{r}(\Gamma)$ is to be understood w.r.t. the Palm measure on
$\Gamma$. This construction will rely on Theorems \ref{thm:Main-Thm-2}
and \ref{thm:Main-Thm-1}. In all our results, we only provide sufficient
conditions for the existence of the respective spaces and operators.
Necessary conditions are left for future studies.

\subsection*{Discussion: Random Geometries and Applicability of the Method}

In Section \ref{sec:Two-scale-convergence} we will discuss how the
present results can be applied in the framework of the stochastic
two-scale convergence method. However, this concerns only the analytic
aspect of applicability.

The more important question is the applicability of the presented
theory from the point of view of random geometries. Of course our
result can be applied to periodic geometries and hence also to stochastic
geometries which originate from random perturbations of periodic geometries
as long as these perturbations are - in the statistical average -
``not to large''. However, it is a well justified question if the
estimates presented here are applicable also for other models.

In Section \ref{sec:Sample-Geometries} we discuss three standard
models from the theory of stochastic geometries. The first one is
the Boolean model based on a Poisson point process. Here we can show
that the micro- and mesoscopic assumptions are fulfilled, at least
in case $\bP$ is given as the union of balls. If we choose $\bP$
as the complement of the balls, we currently seem to run into difficulties.
However, this problem might be overcome using a Matern modification
of the Poisson process. We deal with such Matern modifications in
Section \ref{subsec:Matern-Process}. What remains challenging in
both settings are the proofs of statistical harmony or statistical
connectivity. However, if the Matern process strongly excludes points
that are to close to each other, the connectivity issue can be resolved.

A further class which will be discussed are a system of Delaunay pipes
based on a Matern process. In this case, even though the geometry
might locally become very irregular, all properties can be verified.
Hence, we identified at least one non-trivial, non-quasi-periodic
geometry to which our approach can be applied for sure.

The above mentioned construction of Sobolev spaces and the application
in the homogenization result of Theorem \ref{thm:Final-homogenization-Theorem}
clearly demonstrate the benefits of the new methodology.

\subsection*{Notes}

\subsubsection*{Structure of the article}

We close the introduction by providing an overview over the article
and its main contributions. In Section \ref{sec:Preliminaries} we
collect some basic concepts and inequalities from the theory of Sobolev
spaces, random geometries and discrete and continuous ergodic theory.
We furthermore establish local regularity properties for what we call
$\eta$-regular sets, as well as a related covering theorem in Section
\ref{subsec:Local--Regularity}. In Section \ref{subsec:Dynamical-Systems-on-Zd}
we will demonstrate that stationary ergodic random open sets induce
stationary processes on $\Zd$, a fact which is used later in the
construction of the mesoscopic Voronoi tessellation in Section \ref{subsec:Mesoscopic-Regularity}.

In Section \ref{sec:Periodic-extension-theorem} we provide a proof
of the periodic extension result in a simplified setting. This is
for completeness and self-containedness of the paper, in order to
make a comparison between stochastic and periodic approach easily
accessible to the reader.

In Section \ref{sec:Nonlocal-regularity} we introduce the regularity
concepts of this work. More precisely, in Section \ref{subsec:Microscopic-Regularity}
we introduce the concept of local $\left(\delta,M\right)$-regularity
and use the theory of Section \ref{subsec:Local--Regularity} in order
to establish a local covering result for $\partial\bP$, which will
allow us to infer most of our extension and trace results. In Section
\ref{subsec:Mesoscopic-Regularity} we show how isotropic cone mixing
geometries allow us to construct a stationary Voronoi tessellation
of $\Rd$ such that all related quantities like ``diameter'' of
the cells are stationary variables whose expectation can be expressed
in terms of the isotropic cone mixing function $f$. Moreover we prove
the important integration Lemma \ref{lem:estim-E-fa-fb}.

In Sections \ref{sec:Extension-and-Trace-d-M}--\ref{sec:Construction-of-Macroscopic-1}
we finally provide the aforementioned extension operators and prove
estimates for these extension operators and for the trace operator.

In Section \ref{sec:Sample-Geometries} we study some sample geometries
and in Section \ref{sec:Two-scale-convergence} we discuss the homogenization
problem.

\subsubsection*{A Remark on Notation}

This article uses concepts from partial differential equations, measure
theory, probability theory and random geometry. Additionally, we introduce
concepts which we believe have not been introduced before. This makes
it difficult to introduce readable self contained notation (the most
important aspect being symbols used with different meaning) and enforces
the use of various different mathematical fonts. Therefore, we provide
an index of notation at the end of this work. As a rough orientation,
the reader may keep the following in mind:

We use the standard notation $\N$, $\Q$, $\R$, $\Z$ for natural
($>0$), rational, real and integer numbers. $\P$ denotes a probability
measure, $\E$ the expectation. Furthermore, we use special notation
for some geometrical objects, i.e. $\T^{d}=[0,1)^{d}$ for the torus
($\T$ equipped with the topology of the torus), $\I^{d}=(0,1)^{d}$
the open interval as a subset of $\Rd$ (we often omit the index $d$),
$\B$ a ball, $\cone$ a cone and $\X$ a set of points. In the context
of finite sets $A$, we write $\#A$ for the number of elements.

Bold large symbols ($\bU$, $\bQ$, $\bP$,$\dots$) refer to open
subsets of $\Rd$ or to closed subsets with $\partial\bP=\partial\mathring{\bP}$.
The Greek letter $\Gamma$ refers to a $d-1$ dimensional manifold
(aside from the notion of $\Gamma$-convergence).

Calligraphic symbols ($\mathcal{A}$, $\cU$, $\dots$) usually refer
to operators and large Gothic symbols ($\fB,\fC,\dots$) indicate
topological spaces, except for $\fA$.

\section{\label{sec:Preliminaries}Preliminaries}

We first collect some notation and mathematical concepts which will
be frequently used throughout this paper. We first start with the
standard geometric objects, which will be labeled by bold letters.

\subsection{Fundamental Geometric Objects}

\textbf{Unit cube~~~} The torus $\T=[0,1)^{d}$ has 
the topology of the metric $d(x,y)=\min_{z\in\Zd}\left|x-y+z\right|$.
In contrast, the open interval $\I^{d}:=(0,1)^{d}$ is considered
as a subset of $\Rd$. We often omit the index $d$ if this does not
provoke confusion.\nomenclature[T]{$\T$}{$\T=[0,1)^d$ the torus (Section \ref{sec:Preliminaries})}\nomenclature[I]{$\I$}{$\I=[0,1)^d$ the torus (Section \ref{sec:Preliminaries})}

\textbf{Balls~~~} Given a metric space $\left(M,d\right)$ we denote
$\Ball rx$ \nomenclature[Ball]{$\Ball{r}{x}$}{Ball around $x$ with radius $r$ (Section \ref{sec:Preliminaries})}the
open ball around $x\in M$ with radius $r>0$. The surface of the
unit ball in $\Rd$ is $\S^{d-1}$.

\textbf{Points~~~} A sequence of points will be labeled by $\X:=\left(x_{i}\right)_{i\in\N}$.\nomenclature[X]{$\X$, $Y$}{Families of points (Section \ref{sec:Preliminaries})}

\textbf{A cone~~~} in $\Rd$ is usually labeled by $\cone$. In
particular, we define for a vector $\nu$ of unit length, $0<\alpha<\frac{\pi}{2}$
and $R>0$ the \nomenclature[Cone]{$\cone_{\nu,\alpha,R}(x)$}{Cone with apix $x$, direction $\nu$, opening angle $\alpha$ and hight $R$  (Section \ref{sec:Preliminaries})}cone
\[
\cone_{\nu,\alpha,R}(x):=\left\{ z\in\Ball Rx\,:\;z\cdot\nu>\left|z\right|\cos\alpha\right\} \quad\text{and}\quad\cone_{\nu,\alpha}(x):=\cone_{\nu,\alpha,\infty}(x)\,.
\]
\textbf{Inner and outer hull~~~} We use balls of radius $r>0$
to define for a closed set $\bP\subset\Rd$ the sets \nomenclature[P]{$\bP_{r},\bP_{-r}$}{Inner and outer hull of $\bP$ with hight $r$ (Section \ref{sec:Preliminaries})}
\begin{equation}
\begin{aligned}\bP_{r} & :=\overline{\Ball r{\bP}}:=\left\{ x\in\Rd\,:\;\dist\left(x,\bP\right)\leq r\right\} \,,\\
\bP_{-r} & :=\Rd\backslash\left[\Ball r{\Rd\setminus\bP}\right]:=\left\{ x\in\Rd\,:\;\dist\left(x,\Rd\setminus\bP\right)\geq r\right\} \,.
\end{aligned}
\label{eq:Pr}
\end{equation}
One can consider these sets as inner and outer hulls of $\bP$. The
last definition resembles a concept of ``negative distance'' of
$x\in\bP$ to $\partial\bP$ and ``positive distance'' of $x\not\in\bP$
to $\partial\bP$. For $A\subset\Rd$ we denote $\conv(A)$ \nomenclature[convex]{$\conv A$}{Convex hull of $A$ (Section \ref{sec:Preliminaries})}the
closed convex hull of $A$. 

The natural geometric measures we use in this work are the Lebesgue
measure on $\Rd$, written $\left|A\right|$ for $A\subset\Rd$, and
the $k$-dimensional Hausdorff measure, denoted by $\cH^{k}$ on $k$-dimensional
submanifolds of $\Rd$ (for $k\leq d$).

\subsection{Local Extensions and Traces}

Let $\bP\subset\Rd$ be an open set and let $p\in\partial\bP$ and
$\delta>0$ be a constant such that $\Ball{\delta}p\cap\partial\bP$
is graph of a Lipschitz function. We denote \nomenclature[Meta]{$M(p,\delta)$}{Lemma \eqref{eq:M(delta-p)}}
\begin{align}
M(p,\delta) & :=\inf\left\{ M:\,\,\exists\phi:U\subset\R^{d-1}\to\R\text{ }\right.\nonumber \\
 & \phantom{:=\inf}\left.\phi\,\text{Lipschitz, with constant }M\text{ s.t. }\Ball{\delta}p\cap\partial\bP\text{ is graph of }\phi\right\} \,.\label{eq:M(delta-p)}
\end{align}

\begin{rem}
For every $p$, the function $M(p,\cdot)$ is monotone increasing
in $\delta$.
\end{rem}

In the following, we formulate some extension and trace results. Although
it is well known how such results are proved and the proofs are standard,
we include them for completeness.
\begin{lem}[Uniform Extension for Balls]
\label{lem:uniform-extension-lemma} Let $\bP\subset\Rd$ be an open
set, $0\in\partial\bP$ and assume there exists $\delta>0$, $M>0$
and an open domain $U\subset\Ball{\delta}0\subset\R^{d-1}$ such that
$\partial\bP\cap\Ball{\delta}0$ is graph of a Lipschitz function
$\varphi:\,U\subset\R^{d-1}\to\Rd$ of the form $\varphi(\tilde{x})=(\tilde{x},\phi(\tilde{x}))$
in $\Ball{\delta}0$ with Lipschitz constant $M$ and $\varphi(0)=0$.
Writing $x=(\tilde{x},x_{d})$ and defining $\rho=\delta\sqrt{4M^{2}+2}^{-1}$
there exist an extension operator\nomenclature[Uc]{$\cU$}{local and global extension operators (Lemma \ref{lem:uniform-extension-lemma})}
\begin{equation}
\left(\cU u\right)(x)=\begin{cases}
u(x) & \mbox{ if }x_{d}<\phi(\tilde{x})\\
4u\left(\tilde{x},{-}\frac{x_{d}}{2}+\frac{3}{2}\phi(\tilde{x})\right)-3u\left(\tilde{x},{-}x_{d}+2\phi(\tilde{x})\right) & \mbox{ if }x_{d}>\phi(\tilde{x})
\end{cases}\,,\label{eq:lem:def-cU}
\end{equation}
such that for\nomenclature[Ac]{$\cA\left(0,\bP,\rho\right)$}{$\left\{ \left(\tilde{x},{-}x_{d}+2\phi(\tilde{x})\right)\,:\;\left(\tilde{x},x_{d}\right)\in\Ball{\rho}0\backslash\bP\right\} $ (Lemma \ref{lem:uniform-extension-lemma})}
\begin{equation}
\cA\left(0,\bP,\rho\right):=\left\{ \left(\tilde{x},{-}x_{d}+2\phi(\tilde{x})\right)\,:\;\left(\tilde{x},x_{d}\right)\in\Ball{\rho}0\backslash\bP\right\} \,,\label{eq:lem:uniform-extension-lemma-A-rho}
\end{equation}
and for every $p\in[1,\infty]$ the operator 
\[
\cU:\,W^{1,p}\of{\cA\left(0,\bP,\rho\right)}\to W^{1,p}(\Ball{\rho}0)\,,
\]
is continuous with 
\begin{equation}
\left\Vert \cU u\right\Vert _{L^{p}(\Ball{\rho}0\backslash\bP)}\leq7\left\Vert u\right\Vert _{L^{p}\left(\cA\left(0,\bP,\rho\right)\right)}\,,\qquad\left\Vert \nabla\cU u\right\Vert _{L^{p}(\Ball{\rho}0\backslash\bP)}\leq14M\left\Vert \nabla u\right\Vert _{L^{p}\left(\cA\left(0,\bP,\rho\right)\right)}\,.\label{eq:lem:uniform-extension-lemma-estim}
\end{equation}
\end{lem}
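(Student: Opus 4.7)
The proof is a standard reflection argument, calibrated so that the reflected points stay inside the ball where the local graph representation of $\partial \bP$ is valid. First I rewrite the upper branch in \eqref{eq:lem:def-cU} as $\cU u = 4(u \circ \psi_1) - 3(u \circ \psi)$, with the two reflections
\[
\psi(\tilde x, x_d) := (\tilde x, -x_d + 2\phi(\tilde x)), \qquad \psi_1(\tilde x, x_d) := (\tilde x, -x_d/2 + 3\phi(\tilde x)/2).
\]
Continuity of $\cU u$ on the interface $\{x_d = \phi(\tilde x)\}$ is immediate: both $\psi$ and $\psi_1$ reduce to the identity there, so the upper formula collapses to $4u - 3u = u$, matching the lower branch. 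This, together with $W^{1,p}$-regularity on each side of the graph, yields $\cU u \in W^{1,p}(\Ball \rho 0)$ once the integral estimates are in place.

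The key geometric step is to verify that, for every $x \in \Ball \rho 0 \setminus \bP$, both $\psi(x)$ and $\psi_1(x)$ lie in $\cA(0, \bP, \rho)$ (or at least in the domain where the graph $\phi$ is defined, which is what the change of variables actually requires). The inclusion $\psi(x) \in \cA$ is true by the very definition of $\cA$. For $\psi_1$ I rewrite $\psi_1(\tilde x, x_d) = \psi\bigl(\tilde x, (x_d + \phi(\tilde x))/2\bigr)$, so I need the pre-image $x' := (\tilde x, (x_d + \phi(\tilde x))/2)$ to lie in $\Ball \rho 0 \setminus \bP$. The condition $x'_d > \phi(\tilde x)$ is automatic since $x_d > \phi(\tilde x)$. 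Using $\phi(0) = 0$ and the Lipschitz bound $|\phi(\tilde x)| \le M|\tilde x|$, together with the elementary estimate $((x_d + \phi(\tilde x))/2)^2 \le (x_d^2 + M^2|\tilde x|^2)/2$, one sees that the precise value $\rho = \delta/\sqrt{4M^2 + 2}$ is exactly what is needed so that $|x'| \le \delta$, keeping us inside the admissible part of the graph.

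With the geometric picture in place the quantitative bounds are routine. The Jacobian determinants are constant, $|\det D\psi| = 1$ and $|\det D\psi_1| = 1/2$, so a change of variables in each term of $|\cU u(x)| \le 4|u(\psi_1 x)| + 3|u(\psi x)|$ gives the $L^p$-bound with constant at most $7$ (the two weights $4$ and $3$ being chosen so that they sum to $7$ after absorbing the constant Jacobian factors). For the gradient I apply the chain rule,
\[
\nabla(\cU u) = 4(D\psi_1)^T\,(\nabla u)\circ\psi_1 - 3(D\psi)^T\,(\nabla u)\circ\psi,
\]
and note that $D\psi$ and $D\psi_1$ are lower-block-triangular with off-diagonal block controlled by $|\nabla \phi| \le M$, so their operator norms are $O(M)$. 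A second change of variables then produces the constant $14M$ in front of $\|\nabla u\|_{L^p(\cA)}$.

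The main obstacle is the calibration step: one has to propagate the Lipschitz bound on $\phi$ through \emph{both} reflections simultaneously and check that the worst of the two pre-images still fits inside $\Ball \delta 0$. The constant $\sqrt{4M^2 + 2}$ in the definition of $\rho$ is sharp for this purpose. Everything else — continuity at the interface, change of variables, and the chain rule — is standard, and the weights $(4, -3)$ in the definition of $\cU$ are chosen precisely so that continuity holds while the reflected points both remain close enough to the graph to profit from the calibrated $\rho$.
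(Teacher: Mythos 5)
Your strategy coincides with the paper's (reflection formula, continuity across the graph, calibration of $\rho$, change of variables and chain rule), but the one non-routine step --- the calibration --- is exactly where your argument fails. The constant $\sqrt{4M^{2}+2}$ is dictated by the \emph{far} evaluation point $\psi(x)=(\tilde x,\,-x_{d}+2\phi(\tilde x))$: for $x\in\Ball{\rho}0\setminus\bP$ one has $\left|-x_{d}+2\phi(\tilde x)\right|\leq\left|x_{d}\right|+2M\left|\tilde x\right|$, hence $\left|\psi(x)\right|^{2}\leq\rho^{2}\left(4M^{2}+1\right)+\rho^{2}=\rho^{2}\left(4M^{2}+2\right)$, and $\rho=\delta\sqrt{4M^{2}+2}^{-1}$ is precisely what guarantees $\left|\psi(x)\right|\leq\delta$, i.e. $\cA\left(0,\bP,\rho\right)\subset\Ball{\delta}0$, so that the reflected points remain in the patch where the graph actually describes $\partial\bP$ (this is Step 2 of the paper's proof). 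You never verify this: you dismiss $\psi(x)\in\cA$ as ``true by definition'' (which says nothing about $\cA$ sitting inside $\Ball{\delta}0$), and instead you estimate the $\psi$-preimage $x'=(\tilde x,(x_{d}+\phi(\tilde x))/2)$ of $\psi_{1}(x)$. That estimate is both mis-calibrated and insufficient: $\left|x'\right|\leq\delta$ already follows from the much weaker condition $\rho\leq\delta/\sqrt{1+M^{2}/2}$, so $\sqrt{4M^{2}+2}$ is not ``exactly what is needed'' there; and $\left|x'\right|\leq\delta$ does not give what you set out to prove, namely $x'\in\Ball{\rho}0\setminus\bP$ and hence $\psi_{1}(x)\in\cA\left(0,\bP,\rho\right)$.

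In fact that inclusion is false in general: since $\psi$ is injective, $\psi_{1}(x)\in\cA$ would force $\left|x'\right|<\rho$, which fails whenever $M\left|\tilde x\right|$ dominates $\rho$ (e.g. $M$ large, $\left|\tilde x\right|$ close to $\rho$, $x_{d}$ near $0$, $\phi(\tilde x)$ near $-M\left|\tilde x\right|$). What is true, and what the argument needs, is that both evaluation points lie in $\bP\cap\Ball{\delta}0$: $\psi(x)$ by the computation above, and $\psi_{1}(x)$ because it is the vertical midpoint of the graph point $(\tilde x,\phi(\tilde x))$ (whose norm is at most $\left|\tilde x\right|\sqrt{1+M^{2}}<\delta$) and of $\psi(x)$, hence lies in the convex set $\Ball{\delta}0$ and below the graph; correspondingly the right-hand-side norms are to be read over this admissible region (as they are in the paper's later applications, where $u$ is given on $\bP\cap A_{3,k}$), not literally over $\cA$ alone. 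Your routine steps (continuity at the interface, the Jacobian values $1$ and $1/2$, the chain-rule bound through $\left|\nabla\phi\right|\leq M$) are fine, but as written the central geometric verification that produces $\rho=\delta\sqrt{4M^{2}+2}^{-1}$ is missing, and the statement you substituted for it is not the right one.
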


\begin{rem}
\label{rem:Extension}It is well known (\cite[chapter 5]{Evans2010})
that for every bounded domain $\bU\subset\Rd$ with $C^{0,1}$-boundary
there exists a continuous extension operator $\cU:\,W^{1,p}(\bU)\to W^{1,p}(\Rd)$.

\end{rem}

\begin{proof}[Proof of Lemma \ref{lem:uniform-extension-lemma}]
The extended function $\varphi:\,U\times\R\to U\times\R$, $\varphi(x)=(\tilde{x},\phi(\tilde{x})+x_{d})$
is bijective with $\varphi^{-1}(x)=\left(\tilde{x},x_{d}-\phi(\tilde{x})\right)$.
In particular, both $\varphi$ and $\varphi^{-1}$ are Lipschitz continuous
with Lipschitz constant $M+1$.

W.l.o.g. we assume that 
\[
\varphi\left(U\times({-}\infty,0)\right)\cap\Ball{\delta}0=\bP\cap\Ball{\delta}0\cap\left(U\times\R\right)
\]
 implying $\varphi\left(U\times(0,\infty)\right)\cap\bP=\emptyset$.

\emph{Step 1}: We consider the extension operator $\cU_{+}:\,W^{1,p}(\R^{d-1}\times({-}\infty,0))\to W^{1,p}(\Rd)$
having the form \cite[chapter 5]{Evans2010}, \cite{adams2003sobolev}
\[
\left(\cU_{+}u\right)(x)=\begin{cases}
u(x) & \mbox{ if }x_{d}<0\\
4u\left(\tilde{x},{-}\frac{x_{d}}{2}\right)-3u\left(\tilde{x},{-}x_{d}\right) & \mbox{ if }x_{d}>0
\end{cases}\,.
\]
We make use of this operator and define 
\[
\cU u(x):=\left(\cU_{+}\left(u\circ\varphi\right)\right)\circ\varphi^{-1}(x)\,.
\]
Note that all three operators $u\mapsto u\circ\varphi$, $\cU_{+}$
and $v\mapsto v\circ\varphi^{-1}$ map $W^{1,p}$-functions to $W^{1,p}$-functions.
By the definition of $\cU_{+}$ we may explicitly calculate (\ref{eq:lem:def-cU}).
In particular, $\cU u(x)$ is well defined for $x\in\Ball{\delta}0\backslash\bP$
whenever 
\begin{equation}
\left(\tilde{x},{-}x_{d}+2\phi(\tilde{x})\right)\in\Ball{\delta}0\,.\label{eq:lem:uniform-extension-lemma-help-1}
\end{equation}
\emph{Step 2}: We seek for $\rho>0$ such that (\ref{eq:lem:uniform-extension-lemma-help-1})
is satisfied for every $x\in\Ball{\rho}0\backslash\bP$ and such that
$\cA\left(0,\bP,\rho\right)\subset\Ball{\delta}0$. For $\rho<\delta$
and $x=(\tilde{x},x_{d})\in\Ball{\rho}0$, we find with $\varphi(0)=0$
and $\left|x_{d}\right|\leq\sqrt{\rho^{2}-|\tilde{x}|^{2}}$ that
\begin{align*}
-x_{d}+2\phi(\tilde{x}) & \in\left(x_{d}-2M|\tilde{x}|\,,\,x_{d}+2M|\tilde{x}|\right)\\
 & \subset\left({-}\sqrt{\rho^{2}-|\tilde{x}|^{2}}-2M|\tilde{x}|\,,\,\sqrt{\rho^{2}-|\tilde{x}|^{2}}+2M|\tilde{x}|\right)\,.
\end{align*}
In particular, 
\[
\max_{(\tilde{x},x_{d})\in\Ball{\rho}0\backslash\bP}\left|{-}x_{d}+2\phi(\tilde{x})\right|\leq\rho\sqrt{4M^{2}+1}
\]
and (\ref{eq:lem:uniform-extension-lemma-help-1}) holds if 
\[
\left|-x_{d}+2\phi(\tilde{x})\right|^{2}+\left|\tilde{x}\right|^{2}\leq\rho^{2}\left(4M^{2}+1\right)+\rho^{2}\leq\delta^{2}\,.
\]
Hence we require $\rho=\delta\sqrt{4M^{2}+2}^{-1}$. It is now easy
to verify (\ref{eq:lem:uniform-extension-lemma-estim}) from the definition
of $\cU$ and the chain rule.
\end{proof}
\begin{lem}
\label{lem:basic-trace}Let $\bP\subset\Rd$ be an open set, $0\in\partial\bP$
and assume there exists $\delta>0$, $M>0$ and an open domain $U\subset\Ball{\delta}0\subset\R^{d-1}$
such that $\partial\bP\cap\Ball{\delta}0$ is graph of a Lipschitz
function $\varphi:\,U\subset\R^{d-1}\to\Rd$ of the form $\varphi(\tilde{x})=(\tilde{x},\phi(\tilde{x}))$
in $\Ball{\delta}0$ with Lipschitz constant $M$ and $\varphi(0)=0$.
Writing $x=(\tilde{x},x_{d})$ we consider the trace operator $\cT:\,C^{1}\left(\bP\cap\Ball{2\delta}0\right)\to C\left(\partial\bP\cap\Ball{\delta}0\right)$.
For every $p\in[1,\infty]$ and every $r<\frac{p\left(1-d\right)}{\left(p-d\right)}$
the operator $\cT$ can be continuously extended to 
\[
\cT:\,W^{1,p}\left(\bP\cap\Ball{2\delta}0\right)\to L^{r}(\partial\bP\cap\Ball{\delta}0)\,,
\]
such that 
\begin{equation}
\norm{\cT u}_{L^{r}(\partial\bP\cap\Ball{\delta}0)}\leq C_{r,p}\delta^{\frac{d\left(p-r\right)}{rp}-\frac{1}{r}}\sqrt{4M^{2}+2}^{\frac{1}{r}+1}\norm u_{W^{1,p}\left(\bP\cap\Ball{2\delta}0\right)}\,.\label{eq:lem:basic-trace}
\end{equation}
\end{lem}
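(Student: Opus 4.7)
The plan is to flatten the piece of $\partial\bP$ near $0$ via the bi-Lipschitz change of variables already appearing in the proof of Lemma~\ref{lem:uniform-extension-lemma}, apply the classical flat trace inequality together with the Sobolev embedding for functions on a half-cylinder, and then transform back, tracking the dependence on $\delta$ and $M$ explicitly. Concretely, I set $\Phi(\tilde{x},s):=(\tilde{x},\phi(\tilde{x})+s)$ and $v:=u\circ\Phi$. Because $\Phi$ and $\Phi^{-1}$ are Lipschitz with constant bounded by $\sqrt{1+M^{2}}$, the composition sends $W^{1,p}$ into $W^{1,p}$ and the norm inflates by at most a power of $\sqrt{1+M^2}$. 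The Jacobian of $\Phi$ is identically $1$, and the surface area element of $\partial\bP$ is $\sqrt{1+|\nabla\phi|^{2}}\,\d\tilde{x}\le\sqrt{1+M^{2}}\,\d\tilde{x}$, so the measures on both sides of the change of variables are comparable with constants depending only on $M$.

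Next, I would fix $\rho=\delta\sqrt{4M^{2}+2}^{-1}$ as in Lemma~\ref{lem:uniform-extension-lemma} and choose a flat half-cylinder $\bU_{\rho}:=\{(\tilde{x},s):|\tilde{x}|<\rho,\,-\rho<s<0\}$ so that $\Phi(\bU_{\rho})\subset\bP\cap\Ball{2\delta}{0}$ and the image of $\partial\bP\cap\Ball{\delta}{0}$ under $\Phi^{-1}$ contains the flat piece $\{|\tilde{x}|<\rho,s=0\}$. Writing the fundamental-theorem-of-calculus identity
\[
|v(\tilde{x},0)|^{p}=|v(\tilde{x},s)|^{p}+p\int_{s}^{0}|v(\tilde{x},\tau)|^{p-2}v(\tilde{x},\tau)\,\partial_{d}v(\tilde{x},\tau)\,\d\tau
\]
for smooth $v$, integrating against $s\in(-\rho,0)$ and $\tilde{x}\in B'_{\rho}\subset\R^{d-1}$, and applying Young's inequality yields the standard flat trace estimate $\|v(\cdot,0)\|_{L^{p}(B'_{\rho})}\le C\bigl(\rho^{-1/p}\|v\|_{L^{p}(\bU_{\rho})}+\rho^{1-1/p}\|\nabla v\|_{L^{p}(\bU_{\rho})}\bigr)$. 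To obtain the $L^{r}$-bound on the boundary for $r$ in the range allowed by the statement, I would then upgrade this with the Sobolev embedding of the trace space $W^{1-1/p,p}\hookrightarrow L^{r}$ on the $(d-1)$-dimensional flat slice, giving the critical exponent $\frac{p(d-1)}{d-p}$ (with all $r<\infty$ in the super-critical regime).

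Finally, I would undo the flattening, absorbing the factors $\sqrt{1+M^{2}}\le\sqrt{4M^{2}+2}$ into the prefactor with the exponents appearing in~(\ref{eq:lem:basic-trace}), and perform the $\delta$-scaling bookkeeping: a rescaling argument $x\mapsto\delta x$ converts the unit-scale estimate into one on $\Ball{\delta}{0}$, producing the Lebesgue weight $\delta^{(d-1)/r - d/p + 1}$ on $\nabla u$ and $\delta^{(d-1)/r - d/p}$ on $u$, which combine into the exponent $\frac{d(p-r)}{rp}-\frac{1}{r}$ displayed in the statement once the two terms of the $W^{1,p}$-norm are collected.

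The routine parts of this plan are the flat trace inequality and the change of variables; the main obstacle is purely bookkeeping, namely making the geometric choice of $\rho$ and the half-cylinder $\bU_{\rho}$ compatible with the condition $\bU_{\rho}\subset\Phi^{-1}(\bP\cap\Ball{2\delta}{0})$ and then tracking the powers of $M$ and $\delta$ through the Sobolev upgrade so that the final constant matches $C_{r,p}\,\delta^{\frac{d(p-r)}{rp}-\frac{1}{r}}\sqrt{4M^{2}+2}^{\,\frac{1}{r}+1}$.
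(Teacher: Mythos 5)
Your plan is essentially the paper's proof: both flatten via the bi-Lipschitz map $\Phi(\tilde{x},s)=(\tilde{x},\phi(\tilde{x})+s)$, apply a flat trace estimate $W^{1,p}\to L^{r}$ on a half-ball or half-cylinder, rescale to pick up the $\delta$-powers, and absorb the $M$-factors coming from the surface area element and the change of variables. The only expository difference is that you derive the flat $L^{p}$-trace from the fundamental theorem of calculus and then upgrade to $L^{r}$ via the fractional Sobolev embedding $W^{1-1/p,p}\hookrightarrow L^{r}$ on the $(d-1)$-dimensional flat slice, whereas the paper quotes the composite $W^{1,p}(B_{1}^{-})\to L^{r}(\Sigma_{1})$ estimate directly as ``standard''; these are identical in substance.

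One bookkeeping caveat in your plan. You restrict the flat slice to $\{|\tilde{x}|<\rho,\,s=0\}$ with $\rho=\delta\sqrt{4M^{2}+2}^{-1}$, but the graph condition only forces $U$ to lie in the ball of radius $\delta/\sqrt{1+M^{2}}$, which is strictly larger than $\rho$. Consequently $\Phi(\{|\tilde{x}|<\rho,\,s=0\})$ need not cover all of $\partial\bP\cap\Ball{\delta}{0}$, so your trace estimate as stated only controls part of the left-hand side. The fix is to take the half-cylinder with radius and depth $\delta/\sqrt{1+M^{2}}$: one checks that its image under $\Phi$ still lies in $\Ball{2\delta}{0}$ since $\frac{2+2M+M^{2}}{1+M^{2}}\le 4$ for all $M\ge 0$, and the resulting constant is still dominated by $\sqrt{4M^{2}+2}^{1/r+1}$. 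The paper itself works with the full $\Sigma_{\delta}$ without dwelling on this containment, so the issue is shared, but it is worth stating the correct cylinder radius.
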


\begin{proof}
We proceed similar to the proof of Lemma \ref{lem:uniform-extension-lemma}.

Step 1: Writing $B_{\delta}=\Ball{\delta}0$ together with $B_{\delta}^{-}=\left\{ x\in B_{\delta}:\,x_{d}<0\right\} $
and $\Sigma_{\delta}:=\left\{ x\in B_{\delta}:\,x_{d}=0\right\} $
we recall the standard estimate
\[
\left(\int_{\Sigma_{1}}\left|u\right|^{r}\right)^{\frac{1}{r}}\leq C_{r,p}\left(\left(\int_{B_{1}^{-}}\left|\nabla u\right|^{p}\right)^{\frac{1}{p}}+\left(\int_{B_{1}^{-}}\left|u\right|^{p}\right)^{\frac{1}{p}}\right)\,,
\]
which leads to
\[
\left(\int_{\Sigma_{\delta}}\left|u\right|^{r}\right)^{\frac{1}{r}}\leq C_{r,p}\delta^{\frac{d\left(p-r\right)}{rp}-\frac{1}{r}}\left(\left(\int_{B_{\delta}^{-}}\left|\nabla u\right|^{p}\right)^{\frac{1}{p}}+\left(\int_{B_{\delta}^{-}}\left|u\right|^{p}\right)^{\frac{1}{p}}\right)\,.
\]
Step 2: Using the transformation rule and the fact that $1\leq\left|\det D\varphi\right|\leq\sqrt{4M^{2}+2}$
we infer (\ref{eq:lem:basic-trace}) similar to Step 2 in the proof
of Lemma \ref{lem:uniform-extension-lemma}.
\begin{align*}
\left(\int_{\partial\bP\cap\Ball{\delta}0}\left|u\right|^{r}\right)^{\frac{1}{r}} & \leq\sqrt{4M^{2}+2}^{\frac{1}{r}}\left(\int_{\Sigma_{\delta}}\left|u\circ\varphi\right|^{r}\right)^{\frac{1}{r}}\\
 & \leq C_{r,p}\delta^{\frac{d\left(p-r\right)}{rp}-\frac{1}{r}}\sqrt{4M^{2}+2}^{\frac{1}{r}}\left(\left(\int_{B_{\delta}^{-}}\left|\nabla\left(u\circ\varphi\right)\right|^{p}\right)^{\frac{1}{p}}+\left(\int_{B_{\delta}^{-}}\left|u\circ\varphi\right|^{p}\right)^{\frac{1}{p}}\right)\\
 & \leq C_{r,p}\delta^{\frac{d\left(p-r\right)}{rp}-\frac{1}{r}}\sqrt{4M^{2}+2}^{\frac{1}{r}+1}\,\cdot\\
 & \qquad\cdot\left(\left(\int_{B_{\delta}^{-}}\left|\left(\nabla u\right)\circ\varphi\right|^{p}\det D\varphi\right)^{\frac{1}{p}}+\left(\int_{B_{\delta}^{-}}\left|u\circ\varphi\right|^{p}\det D\varphi\right)^{\frac{1}{p}}\right)
\end{align*}
and from this we conclude the Lemma.
\end{proof}

\subsection{Poincar\'e Inequalities}

We denote 
\[
W_{(0),r}^{1,p}(\Ball r0):=\left\{ u\in W^{1,p}(\Ball r0)\,:\;\exists x:\,B_{r}(x)\subset\Ball r0\;\vee\;\fint_{B_{r}(x)}u=0\right\} \,.
\]
Note that this is not a linear vector space.
\begin{lem}
\label{lem:Poincare-ball}For every $p\in(1,\infty)$ there exists
$C_{p}>0$ such that the following holds: Let $r<1$ and $x\in\Ball 10$
such that $B_{r}(x)\subset\Ball 10$ then for every $u\in W^{1,p}(\Ball 10)$
\begin{equation}
\left\Vert u\right\Vert _{L^{p}(\Ball 10)}^{p}\leq C_{p}\left(\left\Vert \nabla u\right\Vert _{L^{p}(\Ball 10)}^{p}+\frac{1}{r^{d}}\left\Vert u\right\Vert _{L^{p}(B_{r}(x))}^{p}\right)\,,\label{eq:lem:Poincare-ball-1}
\end{equation}
and for every $u\in W_{(0),r}^{1,p}(\Ball 10)$ it holds 
\begin{equation}
\left\Vert u\right\Vert _{L^{p}(\Ball 10)}^{p}\leq C_{p}\left(1+r^{p-d}\right)\left\Vert \nabla u\right\Vert _{L^{p}(\Ball 10)}^{p}\,.\label{eq:lem:Poincare-ball-2}
\end{equation}
\end{lem}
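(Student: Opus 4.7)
The plan is to reduce both inequalities to an estimate of the mean value of $u$ over the unit ball and then to bound this mean by the right-hand sides. The principal ingredient is the standard Poincaré--Wirtinger inequality on the fixed unit ball: setting $\bar u := \fint_{\Ball{1}{0}} u$ one has $\norm{u - \bar u}_{L^{p}(\Ball{1}{0})}^{p} \leq C_{p}\norm{\nabla u}_{L^{p}(\Ball{1}{0})}^{p}$ with a constant depending only on $p$ and $d$. Combined with the triangle inequality $\norm{u}_{L^{p}(\Ball{1}{0})}^{p} \leq 2^{p-1}\bigl(\norm{u - \bar u}_{L^{p}(\Ball{1}{0})}^{p} + |\Ball{1}{0}|\,|\bar u|^{p}\bigr)$ this reduces the problem to estimating $|\bar u|^{p}$ in terms of the right-hand side data.

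For \eqref{eq:lem:Poincare-ball-1} I would introduce the mean on the small ball, $\tilde u := \fint_{B_{r}(x)} u$, and decompose $|\bar u| \leq |\bar u - \tilde u| + |\tilde u|$. Jensen's inequality immediately yields $|\tilde u|^{p} \leq |B_{r}(x)|^{-1}\norm{u}_{L^{p}(B_{r}(x))}^{p}$, which is responsible for the factor $r^{-d}$ in front of $\norm{u}_{L^{p}(B_{r}(x))}^{p}$. For the difference of the two means, I would write $\bar u - \tilde u = \fint_{B_{r}(x)} (\bar u - u)$ and apply Jensen once more together with the inclusion $B_{r}(x) \subset \Ball{1}{0}$, obtaining $|\bar u - \tilde u|^{p} \leq \fint_{B_{r}(x)}|u - \bar u|^{p} \leq C_{d}\,r^{-d}\norm{u - \bar u}_{L^{p}(\Ball{1}{0})}^{p}$, which is in turn controlled by $\norm{\nabla u}_{L^{p}(\Ball{1}{0})}^{p}$ via the global Poincaré--Wirtinger estimate. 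Collecting the three contributions produces \eqref{eq:lem:Poincare-ball-1}.

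For \eqref{eq:lem:Poincare-ball-2} I would exploit the hypothesis $\tilde u = 0$ to feed back a sharper bound on $\norm{u}_{L^{p}(B_{r}(x))}^{p}$. The scaled Poincaré--Wirtinger inequality on the convex set $B_{r}(x)$, whose constant scales linearly in the radius, gives $\norm{u}_{L^{p}(B_{r}(x))}^{p} = \norm{u - \tilde u}_{L^{p}(B_{r}(x))}^{p} \leq C_{p}\,r^{p}\norm{\nabla u}_{L^{p}(B_{r}(x))}^{p} \leq C_{p}\,r^{p}\norm{\nabla u}_{L^{p}(\Ball{1}{0})}^{p}$. Substituting this into \eqref{eq:lem:Poincare-ball-1} converts the term $r^{-d}\norm{u}_{L^{p}(B_{r}(x))}^{p}$ into $C_{p}\,r^{p-d}\norm{\nabla u}_{L^{p}(\Ball{1}{0})}^{p}$, yielding \eqref{eq:lem:Poincare-ball-2}.

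The only delicate point I expect is verifying that all constants are genuinely independent of both the radius $r$ and the center $x$ of the small ball. This is automatic because the global Poincaré--Wirtinger constant is evaluated on the fixed domain $\Ball{1}{0}$; the dependence on $r$ enters solely through the volume factor $|B_{r}(x)| = c_{d}\,r^{d}$ in the two Jensen steps, and the dependence on $x$ drops out since only the inclusion $B_{r}(x) \subset \Ball{1}{0}$ is used. The conceptually key observation is that the mean-zero hypothesis in \eqref{eq:lem:Poincare-ball-2} gains exactly one factor of $r^{p}$ from the small-ball Poincaré inequality, which is precisely what upgrades the naive $r^{-d}$ coming from Jensen's inequality to the sharper $r^{p-d}$.
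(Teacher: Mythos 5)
Your decomposition $u=(u-\bar u)+\bar u$, with $\bar u=\fint_{\Ball 10}u$ further compared to $\tilde u=\fint_{B_r(x)}u$, is sound as an argument but does not actually yield \eqref{eq:lem:Poincare-ball-1} in the form stated. The problem is the bound on the difference of means: the two Jensen steps give
\[
|\bar u-\tilde u|^{p}\leq\fint_{B_r(x)}|u-\bar u|^{p}\leq\frac{c_d^{-1}}{r^{d}}\,\norm{u-\bar u}_{L^{p}(\Ball 10)}^{p}\leq\frac{C}{r^{d}}\,\norm{\nabla u}_{L^{p}(\Ball 10)}^{p}\,,
\]
so the gradient contribution inherits an $r^{-d}$ factor. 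Collecting your three terms therefore produces
\[
\norm u_{L^{p}(\Ball 10)}^{p}\leq C_{p}\left(\left(1+r^{-d}\right)\norm{\nabla u}_{L^{p}(\Ball 10)}^{p}+r^{-d}\norm u_{L^{p}(B_{r}(x))}^{p}\right)\,,
\]
which for small $r$ is strictly weaker than \eqref{eq:lem:Poincare-ball-1}, where the gradient term carries no $r$-dependence. This propagates into your derivation of \eqref{eq:lem:Poincare-ball-2}: substituting the small-ball Poincar\'e bound $\norm u_{L^p(B_r)}^p\lesssim r^p\norm{\nabla u}_{L^p}^p$ into your version gives the prefactor $1+r^{-d}+r^{p-d}$, not $1+r^{p-d}$. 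You are tacitly invoking the sharp form of \eqref{eq:lem:Poincare-ball-1}, which your argument never establishes.

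The paper avoids passing through the global mean $\bar u$ altogether. Assuming first $x=0$, it writes $u(y)=u(ry)+\int_0^1\nabla u\left(ry+t(1-r)y\right)\cdot(1-r)y\,dt$ for $y\in\Ball 10\setminus\Ball r0$, takes $p$-th powers, and integrates in polar coordinates: the $u(ry)$ term produces exactly $r^{-d}\norm u_{L^p(\Ball r0)}^p$ by the substitution $z=ry$, while the gradient term is rearranged by Fubini along rays. The general $x$ is then reduced to $x=0$ via an extension of $u$ to $B_4(0)$. Your passage from \eqref{eq:lem:Poincare-ball-1} to \eqref{eq:lem:Poincare-ball-2} coincides with the paper's (rescaled Poincar\'e on the small ball), so the one genuine gap is the proof of \eqref{eq:lem:Poincare-ball-1}. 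To repair it, either redo \eqref{eq:lem:Poincare-ball-1} by the radial fundamental-theorem-of-calculus route that links the annulus to the small ball directly, or verify that the weaker $(1+r^{-d})$ prefactor your method gives is in fact sufficient where the lemma is applied (in the paper it enters through Lemma \ref{lem:scaled-poincare} and is invoked with the ratio $R/r$ bounded, so the weaker form may well do the job there).
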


\begin{rem*}
In case $p\geq d$ we find that (\ref{eq:lem:Poincare-ball-2}) holds
iff $u(x)=0$ for some $x\in\Ball 10$.
\end{rem*}
\begin{proof}
In a first step, we assume $x=0$. The underlying idea of the proof
is to compare every $u(y)$, $y\in\Ball 10\backslash\Ball r0$ with
$u(rx)$. In particular, we obtain for $y\in\Ball 10\backslash\Ball r0$
that 
\[
u(y)=u(ry)+\int_{0}^{1}\nabla u\of{ry+t(1-r)y}\cdot(1-r)y\,\d t
\]
and hence by Jensen's inequality 
\[
\left|u(y)\right|^{p}\leq C\left(\int_{0}^{1}\left|\nabla u\of{ry+t(1-r)y}\right|^{p}(1-r)^{p}\left|y\right|^{p}\,\d t+\left|u(ry)\right|^{p}\right)\,.
\]
We integrate the last expression over $\Ball 10\backslash\Ball r0$
and find 
\begin{align*}
\int_{\Ball 10\backslash\Ball r0}\left|u(y)\right|^{p}\d y & \leq\int_{S^{d-1}}\int_{r}^{1}C\left(\int_{0}^{1}\left|\nabla u\of{rs\nu+t(1-r)s\nu}\right|^{p}(1-r)^{p}s^{p}\,\d t\right)s^{d-1}\d s\d\nu\\
 & \quad+\int_{\Ball 10\backslash\Ball r0}\left|u(ry)\right|^{p}\d y\\
 & \leq\int_{S^{d-1}}\int_{r}^{1}C\left(\int_{rs}^{s}\left|\nabla u\of{t\nu}\right|^{p}(1-r)^{p-1}s^{p-1}\,\d t\right)s^{d-1}\d s\\
 & \quad+\int_{\Ball 10\backslash\Ball r0}\left|u(ry)\right|^{p}\d y\\
 & \leq C\left\Vert \nabla u\right\Vert _{L^{p}(\Ball 10)}^{p}+\frac{1}{r^{d}}\left\Vert u\right\Vert _{L^{p}(\Ball r0)}^{p}\,.
\end{align*}
For general $x\in\Ball 10$, use the extension operator $\cU:\,W^{1,p}(\Ball 10)\to W^{1,p}(B_{4}(0))$
(see Remark \ref{rem:Extension}) such that $\left\Vert \cU u\right\Vert _{W^{1,p}(B_{4}(0))}\leq C\norm u_{W^{1,p}(\Ball 10)}$
and $\left\Vert \nabla\cU u\right\Vert _{W^{1,p}(B_{4}(0))}\leq C\norm{\nabla u}_{W^{1,p}(\Ball 10)}$.
Since $\Ball 10\subset B_{2}(x)\subset B_{4}(0)$ we infer 
\[
\left\Vert u\right\Vert _{L^{p}(\Ball 10)}^{p}\leq\left\Vert \cU u\right\Vert _{L^{p}(B_{2}(x))}^{p}\leq C\left(\left\Vert \nabla\cU u\right\Vert _{L^{p}(B_{2}(x))}^{p}+\frac{1}{r^{d}}\left\Vert \cU u\right\Vert _{L^{p}(B_{r}(x))}^{p}\right)\,.
\]
and hence (\ref{eq:lem:Poincare-ball-1}). Furthermore, since there
holds $\left\Vert u\right\Vert _{L^{p}(\Ball 10)}^{p}\leq C\left\Vert \nabla u\right\Vert _{L^{p}(\Ball 10)}^{p}$
for every $u\in W_{(0)}^{1,p}(\Ball 10)$, a scaling argument shows
$\left\Vert u\right\Vert _{L^{p}(\Ball r0)}^{p}\leq Cr^{p}\left\Vert \nabla u\right\Vert _{L^{p}(\Ball r0)}^{p}$
for every \\ 	$u\in W_{(0),r}^{1,p}(\Ball 10)$ and hence (\ref{eq:lem:Poincare-ball-2}).
\end{proof}
\begin{lem}
\label{lem:scaled-poincare}Let $0<r<R<\infty$ and $p\in(1,\infty)$
and $q\leq pd/(d-p)$ (if $p<d$) or $q=\infty$ (if $p\geq d$).
Then there exists $C_{p,q}$ such that for every convex set $\bP$
with polytope boundary $\partial\bP\subset\Ball R0\backslash\overline{\Ball r0}$
\begin{equation}
\norm u_{L^{q}\left(\bP\right)}^{p}\leq C_{p,q}R^{-d\left(1-\frac{p}{q}\right)}\left(\int_{\bP}\left(R^{p}\left(\frac{R}{r}\right)^{p+1}\left|\nabla u\right|^{p}+\frac{R^{d+1}}{r^{d+1}}\left|u\right|^{p}\right)\right)\,,\label{eq:rem-lem:Poincare-polytope-1}
\end{equation}
and for every $u\in W_{\left(0\right),r}^{1,p}\left(\Ball R0\right)$
\begin{equation}
\left\Vert u\right\Vert _{L^{q}(\Ball R0)}^{p}\leq\fC_{p,q}(R,r)\left\Vert \nabla u\right\Vert _{L^{p}(\Ball R0)}^{p}\,,\label{eq:rem-lem:Poincare-polytope-2}
\end{equation}
where 
\begin{equation}
\fC_{p,q}(R,r):=C_{p,q}R^{-d\left(1-\frac{p}{q}\right)+p}\left(\left(\frac{R}{r}\right)^{p+1}+\left(\frac{R}{r}\right)^{d+1}\right)\label{eq:Poincare-scaled-constant}
\end{equation}
\end{lem}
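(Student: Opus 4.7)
The plan is to first reduce both estimates to the reference case $R=1$ by dilation, then to prove (\ref{eq:rem-lem:Poincare-polytope-2}) by combining the Sobolev embedding with Lemma \ref{lem:Poincare-ball}, and finally to prove (\ref{eq:rem-lem:Poincare-polytope-1}) by exploiting the convexity of $\bP$ via a radial averaging argument.

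\textbf{Scaling reduction.} I would set $\tilde u(y):=u(Ry)$ and $\tilde\bP:=R^{-1}\bP$. Then $\tilde\bP$ is a convex polytope with $\partial\tilde\bP\subset\Ball 10\setminus\overline{\Ball{r/R}0}$ and, in the ball case, $\tilde u\in W_{(0),r/R}^{1,p}(\Ball 10)$. Using the standard scaling identities
\[
\norm{u}_{L^q(\bP)}^p=R^{dp/q}\norm{\tilde u}_{L^q(\tilde\bP)}^p\,,\qquad \norm{u}_{L^p(\bP)}^p=R^d\norm{\tilde u}_{L^p(\tilde\bP)}^p\,,\qquad \norm{\nabla u}_{L^p(\bP)}^p=R^{d-p}\norm{\nabla\tilde u}_{L^p(\tilde\bP)}^p\,,
\]
both inequalities become equivalent to their $R=1$ versions with $r$ replaced by $r/R\le 1$. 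This is where the Sobolev scaling factor $R^{-d(1-p/q)}$ in (\ref{eq:rem-lem:Poincare-polytope-1}) and the $R^{-d(1-p/q)+p}$ in (\ref{eq:Poincare-scaled-constant}) come from. Hence I may assume $R=1$ and $r\in(0,1]$.

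\textbf{Proof of (\ref{eq:rem-lem:Poincare-polytope-2}).} With $R=1$, the classical Sobolev embedding on the bounded Lipschitz domain $\Ball 10$ yields a constant $C_{p,q}$ with
\[
\norm{u}_{L^q(\Ball 10)}^p\le C_{p,q}\bigl(\norm{u}_{L^p(\Ball 10)}^p+\norm{\nabla u}_{L^p(\Ball 10)}^p\bigr)
\]
for the admissible exponents $q$ in the statement. Plugging in (\ref{eq:lem:Poincare-ball-2}) then gives $\norm{u}_{L^q(\Ball 10)}^p\le C_{p,q}(1+r^{p-d})\,\norm{\nabla u}_{L^p(\Ball 10)}^p$, and since $r\le 1$ one has $1+r^{p-d}\le 2r^{-(d+1)}\le C((r)^{-p-1}+(r)^{-d-1})$, which after undoing the scaling is exactly (\ref{eq:rem-lem:Poincare-polytope-2}) with the claimed $\fC_{p,q}(R,r)$.

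\textbf{Proof of (\ref{eq:rem-lem:Poincare-polytope-1}).} The key observation is that, with $R=1$, $\bP$ is a bounded convex set with $\Ball r0\subset\bP\subset\Ball 10$, so $\bP$ is star-shaped with respect to every point of $\Ball r0$. For $y\in\bP$ and $x\in\Ball r0$ the fundamental theorem of calculus gives
\[
u(y)=u(x)+\int_0^1\nabla u\bigl((1-t)x+ty\bigr)\cdot(y-x)\,\d t\,.
\]
Averaging in $x$ over $\Ball r0$ and applying Jensen yields a pointwise bound of the form
\[
\left|u(y)\right|^p\le C_p\left(\fint_{\Ball r0}\left|u\right|^p+\,|y-x|^p\fint_{\Ball r0}\int_0^1\left|\nabla u\bigl((1-t)x+ty\bigr)\right|^p\d t\,\d x\right).
\]
For the gradient term I would perform the substitution $z=(1-t)x+ty$ in $x$ for each fixed $t$, which is a convex combination so $z$ ranges through $\bP$, and use $|y-x|\le 2$ together with $|y-x|^p\le C(r/1)^{-p}\cdot\text{(volume ratio)}$ to concentrate the main $r$-dependence. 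Integrating the resulting inequality against the exponent $q$ and using H\"older/Sobolev (extending $u$ from $\bP$ to $\Ball 10$ via Remark \ref{rem:Extension} in order to invoke Sobolev embedding only on the ball, with Lipschitz constant of the extension controlled by $r^{-1}$) yields (\ref{eq:rem-lem:Poincare-polytope-1}) after scaling back.

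\textbf{Main obstacle.} The delicate point is not the structure of either argument but the bookkeeping of the $r$-dependence in step three: the convex polytope can be arbitrarily elongated as $r\to 0$, so the Poincaré/Sobolev constant degenerates, and one must verify that the singular factor produced by the radial averaging and the change of variables precisely matches the powers $(R/r)^{p+1}$ and $(R/r)^{d+1}$ in (\ref{eq:rem-lem:Poincare-polytope-1}). Writing $q=pd/(d-p)$ at the endpoint and then interpolating against $L^p$ on the left-hand side typically gives the sharper $(R/r)^{p+1}$, while the extension and mean-value terms are responsible for the looser $(R/r)^{d+1}$; it is in matching these two powers that the form of the claimed constant $\fC_{p,q}(R,r)$ becomes forced.
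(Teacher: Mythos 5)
Your scaling reduction and your treatment of the ball inequality (\ref{eq:rem-lem:Poincare-polytope-2}) are fine and essentially match the paper: scale, invoke the Sobolev embedding on $\Ball 10$, plug in (\ref{eq:lem:Poincare-ball-2}), and absorb $1+r^{p-d}$ into the claimed constant. The gap is in the polytope inequality (\ref{eq:rem-lem:Poincare-polytope-1}). The whole content of that estimate is the \emph{explicit} dependence of the Sobolev--Poincar\'e constant of the convex body $\bP$ (with $\Ball r0\subset\bP\subset\Ball R0$) on the ratio $R/r$, and your sketch never actually produces it. You delegate the $L^q$ step to "extending $u$ from $\bP$ to $\Ball 10$ via Remark \ref{rem:Extension} \dots with Lipschitz constant of the extension controlled by $r^{-1}$", but Remark \ref{rem:Extension} only asserts the existence of an extension operator for a Lipschitz domain with an unspecified constant depending on the domain; that this constant can be taken as an explicit power of $R/r$ for a convex body with inradius $r$ and circumradius $R$ is precisely the quantitative fact one has to prove, and proving it amounts to the same work as the estimate you are trying to establish. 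Likewise, in the radial-averaging step the bound "$|y-x|^p\le C(r/1)^{-p}\cdot\text{(volume ratio)}$" is not a coherent estimate, and the substitution $z=(1-t)x+ty$ carries a Jacobian $t^{-d}$ in $x$ for fixed $t$ whose handling is exactly where the $R^{d+1}/r^{d+1}$-type factor must be extracted; your closing paragraph openly concedes that matching the powers $(R/r)^{p+1}$ and $(R/r)^{d+1}$ is left open, so the proposal identifies the difficulty without resolving it.

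For comparison, the paper closes this gap not by extension but by an explicit bi-Lipschitz change of variables: writing $P(\nu)$ for the unique boundary point of $\bP$ in direction $\nu$, the radial map $\varphi_{P}(x)=Rx/\norm{P(\nu_{x})}$ sends $\bP$ onto the ball, its derivative is bounded by $2R/r$ (worst case at edges), its inverse derivative by $R/r$, and its Jacobian determinant is bounded below by $1$; applying the already-scaled ball inequality (\ref{eq:rem-lem:Poincare-ball-1}) to $u\circ\varphi_{P}^{-1}$ and transforming back yields (\ref{eq:rem-lem:Poincare-polytope-1}) with the stated powers. If you want to salvage your route, you would have to replace the appeal to Remark \ref{rem:Extension} by such a quantified map (or an equally explicit extension/trace bound in terms of $R/r$); as written, the key inequality is assumed rather than proved.
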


\begin{rem}
For the critical Sobolev index $q=\frac{pd}{d-p}$ we infer $d\left(1-\frac{p}{q}\right)=p$.
\end{rem}

\begin{proof}
First note that by a simple scaling argument based on the integral
transformation rule the equations (\ref{eq:lem:Poincare-ball-1})
yields for every $u\in W^{1,p}(\Ball r0)$ 
\begin{equation}
\left\Vert u\right\Vert _{L^{q}(\Ball R0)}^{p}\leq C_{p,q}R^{-d\left(1-\frac{p}{q}\right)}\left(R^{p}\left\Vert \nabla u\right\Vert _{L^{p}(\Ball R0)}^{p}+\frac{R^{d}}{r^{d}}\left\Vert u\right\Vert _{L^{p}(\Ball r0)}^{p}\right)\label{eq:rem-lem:Poincare-ball-1}
\end{equation}
and (\ref{eq:lem:Poincare-ball-2}) yields for every $u\in W_{(0),r}^{1,p}(\Ball r0)$
\begin{equation}
\left\Vert u\right\Vert _{L^{q}(\Ball R0)}^{p}\leq C_{p,q}R^{p}R^{-d\left(1-\frac{p}{q}\right)}\left(1+\left(\frac{r}{R}\right)^{p-d}\right)\left\Vert \nabla u\right\Vert _{L^{p}(\Ball R0)}^{p}\,.\label{eq:rem-lem:Poincare-ball-2}
\end{equation}
Now, for $\nu\in\S^{d-1}$ we denote $P\left(\nu\right)$ as the unique
$p\in\partial\bP\cap(0,\infty)\nu$ and for $x\in\Rd\backslash\{0\}$
we denote $\nu_{x}:=\frac{x}{\norm x}$ and consider the bijective
Lipschitz map
\[
\varphi_{P}:\,\bP\to\Ball r0\,,\qquad x\mapsto R\frac{x}{\norm{P(\nu_{x})}}\,.
\]
 Then we infer from (\ref{eq:rem-lem:Poincare-ball-1}) 
\begin{align*}
\left\Vert u\circ\tilde{\varphi}_{P}^{-1}\right\Vert _{L^{q}(\Ball R0)}^{p} & \leq CR^{-d\left(1-\frac{p}{q}\right)}\left(R^{p}\left\Vert \nabla\left(u\circ\tilde{\varphi}_{P}^{-1}\right)\right\Vert _{L^{p}(\Ball R0)}^{p}+\frac{R^{d}}{r^{d}}\left\Vert u\circ\tilde{\varphi}_{P}^{-1}\right\Vert _{L^{p}(\Ball r0)}^{p}\right)
\end{align*}
or, after transformation of integrals, 
\begin{multline*}
\left(\int_{\bP}\left|u\right|^{q}\left|\det\rmD\tilde{\varphi}_{P}\right|\right)^{\frac{p}{q}}\\
\leq CR^{-d\left(1-\frac{p}{q}\right)}\left(\int_{\bP}\left(R^{p}\left|\left(\nabla u\right)\left(\rmD\tilde{\varphi}_{P}\right)^{-1}\right|^{p}+\frac{R^{d}}{r^{d}}\chi_{\tilde{\varphi}_{P}^{-1}\Ball r0}\left|u\right|^{p}\right)\left|\det\rmD\tilde{\varphi}_{P}\right|\right)\,.
\end{multline*}
It remains to estimate the derivatives of $\varphi_{P}$. In polar
coordinates, the radial derivative is $\partial_{r}\varphi_{P}(x)=\frac{R}{\norm{P(\nu_{x})}}$,
while the tangential derivative is more complicated to calculate.
However, in case $\nu\bot\mathrm{T}_{P(\nu)}$ we obtain $\partial_{\S^{d-1}}\varphi_{P}(x)=\I_{\R^{d-1}}$,
which is by the same time the minimal absolute value for each tangential
derivative, and $\partial_{\S^{d-1}}\varphi_{P}(x)$ becomes maximal
in edges where $2\tan\alpha=r^{-1}\sqrt{R^{2}-r^{2}}$ and $\left\Vert \partial\varphi_{P}\right\Vert (x_{0})=\left\Vert \frac{R}{\left\Vert x_{0}\right\Vert }\id-\frac{Rx_{0}}{\left\Vert x_{0}\right\Vert ^{3}}\otimes x\right\Vert \leq2\frac{R}{r}$
(see Figure ...... ).Now we make use of the fact that $\tilde{\varphi}_{P}$
increases the volume locally with a rate smaller than $\left\Vert \partial\varphi_{P}\right\Vert $and
hence $\left|\det\rmD\tilde{\varphi}_{P}\right|\geq1$. On the other
hand, we have $\left|\left(\rmD\tilde{\varphi}_{P}\right)^{-1}\right|<\frac{R}{r}$
and hence (\ref{eq:rem-lem:Poincare-polytope-1}). In a similar way
we infer (\ref{eq:rem-lem:Poincare-polytope-2}) from (\ref{eq:rem-lem:Poincare-ball-2}).
\end{proof}

\subsection{Voronoi Tessellations and Delaunay Triangulation}
\begin{defn}[Voronoi Tessellation]
\label{def:Voronoi}Let $\X=\left(x_{i}\right)_{i\in\N}$ be a sequence
of points in $\Rd$ with $x_{i}\neq x_{k}$ if $i\neq k$. For each
$x\in\X$ let \nomenclature[G]{$G(x)$}{Voronoi cell with center $x$ (Definition \ref{def:Voronoi})}
\[
G\of x:=\left\{ y\in\Rd\,:\;\forall\tilde{x}\in\X\backslash\left\{ x\right\} :\,\left|x-y\right|<\left|\tilde{x}-y\right|\right\} \,.
\]
Then $\left(G\of{x_{i}}\right)_{i\in\N}$ is called the \emph{Voronoi
tessellation} of $\Rd$ w.r.t. $\X$. For each $x\in\X$ we define
$d\of x:=\diam G\of x$.
\end{defn}

We will need the following result on Voronoi tessellation of a minimal
diameter.
\begin{lem}
\label{lem:estim-diam-Voronoi-cells}Let $\fr>0$ and let $\X=\left(x_{i}\right)_{i\in\N}$
be a sequence of points in $\Rd$ with $\left|x_{i}-x_{k}\right|>2\fr$
if $i\neq k$. For $x\in\X$ let $\cI\of x:=\left\{ y\in\X\,:\;G\of y\cap\Ball{\fr}{G\of x}\not=\emptyset\right\} $.
Then 
\begin{equation}
\#\cI\of x\leq\left(\frac{4d\of x}{\fr}\right)^{d}\,.\label{eq:lem:estim-diam-Voronoi-cells}
\end{equation}
\end{lem}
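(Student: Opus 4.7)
The plan is a standard volume-packing argument, combined with a distance estimate relating points of $\cI(x)$ to $x$. Because the points of $\X$ are $2\fr$-separated, the open balls $\Ball{\fr}{y}$ with $y\in\X$ are pairwise disjoint. So once I confine all $y\in\cI(x)$ to some explicit ball around $x$, I can count them by comparing volumes.

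The first preliminary step is to observe that $G(x)\supset\Ball{\fr}{x}$: any $z$ with $|z-x|<\fr$ satisfies $|z-\tilde x|\geq|\tilde x-x|-|z-x|>2\fr-\fr=\fr>|z-x|$ for every $\tilde x\in\X\setminus\{x\}$, hence $z\in G(x)$. Consequently $d(x)=\diam G(x)\geq 2\fr$, i.e.\ $\fr\leq\tfrac12 d(x)$; this inequality will let me absorb $\fr$-terms into $d(x)$-terms at the end.

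Next I bound $|y-x|$ for $y\in\cI(x)$. By definition there is $z\in G(y)$ with $\dist(z,G(x))\leq\fr$, so pick $w\in\overline{G(x)}$ with $|z-w|\leq\fr$. Since $z\in G(y)$ and $x\in\X$, one has $|y-z|\leq|x-z|\leq|x-w|+|w-z|\leq d(x)+\fr$, where $|x-w|\leq d(x)$ because $x,w\in\overline{G(x)}$. Then
\[
|y-x|\leq|y-z|+|z-w|+|w-x|\leq(d(x)+\fr)+\fr+d(x)=2d(x)+2\fr,
\]
so $\Ball{\fr}{y}\subset\Ball{2d(x)+3\fr}{x}$.

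Finally, since the balls $\{\Ball{\fr}{y}\}_{y\in\cI(x)}$ are pairwise disjoint and contained in $\Ball{2d(x)+3\fr}{x}$, comparing Lebesgue measures gives
\[
\#\cI(x)\cdot\omega_d\fr^d\leq\omega_d\bigl(2d(x)+3\fr\bigr)^d,
\qquad\text{i.e.}\qquad
\#\cI(x)\leq\left(\frac{2d(x)+3\fr}{\fr}\right)^d.
\]
Using $\fr\leq\tfrac12 d(x)$ we have $2d(x)+3\fr\leq\tfrac72 d(x)\leq 4d(x)$, which yields \eqref{eq:lem:estim-diam-Voronoi-cells}. There is no real obstacle here; the only subtlety is remembering to invoke $d(x)\geq 2\fr$ to convert the additive estimate $2d(x)+3\fr$ into the clean multiplicative bound $4d(x)$.
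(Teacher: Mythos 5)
Your proof is correct and takes essentially the same volume-packing approach as the paper: both arguments reduce the claim to disjointness of $\fr$-balls inside an $O(d(x))$-ball around $x$. The only cosmetic difference is that the paper's version bounds $\dist(\partial G(\tilde x),x)$ to rule out distant cells, whereas you bound $|y-x|$ directly via the chain $|y-z|\leq|x-z|\leq d(x)+\fr$; your route is a bit more streamlined but the mechanism and the final invocation of $d(x)\geq 2\fr$ are identical.
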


\begin{proof}
Let $\X_{k}=\left\{ x_{j}\in\X\,:\;\cH^{d-1}\of{\partial G_{k}\cap\partial G_{j}}\geq0\right\} $
the neighbors of $x_{k}$ and $d_{k}:=d\of{x_{k}}$. Then all $x_{j}\in\X$
satisfy $\left|x_{k}-x_{j}\right|\leq2d_{k}$. Moreover, every $\tilde{x}\in\X$
with $\left|\text{\ensuremath{\tilde{x}}}-x_{k}\right|>4d_{k}$ has
the property that $\dist\of{\,\partial G\left(\tilde{x}\right),\,x_{k}\,}>2d_{k}>d_{k}+\fr$
and $\tilde{x}\not\in\cI_{k}$. Since every Voronoi cell contains
a ball of radius $\fr$, this implies that $\#\cI_{k}\leq\left|\Ball{4d_{k}}{x_{k}}\right|/\left|\Ball{\fr}0\right|=\left(\frac{4d_{k}}{\fr}\right)^{d}$.
\end{proof}
\begin{defn}[Delaunay Triangulation]
\label{def:delaunay}Let $\X=\left(x_{i}\right)_{i\in\N}$ be a sequence
of points in $\Rd$ with $x_{i}\neq x_{k}$ if $i\neq k$. The Delaunay
triangulation is the dual graph of the Voronoi tessellation, i.e.
we say $\D(\X):=\left\{ (x,y):\;\hausdorffH^{d-1}\of{\partial G(x)\cap\partial G(y)}\neq0\right\} $.
\end{defn}

\subsection{\label{subsec:Local--Regularity}Local $\eta$-Regularity}

\begin{figure}
 \begin{minipage}[c]{0.5\textwidth} \includegraphics{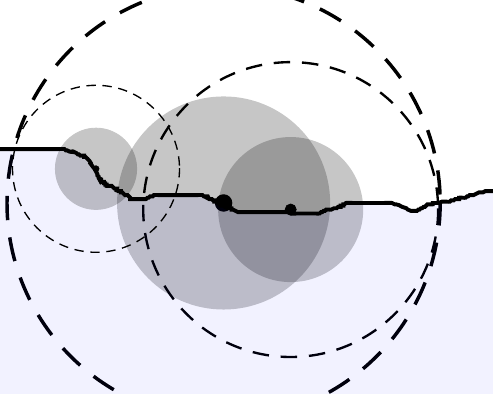}\end{minipage}\hfill   \begin{minipage}[c]{0.45\textwidth}\caption{An illustration of $\eta$-regularity. In Theorem \ref{thm:delta-M-rho-covering}
we will rely on a ``gray'' region like in this picture.}
\end{minipage}
\end{figure}

We say that a function $F:A\to\left\{ 0,1\right\} $ holds ``true''
in $a\in A$ if $F(a)=1$ and ``false'' if $F(a)=0$.
\begin{defn}[$\eta$-regularity]
\label{def:eta-regular}\nomenclature[G]{$\eta$-regular (local)}{(Definition \ref{def:eta-regular})}A
set $\bP\subset\Rd$ is called locally $\eta$-regular with $f:\,\bP\times(0,r]\to\left\{ 0,1\right\} $
and $\fr>0$ if $f(p,\cdot)$ is decreasing and
\begin{equation}
f(p,\eta)=1\quad\Rightarrow\quad\forall\,\eps\in\of{0,\frac{1}{2}}\,,\,\tilde{p}\in\Ball{\eps\eta}p\cap\bP\,,\,\tilde{\eta}\in\left(0,\,(1-\eps)\eta\right)\,:\;f\of{\tilde{p},\tilde{\eta}}=1\,.\label{eq:def:eta-regular}
\end{equation}
For $p\in\bP$ we write $\eta(p):=\sup\left\{ \eta\in(0,\fr)\,:\,f(p,\eta)=1\right\} $.
\end{defn}

\begin{lem}
\label{lem:eta-lipschitz}Let $\bP$ be a locally $\eta$-regular
set with $f$ and $\fr$ and $\eta(p)$. Then $\eta:\,\bP\to\R$ is
locally Lipschitz continuous with Lipschitz constant $4$ and for
every $\eps\in\left(0,\frac{1}{2}\right)$ and $\tilde{p}\in\Ball{\eps\eta}p\cap\bP$
it holds
\begin{equation}
\frac{1-\eps}{1-2\eps}\eta\of p>\eta\of{\tilde{p}}>\eta\of p-\left|p-\tilde{p}\right|>\left(1-\eps\right)\eta\of p\,.\label{eq:eta-lipschitz-ineq-chain}
\end{equation}
Furthermore,
\begin{equation}
\left|p-\tilde{p}\right|\leq\eps\max\left\{ \eta\of p,\eta\of{\tilde{p}}\right\} \quad\Rightarrow\quad\left|p-\tilde{p}\right|\leq\frac{\eps}{1-\eps}\min\left\{ \eta\of p,\eta\of{\tilde{p}}\right\} \label{eq:lem:eta-lipschitz-dist-estim}
\end{equation}
\end{lem}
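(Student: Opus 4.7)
The plan is to derive every assertion from the single implication in Definition~\ref{def:eta-regular} by passing carefully to the supremum that defines $\eta(p)$. Since $f(p,\cdot)$ is decreasing, one knows $f(p,\eta')=1$ for every $\eta'<\eta(p)$, but $f(p,\eta(p))$ itself need not hold, so every estimate must be obtained by letting $\eta'\nearrow\eta(p)$ and tuning a corresponding $\eps'\in(0,\tfrac12)$ simultaneously.

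\emph{Step 1 (lower bound).} Fix $p\in\bP$, $\eps\in(0,\tfrac12)$, and $\tilde p\in\Ball{\eps\eta(p)}{p}\cap\bP$. For every $\eta'<\eta(p)$ close enough that $\eta'>2|p-\tilde p|$ one may pick $\eps'\in(|p-\tilde p|/\eta',\tfrac12)$; from $f(p,\eta')=1$ and $\tilde p\in\Ball{\eps'\eta'}{p}$ the implication in~(\ref{eq:def:eta-regular}) forces $f(\tilde p,\tilde\eta)=1$ for every $\tilde\eta<(1-\eps')\eta'$, hence $\eta(\tilde p)\geq(1-\eps')\eta'$. Letting $\eta'\nearrow\eta(p)$ and $\eps'\searrow|p-\tilde p|/\eta'$ yields the middle inequality $\eta(\tilde p)\geq\eta(p)-|p-\tilde p|$; combined with $|p-\tilde p|<\eps\eta(p)$ this gives the rightmost inequality $\eta(\tilde p)>(1-\eps)\eta(p)$. \emph{Step 2 (upper bound).} If $\eta(\tilde p)\leq\eta(p)$ the desired bound is trivial since $\frac{1-\eps}{1-2\eps}>1$ on $(0,\tfrac12)$. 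Otherwise $\eta(\tilde p)>\eta(p)$, so $|p-\tilde p|<\eps\eta(p)<\eps\eta(\tilde p)$ and thus $p\in\Ball{\eps\eta(\tilde p)}{\tilde p}$; applying Step~1 with the roles of $p$ and $\tilde p$ exchanged yields $\eta(p)\geq\eta(\tilde p)-|p-\tilde p|$, that is $\eta(\tilde p)\leq(1+\eps)\eta(p)$. The elementary inequality $(1+\eps)(1-2\eps)=1-\eps-2\eps^2<1-\eps$ for $\eps\in(0,\tfrac12)$ then gives $\eta(\tilde p)<\frac{1-\eps}{1-2\eps}\eta(p)$, completing the chain~(\ref{eq:eta-lipschitz-ineq-chain}).

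For the Lipschitz estimate, combining the two-sided bounds of Steps~1 and~2 gives $|\eta(\tilde p)-\eta(p)|\leq|p-\tilde p|$ whenever $|p-\tilde p|<\tfrac12\eta(p)$, which is even stronger than the claimed local Lipschitz constant $4$. Inequality~(\ref{eq:lem:eta-lipschitz-dist-estim}) is a direct consequence of~(\ref{eq:eta-lipschitz-ineq-chain}): assuming without loss of generality $\eta(p)=\max\{\eta(p),\eta(\tilde p)\}$, the hypothesis $|p-\tilde p|\leq\eps\eta(p)$ together with $\eta(\tilde p)>(1-\eps)\eta(p)$ forces $\eta(p)<\eta(\tilde p)/(1-\eps)$, whence $|p-\tilde p|\leq\eps\eta(p)<\frac{\eps}{1-\eps}\min\{\eta(p),\eta(\tilde p)\}$. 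The only delicate point in the whole argument is the simultaneous tuning of $\eps'$ and $\eta'$ in Step~1 so that $\eps'<\tfrac12$, $\eps'\eta'>|p-\tilde p|$, and $\eta'<\eta(p)$ hold together; once this bookkeeping is under control, symmetrization and elementary algebra close all remaining inequalities.
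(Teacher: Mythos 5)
Your proof is correct and follows the same approach as the paper: you extract the key bootstrap inequality $\eta(\tilde p)\geq\eta(p)-|p-\tilde p|$ by letting $\eta'\nearrow\eta(p)$ and $\eps'\searrow|p-\tilde p|/\eta'$ simultaneously, and then symmetrize. In fact your Step~2 is cleaner than the paper's own argument, which symmetrizes with $\eps''=\eps/(1-\eps)$ and tacitly requires $\eps''<\tfrac12$ (i.e.\ $\eps<\tfrac13$); your case distinction avoids this by noticing that when $\eta(\tilde p)>\eta(p)$ one has $|p-\tilde p|<\eps\eta(\tilde p)$ with the \emph{original} $\eps<\tfrac12$, which also yields the sharper bound $\eta(\tilde p)<(1+\eps)\eta(p)$ and local Lipschitz constant $1$ rather than $4$.
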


\begin{proof}
We infer from (\ref{eq:def:eta-regular}) for every $\eps\in\left(0,\frac{1}{2}\right)$
and $\tilde{p}$ such that $\left|\tilde{p}-p\right|<\eps\eta(p)$
let $\tilde{\eta}<\eta(p)$ such that also $\left|\tilde{p}-p\right|<\eps\tilde{\eta}$.
It then holds $f\of{\tilde{p},\left(1-\eps\right)\tilde{\eta}}=1$
and hence $\eta\of{\tilde{p}}\geq\left(1-\eps\right)\tilde{\eta}$.
Taking the supremum over $\sup\left\{ \tilde{\eta}:\,\tilde{\eta}<\eta(p)\right\} $
we find $\eta\of{\tilde{p}}\geq\left(1-\eps\right)\eta(p)$ i.e. 
\begin{align*}
\eta\of{\tilde{p}} & \geq\sup_{\hat{p}}\left\{ \left(1-\eps\right)\eta\of{\hat{p}}\,:\;\left|\tilde{p}-\hat{p}\right|<\eps\eta\of{\hat{p}}\right\} \\
 & \geq\eta\of p-\left|p-\tilde{p}\right|>\left(1-\eps\right)\eta\of p
\end{align*}
 which implies $\left|\tilde{p}-p\right|<\frac{\eps}{1-\eps}\eta\of{\tilde{p}}$.
This in turn leads to $\eta\of p>\left(1-\frac{\eps}{1-\eps}\right)\eta\of{\tilde{p}}$
or 
\[
\eta\of p=\frac{1-\eps}{1-\eps}\eta\of p<\frac{1}{1-\eps}\left(\eta\of p-\left|p-\tilde{p}\right|\right)<\frac{1}{1-\eps}\eta\of{\tilde{p}}\leq\frac{1}{1-2\eps}\eta\of p\,,
\]
implying (\ref{eq:eta-lipschitz-ineq-chain}) and continuity of $\eta$.

Let $\left|p-\tilde{p}\right|=\eps\eta\of p\leq2\eps\eta{\left(\tilde{p}\right)}$,
the last inequality particularly implies also $\eta{\left(p\right)}\geq\left(1-2\eps\right)\eta{\left(\tilde{p}\right)}$.
Together with $\left|p-\tilde{p}\right|\leq2\eps\eta{\left(\tilde{p}\right)}\leq4\eps\eta{\left(p\right)}=4\left|p-\tilde{p}\right|$
we have 
\[
4\left|p-\tilde{p}\right|\geq2\eps\eta{\left(\tilde{p}\right)}\geq\eta{\left(\tilde{p}\right)}-\eta{\left(p\right)}\geq-\eps\eta{\left(p\right)}=-\left|p-\tilde{p}\right|\,.
\]
Finally, in order to prove (\ref{eq:lem:eta-lipschitz-dist-estim}),
w.l.o.g. let $\eta\of{\tilde{p}}\leq\eta\of p$. Then
\[
\left|p-\tilde{p}\right|\leq\eps\eta\of p\leq\frac{\eps}{1-\eps}\eta\of{\tilde{p}}\,.
\]
\end{proof}
We make use of the latter Lemmas in order to prove the following covering-regularity
of $\partial\bP$.
\begin{thm}
\label{thm:delta-M-rho-covering}Let $\Gamma\subset\Rd$ be a closed
set and let $\eta\of{\cdot}\in C\of{\Gamma}$ be bounded and satisfy
for every $\eps\in\left(0,\frac{1}{2}\right)$ and for $\left|p-\tilde{p}\right|<\eps\eta\of p$
\begin{equation}
\frac{1-\eps}{1-2\eps}\,\eta\of p>\eta\of{\tilde{p}}>\eta\of p-\left|p-\tilde{p}\right|>\left(1-\eps\right)\eta\of p\,.\label{eq:thm:delta-M-rho-covering-a}
\end{equation}
and define $\tilde{\eta}\of p=2^{-K}\eta\of p$, $K\geq2$. Then for
every $C\in(0,1)$ there exists a locally finite covering of $\Gamma$
with balls $\Ball{\tilde{\eta}\of{p_{k}}}{p_{k}}$ for a countable
number of points $\of{p_{k}}_{k\in\N}\subset\Gamma$ such that for
every $i\neq k$ with $\Ball{\tilde{\eta}\of{p_{i}}}{p_{i}}\cap\Ball{\tilde{\eta}\of{p_{k}}}{p_{k}}\neq\emptyset$
it holds 
\begin{equation}
\begin{aligned} & \frac{2^{K-1}-1}{2^{K-1}}\tilde{\eta}\of{p_{i}}\leq\tilde{\eta}\of{p_{k}}\leq\frac{2^{K-1}}{2^{K-1}-1}\tilde{\eta}\of{p_{i}}\\
\text{and}\quad & \frac{2^{K}-1}{2^{K-1}-1}\min{\left\{ \tilde{\eta}\of{p_{i}},\tilde{\eta}\of{p_{k}}\right\} }\geq\left|p_{i}-p_{k}\right|\geq C\max{\left\{ \tilde{\eta}\of{p_{i}},\tilde{\eta}\of{p_{k}}\right\} }
\end{aligned}
\label{eq:thm:delta-M-rho-covering}
\end{equation}
\end{thm}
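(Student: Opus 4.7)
The plan is to construct the family $(p_{k})$ by a transfinite greedy procedure that selects points of $\Gamma$ in (approximately) decreasing order of $\tilde{\eta}$. Fix a slack parameter $\eta_{0}\in(0,1-C)$ and set $\mathcal{F}_{0}=\emptyset$; at each ordinal step $\alpha$ with uncovered set $U_{\alpha}:=\Gamma\setminus\bigcup_{\beta<\alpha}\Ball{\tilde{\eta}(p_{\beta})}{p_{\beta}}$ still nonempty, pick $p_{\alpha}\in U_{\alpha}$ with $\tilde{\eta}(p_{\alpha})\geq(1-\eta_{0})\sup_{q\in U_{\alpha}}\tilde{\eta}(q)$. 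The slack $\eta_{0}$ is needed only because the supremum is in general not attained; otherwise the rule would reduce to picking a point of maximal $\tilde{\eta}$ in $U_{\alpha}$. The local finiteness established below forces the recursion to terminate at a countable ordinal with $\Gamma\subset\bigcup_{k}\Ball{\tilde{\eta}(p_{k})}{p_{k}}$ by construction.

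For the lower distance bound I would argue as follows. Whenever $\beta<\alpha$ the point $p_{\alpha}\in U_{\alpha}\subset U_{\beta+1}$ sits outside $\Ball{\tilde{\eta}(p_{\beta})}{p_{\beta}}$, hence $\left|p_{\alpha}-p_{\beta}\right|\geq\tilde{\eta}(p_{\beta})$. Moreover $p_{\alpha}\in U_{\alpha}\subset U_{\beta}$, so the greedy rule at step $\beta$ yields $\tilde{\eta}(p_{\alpha})\leq\sup_{U_{\beta}}\tilde{\eta}\leq\tilde{\eta}(p_{\beta})/(1-\eta_{0})$. Combining these two facts gives $\left|p_{\alpha}-p_{\beta}\right|\geq(1-\eta_{0})\max\{\tilde{\eta}(p_{\alpha}),\tilde{\eta}(p_{\beta})\}\geq C\max\{\tilde{\eta}(p_{\alpha}),\tilde{\eta}(p_{\beta})\}$, which is the required separation in (\ref{eq:thm:delta-M-rho-covering}).

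For the ratio bound and the upper distance bound I would use (\ref{eq:thm:delta-M-rho-covering-a}). Assume the balls around $p_{i},p_{k}$ intersect and write $p_{\max},p_{\min}$ for the two points ordered by $\eta$-value. The triangle inequality gives $\left|p_{\max}-p_{\min}\right|<\tilde{\eta}(p_{i})+\tilde{\eta}(p_{k})\leq 2\tilde{\eta}(p_{\max})=2^{1-K}\eta(p_{\max})$. Applying (\ref{eq:thm:delta-M-rho-covering-a}) at $p=p_{\max}$ with $\varepsilon=2^{1-K}$, which is admissible because $K\geq2$ forces $\varepsilon\leq1/2$, yields $\eta(p_{\min})>(1-2^{1-K})\eta(p_{\max})$; rescaling by $2^{-K}$ gives $\tilde{\eta}(p_{\min})>\frac{2^{K-1}-1}{2^{K-1}}\tilde{\eta}(p_{\max})$, which is exactly the ratio bound. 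The upper distance bound then follows by combining $\left|p_{i}-p_{k}\right|<\tilde{\eta}(p_{i})+\tilde{\eta}(p_{k})$ with $\tilde{\eta}(p_{\max})\leq\frac{2^{K-1}}{2^{K-1}-1}\tilde{\eta}(p_{\min})$, giving $\left|p_{i}-p_{k}\right|\leq\tilde{\eta}(p_{\min})\bigl(1+\frac{2^{K-1}}{2^{K-1}-1}\bigr)=\frac{2^{K}-1}{2^{K-1}-1}\tilde{\eta}(p_{\min})$.

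For local finiteness, on any compact $K\subset\R^{d}$ the continuous positive function $\tilde{\eta}$ attains a positive minimum $\tilde{\eta}_{\min}$ on the compact set $\Gamma\cap K$, so the separation $\left|p_{i}-p_{k}\right|\geq(1-\eta_{0})\tilde{\eta}_{\min}$ inside $K$ forces the number of $p_{k}\in K$ to be finite by a standard ball-packing bound in $\R^{d}$. Countability of the whole family, and thereby termination of the greedy recursion at a countable ordinal, then follows from $\sigma$-compactness of $\R^{d}$. The step I expect to require the most care is the rigorous set-up of the greedy recursion itself: one must deal with the fact that $\sup_{U_{\alpha}}\tilde{\eta}$ need not be attained (this is why the slack $\eta_{0}\in(0,1-C)$ is built in from the start), and one must verify that the transfinite process closes up countably. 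By contrast, the Lipschitz-type estimate (\ref{eq:thm:delta-M-rho-covering-a}) enters only through the elementary translation $\varepsilon=2^{1-K}\leq\frac{1}{2}$, where the assumption $K\geq2$ is used in an essential way.
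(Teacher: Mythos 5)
Your proof is correct, but it follows a genuinely different route than the paper's. The paper runs a hierarchical construction: it fixes a ratio $\delta$, works through the discrete scale brackets $\eta_{k}=(1-\delta)^{k}$ from coarse to fine, and within each scale-bracket sweeps a shifted grid of cubes $\tilde{Q}_{z,i}$ of side $\eta_{k}/n$, picking at most one new covering center from each cube; the lower distance bound then comes partly from the combinatorics of the cube grid (same-scale points sit in different cubes and are therefore $(1-\tfrac1n)\eta_{k}$-separated), partly from the exclusion of already-covered points across scales. You instead run a transfinite greedy (Vitali-type) selection with a slack parameter $\eta_{0}\in(0,1-C)$ that picks near-maximal uncovered points; the lower distance bound then follows in one line from the greedy rule and the exclusion property, without any grid. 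The conclusions (\ref{eq:thm:delta-M-rho-covering}) are established identically in both proofs from (\ref{eq:thm:delta-M-rho-covering-a}) once a separated cover is in hand. Your approach is more transparent and shorter; the paper's gives a fully explicit, choice-free construction and makes the dependence of the separation constant on the ratio of scales more visible ($C=(1-\tfrac1n)(1-\delta)$ vs.\ your $C=1-\eta_{0}$). One small phrasing imprecision in your argument: when $K=2$ you have $2^{1-K}=\tfrac12$, which sits on the boundary of the admissible interval $(0,\tfrac12)$ in (\ref{eq:thm:delta-M-rho-covering-a}); what saves the argument is that the intersection of open balls gives the \emph{strict} inequality $|p_{\max}-p_{\min}|<2^{1-K}\eta(p_{\max})$, so one can apply the hypothesis with some $\eps'<2^{1-K}$ and then let $\eps'\uparrow2^{1-K}$, or equivalently just invoke the middle inequality $\eta(\tilde p)>\eta(p)-|p-\tilde p|$ directly. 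This is not a gap, only a wording issue worth tightening.
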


\begin{proof}
W.o.l.g. assume $\tilde{\eta}<(1-\delta)$. Consider $\tilde{Q}:=\left[0,\frac{1}{n}\right]^{d}$,
let $q_{1,\dots,n^{d}}$ denote the $n^{d}$ elements of $[0,1)^{d}\cap\frac{\Q^{d}}{n}$
and let $\tilde{Q}_{z,i}=\tilde{Q}+z+q_{i}$. We set $B_{(0)}:=\emptyset$,
$\Gamma_{1}=\Gamma$, $\eta_{k}:=\left(1-\delta\right)^{k}$ and for
$k\geq1$ we construct the covering using inductively defined open
sets $B_{(k)}$ and closed set $\Gamma_{k}$ as follows:
\begin{enumerate}
\item Define $\Gamma_{k,1}=\Gamma_{k}$. For $i=1,\dots,n^{d}$ do the following:
\begin{enumerate}
\item For every $z\in\Zd$ do 
\[
\begin{aligned} & \text{if }\exists p\in\left(\eta_{k}\tilde{Q}_{z,i}\right)\cap\Gamma_{k,i},\,\tilde{\eta}\of p\in(\eta_{k},\eta_{k-1}] & \text{then set } & b_{z,i}=\Ball{\tilde{\eta}\of p}p\,,\;\X_{z,i}=\left\{ p\right\} \\
 & \text{otherwise } & \text{set } & b_{z,i}=\emptyset\,,\;\X_{z,i}=\emptyset\,.
\end{aligned}
\]
\item Define $B_{(k),i}:=\bigcup_{z\in\Zd}b_{z,i}$ and $\Gamma_{k,i+1}=\Gamma_{k}\backslash B_{(k),i}$
and $\X_{(k),i}:=\bigcup_{z\in\Zd}\X_{z,i}$.\\
Observe: $p_{1},p_{2}\in\X_{(k),i}$ implies $\left|p_{1}-p_{2}\right|>\left(1-\frac{1}{n}\right)\eta_{k}$
and $p_{3}\in\X_{(k),j}$, $j<i$ implies $p_{1}\not\in\Ball{\eta_{k}}{p_{3}}$
and hence $\left|p_{1}-p_{3}\right|>\eta_{k}$. Similar, $p_{3}\in\X_{l}$,
$l<k$, implies $\left|p_{1}-p_{3}\right|>\eta_{l}>\eta_{k}$.
\end{enumerate}
\item Define $\Gamma_{k+1}:=\Gamma_{k,2^{d}+1}$, $\X_{k}:=\bigcup_{i}\X_{(k),i}$.
\end{enumerate}
The above covering of $\Gamma$ is complete in the sense that every
$x\in\Gamma$ lies into one of the balls (by contradiction). We denote
$\X:=\bigcup_{k}\X_{k}=\left(p_{i}\right)_{i\in\N}$ the family of
centers of the above constructed covering of $\Gamma$ and find the
following properties: Let $p_{1},p_{2}\in\X$ be such that $\Ball{\tilde{\eta}\of{p_{1}}}{p_{1}}\cap\Ball{\tilde{\eta}\of{p_{2}}}{p_{2}}\neq\emptyset$.
W.l.o.g. let $\tilde{\eta}\of{p_{1}}\geq\tilde{\eta}\of{p_{2}}$.
Then the following two properties are satisfied due to (\ref{eq:thm:delta-M-rho-covering-a})
\begin{enumerate}
\item It holds $\left|p_{1}-p_{2}\right|\leq2\tilde{\eta}\of{p_{1}}\leq\frac{1}{2^{K-1}}\eta\of{p_{1}}$
and hence $\Ball{\tilde{\eta}\of{p_{2}}}{p_{2}}\subset\Ball{2^{2-K}\eta\of{p_{1}}}{p_{1}}$
and $\eta\of{p_{2}}\geq\frac{2^{K-1}-1}{2^{K-1}}\eta\of{p_{1}}$.
Furthermore $\tilde{\eta}\of{p_{1}}\geq\tilde{\eta}\of{p_{2}}\geq\frac{2^{K-1}-1}{2^{K-1}}\tilde{\eta}\of{p_{1}}$.
\item Let $k$ such that $\tilde{\eta}\of{p_{1}}\in\left(\eta_{k},\eta_{k+1}\right]$.
If also $\tilde{\eta}\of{p_{2}}\in\left(\eta_{k},\eta_{k+1}\right]$
then observation 1.(b) implies $\left|p_{1}-p_{2}\right|\geq\left(1-\frac{1}{n}\right)\eta_{k}\geq\left(1-\frac{1}{n}\right)\left(1-\delta\right)\tilde{\eta}\of{p_{1}}$.
If $\tilde{\eta}\of{p_{2}}\not\in\left[\eta_{k},\eta_{k+1}\right)$
then $\tilde{\eta}\of{p_{2}}<\eta_{k}$ and hence $p_{2}\not\in\Ball{\tilde{\eta}\of{p_{1}}}{p_{1}}$,
implying $\left|p_{1}-p_{2}\right|>\tilde{\eta}\of{p_{1}}$.
\end{enumerate}
Choosing $n$ and $\delta$ appropriately, this concludes the proof.
\end{proof}

\subsection{Dynamical Systems}
\begin{assumption}
\lyxadded{heida}{Sat Jun 06 17:27:29 2020}{\label{assu:separable}}Throughout\lyxadded{heida}{Sat Jun 06 17:27:29 2020}{
this work we assume that $(\Omega,\sF,\P)$ is a probability space
with countably generated $\sigma$-algebra $\sF$.}
\end{assumption}

\lyxadded{heida}{Sat Jun 06 17:27:29 2020}{Due to the insight in \cite{heida2011extension},
shortly }sketched\lyxadded{heida}{Sat Jun 06 17:27:29 2020}{ in the
next two subsections, after a measurable transformation the probability
space $\Omega$ can be assumed to be metric and separable, which always
ensures Assumption \ref{assu:separable}.}
\begin{defn}[Dynamical system]\label{def:A-dynamical-system}
\nomenclature[Tau]{$\tau_x$}{Dynamical system (Definitions \ref{assu:Omega-mu-tau}, \ref{def:A-dynamical-system-Zd}) with respect to $x\in\Rd$ or $x\in\Zd$}\label{Def:Omega-mu-tau}A
dynamical system on $\Omega$ is a family $(\tau_{x})_{x\in\Rd}$
of measurable bijective mappings $\tau_{x}:\Omega\mapsto\Omega$ satisfying
(i)-(iii):

\begin{enumerate}
\item [(i)]$\tau_{x}\circ\tau_{y}=\tau_{x+y}$ , $\tau_{0}=id$ (Group
property)
\item [(ii)]$\P(\tau_{-x}B)=\P(B)\quad\forall x\in\Rd,\,\,B\in\sF$ (Measure
preserving)
\item [(iii)]$A:\,\,\Rd\times\Omega\rightarrow\Omega\qquad(x,\omega)\mapsto\tau_{x}\omega$
is measurable (Measurability of evaluation)
\end{enumerate}
\end{defn}

A set $A\subset\Omega$ is almost invariant if \nomenclature[I]{$\sI$}{Invariant sets, \eqref{eq:invariant-sets}}$\P\left(\left(A\cup\tau_{x}A\right)\backslash\left(A\cap\tau_{x}A\right)\right)=0$.
The family 
\begin{equation}
\sI=\left\{ A\in\sF\,:\;\forall x\in\Rd\,\P\left(\left(A\cup\tau_{x}A\right)\backslash\left(A\cap\tau_{x}A\right)\right)=0\right\} \label{eq:invariant-sets}
\end{equation}
 of almost invariant sets is $\sigma$-algebra and \nomenclature[E]{$\E(f|\sI)$}{Expectation of $f$ wrt. the invariant sets, \eqref{eq:invariant-sets-expectation}}
\begin{equation}
\E\left(f|\sI\right)\text{denotes the expectation of }f:\,\Omega\to\R\text{ w.r.t. }\sI\,.\label{eq:invariant-sets-expectation}
\end{equation}
A concept linked to dynamical systems is the concept of stationarity.
\begin{defn}[Stationary]
\label{def:stationary}\nomenclature{stationary}{Definition \ref{def:stationary}}Let
$X$ be a measurable space and let $f:\Omega\times\Rd\to X$. Then
$f$ is called (weakly) stationary if $f(\omega,x)=f(\tau_{x}\omega,0)$
for (almost) every $x$.
\end{defn}

\begin{defn}
\label{def:convex-averaging-sequence}\nomenclature[Convex averaging sequence]{Convex averaging sequence}{(Definition \ref{def:convex-averaging-sequence}) }A
family $\left(A_{n}\right)_{n\in\N}\subset\Rd$ is called convex averaging
sequence if
\begin{enumerate}
\item [(i)]each $A_{n}$ is convex
\item [(ii)]for every $n\in\N$ holds $A_{n}\subset A_{n+1}$
\item [(iii)]there exists a sequence $r_{n}$ with $r_{n}\to\infty$ as
$n\to\infty$ such that $B_{r_{n}}(0)\subseteq A_{n}$.
\end{enumerate}
\end{defn}

We sometimes may take the following stronger assumption.
\begin{defn}
A convex averaging sequence $A_{n}$ is called regular if 
\[
\left|A_{n}\right|^{-1}\#\left\{ z\in\Zd\,:\;\left(z+\T\right)\cap\partial A_{n}\neq\emptyset\right\} \to0\,.
\]
\end{defn}

The latter condition is evidently fulfilled for sequences of cones
or balls. Convex averaging sequences are important in the context
of ergodic theorems.
\begin{thm}[Ergodic Theorem \cite{Daley1988} Theorems 10.2.II and also \cite{tempel1972ergodic}]
\label{thm:Ergodic-Theorem} \nomenclature[Ergodic Theorem]{Ergodic Theorem}{(Theorems \ref{thm:Ergodic-Theorem}, \ref{thm:Ergodic-Theorem-ran-meas}) }Let
$\left(A_{n}\right)_{n\in\N}\subset\Rd$ be a convex averaging sequence,
let $(\tau_{x})_{x\in\Rd}$ be a dynamical system on $\Omega$ with
invariant $\sigma$-algebra $\sI$ and let $f:\,\Omega\to\R$ be measurable
with $\left|\E(f)\right|<\infty$. Then for almost all $\omega\in\Omega$
\begin{equation}
\left|A_{n}\right|^{-1}\int_{A_{n}}f\of{\tau_{x}\omega}\,\d x\to\E\of{f|\sI}\,.\label{eq:ergodic convergence}
\end{equation}
\end{thm}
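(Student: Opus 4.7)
My plan would follow the Wiener--Tempel'man strategy for continuous multidimensional ergodic theorems, specialized to convex averaging sequences. Throughout, write $S_n f(\omega) := \left|A_n\right|^{-1} \int_{A_n} f(\tau_x \omega) \, dx$ and $M^* f(\omega) := \sup_{n \in \N} S_n |f|(\omega)$.

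\textbf{Step 1 (Weak-type maximal inequality).} I would first establish that there is a constant $C_d$ depending only on the dimension such that
\[
\P\of{M^* f > \lambda} \leq \frac{C_d}{\lambda}\, \E|f| \qquad \forall \lambda > 0,\ f \in L^1(\Omega).
\]
The argument is the usual transference: on the product space $\Omega \times \Rd$ equipped with $\P \otimes \lebesgueL^d$, the function $F(\omega,x) := f(\tau_x \omega)$ satisfies $S_n f(\omega) = \left|A_n\right|^{-1} \int_{A_n} F(\omega, x) \, dx$, and by the measure-preserving property of $\tau$ together with Fubini the claim reduces to a Hardy--Littlewood-type weak inequality for the averaging operator $g \mapsto \sup_n \left|A_n\right|^{-1} \int_{A_n} g(x + \cdot)$ acting on $L^1(\Rd)$. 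This in turn follows from a Vitali/Besicovitch covering lemma for the family $\{x + A_n\}$ whose constants depend only on $d$; convexity of $A_n$ plus the inclusion $\Ball{r_n}0 \subset A_n$ give the required doubling behavior (via John's theorem one may even compare $A_n$ to an ellipsoid up to a $d$-dependent factor, reducing to a classical Besicovitch argument).

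\textbf{Step 2 (Convergence on a dense subset).} If $f$ is $\sI$-measurable, then $f(\tau_x \omega) = f(\omega)$ for a.e. $\omega$ and every $x$ (up to a null set by Fubini and countable generation of $\sF$), so $S_n f = f = \E\of{f\vert \sI}$ pointwise a.e. For the orthogonal complement in $L^2(\Omega)$, one argues that the closed linear span of coboundaries $g - g \circ \tau_h$ ($g \in L^\infty$, $h \in \Rd$) is dense; on such $g - g\circ \tau_h$ one computes
\[
S_n(g - g\circ\tau_h)(\omega) = \left|A_n\right|^{-1} \int_{(A_n)\triangle(A_n+h)} \pm g(\tau_x \omega)\, dx,
\]
and the symmetric difference of convex bodies containing $\Ball{r_n}0$ with $r_n \to \infty$ satisfies $\left|A_n \triangle (A_n+h)\right|/\left|A_n\right| \to 0$, giving a.s. convergence to zero.

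\textbf{Step 3 (Banach principle and identification).} The combination of Steps 1 and 2 together with the standard Banach principle yields a.s. convergence of $S_n f$ for every $f \in L^1(\Omega)$. The same boundary estimate used in Step 2 shows that the a.s.~limit is invariant under every $\tau_h$ up to a null set, hence $\sI$-measurable; dominated convergence against any $\sI$-measurable bounded test function then identifies the limit as $\E\of{f\vert \sI}$.

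The main obstacle is Step 1: producing a covering constant uniform across the family $\{A_n\}$ of arbitrary convex averaging sets. For balls or cubes this is classical Wiener, but for a general convex averaging sequence one must either invoke John's theorem to reduce to ellipsoids, or prove a direct Besicovitch-type lemma exploiting the convex geometry and the inclusion $\Ball{r_n}0 \subset A_n$. Once this covering bound is in hand, the rest of the proof is a standard density argument.
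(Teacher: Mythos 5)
The paper offers no proof of this statement at all: it is quoted as a known result with a citation to Daley--Vere-Jones (Theorem 10.2.II) and Tempel'man, so there is nothing internal to compare against. Your outline is essentially a reconstruction of the standard Wiener--Tempel'man proof behind that citation (transference maximal inequality, convergence on invariant functions plus coboundaries, Banach principle), and Steps 2--3 are sound as sketched, up to the minor point that the identification of the limit for $f\in L^{1}$ should use uniform integrability of the averages (or be done on the dense class itself) rather than dominated convergence, since no single dominating function exists.

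The genuine gap is in the step you yourself flag, and the repair you propose would not go through as described. Reducing each $A_{n}$ to an ellipsoid via John's theorem does not yield a dimension-only covering constant: the Besicovitch constant for ellipsoids degenerates with eccentricity, and since the maximal operator involves all $n$ simultaneously you cannot normalize the whole nested family by a single affine map (affine invariance only saves the maximal function over dilates of one fixed convex body). The classical route avoids Besicovitch entirely and exploits the nestedness: in a Vitali/Wiener-type greedy selection by decreasing index, if $x+A_{m}$ meets a selected $y+A_{n}$ with $m\leq n$, then $x+A_{m}\subset y+A_{n}+\left(A_{n}-A_{n}\right)$, and since $A_{n}$ is convex with $0\in A_{n}$ (as $\Ball{r_{n}}0\subset A_{n}$) one has $\left|A_{n}+\left(A_{n}-A_{n}\right)\right|=\left|2A_{n}-A_{n}\right|\leq2^{d}\left|A_{n}-A_{n}\right|\leq2^{d}\binom{2d}{d}\left|A_{n}\right|$ by the Rogers--Shephard difference-body inequality. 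This is exactly Tempel'man's regularity condition with a dimension-only constant, and with it your Step 1 weak-type $(1,1)$ bound holds and the rest of the argument is complete.
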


We observe that $\E\left(f|\sI\right)$ is of particular importance.
For the calculations in this work, we will particularly focus on the
case of trivial $\sI$. This is called ergodicity, as we will explain
in the following.
\begin{defn}[Ergodicity and Mixing]
\nomenclature[Ergodicity]{Ergodicity}{(Definition \ref{def:erg-mixing}) }\label{def:erg-mixing}\nomenclature[Mixing]{Mixing}{(Definition \ref{def:erg-mixing}) }A
dynamical system $(\tau_{x})_{x\in\Rd}$ which is given on a probability space $(\Omega,\sF,\P)$
is called \emph{mixing }if for every measurable $A,B\subset\Omega$
it holds 
\begin{equation}
\lim_{\norm x\to\infty}\P\of{A\cap\tau_{x}B}=\P(A)\,\P(B)\,.\label{eq:def-mixing}
\end{equation}
A dynamical system is called \emph{ergodic }if 
\begin{equation}
\lim_{n\to\infty}\frac{1}{\left(2n\right)^{d}}\int_{[-n,n]^{d}}\P\of{A\cap\tau_{x}B}\d x=\P(A)\,\P(B)\,.\label{eq:def-ergodic}
\end{equation}
\end{defn}

\begin{rem}
\label{rem:trivial-mixing}a) Let $\Omega=\left\{ \omega_{0}=0\right\} $
with the trivial $\sigma$-algebra and $\tau_{x}\omega_{0}=\omega_{0}$.
Then $\tau$ is evidently mixing. However, the realizations are constant
functions $f_{\omega}(x)=c$ on $\Rd$ for some constant $c$.

b) A typical ergodic system is given by $\Omega=\T$ with the Lebesgue
$\sigma$-algebra and $\P=\lebesgueL$ the Lebesgue measure. The dynamical
system is given by $\tau_{x}y:=\left(x+y\right)\!\mod\T$.

c) It is known that $(\tau_{x})_{x\in\Rd}$ is ergodic if and only
if every almost invariant set $A\in\sI$ has probability $\P(A)\in\left\{ 0,1\right\} $
(see \cite{Daley1988} Proposition 10.3.III) i.e. 
\begin{equation}
\left[\,\,\forall x\,\P\of{\left(\tau_{x}A\cup A\right)\backslash\left(\tau_{x}A\cap A\right)}=0\,\,\right]\;\Rightarrow\;\P\of A\in\left\{ 0,1\right\} \,.\label{eq:def-ergodic-2}
\end{equation}

d) It is sufficient to show (\ref{eq:def-mixing}) or (\ref{eq:def-ergodic})
for $A$ and $B$ in a ring that generates the $\sigma$-algebra $\sF$.
We refer to \cite{Daley1988}, Section 10.2, for the later results.
\end{rem}

A further useful property of ergodic dynamical systems, which we will
use below, is the following:
\begin{lem}[Ergodic times mixing is ergodic]
\label{lem:erg-and-mix-is-erg}Let $(\tilde{\Omega},\tilde{\sF},\tilde{\P})$
and $(\hat{\Omega},\hat{\sF},\hat{\P})$ be probability spaces with
dynamical systems $(\tilde{\tau}_{x})_{x\in\Rd}$ and $(\hat{\tau}_{x})_{x\in\Rd}$
respectively. Let $\Omega:=\tilde{\Omega}\times\hat{\Omega}$ be the
usual product measure space with the notation $\omega=(\tilde{\omega},\hat{\omega})\in\Omega$
for $\tilde{\omega}\in\tilde{\Omega}$ and $\hat{\omega}\in\hat{\Omega}$.
If $\tilde{\tau}$ is ergodic and $\hat{\tau}$ is mixing, then $\tau_{x}(\tilde{\omega},\hat{\omega}):=(\tilde{\tau}_{x}\tilde{\omega},\hat{\tau}_{x}\hat{\omega})$
is ergodic.
\end{lem}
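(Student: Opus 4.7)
My plan is to verify ergodicity via the averaged criterion (\ref{eq:def-ergodic}) of Definition \ref{def:erg-mixing}. By Remark \ref{rem:trivial-mixing}(d) it suffices to check it on a ring that generates $\sF = \tilde{\sF}\otimes\hat{\sF}$; the natural choice is the ring of finite disjoint unions of measurable rectangles $\tilde{A}\times\hat{A}$. By bilinearity of $(A,B)\mapsto\P(A\cap\tau_x B)$, the estimate reduces to the case $A=\tilde{A}\times\hat{A}$, $B=\tilde{B}\times\hat{B}$.

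For such product sets, $\tau_x B = \tilde{\tau}_x\tilde{B}\times\hat{\tau}_x\hat{B}$, and the product structure of $\P=\tilde{\P}\otimes\hat{\P}$ gives
\[
\P\of{A\cap\tau_x B} \;=\; \tilde{\P}\of{\tilde{A}\cap\tilde{\tau}_x\tilde{B}}\,\hat{\P}\of{\hat{A}\cap\hat{\tau}_x\hat{B}}\,.
\]
The mixing of $\hat{\tau}$ lets me write $\hat{\P}(\hat{A}\cap\hat{\tau}_x\hat{B})=\hat{\P}(\hat{A})\hat{\P}(\hat{B})+r(x)$ with $r(x)\to 0$ as $\norm x\to\infty$ and $|r(x)|\leq 1$. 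Plugging this in and splitting the integral,
\[
\frac{1}{(2n)^d}\int_{[-n,n]^d}\P\of{A\cap\tau_x B}\,\d x \;=\; \hat{\P}(\hat{A})\hat{\P}(\hat{B})\cdot\frac{1}{(2n)^d}\int_{[-n,n]^d}\tilde{\P}\of{\tilde{A}\cap\tilde{\tau}_x\tilde{B}}\,\d x \;+\; R_n\,,
\]
where $R_n:=\frac{1}{(2n)^d}\int_{[-n,n]^d}\tilde{\P}(\tilde{A}\cap\tilde{\tau}_x\tilde{B})\,r(x)\,\d x$. The first term converges to $\hat{\P}(\hat{A})\hat{\P}(\hat{B})\tilde{\P}(\tilde{A})\tilde{\P}(\tilde{B})=\P(A)\P(B)$ by ergodicity of $\tilde{\tau}$.

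The main (minor) obstacle is controlling $R_n$, because mixing gives only pointwise decay of $r$. I handle this by the standard Ces\`aro-vanishing argument: $|\tilde{\P}(\tilde{A}\cap\tilde{\tau}_x\tilde{B})|\leq 1$, so for any $\eps>0$ pick $R$ with $|r(x)|<\eps$ for $\norm x>R$; then
\[
|R_n|\;\leq\; \frac{|[-n,n]^d\cap\Ball R0|}{(2n)^d} + \eps \;\xrightarrow[n\to\infty]{}\; \eps\,,
\]
and $\eps$ is arbitrary, so $R_n\to 0$. This establishes (\ref{eq:def-ergodic}) on rectangles, hence on the generating ring, hence by Remark \ref{rem:trivial-mixing}(d) on all of $\sF$, proving ergodicity of $\tau$.
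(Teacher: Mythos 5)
Your proof is correct and follows essentially the same route as the paper: reduce to measurable rectangles, factor $\P(A\cap\tau_{x}B)$ over the product, peel off the mixing error $r(x)=\hat{\P}(\hat{A}\cap\hat{\tau}_{x}\hat{B})-\hat{\P}(\hat{A})\hat{\P}(\hat{B})$, average the main term via ergodicity of $\tilde{\tau}$, and Ces\`aro-vanish $r$. If anything, your bookkeeping is cleaner than the paper's, which repeatedly writes $\hat{\P}(\hat{A}\cap\hat{B})$ and $\P(A\cap B)$ where $\hat{\P}(\hat{A})\hat{\P}(\hat{B})$ and $\P(A)\P(B)$ are clearly intended; your explicit introduction of the remainder $r(x)$ avoids that slip.
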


\begin{proof}
Relying on Remark \ref{rem:trivial-mixing}.c) we verify (\ref{eq:def-ergodic})
by proving it for sets $A=\tilde{A}\times\hat{A}$ and $B=\tilde{B}\times\hat{B}$
which generate $\sF:=\tilde{\sF}\otimes\hat{\sF}$. We make use of
$A\cap B=\left(\tilde{A}\cap\tilde{B}\right)\times\left(\hat{A}\cap\hat{B}\right)$
and observe that 
\begin{align*}
\P\of{A\cap\tau_{x}B} & =\P\of{\left(\tilde{A}\cap\tilde{\tau}_{x}\tilde{B}\right)\times\left(\hat{A}\cap\hat{\tau}_{x}\hat{B}\right)}=\hat{\P}\of{\hat{A}\cap\hat{\tau}_{x}\hat{B}}\,\tilde{\P}\of{\tilde{A}\cap\tilde{\tau}_{x}\tilde{B}}\\
 & =\hat{\P}\of{\hat{A}\cap\hat{B}}\,\tilde{\P}\of{\tilde{A}\cap\tilde{\tau}_{x}\tilde{B}}+\left[\hat{\P}\of{\hat{A}\cap\hat{\tau}_{x}\hat{B}}-\hat{\P}\of{\hat{A}\cap\hat{B}}\right]\,\tilde{\P}\of{\tilde{A}\cap\tilde{\tau}_{x}\tilde{B}}\,.
\end{align*}
Using ergodicity, we find that
\begin{align}
\lim_{n\to\infty}\frac{1}{\left(2n\right)^{d}}\int_{[-n,n]^{d}}\hat{\P}\of{\hat{A}\cap\hat{B}}\,\tilde{\P}\of{\tilde{A}\cap\tilde{\tau}_{x}\tilde{B}}\d x & =\hat{\P}\of{\left(\hat{A}\cap\hat{B}\right)}\,\tilde{\P}\of{\tilde{A}\cap\tilde{B}}\nonumber \\
 & =\P\of{A\cap B}\,.\label{eq:lem:erg-and-mix-is-erg-help-1}
\end{align}
Since $\hat{\tau}$ is mixing, we find for every $\eps>0$ some $R>0$
such that $\norm x>R$ implies $$\left|\hat{\P}\of{\hat{A}\cap\hat{\tau}_{x}\hat{B}}-\hat{\P}\of{\hat{A}\cap\hat{B}}\right|<\eps\,.$$
For $n>R$ we find 
\begin{multline}
\frac{1}{\left(2n\right)^{d}}\int_{[-n,n]^{d}}\left|\hat{\P}\of{\hat{A}\cap\hat{\tau}_{x}\hat{B}}-\hat{\P}\of{\hat{A}\cap\hat{B}}\right|\,\tilde{\P}\of{\tilde{A}\cap\tilde{\tau}_{x}\tilde{B}}\\
\leq\frac{1}{\left(2n\right)^{d}}\int_{[-n,n]^{d}}\eps+\frac{1}{\left(2n\right)^{d}}\int_{[-R,R]^{d}}2\to\eps\quad\text{as }n\to\infty\,.\label{eq:lem:erg-and-mix-is-erg-help-2}
\end{multline}
The last two limits (\ref{eq:lem:erg-and-mix-is-erg-help-1}) and
(\ref{eq:lem:erg-and-mix-is-erg-help-2}) imply (\ref{eq:def-ergodic}).
\end{proof}
\begin{rem}
The above proof heavily relies on the mixing property of $\hat{\tau}$.
Note that for $\hat{\tau}$ being only ergodic, the statement is wrong,
as can be seen from the product of two periodic processes in $\T\times\T$
(see Remark \ref{rem:trivial-mixing}). Here, the invariant sets are
given by $I_{A}:=\left\{ \left(\left(y+x\right)\!\mod\T\,,\,x\right)\,:\;y\in A\right\} $
for arbitrary measurable $A\subset\T$.
\end{rem}

\subsection{\label{subsec:Random-measures-and}Random Measures and Palm Theory}

We recall some facts from random measure theory (see \cite{Daley1988})
which will be needed for homogenization. Let $\fM(\Rd)$ \nomenclature[M]{$\fM(\Rd)$}{Measures on $\Rd$ (Section \ref{subsec:Random-measures-and}) }denote
the space of locally bounded Borel measures on $\Rd$ (i.e. bounded
on every bounded Borel-measurable set) equipped with the Vague topology,
which is generated by the sets 
\[
\left\{ \mu\,:\;\int f\,\d\mu\in A\right\} \text{ for every open }A\subset\Rd\text{ and }f\in C_{c}\of{\Rd}\,.
\]
This topology is metrizable, complete and countably generated. However, note that it is not locally compact, which implies that the Alexandroff compactification cannot be applied. A random
measure is a measurable mapping 
\[
\mu_{\bullet}:\;\Omega\to\fM(\Rd)\,,\qquad\omega\mapsto\mu_{\omega}
\]
which is equivalent to both of the following conditions
\begin{enumerate}
\item For every bounded Borel set $A\subset\Rd$ the map $\omega\mapsto\mu_{\omega}(A)$
is measurable
\item For every $\omega\mapsto\int f\d\mu_{\omega}$ the map $\omega\mapsto\int f\,\d\mu_{\omega}$
is measurable.
\end{enumerate}
A random measure is stationary if the distribution of $\mu_{\omega}(A)$
is invariant under translations of $A$ that is $\mu_{\omega}(A)$
and $\mu_{\omega}(A+x)$ share the same distribution. From stationarity
of $\mu_{\omega}$ one concludes the existence (\cite{heida2011extension,papanicolaou1979boundary}
and references therein) of a dynamical system $\left(\tau_{x}\right)_{x\in\Rd}$
on $\Omega$ such that $\mu_{\omega}\left(A+x\right)=\mu_{\tau_{x}\omega}\left(A\right)$.
By a deep theorem due to Mecke (see \cite{Mecke1967,Daley1988}) the
measure 
\[
\mupalm(A)=\int_{\Omega}\int_{\Rd}g(s)\,\chi_{A}(\tau_{s}\omega)\,\d\muomega(s)\,\d\P(\omega)
\]
can be defined on $\Omega$ for every positive $g\in L^{1}(\Rd)$
with compact support. $\mupalm$ is independent from $g$ and in case
$\mu_{\omega}=\lebesgueL$ we find $\mupalm=\P$. Furthermore, for
every $\borelB(\Rd)\times\borelB(\Omega)$-measurable non negative
or $\mupalm\times\lebesgueL$- integrable functions $f$ the Campbell
formula
\[
\int_{\Omega}\int_{\Rd}f(x,\tau_{x}\omega)\,\d\muomega(x)\,\d\P(\omega)=\int_{\Rd}\int_{\Omega}f(x,\omega)\,\d\mupalm(\omega)\,\d x
\]
 holds. The measure $\mu_{\omega}$ has finite intensity if $\mupalm(\Omega)<+\infty$.

We denote by \nomenclature[Emup]{$\E_{\mupalm}(f|\sI)$}{Expectation of $f$ wrt. $\mupalm$ and the invariant sets, \eqref{eq:invariant-sets-expectation-mup}}
\begin{equation}
\E_{\mupalm}\of{f|\sI}:=\int_{\Omega}f\text{ the expectation of }f\text{ w.r.t. the }\sigma\text{-algebra }\sI\text{ and }\mupalm\,.\label{eq:invariant-sets-expectation-mup}
\end{equation}
For random measures we find a more general version of Theorem \ref{thm:Ergodic-Theorem}.
\begin{thm}[Ergodic Theorem \cite{Daley1988} 12.2.VIII]
\label{thm:Ergodic-Theorem-ran-meas} Let $\left(\Omega,\sF,\P\right)$
be a probability space, $\left(A_{n}\right)_{n\in\N}\subset\Rd$ be
a convex averaging sequence, let $(\tau_{x})_{x\in\Rd}$ be a dynamical
system on $\Omega$ with invariant $\sigma$-algebra $\sI$ and let
$f:\,\Omega\to\R$ be measurable with $\int_{\Omega}\left|f\right|\d\mupalm<\infty$.
Then for $\P$-almost all $\omega\in\Omega$ 
\begin{equation}
\left|A_{n}\right|^{-1}\int_{A_{n}}f\of{\tau_{x}\omega}\,\d\mu_{\omega}(x)\to\E_{\mupalm}\of{f|\sI}\,.\label{eq:ergodic convergence ran meas}
\end{equation}
\end{thm}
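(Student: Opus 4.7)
Since the result is cited from \cite{Daley1988}, the plan is to sketch the standard reduction to the classical Birkhoff ergodic theorem (Theorem \ref{thm:Ergodic-Theorem}) via the Campbell formula. First, by splitting $f=f^{+}-f^{-}$ we may assume $f\geq0$. Define the auxiliary function
\[
g(\omega):=\int_{[0,1)^{d}}f(\tau_{x}\omega)\,\d\mu_{\omega}(x)\,.
\]
Campbell's formula yields $\E[g]=\int_{\Omega}f\,\d\mupalm<\infty$, so $g\in L^{1}(\P)$. The stationarity relation $\mu_{\omega}(A+z)=\mu_{\tau_{z}\omega}(A)$ implies that for each $z\in\Zd$ one has $g(\tau_{z}\omega)=\int_{z+[0,1)^{d}}f(\tau_{x}\omega)\,\d\mu_{\omega}(x)$. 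In particular, summing over the lattice cubes contained in $A_{n}$ reconstructs the integral $\int_{A_{n}}f(\tau_{x}\omega)\,\d\mu_{\omega}(x)$ up to boundary contributions.

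Next, I would apply Theorem \ref{thm:Ergodic-Theorem} (in its standard $\Zd$-version, which follows by the same proof) to the $\P$-integrable function $g$ under the restricted dynamical system $(\tau_{z})_{z\in\Zd}$. This gives, for $\P$-a.e.~$\omega$,
\[
\frac{1}{\#(\Zd\cap A_{n})}\sum_{z\in\Zd\cap A_{n}}g(\tau_{z}\omega)\;\longrightarrow\;\E\bigl(g\,\bigl|\,\sI_{\Zd}\bigr)(\omega)\,,
\]
where $\sI_{\Zd}$ is the $\sigma$-algebra of $\Zd$-invariant sets. Since $A_{n}$ is a convex averaging sequence, condition (iii) of Definition \ref{def:convex-averaging-sequence} forces the isoperimetric ratio $|\partial A_{n}|/|A_{n}|\to 0$, so that the number of lattice cubes intersecting $\partial A_{n}$ is $o(|A_{n}|)$; consequently, the difference between $\int_{A_{n}}f(\tau_{x}\omega)\,\d\mu_{\omega}(x)$ and $\sum_{z\in\Zd\cap A_{n}}g(\tau_{z}\omega)$ (suitably normalized) vanishes in the limit, using $g\in L^{1}(\P)$ to control the boundary layer by a further application of Birkhoff.

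It remains to identify the limit with $\E_{\mupalm}(f\,|\,\sI)$. This is the delicate step. The idea is that the continuous invariants $\sI$ and the discrete invariants $\sI_{\Zd}$ agree modulo $\P$-null sets on the algebra generated by the cylinder events used to define $g$, and that Campbell's formula applied to $\chi_{A}\,f$ for $A\in\sI$ gives
\[
\int_{A}g\,\d\P \;=\;\int_{A}f\,\d\mupalm
\]
(using that $\chi_{A}(\tau_{x}\omega)=\chi_{A}(\omega)$ for $\P$-a.e.\ $\omega$ and $\mu_{\omega}$-a.e.\ $x$). Hence $\E(g|\sI)=\E_{\mupalm}(f|\sI)$ $\P$-a.s., and a short argument (or direct appeal to \cite[Prop.~12.2.VII]{Daley1988}) upgrades this from $\sI$ to $\sI_{\Zd}$.

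The main obstacle is precisely this last identification of the limit and the boundary control: one must be careful that the discrete ergodic average of $g$ really converges to the correct continuous Palm-conditional expectation, and that the non-lattice remainder $\int_{A_{n}\setminus\bigcup_{z\in\Zd\cap A_{n}}(z+[0,1)^{d})}f(\tau_{x}\omega)\,\d\mu_{\omega}(x)$ is genuinely negligible in the almost sure sense. Once these two points are settled, combining them with the convergence displayed above yields \eqref{eq:ergodic convergence ran meas}.
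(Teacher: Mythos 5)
The paper cites this result from \cite{Daley1988} without giving a proof, so there is no in-paper argument to compare against; you are essentially reconstructing the reference. The reduction to the $\Zd$-Birkhoff theorem via unit-cube averaging and the Campbell formula is indeed the standard approach, and the steps you compute explicitly (the formula $g(\tau_z\omega)=\int_{z+[0,1)^d}f(\tau_x\omega)\,\d\mu_\omega(x)$, integrability of $g$ by Campbell) are correct.

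The genuine gap is in the identification of the limit. You assert that ``the continuous invariants $\sI$ and the discrete invariants $\sI_{\Zd}$ agree modulo $\P$-null sets on the algebra generated by the cylinder events used to define $g$,'' and use this to pass from $\E_{\P}(g\,|\,\sI_{\Zd})$ to $\E_{\mupalm}(f\,|\,\sI)$. This claim is false in general. Take $\Omega=\T$, $\tau_x y=(x+y)\mod1$, $\P=\lebesgueL$, $\mu_\omega=\lebesgueL$: the $\R$-action is ergodic (so $\sI$ is trivial), but the $\Z$-action is the identity (so $\sI_{\Zd}=\sF$ is everything). The two invariant $\sigma$-algebras are as far apart as possible. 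The reason your argument nevertheless gives the right answer in this example is that the \emph{specific} averaged function $g(\omega)=\int_{[0,1)^d}f(\tau_x\omega)\,\d\mu_\omega(x)$ happens to be $\sI$-measurable (in the torus case, constant). The correct argument does not pass through an agreement of $\sigma$-algebras; instead one shows directly that the almost-sure limit $L(\omega):=\lim_n|A_n|^{-1}\int_{A_n}f(\tau_x\omega)\,\d\mu_\omega(x)$ is invariant under the \emph{full} $\Rd$-action (because replacing $A_n$ by $A_n+h$ changes the average by a quantity controlled by $\lebesgueL\bigl((A_n+h)\triangle A_n\bigr)=o(|A_n|)$), so $L$ is $\sI$-measurable; one then verifies $\int_A L\,\d\P=\int_A f\,\d\mupalm$ for $A\in\sI$ by Campbell. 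Relatedly, your phrase ``upgrades this from $\sI$ to $\sI_{\Zd}$'' has the direction reversed: since $\sI\subset\sI_{\Zd}$, what is required is to \emph{descend} from $\sI_{\Zd}$-measurability to $\sI$-measurability, which is exactly the invariance argument just described. A final subtlety you gloss over: the Campbell computation requires $\chi_A(\tau_x\omega)=\chi_A(\omega)$ for $\mupalm$-a.e.\ $\omega$ (not merely $\P$-a.e.), i.e.\ one must know that $\P$-invariant sets are also $\mupalm$-invariant; this is nontrivial and is one of the points for which the appeal to \cite[Prop.~12.2.VII]{Daley1988} is genuinely needed rather than decorative. The boundary-layer estimate is likewise plausible but would need a maximal-function or sandwich argument, since $g$ is only in $L^1$; but this is more routine.
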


Given a bounded open (and convex) set $\bQ\subset\Omega$, it is not
hard to see that the following generalization holds:
\begin{thm}[General Ergodic Theorem]
\label{thm:Ergodic-Theorem-ran-meas-2} Let $\left(\Omega,\sF,\P\right)$
be a probability space, $\bQ\subset\Rd$ be a convex bounded open
set with $0\in\bQ$, let $(\tau_{x})_{x\in\Rd}$ be a dynamical system
on $\Omega$ with invariant $\sigma$-algebra $\sI$ and let $f:\,\Omega\to\R$
be measurable with $\int_{\Omega}\left|f\right|\d\mupalm<\infty$.
Then for $\P$-almost all $\omega\in\Omega$ it holds
\begin{equation}
\forall\varphi\in C(\overline{\bQ}):\quad n^{-d}\int_{n\bQ}\varphi(\frac{x}{n})f\of{\tau_{x}\omega}\,\d\mu_{\omega}(x)\to\E_{\mupalm}\of{f|\sI}\int_{\bQ}\varphi\,.\label{eq:ergodic convergence ran meas2}
\end{equation}
\end{thm}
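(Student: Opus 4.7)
The plan is to derive the statement from Theorem~\ref{thm:Ergodic-Theorem-ran-meas} by linearity in $\varphi$, handling indicator functions of a countable family of convex subsets of $\bQ$ first, and then invoking a uniform step-function approximation of the continuous weight $\varphi$.

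Write $T_n(\varphi,\omega):=n^{-d}\int_{n\bQ}\varphi(x/n)f(\tau_x\omega)\,\d\mu_\omega(x)$, which is linear in $\varphi$. Since $\bQ$ is open, bounded, convex and $0\in\bQ$, the sequence $A_n:=n\bQ$ is a convex averaging sequence in the sense of Definition~\ref{def:convex-averaging-sequence}; applied to $|f|$ in place of $f$, Theorem~\ref{thm:Ergodic-Theorem-ran-meas} yields, on a full-measure set $\Omega_0$, the eventual uniform bound $|T_n(\varphi,\omega)|\le C_\omega\|\varphi\|_{C(\overline\bQ)}$ for all $n$ large enough (depending on $\omega$).

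Let $\cU$ be the countable family of finite intersections $\bQ\cap H_1\cap\cdots\cap H_k$, where each $H_i$ is an open rational half-space with $0$ in its interior. Every $\bU\in\cU$ is open, bounded and convex with $0$ in its interior, so $(n\bU)_n$ is again a convex averaging sequence: the nesting $n\bU\subset m\bU$ for $n\le m$ uses convexity of $\bU$ together with $0\in\bU$, while $B_{nr}(0)\subset n\bU$ with $nr\to\infty$ for any $r>0$ with $B_r(0)\subset\bU$. Theorem~\ref{thm:Ergodic-Theorem-ran-meas} then yields convergence of $T_n(\chi_\bU,\omega)$ to $|\bU|\,\E_{\mupalm}(f|\sI)$ outside a null set $N_\bU$. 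Replacing $\Omega_0$ by $\Omega_0\setminus\bigcup_{\bU\in\cU}N_\bU$ (still of full measure by countability), convergence holds on $\Omega_0$ simultaneously for all $\bU\in\cU$ and, by linearity, for every finite $\R$-linear combination of the corresponding indicators.

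Given $\varphi\in C(\overline\bQ)$ and $\eps>0$, approximate $\varphi$ within $\eps$ in sup norm by a step function $\varphi_\eps=\sum_\alpha c_\alpha\chi_{P_\alpha}$ whose cells $P_\alpha$ are polytopal, of arbitrarily small diameter, and arise as the connected components of $\bQ$ cut by finitely many rational hyperplanes chosen via a small generic perturbation so that none passes through $0$. Every half-space defining a face of any $P_\alpha$ is then either of the form $H$ with $0$ in the interior of $H$ or of the form $\Rd\setminus H'$ with $0$ in the interior of $H'$, so by inclusion--exclusion each $\chi_{P_\alpha}$ lies in the $\R$-linear span of $\{\chi_\bU:\bU\in\cU\}$. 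Combining the uniform bound of the first paragraph with the convergence established for $\varphi_\eps$ in the second paragraph gives
\[
\limsup_n\Bigl|T_n(\varphi,\omega)-\E_{\mupalm}(f|\sI)\int_\bQ\varphi\Bigr|\le\bigl(C_\omega+|\bQ|\,\E_{\mupalm}(|f|\,|\,\sI)\bigr)\eps,
\]
and $\eps\to 0$ concludes; passing to a countable sup-norm-dense subset of $C(\overline\bQ)$ and intersecting the corresponding full-measure sets yields the quantifier $\forall\varphi$ on a single $\Omega_0$. \textbf{The main obstacle} is this last paragraph: expressing $\chi_{P_\alpha}$ for cells not containing $0$ as a finite signed combination of $\chi_\bU$ with $\bU\in\cU$. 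This relies on the generic perturbation keeping every hyperplane off $0$ (so every bounding half-space can be oriented to contain $0$ in its interior) and on a careful enumeration of faces in the inclusion--exclusion; once this is in place the rest is routine linearity-plus-density.
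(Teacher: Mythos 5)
Your proof is correct and implements the same strategy as the paper's sketch (reduction to indicators of convex sets containing $0$, Theorem~\ref{thm:Ergodic-Theorem-ran-meas}, Cantor diagonalization, and step-function approximation of continuous $\varphi$), and the inclusion--exclusion you flag as the main obstacle does go through: writing each bounding half-space as either $H$ or $\Rd\setminus K$ with $0$ in the interior of $H$ or $K$ respectively, the expansion of $\prod(1-\chi_{K_i})$ expresses $\chi_{P_\alpha}$ as a finite signed sum of indicators of convex sets in $\cU$. One minor redundancy: the closing remark about passing to a countable sup-norm-dense subset of $C(\overline\bQ)$ is unnecessary, since your $\limsup$ estimate already establishes the $\forall\varphi$ quantifier on the single full-measure set $\Omega_0$ obtained by Cantor over $\cU$.
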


\begin{proof}[Sketch of proof]
 Chose a countable family of characteristic functions that spans
$L^{1}(\bQ)$. Use a Cantor argument and Theorem \ref{thm:Ergodic-Theorem-ran-meas}
to prove the statement for a countable dense family of $C(\overline{\bQ})$.
From here, we conclude by density.

The last result can be used to prove the most general ergodic theorem
which we will use in this work:
\end{proof}
\begin{thm}[General Ergodic Theorem for the Lebesgue measure]
\label{thm:general-lebesgue-ergodic-thm}Let $\left(\Omega,\sF,\P\right)$
be a probability space, $\bQ\subset\Rd$ be a convex bounded open
set with $0\in\bQ$, let $(\tau_{x})_{x\in\Rd}$ be a dynamical system
on $\Omega$ with invariant $\sigma$-algebra $\sI$ and let $f\in L^{p}(\Omega;\mupalm)$
and $\varphi\in L^{q}(\bQ)$, where $1<p,q<\infty$, $\frac{1}{p}+\frac{1}{q}=1$.
Then for $\P$-almost all $\omega\in\Omega$ it holds
\[
n^{-d}\int_{n\bQ}\varphi(\frac{x}{n})f\of{\tau_{x}\omega}\,\d x\to\E\of f\int_{\bQ}\varphi\,.
\]
\end{thm}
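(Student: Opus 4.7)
The plan is to extend Theorem \ref{thm:Ergodic-Theorem-ran-meas-2} (which handles continuous test functions $\varphi$) from $\varphi \in C(\overline{\bQ})$ to the full space $L^q(\bQ)$ by a standard density argument, with the $L^p$ bound on $f$ providing the uniform-in-$n$ control needed to pass to the limit in $\varphi$.

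First I would note that, since $\mupalm = \P$ when $\mu_\omega = \lebesgueL$ and $\P$ is a probability measure, $f \in L^p(\Omega; \P) \subset L^1(\Omega; \P)$, so Theorem \ref{thm:Ergodic-Theorem-ran-meas-2} applies. Moreover, since $|f|^p \in L^1(\Omega; \P)$, the same theorem applied to $|f|^p$ with the constant test function $\varphi \equiv 1$ yields
\[
n^{-d}\int_{n\bQ}|f(\tau_x\omega)|^p\,\d x \;\longrightarrow\; |\bQ|\,\E_\P(|f|^p\,|\,\sI)
\]
for $\omega$ in a set $\Omega_0$ of full measure, so that $C(\omega):=\sup_{n\geq 1} n^{-d}\int_{n\bQ}|f(\tau_x\omega)|^p\,\d x<\infty$ for each $\omega\in\Omega_0$.

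Next I would fix a countable dense family $(\varphi_k)_{k\in\N}\subset C(\overline{\bQ})$ in $L^q(\bQ)$. For each $k$ there is a set $\Omega_k$ of full measure on which the conclusion of Theorem \ref{thm:Ergodic-Theorem-ran-meas-2} holds for $\varphi_k$. Intersecting, $\Omega_\infty := \Omega_0 \cap \bigcap_k \Omega_k$ still has full measure. For $\omega \in \Omega_\infty$ and arbitrary $\varphi \in L^q(\bQ)$, I would insert and subtract $\varphi_k$ to decompose
\begin{align*}
\Bigl|n^{-d}\!\!\int_{n\bQ}\!\varphi(\tfrac{x}{n})f(\tau_x\omega)\d x - \E(f|\sI)\!\int_\bQ\!\varphi\Bigr|
&\leq \Bigl|n^{-d}\!\!\int_{n\bQ}\!\varphi_k(\tfrac{x}{n})f(\tau_x\omega)\d x - \E(f|\sI)\!\int_\bQ\!\varphi_k\Bigr| \\
&\quad + \Bigl|n^{-d}\!\!\int_{n\bQ}\!(\varphi-\varphi_k)(\tfrac{x}{n})f(\tau_x\omega)\d x\Bigr| \\
&\quad + |\E(f|\sI)|\Bigl|\int_\bQ (\varphi-\varphi_k)\Bigr|.
\end{align*}
By Hölder's inequality and the change of variables $y=x/n$, the middle term is bounded by
\[
\Bigl(\int_\bQ|\varphi-\varphi_k|^q\Bigr)^{1/q}\Bigl(n^{-d}\!\!\int_{n\bQ}|f(\tau_x\omega)|^p\d x\Bigr)^{1/p}
\leq \|\varphi-\varphi_k\|_{L^q(\bQ)}\,C(\omega)^{1/p},
\]
uniformly in $n$.

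Finally I would take $\limsup_{n\to\infty}$ first: the first term vanishes by Theorem \ref{thm:Ergodic-Theorem-ran-meas-2}, leaving bounds on the second and third that are $\le C'(\omega)\|\varphi-\varphi_k\|_{L^q(\bQ)}$. Letting $k\to\infty$ along the dense sequence gives the result for $\omega\in\Omega_\infty$. The only mildly delicate point is ensuring that $\Omega_\infty$ is independent of $\varphi$, which is why I chose a countable dense family first and then approximated an arbitrary $\varphi \in L^q(\bQ)$; no further difficulty arises since the control $C(\omega)$ on the $L^p$-averages of $f(\tau_\cdot\omega)$ is intrinsic to $\omega$ and does not depend on $\varphi$.
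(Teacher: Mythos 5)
Your proof takes essentially the same approach as the paper: approximate $\varphi$ by continuous functions, invoke Theorem \ref{thm:Ergodic-Theorem-ran-meas-2} on each approximant, and control the error term via H\"older's inequality with the $L^{p}$-bound on $f$. You are more careful than the paper's own sketch in two respects -- explicitly choosing a countable dense family in $C(\overline{\bQ})$ so that the exceptional null set is independent of $\varphi$, and noting that the uniform-in-$n$ bound $C(\omega)$ comes from applying the ergodic theorem to $|f|^{p}$ -- but these are details the paper leaves implicit, not a different route.
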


\begin{proof}
Let $\varphi_{\delta}\in C(\overline{\bQ})$ with $\norm{\varphi-\varphi_{\delta}}_{L^{q}(\bQ)}<\delta$.
Then
\begin{align*}
 & \left|n^{-d}\int_{n\bQ}\varphi(\frac{x}{n})f\of{\tau_{x}\omega}\,\d x-\E\of f\int_{\bQ}\varphi\right|\\
 & \qquad\qquad\leq\norm{\varphi-\varphi_{\delta}}_{L^{q}(\bQ)}\left(n^{-d}\int_{n\bQ}\left|f\of{\tau_{x}\omega}\right|^{p}\,\d x\right)^{\frac{1}{p}}\\
 & \qquad\qquad\quad+\left|n^{-d}\int_{n\bQ}\varphi_{\delta}(x)f\of{\tau_{x}\omega}\,\d x-\E\of f\int_{\bQ}\varphi_{\delta}\right|+\E_{\mupalm}\of{f|\sI}\int_{\bQ}\left|\varphi-\varphi_{\delta}\right|\,,
\end{align*}
which implies the claim.
\end{proof}

\subsection{Random Sets}

The theory of random measures and the theory of random geometry are
closely related. In what follows, we recapitulate those results that
are important in the context of the theory developed below and shed
some light on the correlations between random sets and random measures.

Let $\closedsets(\Rd)$ denote the set of all closed sets in $\Rd$.
We write \nomenclature[F]{$\closedsets_{V}$, $\closedsets^{K}$, $(\closedsets(\Rd),\ttopology F)$}{(Equations \eqref{eq:closedsets-1}, \eqref{eq:closedsets-2}) }
\begin{eqnarray}
\closedsets_{V}:= & \left\{ F\in\closedsets(\Rd)\,:\;F\cap V\not=\emptyset\right\}  & \mbox{if }V\subset\Rd\quad\textnormal{is an open set}\,,\label{eq:closedsets-1}\\
\closedsets^{K}:= & \left\{ F\in\closedsets(\Rd)\,:\;F\cap K=\emptyset\right\}  & \mbox{if }K\subset\Rd\quad\textnormal{is a compact set}\,.\label{eq:closedsets-2}
\end{eqnarray}
The \emph{Fell-topology} $\ttopology F$ is created by all sets $\closedsets_{V}$
and $\closedsets^{K}$ and the topological space $(\closedsets(\Rd),\ttopology F)$
is compact, Hausdorff and separable\cite{Matheron1975}.
\begin{rem}
\label{rem:char-fell}We find for closed sets $F_{n},F$ in $\Rd$
that $F_{n}\to F$ if and only if \cite{Matheron1975}
\begin{enumerate}
\item for every $x\in F$ there exists $x_{n}\in F_{n}$ such that $x=\lim_{n\to\infty}x_{n}$
and
\item if $F_{n_{k}}$ is a subsequence, then every convergent sequence $x_{n_{k}}$
with $x_{n_{k}}\in F_{n_{k}}$ satisfies $\lim_{k\to\infty}x_{n_{k}}\in F$.
\end{enumerate}
\end{rem}

If we restrict the Fell-topology to the compact sets $\mathfrak{K}(\Rd)$
it is equivalent with the Hausdorff topology given by the Hausdorff
distance
\[
\d\of{A,B}=\max\left\{ \sup_{y\in B}\inf_{x\in A}\left|x-y\right|\,,\,\,\sup_{x\in A}\inf_{y\in B}\left|x-y\right|\right\} \,.
\]

\begin{rem}
For $A\subset\Rd$ closed, the set 
\[
\closedsets(A):=\left\{ F\in\closedsets(\Rd)\,:\;F\subset A\right\} 
\]
 is a closed subspace of $\closedsets\of{\Rd}$. This holds since
\[
\closedsets\of{\Rd}\backslash\closedsets\of A=\left\{ B\in\closedsets\of{\Rd}\,:\;B\cap\left(\Rd\backslash A\right)\not=\emptyset\right\} =\closedsets_{\Rd\backslash A}\quad\text{is open.}
\]
.
\end{rem}

\begin{lem}[Continuity of geometric operations]
\label{lem:contin-geom-Ops}The maps $\tau_{x}:\,A\mapsto A+x$ and
$b_{\delta}:\,A\mapsto\overline{\Ball{\delta}A}$ are continuous in
$\closedsets\left(\Rd\right)$.
\end{lem}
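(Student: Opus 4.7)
Since the Fell topology $\sT_F$ is generated by the subbasis consisting of the sets $\closedsets_V$ (with $V$ open) and $\closedsets^K$ (with $K$ compact), a map into $(\closedsets(\Rd),\sT_F)$ is continuous as soon as the preimages of all these subbasis elements are open. My plan is to carry out this verification directly for both $\tau_x$ and $b_\delta$, rather than going through the sequential characterisation of Remark \ref{rem:char-fell} (which would also work thanks to metrizability, but is more cumbersome for $b_\delta$).

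For $\tau_x$ the argument is immediate. One notes that
\[
\tau_x^{-1}(\closedsets_V) = \{A : (A+x)\cap V \neq \emptyset\} = \{A : A\cap(V-x)\neq\emptyset\} = \closedsets_{V-x},
\]
and analogously $\tau_x^{-1}(\closedsets^K) = \closedsets^{K-x}$. Since $V-x$ is again open and $K-x$ again compact, both preimages are subbasis sets, hence open.

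For $b_\delta$ the main point is to trade the dilation acting on $A$ for a dilation acting on $V$ or $K$. If $V$ is open, I will first argue that $\overline{B_\delta(A)}\cap V \neq \emptyset$ is equivalent to $B_\delta(A)\cap V \neq \emptyset$: indeed any point in the closure that lies in the open set $V$ can be approximated from inside $B_\delta(A)$. This in turn is equivalent to the existence of $a\in A$ with $\dist(a,V)<\delta$, i.e.\ to $A\cap B_\delta(V)\neq\emptyset$. Therefore
\[
b_\delta^{-1}(\closedsets_V) = \closedsets_{B_\delta(V)},
\]
which is open because $B_\delta(V) = \bigcup_{v\in V}B_\delta(v)$ is open. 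For compact $K$, I would observe that $\overline{B_\delta(A)}\cap K=\emptyset$ is equivalent to $\dist(A,K)>\delta$, which in turn is equivalent to $A\cap K_\delta=\emptyset$ for $K_\delta := \{x\in\Rd : \dist(x,K)\leq\delta\}$. The set $K_\delta$ is closed and bounded, hence compact in $\Rd$, so
\[
b_\delta^{-1}(\closedsets^K) = \closedsets^{K_\delta}
\]
is again a subbasis set and therefore open.

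The only subtle point, which I regard as the main obstacle in an otherwise routine verification, is the exchange between the closed dilation $\overline{B_\delta(A)}$ and the open ball $B_\delta(A)$ when testing against the open set $V$; this is where the openness of $V$ enters essentially. The possible edge case $A=\emptyset$ is harmless, since then both $\tau_x(A)$ and $b_\delta(A)$ remain empty and all the displayed set-theoretic identities hold trivially.
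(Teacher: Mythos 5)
Your proof is correct and takes the same route as the paper: both verify continuity by showing that preimages of the subbasis sets $\closedsets_V$ and $\closedsets^K$ are again subbasis sets of the same type. You are actually somewhat more careful than the paper in that you justify the exchange $\overline{\Ball{\delta}A}\cap V\neq\emptyset \Leftrightarrow A\cap\Ball{\delta}V\neq\emptyset$ and verify compactness of $K_\delta$, steps the paper's terse proof asserts without comment.
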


\begin{proof}
We show that preimages of open sets are open. For open sets $V$ we
find
\begin{align*}
\tau_{x}^{-1}\left(\closedsets_{V}\right) & =\left\{ F\in\closedsets(\Rd)\,:\;\tau_{x}F\cap V\not=\emptyset\right\} =\left\{ F\in\closedsets(\Rd)\,:\;F\cap\tau_{-x}V\not=\emptyset\right\} =\closedsets_{\tau_{-x}V}\,,\\
b_{\delta}^{-1}\left(\closedsets_{V}\right) & =\left\{ F\in\closedsets(\Rd)\,:\;\overline{\Ball{\delta}F}\cap V\not=\emptyset\right\} =\left\{ F\in\closedsets(\Rd)\,:\;F\cap\Ball{\delta}V\not=\emptyset\right\} =\closedsets_{\left(b_{\delta}V\right)^{\circ}}\,.
\end{align*}
The calculations for $\tau_{x}^{-1}\of{\closedsets^{K}}=\closedsets^{\tau_{-x}K}$
and $b_{\delta}^{-1}\of{\closedsets^{K}}=\closedsets^{b_{\delta}K}$
are analogue.
\end{proof}
\begin{rem}
\label{rem:The-Matheron--field}The \emph{Matheron-$\sigma$-field
$\sigma_{\closedsets}$} is the Borel-$\sigma$-algebra of the Fell-topology
and is fully characterized either by the class $\closedsets_{V}$
of $\closedsets^{K}$. 
\end{rem}

\begin{defn}[Random closed / open set according to Choquet (see \cite{Matheron1975}
for more details)]
\label{def:RACS}\nomenclature[Random closed sets]{Random closed sets}{(Definition \ref{def:RACS})}
\end{defn}

\begin{enumerate}
\item [a)] Let $(\Omega,\sigma,\P)$ be a probability space. Then a \emph{Random
Closed Set (RACS)} is a measurable mapping 
\[
A:(\Omega,\sigma,\P)\longrightarrow(\closedsets,\sigma_{\closedsets})
\]
\item [b)]Let $\tau_{x}$ be a dynamical system on $\Omega$. A random
closed set is called stationary if its characteristic functions $\chi_{A(\omega)}$
are stationary, i.e. they satisfy $\chi_{A(\omega)}(x)=\chi_{A(\tau_{x}\omega)}(0)$
for almost every $\omega\in\Omega$ for almost all $x\in\Rd$. Two
random sets are jointly stationary if they can be parameterized by
the same probability space such that they are both stationary.
\item [c)] A random closed set $\Gamma:(\Omega,\sigma,P)\longrightarrow(\closedsets,\sigma_{\closedsets})\quad\omega\mapsto\Gamma(\omega)$
is called a \emph{Random closed $C^{k}$-Manifold} if $\Gamma(\omega)$
is a piece-wise $C^{k}$-manifold for P almost every $\omega$.
\item [d)] A measurable mapping 
\[
A:(\Omega,\sigma,\P)\longrightarrow(\closedsets,\sigma_{\closedsets})
\]
is called \emph{Random Open Set (RAOS)} if $\omega\mapsto\Rd\backslash A(\omega)$
is a RACS.
\end{enumerate}
The importance of the concept of random geometries for stochastic
homogenization stems from the following Lemma by Z\"ahle. It states
that every random closed set induces a random measure. Thus, every
stationary RACS induces a stationary random measure.
\begin{lem}[\cite{Zaehle1982} Theorem 2.1.3 resp. Corollary 2.1.5]
\label{lemmazaehlerandommeasure}Let $\closedsets_{m}\subset\closedsets$
be the space of closed m-dimensional sub manifolds of $\Rd$ such
that the corresponding Hausdorff measure is locally finite. Then,
the $\sigma$-algebra $\sigma_{\closedsets}\cap\closedsets_{m}$ is
the smallest such that 
\[
M_{B}:\closedsets_{m}\rightarrow\R\quad M\mapsto\mathcal{H}^{m}(M\cap B)
\]
 is measurable for every measurable and bounded $B\subset\Rd$.
\end{lem}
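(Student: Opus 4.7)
The claim has two halves: (i) every functional $M_B$ is measurable with respect to $\sigma_{\closedsets}\cap\closedsets_{m}$, and (ii) any sub-$\sigma$-algebra on $\closedsets_{m}$ making all $M_B$ measurable already contains $\sigma_{\closedsets}\cap\closedsets_{m}$.

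For (i) I would proceed by a Dynkin/monotone-class argument. Since for each fixed $M\in\closedsets_{m}$ the map $B\mapsto\hausdorffH^{m}(M\cap B)$ is a locally finite Borel measure on $\Rd$, the collection of bounded Borel sets $B$ on which $M\mapsto\hausdorffH^{m}(M\cap B)$ is measurable forms a Dynkin system; hence it suffices to verify measurability on the $\pi$-system of bounded open sets. By inner regularity of the Hausdorff measure, $\hausdorffH^{m}(M\cap V)=\sup_{n}\hausdorffH^{m}(M\cap K_{n})$ for an increasing sequence of compacts $K_{n}\nearrow V$, so the task reduces to showing measurability of $M\mapsto\hausdorffH^{m}(M\cap K)$ for compact $K$. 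For this one uses the standard representation $\hausdorffH^{m}(M\cap K)=\sup_{\delta>0}\hausdorffH^{m}_{\delta}(M\cap K)$, where the premeasure $\hausdorffH^{m}_{\delta}$ is an infimum over countable $\delta$-coverings. Restricting to coverings by open balls with rational centres and radii, this infimum becomes a countable infimum over sets of the form $\{M\,:\,M\cap K\subset\bigcup_{i}U_{i}\}$, which in turn can be expressed through conditions $\{M\,:\,M\cap (K\setminus\bigcup_{i}U_{i})=\emptyset\}$, and these lie in $\sigma_{\closedsets}$ by the very definition of the Fell topology (cf.\ Remark~\ref{rem:The-Matheron--field}).

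For (ii) I would recover a generator of $\sigma_{\closedsets}$ from the $M_B$'s. The decisive point is that the members of $\closedsets_{m}$ are \emph{submanifolds}, not merely closed sets of Hausdorff dimension $m$: for $M\in\closedsets_{m}$ and any open $V$, a point $x\in M\cap V$ is contained together with a small geodesic $m$-disk of $M$ inside $V$, which has strictly positive $\hausdorffH^{m}$-measure. Therefore
\[
\closedsets_{V}\cap\closedsets_{m}=\left\{M\in\closedsets_{m}\,:\;\hausdorffH^{m}(M\cap V)>0\right\}.
\]
Writing a general open $V$ as a countable union of bounded open $V_{n}$, the right-hand side equals $\bigcup_{n}\{M\,:\,\hausdorffH^{m}(M\cap V_{n})>0\}$ and is therefore measurable with respect to any $\sigma$-algebra that makes all $M_{B}$ measurable on bounded Borel $B$. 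Since the sets $\closedsets_{V}$ with $V$ open generate $\sigma_{\closedsets}$ (Remark~\ref{rem:The-Matheron--field}), the asserted minimality follows.

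The main obstacle is the measurability step in (i): the definition of $\hausdorffH^{m}$ involves an \emph{a priori} uncountable infimum over coverings, and one has to reduce this to a countable lattice operation on Fell-measurable sets. Once that reduction is carried out, both halves of the lemma follow by purely formal manipulations; the manifold hypothesis is needed only in (ii), in order to equate non-emptiness of $M\cap V$ with positivity of $\hausdorffH^{m}(M\cap V)$.
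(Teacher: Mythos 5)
The paper does not prove this lemma; it is quoted from Z\"ahle \cite{Zaehle1982} (Theorem 2.1.3, Corollary 2.1.5) and used as a black box, so there is no internal proof to compare against. On its own merits your argument is correct in structure and follows the standard route. Both halves are sound: the manifold hypothesis enters exactly where it must, namely to identify $\closedsets_{V}\cap\closedsets_{m}$ with $\left\{M:\hausdorffH^{m}(M\cap V)>0\right\}$, and the reduction of $M_{B}$ to the compact case via a Dynkin/monotone-class argument and inner regularity is the right move.

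Two points deserve tightening before the proof is airtight. First, for fixed $\delta$ and $c$ you express $\left\{M:\hausdorffH^{m}_{\delta}(M\cap K)<c\right\}$ as a union over rational-ball coverings, but the family of all \emph{countable} such coverings is uncountable; you should invoke compactness of $M\cap K$ (closed intersected with compact) to pass to finite subcovers, of which there are only countably many, so that the union is genuinely countable and lands in $\sigma_{\closedsets}$. Second, the Carath\'eodory construction restricted to covers by balls produces the spherical measure $\mathcal{S}^{m}$, which agrees with $\hausdorffH^{m}$ only on rectifiable sets; since each $M\in\closedsets_{m}$ is a submanifold and hence countably $m$-rectifiable this coincidence is available, but it should be stated explicitly rather than left implicit. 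With those two remarks inserted, the proof is complete.
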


This means that 
\[
M_{\Rd}:\closedsets_{m}\rightarrow\fM(\Rd)\quad M\mapsto\mathcal{H}^{m}(M\cap\cdot)
\]
is measurable with respect to the $\sigma$-algebra created by the
Vague topology on $\fM(\Rd)$. Hence a random closed set always induces
a random measure. Based on Lemma \ref{lemmazaehlerandommeasure} and
on Palm-theory, the following useful result was obtained in \cite{heida2011extension}
(See Lemma 2.14 and Section 3.1 therein).
\begin{thm}
\label{thm:Main-THM-Sto-Geo}Let $(\Omega,\sigma,P)$ be a probability
space with an ergodic dynamical system $\tau$. Let $A:(\Omega,\sigma,P)\longrightarrow(\closedsets,\sigma_{\closedsets})$
be a stationary random closed $m$-dimensional $C^{k}$-Manifold.

a) There exists a separable metric space $\tilde{\Omega}\subset\fM\of{\Rd}$
with an ergodic dynamical system $\tilde{\tau}$ and a mapping $\tilde{A}:\,(\tilde{\Omega},\cB_{\tilde{\Omega}},\P)\to(\closedsets,\sigma_{\closedsets})$
such that $A$ and $\tilde{A}$ have the same law and such that $\tilde{A}$
still is stationary. Furthermore, $(x,\omega)\mapsto\tau_{x}\omega$
is continuous. We identify $\tilde{\Omega}=\Omega$, $\tilde{A}=A$
and $\tilde{\tau}=\tau$.

b) The mapping 
\[
\mu_{\bullet}:\,\,\Omega\to\fM(\Rd)\,,\quad\omega\mapsto\mu_{\omega}(\cdot):=\mathcal{H}^{m}(M\cap\cdot)
\]
is a stationary random measure on $\Rd$ and there exists a corresponding
Palm-measure $\mupalm$ if and only if $\mu_{\bullet}$ has finite
intensity.

c) There exists a measurable set $\hat{A}\subset\Omega$, called the
\emph{prototype }of $A$, such that $\chi_{A(\omega)}(x)=\chi_{\hat{A}}(\tau_{x}\omega)$
for $\lebesgueL+\muomega$-almost every $x$ and $\P$-almost surely.
The Palm-measure $\mupalm$ of $\muomega$ concentrates on $\hat{A}$,
i.e. $\mupalm(\Omega\backslash\hat{A})=0$.

d) If $A$ is a random closed $m$-dimensional $C^{k}$-manifold,
then $\P(\hat{A})=0$.
\end{thm}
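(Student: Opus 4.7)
The strategy is to use the Hausdorff-measure map from Lemma~\ref{lemmazaehlerandommeasure} to reduce everything to a measure-theoretic setting where stationarity, separability and Palm theory are classical, and only then to read off the statements about the set $A$.

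\textbf{Part (a).} I would consider the map
\[
\Phi:\,\Omega\to\fM(\Rd)\,,\qquad \omega\mapsto\muomega:=\cH^{m}\of{A(\omega)\cap\cdot}\,,
\]
which is measurable by Lemma~\ref{lemmazaehlerandommeasure}. Define $\tilde\Omega:=\Phi(\Omega)$ equipped with the vague topology (which is metrizable, separable and countably generated), $\tilde\P:=\Phi_{*}\P$, and the translation action $\tilde\tau_{x}\mu:=\mu(\cdot+x)$. By Lemma~\ref{lem:contin-geom-Ops} together with stationarity of $A$, $\Phi\circ\tau_{x}=\tilde\tau_{x}\circ\Phi$ $\P$-a.s., so $\tilde\tau$ is measure preserving and ergodic (ergodicity passes to the factor because $\Phi$ intertwines the actions). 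Continuity of $(x,\mu)\mapsto\tilde\tau_{x}\mu$ in the vague topology is elementary: on test functions $f\in C_{c}(\Rd)$, $\int f\,\d\tilde\tau_{x}\mu=\int f({\cdot}-x)\,\d\mu$ is jointly continuous in $(x,\mu)$ by dominated convergence. Finally define $\tilde A(\mu):=\support\mu$; since for a locally finite $m$-dimensional Hausdorff measure the closed set $A(\omega)$ is recovered as the support of $\muomega$, $\tilde A$ is a RACS with the same law as $A$.

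\textbf{Part (b).} After the identification from (a), the map $\omega\mapsto\muomega$ is a stationary random measure by construction: $\muomega(B+x)=\mu_{\tau_{x}\omega}(B)$ follows from $A(\tau_{x}\omega)=A(\omega)-x$. The existence of the Palm measure $\mupalm$ under finite intensity is then Mecke's theorem recalled in Section~\ref{subsec:Random-measures-and}, and its necessity follows from $\mupalm(\Omega)=\E(\muomega(\T^{d}))$.

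\textbf{Part (c).} I define the prototype as
\[
\hat A:=\left\{\omega\in\Omega\,:\;0\in\support\muomega\right\}\,,
\]
which, in the separable metric model from (a), is measurable because $\left\{\mu:\,\mu(\Ball{1/n}0)>0\right\}$ is open in the vague topology and $\hat A=\bigcap_{n}\left\{\mu:\mu(\Ball{1/n}0)>0\right\}$. Stationarity yields $0\in\support\mu_{\tau_{x}\omega}\iff x\in\support\muomega$, i.e. $\chi_{A(\omega)}(x)=\chi_{\hat A}(\tau_{x}\omega)$ on the support of $\muomega$ for $\P$-a.e. $\omega$; off the support the identity holds $\lebesgueL$-a.e. because $A(\omega)=\support\muomega$. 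The fact that $\mupalm$ concentrates on $\hat A$ follows from the Campbell formula applied to $f(x,\omega)=\chi_{\Omega\setminus\hat A}(\omega)\,g(x)$ with $g\geq0$, $\int g=1$: the integrand $\chi_{\Omega\setminus\hat A}(\tau_{x}\omega)=\chi_{\{x\notin\support\muomega\}}$ vanishes $\muomega$-a.e., so the Campbell integral equals $\mupalm(\Omega\setminus\hat A)=0$.

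\textbf{Part (d).} For an $m$-dimensional manifold with $m<d$, $\lebesgueL(A(\omega))=0$ $\P$-a.s. Fubini combined with stationarity of $\chi_{A}$ gives
\[
0=\E\,\lebesgueL\of{A(\omega)\cap\T^{d}}=\int_{\T^{d}}\P\of{x\in A(\omega)}\,\d x=\int_{\T^{d}}\P\of{\hat A}\,\d x=\P(\hat A)\,.
\]

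\textbf{Expected obstacles.} The only genuinely delicate point is part (a): pushing forward the dynamical system and verifying that the new action remains ergodic, measure preserving and jointly continuous in $(x,\mu)$. Everything else is essentially a careful bookkeeping of Mecke/Campbell formulas and Fubini, once the separable metric realization is in place.
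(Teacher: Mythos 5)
The paper does not give a proof of this theorem: it is quoted from \cite{heida2011extension} (Lemma~2.14 and Section~3.1 there), so there is no in-paper argument to compare against. Judged on its own merits, your proof plan is sound and follows the natural route suggested by the surrounding text (Z\"ahle's Lemma~\ref{lemmazaehlerandommeasure} plus Mecke/Campbell theory). Two remarks, one of substance and one of precision.

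The substantive point is in part~(a): the intertwining $\Phi\circ\tau_{x}=\tilde\tau_{x}\circ\Phi$ does not hold everywhere, because the paper's Definition~\ref{def:RACS}(b) only gives $\chi_{A(\omega)}(x)=\chi_{A(\tau_x\omega)}(0)$ for a.e.\ $x$ and a.e.\ $\omega$, which does not by itself imply $A(\tau_{x}\omega)=A(\omega)-x$ as closed sets. You should say explicitly that, since $A(\omega)$ is a $C^{k}$ $m$-manifold of full local $\cH^{m}$-support and $\mu_\omega$ is locally finite, the a.e.\ agreement of characteristic functions against $\lebesgueL+\mu_\omega$ forces $A(\tau_{x}\omega)=A(\omega)-x$ $\P$-a.s.\ for each fixed $x$; that is what you need to push $\P$ forward to a $\tilde\tau$-invariant $\tilde\P$ on $\fM(\Rd)$. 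Indeed the whole content of part~(a) is to replace the ``a.e.'' stationarity of the original model by the exact identity $\tilde A(\tilde\tau_{x}\mu)=\tilde A(\mu)-x$, valid for all $x$ and all $\mu$, which your construction $\tilde A(\mu):=\support\mu$, $\tilde\tau_{x}\mu:=\mu(\cdot+x)$ does achieve. Once this is noted, parts~(c) and~(d) become cleaner: in the model, $\chi_{A(\omega)}(x)=\chi_{\hat A}(\tau_{x}\omega)$ holds for every $x$, so the ``on the support / off the support'' case split in your write-up is unnecessary and slightly misleading, and your Campbell and Fubini arguments go through verbatim.

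The precision point: the joint continuity of $(x,\mu)\mapsto\tilde\tau_{x}\mu$ is not ``dominated convergence''; the correct justification is that for $f\in C_{c}(\Rd)$, $x_{n}\to x$, $\mu_{n}\to\mu$ vaguely, one has
$\left|\int f(\cdot-x_{n})\,\d\mu_{n}-\int f(\cdot-x)\,\d\mu\right|\le\sup\left|f(\cdot-x_{n})-f(\cdot-x)\right|\,\mu_{n}(K)+\left|\int f(\cdot-x)\,\d(\mu_{n}-\mu)\right|$,
where $K$ is a fixed compact containing the supports of all $f(\cdot-x_{n})$; the first term vanishes by uniform continuity of $f$ together with $\sup_{n}\mu_{n}(K)<\infty$ (local finiteness under vague convergence), and the second by vague convergence. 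Finally, in part~(d) your argument silently assumes $m<d$ (so that $\lebesgueL(A(\omega))=0$); this is in spirit with the intended application but should be stated.
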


Also the following result will be useful below.
\begin{lem}
\label{lem:lower-semi-cont-measures}Let $\mu$ be a Radon measure
on $\Rd$ and let $\bQ\subset\Rd$ be a bounded open set. Let $\closedsets_{0}\subset\closedsets\left(\overline{\bQ}\right)$
be such that $\closedsets_{0}\to\R$, $A\mapsto\mu(A)$ is continuous.
Then 
\[
m:\,\closedsets\times\closedsets_{0}\to\fM\of{\Rd}\,,\quad\left(P,B\right)\mapsto\begin{cases}
A\mapsto\mu\of{A\cap B} & B\subset P\\
0 & \text{else}
\end{cases}
\]
is measurable.
\end{lem}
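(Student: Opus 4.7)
The plan is to verify measurability of $m$ into $\fM(\Rd)$, which carries the vague topology, by composition with the generating family $\nu\mapsto\int f\,\d\nu$, $f\in C_{c}(\Rd)$. For each such $f$ one has
\[
\int f\,\d m(P,B)\;=\;\chi_{E}(P,B)\cdot\int_{B} f\,\d\mu,\qquad E:=\bigl\{(P,B)\in\closedsets\times\closedsets_{0}\,:\;B\subset P\bigr\},
\]
so the problem reduces to showing (i) that $E$ is Borel in $\closedsets\times\closedsets_{0}$ and (ii) that $B\mapsto\int_{B}f\,\d\mu$ is Borel measurable on $\closedsets_{0}$ for every $f\in C_{c}(\Rd)$. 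Notably, the continuity hypothesis $A\mapsto\mu(A)\in C(\closedsets_{0})$ is not needed beyond the standing Radonness of $\mu$.

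For (i), I enumerate the countable collection of open balls $B_{i}:=\Ball{r_{i}}{q_{i}}$ with $r_{i}\in\Q\cap(0,\infty)$ and $q_{i}\in\Q^{d}$, and observe that $B\not\subset P$ holds iff there is some $i$ with $B\cap B_{i}\neq\emptyset$ and $B_{i}\subset\Rd\setminus P$: the forward direction takes $x\in B\setminus P$, uses openness of $\Rd\setminus P$ to fit an open ball around $x$ inside $\Rd\setminus P$, and then inscribes a rational ball through $x$; the reverse is immediate. The second condition reads $B\in\closedsets_{B_{i}}$, a basic set of the Matheron $\sigma$-algebra. For the first, $P\cap B_{i}=\emptyset$ is equivalent to $P\in\bigcap_{n:\,r_{i}-1/n>0}\closedsets^{\overline{\Ball{r_{i}-1/n}{q_{i}}}}$, a countable intersection of basic sets, hence measurable. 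Therefore $E^{c}$ is a countable union of measurable rectangles and $E$ is Borel.

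For (ii), writing $f=f^{+}-f^{-}$ reduces the task to $f\ge 0$, for which $\nu_{f}:=f\mu$ is a finite Radon measure on $\Rd$ and $\int_{B}f\,\d\mu=\nu_{f}(B)$. I claim that $B\mapsto\nu_{f}(B)$ is upper semicontinuous on $\closedsets(\overline{\bQ})$: if $B_{n}\to B$ in the Fell topology (equivalently Hausdorff on the compact space $\closedsets(\overline{\bQ})$) and $\eps>0$, outer regularity of $\nu_{f}$ produces an open $U\supset B$ with $\nu_{f}(U)<\nu_{f}(B)+\eps$; by Remark~\ref{rem:char-fell} one has $B_{n}\subset U$ for all sufficiently large $n$, and hence $\nu_{f}(B_{n})\le\nu_{f}(U)<\nu_{f}(B)+\eps$. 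Upper semicontinuity implies Borel measurability, and combined with (i) this yields measurability of $(P,B)\mapsto\int f\,\d m(P,B)$ for every $f\in C_{c}(\Rd)$, hence measurability of $m:\,\closedsets\times\closedsets_{0}\to\fM(\Rd)$. The main obstacle is step (i) — the careful Fell-topological bookkeeping needed to express $E^{c}$ as a countable union of measurable rectangles; once that is in place, the upper semicontinuity argument in (ii) is short and standard.
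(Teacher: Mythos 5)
Your proposal is correct, and it takes a genuinely different route from the paper's. The paper proves directly that, for $f\ge 0$, the joint map $(P,B)\mapsto m_f(P,B)$ is upper semicontinuous on the product $\closedsets\times\closedsets_0$: it splits a convergent sequence $(P_n,B_n)\to(P,B)$ according to whether $B_n\subset P_n$, uses Remark~\ref{rem:char-fell} together with compactness of $\overline{\bQ}$ to conclude $B\subset P$ when $B_n\subset P_n$ eventually, and then invokes the continuity hypothesis on $A\mapsto\mu(A)$ over $\closedsets_0$ to pass to the limit in $\int_{B_n}f\,\d\mu$; the decomposition $f=f^{+}-f^{-}$ then yields measurability of $m_f$ for general $f$. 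Your argument instead factors $m_f(P,B)=\chi_E(P,B)\cdot\int_B f\,\d\mu$ with $E=\{(P,B):B\subset P\}$, establishes Borel measurability of $E$ through an explicit exhaustion by rational balls and the generators $\closedsets_V$, $\closedsets^K$ of the Matheron $\sigma$-algebra, and obtains upper semicontinuity of $B\mapsto\int_B f\,\d\mu$ (for $f\ge 0$) from outer regularity of the finite Radon measure $f\mu$ combined again with Remark~\ref{rem:char-fell}. The factorization cleanly separates the two sources of measurability, replaces the joint product-space semicontinuity case analysis by a countable union of measurable rectangles, and, as you point out, renders the lemma's continuity hypothesis on $\closedsets_0$ superfluous, so you actually prove a slightly stronger statement. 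The paper's route is somewhat shorter but leans on a hypothesis that must be separately verified at each application of the lemma; yours is self-contained under the Radon assumption alone.
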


\begin{proof}
For $f\in C_{c}(\Rd)$ we introduce $m_{f}$ through 
\[
m_{f}:\,\left(P,B\right)\mapsto\begin{cases}
\int_{B}f\,\d\mu & B\subset P\\
0 & \text{else}
\end{cases}
\]
and observe that $m$ is measurable if and only if for every $f\in C_{c}\of{\Rd}$
the map $m_{f}$ is measurable (see Section \ref{subsec:Random-measures-and}).
Hence, if we prove the latter property, the lemma is proved.

We assume $f\geq0$ and we show that the mapping $m_{f}$ is even
upper continuous. In particular, let $(P_{n},B_{n})\to(P,B)$ in $\closedsets\times\closedsets_{0}$
and assume that $B_{n}\subset P_{n}$ for all $n>N_{0}$. Since $\overline{\bQ}$
is compact, Remark \ref{rem:char-fell}.~2. implies that $B\subset P\cap\overline{\bQ}$.
Furthermore, since $f$ has compact support, we find $\left|\int_{B_{n}}f\,\d\mu-\int_{B}f\,\d\mu\right|\leq\norm f_{\infty}\left|\mu\of{B_{n}}-\mu\of B\right|\to0$.
On the other hand, if there exists a subsequence such that $B_{n}\not\subset P_{n}$
for all $n$, then either $B\not\subset P$ and $m_{f}(P_{n},B_{n})=0\to m_{f}(P,B)=0$
or $B\subset P$ and $0=\lim_{n\to\infty}m_{f}(P_{n},B_{n})\leq\int_{B}f\d\mu=m_{f}(P,B)$.
For $f\leq0$ we obtain lower semicontinuity and for general $f$
the map $m_{f}$ is the sum of an upper and a lower semicontinuous
map, hence measurable.
\end{proof}

\subsection{Point Processes}
\begin{defn}[(Simple) point processes]
A $\Z$-valued random measure $\mu_{\omega}$ is called point process.
In what follows, we consider the particular case that for almost every
$\omega$ there exist points $\left(x_{k}(\omega)\right)_{k\in\N}$
and values $\left(a_{k}\left(\omega\right)\right)_{k\in\N}$ in $\Z$
such that
\[
\mu_{\omega}=\sum_{k\in\N}a_{k}\delta_{x_{k}(\omega)}\,.
\]
The point process $\mu_{\omega}$ is called simple if almost surely
for all $k\in\N$ it holds $a_{k}\in\left\{ 0,1\right\} $.
\end{defn}

\begin{example}[Poisson process]
 \label{exa:poisson-point-proc}\nomenclature[Poisson Process]{Poinsson process}{(Example \ref{exa:poisson-point-proc})}A
particular example for a stationary point process is the Poisson point
process $\mu_{\omega}=\X_{\omega}$ with intensity $\lambda$. Here,
the probability $\P\of{\X(A)=n}$ to find $n$ points in a Borel-set
$A$ with finite measure is given by a Poisson distribution
\begin{equation}
\P\of{\X(A)=n}=e^{-\lambda\left|A\right|}\frac{\lambda^{n}\left|A\right|^{n}}{n!}\label{eq:PoisonPointPoc-Prob}
\end{equation}
with expectation $\E\of{\X(A)}=\lambda\left|A\right|$. The last formula
implies that the Poisson point process is stationary.
\end{example}

We can use a given random point process to construct further processes.
\begin{example}[Hard core Matern process]
\label{exa:Matern} \nomenclature[Matern Process]{Matern process}{(Example \ref{exa:Matern}--\ref{exa:Matern-Pois})}The
hard core Matern process is constructed from a given point process
$\X_{\omega}$ by mutually erasing all points with the distance to
the nearest neighbor smaller than a given constant $r$. If the original
process $\X_{\omega}$ is stationary (ergodic), the resulting hard
core process is stationary (ergodic) respectively.
\end{example}

\begin{example}[Hard core Poisson--Matern process]
 \label{exa:Matern-Pois} If a Matern process is constructed from
a Poisson point process, we call it a Poisson--Matern point process.
\end{example}

\begin{lem}
\label{lem:PP-is-RACS}Let $\mu_{\omega}$ be a simple point process
with $a_{k}=1$ almost surely for all $k\in\N$. Then $\X_{\omega}=\left(x_{k}(\omega)\right)_{k\in\N}$
is a random closed set. On the other hand, if $\X_{\omega}=\left(x_{k}(\omega)\right)_{k\in\N}$
is a random closed set that almost surely has no limit points then
$\mu_{\omega}$ is a point process.
\end{lem}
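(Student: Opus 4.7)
The assertion is the measurable equivalence, well known in classical point process theory, between simple point processes with unit multiplicities and random closed sets of isolated points. The task is therefore to verify measurability in both directions with respect to the Matheron $\sigma$-algebra $\sigma_{\closedsets}$ on $\closedsets\of{\Rd}$ and the Vague-Borel $\sigma$-algebra on $\fM\of{\Rd}$, using their natural generators.

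\textbf{Forward direction.} Assume $\muomega=\sum_{k}\delta_{x_{k}\of{\omega}}$ is simple with $a_{k}=1$. Local boundedness of $\muomega$ forces $\muomega\of K<\infty$ for every compact $K$, so only finitely many $x_{k}\of{\omega}$ lie in $K$ and $\X_{\omega}=\left(x_{k}\of{\omega}\right)_{k\in\N}$ has no limit points in $\Rd$; hence $\X_{\omega}$ is closed almost surely. By Remark \ref{rem:The-Matheron--field}, $\sigma_{\closedsets}$ is generated by the sets $\closedsets^{K}=\left\{ F\,:\;F\cap K=\emptyset\right\} $ for compact $K$, and
\[
\left\{ \omega\,:\;\X_{\omega}\in\closedsets^{K}\right\} =\left\{ \omega\,:\;\muomega\of K=0\right\}
\]
is measurable by the random-measure property of $\mu$. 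Thus $\X_{\bullet}$ is a RACS.

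\textbf{Backward direction.} Now assume $\X_{\omega}=\left(x_{k}\of{\omega}\right)_{k\in\N}$ is a RACS with no limit points a.s. Then the sum $\muomega:=\sum_{k}\delta_{x_{k}\of{\omega}}$ defines a locally finite Borel measure a.s. To show $\mu_{\bullet}$ is a random measure it suffices, by a monotone class argument, to show $\omega\mapsto\muomega\of K$ is measurable for every compact $K$, which in turn follows from measurability of $\left\{ \omega\,:\,\muomega\of K\geq n\right\} $ for each $n\in\N$. For rational $\eps>0$, let $\cC_{\eps}\of K$ be the finite family of cubes $Q_{z,\eps}=z+\of{0,\eps}^{d}$, $z\in\eps\Zd$, whose closure meets $K$, and set
\[
\fA_{n,\eps}:=\bigcup_{\substack{Q_{1},\dots,Q_{n}\in\cC_{\eps}\of K\\
\text{pairwise disjoint}
}
}\bigcap_{i=1}^{n}\left\{ \omega\,:\,\X_{\omega}\in\closedsets_{Q_{i}}\right\} .
\]
Each $\left\{ \omega\,:\,\X_{\omega}\in\closedsets_{Q_{i}}\right\} $ is measurable since $\closedsets_{V}$ generates $\sigma_{\closedsets}$, hence $\fA_{n,\eps}$ is measurable. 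The absence of limit points ensures that for a.e. $\omega$ and sufficiently small $\eps$, the $n$-point condition on $K$ is equivalent to having $n$ distinct $\eps$-cubes of $\cC_{\eps}\of{K_{\eps}}$ hit by $\X_{\omega}$, where $K_{\eps}$ is a slight outer fattening of $K$; a countable Boolean combination over rational $\eps$ and rational fattenings recovers $\left\{ \omega\,:\,\muomega\of K\geq n\right\} $ modulo a $\P$-null set.

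\textbf{Main obstacle.} The only delicate point is the backward direction: expressing the counting function $\omega\mapsto\#\of{\X_{\omega}\cap K}$ as a countable Boolean combination of cube-hitting events. Boundary effects (points of $\X_{\omega}$ landing on $\partial K$) require slight fattenings of $K$ together with the almost sure absence of accumulation points, which guarantees that the count stabilises as $\eps\to0$. Once this measurability is established for compact $K$, the extension to bounded Borel $B$ and hence the measurability of $\omega\mapsto\muomega$ into $\fM\of{\Rd}$ is a routine monotone class exercise.
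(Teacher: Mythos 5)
Your proposal is correct but takes a genuinely different route from the paper in both directions, following the classical textbook construction (Matheron, Daley--Vere-Jones) rather than the machinery the paper has already set up. For the forward direction you test against the generator $\closedsets^{K}$ and observe that $\X^{-1}\of{\closedsets^{K}}=\left\{\omega\,:\,\muomega\of K=0\right\}$ is measurable by the first characterization of a random measure, namely measurability of $\omega\mapsto\muomega(A)$. The paper instead tests against continuous Lipschitz functions $f_{V,R}$ and $f_{\delta}^{K}$ and reads off $\X^{-1}\of{\closedsets_{V}}$ and $\X^{-1}\of{\closedsets^{K}}$ from the sign of $\int f\,\d\muomega$, i.e.\ it uses the second (integral) characterization. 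Both are legitimate and essentially equivalent in effort. The more substantive divergence is in the backward direction: you reconstruct $\omega\mapsto\muomega\of K$ by a dyadic cube-counting argument, using absence of accumulation points to show the count of hit cubes stabilizes. The paper instead introduces the smoothed measures $\mu_{\omega}^{\delta}\of A=\left(\left|\S^{d-1}\right|\delta^{d}\right)^{-1}\lebesgueL\of{A\cap\Ball{\delta}{\X_{\omega}}}$, shows $\omega\mapsto\int f\,\d\mu_{\omega}^{\delta}$ is measurable via Lemma \ref{lem:contin-geom-Ops} (continuity of $A\mapsto\overline{\Ball{\delta}{A}}$) and Lemma \ref{lem:lower-semi-cont-measures}, and then passes to the limit $\delta\to0$; this buys a shorter argument by reusing lemmas already proved for other purposes in the paper. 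Your cube-counting route is more elementary and self-contained but leaves the stabilization step (``a countable Boolean combination over rational $\eps$ and rational fattenings recovers $\left\{\omega\,:\,\muomega\of K\geq n\right\}$'') at the level of a sketch; for completeness you would want to spell out that for nested dyadic cubes $\bigcup\cC_{j}\of K$ decreases to $K$, that absence of accumulation points ensures all points of $\X_{\omega}$ near $K$ are eventually separated by the grid, and hence that the cube count is eventually nonincreasing with limit $\#\of{\X_{\omega}\cap K}$. That detail is standard but should not be omitted if this were to replace the paper's proof.
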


\begin{proof}
Let $\mu_{\omega}$ be a point process. For open $V\subset\Rd$ and
compact $K\subset\Rd$ let 
\begin{gather*}
f_{V,R}(x)=\dist\of{\,x,\,\Rd\backslash\left(V\cap\Ball R0\right)\,}\,,\qquad f_{\delta}^{K}(x)=\max{\left\{ \,1-\frac{1}{\delta}\dist\of{x,K}\,,\,0\,\right\} }\,.
\end{gather*}
Then $f_{V,R}$ is Lipschitz with constant $1$ and $f_{\delta}^{K}$
is Lipschitz with constant $\frac{1}{\delta}$ and support in $\Ball{\delta}K$.
Moreover, since $\mu_{\omega}$ is locally bounded, the number of
points $x_{k}$ that lie within $\Ball 1K$ is bounded. In particular,
we obtain
\begin{align*}
\X^{-1}\of{\closedsets_{V}} & =\bigcup_{R>0}\left\{ \omega\,:\;\int_{\Rd}f_{V,R}\,\d\mu_{\omega}>0\right\} \,,\\
\X^{-1}\of{\closedsets^{K}} & =\bigcap_{\delta>0}\left\{ \omega\,:\;\int_{\Rd}f_{\delta}^{K}\,\d\mu_{\omega}>0\right\} \,,
\end{align*}
are measurable. Since $\closedsets_{V}$ and $\closedsets^{K}$ generate
the $\sigma$-algebra on $\closedsets\left(\Rd\right)$, it follows
that $\omega\to\X_{\omega}$ is measurable.

In order to prove the opposite direction, let $\X_{\omega}=\left(x_{k}(\omega)\right)_{k\in\N}$
be a random closed set of points. Since $\X_{\omega}$ has almost
surely no limit points the measure $\mu_{\omega}$ is locally bounded
almost surely. We prove that $\mu_{\omega}$ is a random measure by
showing that 
\[
\forall f\in C_{c}\of{\Rd}\,:\qquad F:\,\omega\mapsto\int_{\Rd}f\,\d\mu_{\omega}\text{ is measurable.}
\]
For $\delta>0$ let $\mu_{\omega}^{\delta}\of A:=\left(\left|\S^{d-1}\right|\delta^{d}\right)^{-1}\lebesgueL\of{A\cap\Ball{\delta}{\X_{\omega}}}$.
By Lemmas \ref{lem:contin-geom-Ops} and \ref{lem:lower-semi-cont-measures}
we obtain that $F_{\delta}:\,\omega\mapsto\int_{\Rd}f\,\d\mu_{\omega}^{\delta}$
are measurable. Moreover, for almost every $\omega$ we find $F_{\delta}\left(\omega\right)\to F\left(\omega\right)$
uniformly and hence $F$ is measurable.
\end{proof}
\begin{cor}
A random simple point process $\mu_{\omega}$ is stationary iff $\X_{\omega}$
is stationary.
\end{cor}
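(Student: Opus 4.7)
The plan is to leverage the measurable bijection between simple point processes and atom-free-limit random closed sets established in Lemma \ref{lem:PP-is-RACS}, and to observe that the canonical shift action on $\Rd$ intertwines the two representations. Concretely, for any $x\in\Rd$, translation of the counting measure $A\mapsto \mu_\omega(A+x)$ is precisely the counting measure associated to the translated point set $\X_\omega - x$, because $\mu_\omega=\sum_k \delta_{x_k(\omega)}$ pushed forward under $y\mapsto y-x$ equals $\sum_k \delta_{x_k(\omega)-x}$. This single identity is the backbone of both implications.

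For the forward direction, assume $\mu_\omega$ is stationary. By Section \ref{subsec:Random-measures-and} this provides a dynamical system $(\tau_x)_{x\in\Rd}$ on $\Omega$ with $\mu_\omega(A+x)=\mu_{\tau_x\omega}(A)$ for every Borel set $A\subset\Rd$. Testing this identity against the sets $\Ball{\eps}{y}$ and letting $\eps\downarrow 0$, and using that $\mu_\omega$ is simple with locally finite support, I recover the atoms pointwise: $y\in\X_{\tau_x\omega}$ iff $y+x\in\X_\omega$. Equivalently $\chi_{\X(\omega)}(y+x)=\chi_{\X(\tau_x\omega)}(y)$ for $\P$-a.e.\ $\omega$ and every $y$, which at $y=0$ yields stationarity of $\X_\omega$ in the sense of Definition \ref{def:RACS}~b) with respect to the same $\tau$.

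For the converse, suppose $\X_\omega$ is stationary w.r.t.\ some $(\tau_x)_{x\in\Rd}$, i.e.\ $\chi_{\X(\omega)}(x)=\chi_{\X(\tau_x\omega)}(0)$ almost surely. Extending this relation from single points to the whole set gives $\X_{\tau_x\omega}=\X_\omega-x$ almost surely, so that for every Borel $A\subset\Rd$
\[
\mu_{\tau_x\omega}(A)=\#\bigl(\X_{\tau_x\omega}\cap A\bigr)=\#\bigl(\X_\omega\cap (A+x)\bigr)=\mu_\omega(A+x),
\]
which is exactly stationarity of the random measure $\mu_\omega$ relative to $\tau$.

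I do not anticipate a serious obstacle; the only point requiring a little care is checking that the very same dynamical system $\tau$ witnesses stationarity on both sides. This is automatic because the bijection $\mu_\omega\leftrightarrow\X_\omega$ from Lemma \ref{lem:PP-is-RACS} is measurable and equivariant under the $\Rd$-shift, so any $\tau$ implementing shift invariance on one representation implements it on the other.
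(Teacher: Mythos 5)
The paper states this corollary without a proof, treating it as an immediate consequence of Lemma \ref{lem:PP-is-RACS} and the definitions; your argument spells out exactly the reasoning that is being left implicit, and it is correct. The key identity $\X_{\tau_x\omega}=\X_\omega-x$ (equivalently $\mu_{\tau_x\omega}=\mu_\omega(\cdot+x)$) is indeed what ties the two notions together, and your use of small balls $\Ball{\eps}{y}$ to pass from measure-level equivariance to atom-level equivariance is the right move given that $\mu_\omega$ is simple with locally finite support.

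One remark on a step you compress: in the converse direction you write ``Extending this relation from single points to the whole set,'' which deserves a line. From Definition~\ref{def:RACS}.b one has $\chi_{\X(\omega)}(x)=\chi_{\X(\tau_x\omega)}(0)$, and to upgrade this pointwise-at-$0$ statement to the full set identity $\X_{\tau_x\omega}=\X_\omega-x$ you should invoke the group law of the dynamical system: for any $y$, $y\in\X_{\tau_x\omega}$ iff $0\in\X_{\tau_y\tau_x\omega}=\X_{\tau_{x+y}\omega}$ iff $x+y\in\X_\omega$. With that step made explicit the argument is complete. Also note that the paper's notion of stationary random measure is a priori distributional while stationary random closed set presupposes a dynamical system $\tau$; you correctly handle this asymmetry by using the $\tau$ furnished by the cited construction on the measure side, and by observing that measure-preservation of $\tau$ recovers the distributional statement on the way back.
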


Hence we can provide the following definition based on Definition
\ref{def:RACS}.
\begin{defn}
\label{def:jontly-stat-point-regular}A point process $\mu_{\omega}$
and a random set $\bP$ are jointly stationary if $\bP$ and $\X$
are jointly stationary.
\end{defn}

\begin{lem}
\label{lem:Matern-is-RACS}Let $\X_{\omega}=\left(x_{i}\right)_{i\in\N}$
be a Matern point process from Example \ref{exa:Matern} with distance
$r$ and let for $\delta<\frac{r}{2}$ be $\bB(\omega):=\bigcup_{i}\overline{\B{\delta}{x_{i}}}$.
Then $\bB(\omega)$ is a random closed set.
\end{lem}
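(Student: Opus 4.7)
The natural strategy is to realize $\bB(\omega)$ as the composition of two already-controlled measurable maps: the point set $\omega \mapsto \X_\omega$ and the thickening $A \mapsto \overline{\Ball{\delta}{A}}$. I would proceed in three short steps.

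First, I would verify that $\X_\omega = (x_i(\omega))_{i \in \N}$ is itself a random closed set. The Matern process of Example \ref{exa:Matern} is a simple point process, and by construction any two distinct points satisfy $|x_i - x_j| \geq r > 0$, so $\X_\omega$ has no limit points almost surely. Hence Lemma \ref{lem:PP-is-RACS} applies and $\omega \mapsto \X_\omega$ is measurable as a map into $(\closedsets(\Rd), \sigma_{\closedsets})$.

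Second, I would identify $\bB(\omega) = b_\delta(\X_\omega) = \overline{\Ball{\delta}{\X_\omega}}$ set-theoretically. The inclusion $\bigcup_i \overline{\Ball{\delta}{x_i}} \subset \overline{\Ball{\delta}{\X_\omega}}$ is immediate. Conversely, if $y$ satisfies $\dist(y,\X_\omega) \leq \delta$, then since $\X_\omega \cap \overline{\Ball{\delta+1}{y}}$ is a finite set (any two of its points are separated by at least $r$, so only finitely many fit in a bounded ball), the infimum is attained at some $x_i$ with $|y - x_i| \leq \delta$, giving $y \in \overline{\Ball{\delta}{x_i}}$. Thus $\bB(\omega) = b_\delta(\X_\omega)$ pointwise.

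Third, Lemma \ref{lem:contin-geom-Ops} provides continuity of $b_\delta : \closedsets(\Rd) \to \closedsets(\Rd)$ with respect to the Fell topology, and hence measurability with respect to $\sigma_{\closedsets}$. Consequently the composition $\omega \mapsto b_\delta(\X_\omega) = \bB(\omega)$ is measurable, which is precisely the assertion that $\bB$ is a random closed set. The only slightly delicate point is the second step --- ensuring that the countable union of closed balls really coincides with the thickening $b_\delta(\X_\omega)$ --- but this is a direct consequence of the hard-core condition $r > 2\delta$ (indeed only $r > 0$ suffices via local finiteness), so no serious obstacle is expected.
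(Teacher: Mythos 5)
Your proof is correct and follows essentially the same route as the paper's: measurability of $\omega\mapsto\X_\omega$ (via Lemma \ref{lem:PP-is-RACS}) composed with continuity of $A\mapsto\overline{\Ball{\delta}{A}}$ (Lemma \ref{lem:contin-geom-Ops}). The paper states this in two lines; you additionally spell out the set-theoretic identification $\bB(\omega)=\overline{\Ball{\delta}{\X_\omega}}$, which the paper leaves implicit but which is indeed the one point worth checking.
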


\begin{proof}
This follows from Lemma \ref{lem:contin-geom-Ops}: $\X_{\omega}$
is measurable and $\X\mapsto\overline{\Ball{\delta}\X}$ is continuous.
Hence $\bB\left(\omega\right)$ is measurable.
\end{proof}

\subsection{Unoriented Graphs on Point Processes}
\begin{defn}[(Unoriented) Graph]\label{def:graph-neighbor}
Let $\X=\left(x_{i}\right)_{i\in\N}\subset\Rd$ be a countable set
of points. A graph $\left(\G,\X\right)$ on $\X$ (or simply $\G$
on $\X$) is a subset $\G\subset\X^{2}$. The graph $\G$ is unoriented
if $\left(x,y\right)\in\G$ implies $\left(y,x\right)\in\G$. For
$\left(x,y\right)\in\G$ we write $x\sim y$.
\end{defn}

Elements of $\G$ are usually referred to as \emph{edges}. Classically,
a graph consists of vertices $\X$ and edges $\G$, so the graph is
given through $\left(\G,\X\right)$. However, in this work the set
of points $\X$ will usually be given and we will mostly discuss the
properties of $\G$. This is why we adopt standard notations.
\begin{defn}[Paths and connected graphs]
Let $\X=\left(x_{i}\right)_{i\in\N}\subset\Rd$ be a countable set
of points with a graph $\G\subset\X^{2}$. A \emph{path} in $\X$
is a sorted family of points $\left(y_{1},\dots,y_{N}\right)\in\X^{N}$,
$N\in\N$, such that for every $k\in\left\{ 1,\dots,N-1\right\} $
it holds $y_{k}\sim y_{k+1}$. The family of all paths in $\X$ is
hence a subset of $\bigcup_{N\in\N}\X^{N}$. The graph $\G$ is said
to be \emph{connected} if for every $x,y\in\G$, $x\neq y$, there
exists $N>2$ and a path $\left(y_{1},\dots,y_{N}\right)\in\X^{N}$
such that $y_{1}=x$ and $y_{N}=y$.
\end{defn}

\begin{rem}
Let $\left(y_{1},\dots,y_{k}\right)$ with be a path from $y_{1}$
to $y_{k}$ . A path from $y_{k}$ to $y_{1}$ is given by reversing
the order, i.e. by $\left(y_{k},\dots,y_{1}\right)$.
\end{rem}

\begin{defn}[Local extrema on graphs]
Let $\X\subset\Rd$ be a countable set of points with a graph $\G$.
A function $u:\,A\subset\X\to\R$ has a \emph{local maximum }resp.
\emph{minimum }in $y\in A$ if for all $\tilde{y}\in A$ with $\tilde{y}\sim y$
it holds $u(y)\geq u\of{\tilde{y}}$ resp. $u(y)\leq u\of{\tilde{y}}$
\end{defn}

\subsection{\label{subsec:Dynamical-Systems-on-Zd}Dynamical Systems on $\protect\Zd$}
\begin{defn}
\label{def:A-dynamical-system-Zd}Let $\left(\hat{\Omega},\hat{\sF},\hat{\P}\right)$
be a probability space. A discrete dynamical system on $\hat{\Omega}$
is a family $(\hat{\tau}_{z})_{z\in r\Zd}$ of measurable bijective
mappings $\hat{\tau}_{z}:\hat{\Omega}\mapsto\hat{\Omega}$ satisfying
(i)-(iii) of Definition \ref{Def:Omega-mu-tau}. A set $A\subset\hat{\Omega}$
is almost invariant if for every $z\in r\Zd$ it holds $\P\left(\left(A\cup\hat{\tau}_{z}A\right)\backslash\left(A\cap\hat{\tau}_{z}A\right)\right)=0$
and $\hat{\tau}$ is called ergodic w.r.t. $r\Zd$ if every almost
invariant set has measure $0$ or $1$. 
\end{defn}

Similar to the continuous dynamical systems, also in this discrete
setting an ergodic theorem can be proved.
\begin{thm}[See Krengel and Tempel'man \cite{krengel1985ergodic,tempel1972ergodic}]
\label{thm:Ergodic-Theorem-discrete} Let $\left(A_{n}\right)_{n\in\N}\subset\Rd$
be a convex averaging sequence, let $(\hat{\tau}_{z})_{z\in r\Zd}$
be a dynamical system on $\hat{\Omega}$ with invariant $\sigma$-algebra
$\sI$ and let $f:\,\hat{\Omega}\to\R$ be measurable with $\left|\E(f)\right|<\infty$.
Then for almost all $\hat{\omega}\in\hat{\Omega}$ 
\begin{equation}
\left|A_{n}\right|^{-1}\sum_{z\in A_{n}\cap r\Zd}f\of{\hat{\tau}_{z}\hat{\omega}}\to r^{-d}\E\of{f|\sI}\,.\label{eq:ergodic convergence-discrete}
\end{equation}
\end{thm}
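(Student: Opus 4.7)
The plan is to reduce the discrete statement to the continuous Ergodic Theorem~\ref{thm:Ergodic-Theorem} by a suspension construction. I will build a continuous dynamical system on $\Omega := \hat{\Omega} \times [0,r)^{d}$ equipped with the product probability measure $\P := \hat{\P} \otimes r^{-d}\lebesgueL$. For $u \in \Rd$ denote by $z(u) \in r\Zd$ the unique lattice point with $u - z(u) \in [0,r)^{d}$, and set
\[
\tau_{x}(\hat{\omega}, y) := \bigl(\hat{\tau}_{z(x+y)}\hat{\omega},\ (x+y) - z(x+y)\bigr).
\]
A short calculation using the $r\Zd$-equivariance $z(u+v) = z(u)+v$ for $v \in r\Zd$, together with the group property of $\hat{\tau}$, shows that $\tau_{x_{1}} \circ \tau_{x_{2}} = \tau_{x_{1}+x_{2}}$; measurability, measure-preservation and joint measurability in $(x,\omega)$ are straightforward, so $\tau$ is a dynamical system in the sense of Definition~\ref{def:A-dynamical-system}.

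Next, I will lift $f$ to $F(\hat{\omega},y) := f(\hat{\omega}) \in L^{1}(\Omega,\P)$, with $\E_{\P}(F) = \E(f)$. Starting from $\omega_{0} = (\hat{\omega}_{0}, 0)$ a direct computation gives
\[
\int_{A_{n}} F(\tau_{x}\omega_{0})\,\d x = \sum_{z \in r\Zd} \bigl|A_{n} \cap (z + [0,r)^{d})\bigr|\, f(\hat{\tau}_{z}\hat{\omega}_{0}),
\]
which differs from $r^{d}\sum_{z \in A_{n} \cap r\Zd} f(\hat{\tau}_{z}\hat{\omega}_{0})$ only through lattice points within $r\sqrt{d}$ of $\partial A_{n}$. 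After splitting $f = f^{+} - f^{-}$ I will sandwich each of these nonnegative discrete sums between the corresponding continuous integrals over an inner and an outer $r\sqrt{d}$-enlargement $A_{n}^{\pm}$ of $A_{n}$. Since $A_{n}$ is a convex averaging sequence, $|A_{n}^{\pm}|/|A_{n}| \to 1$, and so Theorem~\ref{thm:Ergodic-Theorem} applied to $F^{\pm}$ along both $A_{n}^{\pm}$ and $A_{n}$ forces the three normalized integrals to share the common almost-sure limit $\E_{\P}(F^{\pm}|\sI_{\tau})$, where $\sI_{\tau}$ denotes the invariant $\sigma$-algebra of $\tau$. Dividing by $r^{d}|A_{n}|$ and recombining $f^{\pm}$ then yields convergence of the left-hand side to $r^{-d}\E_{\P}(F|\sI_{\tau})$ for $\P$-a.e.\ $\omega_{0}$, and hence for $\hat{\P}$-a.e.\ $\hat{\omega}_{0}$ by Fubini.

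The main obstacle I expect is the identification $\E_{\P}(F|\sI_{\tau}) = \E(f|\sI)$. One direction is immediate: every $\hat{A} \in \sI$ lifts to the $\tau$-invariant cylinder $\hat{A}\times[0,r)^{d}$. For the converse I plan to argue that any $\tau$-invariant $A$ has, modulo $\P$-null sets, a fibre $A_{\hat{\omega}} \subset [0,r)^{d}$ that is essentially constant in $y$: taking $x \in [0,r)^{d}$ only, the invariance reduces to invariance of $y \mapsto A_{\hat{\omega}}$ under the ergodic torus translations on $[0,r)^{d}$, forcing $A_{\hat{\omega}} \in \{\emptyset,[0,r)^{d}\}$ (mod null); taking then $x \in r\Zd$ shows that the set $\{\hat{\omega}:\,A_{\hat{\omega}} = [0,r)^{d}\}$ belongs to $\sI$. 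Because $F$ depends only on the first coordinate, this gives $\E_{\P}(F|\sI_{\tau}) = \E(f|\sI)$, and the factor $r^{-d}$ in the target of the theorem comes precisely from the normalization of Lebesgue measure on the $[0,r)^{d}$-factor of $\P$.
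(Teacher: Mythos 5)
The paper states Theorem~\ref{thm:Ergodic-Theorem-discrete} with a citation to Krengel and Tempel'man and gives no proof, so there is no in-text argument to compare against; your suspension construction is the standard self-contained reduction of the discrete theorem to the continuous Theorem~\ref{thm:Ergodic-Theorem}, and it is correct in substance. In particular the group property of $\tau$ via the $r\Zd$-equivariance $z(u+v)=z(u)+v$, measure preservation, the sandwich of $r^{d}\sum_{z\in A_{n}\cap r\Zd}f^{\pm}(\hat\tau_{z}\hat\omega_{0})$ between $\int_{A_{n}^{-}}F^{\pm}(\tau_{x}\omega_{0})\,\d x$ and $\int_{A_{n}^{+}}F^{\pm}(\tau_{x}\omega_{0})\,\d x$, the observation $|A_{n}^{\pm}|/|A_{n}|\to1$ (using $B_{r_{n}}(0)\subset A_{n}$ and convexity to get $A_{n}^{+}\subset(1+r\sqrt{d}/r_{n})A_{n}$ and $(1-r\sqrt{d}/r_{n})A_{n}\subset A_{n}^{-}$), and the origin of the factor $r^{-d}$ in the normalisation of Lebesgue measure on the fibre are all sound.

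Two places deserve tightening. First, for the identification $\sI_{\tau}=\{\hat A\times[0,r)^{d}:\hat A\in\sI\}$ modulo null sets, the phrase ``invariance of $y\mapsto A_{\hat\omega}$ under the ergodic torus translations on $[0,r)^{d}$'' is not quite accurate: for $x\in[0,r)^{d}$ the map $\tau_{x}$ does not preserve the fibre $\{\hat\omega\}\times[0,r)^{d}$, since as soon as $x+y$ leaves the fundamental cell the $\hat\Omega$-coordinate shifts. A clean substitute: set $\phi(\hat\omega):=r^{-d}\int_{[0,r)^{d}}\chi_{A}(\hat\omega,y)\,\d y$; invariance under $\tau_{z}$ with $z\in r\Zd$ makes $\phi$ $\sI$-measurable, and by Fubini, for $\P$-a.e.\ $(\hat\omega,y)$ the map $x\mapsto\chi_{A}(\tau_{x}(\hat\omega,y))$ is a.e.\ equal to $\chi_{A}(\hat\omega,y)$, so integrating $x$ over the cube $[0,r)^{d}-y$ (whose image under $\tau_{\cdot}(\hat\omega,y)$ is exactly $\{\hat\omega\}\times[0,r)^{d}$) yields $\chi_{A}(\hat\omega,y)=\phi(\hat\omega)$ a.e., i.e.\ $A$ is a product modulo null. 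Second, Fubini applied to the full-measure set of $\omega_{0}=(\hat\omega_{0},y_{0})$ where Theorem~\ref{thm:Ergodic-Theorem} converges gives, for $\hat\P$-a.e.\ $\hat\omega_{0}$, convergence for \emph{some} a.e.\ $y_{0}\in[0,r)^{d}$, not for $y_{0}=0$; you should note that $F(\tau_{x}(\hat\omega_{0},y_{0}))=f(\hat\tau_{z(x+y_{0})}\hat\omega_{0})$, so the same sandwich (with boundary layer thickened by $|y_{0}|\le r\sqrt d$) applied at that $y_{0}$ still produces exactly the desired discrete limit for $\hat\omega_{0}$, which is what makes the Fubini reduction legitimate.
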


In the following, we restrict to $r=1$ for simplicity of notation.

Let $\Omega_{0}\subset\Rd$. We consider an enumeration $\left(\xi_{i}\right)_{i\in\N}$
of $\Zd$ such that $\hat{\Omega}:=\Omega_{0}^{\Zd}=\Omega_{0}^{\N}$
and write $\hat{\omega}=\left(\hat{\omega}_{\xi_{1}},\hat{\omega}_{\xi_{2}},\dots\right)=\left(\hat{\omega}_{1},\hat{\omega}_{2},\dots\right)$
for all $\hat{\omega}\in\hat{\Omega}$. We define a metric on $\hat{\Omega}$
through 
\[
d(\hat{\omega}_{1},\hat{\omega}_{2})=\sum_{k=1}^{\infty}\frac{1}{2^{k}}\frac{\left|\hat{\omega}_{1,\xi_{k}}-\hat{\omega}_{2,\xi_{k}}\right|}{1+\left|\hat{\omega}_{1,\xi_{k}}-\hat{\omega}_{2,\xi_{k}}\right|}\,.
\]
We write $\Omega_{n}:=\Omega_{0}^{n}$ and $\N_{n}:=\left\{ k\in\N:\,k\geq n+1\right\} $.
The topology of $\hat{\Omega}$ is generated by the open sets $A\times\Omega_{0}^{\N_{n}}$,
where for some $n>0$, $A\subset\Omega_{n}$ is an open set. In case
$\Omega_{0}$ is compact, the space $\hat{\Omega}$ is compact. Further,
$\hat{\Omega}$ is separable in any case since $\Omega_{0}$ is separable
(see \cite{Kelley1955}).

We consider the ring
\[
\cR=\bigcup_{n\in\N}\left\{ A\times\Omega_{0}^{\N_{n}}\,:\;A\subset\Omega_{n}\text{ is measurable}\right\} 
\]
and suppose for every $n\in\N$ that there exists a probability measure
$\P_{n}$ on $\Omega_{n}$ such that for every measurable $A_{n}\subset\Omega_{n}$
it holds $\P_{n+k}\of{A_{n}\times\Omega^{k}}=\P_{n}\of{A_{n}}$. Then
we define 
\[
\P\of{A_{n}\times\Omega_{0}^{\N_{n}}}:=\P_{n}\of{A_{n}}\,.
\]
We make the observation that $\P$ is additive and positive on $\cR$
and $\P(\emptyset)=0$. Next, let $\left(A_{j}\right)_{j\in\N}$ be
an increasing sequence of sets in $\cR$ such that $A:=\bigcup_{j}A_{j}\in\cR$.
Then, there exists $\tilde{A}_{1}\subset\Omega_{0}^{n}$ such that
$A_{1}=\tilde{A}_{1}\times\Omega_{0}^{\N_{n}}$ and since $A_{1}\subset A_{2}\subset\dots\subset A$,
for every $j>1$, we conclude $A_{j}=\tilde{A_{j}}\times\Omega_{0}^{\N_{n}}$
for some $\tilde{A}_{j}\subset\Omega_{n}$. Therefore, $\P(A_{j})=\P_{n}(\tilde{A}_{j})\to\P_{n}(\tilde{A})=\P(A)$
where $A=\tilde{A}\times\Omega_{0}^{\N_{n}}$. We have thus proved
that $\P:\cR\to[0,1]$ can be extended to a measure on the Borel-$\sigma$-Algebra
on $\Omega$ (See \cite[Theorem 6-2]{Berberian1965}).

We define for $z\in\Zd$ the mapping
\[
\hat{\tau}_{z}:\,\hat{\Omega}\to\hat{\Omega}\,,\qquad\hat{\omega}\mapsto\hat{\tau}_{z}\hat{\omega}\,,\quad\mbox{where }\left(\hat{\tau}_{z}\hat{\omega}\right)_{\xi_{i}}=\hat{\omega}_{\xi_{i}+z}\mbox{ component wise}\,.
\]

\begin{rem}
\label{rem:standard-omega-0}In this paper, we consider particularly
$\Omega_{0}=\left\{ 0,1\right\} $. Then $\hat{\Omega}:=\Omega_{0}^{\Zd}$
is equivalent to the power set of $\Zd$ and every $\hat{\omega}\in\hat{\Omega}$
is a sequence of $0$ and $1$ corresponding to a subset of $\Zd$.
Shifting the set $\hat{\omega}\subset\Zd$ by $z\in\Zd$ corresponds
to an application of $\hat{\tau}_{z}$ to $\hat{\omega}\in\hat{\Omega}$.
\end{rem}

Now, let $\bP(\omega)$ be a stationary ergodic random open set and
let $r>0$. Recalling (\ref{eq:Pr}) the map $\omega\mapsto\bP_{-r}(\omega)$
is measurable due to Lemma \ref{lem:contin-geom-Ops} and we can define
$\X_{r}\of{\bP\of{\omega}}:=2r\Zd\cap\bP_{-\frac{r}{2}}\of{\omega}$.
\begin{lem}
\label{lem:X-r-stationary}If $\bP$ is a stationary ergodic random
open set then the set\nomenclature[Xr]{$\X_r(\omega)=\X_r(\bP(\omega))$}{$=2r\Zd\cap \bP_{-r}(\omega)$, \eqref{eq:def-X_r}}
\begin{equation}
\X=\X_{r}(\omega):=\X_{r}\of{\bP\of{\omega}}:=2r\Zd\cap\bP_{-r}\of{\omega}\label{eq:def-X_r}
\end{equation}
 is a stationary random point process w.r.t. $2r\Zd$.
\end{lem}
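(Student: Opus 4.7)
The plan is to verify two things: measurability of $\omega\mapsto\X_r(\omega)$ as a $\Z$-valued random measure (equivalently, as a random closed set of points), and invariance under the discrete shift group generated by $2r\Zd$ acting via $\tau$. Since $2r\Zd$ is a countable discrete set and $\bP_{-r}(\omega)$ is closed, I expect the proof to be a short bookkeeping argument building on Lemma \ref{lem:contin-geom-Ops}, Lemma \ref{lem:PP-is-RACS} and the stationarity of $\bP$.

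For measurability, I would first observe that $\bP_{-r}(\omega)=\Rd\setminus b_r\of{\Rd\setminus\bP(\omega)}$. Since $\bP$ is a RAOS, $\omega\mapsto\Rd\setminus\bP(\omega)$ is a measurable map into $(\closedsets(\Rd),\sigma_{\closedsets})$, and by Lemma \ref{lem:contin-geom-Ops} the map $b_r$ is continuous, hence measurable; its image is closed in $\Rd$, so the complement $\bP_{-r}(\omega)$ is open with a closed boundary and, viewed as a closed set, measurable in the Fell topology. To pass to the point process $\X_r$, note that for each fixed $z\in 2r\Zd$, since $\bP_{-r}(\omega)$ is closed, one has $z\in\bP_{-r}(\omega)$ if and only if $\bP_{-r}(\omega)\cap\Ball{1/n}{z}\neq\emptyset$ for every $n\in\N$. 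Each of these events lies in the Matheron $\sigma$-field (Remark \ref{rem:The-Matheron--field}) and the countable intersection is measurable. Since $2r\Zd$ is countable, this shows that $\X_r(\omega)$ is almost surely a locally finite set of points whose counting function $A\mapsto\#(A\cap\X_r(\omega))$ is measurable in $\omega$ for every bounded Borel $A$; by (the point-process direction of) Lemma \ref{lem:PP-is-RACS} this yields a simple random point process.

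For stationarity, I would use that $\bP$ is stationary, i.e.\ $\chi_{\bP(\omega)}(x)=\chi_{\bP}(\tau_x\omega)$ almost surely, which translates into $\bP(\tau_z\omega)=\bP(\omega)-z$ for $\P$-a.e.\ $\omega$ and every $z\in\Rd$. Since the inner hull $(\,\cdot\,)_{-r}$ commutes with translations, $\bP_{-r}(\tau_z\omega)=\bP_{-r}(\omega)-z$. For $z\in 2r\Zd$ the lattice is invariant under translation by $-z$, so
$\X_r(\tau_z\omega)=2r\Zd\cap\bP_{-r}(\tau_z\omega)=(2r\Zd-z)\cap(\bP_{-r}(\omega)-z)=\X_r(\omega)-z.$
This is exactly the stationarity condition with respect to the discrete dynamical system on $2r\Zd$ in the sense of Definition \ref{def:A-dynamical-system-Zd}, so the corresponding random counting measure $\mu_\omega=\sum_{x\in\X_r(\omega)}\delta_x$ is stationary under $\hat\tau_z$ with $z\in 2r\Zd$.

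The only subtle point is measurability of the pointwise event $\{z\in\bP_{-r}(\omega)\}$, because Lemma \ref{lem:contin-geom-Ops} gives measurability in the Fell topology but $\{z\}$ is neither open nor of the right form to apply $\closedsets_V$ directly; the work-around via the decreasing balls $\Ball{1/n}{z}$ and closedness of $\bP_{-r}$ is the main (mild) obstacle. Ergodicity is not needed for the statement and plays no role in the argument.
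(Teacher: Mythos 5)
Your proof is correct and follows essentially the same route as the paper's, which simply notes that $\X_r$ corresponds to a $\{0,1\}$-valued process on $2r\Zd$ and that the shift invariance of $\bP$ induces the required invariance. You spell out the measurability of the events $\{z\in\bP_{-r}(\omega)\}$ in more detail than the paper does (which just says "evidently"), and you correctly observe that ergodicity plays no role here; the only minor blemish is a small inconsistency where you first call $\bP_{-r}(\omega)$ open and a moment later closed -- by the paper's definition \eqref{eq:Pr} it is closed, and then your decreasing-balls argument applies.
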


\begin{proof}
By a simple scaling we can w.l.o.g. assume $2r=1$ and write $\X=\X_{r}$.
Evidently, $\X$ corresponds to a process on $\Zd$ with values in
$\Omega_{0}=\left\{ 0,1\right\} $ writing $\X(z)=1$ if $z\in\X$
and $\X(z)=0$ if $z\not\in\X$. In particular, we write $\left(\omega,z\right)\mapsto\X\of{\omega,z}$.
This process is stationary as  the shift invariance of $\bP$ induces
a shift-invariance of $\hat{\P}$ with respect to $\hat{\tau}_{z}$.
It remains to observe that the probabilities $\P\of{\X(z)=1}$ and
$\P\of{\X(z)=0}$ induce a random measure on $\hat{\Omega}$ in the
way described in Remark \ref{rem:standard-omega-0}. 
\end{proof}
\begin{rem}
If $\bP$ is mixing one can follow the lines of the proof of Lemma
\ref{lem:erg-and-mix-is-erg} to find that $\X_{r}\of{\bP\of{\omega}}$
is ergodic. However, in the general case $\X_{r}\of{\bP\of{\omega}}$
is not ergodic. This is due to the fact that by nature $\left(\tau_{z}\right)_{z\in\Zd}$
on $\Omega$ has more invariant sets than$\left(\tau_{x}\right)_{x\in\Rd}$.
For sufficiently complex geometries the map $\Omega\to\hat{\Omega}$
is onto.
\end{rem}

\begin{defn}[Jointly stationary]
\label{def:jointly-staionary-points}\nomenclature[jointly stationary]{jointly stationary}{Definitions \ref{def:RACS}, \ref{def:jontly-stat-point-regular} and \ref{def:jointly-staionary-points}}We
call a point process $\X$ with values in $2r\Zd$ to be strongly
jointly stationary with a random set $\bP$ if the functions $\chi_{\bP(\omega)}$,
$\chi_{\X(\omega)}$ are strongly jointly stationary w.r.t. the dynamical
system $\left(\tau_{2rx}\right)_{x\in\Zd}$ on $\Omega$.
\end{defn}

\section{\label{sec:Periodic-extension-theorem}Periodic Extension Theorem}

We study extension theorems on periodic geometries. In what follows,
we assume that the torus is split into $\T=\T_{1}\cup\T_{2}$ and
we denote $\bT_{1}$ and $\bT_{2}$ the periodic extensions of $\T_{1}$
and $\T_{2}$ respectively. In order to get familiar with our approach,
we first prove the following standard result, which was already obtained
in \cite{cioranescu1979homogenization} and generalized to $\Rd$
and $W^{1,p}(\bT_{1})$ in \cite{hopker2014diss} (see also \cite{hopker2014note}).
\begin{thm}[Extension Theorem]
\label{thm:Ext-per-isolated}Let $\T=\T_{1}\cup\T_{2}$ with $\T_{2}\subset\subset(0,1)^{d}$
compactly and such that $\partial\T_{2}$ is Lipschitz. Then, for
every $p\in[1,\infty)$ there exists $C$ depending only on $\T_{2}$,
$p$ and $d$ such that for every $u\in W^{1,p}(Y_{1})$: 
\begin{align}
\int_{\Rd}\left|\tilde{\cU}u\right|^{p} & \leq C\int_{Y_{1}}\left|u\right|^{p}\,,\label{eq:thm:Ext-per-isolated-1}\\
\int_{\Rd}\left|\nabla\left(\tilde{\cU}u\right)\right|^{p} & \leq C\int_{Y_{1}}\left|\nabla u\right|^{p}\,.\label{eq:thm:Ext-per-isolated-2}
\end{align}
\end{thm}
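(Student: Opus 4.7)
The strategy is the classical one: reduce the global extension problem to a single local extension on the reference cell, then glue copies together along the cell boundaries, exploiting the hypothesis $\T_2\subset\subset(0,1)^d$ which guarantees that the interfaces $\partial(z+\T)$ all lie inside $\bT_1$ and hence do not interact with any hole.

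First I would invoke standard Sobolev extension (see Remark \ref{rem:Extension}) on the bounded Lipschitz domain $\T_1=\T\setminus\T_2$ to obtain a single bounded operator $\cU_{0}:W^{1,p}(\T_1)\to W^{1,p}(\T)$ with $(\cU_0 v)|_{\T_1}=v$ and
\[
\|\cU_0 v\|_{L^p(\T)}\le C_0\|v\|_{L^p(\T_1)},\qquad \|\nabla\cU_0 v\|_{L^p(\T)}\le C_0\|\nabla v\|_{L^p(\T_1)},
\]
with $C_0=C_0(\T_2,p,d)$. The compact containment $\T_2\subset\subset(0,1)^d$ means the extension modifies $v$ only inside $\T_2$, strictly away from $\partial[0,1]^d$.

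Next, for $u\in W^{1,p}(\bT_1)$ I would define the cell-wise extension by
\[
(\tilde\cU u)(x):=\bigl(\cU_0\,u(\,\cdot\,+z)\bigr)(x-z)\qquad\text{for }x\in z+\T,\ z\in\Zd.
\]
This prescription is consistent on the shared faces $\partial(z+\T)\cap\partial(z'+\T)$ because these faces sit inside $\bT_1$, on which $\cU_0$ acts as the identity, so the different cell definitions all agree with $u$ there. Hence the traces match and $\tilde\cU u\in W^{1,p}_{\loc}(\Rd)$ by the standard characterization of Sobolev functions via traces across Lipschitz interfaces.

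Finally, the two estimates drop out by summing the local bounds cell by cell:
\begin{align*}
\int_{\Rd}|\tilde\cU u|^p &=\sum_{z\in\Zd}\int_{\T}\bigl|\cU_0 u(\cdot+z)\bigr|^p\le C_0^p\sum_{z\in\Zd}\int_{\T_1}\bigl|u(\cdot+z)\bigr|^p=C_0^p\int_{\bT_1}|u|^p,
\end{align*}
and analogously for the gradient, using that $\nabla\tilde\cU u$ is cellwise equal to $\nabla\cU_0 u(\cdot+z)$ (no singular contributions from the interfaces, since they lie in $\bT_1$). The main (and only) delicate point is the matching at $\partial(z+\T)$, and it is exactly the compact containment $\T_2\subset\subset(0,1)^d$ that makes this automatic; no partition of unity or cutoff is needed, which is why the constant depends only on $\T_2$, $p$, and $d$. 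In the percolating case (which motivates the later sections of the paper) this matching step is precisely what fails and must be replaced by the more elaborate mesoscopic gluing.
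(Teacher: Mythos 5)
Your overall architecture (single local extension on the reference cell, then stitch copies by periodicity, using $\T_2\subset\subset(0,1)^d$ to make the cell interfaces lie in $\bT_1$ so that the traces match trivially) is the same as the paper's, and your matching argument is correct. But your first step contains a genuine gap: the standard Sobolev extension operator from Remark~\ref{rem:Extension} only gives
\[
\norm{\cU_0 v}_{W^{1,p}(\T)}\leq C_0\norm{v}_{W^{1,p}(\T_1)}\,,
\]
i.e.\ it bounds the gradient of the extension by the \emph{full} $W^{1,p}$ norm of $v$, not by $\norm{\nabla v}_{L^p(\T_1)}$ alone. You have silently upgraded this to the pure gradient estimate $\norm{\nabla\cU_0 v}_{L^p(\T)}\leq C_0\norm{\nabla v}_{L^p(\T_1)}$, which is false for the generic Lipschitz-domain extension (the partition-of-unity cutoffs in its construction produce zero-order contributions). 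Without this upgrade, summing over cells only gives $\int_{\Rd}|\nabla(\tilde\cU u)|^p\leq C\int_{\bT_1}\left(|u|^p+|\nabla u|^p\right)$, which is not inequality (\ref{eq:thm:Ext-per-isolated-2}).

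The paper closes exactly this gap with a mean-subtraction trick: on each cell one first establishes the Poincar\'e inequality (\ref{eq:thm:Ext-per-isolated-poincare}) on $\T_1$ (by compactness and a contradiction argument), then sets $\tilde{\cU}u:=\cU_0\!\left(u-\overline{u}\right)+\overline{u}$ with $\overline{u}=\fint_{\T_1}u$. Since $\cU_0$ is linear, $\nabla\tilde{\cU}u=\nabla\cU_0(u-\overline u)$, the full-norm extension bound applies to $u-\overline u$, and Poincar\'e converts $\norm{u-\overline u}_{L^p(\T_1)}$ into $\norm{\nabla u}_{L^p(\T_1)}$, yielding the desired pure gradient estimate. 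Your cell-matching argument still goes through with this modified operator, since on $\partial(z+\T)\subset\bT_1$ one has $\tilde{\cU}u=(u-\overline u)+\overline u=u$. Incorporate the mean-subtraction and the Poincar\'e step and the proof is complete.
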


\begin{proof}
Since $\T_{2}\subset\subset(0,1)^{d}$ one proves by contradiction
the existence of $C>0$ such that 
\begin{equation}
\forall\varphi\in W^{1,p}((0,1)^{d}\backslash\T_{2})\,:\qquad\int_{\T_{1}}\left|\varphi\right|^{p}\leq C\left(\int_{\T_{1}}\left|\nabla\varphi\right|^{p}+\left|\fint_{\T_{1}}\varphi\right|\right)\,.\label{eq:thm:Ext-per-isolated-poincare}
\end{equation}
In what follows we write $\overline{\varphi}=\fint_{\T_{1}}\varphi$.
Since $\partial\T_{2}$ is Lipschitz, there exists a continuous operator
$\tilde{\cU}:\,W^{1,p}((0,1)^{d}\backslash\T_{2})\to W^{1,p}((0,1)^{d})$.
Due to (\ref{eq:thm:Ext-per-isolated-poincare}) it holds 
\begin{align*}
\int_{\T}\left|\cU\left(u-\overline{u}\right)+\overline{u}\right|^{p} & \leq C\int_{\T_{1}}\left|u\right|^{p}\,,\\
\int_{\T_{2}}\left|\nabla\left(\cU\left(u-\overline{u}\right)+\overline{u}\right)\right|^{p} & =\int_{\T_{2}}\left|\nabla\cU\left(u-\overline{u}\right)\right|^{p}\\
 & \leq C\left(\int_{\T_{1}}\left|u-\overline{u}\right|^{p}+\int_{\T_{1}}\left|\nabla\left(u-\overline{u}\right)\right|^{p}\right)\\
 & \leq C\int_{\T_{1}}\left|\nabla u\right|^{p}\,.
\end{align*}
For $u\in W^{1,p}(\bT_{1})$ and $k\in\Zd$, we define $\cU$ on $\Rd$
by applying it locally on every cell $I_{k}:=k+[0,1)^{d}$. Hence
$\cU$ satisfies (\ref{eq:thm:Ext-per-isolated-1})--(\ref{eq:thm:Ext-per-isolated-2}).
\end{proof}
The last proof heavily relied on the disconnectedness of $\bT_{2}$.
In case $\bT_{2}$ is connected, the ``gluing'' of the local extensions
is more delicate.
\begin{thm}
\label{thm:Ext-per-connected}Let $\T=\T_{1}\cup\T_{2}$ such that
,$\partial\bT_{1}$ is locally Lipschitz. Then there exist an extension
operator 
\[
\cU:\,W^{1,p}(\bT_{1})\to W^{1,p}(\Rd)
\]
such that for some $C>0$ depending only on $\delta$ and $p$ it
holds 
\begin{align}
\int_{\Rd}\left|\cU u\right|^{p} & \leq C\int_{\bT_{1}}\left|u\right|^{p}\,,\label{eq:thm:Ext-per-connected-1}\\
\int_{\Rd}\left|\nabla\left(\cU u\right)\right|^{p} & \leq C\int_{\bT_{1}}\left|\nabla u\right|^{p}\,.\label{eq:thm:Ext-per-connected-2}
\end{align}
\end{thm}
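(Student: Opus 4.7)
The plan is to follow the strategy sketched in the introduction: first construct local extensions on enlarged periodic cells via Lemma \ref{lem:uniform-extension-lemma}, then glue them with a periodic partition of unity while exploiting the connectivity of $\bT_1$ to get a gradient bound controlled only by $\nabla u$.

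\textbf{Step 1 (Uniform local extension).} Because $\partial\bT_1$ is locally Lipschitz and periodic, compactness of $\partial\T_1$ yields constants $\delta_0>0$ and $M>0$ such that for every $p\in\partial\bT_1$ the set $\partial\bT_1\cap\Ball{\delta_0}{p}$ is the graph of an $M$-Lipschitz function after a rotation. Cover $\partial\T_1$ by finitely many balls $\Ball{\rho}{p_j}$, $j=1,\dots,J$, with $\rho=\delta_0(4M^2+2)^{-1/2}$ so that Lemma \ref{lem:uniform-extension-lemma} produces local extension operators $\cU_{p_j}$ with uniform constants. Choose $\delta\in(0,\tfrac12)$ small enough that $\bigcup_{z\in\Zd}\bigcup_j\Ball{\rho/2}{p_j+z}$ covers $\partial\bT_1$ and that every $\Ball{\rho}{p_j}$ fits inside $Y:=(-\delta,1+\delta)^d$. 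Fix a subordinate partition of unity $\{\psi_j\}_{j=0}^J$ on $\overline{Y}$ with $\psi_0$ supported away from $\partial\bT_1$, and $|\nabla\psi_j|\leq C$.

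\textbf{Step 2 (Extension on one enlarged cell).} On the enlarged unit cell $Y$, define, for $u\in W^{1,p}(Y\cap\bT_1)$,
\[
\tilde\cU u := \psi_0\,u + \sum_{j=1}^J \psi_j\,\cU_{p_j}u,
\]
interpreted so that on $\bT_1$ it coincides with $u$. To avoid picking up $\|u\|_{L^p}$ in the gradient estimate, apply it to $u-\overline u$, where $\overline u:=\fint_{Y\cap\bT_1}u$, and set $\hat\cU_Y u:=\tilde\cU(u-\overline u)+\overline u$. Since $Y\cap\bT_1$ is a bounded connected Lipschitz domain, it supports a Poincaré inequality $\|u-\overline u\|_{L^p(Y\cap\bT_1)}\leq C\|\nabla u\|_{L^p(Y\cap\bT_1)}$; combined with the estimates in Lemma \ref{lem:uniform-extension-lemma} this gives
\[
\|\nabla\hat\cU_Y u\|_{L^p(Y)} \leq C\|\nabla u\|_{L^p(Y\cap\bT_1)},\qquad
\|\hat\cU_Y u-\overline u\|_{L^p(Y)} \leq C\|\nabla u\|_{L^p(Y\cap\bT_1)}.
\]

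\textbf{Step 3 (Gluing across periodic cells).} For $k\in\Zd$ let $Y_k:=k+Y$, $\hat\cU_k$ the translate of $\hat\cU_Y$, and $\overline u_k:=\fint_{Y_k\cap\bT_1}u$. Pick a periodic partition of unity $\{\varphi_k\}_{k\in\Zd}$ with $\mathrm{supp}\,\varphi_k\subset Y_k$, $\sum_k\varphi_k=1$, $|\nabla\varphi_k|\leq C$. Define
\[
\cU u(x):=\sum_{k\in\Zd}\varphi_k(x)\,\hat\cU_k u(x).
\]
On $\bT_1$ this equals $u$. Because only boundedly many $k$ contribute at each $x$, the $L^p$-bound (\ref{eq:thm:Ext-per-connected-1}) follows cell-wise.

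\textbf{Step 4 (Gradient via connectivity -- the main obstacle).} Using $\sum_k\nabla\varphi_k=0$, write
\[
\nabla\cU u(x) = \sum_k\varphi_k(x)\,\nabla\hat\cU_k u(x) + \sum_k\nabla\varphi_k(x)\,\bigl(\hat\cU_k u(x)-\overline u_{k_0(x)}\bigr),
\]
where $k_0(x)$ is one fixed cell whose closure contains $x$. The first sum is pointwise controlled by $\sum_k\varphi_k|\nabla\hat\cU_k u|$ and bounded locally by $C|\nabla u|^p$ through Step 2. In the second sum, each term splits as
\[
\hat\cU_k u-\overline u_{k_0} = (\hat\cU_k u-\overline u_k) + (\overline u_k-\overline u_{k_0}),
\]
the first piece being controlled by Step 2. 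The critical estimate is
\[
|\overline u_k-\overline u_{k_0}|^p \leq C\int_{(Y_k\cup Y_{k_0})\cap\bT_1}|\nabla u|^p,
\]
valid whenever $Y_k\cap Y_{k_0}\neq\emptyset$. This is where \emph{connectedness} of $\bT_1$ enters: $(Y_k\cup Y_{k_0})\cap\bT_1$ is a bounded connected Lipschitz domain (by periodicity it is a translate of one of finitely many configurations), hence a Poincaré inequality holds on it with a uniform constant, and both averages can be compared through the global mean on that set. This step is the heart of the proof and the analogue of the \textquotedbl mesoscopic connectivity\textquotedbl{} that must be reproved later in the stochastic setting.

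\textbf{Step 5 (Assembly).} Summing the pointwise bounds over $x\in k+[0,1)^d$ and over $k\in\Zd$, invoking periodicity so that every contributing neighborhood shape appears with a uniform constant, yields (\ref{eq:thm:Ext-per-connected-2}). Combined with the $L^p$-bound of Step 3, this completes the proof.
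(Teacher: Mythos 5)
Your proposal follows essentially the same route as the paper's proof: build local extensions on enlarged cells via Lemma \ref{lem:uniform-extension-lemma} with a subordinate partition of unity, subtract a cell-local mean before extending so the gradient bound is gradient-only, glue with a periodic partition of unity using $\sum_k\nabla\varphi_k=0$, and reduce the resulting cross terms to pairwise differences of local means estimated by Poincar\'e on $(Y_k\cup Y_{k_0})\cap\bT_1$, which is exactly (\ref{eq:thm:Ext-per-connected-help-6}). The only cosmetic difference is that you take the subtracted mean over $Y\cap\bT_1$ while the paper averages over a small interior ball $B\subset(2\delta,1-2\delta)^d\cap T_1$, and you handle the $\sum_k u\,\nabla\varphi_k$ term by subtracting a fixed $\overline u_{k_0(x)}$ rather than by exploiting the antisymmetry of $\partial_i\tilde\eta_z$; both choices are equivalent and neither affects the logic.
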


\begin{figure}

\centering{}\includegraphics[width=8cm]{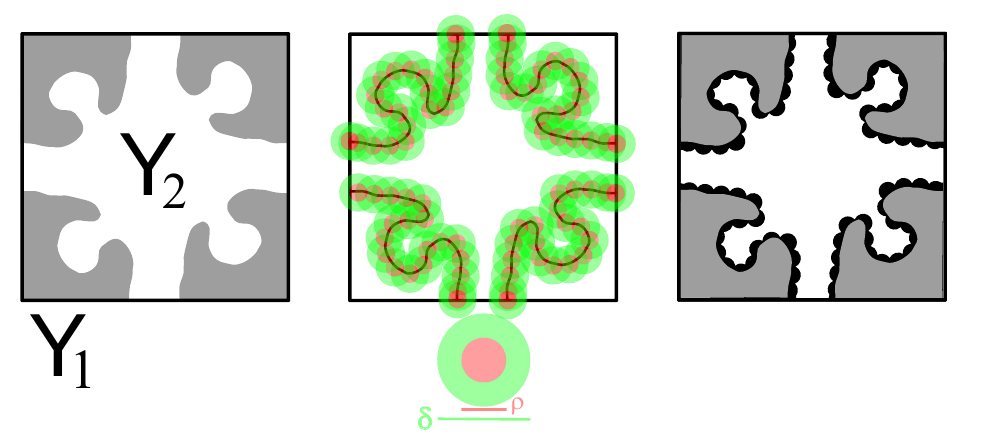}\caption{\label{fig:periodic-ext}Left: The periodic geometry $\protect\bT_{1}$
and $\protect\bT_{2}$. Middle: The boarder $\partial\protect\bT_{1}$
is covered by balls of a uniform size such that on each center $x_{i}$
there exists an extension operator from $\protect\bT_{1}\cap\protect\Ball{\delta}{x_{i}}$
to $\protect\bT_{2}\cap\protect\Ball{\rho}{x_{i}}$. Right: The microscopically
glued extension operator maps functions with support $\protect\bT_{1}$
onto functions with support in the black and gray domain.}
\end{figure}

Idea of Proof: In order to highlight the structure of the following
proof, let us explain how the extension operator is constructed. In
Figure \ref{fig:periodic-ext} we see on the left a Lipschitz surface
$\partial\bT_{1}$ with maximal Lipschitz constant $M$, which can
be locally covered by balls of radius $\rho=\delta\sqrt{4M^{2}+2}^{-1}$
(middle). Using the extension operators given by Lemma \ref{lem:uniform-extension-lemma},
we can extend $u$ to the red balls that intersect $\bT_{2}$. The
extension operators on the various red balls are then glued together
using a suitable partition of unity. However, this leads to steep
gradients in the black region on the right hand side, while $\cU u\equiv0$
in the white region. In particular, if $u(x)\equiv c$ is locally
constant, these gradients are of order $\frac{c}{\rho}$. Hence, proceeding
globally in this way, the gradient $\nabla\left(\tilde{\cU}u\right)$
cannot be bounded by $\nabla u$.

To avoid this problem, in Step 2 we use a mesoscopic correction: Writing
$K_{\alpha}:=(-\alpha,1+\alpha)^{d}$, and $K_{\alpha}(z)=z+K_{\alpha}$
for $z\in\Z^{d}$ with a partition of unity $\tilde{\eta}_{z}$ and
the local extension operator $\cU_{z}$ on $K_{\alpha}(z)$, we define
the global extension operator through:
\begin{equation}
\cU u:=\sum_{z\in\Zd}\tilde{\eta}_{z}\of{\tilde{\cU}_{z}\of{u-\tau_{z}u}+\tau_{z}u}\label{eq:thm:Ext-per-connected-def-global}
\end{equation}
where $\tau_{z}u=\fint_{B(z)}u$ for some suitable ball $B(z)$. By
this, we assign to the void space an averaged value of the surrounding
matrix. In Step 2 we heavily rely on the periodicity, which allows
to apply a $\T$-periodic partitioning to $\Rd$.
\begin{proof}
\textbf{Step 1 (Local extension operator on $(0,1)^{d}$):} W.l.o.g.
we can assume that $\delta\ll1$. Writing $K_{\alpha}:=(-\alpha,1+\alpha)^{d}$
the set $\partial\bT_{1}\cap K_{\delta}$ is precompact and can be
covered by a finite number of balls $B_{\rho/2}(x_{k})$, where $\rho=\delta\sqrt{4M^{2}+2}^{-1}$
and $\left(x_{k}\right)_{k=1,\dots,K}\subset\partial\bT_{1}\cap K_{\delta}$.

In what follows, let $\eta\in C_{0}^{\infty}(-1,1)$ be a positive
symmetric smooth function with $0<\eta(x)\leq1$ on $(-1,1)$, $\eta(0)=1$
and monotone on $(0,1)$. We denote $\eta_{0}:=\eta\circ\mathrm{dist}(\,\cdot\,,\partial\bT_{1}\,)$
and $\eta_{k}(x):=\eta\of{\rho^{-1}\left|x-x_{k}\right|}$ for $k\geq1$.
In what follows we identify $\eta_{k}$ with their periodized versions.
For every $k\geq0$ let $\tilde{\eta}_{k}=\of{\sum_{j=0}^{\infty}\eta_{j}}^{-1}\eta_{k}$
and note that $\tilde{\eta}_{k}$ defines a partition of unity on
$\partial\bT_{1}\cap K_{\delta}$. Writing $\cU_{i}$ for the corresponding
extension operator from Lemma \ref{lem:uniform-extension-lemma} on
$B_{\rho}(x_{i})$, we extend $u$ by $0$ to $\Rd\backslash\bT_{1}$
and consider 
\begin{align}
\tilde{\cU}:\;W^{1,p}(K_{2\delta}\cap\bT_{1}) & \to W^{1,p}(K_{\delta})\nonumber \\
\tilde{\cU}u & :=\sum_{i\in\N}\tilde{\eta}_{i}\cU_{i}u+\eta_{0}u\,.\label{eq:thm:Ext-per-connected-def-local}
\end{align}
For the following calculation, we further note that 
\begin{gather*}
\nabla\tilde{\eta}_{k}=\left(\sum_{j=0}^{\infty}\eta_{j}\right)^{-1}\nabla\eta_{k}-\left(\sum_{j=0}^{\infty}\eta_{j}\right)^{-2}\eta_{k}\sum_{j=0}^{\infty}\nabla\eta_{j}\\
\mbox{and }1\leq\sum_{j=0}^{\infty}\eta_{j}\leq\hat{N}\mbox{ as well as }\left|\sum_{j=0}^{\infty}\nabla\eta_{j}\right|\leq\hat{N}\norm{\nabla\eta}_{\infty}\,,
\end{gather*}
for some $\hat{N}$ depending only on the dimension $d$. Let $\boldsymbol{\tilde{B}}:=\left\{ B_{\rho}(x_{k})\right\} $.
For every $i\in\left\{ 1,\dots,k\right\} $, the number $\#\left\{ \tilde{B}_{j}\in\boldsymbol{\tilde{B}}\,|\;\tilde{B}_{j}\cap\tilde{B}_{i}\not=\emptyset\right\} $
of balls in $\boldsymbol{\tilde{B}}$ intersecting with $\tilde{B}_{i}$
is bounded by $\hat{N}$. On each ball we infer from Lemma \ref{lem:uniform-extension-lemma}
\begin{align*}
\int_{B_{i}}\left|\tilde{\eta}_{i}\cU_{i}u\right|^{p} & \leq7\int_{B_{\delta}(x_{i})\cap\bT_{1}}\left|u\right|^{p}\,,\\
\int_{B_{i}}\left|\nabla\left(\tilde{\eta}_{i}\cU_{i}u\right)\right|^{p} & \leq7\left\Vert \nabla\tilde{\eta}\right\Vert _{\infty}^{p}\int_{B_{\delta}(x_{i})\cap P}\left|u\right|^{p}+14M\int_{B_{\delta}(x_{i})\cap\bT_{1}}\left|\nabla u\right|^{p}\,.
\end{align*}
Similar estimates also hold for $\eta_{0}u$ and summing over $i$,
we obtain
\begin{align}
\int_{K_{\delta}}\left|\tilde{\cU}u\right|^{p} & \leq7\hat{N}\int_{K_{2\delta}\cap\bT_{1}}\left|u\right|^{p}\,,\label{eq:thm:Ext-per-connected-help-1}\\
\int_{K_{\delta}}\left|\nabla\left(\tilde{\cU}u\right)\right|^{p} & \leq7\hat{N}\frac{1}{\rho^{p}}\int_{K_{2\delta}\cap\bT_{1}}\left|u\right|^{p}+14M\hat{N}\int_{K_{2\delta}\cap\bT_{1}}\left|\nabla u\right|^{p}\,.\label{eq:thm:Ext-per-connected-help-2}
\end{align}
Now let $B\subset(2\delta,1-2\delta)^{d}\cap T_{1}$ be a ball with
positive radius. By a contradiction argument, we obtain 
\begin{equation}
\int_{K_{2\delta}\cap\bT_{1}}\left|u\right|^{p}\leq C\left(\int_{K_{2}\cap\bT_{1}}\left|\nabla u\right|^{p}+\left|\fint_{B}u\right|^{p}\right)\label{eq:thm:Ext-per-connected-help-2-a}
\end{equation}
and hence defining $\tau u:=\fint_{B}u$ we find 
\begin{equation}
\int_{K_{\delta}}\left|\nabla\left(\tilde{\cU}\left(u-\tau u\right)\right)\right|^{p}\leq28M\hat{N}\int_{K_{2}\cap\bT_{1}}\left|\nabla u\right|^{p}\,.\label{eq:thm:Ext-per-connected-help-2-b}
\end{equation}

\textbf{Step 2 (gluing together the local extension operators):} In
what follows, for every $z\in\Zd$ let $\left(\tilde{\cU}_{z}u\right)(\cdot):=\tilde{\cU}\of{u(\cdot+z)}(\cdot-z)$
the operator $\tilde{\cU}$ shifted onto the cell $z+K_{2\delta}$.
Given some positive $\overline{\eta}\in C_{c}(K_{\delta})$ with
$\overline{\eta}|_{(0,1)^{d}}\equiv1$ and symmetric w.r.t. the center
of $(0,1)^{d}$ we write $\eta_{z}:=\overline{\eta}(\cdot-z)$ such
that $\eta_{z}|_{z+(0,1)^{d}}\equiv1$ and introduce $\tilde{\eta}_{z}=\eta_{z}/\left(\sum_{x\in\Zd}\eta_{x}\right)$
which provide a $\left(0,1\right)^{d}$-periodic partition of unity.
Note that at each $x\in\Rd$ at most $2^{d}$ functions $\tilde{\eta}_{z}$
are different from $0$. We now define the operator $\cU$ according
to (\ref{eq:thm:Ext-per-connected-def-global}) with $\tau_{z}u:=\fint_{B+z}u$
and $\cU_{z}$ from Step 1 to find
\begin{align}
\int_{\Rd\backslash\bT_{1}}\left|\nabla\cU u\right|^{p} & =\int_{\Rd\backslash\bT_{1}}\left|\nabla\sum_{z\in\Zd}\tilde{\eta}_{z}\left(\tilde{\cU}_{z}\of{u-\tau_{z}u}+\tau_{z}u\right)\right|^{p}\nonumber \\
 & =\int_{\Rd\backslash\bT_{1}}\left|\sum_{z\in\Zd}\left[\nabla\tilde{\eta}_{z}\left(\tilde{\cU}_{z}\of{u-\tau_{z}u}+\tau_{z}u\right)+\tilde{\eta}_{z}\nabla\left(\tilde{\cU}_{z}\of{u-\tau_{z}u}\right)\right]\right|^{p}\nonumber \\
 & \leq C\left\Vert \nabla\tilde{\eta}\right\Vert _{\infty}^{p}\sum_{z\in\Zd}\left\Vert \tilde{\cU}_{z}\of{u-\tau_{z}u}\right\Vert _{L^{p}(z+K_{\delta}\backslash\bT_{1})}^{p}+C\int_{\Rd}\left|\sum_{z\in\Zd}\tau_{z}u\nabla\tilde{\eta}_{z}\right|^{p}\nonumber \\
 & \quad+C\sum_{z\in\Zd}\int_{z+K_{\delta}}\left|\nabla\left(\tilde{\cU}_{z}\of{u-\tau_{z}u}\right)\right|^{p}\,,\label{eq:thm:Ext-per-connected-help-7}
\end{align}
In order to derive an estimate on $\int_{\Rd}\left|\sum_{z\in\Zd}\tau_{z}u\nabla\tilde{\eta}_{z}\right|^{p}$,
note that for $z_{1},z_{2}\in\Zd$ and $x\in\Rd$ for all $i=1,\dots,d$
it holds $\partial_{i}\tilde{\eta}_{z_{1}}=-\partial_{i}\tilde{\eta}_{z_{2}}$
by symmetry and hence (writing $K_{\delta}(z)=z+K_{\delta}$
\[
\int_{\Rd}\left|\sum_{z\in\Zd}\tau_{z}u\nabla\tilde{\eta}_{z}\right|^{p}\leq\sum_{z_{1}\in\Zd}\sum_{z_{2}\in\Zd}\int_{K_{\delta}(z_{1})\cap K_{\delta}(z_{2})}\left|\nabla\tilde{\eta}_{z}\right|^{p}\left|\tau_{z_{1}}u-\tau_{z_{2}}u\right|^{p}\,.
\]
Thus, let $z_{1},z_{2}\in\Zd$ such that $\left(z_{1}+K_{2\delta}\right)\cap\left(z_{2}+K_{2\delta}\right)\not=\emptyset$.
Since $\bT_{1}$ is open and connected, one can prove 
\begin{align}
\left|\tau_{z_{1}}u-\tau_{z_{2}}u\right|^{p} & \leq C\int_{\bT_{1}\cap\left[\left(z_{1}+K_{2}\right)\cup\left(z_{2}+K_{2}\right)\right]}\left|\nabla u\right|^{p}\,,\label{eq:thm:Ext-per-connected-help-6}
\end{align}
where $C$ depends on $d$, $p$ and $\bT_{1}$. Together with (\ref{eq:thm:Ext-per-connected-help-2})--(\ref{eq:thm:Ext-per-connected-help-2-b})
we infer (\ref{eq:thm:Ext-per-connected-2}). Estimate (\ref{eq:thm:Ext-per-connected-1})
can be proved in an analogue way.
\end{proof}

\section{\label{sec:Nonlocal-regularity}Quantifying Nonlocal Regularity Properties
of the Geometry}

We have to account for three types of randomness. One is local, namely
the local Lipschitz regularity. The other is of global nature: We
have to find a partition of $\Rd$ such that on each partition cell
the extension can be explicitly constructed in a well defined way.
In the case of periodicity this is evidently trivial. However, since
we lack periodicity, we have to replace the periodic construction
of the extension operator in Section \ref{sec:Periodic-extension-theorem}
by something similar, but of stochastic nature. The key to this will
be the local $\left(\delta,M\right)$-regularity

The second problem will be overcome using a random distribution of
balls within $\bP(\omega)$ and a Voronoi tessellation which is such
that every Ball is contained in exactly one Voronoi cell. This construction
is based on the following observation.
\begin{lem}
\label{lem:there-are-balls}Let $\bP(\omega)$ be a stationary and
ergodic random open set such that 
\[
\P{\left(\bP\cap\I=\emptyset\right)}<1\,.
\]
Then there exists $\fr>0$ such that with positive probability $p_{\fr}>0$
the set $(0,1)^{d}\cap\bP$ contains a ball with radius $4\sqrt{d}\fr$.
\end{lem}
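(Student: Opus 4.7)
The plan is to exploit that $\bP$ is open, via a countable decomposition of the event $\{\bP \cap \I \neq \emptyset\}$ into events of the simple form ``a specific rational ball lies inside $\bP$''. No strong use of stationarity or ergodicity is needed.

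First, I introduce the countable family
$$
\mathcal{F} := \left\{ B_{r}(x) \,:\; x \in \Q^{d} \cap (0,1)^{d},\; r \in \Q_{+},\; \overline{B_{r}(x)} \subset (0,1)^{d} \right\}.
$$
For each $B \in \mathcal{F}$, the event $\{B \subset \bP\}$ is measurable: writing $B = \bigcup_{n} K_{n}$ as an increasing union of compact sets, one has
$$
\{B \subset \bP\} \;=\; \bigcap_{n} \left\{ K_{n} \cap (\Rd \setminus \bP) = \emptyset \right\},
$$
and each factor on the right lies in the Matheron $\sigma$-algebra (Remark \ref{rem:The-Matheron--field}) applied to the random closed set $\Rd \setminus \bP$.

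Second, I would establish the inclusion
$$
\{\bP \cap \I \neq \emptyset\} \;\subset\; \bigcup_{B \in \mathcal{F}} \{B \subset \bP\}.
$$
Given $\omega$ with $\bP(\omega) \cap \I \neq \emptyset$, pick $y \in \bP(\omega) \cap \I$ and use openness of $\bP(\omega)$ to obtain $s > 0$ with $B_{s}(y) \subset \bP(\omega)$. Then $B_{s}(y) \cap (0,1)^{d}$ is a non-empty open subset of $(0,1)^{d}$: even when some coordinates of $y$ equal zero (so $y$ lies on the ``floor'' of $\I$), replacing each vanishing coordinate by a small positive value produces a point of $(0,1)^{d} \cap B_{s}(y)$. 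Any non-empty open subset of $(0,1)^{d}$ contains an element of $\mathcal{F}$, which yields the desired $B$.

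Combining these two steps, the hypothesis $\P(\bP \cap \I \neq \emptyset) > 0$ and countable sub-additivity produce some $B_{0} = B_{r_{0}}(x_{0}) \in \mathcal{F}$ with $p_{0} := \P(B_{0} \subset \bP) > 0$. Choosing $\fr > 0$ small enough that $4\sqrt{d}\,\fr \leq r_{0}$ gives $B_{4\sqrt{d}\,\fr}(x_{0}) \subset B_{r_{0}}(x_{0}) \subset (0,1)^{d}$, so on the event $\{B_{0} \subset \bP\}$ the set $(0,1)^{d} \cap \bP$ contains a ball of radius $4\sqrt{d}\,\fr$, and the stated probability $p_{\fr} \geq p_{0}$ is positive. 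There is no substantive obstacle; the only non-trivial bookkeeping is verifying measurability of $\{B \subset \bP\}$ and handling boundary points of $\I$ in the inclusion above.
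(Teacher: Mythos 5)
Your proof is correct and rests on the same two pillars as the paper's: openness of $\bP$ (so intersecting $\I$ forces a ball inside) and reduction to a countable family of events. The paper proves the statement by contradiction, observing that if every radius $4\sqrt{d}\fr$ fails with probability one, then along a countable sequence $\fr_n\to 0$ one gets that almost surely no ball of any radius lies in $(0,1)^d\cap\bP$, whence $(0,1)^d\cap\bP=\emptyset$ a.s.; you run the same idea directly, decomposing $\{\bP\cap\I\neq\emptyset\}$ into the countable union over rational balls $B\subset(0,1)^d$ of $\{B\subset\bP\}$ and applying countable sub-additivity. Your version has the modest advantages of being direct rather than by contradiction and of making the measurability of the events $\{B\subset\bP\}$ explicit via the Matheron $\sigma$-algebra, a point the paper leaves tacit; otherwise the two arguments are interchangeable.
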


\begin{proof}
Assume that the lemma was wrong. Then for every $r>0$ the set $(0,1)^{d}\cap\bP$
almost surely does not contain an open ball with radius $r$. In particular
with probability $1$ the set $(0,1)^{d}\cap\bP$ does not contain
any ball. Hence $(0,1)^{d}\cap\bP=\emptyset$ almost surely, contradicting
the assumptions.
\end{proof}
The numbers $\fr$ and $p_{\fr}$ from Lemma \ref{lem:there-are-balls}
will finally lead to the concept of \emph{mesoscopic regularity }of
the geometry $\bP(\omega)$, see Definition \ref{def:meso-regularity}.
Particularly the number $\fr$ is important, as it affects also the
construction of the extension operator on the very microscopic level.

The third problem is the hardest: It is the necessity to quantify
connectedness of a domain geometrically and analytically.

\subsection{\label{subsec:Microscopic-Regularity}Microscopic Regularity}
\begin{defn}[$\left(\delta,M\right)$-Regularity]
\label{def:loc-del-M-reg}\nomenclature[delta]{$(\delta,M)$-regularity}{(Definition \ref{def:loc-del-M-reg})}Let
$\bP\subset\Rd$ be an open set.
\begin{enumerate}
\item $\bP$ is called \emph{$(\delta,M)$-regular }in $p_{0}\in\partial\bP$
if $M(p,\delta)<\infty$ and $M>M(p,\delta)$, i.e. there exists an
open set $U\subset\R^{d-1}$ and a Lipschitz continuous function $\phi:\,U\to\R$
with Lipschitz constant $M$ such that $\partial\bP\cap\Ball{\delta}{p_{0}}$
is graph of the function $\varphi:\,U\to\Rd\,,\;\tilde{x}\mapsto\left(\tilde{x},\phi\of{\tilde{x}}\right)$
in some suitable coordinate system.
\item $\bP$ is called \emph{locally $(\delta,M)$-regular }if for every
$p_{0}\in\partial\bP$ there exists $\delta(p_{0})>0$ and $M(p_{0})>0$
such that $\bP$ is $\left(\delta{\left(p_{0}\right)},M{\left(p_{0}\right)}\right)$-regular
in $p_{0}$.
\item $\bP$ is called \emph{(globally) $(\delta,M)$-regular }or \emph{minimally
smooth} if there exist constants $\delta,M>0$ s.t. $\bP$ is $\left(\delta,M\right)$-regular
in every $p_{0}\in\partial\bP$.
\end{enumerate}
\end{defn}

The concept of (global) $\left(\delta,M\right)$-regularity or minimally
smoothness can be found in the book \cite{stein2016singular}. The
theory of \cite{stein2016singular} was recently used in \cite{guillen2015quasistatic}
to derive extension theorems for minimally smooth stochastic geometries.
A first application of the concept of $\left(\delta,M\right)$-regularity
is the following Lemma, which is important for the application of
the Poincar\'e inequalities proved in Section \ref{sec:Preliminaries}
during the construction of the local extension operators in Section
\ref{sec:Extension-and-Trace-d-M}.

\begin{figure}
\centering{}\includegraphics[width=4cm]{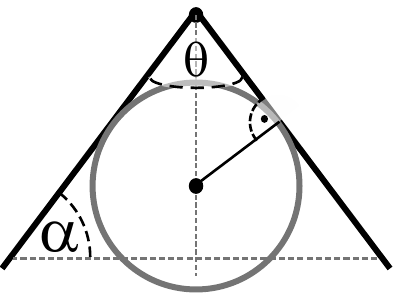}\caption{\label{fig:Ball-and-Cone}How to fit a ball into a cone.}
\end{figure}

\begin{lem}
\label{lem:small-ball-in-cone}Let $\bP$ be locally $\left(\delta,M\right)$-regular.
Then for every $p_{0}\in\partial\bP$ with $\delta(p_{0})>0$ the
following holds: For every $\delta<\delta\left(p_{0}\right)$ let
$M:=M(p_{0},\delta)>0$ such that $\partial\bP\cap\Ball{\delta}{p_{0}}$
is a $M(p_{0},\delta)$ Lipschitz manifold. Then there exists $y\in\bP$
with $\left|p_{0}-y\right|=\frac{\delta}{4}$ such that with $r\left(p_{0}\right):=\frac{\delta}{4\left(1+M\right)}$
it holds $\Ball{r\left(p_{0}\right)}y\subset\Ball{\delta/2}{p_{0}}$.
\end{lem}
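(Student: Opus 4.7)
This is a local tangent-cone argument: since $\partial\bP$ is locally the graph of an $M$-Lipschitz function vanishing at $p_0$, the set $\bP$ must contain on one side a downward cone with half-angle $\arctan(1/M)$, and the statement reduces to inscribing a ball in this cone. The figure caption confirms this reading, so I interpret the conclusion as $\Ball{r(p_0)}{y}\subset\bP\cap\Ball{\delta/2}{p_0}$.

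First I would translate and rotate coordinates so that $p_0=0$ and $\partial\bP\cap\Ball{\delta}{0}$ is the graph of an $M$-Lipschitz function $\phi:U\subset\R^{d-1}\to\R$ with $\phi(0)=0$ and, without loss of generality,
\[
\bP\cap\Ball{\delta}{0}=\{(\tilde x,x_d):x_d<\phi(\tilde x)\}\cap\Ball{\delta}{0}.
\]
The bound $|\phi(\tilde x)|\leq M|\tilde x|$ then implies that $\bP\cap\Ball{\delta}{0}$ contains the open downward cone $\cC:=\{(\tilde x,x_d):x_d<-M|\tilde x|\}\cap\Ball{\delta}{0}$. (The $\tilde x$'s involved lie in $U$ because $|\tilde x|<\delta/(\sqrt{1+M^2})$ keeps $(\tilde x,\phi(\tilde x))$ inside $\Ball{\delta}{0}$, and this range is larger than anything we will use.) Set $y:=(0,-\delta/4)$; then $y\in\bP$ and $|y-p_0|=\delta/4$ by construction.

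Two checks remain. That $\Ball{r(p_0)}{y}\subset\Ball{\delta/2}{p_0}$ is immediate from the triangle inequality, since $r(p_0)=\delta/(4(1+M))\leq\delta/4$ and $|y-p_0|=\delta/4$. For $\Ball{r(p_0)}{y}\subset\cC$, it suffices to compute the Euclidean distance from $y$ to the lateral surface $\Sigma:=\{x_d=-M|\tilde x|\}$. In the two-dimensional cross-section spanned by the $x_d$-axis and any direction in $\R^{d-1}$, $\Sigma$ becomes the line $x_d+M\tilde s=0$ (for $\tilde s\geq0$), whose distance to $(0,-\delta/4)$ equals
\[
\frac{\delta/4}{\sqrt{1+M^2}}\geq\frac{\delta}{4(1+M)}=r(p_0),
\]
where we used $\sqrt{1+M^2}\leq 1+M$. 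Hence the ball $\Ball{r(p_0)}{y}$ lies strictly below $\Sigma$, hence in $\cC$, hence in $\bP$.

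There is no real obstacle here beyond careful bookkeeping of the orientation of $\bP$ relative to the graph and the choice of coordinate axis for the cone; once this is fixed, the estimate reduces to the elementary planar computation above. The slight slack between $1+M$ and $\sqrt{1+M^2}$ explains why the lemma can afford the simpler denominator $4(1+M)$ in the definition of $r(p_0)$.
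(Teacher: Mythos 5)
Your proof is correct and takes essentially the same route as the paper: both arguments fit a right circular cone of half-opening angle $\arctan(1/M)$ inside $\bP$ at $p_0$, place $y$ on the axis at distance $\delta/4$, and compute the inradius $\delta/(4\sqrt{1+M^2})$, which dominates $r(p_0)=\delta/(4(1+M))$. The only difference is presentational — you work directly with the graph inequality $x_d<-M|\tilde x|$ and the point-to-line distance formula, whereas the paper phrases the same estimate in terms of the aperture $\theta=\pi-2\arctan M$; you also note explicitly that the statement as written ($\Ball{r(p_0)}{y}\subset\Ball{\delta/2}{p_0}$) is trivial by the triangle inequality and that the intended content is $\Ball{r(p_0)}{y}\subset\bP$, a reading which matters and which the paper's proof confirms but whose conclusion the lemma statement under-specifies.
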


\begin{proof}
We can assume that $\partial\bP$ is locally a cone as in Figure \ref{fig:Ball-and-Cone}.
With regard to Figure \ref{fig:Ball-and-Cone}, for $p_{0}\in\partial\bP$
with $\delta$ and $M$ as in the statement we can place a right circular
cone with vertex (apex) $p_{0}$ and axis $\nu$ and an aperture $\theta=\pi-2\arctan M$
inside $\Ball{\delta}{p_{0}}$, where $\alpha=\arctan M\left(p_{0}\right)$.
In other words, it holds $\tan\left(\alpha\right)=\tan\left(\frac{\pi-\theta}{2}\right)=M$.
Along the axis we may select $y$ with $\left|p_{0}-y\right|=\frac{\delta}{4}$.
Then the distance $R$ of $y$ to the cone is given through 
\[
\left|y-p_{0}\right|^{2}=R^{2}+R^{2}\tan^{2}\left(\frac{\pi-\theta}{2}\right)\quad\Rightarrow\quad R=\frac{\left|y-p_{0}\right|}{\sqrt{1+M^{2}}}\,.
\]
In particular $r\left(p_{0}\right)$ as defined above satisfies the
claim.
\end{proof}

\subsubsection*{Continuity properties of $\delta$, $M$ and $\varrho$}

Our main extension and trace theorems will be proved for \emph{locally
}$\left(\delta,M\right)$-regular sets $\bP$ and is based on some
simple properties of such sets which we summarize in this section.
Additionally we introduce the quantity $\rho$.
\begin{lem}
\label{lem:properties-delta-M-regular}Let $\fr>0$, $\bP$ be a locally
$\left(\delta,M\right)$-regular open set and let $M_{0}\in(0,+\infty]$
such that for every $p\in\partial\bP$ there exists $\delta>0$, $M<M_{0}$
such that $\partial\bP$ is $(\delta,M)$-regular in $p$. Define
for every $p\in\partial\bP$ 
\begin{align*}
\Delta{\left(p\right)} & :=\sup_{\delta<\fr}\left\{ \exists M\in(0,M_{0}):\,\bP\text{ is }\left(\delta,M\right)\text{-regular in }p\right\} \,,\quad\delta_{\Delta}{\left(p\right)}:=\frac{\Delta{\left(p\right)}}{2}
\end{align*}
Then $\partial\bP$ is $\delta_{\Delta}$-regular in the sense of
Definition \ref{def:eta-regular} with 
\[
f\of{p,\delta}:=\left(\exists M\in(0,M_{0}):\,\bP\text{ is }\left(\delta,M\right)\text{-regular in }p\right)\,.
\]
In particular, $\delta_{\Delta}:\,\partial\bP\to\R$ is locally Lipschitz
continuous with Lipschitz constant $4$ and for every $\eps\in\left(0,\frac{1}{2}\right)$
and $\tilde{p}\in\Ball{\eps\delta}p\cap\partial\bP$ it holds 
\begin{equation}
\frac{1-\eps}{1-2\eps}\delta_{\Delta}\of p>\delta_{\Delta}\of{\tilde{p}}>\delta_{\Delta}\of p-\left|p-\tilde{p}\right|>\left(1-\eps\right)\delta_{\Delta}\of p\,.\label{eq:lem:properties-delta-M-regular-3}
\end{equation}
\end{lem}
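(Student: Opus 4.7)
The plan is to verify that $\partial\bP$ satisfies the abstract $\eta$-regularity of Definition \ref{def:eta-regular} with the given $f$, so that Lemma \ref{lem:eta-lipschitz} can be applied directly to the derived quantity $\Delta$. The stated conclusions for $\delta_{\Delta}$ will then follow by an elementary rescaling by $1/2$.

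First, I would check that $f(p,\cdot)$ is monotone decreasing. If $\partial\bP\cap\Ball{\delta}{p}$ is represented, in some suitable coordinate system, as the graph of a Lipschitz function $\phi$ of constant $M<M_{0}$, then for any $\delta'<\delta$ the same $\phi$, restricted to the preimage of $\Ball{\delta'}{p}$, exhibits $\partial\bP\cap\Ball{\delta'}{p}$ as a Lipschitz graph with the same constant; hence $f(p,\delta)=1$ implies $f(p,\delta')=1$.

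Second, I would verify the implication in Definition \ref{def:eta-regular}. Suppose $f(p,\eta)=1$ with witnessing Lipschitz graph function $\phi$ of constant $M<M_{0}$, and fix $\eps\in(0,\tfrac{1}{2})$, $\tilde p\in\Ball{\eps\eta}{p}\cap\partial\bP$, and $\tilde\eta<(1-\eps)\eta$. The triangle inequality yields $\Ball{\tilde\eta}{\tilde p}\subset\Ball{\eta}{p}$, because for any $y\in\Ball{\tilde\eta}{\tilde p}$ one has $|y-p|\leq|y-\tilde p|+|\tilde p-p|<\tilde\eta+\eps\eta<\eta$. Consequently, the same coordinate system together with the appropriate restriction of $\phi$ represent $\partial\bP\cap\Ball{\tilde\eta}{\tilde p}$ as a Lipschitz graph with constant at most $M<M_{0}$, so $f(\tilde p,\tilde\eta)=1$.

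With these two properties in place, Lemma \ref{lem:eta-lipschitz} applies to the derived function $\eta(p)=\Delta(p)=\sup\{\delta<\fr:f(p,\delta)=1\}$, yielding local Lipschitz continuity of $\Delta$ with constant $4$ together with the chain of inequalities for $\Delta$. To transfer these to $\delta_{\Delta}=\Delta/2$, the Lipschitz constant can only be halved (so it remains bounded by $4$), while (\ref{eq:lem:properties-delta-M-regular-3}) is obtained by invoking the chain for $\Delta$ at $\eps/2$ in place of $\eps$, so that $\Ball{(\eps/2)\Delta(p)}{p}=\Ball{\eps\delta_{\Delta}(p)}{p}$, and then dividing through by $2$; the elementary comparisons $\frac{1-\eps/2}{1-\eps}<\frac{1-\eps}{1-2\eps}$ and $|p-\tilde p|/2<|p-\tilde p|$ then accommodate the constants in the desired chain. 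No genuine obstacle appears in the argument; the content is purely a bookkeeping verification that a Lipschitz graph representation of $\partial\bP$ on a ball restricts well to every subball lying inside it.
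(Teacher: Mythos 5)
Your proof is correct and takes the same approach as the paper's one-line proof, which simply asserts that $f$ and $\delta_{\Delta}$ satisfy the hypotheses of Lemma \ref{lem:eta-lipschitz}. You have supplied the details the paper omits — the restriction-to-subball argument showing $f(p,\cdot)$ is decreasing and satisfies the nesting implication of Definition \ref{def:eta-regular}, the application of Lemma \ref{lem:eta-lipschitz} to the derived function $\Delta$, and the explicit $\eps\mapsto\eps/2$ rescaling with the comparison $\tfrac{1-\eps/2}{1-\eps}<\tfrac{1-\eps}{1-2\eps}$ that transfers the inequality chain to $\delta_{\Delta}=\Delta/2$.
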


\begin{rem}
\label{rem:no-global-lipschitz}The latter lemma does \textbf{\emph{not
}}imply global Lipschitz regularity of $\delta_{\Delta}$. It could
be that $2\delta_{\Delta}\of p<\left|p-\tilde{p}\right|<3\delta_{\Delta}\of p$
and $p$ and $\tilde{p}$ are connected by a path inside $\partial\bP$
with the shortest path of length $10\delta_{\Delta}\of p$. Then Lemma
\ref{lem:properties-delta-M-regular} would have to be applied successively
along this path yielding an estimate of $\left|\delta_{\Delta}\of p-\delta_{\Delta}\of{\tilde{p}}\right|\leq40\left|p-\tilde{p}\right|$.
\end{rem}

\begin{proof}[Proof of Lemma \ref{lem:properties-delta-M-regular}]
 It is straight forward to verify that $f$ and $\delta_{\Delta}$
satisfy the conditions of Lemma \ref{lem:eta-lipschitz}.
\end{proof}
With regard to Lemma \ref{lem:uniform-extension-lemma}, the relevant
quantity for local extension operators is related to the variable $\delta\of p/\sqrt{4M\of p^{2}+2}$,
where $M(p)$ is the related Lipschitz constant. While we can quantify
$\delta\of p$ in terms of $\delta\of{\tilde{p}}$ and $\left|p-\tilde{p}\right|$,
this does not work for $M\of p$. Hence we cannot quantify $\delta\of p/\sqrt{4M\of p^{2}+2}$
in terms of its neighbors. This drawback is compensated by a variational
trick in the following statement.
\begin{lem}
\label{lem:rho-p-lsc} Let $\bP$ be locally $\left(\delta,M\right)$-regular
and let $\delta\leq\delta_{\Delta}$ satisfy (\ref{eq:lem:properties-delta-M-regular-3})
such that $\partial\bP$ is $\delta$-regular. For $p\in\partial\bP$
and $r<\delta\of p$ let $M_{r}\of p$ be the Lipschitz constant of
$\partial\bP$ in $\Ball rp$ and define \nomenclature[rho]{$\hat\rho(p)$}{$\inf\left\{ \delta\leq\delta(p)\,:\;\sup_{r<\delta}r\sqrt{4M_{r}\of p^{2}+2}^{-1}=\rho\right\} $ \eqref{eq:def-rhohat-of-p}}\nomenclature[rho]{$\rho(p)$}{$\sup_{r<\delta\of p}r\sqrt{4M_{r}\of p^{2}+2}^{-1}$ \eqref{eq:def-rho-of-p}}
\begin{align}
\rho{\left(p\right)} & :=\sup_{r<\delta\of p}r\sqrt{4M_{r}\of p^{2}+2}^{-1}\,,\label{eq:def-rho-of-p}\\
\hat{\rho}(p) & :=\inf\left\{ \delta\leq\delta(p)\,:\;\sup_{r<\delta}r\sqrt{4M_{r}\of p^{2}+2}^{-1}=\rho(p)\right\} \,.\label{eq:def-rhohat-of-p}
\end{align}
Then, $\rho$ and $\hat{\rho}$ are positive and locally Lipschitz
continuous on $\partial\bP$ with Lipschitz constant $4$ and $\partial\bP$
is $\rho$ and $\hat{\rho}$-regular in the sense of Definition \ref{def:eta-regular}.
In particular, for $\left|p-\tilde{p}\right|<\eps\rho{\left(p\right)}$
or $\left|p-\tilde{p}\right|<\eps\hat{\rho}{\left(p\right)}$ it holds
respectively 
\begin{gather*}
\frac{1-\eps}{1-2\eps}\rho\of p>\rho\of{\tilde{p}}>\rho\of p-\left|p-\tilde{p}\right|>\left(1-\eps\right)\rho\of p\,,\\
\frac{1-\eps}{1-2\eps}\hat{\rho}\of p>\hat{\rho}\of{\tilde{p}}>\hat{\rho}\of p-\left|p-\tilde{p}\right|>\left(1-\eps\right)\hat{\rho}\of p\,.
\end{gather*}
\end{lem}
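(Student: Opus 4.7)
My plan is to verify the $\eta$-regularity condition~\eqref{eq:def:eta-regular} of Definition~\ref{def:eta-regular} for both $\rho$ and $\hat\rho$ with the natural indicators $f_\rho(p,\eta)=1\Leftrightarrow\eta\le\rho(p)$ and $f_{\hat\rho}(p,\eta)=1\Leftrightarrow\eta\le\hat\rho(p)$; once that is done, Lemma~\ref{lem:eta-lipschitz} will automatically supply the Lipschitz constant $4$ and the displayed inequality chains in one shot. Abbreviating $g_p(r):=r/\sqrt{4M_r(p)^2+2}$, positivity comes for free: any $r\in(0,\delta(p))$ has $M_r(p)<\infty$ by local $(\delta,M)$-regularity, hence $\rho(p)\ge g_p(r)>0$; and the trivial bound $g_p(r)\le r/\sqrt{2}$ forces every radius at which $g_p$ approaches $\rho(p)$ to satisfy $r\ge\sqrt{2}\,\rho(p)$, giving $\hat\rho(p)\ge\sqrt{2}\,\rho(p)>0$.

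For $\rho$, I would reduce the task to the pointwise bound $\rho(\tilde p)\ge\rho(p)-|p-\tilde p|$ valid whenever $|p-\tilde p|<\eps\rho(p)$ with $\eps\in(0,\tfrac12)$. Writing $d=|p-\tilde p|$, for every $r\in(d,\delta(p))$ the inclusion $\Ball{r-d}{\tilde p}\subset\Ball{r}{p}$ gives $M_{r-d}(\tilde p)\le M_r(p)$, while Lemma~\ref{lem:properties-delta-M-regular} guarantees $r-d<\delta(\tilde p)$. Hence
\[
g_{\tilde p}(r-d)\ \ge\ \frac{r-d}{\sqrt{4M_r(p)^2+2}}\ =\ \Bigl(1-\frac{d}{r}\Bigr)\,g_p(r),
\]
and using $g_p(r)/r\le 1/\sqrt{2}$ before taking the supremum over $r$ yields the even stronger inequality $\rho(\tilde p)\ge\rho(p)-d/\sqrt{2}$. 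This verifies~\eqref{eq:def:eta-regular} for $f_\rho$, and Lemma~\ref{lem:eta-lipschitz} then closes this case.

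For $\hat\rho$ the analogous reduction leaves me with the one-sided bound $\hat\rho(\tilde p)\ge\hat\rho(p)-|p-\tilde p|$. I would argue this by contradiction: if $\hat\rho(\tilde p)<\hat\rho(p)-d$, then the infimum defining $\hat\rho(\tilde p)$ would produce a sequence $r_n<\hat\rho(p)-d$ with $g_{\tilde p}(r_n)\to\rho(\tilde p)$, and the bound $g_{\tilde p}(r_n)\le r_n/\sqrt{2}$ would allow one to assume further $r_n\ge\sqrt{2}\rho(\tilde p)>d$ for large $n$. The symmetric inclusion $\Ball{r_n-d}{p}\subset\Ball{r_n}{\tilde p}$ delivers $M_{r_n-d}(p)\le M_{r_n}(\tilde p)$ and therefore
\[
g_p(r_n-d)\ \ge\ \Bigl(1-\frac{d}{r_n}\Bigr)\,g_{\tilde p}(r_n);
\]
I would then play this lower bound off against the defining property of $\hat\rho(p)$, which forces $g_p(r')\le\rho(p)-\alpha$ for some $\alpha>0$ and every $r'<\hat\rho(p)-2d$, together with the Lipschitz estimate $|\rho(p)-\rho(\tilde p)|\le d/\sqrt{2}$ just established, to derive the contradiction. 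Lemma~\ref{lem:eta-lipschitz} will then close this case as well.

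The hard part will be precisely this last step, because $\hat\rho$ is defined through an infimum and is a priori only upper-semicontinuous; the quantitative gap $\alpha$ that the contradiction demands must be sharp enough to overcome the Lipschitz defect $d/\sqrt{2}$ of $\rho$. The buffers $\hat\rho(p)\ge\sqrt{2}\rho(p)$, $\delta\le\delta_\Delta=\Delta/2$ inherited from Lemma~\ref{lem:properties-delta-M-regular}, and the strict constraint $\eps<\tfrac12$ built into Definition~\ref{def:eta-regular} should together provide exactly the room needed, but the bookkeeping is where the argument genuinely becomes delicate and is the one place where the generic proof scheme for $\rho$ does not simply copy over to $\hat\rho$.
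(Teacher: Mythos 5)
Your argument for $\rho$ is correct and is essentially the paper's: both rest on the ball inclusion (you use $\Ball{r-d}{\tilde{p}}\subset\Ball{r}{p}$; the paper uses $\Ball{(1-\eps)r_\eta}{\tilde{p}}\subset\Ball{r_\eta}{p}$) together with the monotonicity of the Lipschitz constant before handing off to Lemma~\ref{lem:eta-lipschitz}. Your extra step of bounding $g_p(r)/r\le 1/\sqrt{2}$ before taking the supremum gives the marginally sharper estimate $\rho(\tilde{p})\ge\rho(p)-d/\sqrt{2}$, which is fine, and the positivity argument $\hat{\rho}(p)\ge\sqrt{2}\,\rho(p)>0$ is also sound.

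The contradiction scheme for $\hat{\rho}$, however, does \emph{not} close, and this is a genuine gap -- you were right to be suspicious. Because $M_r(p)$ is nondecreasing, one has the one-sided estimate $g_p(r)-g_p(s)\le(r-s)/\sqrt{2}$ for $r>s$, which together with the left-continuity of $g_p$ forces $g_p(r)\ge\rho(p)-(\hat{\rho}(p)-r)/\sqrt{2}$ for every $r<\hat{\rho}(p)$. Hence the gap you want to exploit, $\alpha:=\rho(p)-\sup_{r'<\hat{\rho}(p)-2d}g_p(r')$, is at most $d\sqrt{2}$. On the other hand, passing to the limit in $g_p(r_n-d)\ge(1-d/r_n)\,g_{\tilde{p}}(r_n)$, using $\rho(\tilde{p})\ge\rho(p)-d/\sqrt{2}$ and $\liminf r_n\ge\sqrt{2}\,\rho(\tilde{p})$, yields exactly $\alpha\le d\sqrt{2}$ -- the same bound, not a conflict with it. The reductio produces slack of precisely the order of the gap it would have to exceed, so no contradiction ever emerges. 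The paper's treatment of $\hat{\rho}$ is structurally different: it argues forward from the inclusion $\Ball{(1-\eps)r}{\tilde{p}}\subset\Ball{r}{p}$ for $r$ just below $\hat{\rho}(p)$, records that $\tilde{p}$ is $\left((1-\eps)r,M_r(p)\right)$-regular, and reads the $\hat{\rho}$-regularity off that together with Lemma~\ref{lem:eta-lipschitz}, never invoking a quantitative gap for $g_p$. The $1/\sqrt{2}$-Lipschitz control on $g_p$ genuinely leaves no room for an indirect argument here; to close the $\hat{\rho}$ case you must replace the reductio by a direct verification of Definition~\ref{def:eta-regular} along those lines.
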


\begin{rem}
For the same reason as in Remark \ref{rem:no-global-lipschitz}. The
latter lemma does \textbf{\emph{not }}imply global Lipschitz regularity
of $\rho$ or $\hat{\rho}$.
\end{rem}

\begin{proof}
Positivity is given by $\rho{\left(p\right)}\geq\delta{\left(p\right)}\sqrt{4M{\left(p\right)}^{2}+2}^{-1}$.
Let $\eps>0$ and $\left|p-\tilde{p}\right|<\eps\hat{\rho}{\left(p\right)}$.
For $r<\hat{\rho}(p)$ sufficiently large it holds $\left|p-\tilde{p}\right|<\eps r$
implying $\tilde{p}$ is $\left((1-\eps)r,M_{r}(p)\right)$-regular.
From here we conclude that $\partial\bP$ is $\hat{\rho}$-regular
and the above chain of inequalities follows from Lemma \ref{lem:eta-lipschitz}.

Now let $\left|p-\tilde{p}\right|<\eps\rho{\left(p\right)}<\eps\delta(p)$
implying $\delta{\left(\tilde{p}\right)}\geq\left(1-\eps\right)\delta{\left(p\right)}$
by Lemma \ref{lem:properties-delta-M-regular}. For every $\eta>0$
let $r_{\eta}<\delta\of p$ such that $\rho{\left(p\right)}\leq\left(1+\eta\right)r_{\eta}\sqrt{4M_{r_{\eta}}{\left(p\right)}^{2}+2}^{-1}$.
Since $r_{\eta}>\rho{\left(p\right)}$ and $\left|p-\tilde{p}\right|<\eps\rho{\left(p\right)}$
we find $\Ball{r_{\eta}}p\supset\Ball{\left(1-\eps\right)r_{\eta}}{\tilde{p}}$
 and hence $M_{\left(1-\eps\right)r_{\eta}}{\left(\tilde{p}\right)}\leq M_{r_{\eta}}{\left(p\right)}$.
This implies at the same time that $\partial\bP$ is $\rho$-regular
and that 
\[
\rho{\left(\tilde{p}\right)}\geq\frac{\left(1-\eps\right)r_{\eta}}{\sqrt{4M_{\left(1-\eps\right)r_{\eta}}{\left(\tilde{p}\right)}^{2}+2}}\geq\frac{\left(1-\eps\right)r_{\eta}}{\sqrt{4M_{r_{\eta}}{\left(p\right)}^{2}+2}}\geq\frac{\left(1-\eps\right)}{\left(1+\eta\right)}\rho{\left(p\right)}\,.
\]
Since $\eta$ was arbitrary, we conclude $\rho{\left(\tilde{p}\right)}\geq\left(1-\eps\right)\rho{\left(p\right)}$.
Moreover, we find $\left|p-\tilde{p}\right|<\frac{\eps}{1-\eps}\rho{\left(\tilde{p}\right)}$.
From here, we conclude with Lemma \ref{lem:eta-lipschitz}.
\end{proof}

\begin{lem}
\label{lem:M-eta}Let $\fr>0$, $\bP\subset\Rd$ be a locally $\left(\delta,M\right)$-regular
open set and let $M_{0}\in(0,+\infty]$ such that for every $p\in\partial\bP$
there exists $\delta>0$, $M<M_{0}$ such that $\partial\bP$ is $(\delta,M)$-regular
in $p$. For $\alpha\in(0,1]$ let $\eta(p)=\alpha\delta(p)$ from
Lemma \ref{lem:properties-delta-M-regular} or $\eta(p)=\alpha\rho(p)$
or $\eta(p)=\alpha\hat{\rho}(p)$ from Lemma \ref{lem:rho-p-lsc}
and define
\begin{align}
\rmM_{[\eta]}{\left(p\right)} & :=\inf_{\delta>\eta{\left(p\right)}}\inf_{M}\left\{ \,\bP\text{ is }\left(\delta,M\right)\text{-regular in }p\right\} \,.\label{eq:lem:M-eta-1}\\
\fm_{[\eta]}{\left(p,\xi\right)} & :=\inf_{\delta>\min\left\{ \delta{\left(p\right)},\xi\right\} }\inf_{M}\left\{ \,\bP\text{ is }\left(\delta,M\right)\text{-regular in }p\right\} \,,\label{eq:lem:M-eta-2}
\end{align}
Then, for fixed $\xi$, $\rmM_{[\eta]}\of{\cdot},\fm_{[\eta]}{\left(p,\xi\right)}:\;\partial\bP\to\R$
are upper semicontinuous and on each bounded measurable set $A\subset\Rd$
the quantity 
\begin{equation}
\rmM_{[\eta],A}:=\sup_{p\in\overline{A}\cap\partial\bP}M_{[\eta]}\of p\label{eq:def-M-set-A}
\end{equation}
 \nomenclature[MA]{$M_{[\eta]},\,M_{[\eta],A}$ }{($A$ a set) Equation \eqref{eq:def-M-set-A}, a quantity on $\partial\bP$}\nomenclature[m]{$\fm_{[\eta]}{\left(p,\xi\right)}$}{Lemma \ref{lem:M-eta}}with
$\rmM_{[\eta],A}=0$ if $\overline{A}\cap\partial\bP=\emptyset$ is
well defined. The functions 
\[
\rmM_{[\eta],A}:\;\Rd\to\R\,,\quad\rmM_{[\eta],A}\of x:=\rmM_{[\eta],A+x}\quad\text{with }\rmM_{[\eta],A}(0)=\rmM_{[\eta],A}
\]
are upper semicontinuous.
\end{lem}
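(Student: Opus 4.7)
The plan is to exploit the following \emph{propagation property} of $(\delta,M)$-regularity, which is immediate from Definition \ref{def:loc-del-M-reg}: if $\partial\bP\cap\Ball{\delta}{p}$ is the graph of an $M$-Lipschitz function, and $\tilde p\in\partial\bP$ with $|p-\tilde p|<\delta$, then
\[
\partial\bP\cap\Ball{\delta-|p-\tilde p|}{\tilde p}\;\subset\;\partial\bP\cap\Ball{\delta}{p}
\]
is a graph of the \emph{same} Lipschitz function (after the obvious relabeling of coordinates). Hence $\bP$ is $(\delta-|p-\tilde p|,M)$-regular at $\tilde p$. Combined with the local Lipschitz continuity of $\delta_\Delta$, $\rho$ and $\hat\rho$ established in Lemmas \ref{lem:properties-delta-M-regular} and \ref{lem:rho-p-lsc}, this will give upper semicontinuity essentially by unfolding the definitions.

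To prove upper semicontinuity of $\rmM_{[\eta]}$, fix $p\in\partial\bP$ and let $p_n\to p$ in $\partial\bP$. Given $\varepsilon>0$, choose $\delta>\eta(p)$ and $M\leq \rmM_{[\eta]}(p)+\varepsilon$ such that $\bP$ is $(\delta,M)$-regular at $p$; by the propagation property, $\bP$ is $(\delta-|p-p_n|,M)$-regular at $p_n$. Since $\eta$ is continuous at $p$, we have $\eta(p_n)\to\eta(p)$ and $\delta-|p-p_n|\to\delta>\eta(p)$, so for $n$ large, $\delta-|p-p_n|>\eta(p_n)$ and therefore $\rmM_{[\eta]}(p_n)\leq M\leq \rmM_{[\eta]}(p)+\varepsilon$. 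Taking the limsup and then $\varepsilon\to0$ yields $\limsup_n \rmM_{[\eta]}(p_n)\leq \rmM_{[\eta]}(p)$. The argument for $\fm_{[\eta]}(\cdot,\xi)$ with $\xi$ fixed is identical: only the threshold $\eta(p)$ is replaced by $\min\{\delta(p),\xi\}$, which is still continuous in $p$.

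For the second part, let $A\subset\Rd$ be bounded. Then $\overline{A}\cap\partial\bP$ is a (possibly empty) compact set, on which the upper semicontinuous function $\rmM_{[\eta]}$ attains its supremum in $[0,+\infty]$, so $\rmM_{[\eta],A}$ is well defined. To show that $x\mapsto\rmM_{[\eta],A+x}$ is upper semicontinuous, fix $x\in\Rd$ and $x_n\to x$. We distinguish two cases. If $\overline{A+x}\cap\partial\bP=\emptyset$, then by compactness of $\overline{A+x}$ and closedness of $\partial\bP$ there exists $r>0$ with $\dist(\overline{A+x},\partial\bP)>r$; for $n$ large $\overline{A+x_n}\cap\partial\bP=\emptyset$ as well and $\rmM_{[\eta],A+x_n}=0=\rmM_{[\eta],A+x}$. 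Otherwise, for each $n$ pick $p_n\in\overline{A+x_n}\cap\partial\bP$ either attaining $\rmM_{[\eta],A+x_n}$ (if finite) or satisfying $\rmM_{[\eta]}(p_n)\geq n$ (if $+\infty$). Since $A$ is bounded and $x_n$ convergent, $(p_n)$ is bounded and a subsequence converges to some $p$; closedness of $\overline{A}$ and $\partial\bP$ gives $p\in\overline{A+x}\cap\partial\bP$, and the upper semicontinuity of $\rmM_{[\eta]}$ yields $\limsup_n\rmM_{[\eta],A+x_n}\leq \limsup_n\rmM_{[\eta]}(p_n)\leq\rmM_{[\eta]}(p)\leq\rmM_{[\eta],A+x}$, as required.

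The main obstacle is really only notational/bookkeeping: one must carefully distinguish the role of the threshold $\eta(p)$ (respectively $\min\{\delta(p),\xi\}$), whose continuity has already been established, from the infimum over $(\delta,M)$-regular pairs, whose upper semicontinuity comes purely from the geometric propagation property above. Once these two ingredients are separated, all four claims of the lemma reduce to standard compactness and limsup arguments.
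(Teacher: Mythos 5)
Your proof is correct and follows essentially the same route as the paper: for nearby points, the ball of radius $\eta(\tilde p)$ around $\tilde p$ sits inside a slightly larger ball around $p$ (you phrase this as a ``propagation property'', the paper via the $\eta$-regularity inequalities and explicit containments), which immediately bounds $\rmM_{[\eta]}(\tilde p)$ in terms of the Lipschitz constant over the enlarged ball and yields upper semicontinuity after letting the excess vanish; the set-level statement then follows by attainment of the supremum on a compact set and a sequential-compactness/Hausdorff argument, exactly as in the paper. The bookkeeping is sound, including the remark that the same argument applies to $\fm_{[\eta]}(\cdot,\xi)$ with $\min\{\delta(p),\xi\}$ replacing $\eta(p)$.
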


\begin{rem}
\label{rem:difference-M-eta-M-eta}In order to prevent confusion,
let us note at this point that $M_{\eta}$ defined in (\ref{eq:lem:local-delta-M-construction-estimate-1b})
is different from $M_{[\eta]}$. In particular, $M_{\eta}$ is a quantity
on $\Rd$, while $M_{[\eta]}$ is a quantity on $\partial\bP$. Furthermore,
as the last lemma shows, $M_{[\eta]}$ is upper semi continuous, while
$M_{\eta}$ is only measurable.
\end{rem}

\begin{notation}
The infimum in (\ref{eq:lem:M-eta-1}) is a $\liminf$ for $\delta\searrow\eta(p)$.
We sometimes use the special notation 
\[
M_{[\eta],\fr}(x):=M_{[\eta],\Ball{\fr}0}(x)\,.
\]
\end{notation}

\begin{proof}[Proof of Lemma \ref{lem:M-eta}]
Let $p,\tilde{p}\in\partial\bP$ with $\left|p-\tilde{p}\right|<\eps\eta(p)$.
Writing $\tilde{\eps}:=\frac{\eps}{1-\eps}$ and $r\left(p,\eps\right):=\left(\frac{1}{1-2\eps}+\eps\right)\eta\of p$
and 
\[
M\of{p,\eps}:=\inf_{M}\left\{ \Ball{r\left(p,\eps\right)}p\cap\partial\bP\text{ is }M\text{-Lipschitz graph}\right\} 
\]
as well as we observe from $\eta$-regularity that $\Ball{\eta\left(\tilde{p}\right)}{\tilde{p}}\subset\Ball{r\left(p,\eps\right)}p$
and $\Ball{\eta\left(p\right)}p\subset\Ball{r\left(\tilde{p},\tilde{\eps}\right)}{\tilde{p}}$.
Hence we find 
\[
\rmM_{[\eta]}\of{\tilde{p}}\leq M\of{p,\eps}\,.
\]
Observing that $M\of{p,\eps}\searrow\rmM_{[\eta]}\of p$ as $\eps\to0$
we find $\limsup_{\tilde{p}\to p}\rmM_{[\eta]}\of{\tilde{p}}\leq\rmM_{[\eta]}\of p$
and $\rmM$ is u.s.c.

Let $x\to0$. First observe that $\rmM_{[\eta],A}=\max_{y\in\overline{A}}\rmM_{[\eta]}\of y$.
The set $\overline{A}$ is compact and hence $\overline{A}+x\to\overline{A}$
in the Hausdorff metric as $x\to0$. Let $y_{x}\in\overline{A}+x$
such that $\rmM_{[\eta]}\of{y_{x}}=\rmM_{[\eta],A}\left(x\right)$.
Since $\overline{A}+x\to\overline{A}$ w.l.o.g. we find $y_{x}\to y$
converges and $y\in\overline{A}$. Hence 
\[
\rmM_{[\eta]}\of y\geq\limsup_{x\to0}\rmM_{[\eta]}\of{y_{x}}=\limsup_{x\to0}\rmM_{[\eta],A}\of x\,.
\]
In particular, $M_{[\eta],A}\of{\cdot}$ is u.s.c. The u.s.c of $\fm_{[\eta]}{\left(p,\xi\right)}$
can be proved similarly.
\end{proof}

\begin{cor}
\label{cor:cover-boundary}Let $\fr>0$ and let $\bP\subset\Rd$ be
a locally $\left(\delta,M\right)$-regular open set, where we restrict
$\delta$ by $\delta\left(\cdot\right)\leq\frac{\fr}{4}$. Then there
exists a countable number of points $\left(p_{k}\right)_{k\in\N}\subset\partial\bP$
such that $\partial\bP$ is completely covered by balls $\Ball{\tilde{\rho}\left(p_{k}\right)}{p_{k}}$
where $\tilde{\rho}\left(p\right):=2^{-5}\rho\left(p\right)$. Writing
\[
\tilde{\rho}_{k}:=\tilde{\rho}\of{p_{k}}\,,\qquad\delta_{k}:=\delta\of{p_{k}}\,.
\]
For two such balls with $\Ball{\tilde{\rho}_{k}}{p_{k}}\cap\Ball{\tilde{\rho}_{i}}{p_{i}}\neq\emptyset$
it holds 
\begin{equation}
\begin{aligned} & \frac{15}{16}\tilde{\rho}_{i}\leq\tilde{\rho}_{k}\leq\frac{16}{15}\tilde{\rho}_{i}\\
\text{and}\quad & \frac{31}{15}\min\left\{ \tilde{\rho}_{i},\tilde{\rho}_{k}\right\} \geq\left|p_{i}-p_{k}\right|\geq\frac{1}{2}\max\left\{ \tilde{\rho}_{i},\tilde{\rho}_{k}\right\} \,.
\end{aligned}
\label{eq:cor:cover-boundary-h1}
\end{equation}
Furthermore, there exists $\fr_{k}\geq\frac{\tilde{\rho}_{k}}{32\left(1+\fm_{[\tilde{\rho}]}\of{p_{k},\tilde{\rho}_{k}/4}\right)}$
and $y_{k}$ such that $\Ball{\fr_{k}}{y_{k}}\subset\Ball{\tilde{\rho}_{k}/8}{p_{k}}\cap\bP$
and $\Ball{2\fr_{k}}{y_{k}}\cap\Ball{2\fr_{j}}{y_{j}}=\emptyset$
for $k\neq j$.
\end{cor}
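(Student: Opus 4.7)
The corollary splits naturally into two independent claims: a Besicovitch-type covering of $\partial\bP$ with the size relations (\ref{eq:cor:cover-boundary-h1}), and the production of interior balls $\Ball{\fr_k}{y_k}$ with the stated disjointness property. My plan is to dispatch the first claim as a direct specialization of Theorem \ref{thm:delta-M-rho-covering}, and then obtain the second from Lemma \ref{lem:small-ball-in-cone} by a limiting argument that encodes the Lipschitz constant as the quantity $\fm_{[\tilde{\rho}]}$.

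For the first claim I apply Theorem \ref{thm:delta-M-rho-covering} to $\Gamma := \partial\bP$ with $\eta := \rho$, $K := 5$ and $C := \tfrac{1}{2}$. The structural hypothesis (\ref{eq:thm:delta-M-rho-covering-a}) for $\rho$ is exactly the conclusion of Lemma \ref{lem:rho-p-lsc}, and boundedness of $\rho$ follows from the standing restriction $\delta(\cdot) \leq \fr/4$ together with $\rho \leq \delta$. With $K = 5$ the constants appearing in (\ref{eq:thm:delta-M-rho-covering}) specialize to $(2^{K-1}-1)/2^{K-1} = 15/16$, its reciprocal $16/15$, and $(2^K-1)/(2^{K-1}-1) = 31/15$; these match (\ref{eq:cor:cover-boundary-h1}) verbatim, and the prefactor $2^{-K} = 2^{-5}$ is exactly the scaling in the definition $\tilde{\rho} = 2^{-5}\rho$.

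For the second claim I apply Lemma \ref{lem:small-ball-in-cone} at each $p_k$ with scale $\delta_\eps := \tilde{\rho}_k/4 + \eps$. Because $\tilde{\rho}_k \leq \delta(p_k)/32$, the admissibility condition $\delta_\eps < \delta(p_k)$ holds for all sufficiently small $\eps > 0$, and the lemma produces a point $y_k^\eps \in \bP$ with $|p_k - y_k^\eps| = \delta_\eps/4$ together with a ball $\Ball{r_\eps}{y_k^\eps} \subset \Ball{\delta_\eps/2}{p_k}$ of radius $r_\eps = \delta_\eps/(4(1 + M_\eps))$, where $M_\eps$ denotes the Lipschitz constant of $\partial\bP \cap \Ball{\delta_\eps}{p_k}$. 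By the very definition (\ref{eq:lem:M-eta-2}), $M_\eps \to \fm_{[\tilde{\rho}]}(p_k, \tilde{\rho}_k/4)$ as $\eps \searrow 0$. Extracting a convergent subsequence of the $y_k^\eps$ (which sit in a bounded set) produces a limit point $y_k$, and I then \emph{define}
\[
\fr_k := \frac{\tilde{\rho}_k}{32\bigl(1 + \fm_{[\tilde{\rho}]}(p_k, \tilde{\rho}_k/4)\bigr)},
\]
which is exactly half the limiting value of $r_\eps$. This choice yields $\Ball{\fr_k}{y_k} \subset \bP$ and, crucially, $\Ball{2\fr_k}{y_k} \subset \Ball{\tilde{\rho}_k/8}{p_k}$, since $|p_k - y_k| \leq \tilde{\rho}_k/16$ and $2\fr_k \leq \tilde{\rho}_k/16$. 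The factor $32$ rather than $16$ in the denominator is there precisely to secure this stronger inclusion.

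Finally, mutual disjointness of the balls $\Ball{2\fr_k}{y_k}$ reduces to disjointness of the containing balls $\Ball{\tilde{\rho}_k/8}{p_k}$. When the covering balls $\Ball{\tilde{\rho}_k}{p_k}$ and $\Ball{\tilde{\rho}_j}{p_j}$ intersect, (\ref{eq:cor:cover-boundary-h1}) gives $|p_k - p_j| \geq \tfrac{1}{2}\max\{\tilde{\rho}_k,\tilde{\rho}_j\}$, and the comparability $\tfrac{15}{16}\tilde{\rho}_k \leq \tilde{\rho}_j \leq \tfrac{16}{15}\tilde{\rho}_k$ implies $\tilde{\rho}_k/8 + \tilde{\rho}_j/8 < |p_k - p_j|$; when they do not intersect, the separation is more generous still. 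The only genuinely delicate step in the entire argument is the limit passage $\eps \searrow 0$ in the previous paragraph, because $M_\eps$ is only upper semicontinuous in $\eps$ (cf.~Lemma \ref{lem:M-eta}); this, however, is built into the definition of $\fm_{[\tilde{\rho}]}$ as the appropriate infimum, so no loss is incurred.
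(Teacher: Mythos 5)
Your argument is correct and follows the same route as the paper: specialize Theorem \ref{thm:delta-M-rho-covering} (with $\eta=\rho$, $K=5$, $C=\tfrac12$, which indeed yields the constants $15/16$, $16/15$, $31/15$ and $\tilde{\eta}=2^{-5}\rho=\tilde{\rho}$) for the covering, then invoke Lemma \ref{lem:small-ball-in-cone} for the interior balls. Where you go beyond the paper's own proof is the disjointness claim: the paper only writes $\Ball{\fr_k}{y_k}\cap\Ball{\fr_j}{y_j}=\emptyset$, whereas the corollary asserts disjointness of the $\Ball{2\fr_k}{y_k}$, and the naive choice $\fr_k=r(p_k)$ from the lemma does \emph{not} give $\Ball{2\fr_k}{y_k}\subset\Ball{\tilde{\rho}_k/8}{p_k}$ unless $M\ge 1$. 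You close that gap by taking $\fr_k$ equal to \emph{half} the radius supplied by Lemma \ref{lem:small-ball-in-cone}, which is exactly why the factor $32$ appears, and your case analysis for disjointness of the $\Ball{\tilde{\rho}_k/8}{p_k}$ via \eqref{eq:cor:cover-boundary-h1} is sound. One simplification is available: the $\eps$-limiting passage is unnecessary. Since $\tilde{\rho}_k<\delta(p_k)$, you may apply Lemma \ref{lem:small-ball-in-cone} directly with $\delta=\tilde{\rho}_k/4$ to obtain $y_k$ and $r(p_k)=\tilde{\rho}_k/\left(16(1+M(p_k,\tilde{\rho}_k/4))\right)$, then set $\fr_k:=r(p_k)/2$. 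By monotonicity of $M(p,\cdot)$ one has $M(p_k,\tilde{\rho}_k/4)\le\fm_{[\tilde{\rho}]}(p_k,\tilde{\rho}_k/4)$, so $\fr_k\ge \tilde{\rho}_k/\left(32(1+\fm_{[\tilde{\rho}]}(p_k,\tilde{\rho}_k/4))\right)$ holds directly, and $\Ball{2\fr_k}{y_k}=\Ball{r(p_k)}{y_k}\subset\Ball{\tilde{\rho}_k/8}{p_k}\cap\bP$ is immediate from the lemma. Your limit argument reproduces this but requires extracting a convergent subsequence of $y_k^\eps$ and verifying that the ball $\Ball{\fr_k}{y_k}$ eventually lies inside $\Ball{r_\eps}{y_k^\eps}$; that reasoning is correct but does extra work for a smaller $\fr_k$.
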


\begin{proof}
The existence of the points and Balls satisfying (\ref{eq:cor:cover-boundary-h1})
follows from Theorem \ref{thm:delta-M-rho-covering}, in particular
(\ref{eq:thm:delta-M-rho-covering}). It holds for $\Ball{\tilde{\rho}_{k}}{p_{k}}\cap\Ball{\tilde{\rho}_{i}}{p_{i}}\neq\emptyset$
\[
\left|p_{i}-p_{k}\right|\leq\tilde{\rho_{i}}+\tilde{\rho}_{k}\leq\left(\frac{16}{15}+1\right)\tilde{\rho}_{i}\,.
\]
Lemma \ref{lem:small-ball-in-cone} yields existence of $y_{k}$ such
that $\Ball{\fr_{k}}{y_{k}}\subset\Ball{\tilde{\rho}_{k}/8}{p_{k}}\cap\bP$.
The latter implies $\Ball{\fr_{k}}{y_{k}}\cap\Ball{\fr_{j}}{y_{j}}=\emptyset$
for $k\neq j$.
\end{proof}

\subsubsection*{Measurability and Integrability of Extended Variables}
\begin{lem}
\label{lem:delta-rho-M-measurable}Let $\fr>0$, $\bP\subset\Rd$
be a locally $\left(\delta,M\right)$-regular open set and let $M_{0}\in(0,+\infty]$
such that for every $p\in\partial\bP$ there exists $\delta>0$, $M<M_{0}$
such that $\partial\bP$ is $(\delta,M)$-regular in $p$. For $\alpha\in(0,1]$
let $\eta(p)=\alpha\delta(p)$ from Lemma \ref{lem:properties-delta-M-regular}
or $\eta(p)=\alpha\rho(p)$ or $\eta(p)=\alpha\hat{\rho}(p)$ from
Lemma \ref{lem:rho-p-lsc} and define\nomenclature[M eta]{$\tilde{M}_{\eta}(x)$}{Equation \eqref{eq:lem:local-delta-M-construction-estimate-1b}, a quantity on $\Rd$}
\begin{align}
\tilde{\eta}\of x & :=\inf\left\{ \eta\of{\tilde{x}}\,:\;\tilde{x}\in\partial\bP\,\text{s.t. }x\in\Ball{\frac{1}{8}\eta\of{\tilde{x}}}{\tilde{x}}\right\} \,,\label{eq:lem:local-delta-M-construction-estimate-1}\\
M_{[\eta],\Rd}(x) & :=\sup\left\{ M_{[\eta]}(\tilde{x})\,:\;\tilde{x}\in\partial\bP\,\text{s.t. }x\in\overline{\Ball{\eta\of{\tilde{x}}}{\tilde{x}}}\right\} \,,\label{eq:lem:local-delta-M-construction-estimate-1b}
\end{align}
where $\inf\emptyset=\sup\emptyset:=0$ for notational convenience.
Furthermore, write $A:=F^{-1}\of{(0,\frac{3}{2}\fr)}$ for 
\[
F:=\inf_{p\in\partial\bP}f_{p}\,,\qquad f_{p}\of x:=\begin{cases}
\eta\of p & \text{if }x\in\Ball{\frac{\eta\of p}{4}}p\\
2\fr & \text{else}
\end{cases}\,.
\]
then $\tilde{\eta}$ is measurable and $M_{[\eta]}$ is upper semicontinuous.
\end{lem}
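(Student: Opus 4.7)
The plan is to prove the two claims separately by elementary semicontinuity arguments on $\Rd$. For the measurability of $\tilde{\eta}$ I would first introduce the auxiliary function
\[
\tilde{\eta}^{\ast}(x)\;:=\;\inf\{\eta(\tilde{x})\,:\;\tilde{x}\in\partial\bP,\,x\in\Ball{\eta(\tilde{x})/8}{\tilde{x}}\}
\]
with the standard convention $\inf\emptyset:=+\infty$ (in contrast to the $\inf\emptyset:=0$ chosen for $\tilde{\eta}$ itself). Each constituent $f_{\tilde{x}}:\Rd\to\R\cup\{+\infty\}$, defined by $f_{\tilde{x}}(x)=\eta(\tilde{x})$ on $\Ball{\eta(\tilde{x})/8}{\tilde{x}}$ and $f_{\tilde{x}}(x)=+\infty$ otherwise, is upper semicontinuous because every sublevel set is either an open ball or empty; and the infimum of any family of u.s.c.\ functions is u.s.c.\ since $\{\inf_{\alpha}f_{\alpha}\geq t\}=\bigcap_{\alpha}\{f_{\alpha}\geq t\}$ is closed. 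Hence $\tilde{\eta}^{\ast}$ is u.s.c., in particular Borel measurable. The set $O:=\{\tilde{\eta}^{\ast}<+\infty\}=\bigcup_{\tilde{x}\in\partial\bP}\Ball{\eta(\tilde{x})/8}{\tilde{x}}$ is open, and by the stated convention $\tilde{\eta}$ agrees with $\tilde{\eta}^{\ast}$ on $O$ and vanishes on the closed complement $\Rd\setminus O$, so $\tilde{\eta}=\tilde{\eta}^{\ast}\chi_{O}$ is Borel measurable.

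For the upper semicontinuity of $M_{[\eta],\Rd}$ (which I read as the content of the second claim, the u.s.c.\ of $M_{[\eta]}$ itself being already contained in Lemma \ref{lem:M-eta}), I would fix $c>0$ and argue that $\{M_{[\eta],\Rd}<c\}$ is open. Suppose $x_{0}$ satisfies $M_{[\eta],\Rd}(x_{0})<c$ yet $x_{n}\to x_{0}$ with $M_{[\eta],\Rd}(x_{n})\geq c$. For each $n$ pick $\tilde{x}_{n}\in\partial\bP$ with $M_{[\eta]}(\tilde{x}_{n})\geq c-1/n$ and $|x_{n}-\tilde{x}_{n}|\leq\eta(\tilde{x}_{n})$. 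The uniform bound $\eta\leq\fr/2$, which holds in all three cases because $\delta_{\Delta}\leq\fr/2$ by Lemma \ref{lem:properties-delta-M-regular} and $\rho,\hat{\rho}\leq\delta_{\Delta}$ by Lemma \ref{lem:rho-p-lsc}, forces $(\tilde{x}_{n})$ to be bounded. Bolzano--Weierstrass together with closedness of $\partial\bP$ then yields a subsequence $\tilde{x}_{n_{k}}\to\tilde{x}_{0}\in\partial\bP$. Continuity of $\eta$ on $\partial\bP$ (a consequence of the local Lipschitz estimates in Lemmas \ref{lem:properties-delta-M-regular} and \ref{lem:rho-p-lsc}) lets me pass to the limit in $|x_{n}-\tilde{x}_{n}|\leq\eta(\tilde{x}_{n})$ to obtain $x_{0}\in\overline{\Ball{\eta(\tilde{x}_{0})}{\tilde{x}_{0}}}$, while u.s.c.\ of $M_{[\eta]}$ on $\partial\bP$ (Lemma \ref{lem:M-eta}) gives $M_{[\eta]}(\tilde{x}_{0})\geq\limsup_{k}M_{[\eta]}(\tilde{x}_{n_{k}})\geq c$. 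Thus $\tilde{x}_{0}$ is admissible in the supremum defining $M_{[\eta],\Rd}(x_{0})$, forcing $M_{[\eta],\Rd}(x_{0})\geq c$, a contradiction.

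The hard part will be this limiting argument, which has to combine three ingredients at once: the uniform bound on $\eta$ to extract a convergent subsequence of the witnesses $\tilde{x}_{n}$, continuity of $\eta$ on $\partial\bP$ to preserve the closed-ball inclusion in the limit, and u.s.c.\ of $M_{[\eta]}$ from Lemma \ref{lem:M-eta} to preserve the lower bound on $M_{[\eta]}(\tilde{x}_{0})$. On the measurability side the only subtlety is the unconventional value $\inf\emptyset=0$, which is precisely why I would route through $\tilde{\eta}^{\ast}$ rather than work with $\tilde{\eta}$ directly. The auxiliary function $F$ and the set $A$ appearing in the statement play no essential role in the proof of the present lemma — $F$ is manifestly u.s.c.\ as an infimum of u.s.c.\ functions $f_{p}$, and $A$ is accordingly open — but they appear to be collected here for use in later arguments.
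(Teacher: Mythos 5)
Your proof is correct and takes essentially the same approach as the paper: measurability of $\tilde\eta$ is reduced to upper semicontinuity of auxiliary single-ball cutoff functions, and upper semicontinuity of $M_{[\eta],\Rd}$ is proved by taking witnesses $\tilde x_n\in\partial\bP$ for the sup, extracting a convergent subsequence (boundedness from $\eta\le\fr/2$), and passing to the limit using continuity of $\eta$ on $\partial\bP$ together with upper semicontinuity of $M_{[\eta]}$ from Lemma \ref{lem:M-eta}; this is exactly the paper's Step~2, which you have written out more explicitly.

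One small variation is worth recording. For the measurability of $\tilde\eta$ you observe directly that an \emph{arbitrary} infimum of upper semicontinuous functions is upper semicontinuous, via $\{\inf_\alpha f_\alpha\ge c\}=\bigcap_\alpha\{f_\alpha\ge c\}$, and so need no countability. The paper instead reduces to a countable infimum $F=\inf_{i\in\N}f_{p_i}$ over a dense subset $(p_i)\subset\partial\bP$ (the density reduction being justified by continuity of $\eta$), and then identifies $\tilde\eta=\chi_A F$. Your route is mildly cleaner, but your concluding remark that $F$ and $A$ ``play no essential role in the proof of the present lemma'' and ``appear to be collected here for use in later arguments'' misreads the paper's intent: those objects are exactly the device the paper uses to establish the measurability of $\tilde\eta$ — the paper's Step~1 ends with ``thus $\tilde\eta=\chi_A F$ is measurable.'' Your argument replaces that device rather than operating alongside it.
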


\begin{proof}
Step 1: Let $\left(p_{i}\right)_{i\in\N}\subset\partial\bP$ be a
dense subset. If $x\in\Ball{\frac{1}{8}\eta\of p}p$ for some $p\in\partial\bP$
then also $x\in\Ball{\frac{1}{8}\eta\of{\tilde{p}}}{\tilde{p}}$ for
$\left|p-\tilde{p}\right|$ sufficiently small, by continuity of $\eta$.
For every $p\in\partial\bP$ consider the function $f_{p}\of x$ as
introduced above. Then every $f_{p}$ is upper semicontinuous and
$F:=\inf_{i\in\N}f_{p_{i}}$ is measurable. In particular, the set
$A$ is measurable and thus $\tilde{\eta}=\chi_{A}F$ is measurable.

Step 2: We show that for every $a\in\R$ the preimage $M_{[\eta],\Rd}^{-1}\of{[a,+\infty)}$
is closed. Let $\left(x_{k}\right)_{k\in\N}$ be a sequence with $M_{[\eta],\Rd}\of{x_{k}}\in[a,+\infty)$.
Let $\left(p_{k}\right)\subset\partial\bP$ be a sequence with $\left|x_{k}-p_{k}\right|\leq\eta\of{p_{k}}$.
W.l.o.g. assume $p_{k}\to p\in\partial\bP$ and $x_{k}\to x\in\Rd$.
Since $\eta$ is continuous, it follows $\left|x-p\right|\leq\eta\of p$.
On the other hand $M_{[\eta]}(p)\geq\limsup_{k\to\infty}M_{[\eta]}(p_{k})$
and thus $M_{[\eta],\Rd}\of x\geq M_{[\eta]}(p)\geq a$.
\end{proof}
\begin{lem}
\label{lem:delta-tilde-construction-estimate}Under the assumptions
of Lemma \ref{lem:delta-rho-M-measurable} there exists a constant
$C>0$ only depending on the dimension $d$ such that for every bounded
open domain $\bQ$ it holds
\begin{align}
\int_{A\cap\bQ}\chi_{\tilde{\eta}>0}\tilde{\eta}^{-\alpha} & \leq C\int_{A\cap\Ball{\frac{\fr}{4}}{\bQ}\cap\partial\bP}\eta^{1-\alpha}M_{[\frac{\eta}{4}],\Rd}^{d-2}\,,\label{eq:lem:delta-tilde-construction-estimate-1}\\
\int_{A\cap\bQ}\tilde{\eta}^{-\alpha}M_{[\frac{\eta}{8}]}^{r} & \leq C\int_{A\cap\Ball{\frac{\fr}{4}}{\bQ}\cap\partial\bP}\eta^{1-\alpha}M_{[\frac{\eta}{4}],,\Rd}^{r+d-2}\,.\label{eq:lem:delta-tilde-construction-estimate-2}
\end{align}
Finally, it holds 
\begin{equation}
x\in\Ball{\frac{1}{8}\eta\of p}p\quad\Rightarrow\quad\eta\of p>\tilde{\eta}\of x>\frac{3}{4}\eta\of p\,.\label{eq:lem:local-delta-M-construction-estimate-2}
\end{equation}
\end{lem}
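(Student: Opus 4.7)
The plan is to first verify the pointwise comparison~\eqref{eq:lem:local-delta-M-construction-estimate-2} and then to reduce both~\eqref{eq:lem:delta-tilde-construction-estimate-1} and~\eqref{eq:lem:delta-tilde-construction-estimate-2} to a covering of $\partial\bP$ combined with an area lower bound for Lipschitz graphs. For~\eqref{eq:lem:local-delta-M-construction-estimate-2}, fix $p\in\partial\bP$ and $x\in\Ball{\eta(p)/8}{p}$. The upper bound $\tilde\eta(x)\le\eta(p)$ is immediate from the definition. For the lower bound, let $\tilde p$ satisfy $x\in\Ball{\eta(\tilde p)/8}{\tilde p}$, so $|p-\tilde p|\le(\eta(p)+\eta(\tilde p))/8$. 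Either $\eta(\tilde p)\ge\eta(p)$, or $|p-\tilde p|\le\eta(p)/4$ and the Lipschitz chain~\eqref{eq:lem:properties-delta-M-regular-3} (available for the three choices of $\eta$ by Lemmas~\ref{lem:properties-delta-M-regular} and~\ref{lem:rho-p-lsc}) applied with $\eps=1/4$ forces $\eta(\tilde p)>\frac34\eta(p)$. Taking the infimum over $\tilde p$ closes~\eqref{eq:lem:local-delta-M-construction-estimate-2}.

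I would then apply Theorem~\ref{thm:delta-M-rho-covering} to $\eta$ with $K=3$ to obtain a locally finite covering of $\partial\bP$ by balls $B_k=\Ball{\eta_k/8}{p_k}$, $\eta_k:=\eta(p_k)$, with uniformly bounded overlap $N=N(d)$. If $x\in\{\tilde\eta>0\}\cap\bQ$ with witness $p$ satisfying $x\in\Ball{\eta(p)/8}{p}$ and $p\in B_{k}$, the Lipschitz chain~\eqref{eq:lem:properties-delta-M-regular-3} with $\eps=1/8$ yields $\eta(p)\le\frac76\eta_k$, hence
\[
|x-p_k|\le\frac{1}{8}\eta(p)+\frac{1}{8}\eta_k\le\frac{13}{48}\eta_k<\frac13\eta_k.
\]
Thus $\{\tilde\eta>0\}\cap\bQ\subset\bigcup_{k\in\mathcal{I}(\bQ)}\Ball{\eta_k/3}{p_k}$, with $\mathcal{I}(\bQ):=\{k:B_k\cap\Ball{\fr/4}{\bQ}\cap\partial\bP\neq\emptyset\}$ since $\eta_k\le\fr$. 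Assigning to each such $x$ an index $k(x)$, estimate~\eqref{eq:lem:local-delta-M-construction-estimate-2} applied to the witness $p$ yields $\tilde\eta(x)\ge\frac34\eta(p)\ge\frac{21}{32}\eta_{k(x)}$, and combining with $|\Ball{\eta_k/3}{p_k}|\le C_d\eta_k^d$ gives
\[
\int_{A\cap\bQ}\chi_{\tilde\eta>0}\,\tilde\eta^{-\alpha}\le C_d\sum_{k\in\mathcal{I}(\bQ)}\eta_k^{d-\alpha}.
\]

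The last step is to rewrite the sum as a surface integral. For each $k$, local $(\delta,M)$-regularity presents $\partial\bP\cap B_k$ as the graph of a Lipschitz function with constant $M_k\le M_{[\eta/4],\Rd}(p_k)$, because the ball $\Ball{\eta_k/4}{p_k}$ enters the infimum~\eqref{eq:lem:M-eta-1} defining $M_{[\eta/4]}(p_k)$ and the Euclidean extension~\eqref{eq:lem:local-delta-M-construction-estimate-1b} at $p_k$ itself inherits the very same value. The area formula together with the inclusion of a $(d-1)$-disk of radius $\eta_k/(8\sqrt{1+M_k^2})$ in the projection yields the sharpened lower bound
\[
\hausdorffH^{d-1}(\partial\bP\cap B_k)\ge c_d\,\eta_k^{d-1}\bigl(1+M_{[\eta/4],\Rd}(p_k)\bigr)^{-(d-2)}\,,
\]
by recovering one factor of $\sqrt{1+M_k^2}$ inside $\int\sqrt{1+|\nabla\phi|^2}$ from the fact that $M_k$ is the minimal Lipschitz constant and hence $|\nabla\phi|$ must attain values close to $M_k$ on a set of positive $(d-1)$-measure. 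Combined with $\eta\ge\frac78\eta_k$ on $B_k\cap\partial\bP$ from~\eqref{eq:lem:properties-delta-M-regular-3}, this yields $\eta_k^{d-\alpha}\le C\,M_{[\eta/4],\Rd}(p_k)^{d-2}\int_{\partial\bP\cap B_k}\eta^{1-\alpha}$; summing over $k\in\mathcal{I}(\bQ)$ with bounded multiplicity $N$ delivers~\eqref{eq:lem:delta-tilde-construction-estimate-1}. Estimate~\eqref{eq:lem:delta-tilde-construction-estimate-2} follows by the same scheme after inserting the additional factor $M_{[\eta/8],\Rd}(p_k)^r\le M_{[\eta/4],\Rd}(p_k)^r$ into each local bound. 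The main obstacle is precisely the sharpened area exponent $d-2$: the naive Lipschitz-graph estimate produces only $(1+M)^{-(d-1)}$, and regaining one factor of $1+M$ is the delicate heart of the argument.
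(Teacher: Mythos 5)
Your proof of the pointwise comparison~\eqref{eq:lem:local-delta-M-construction-estimate-2} and your covering argument (Steps 1--3) are correct and essentially coincide with the paper's structure: the paper too invokes Theorem~\ref{thm:delta-M-rho-covering} to obtain a locally finite covering by balls centered on $\partial\bP$ and reduces the bulk integral to a sum over the covering balls. The genuine gap is in precisely the step you yourself flag as the ``delicate heart'': the surface-area lower bound $\hausdorffH^{d-1}(\partial\bP\cap B_k)\gtrsim\eta_k^{d-1}(1+M_k)^{2-d}$. Your mechanism for recovering the extra factor $\sqrt{1+M_k^2}$ — that $M_k$ is the \emph{minimal} Lipschitz constant and hence $|\nabla\phi|$ must be close to $M_k$ on a set of positive $(d-1)$-measure — does not deliver it: ``positive measure'' can be an arbitrarily small fraction of the projected domain. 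A graph that is essentially flat on $\Ball{(1-\delta)\rho}{0}$ and has slope $M_k$ only on a thin annulus has Lipschitz constant $M_k$, yet $\int_U\sqrt{1+|\nabla\phi|^2}$ is close to the naive $|U|$-bound; the $\sqrt{1+M_k^2}$ factor is simply absent from the integral over that domain.

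The correct source of the extra factor is a global constraint that you are not using: because the graph passes through the center $p_k$ and must exit the ball $\Ball{\eta_k/8}{p_k}$, the radial slope integrates along each ray to the height at the exit point, namely $\sqrt{(\eta_k/8)^2-\rho^2}$ where $\rho$ is the exit radius from the axis. Subject to $0\le f\le M_k$, the integral $\int_0^\rho f\,\rho'^{d-2}\d\rho'$ with the constraint $\int_0^\rho f\,\d\rho'\ge\sqrt{(\eta_k/8)^2-\rho^2}$ is minimized by concentrating $f=M_k$ near the apex, which is exactly the cone configuration; it produces the bound $\gtrsim\eta_k^{d-1}(1+M_k)^{2-d}$, not $(1+M_k)^{1-d}$. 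This is what the paper encodes when it asserts ``the minimal surface of $B_k\cap\partial\bP$ is given when $B_k\cap\partial\bP$ is a cone with opening angle $\frac\pi2-\arctan M(p_k)$,'' and it is an extremal-shape observation rather than a positive-measure observation. Your write-up needs this integral-geometric argument in place of the positive-measure claim; without it, the exponent $d-2$ does not follow and the lemma is not established.
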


\begin{rem}
Estimates (\ref{eq:lem:delta-tilde-construction-estimate-1})--(\ref{eq:lem:delta-tilde-construction-estimate-2})
are only rough estimates and better results could be obtained via
more sophisticated calculations that make use of particular features
of given geometries.
\end{rem}

\begin{proof}
Step 1: Given $x\in\Rd$ with $\tilde{\eta}(x)>0$ let 
\begin{equation}
p_{x}\in\argmin\left\{ \eta\of{\tilde{x}}\,:\;\tilde{x}\in\partial\bP\,\text{s.t. }x\in\overline{\Ball{\frac{1}{8}\eta\of{\tilde{x}}}{\tilde{x}}}\right\} \,.\label{eq:lem:delta-tilde-construction-estimate-h-1}
\end{equation}
Such $p_{x}$ exists because $\partial\bP$ is locally compact. We
observe with help of the definition of $p_{x}$, the triangle inequality
and (\ref{eq:thm:delta-M-rho-covering-a})
\[
x\in\Ball{\frac{1}{8}\eta\of p}p\quad\Rightarrow\quad\eta\of{p_{x}}\leq\eta\of p\quad\Rightarrow\quad\left|p-p_{x}\right|<\frac{\eta\of p}{4}\quad\Rightarrow\quad\eta\of{p_{x}}>\frac{3}{4}\eta\of p\,.
\]
The last line particularly implies (\ref{eq:lem:local-delta-M-construction-estimate-2})
and 
\[
\forall p\in\partial\bP\;\forall x\in\Ball{\frac{\eta\of p}{8}}p:\quad\tilde{\eta}\of x>\frac{3\eta\of p}{4}\,.
\]
Step 2: By Theorem \ref{thm:delta-M-rho-covering} we can chose a
countable number of points $\left(p_{k}\right)_{k\in\N}\subset\partial\bP$
such that $\Gamma=\partial\bP$ is completely covered by balls $B_{k}:=\Ball{\xi\of{p_{k}}}{p_{k}}$
where $\xi\of p:=2^{-4}\eta\of p$. For simplicity of notation we
write $\eta_{k}:=\eta\of{p_{k}}$ and $\xi_{k}:=\xi\of{p_{k}}$. Assume
$x\in A$ with $p_{x}\in\Gamma$ given by (\ref{eq:lem:delta-tilde-construction-estimate-h-1}).
Since the balls $B_{k}$ cover $\Gamma$, there exists $p_{k}$ with
$\left|p_{x}-p_{k}\right|<\xi_{k}=2^{-4}\eta_{k}$, implying $\eta\of{p_{x}}<\frac{2^{4}}{2^{4}-1}\eta_{k}$
and hence $\left|x-p_{k}\right|\leq\left(2^{-4}+\frac{2^{-3}2^{4}}{2^{4}-1}\right)\eta_{k}<\frac{3}{16}\eta_{k}$.
Hence we find
\[
\forall x\in A\,\;\exists p_{k}\;:\quad x\in\Ball{\frac{3}{16}\eta_{k}}{p_{k}}\,.
\]
Step 3: For $p\in\Gamma$ with $x\in\Ball{\frac{1}{4}\eta\of p}p\cap\Ball{\frac{1}{8}\eta\of{p_{x}}}{p_{x}}$
we can distinguish two cases:
\begin{enumerate}
\item $\eta\of p\geq\eta\of{p_{x}}$: Then $p_{x}\in\Ball{\frac{3}{8}\eta\of p}p$
and hence $\eta\of{p_{x}}\ge\frac{5}{8}\eta\of p$ by (\ref{eq:thm:delta-M-rho-covering-a}).
\item $\eta\of p<\eta\of{p_{x}}$: Then $p\in\Ball{\frac{3}{8}\eta\of{p_{x}}}{p_{x}}$
and hence$\eta\of{p_{x}}>\frac{1-\frac{3}{8}}{1-\frac{6}{8}}\eta\of p=\frac{5}{2}\eta\of p$
by (\ref{eq:thm:delta-M-rho-covering-a}).
\end{enumerate}
and hence 
\[
x\in\Ball{\frac{1}{4}\eta\of p}p\qquad\Rightarrow\qquad\tilde{\eta}\of x=\eta\of{p_{x}}>\frac{5}{8}\eta\of p\,.
\]
Step 4:  Let $k\in\N$ be fixed and define $B_{k}=\Ball{\frac{1}{4}\eta_{k}}{p_{k}}$,
$M_{k}:=M(p_{k},\frac{1}{4}\eta_{k})$. By construction, every $B_{j}$
with $B_{j}\cap B_{k}\neq\emptyset$ satisfies $\eta_{j}\geq\frac{1}{2}\eta_{k}$
and hence if $B_{j}\cap B_{k}\neq\emptyset$ and $B_{i}\cap B_{j}\neq\emptyset$
we find $\left|p_{j}-p_{i}\right|\geq\frac{1}{4}\eta_{k}$ and $\left|p_{j}-p_{k}\right|\leq3\eta_{k}$.
This implies that 
\[
\exists C>0:\;\forall k\quad\#\left\{ j\,:\;B_{j}\cap B_{k}\neq\emptyset\right\} \leq C\,.
\]
We further observe that the minimal surface of $B_{k}\cap\partial\bP$
is given in case when $B_{k}\cap\partial\bP$ is a cone with opening
angle $\frac{\pi}{2}-\arctan M\of{p_{k}}$. The surface area of $B_{k}\cap\partial\bP$
in this case is bounded by $\frac{1}{d-1}\left|\S^{d-2}\right|\eta_{k}^{d-1}\left(M_{k}+1\right)^{2-d}$.
This particularly implies up to a constant independent from $k$:
\begin{align*}
\int_{A\cap\bQ\cap\bP}\tilde{\eta}^{-\alpha} & \lesssim\sum_{k:\,B_{k}\cap\bQ\neq\emptyset}\int_{A\cap B_{k}\cap\bP}\eta_{k}^{-\alpha}\\
 & \lesssim\sum_{k:\,B_{k}\cap\bQ\neq\emptyset}\int_{A\cap B_{k}\cap\partial\bP}\eta^{1-\alpha}M_{[\frac{\eta}{4}]}^{d-2}\\
 & \lesssim\int_{A\cap\Ball{\frac{\fr}{4}}{\bQ}\cap\partial\bP}\eta^{1-\alpha}M_{[\frac{\eta}{4}]}^{d-2}\,.
\end{align*}
The second integral formula follows in a similar way.
\end{proof}

\subsection{\label{subsec:Mesoscopic-Regularity}Mesoscopic Regularity and Isotropic
Cone Mixing}

In what follows, we built upon Lemma \ref{lem:there-are-balls} to
motivate our definition of mesoscopic regularity (Definition \ref{def:iso-cone-mix}
by the following two Lemmas.
\begin{lem}
\label{lem:intensity-mu-omega-r}Recall $\X_{r}\of{\bP\of{\omega}}:=2r\Zd\cap\bP_{-r}\of{\omega}=\left\{ x\in2r\Zd\,:\;\Ball{\frac{r}{2}}x\subset\bP\right\} $
from Lemma \ref{lem:X-r-stationary} and assume $\fr<\frac{1}{8}$.
Let 
\[
\mu_{\omega,\fr}(\,\cdot\,):=\lebesgueL\of{\,\cdot\,\cap\Ball{\frac{\fr}{2}}{\X_{\fr}\of{\bP\of{\omega}}}}\,,
\]
then there exists a constant $\lambda_{0}>0$ such that for almost
every $\omega\in\Omega$ it holds for all regular convex averaging
sequences $A_{n}$
\begin{equation}
\liminf_{n\to\infty}\left|A_{n}\right|^{-1}\mu_{\omega,\fr}(A_{n})\geq\lambda_{0}\,.\label{eq:lem:intensity-mu-omega-r-1}
\end{equation}
\end{lem}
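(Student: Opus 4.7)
The plan is to dominate $\mu_{\omega,\fr}(A_{n})$ from below by the Lebesgue volume of a fully stationary inner-bulk set $\bP_{-(1+\sqrt{d})\fr}(\omega)$, whose asymptotic density is supplied by the continuous ergodic theorem. This sidesteps the fact that $\mu_{\omega,\fr}$ is only $2\fr\Zd$-stationary, so that neither the random-measure ergodic theorem nor the discrete $\Zd$-ergodic theorem (which, as noted just before the present lemma, may fail to be ergodic) applies directly.

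The first step is to secure positivity of an auxiliary probability. Let $E^{*}:=\{\omega:\Ball{(1+\sqrt{d})\fr}{0}\subset\bP(\omega)\}$. With $\fr$ chosen as in Lemma \ref{lem:there-are-balls} (so that $4\sqrt{d}\fr\geq(1+\sqrt{d})\fr$, which holds for every $d\geq 1$), that lemma produces a positive-probability event on which $(0,1)^{d}\cap\bP$ contains a ball of radius $4\sqrt{d}\fr$, which in particular forces $|(0,1)^{d}\cap\bP_{-(1+\sqrt{d})\fr}(\omega)|>0$. Taking expectations and using Fubini together with stationarity of $\bP_{-(1+\sqrt{d})\fr}$ yields
\[
\E\,|(0,1)^{d}\cap\bP_{-(1+\sqrt{d})\fr}|=\int_{(0,1)^{d}}\P(x\in\bP_{-(1+\sqrt{d})\fr})\,dx=\P(E^{*}),
\]
so $\P(E^{*})>0$.

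The second step is the geometric coupling. For $x\in\Rd$ let $z(x)$ denote the closest point of $2\fr\Zd$, so $|x-z(x)|\leq\sqrt{d}\fr$ and the fibres $z^{-1}(z_{0})=z_{0}+[-\fr,\fr)^{d}$ have Lebesgue measure $(2\fr)^{d}$. If $x\in\bP_{-(1+\sqrt{d})\fr}(\omega)$, then $\dist(z(x),\Rd\setminus\bP(\omega))\geq\fr$, so $z(x)\in\X_{\fr}(\bP(\omega))$. Introducing the erosion $A_{n}^{-}:=\{x\in A_{n}:\Ball{(\tfrac{1}{2}+\sqrt{d})\fr}{x}\subset A_{n}\}$ ensures that for $x\in A_{n}^{-}\cap\bP_{-(1+\sqrt{d})\fr}(\omega)$ the ball $\Ball{\fr/2}{z(x)}$ sits entirely inside $A_{n}$ and thus contributes its full volume $c_{\fr}:=|\Ball{\fr/2}{0}|$ to $\mu_{\omega,\fr}(A_{n})$. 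Since the balls $\{\Ball{\fr/2}{z}\}_{z\in 2\fr\Zd}$ are pairwise disjoint (their centres being at distance at least $2\fr$), counting each such $z_{0}\in\X_{\fr}$ by its fibre of volume $(2\fr)^{d}$ gives
\[
\mu_{\omega,\fr}(A_{n})\geq c_{\fr}(2\fr)^{-d}\,|A_{n}^{-}\cap\bP_{-(1+\sqrt{d})\fr}(\omega)|.
\]

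The third and final step invokes the continuous ergodic theorem. Since $\bP_{-(1+\sqrt{d})\fr}$ is stationary, $\chi_{\bP_{-(1+\sqrt{d})\fr}(\omega)}(x)=\chi_{E^{*}}(\tau_{x}\omega)$, and Theorem \ref{thm:Ergodic-Theorem} applied to $f=\chi_{E^{*}}$ together with ergodicity of $\tau$ yields, $\P$-a.s.,
\[
|A_{n}^{-}|^{-1}\,|A_{n}^{-}\cap\bP_{-(1+\sqrt{d})\fr}(\omega)|\longrightarrow\P(E^{*}).
\]
Since $A_{n}^{-}$ is convex with inradius $r_{n}-(\tfrac{1}{2}+\sqrt{d})\fr\to\infty$, a standard volume/surface estimate for convex bodies gives $|A_{n}^{-}|/|A_{n}|\to 1$. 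Combining the two inequalities above produces
\[
\liminf_{n\to\infty}|A_{n}|^{-1}\mu_{\omega,\fr}(A_{n})\geq c_{\fr}(2\fr)^{-d}\P(E^{*})=:\lambda_{0}>0,
\]
uniformly in $\omega$ and in the averaging sequence. The main obstacle, which this reduction overcomes, is precisely the loss of continuous translation invariance on passing from $\bP_{-\fr}$ to the grid-restricted $\X_{\fr}$; the price paid is the blow-up of the relevant radius from $\fr/2$ to $(1+\sqrt{d})\fr$ inside the constant $\lambda_{0}$.
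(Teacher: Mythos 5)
Your proof is correct and rests on the same pillars as the paper's: positivity of the expected volume of an eroded version of $\bP$ (supplied by Lemma~\ref{lem:there-are-balls}), a geometric coupling between this eroded set and the grid $\X_{\fr}$, and the continuous ergodic theorem applied to the resulting stationary Lebesgue-type density. Where you differ is in how the coupling is organized. The paper works cube by cube: it shows that whenever $\tilde\mu_\omega\bigl(\I_{-3\sqrt{d}\fr}\bigr)>0$ there is a grid point of $\X_{\fr}$ whose $\fr/2$-ball lies inside $\I$, and then passes from this per-cube statement to the averaging sequence $A_n$, a step that is left implicit and which, as written, involves some overcounting near cube faces that would need a ``fattened neighborhood'' argument together with the regularity of $A_n$ to be made airtight. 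You instead push every point $x\in\bP_{-(1+\sqrt{d})\fr}$ through the nearest-grid-point map $z(\cdot)$, observe that disjointness of the balls $\Ball{\fr/2}{z}$ and the fixed fibre volume $(2\fr)^d$ give the global inequality $\mu_{\omega,\fr}(A_n)\geq c_\fr(2\fr)^{-d}\lvert A_n^-\cap\bP_{-(1+\sqrt{d})\fr}\rvert$ in one stroke, and handle the boundary once and for all by eroding $A_n$ itself. This buys a shorter erosion radius $(1+\sqrt{d})\fr$ in place of $3\sqrt{d}\fr$, and, more importantly, an argument that needs only that $A_n^-$ is a convex averaging sequence and that $\lvert A_n^-\rvert/\lvert A_n\rvert\to 1$ --- the latter following for any convex averaging sequence with inradius $\to\infty$ by the rescaling $\phi(y)=x_0+\tfrac{\rho-c}{\rho}(y-x_0)$ which maps $A_n$ into $A_n^-$, so in your route the ``regular'' hypothesis on $A_n$ is in fact dispensable. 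One small point to flag: both you and the paper apply the ergodic theorem and then conclude for ``all'' averaging sequences from an a.s.\ statement; this requires the full-measure set to be chosen uniformly over convex averaging sequences, a standard refinement of the ergodic theorem that neither proof makes explicit, so you are in the same boat as the paper there.
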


\begin{rem*}
Note that $\mu_{\omega,\fr}$ is stationary with respect to shifts
in $2\fr\Zd$ but not ergodic in general. It corresponds to the function
$\left|\S^{d-1}\right|\left(\frac{\fr}{2}\right)^{d}\X_{\fr}\of{\bP\of{\omega}}$
on $2\fr\Zd$ and by stationarity, Theorem \ref{thm:Ergodic-Theorem-discrete}
yields convergence
\[
\left|A_{n}\right|^{-1}\sum_{z\in A_{n}\cap\X_{r}\left(\bP\left(\omega\right)\right)}\left|\S^{d-1}\right|\left(\frac{\fr}{2}\right)^{d}\X_{\fr}\of{\bP\of{\omega}}\to\E\left(\mu_{\omega,\fr}|\sI\right)\,.
\]
Inequality (\ref{eq:lem:intensity-mu-omega-r-1}) implies $\E\of{\mu_{\omega,\fr}|\sI}\geq\lambda_{0}$
a.s.
\end{rem*}
\begin{proof}[Proof of Lemma \ref{lem:intensity-mu-omega-r}]
Due to Lemma \ref{lem:there-are-balls}, with probability $p_{\fr}>0$
the set $\I\cap\bP$ contains a ball $\Ball{4\sqrt{d}\fr}x$ and thus
the set $\left(\I\cap\bP\right)_{-3\sqrt{d}\fr}$ contains a ball
$\Ball{\sqrt{d}\fr}x$. In particular, the stationary ergodic random
measure $\tilde{\mu}_{\omega}(\,\cdot\,):=\lebesgueL\of{\,\cdot\,\cap\bP_{-3\sqrt{d}\fr}\left(\omega\right)}$
has positive intensity $\tilde{\lambda}_{0}>p_{\fr}\left|\S^{d-1}\left(\sqrt{d}\fr\right)^{d}\right|$.
Let $\tilde{\mu}_{\omega}\of{\I_{-3\sqrt{d}\fr}}>0$. Then there exists
$x\in\left(\I\cap\bP\right)_{-3\sqrt{d}\fr}$ and thus there exists
$x\in\X_{\fr}\of{\bP\of{\omega}}\cap\I$ with $\Ball{\frac{\fr}{2}}x\subset\I$.
It follows 
\[
\tilde{\mu}_{\omega}\of{\I_{-3\sqrt{d}\fr}}\leq\tilde{\mu}_{\omega}\left(\I\right)\leq1=\frac{2^{d}}{\fr^{d}\left|\S^{d-1}\right|}\left|\Ball{\frac{\fr}{2}}x\right|\leq\frac{2^{d}}{\fr^{d}\left|\S^{d-1}\right|}\mu_{\omega,\fr}\of{\I}\,.
\]
Since $\tilde{\mu}_{\omega}$ is stationary ergodic and $A_{n}$ is
regular we find 
\begin{align*}
p_{\fr}\left|\S^{d-1}\left(\sqrt{d}\fr\right)^{d}\right| & <\tilde{\mu}_{\omega}\of{\I_{-3\sqrt{d}\fr}}<\tilde{\lambda}_{0}\leq\E\of{\tilde{\mu}_{\omega}\left(\I\right)}=\liminf_{n\to\infty}\left|A_{n}\right|^{-1}\tilde{\mu}_{\omega}(A_{n})\\
 & \leq\frac{2^{d}}{\fr^{d}\left|\S^{d-1}\right|}\liminf_{n\to\infty}\left|A_{n}\right|^{-1}\mu_{\omega,\fr}(A_{n})\,.
\end{align*}
\end{proof}
Lemma \ref{lem:there-are-balls} suggests that starting at the origin
and walking into an arbitrary direction, it is almost impossible to
not meet a ball of radius $\fr$ that fully lies within $\bP(\omega)$.
However, this is in general wrong, as for a given fixed direction
one may already find periodic counter examples. In what follows, we
will therefore use the weaker concept of isotropic cone mixing (Definition
\ref{def:iso-cone-mix}) which is based on the following observation:
\begin{lem}
\label{lem:Cone-regularity}Let $\left(\left(\nu_{j},\alpha_{j}\right)\right)_{j\in\N}\subset\S^{d-1}\times\left(0,\frac{\pi}{2}\right)$
be countable. Then for every $x\in\Rd$ and each $j\in\N$ there holds
\[
\lim_{R\to\infty}\P\of{\X_{\fr}\left(\bP\right)\cap\cone_{\nu_{j},\alpha_{j},R}(x)\neq\emptyset}=1\,.
\]
\end{lem}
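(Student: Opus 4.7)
The plan is to fix $x \in \Rd$ and $j \in \N$, write $\nu = \nu_j$, $\alpha = \alpha_j$, and reduce the statement to the asymptotic non-emptiness of $\X_{\fr}(\omega)$ in balls of growing radius around the origin. Since $\X_{\fr}$ is stationary with respect to $2\fr\Zd$ (Lemma \ref{lem:X-r-stationary}), picking any $y \in \Rd$ and a lattice point $z \in 2\fr\Zd$ with $|y-z| \leq \fr\sqrt{d}$ gives, by measure preservation of $\tau_{-z}$ and the identity $\X_\fr(\tau_{-z}\omega) = \X_\fr(\omega) + z$,
\[
\P\of{\X_\fr \cap \Ball{r}{y} = \emptyset} \leq \P\of{\X_\fr \cap \Ball{r-\fr\sqrt{d}}{z} = \emptyset} = \P\of{\X_\fr \cap \Ball{r-\fr\sqrt{d}}{0} = \emptyset}.
\]

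The geometric step is to exhibit, inside $\cone_{\nu,\alpha,R}(x)$, a ball whose radius tends to infinity as $R \to \infty$. Setting $y_R := x + (R/2)\nu$ and $r_R := (R/4)\tan^2(\alpha/2)$, a direct calculation shows that any $z = y_R + w$ with $|w| \leq r_R$ satisfies $|z - x| \leq R/2 + r_R \leq R$ and
\[
(z-x)\cdot\nu \,\geq\, R/2 - r_R \,>\, (R/2 + r_R)\cos\alpha \,\geq\, |z-x|\cos\alpha,
\]
the middle inequality reducing to $r_R < (R/2)\tan^2(\alpha/2)$. Hence $\Ball{r_R}{y_R} \subset \cone_{\nu,\alpha,R}(x)$, and combined with the shift estimate above one obtains
\[
\P\of{\X_\fr \cap \cone_{\nu,\alpha,R}(x) = \emptyset} \,\leq\, \P\of{\X_\fr \cap \Ball{r_R - \fr\sqrt{d}}{0} = \emptyset}.
\]

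The events on the right-hand side are decreasing in $R$ and their intersection is $\{\omega : \X_\fr(\omega) = \emptyset\}$, so by continuity of measure it suffices to show this limiting event has probability zero. This is now immediate from Lemma \ref{lem:intensity-mu-omega-r}: choosing the regular convex averaging sequence $A_n := \Ball{n}{0}$, we have $\liminf_{n\to\infty} |A_n|^{-1}\mu_{\omega,\fr}(A_n) \geq \lambda_0 > 0$ for almost every $\omega$. In particular $\mu_{\omega,\fr}(\Rd) = \infty$ almost surely, and since $\mu_{\omega,\fr}$ is concentrated on $\Ball{\fr/2}{\X_\fr(\omega)}$, the point set $\X_\fr(\omega)$ is nonempty (in fact unbounded) almost surely.

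The main obstacle, if any, is purely bookkeeping: because $\X_\fr$ is only $2\fr\Zd$-stationary, a shift by a generic $y \in \Rd$ is not directly available, forcing the fixed loss $\fr\sqrt{d}$ in the ball radius above. Since $r_R$ grows linearly in $R$ while this loss is a universal constant, the reduction to $\Ball{\cdot}{0}$ goes through and the rest follows from the positive mesoscopic intensity of $\X_\fr$ already established.
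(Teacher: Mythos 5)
Your proof is correct, and it takes a genuinely different route from the paper's. The paper's argument stays in the ergodic-theorem framework: it extends the cones $\cone_R := \cone_{\nu_j,\alpha_j,e^R}(0)$ to convex averaging sequences $\tilde\cone_R$ (by sliding the apex backward), applies Lemma \ref{lem:intensity-mu-omega-r} to these sequences to conclude that $\lebesgueL(\tilde\cone_R)^{-1}\mu_{\omega,\fr}(\cone_R)$ is bounded away from zero almost surely, and infers that a.s.\ the cones eventually contain balls of $\X_\fr$ --- an almost-sure statement stronger than the probability-in-the-limit claim. Your argument decouples the geometry from the ergodics: you inscribe in the cone a ball of radius $r_R \propto R$, use the $2\fr\Zd$-stationarity of $\X_\fr$ to recenter (losing only a fixed constant $\fr\sqrt{d}$ in the radius, since $z$ must be a lattice point), then invoke continuity of measure to reduce everything to the single event $\{\X_\fr = \emptyset\}$, which has probability zero by Lemma \ref{lem:intensity-mu-omega-r}. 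The inscribed-ball calculation checks out: with $y_R = x + (R/2)\nu$ and $r_R = (R/4)\tan^2(\alpha/2)$ one indeed has $R/2 - r_R > (R/2 + r_R)\cos\alpha$ and $R/2 + r_R < R$ for $\alpha < \pi/2$. What your approach buys is that it is entirely elementary past the invocation of Lemma \ref{lem:intensity-mu-omega-r} (no ergodic averaging over cones is needed), and it cleanly sidesteps a small subtlety in the paper's step from ``$\mu_{\omega,\fr}(\cone_R)>0$'' to ``$\X_\fr \cap \cone_R \neq \emptyset$'' (positive mass only guarantees that the $\fr/2$-neighborhood of $\X_\fr$ meets the cone, whereas your inscribed ball directly traps a lattice point in the interior). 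What it gives up is the a.s.\ version of the statement, but that is not what the lemma asserts, so this is purely a matter of style.
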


\begin{proof}
By stationarity, we can assume $x=0$ and by Lemma \ref{lem:intensity-mu-omega-r}
the random measure $\mu_{\omega,\fr}$ has strictly positive intensity.

We write $\cone_{R}:=\cone_{\nu_{j},\alpha_{j},e^{R}}(0)$ and denote
by $\tilde{\cone}_{R}$ the cone with the same base as $\cone_{R}$
but with apex $-\nu_{j}R$. Then $\tilde{\cone}_{R}$ is a regular
convex averaging sequence. Furthermore, it holds $\lebesgueL\of{\tilde{\cone}_{R}}/\lebesgueL\of{\left(\cone_{R}\right)}=\frac{e^{R}+R}{e^{R}}\to1$
implying $\lebesgueL\of{\tilde{\cone}_{R}\backslash\cone_{R}}\,\lebesgueL\of{\tilde{\cone}_{R}}^{-1}\to0$
as $R\to\infty$. Thus
\begin{align*}
\mu_{0} & \leq\liminf_{R\to\infty}\lebesgueL\of{\tilde{\cone}_{R}}^{-1}\mu_{\omega,\fr}\of{\tilde{\cone}_{R}}\\
 & =\liminf_{R\to\infty}\lebesgueL\of{\tilde{\cone}_{R}}^{-1}\left(\mu_{\omega,\fr}\of{\cone_{R}}+\mu_{\omega,\fr}\of{\tilde{\cone}_{R}\backslash\cone_{R}}\right)=\liminf_{R\to\infty}\lebesgueL\of{\tilde{\cone}_{R}}^{-1}\mu_{\omega,\fr}\of{\cone_{R}}\,.
\end{align*}
where we use $0\leq\mu_{\omega,\fr}\of{\tilde{\cone}_{R}\backslash\cone_{R}}\leq\lebesgueL\of{\tilde{\cone}_{R}\backslash\cone_{R}}\to0$
as $R\to\infty$. We infer that $\mu_{\omega,\fr}\of{\left(\cone_{R}\right)}=O\of{R^{d}}$
and hence the statement ($\cone_{R}$ has to contain infinitely many
balls $\Ball{\frac{\fr}{2}}{x_{l}}$).
\end{proof}
The following definition is a quantification of Lemma \ref{lem:Cone-regularity}.
\begin{defn}[Isotropic cone mixing]
\label{def:iso-cone-mix}\nomenclature[Isotropic cone mixing]{Isotropic cone mixing}{Definition \ref{def:iso-cone-mix}}A
random set $\bP(\omega)$ is isotropic cone mixing if there exists
a jointly stationary point process $\X$ in $\Rd$ or $2\fr\Zd$,
$\fr>0$, such that almost surely two points $x,y\in\X$ have mutual
minimal distance $2\fr$ and such that $\Ball{\frac{\fr}{2}}{\X(\omega)}\subset\bP(\omega)$.
Further there exists a function $f(R)$ with $f(R)\to0$ as $\R\to\infty$
and $\alpha\in\left(0,\frac{\pi}{2}\right)$ such that with $\bE:=\left\{ e_{1},\dots e_{d}\right\} \cup\left\{ -e_{1},\dots-e_{d}\right\} $
($\left\{ e_{1},\dots e_{d}\right\} $ being the canonical basis of
$\Rd$) 
\begin{equation}
\P\of{\forall e\in\bE:\;\X\cap\cone_{e,\alpha,R}(0)\neq\emptyset}\geq1-f\of R\,.\label{eq:def-iso-cone-mixing}
\end{equation}
\end{defn}

\begin{criterion}[A simple sufficient criterion for (\ref{eq:def-iso-cone-mixing})]
\label{cri:stat-erg-ball}Let $\bP$ be a stationary ergodic random
open set, let $\tilde{f}$ be a positive, monotonically decreasing
function with $\tilde{f}(R)\to0$ as $R\to\infty$ and let $\fr>0$
s.t.
\begin{equation}
\P\of{\exists x\in\Ball R0:\,\Ball{4\sqrt{d}\fr}x\subset\Ball R0\cap\bP}\geq1-\tilde{f}\of R\,.\label{eq:cri:stat-erg-ball}
\end{equation}
Then $\bP$ is isotropic cone mixing with $f(R)=2d\tilde{f}\of{\left(a+1\right)^{-1}R}$
and with $\X=\X_{\fr}(\bP)$. Vice versa, if $\bP$ is isotropic cone
mixing for $f$ then $\bP$ satisfies (\ref{eq:cri:stat-erg-ball})
with $\tilde{f}=f$.
\end{criterion}

\begin{defn}[Mesoscopic regularity]
\label{def:meso-regularity}\nomenclature[Mesoscopic regularity]{Mesoscopic regularity}{Definition \ref{def:meso-regularity}}A
random set $\bP$ satisfying Criterion \ref{cri:stat-erg-ball} is
also called mesoscopically regular and $\tilde{f}$ is the regularity.
$\bP$ is called polynomially (exponentially) regular if $1/\tilde{f}$
grows polynomially (exponentially).
\end{defn}

\begin{proof}[Proof of Criterion \ref{cri:stat-erg-ball}]
Because of $\P\of{A\cup B}\leq\P\of A+\P\of B$ it holds for $a>1$
\[
\P\of{\exists e\in\bE\,:\;\nexists x\in\Ball R{aRe}:\,\Ball{4\sqrt{d}\fr}x\subset\Ball R{aRe}\cap\bP}\leq2d\tilde{f}(R)\,.
\]
The existence of $\Ball{4\sqrt{d}\fr}x\subset\Ball R{aRe}\cap\bP\of{\omega}$
implies that there exists at least one $x\in\X_{\fr}\left(\bP\left(\omega\right)\right)$
such that $\Ball{\frac{\fr}{2}}x\subset\Ball R{aRe}\cap\bP\of{\omega}$
and we find 
\[
\P\of{\exists e\in\bE\,:\;\nexists x\in\X_{\fr}\of{\bP}:\,\Ball{\frac{\fr}{2}}x\subset\Ball R{aRe}\cap\bP}\leq2d\tilde{f}(R)\,.
\]
In particular, for $\alpha=\arccos a$ and $R$ large enough we discover
\[
\P\left(\exists e\in\bE\,:\;\X_{\fr}\of{\bP}\cap\cone_{e,\alpha,\left(a+1\right)R}\left(0\right)=\emptyset\right)\leq2d\tilde{f}(R)\,.
\]
The relation (\ref{eq:def-iso-cone-mixing}) holds with $f(R)=2d\tilde{f}\of{\left(a+1\right)^{-1}R}$.

The other direction is evident.
\end{proof}
Note that Criterion \ref{cri:stat-erg-ball} is much easier to verify
than Definition \ref{def:iso-cone-mix}. However, Definition \ref{def:iso-cone-mix}
is formulated more generally and is easier to handle in the proofs
below, that are all built on properties of Voronoi meshes.

The formulation of Definition \ref{def:iso-cone-mix} is particularly
useful for the following statement.
\begin{lem}[Size distribution of cells]
\label{lem:Iso-cone-geo-estimate}Let $\bP(\omega)$ be a stationary
and ergodic random open set that is isotropic cone mixing for $\X(\omega)$,
$\fr>0$, $f:\,(0,\infty)\to\R$ and $\alpha\in\left(0,\frac{\pi}{2}\right)$.
Then $\X$ and its Voronoi tessellation have the following properties:
\begin{enumerate}
\item If $G(x)$ is the open Voronoi cell of $x\in\X(\omega)$  with diameter
$d\of x$ then $d$ is jointly stationary with $\X$ and for some
constant $C_{\alpha}>0$ depending only on $\alpha$
\begin{equation}
\P(d(x)>D)<f\of{C_{\alpha}^{-1}\frac{D}{2}}\,.\label{eq:estim-diam-Voronoi}
\end{equation}
\item For $x\in\X(\omega)$  let $\cI\of x:=\left\{ y\in\X\,:\;G\of y\cap\Ball{\fr}{G\of x}\not=\emptyset\right\} $.
Then 
\begin{equation}
\#\cI\of x\leq\left(\frac{4d\of x}{\fr}\right)^{d}\,.\label{eq:estim-diam-Voronoi-2}
\end{equation}
\end{enumerate}
\end{lem}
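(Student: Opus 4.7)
The plan is to dispatch assertion (2) directly as an application of the previously established Lemma \ref{lem:estim-diam-Voronoi-cells}, whose hypothesis (mutual minimal distance $2\fr$ between points of $\X$) is built into Definition \ref{def:iso-cone-mix}. The substance of the present lemma is therefore assertion (1).

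Joint stationarity of $d$ with $\X$ follows at once from the translation-equivariance of the Voronoi construction, $G(x+y,\X+y) = y + G(x,\X)$. To prove the tail bound, I would pass to the Palm measure at $x=0$ and argue as follows. Because $G(0)$ is convex and contains $0$, one has $\diam G(0) \leq 2\max_{z \in G(0)}|z|$, so $\{d(0) > D\}$ forces $G(0)$ to contain a point at distance $> D/2$ from the origin. I then aim to show that the latter is incompatible with the cone mixing event at a suitable scale $R = R(D)$, after which the desired estimate follows from Definition \ref{def:iso-cone-mix}.

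The crucial deterministic step is the following geometric claim: there exists a constant $C_\alpha > 0$, depending only on $\alpha$ and $d$, such that whenever $\X \cap \cone_{e,\alpha,R}(0) \neq \emptyset$ for every $e \in \bE$, one has $G(0) \subset \Ball{C_\alpha R}{0}$. The idea is that each such $y_e = s_e \nu_e$ with $s_e \leq R$ and $\nu_e \cdot e \geq \cos\alpha$ contributes a half-space $H_e := \{z : z\cdot\nu_e \leq s_e/2\}$ containing $G(0)$; and for any unit direction $w$, a pigeonhole argument yields some $e \in \bE$ with $w \cdot e \geq 1/\sqrt{d}$, whence a short computation gives
\[
w \cdot \nu_e \;\geq\; \cos\alpha\,(w\cdot e) - \sin\alpha \sqrt{1 - (w\cdot e)^2} \;\geq\; \frac{\cos\alpha - \sqrt{d-1}\,\sin\alpha}{\sqrt{d}}\,,
\]
so the extent $\sup\{t > 0 : tw \in H_e\text{ for all }e\}$ is bounded by a multiple of $R$, yielding $C_\alpha$.

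The main obstacle is this geometric claim, and in particular the identification of a usable $C_\alpha$ for the given opening angle. The bound above is only positive when $\tan\alpha < 1/\sqrt{d-1}$; for larger $\alpha$, I would first reduce to a smaller auxiliary $\alpha' < \arctan(1/\sqrt{d-1})$ by covering each cone $\cone_{e,\alpha,R}$ with finitely many narrower cones (at the cost of replacing $f$ by a suitable majorant). With the geometric claim in hand, the choice $R = C_\alpha^{-1} D/2$ gives the set inclusion
\[
\{d(0) > D\} \;\subset\; \bigl\{\exists\, e \in \bE : \X \cap \cone_{e,\alpha,R}(0) = \emptyset\bigr\}\,,
\]
and (\ref{eq:estim-diam-Voronoi}) then follows from (\ref{eq:def-iso-cone-mixing}).
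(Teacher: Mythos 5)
Your overall strategy mirrors the paper's: assertion (2) is discharged by Lemma \ref{lem:estim-diam-Voronoi-cells}, and for assertion (1) one passes to the Palm picture, takes the $2d$ points furnished by the cone mixing event, cuts with the mediating hyperplanes $\left(x-\tfrac12 x_{\pm j}\right)\cdot x_{\pm j}=0$, and reads off a diameter bound $D(\alpha,R)=2C_\alpha R$. The paper's proof is terse and supplies no argument that this intersection of half-spaces is in fact bounded; your geometric claim, and the computation showing $w\cdot\nu_e\geq\bigl(\cos\alpha-\sqrt{d-1}\,\sin\alpha\bigr)/\sqrt{d}$, is exactly the missing verification and is correct as far as it goes.

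However, the workaround you propose for $\tan\alpha\geq 1/\sqrt{d-1}$ does not work, and I think you should be suspicious of your own formula here rather than try to repair it. Covering $\cone_{e,\alpha,R}$ by finitely many narrower cones $\cone_{\nu_1,\alpha',R},\dots,\cone_{\nu_k,\alpha',R}$ only allows you to conclude that \emph{at least one} of the narrow cones contains a point of $\X$; the isotropic cone mixing hypothesis (\ref{eq:def-iso-cone-mixing}) gives no control over \emph{which} one, and the directions $\nu_i$ are not the coordinate directions $\bE$ appearing in that hypothesis. So you cannot pass to a smaller $\alpha'$ this way. More importantly, the obstruction is not a defect of your estimate: for $\alpha$ large the geometric conclusion itself fails. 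In $d=2$ take $\alpha=\tfrac{\pi}{2}-\eps$; the point $p_1$ at polar angle $\pi-2\eps$ lies in both $\cone_{e_2,\alpha}$ and $\cone_{-e_1,\alpha}$, and $p_2$ at polar angle $\tfrac{3\pi}{2}+2\eps$ lies in both $\cone_{e_1,\alpha}$ and $\cone_{-e_2,\alpha}$, so $\{p_1,p_2\}$ meets all four cones; yet both lie in the open half-plane $\{z: z\cdot w<0\}$ with $w=(1,1)/\sqrt{2}$, so the Voronoi cell of $0$ relative to $\{0,p_1,p_2\}$ is unbounded in direction $w$. Your threshold $\tan\alpha<1/\sqrt{d-1}$ reduces to $\alpha<\pi/4$ in $d=2$ and is precisely the angle at which the coordinate cones begin to overlap, so it appears to be the correct (sharp) restriction. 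The honest conclusion is therefore that the lemma, read literally for arbitrary $\alpha\in(0,\tfrac{\pi}{2})$, requires an additional hypothesis (e.g. $\tan\alpha<1/\sqrt{d-1}$, or replacing $\bE$ by a finer net of directions in Definition \ref{def:iso-cone-mix}); the paper's assertion that $C_\alpha$ exists with $C_\alpha\to\infty$ as $\alpha\to\tfrac{\pi}{2}$ is not substantiated by the given proof, and your proposal does not substantiate it either. You should either carry the restriction $\tan\alpha<1/\sqrt{d-1}$ as a standing hypothesis, or strengthen the cone-mixing hypothesis to use more than the $2d$ coordinate directions.
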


\begin{proof}
1. W.l.o.g. let $x_{k}=0$. The first part follows from the definition
of isotropic cone mixing: We take arbitrary points $x_{\pm j}\in C_{\pm\e_{j},\alpha,R}(0)\cap\X$.
Then the planes given by the respective equations $\left(x-\frac{1}{2}x_{\pm j}\right)\cdot x_{\pm j}=0$
define a bounded cell around $0$, with a maximal diameter $D(\alpha,R)=2C_{\alpha}R$
which is proportional to $R$. The constant $C_{\alpha}$ depends
nonlinearly on $\alpha$ with $C_{\alpha}\to\infty$ as $\alpha\to\frac{\pi}{2}$.
Estimate (\ref{eq:estim-diam-Voronoi}) can now be concluded from
the relation between $R$ and $D(\alpha,R)$ and from (\ref{eq:def-iso-cone-mixing}).

2. This follows from Lemma \ref{eq:lem:estim-diam-Voronoi-cells}.
\end{proof}
\begin{lem}
\label{lem:estim-E-fa-fb}Let $\X_{\fr}$ be a stationary and ergodic
random random points process with minimal mutual distance $2\fr$
for $\fr>0$ and let $f:\,(0,\infty)\to\R$ be such that the Voronoi
tessellation of $\X$ has the property 
\[
\forall x\in\fr\Zd\,:\quad\P(d(x)>D)=f\of D\,.
\]
Furthermore, let $n,s:\,\X_{\fr}\to[1,\infty)$ be measurable and
i.i.d. among $\X_{\fr}$ and let $n,s,d$ be independent from each
other. Let 
\[
G_{n(x)}\of x=n(x)\left(G\of x-x\right)+x
\]
 be the cell $G\of x$ enlarged by the factor $n(x)$, let $d(x)=\diam G(x)$
and let 
\begin{align*}
\fb_{n}\of y & :=\sum_{x\in\X_{\fr}}\chi_{G_{n}\of x}d(x)^{\eta}s(x)^{\xi}n(x)^{\zeta}\,,
\end{align*}
where $\eta,\xi,\zeta>0$ is a constant. Then $\fb_{n}$ is jointly
stationary with $\X_{\fr}$ and for every $r>1$ there exists $C\in(0,+\infty)$
such that 
\begin{multline}
\E\of{\fb_{n}^{p}} \\ \leq C\left(\sum_{k,N=1}^{\infty}\left(k+1\right)^{d\left(p+1\right)+\eta p+r\left(p-1\right)}\left(S+1\right)^{\xi p+r\left(p-1\right)}\left(N+1\right)^{d\left(p+1\right)+\zeta p+r\left(p-1\right)}\P_{d,k}\P_{n,N}\P_{s,S}\right)\,.\label{eq:lem:estim-E-fa-fb-1}
\end{multline}
where 
\begin{align*}
\P_{d,k} & :=\P\of{d(x)\in[k,k+1)}=f\of k-f\of{k+1}\,,\\
\P_{n,N} & :=\P\of{n(x)\in[N,N+1)}\,,\\
\P_{s,S} & :=\P\of{s(x)\in[S,S+1)}\,.
\end{align*}
\end{lem}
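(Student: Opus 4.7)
Joint stationarity of $\fb_n$ with $\X_\fr$ is inherited from the equivariance of each ingredient under $\tau_\xi$: one has $\X_\fr\of{\tau_\xi\omega} = \X_\fr\of\omega - \xi$, the Voronoi cells transform as $G\of{x, \tau_\xi\omega} = G\of{x+\xi,\omega}-\xi$ (so that $d\of{x, \tau_\xi\omega} = d\of{x+\xi, \omega}$), and the i.i.d.\ marks can be placed on the probability space so that $n\of{x, \tau_\xi\omega} = n\of{x+\xi, \omega}$, and similarly for $s$. A direct calculation then gives $G_{n\of x}\of{x, \tau_\xi\omega} = G_{n\of{x+\xi}}\of{x+\xi, \omega} - \xi$, and the reindexing $x \mapsto x - \xi$ in the defining sum yields $\fb_n\of{y+\xi, \omega} = \fb_n\of{y, \tau_\xi\omega}$, as required.

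\textbf{Decomposition and weighted H\"older.} By stationarity, $\E\of{\fb_n\of y^p}$ is $y$-independent and it suffices to bound $\E\of{\fb_n\of 0^p}$. I split the sum according to integer level sets of $\of{d, n, s}$: setting
\[
A_{k,N,S} := \sum_{x \in \X_\fr}\chi_{G_{n\of x}\of x}\of 0\,\chi_{\left\{d\of x \in [k,k+1),\ n\of x \in [N,N+1),\ s\of x \in [S,S+1)\right\}}\,d\of x^\eta s\of x^\xi n\of x^\zeta,
\]
one has $\fb_n\of 0 = \sum_{k,N,S\geq 1} A_{k,N,S}$. Choosing weights $c_{k,N,S} := \of{(k+1)(N+1)(S+1)}^{-r}$, which are summable over $\N^3$ precisely because $r > 1$, H\"older's inequality gives
\[
\fb_n\of 0^p \leq C_r \sum_{k,N,S}\of{(k+1)(N+1)(S+1)}^{r(p-1)} A_{k,N,S}^p,
\]
which is the source of the $r(p-1)$ exponents in the target bound. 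For the block $A_{k,N,S}$, the integrand is pointwise dominated by $(k+1)^\eta(N+1)^\zeta(S+1)^\xi$, while $0 \in G_{n\of x}\of x \subset \overline{\Ball{n\of x\,d\of x}{x}}$ restricts the sum to $x \in \Ball{(k+1)(N+1)}0$; the minimal-distance property of $\X_\fr$ then supplies the \emph{deterministic} bound $C\of{(k+1)(N+1)}^d$ on the number of contributing points. Jensen's inequality for the $p$-th power of the resulting finite sum, followed by expectation and the independence of the marks $n, s$ from $\of{\X_\fr, d}$, reduces the estimate to $\E\sum_{x \in \X_\fr \cap \Ball{(k+1)(N+1)}0}\chi_{d\of x \in [k,k+1)}$, which by Campbell's formula equals $\lambda\,\bigl|\Ball{(k+1)(N+1)}0\bigr|\,\P_{d,k} \leq C\of{(k+1)(N+1)}^d\P_{d,k}$. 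Collecting powers gives a summand whose $(k+1)(N+1)$-exponent is $dp$, which is dominated by the slightly loose $d(p+1)$ in (\ref{eq:lem:estim-E-fa-fb-1}); multiplying by the H\"older factor and the independent probabilities $\P_{n,N}\P_{s,S}$ produces the required bound.

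\textbf{Expected obstacle.} The essential tension is between wanting a \emph{deterministic} count of contributing points (so that Jensen applies cleanly inside the random sum $A_{k,N,S}$) and the need to retain the probability factors $\P_{d,k}\P_{n,N}\P_{s,S}$ (so that the tails of $d$, $n$, and $s$ control the outer sum). The weighted-H\"older step with free parameter $r > 1$ is what reconciles the two: the algebraic growth from the deterministic count is absorbed into the weight $c_{k,N,S}$, while the marginal probabilities of the independent marks are released untouched via Campbell's formula. The remaining work is pure exponent bookkeeping to identify the result with the summand of (\ref{eq:lem:estim-E-fa-fb-1}).
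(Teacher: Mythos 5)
Your proof follows the paper's argument in all essential respects: decompose $\fb_n(0)$ by joint level sets of $(d,n,s)$, apply a weighted H\"older inequality with weights $\alpha_{k,N,S}^q=((k+1)(N+1)(S+1))^{-r}$ (summable precisely when $r>1$, and the source of the $r(p-1)$ exponents), bound the number of contributing cells through the minimal-distance property of $\X_\fr$, and extract the factors $\P_{d,k}\P_{n,N}\P_{s,S}$ from the random sum. The decomposition, the counting argument, and the independence use of the marks are identical.

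The one genuine refinement is in the treatment of the block $A_{k,N,S}$. The paper bounds $\sum_{x\in X_{k,N,S}}\chi_{G_{n(x)}(x)}$ pointwise by $\chi_{A_{k,N,S}}\cdot M$ with $M\approx((k+1)(N+1))^d$, so the $p$-th power costs $M^p$; the spatial average $\frac{1}{|B_R|}\int_{B_R}\chi_{A_{k,N,S}}\to\P(y\in A_{k,N,S})$, estimated by a further volume count, contributes one more factor of $M$, producing the $d(p+1)$ in the statement. You instead apply the power-mean/Jensen inequality $(\sum_{i=1}^m a_i)^p\le M^{p-1}\sum a_i^p$ (valid because the number $m$ of contributing $x$ is deterministically $\le M$) and then evaluate the residual indicator sum via Campbell's formula, paying $M^{p-1}\cdot M=M^p$, i.e.\ $((k+1)(N+1))^{dp}$. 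Your bound is therefore a factor of $((k+1)(N+1))^d$ sharper and a fortiori implies \eqref{eq:lem:estim-E-fa-fb-1}. Note that Campbell's formula here is exactly the ergodic-theorem spatial average the paper performs, so apart from the Jensen improvement the two routes are interchangeable; both, in particular, rely on the same independence hypothesis (that $n,s$ are independent of the point configuration, not merely of $d$) to factor out $\P_{n,N}\P_{s,S}$.
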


\begin{proof}
We write $\X_{\fr}=\left(x_{i}\right)_{i\in\N}$, $d_{i}=d(x_{i})$,
$n_{i}=n(x_{i})$, $s_{i}:=s(x_{i})$. Let 
\[
X_{k,N,S}(\omega):=\left\{ x_{i}\in\X_{\fr}\,:\;d_{i}\in[k,k+1),\,n_{i}\in[N,N+1),\,s_{i}\in[S,S+1)\right\} 
\]
 with $A_{k,N,S}:=\bigcup_{x\in X_{k,N,S}}G_{n(x)}\of x$. We observe
that 
\begin{equation}
\forall x\in\Rd:\quad\#\left\{ x_{i}\in X_{k,N,S}:\,x\in G_{n(x_{i})}\of{x_{i}}\right\} \leq\S^{d-1}\left(N+1\right)^{d}\,\left(k+1\right)^{d}\fr^{-d}\,,\label{eq:lem:estim-E-fa-fb-help-1}
\end{equation}
which follows from the uniform boundedness of cells $G_{n(x)}\of x$,
$x\in X_{k,N}$ and the minimal distance of $\left|x_{i}-x_{j}\right|>2\fr$.
Then, writing $B_{R}:=\Ball R0$ for every $y\in\Rd$ it holds by
stationarity and the ergodic theorem
\begin{align*}
\P\of{y\in G_{n_{i}}\of{x_{i}}\,:\;x_{i}\in X_{k,N}} & =\lim_{R\to\infty}\left|B_{R}\right|^{-1}\left|A_{k,N}\cap B_{R}\right|\\
 & \leq\lim_{R\to\infty}\left|B_{R}\right|^{-1}\left|B_{R}\cap\bigcup_{x_{i}\in X_{k,N}}G_{n_{i}}\of{x_{i}}\right|\\
 & \leq\lim_{R\to\infty}\left|B_{R}\right|^{-1}\sum_{x_{i}\in X_{k,N}\cap B_{R}}\left|\S^{d-1}\right|\left(N+1\right)^{d}\left(k+1\right)^{d}\fr^{-d}\\
 & \to\P_{d,k}\P_{n,N}\left(N+1\right)^{d}\left|\S^{d-1}\right|\left(k+1\right)^{d}\fr^{-d}\,.
\end{align*}
In the last inequality we made use of the fact that every cell $G_{n(x)}(x)$,
$x\in X_{k,N}$, has volume smaller than $\S^{d-1}\left(N+1\right)^{d}\left(k+1\right)^{d}$.
We note that for $\frac{1}{p}+\frac{1}{q}=1$
\begin{align*}
 & \int_{\bQ}\left(\sum_{x\in\X_{\fr}}\chi_{G_{n}\of x}d(x)^{\eta}s(x)^{\xi}n(x)^{\zeta}\right)^{p}\\
 & \qquad\leq\int_{\bQ}\left(\sum_{k=1}^{\infty}\sum_{N=1}^{\infty}\sum_{S=1}^{\infty}\left(\sum_{x\in X_{k,N,S}}\chi_{G_{n(x)}\of x}\left(k+1\right)^{\eta}(N+1)^{\xi}(S+1)^{\zeta}\right)\right)^{p}\\
 & \qquad\leq\int_{\bQ}\left(\sum_{k,N,S=1}^{\infty}\alpha_{k,N,S}^{q}\right)^{\frac{p}{q}}\left(\sum_{k,N,S=1}^{\infty}\alpha_{k,N,S}^{-p}\left(\sum_{x\in X_{k,N,S}}\chi_{G_{n(x)}\of x}\left(k+1\right)^{\eta}(N+1)^{\xi}(S+1)^{\zeta}\right)^{p}\right)\,.
\end{align*}
Due to (\ref{eq:lem:estim-E-fa-fb-help-1}) we find 
\[
\sum_{x\in X_{k,N}}\chi_{G_{n(x)}\of x}\leq\chi_{A_{k,N}}\left(N+1\right)^{d}\left(k+1\right)^{d}\left|\S^{d-1}\right|
\]
and obtain for $q=\frac{p}{p-1}$ and $C_{q}:=\left(\sum_{k,N,S=1}^{\infty}\alpha_{k,N,S}^{q}\right)^{\frac{p}{q}}\left|\S^{d-1}\right|^{p}$:
\begin{align*}
\frac{1}{\left|B_{R}\right|}\int_{B_{R}} & \left(\sum_{x\in\X_{\fr}}\chi_{G_{n}\of x}d(x)^{\eta}s(x)^{\xi}n(x)^{\zeta}\right)^{p}\\
 & \leq C_{q}\frac{1}{\left|B_{R}\right|}\int_{B_{R}}\left(\sum_{k,N,S=1}^{\infty}\alpha_{k,N,S}^{-p}\chi_{A_{k,N,S}}\left(N+1\right)^{dp+\zeta p}\left(k+1\right)^{dp+\eta p}(S+1)^{\xi p}\right)\\
 & \to C_{q}\left(\sum_{k,N,S=1}^{\infty}\alpha_{k,N,S}^{-p}\left(k+1\right)^{d\left(p+1\right)+\eta p}\left(N+1\right)^{d\left(p+1\right)+\zeta p}(S+1)^{\xi p}\P_{s,S}\P_{d,k}\P_{n,N}\right)
\end{align*}
For the sum $\sum_{k,N,S=1}^{\infty}\alpha_{k,N,S}^{q}$ to converge,
it is sufficient that $\alpha_{k,N,S}^{q}=\left(k+1\right)^{-r}\left(N+1\right)^{-r}\left(S+1\right)^{-r}$
for some $r>1$. Hence, for such $r$ it holds $\alpha_{k,N,S}=\left(k+1\right)^{-r/q}\left(N+1\right)^{-r/q}\left(S+1\right)^{-r/q}$
and thus (\ref{eq:lem:estim-E-fa-fb-1}).

\end{proof}

\subsection{\label{subsec:Connectedness-of--Regular-sets}Discretizing the Connectedness
of $\left(\delta,M\right)$-Regular Sets}

Let $\bP\of{\omega}$ be a stationary ergodic random open set which
is isotropic cone mixing for $\fr>0$, $f:\,(0,\infty)\to\R$ and
$\alpha\in\left(0,\frac{\pi}{2}\right)$. Then $\X_{\fr}\of{\bP\of{\omega}}=\left(x_{k}\right)_{k\in\N}$
generates a Voronoi tessellation according to Lemma \ref{lem:Iso-cone-geo-estimate}
with cells $G_{k}$ and balls $B_{k,\fr}=\Ball{\fr/2}{x_{k}}$. While
the $\left(\delta,M\right)$-regularity of $\bP$ is a strictly local
property with a radius of influence of $\delta$, the isotropic cone
mixing is a mesoscopic property, with the influence ranging from $\fr$
to $\infty$.

In this part, we close the gap by introducing graphs on $\bP$ that
connect the small local balls covering $\partial\bP$ with $\X_{\fr}$
in $\bP$. The resulting family of graphs and paths on these graphs
will be essential for the last step in Section \ref{sec:Construction-of-Macroscopic-1}.
\begin{defn}[Admissible and simple graphs]
Let $\partial\X:=\left(p_{k}\right)_{k\in\N}\subset\partial\bP$
with corresponding $\Y_{\partial\X}:=\left(y_{k}\right)_{k\in\N}$
like in Corollary \ref{cor:cover-boundary} and let $\Y\subset\bP$
be a countable set of points with $\partial\X\cup\Y_{\partial\X}\cup\X_{\fr}\subset\Y$
and let $\left(\Y,\G_{\ast}(\bP)\right)$ be a graph. Then the graph
$\G_{\ast}(\bP)$ on $\Y$ is \emph{admissible} if it is connected
and every $p_{k}\in\partial\X$ has exactly one neighbor $y=y_{k}\in\Y_{\partial\X}$.
An admissible graph is called \emph{simple }if every $y_{k}\in\Y_{\partial\X}$
has - besides $p_{k}$ - only neighbors in $\Y\backslash\Y_{\partial\X}$.
\end{defn}

The following concept will become important later in Section . For
reasons of self-containedness, we introduce it already at this point.
\begin{defn}[Locally connected $\bP$ and $\G_{\fl}$]
Assume that $\left(\Y,\G(\bP)\right)$ is an admissible graph on
$\bP$ with the property that for $y_{1},y_{2}\in\Y_{\partial\X}$
with corresponding $p_{1},p_{2}\in\partial\X$ it holds $y_{1}\sim y_{2}$
iff $\Ball{\tilde{\rho}_{1}}{p_{1}}\cap\Ball{\tilde{\rho}_{2}}{p_{2}}\neq\emptyset$.
\nomenclature[locally connected]{locally connected}{$\G_\fl$, Definition \ref{def:simple-border}}The
graph $\G_{\fl}(\bP)$ consists of all elements of $\G(\bP)$, except
those $(y_{1},y_{2})\in\Y_{\partial\X}^{2}$ for which there is no
path in $\Ball{2\tilde{\rho}_{1}}{p_{1}}\cap\bP$ or in $\Ball{2\tilde{\rho}_{2}}{p_{2}}\cap\bP$
connecting $y_{1}$ with $y_{2}$. \\
If $\G_{\fl}(\bP)$ is connected, the set $\bP$ is called \emph{locally
connected}.
\end{defn}

Locally flat geometries will turn out to be particularly useful as
they allow to construct tubes around paths that fully lie within $\bP$
and connect the local with the mesoscopic balls.
\begin{defn}[Admissible paths]
\label{def:admissible-path}Let $(\Y,\G(\bP))$ be an admissible
graph on $\bP$ and let $\Apaths(y,x)$ be a family of paths from
$y\in\Y_{\partial\X}$ to $x\in\X_{\fr}$ which are constructed from
a deterministic algorithm that terminates after finitely many steps.
Assume that for every $Y=(y_{1},\dots,y_{k})\in\Apaths(y,x)$. If
$\fr_{1}=\fr(y)$ is the radius of $y$ from Corollary \ref{cor:cover-boundary}
assume there exists
\[
Y_{0}\in C\of{[0,1]\times\Ball{\fr_{1}}y;\bP}\quad\text{with}\quad Y_{0}\of{t,\Ball{\fr_{1}}y}=\Ball{\frac{\fr}{16}}x\,,
\]
such that $Y_{0}(t,\Ball{\fr_{1}}y)$ is invertible for every $t$
and $Y_{0}(0,x)=x$. Then the family $\Apaths(y,x)$ is called admissible.
\end{defn}

\subsubsection*{A general approach to construct admissible graphs and paths on locally
connected $\protect\bP$}

For a particular family of random geometries, there might be sophisticated
ways to construct $\Y$ and the families $\Apaths(\cdot,\cdot)$.
However, it is interesting to know that such a graph can be constructed
very generally for every locally connected geometry. In this section,
we will thus introduce a concept how to transform the domain $\bP$
into such a graph, thereby bridging the gap between the local regularity
of $\partial\bP$ and the mesoscopic regularity. The basic Idea is
sketched in Figure \ref{fig:graph}.

\begin{figure}
 \begin{minipage}[c]{0.5\textwidth} \includegraphics[width=6cm]{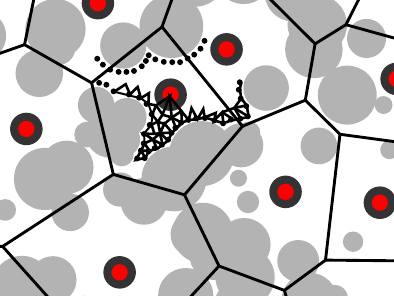}\end{minipage}\hfill   \begin{minipage}[c]{0.45\textwidth}\caption{\label{fig:graph}In order to treat the differences $\left|\tau_{i}u-\protect\cM_{j}u\right|^{s}$
appearing in Theorem \ref{thm:Fine-meso-estimate} below, it is necessary
to construct a graph that connects the boundary with the centers of
the Voronoi tessellation.}
\end{minipage}
\end{figure}

\paragraph*{The grid}

Let $\bP\subset\Rd$ be open and $\fr>0$. For $x\not\in\partial\bP$
let\nomenclature[eta]{$\eta(x)$}{Equation \eqref{eq:eta-distance}}
\begin{equation}
\eta(x):=\min{\left\{ \dist\of{x,\partial\bP}\,,\,2\fr\right\} }\label{eq:eta-distance}
\end{equation}
 and $\tilde{\eta}=\frac{1}{4}\eta$. Then we find the following:
\begin{lem}
\label{lem:cover-iso-mixing-delta-M}Let $\bP$ be a connected open
set which is locally $\left(\delta,M\right)$-regular. For $\fr>0$
let $\X_{\fr}=\left(x_{k}\right)_{k\in\N}$ be a family of points
with a mutual distance of at least $2\fr$ satisfying $\dist\of{x_{k},\partial\bP}>2\fr$
and let $\partial\X:=\left(p_{k}\right)_{k\in\N}\subset\partial\bP$
with corresponding $\left(\tilde{\rho}_{k}\right)_{k\in\N},\left(\fr_{k}\right)_{k\in\N}\subset\R$
and $\Y_{\partial\X}:=\left(y_{k}\right)_{k\in\N}$ like in Corollary
\ref{cor:cover-boundary}. Then there exists a family of points $\mathring{\X}=\left(\hat{p}_{j}\right)_{j\in\N}\subset\bP$
with $\X_{\fr}\subset\mathring{\X}$ such that with $\tilde{\eta}_{k}:=\tilde{\eta}\of{\hat{p}_{k}}$,
$\hat{B}_{k}:=\Ball{\tilde{\eta}_{k}}{\hat{p}_{k}}$ and $B_{k}:=\Ball{\tilde{\rho}_{k}}{p_{k}}$
the family $\left(B_{k}\right)_{k\in\N}\cup\left(\hat{B}_{k}\right)_{k\in\N}$
covers $\bP$ and 
\begin{equation}
\hat{B}_{k}\cap\hat{B}_{i}\neq\emptyset\quad\Rightarrow\quad\left\{ \begin{aligned} & \frac{1}{2}\tilde{\eta}_{i}\leq\tilde{\eta}_{k}\leq2\tilde{\eta}_{i}\\
\text{and}\quad & 3\min\left\{ \tilde{\eta}_{i},\tilde{\eta}_{k}\right\} \geq\left|\hat{p}_{i}-\hat{p}_{k}\right|\geq\frac{1}{2}\max\left\{ \tilde{\eta}_{i},\tilde{\eta}_{k}\right\} \,.
\end{aligned}
\right.\label{eq:min-dist-eta}
\end{equation}
Furthermore, $B_{k}\cap\hat{B}_{j}\neq\emptyset$ implies
\begin{equation}
\frac{3}{14}\tilde{\rho}_{k}\leq\tilde{\eta}_{j}\leq\frac{1}{3}\tilde{\rho}_{k}\,,\quad4\tilde{\eta}_{j}\leq\left|\hat{p}_{j}-p_{k}\right|\leq\frac{4}{3}\tilde{\rho}_{k}\,,\label{eq:min-dist-eta-2}
\end{equation}
i.e. $\Ball{\fr_{k}}{y_{k}}\cap\Ball{\frac{1}{8}\tilde{\eta}_{j}}{\hat{p}_{j}}=\emptyset$.
Finally, there exists $C>0$ such that for every $x\in\bP$
\begin{equation}
\#\left\{ j\in\N:\;x\in\Ball{\frac{1}{8}\tilde{\eta}_{j}}{\hat{p}_{j}}\right\} +\#\left\{ k\in\N:\;x\in\Ball{\fr_{k}}{y_{k}}\right\} \leq C\,.\label{eq:min-dist-eta-3}
\end{equation}
\end{lem}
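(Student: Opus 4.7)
The function $\tilde{\eta}:\bP\to(0,\tfrac{\fr}{2}]$ is $\tfrac14$-Lipschitz by definition, so it satisfies hypothesis~(\ref{eq:thm:delta-M-rho-covering-a}) of Theorem~\ref{thm:delta-M-rho-covering} with $\tilde\eta$ playing the role of $\eta$. The plan is thus to produce $(\hat p_j)$ by running a Whitney-type dyadic construction (essentially that of Theorem~\ref{thm:delta-M-rho-covering}, applied with $K=2$ so the prescribed ratios $\tfrac12\le\tilde\eta_i/\tilde\eta_k\le 2$ and $\tfrac12\max\le|\hat p_i-\hat p_k|\le 3\min$ from~(\ref{eq:min-dist-eta}) come out on the nose) on a suitable open subset of $\bP$. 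The construction is seeded with the points $x_k\in\X_{\fr}$ (where $\tilde\eta(x_k)=\fr/2$ is the maximal scale, so they fit in the top dyadic layer and automatically form a valid choice at that scale), securing $\X_{\fr}\subset\mathring\X$.

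\textbf{Covering construction.} Rather than running the dyadic selection on all of $\bP$, I run it on $\bP\setminus\bigcup_l cB_l$ for a carefully tuned $c>1$ (the choice $c=\tfrac{17}{14}$ will be the one that produces the ratio $\tfrac{3}{14}$ in~(\ref{eq:min-dist-eta-2})), arranging at each scale $\eta_k=(1-\delta)^k$ for the selected $\hat p_j$'s to have $\tilde\eta(\hat p_j)\in(\eta_k,\eta_{k-1}]$. The family $(\hat B_j)$ then covers the complement of the boundary balls, and together with $(B_l)$ covers $\bP$. The estimates~(\ref{eq:min-dist-eta}) are inherited verbatim from Theorem~\ref{thm:delta-M-rho-covering}, since $\tilde\eta$ is the $\eta$-regular function driving the construction.

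\textbf{Verification of~(\ref{eq:min-dist-eta-2}).} Suppose $\hat B_j\cap B_k\ne\emptyset$ with witness $x$. The upper bound $\tilde\eta_j\le\tfrac13\tilde\rho_k$ follows by noting $\dist(x,\partial\bP)\ge\dist(\hat p_j,\partial\bP)-\tilde\eta_j=3\tilde\eta_j$ while $\dist(x,\partial\bP)\le|x-p_k|<\tilde\rho_k$. The lower bound $\tilde\eta_j\ge\tfrac{3}{14}\tilde\rho_k$ comes from the exclusion above: $\hat p_j\notin cB_k$ yields $|\hat p_j-p_k|\ge c\tilde\rho_k$, while $\hat B_j\cap B_k\ne\emptyset$ forces $|\hat p_j-p_k|<\tilde\eta_j+\tilde\rho_k$, so $\tilde\eta_j>(c-1)\tilde\rho_k=\tfrac{3}{14}\tilde\rho_k$. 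The distance bounds $4\tilde\eta_j\le|\hat p_j-p_k|\le\tfrac43\tilde\rho_k$ follow from $\dist(\hat p_j,\partial\bP)=4\tilde\eta_j$ and the already-noted $|\hat p_j-p_k|<\tilde\eta_j+\tilde\rho_k\le\tfrac43\tilde\rho_k$. The consequent disjointness of $\Ball{\fr_k}{y_k}$ and $\Ball{\tilde\eta_j/8}{\hat p_j}$ is then a triangle-inequality check using $\fr_k\le\tfrac{1}{32}\tilde\rho_k$, $|y_k-p_k|<\tfrac18\tilde\rho_k$, and $|\hat p_j-p_k|\ge\tfrac{6}{7}\tilde\rho_k$: the two radii sum to at most $\tfrac{7}{96}\tilde\rho_k$ while the centers are separated by at least $\tfrac{41}{56}\tilde\rho_k$.

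\textbf{Finite overlap and main obstacle.} For~(\ref{eq:min-dist-eta-3}), the two counts decouple. Balls $\Ball{\fr_k}{y_k}$ are pairwise disjoint by Corollary~\ref{cor:cover-boundary}, contributing at most $1$. For the $\Ball{\tilde\eta_j/8}{\hat p_j}$ containing a common $x$, the ratios and minimum-distance estimates in~(\ref{eq:min-dist-eta}) force all relevant $\tilde\eta_j$ to be comparable to one another (fixed-factor windows in a geometric sequence), and a volume-packing argument against~(\ref{eq:min-dist-eta}) gives a purely dimensional bound. The subtlest point, and the one I would expect to require the most bookkeeping, is the simultaneous calibration of the exclusion constant $c$, the dyadic contraction ratio $1-\delta$, and the grid mesh fraction $1/n$ appearing in the proof of Theorem~\ref{thm:delta-M-rho-covering}, so that $(B_l)\cup(\hat B_j)$ still covers $\bP$ (no region is left both outside $\bigcup cB_l$ and inaccessible to a ball $\hat B_j$ of the right scale) while~(\ref{eq:min-dist-eta-2}) comes out with the stated constants.
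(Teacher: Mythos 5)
Your approach to the lower bound in \eqref{eq:min-dist-eta-2} diverges from the paper's, and the divergence is where the gap appears. You run the Whitney covering on $\bP\setminus\bigcup_l cB_l$ with $c=\tfrac{17}{14}$, which makes $\tilde\eta_j>(c-1)\tilde\rho_k=\tfrac{3}{14}\tilde\rho_k$ immediate from $|\hat p_j-p_k|\ge c\tilde\rho_k$ and $\hat B_j\cap B_k\neq\emptyset$. The paper instead covers $\bP_B:=\overline{\bP\setminus\bigcup_k B_k}$ (no enlargement): this makes the statement ``$(B_k)\cup(\hat B_j)$ covers $\bP$'' trivial, since $\bP\setminus\bigcup B_k\subset\bP_B$. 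The paper then recovers the lower bound $\tilde\eta_j\ge\tfrac{3}{14}\tilde\rho_k$ by a quite different route: since $\hat p_j\in\bP_B$ lies outside every nearby $B_i$ (those with $p_i\in\Ball{4\tilde\rho_k}{p_k}$), and the nearest point of $\partial\bP$ to $\hat p_j$ is caught by some such $B_i$, one gets $\dist(\hat p_j,\partial\bP)\ge\tilde\rho_i$; the $\rho$-regularity of Lemma~\ref{lem:rho-p-lsc} then gives $\tilde\rho_i\ge\tfrac{6}{7}\tilde\rho_k$, hence $\tilde\eta_j\ge\tfrac{1}{4}\tilde\rho_i\ge\tfrac{3}{14}\tilde\rho_k$.

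The cost of your cleaner derivation is that the family $(B_l)\cup(\hat B_j)$ is no longer an automatic cover of $\bP$: a point in the annular region $\bP\cap\bigl(\bigcup_l cB_l\setminus\bigcup_l B_l\bigr)$ lies outside the set you covered and need not lie in any $B_l$. Your balls $\hat B_j$ could in principle spill into this annulus, but nothing in the construction guarantees it --- $\tilde\eta_j=\tfrac14\dist(\hat p_j,\partial\bP)$ can be made arbitrarily small if $\hat p_j$ is close to $\partial\bP$, so the spill-over is not automatic. Your closing paragraph flags this as ``bookkeeping'' to tune $c$, the dyadic ratio, and the grid mesh, but the issue is not cosmetic: with $c>1$ the coverage claim requires a genuinely new estimate on how far the $\hat B_j$ reach into the annulus, and the natural way to establish such a spill-over is precisely the $\rho$-regularity argument you were trying to bypass. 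So the proposal, as it stands, proves \eqref{eq:min-dist-eta} and \eqref{eq:min-dist-eta-2} but not the covering statement; to close it you would either need to justify the spill-over quantitatively, or revert to $c=1$ and the paper's Lemma~\ref{lem:rho-p-lsc} route for the lower bound.

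The rest of your argument is sound and essentially parallels the paper: the upper bound $\tilde\eta_j\le\tfrac13\tilde\rho_k$, the distance bounds $4\tilde\eta_j\le|\hat p_j-p_k|\le\tfrac43\tilde\rho_k$, the triangle-inequality disjointness of $\Ball{\fr_k}{y_k}$ and $\Ball{\tilde\eta_j/8}{\hat p_j}$ (your numbers $\tfrac{41}{56}$ versus $\tfrac{7}{96}$ check out), and the finite-overlap count in \eqref{eq:min-dist-eta-3} via disjointness of the $\Ball{\fr_k}{y_k}$ plus a packing argument using \eqref{eq:min-dist-eta} all match the paper's strategy. The seeding $\X_\fr\subset\mathring\X$ via $\tilde\eta(x_k)=\fr/2$ at the top dyadic layer is also the paper's device (``by suitably bounding $\eta$'').
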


\begin{notation}
\label{nota:XY}\label{nota:Y}\nomenclature[Y]{$\mathring{\Y},\,\partial\Y,\,\Y$}{Notation \ref{nota:Y}}\nomenclature[X]{$\X_\partial,\,\hat\X$}{Notation \ref{nota:XY}}\nomenclature[X]{$\Y_{\X\partial}$}{Notation \ref{nota:XY}}Summing
up and extending the notation of Lemma \ref{lem:cover-iso-mixing-delta-M}
we write 
\begin{equation}
\begin{aligned}\partial\Y:=\partial\X & :=\left(p_{k}\right)_{k\in\N}\subset\partial\bP\,, & \X_{\fr}\subset\mathring{\X} & :=\left(\hat{p}_{j}\right)_{j\in\N}\subset\bP\,, & \X & :=\partial\X\cup\mathring{\X}\,,\\
\Y_{\partial\X} & :=\left(y_{k}\right)_{k\in\N}\,, & \mathring{\Y} & :=\left(y_{k}\right)_{k\in\N}\cup\mathring{\X} & \Y & :=\mathring{\Y}\cup\partial\Y\,.
\end{aligned}
\label{eq:def:Y-Partial-Y}
\end{equation}
 The meaning of introducing the symbol $\Y$ will be clarified below.

For $p\in\partial\X$ we write $\tilde{\eta}(p):=\tilde{\rho}(p)$
and for $p\in\mathring{\X}$ we use the above notation (\ref{eq:eta-distance})
and further define 
\begin{equation}
\fr(y):=\fr_{j}\text{ for }y=y_{j}\in\Y_{\partial\X}\,,\qquad\fr(y):=\frac{1}{8}\tilde{\eta}(y)\text{ for }y\in\left(\hat{p}_{k}\right)_{k}\,.\label{eq:r-of-y}
\end{equation}
We finally introduce the following bijective mappings
\begin{equation}
x(y)=\begin{cases}
p_{k} & \text{if }y=y_{k}\in\Y_{\partial\X}\\
\hat{p}_{j} & \text{if }y=\hat{p}_{j}\in\mathring{\X}
\end{cases}\,,\qquad y(x)=\begin{cases}
y_{k} & \text{if }x=p_{k}\in\partial\X\\
\hat{p}_{j} & \text{if }x=\hat{p}_{j}\in\mathring{\X}
\end{cases}\,.\label{eq:def-x-of-y-y-of-x}
\end{equation}
\end{notation}

\begin{proof}[Proof of Lemma \ref{lem:cover-iso-mixing-delta-M}]
We recall $\tilde{\rho}_{k}:=\tilde{\rho}\left(p_{k}\right):=2^{-5}\rho\left(p_{k}\right)$
and $\fr_{k}=\frac{\tilde{\rho}_{k}}{32\left(1+M_{k}\right)}$ and
that (\ref{eq:cor:cover-boundary-h1}) holds. Furthermore, $\Ball{\fr_{k}}{y_{k}}\subset\Ball{\tilde{\rho}_{k}/8}{p_{k}}\cap\bP$
and hence $\Ball{\fr_{k}}{y_{k}}\cap\Ball{\fr_{j}}{y_{j}}=\emptyset$
for $k\neq j$.

If we define $\bP_{B}:=\overline{\bP\backslash\bigcup_{k}B_{k}}$
and observe that $\bP_{B}$ is $\eta$-regular (for $\eta$ defined
in (\ref{eq:eta-distance})). Then Lemma \ref{lem:eta-lipschitz}
and Theorem \ref{thm:delta-M-rho-covering} yield a cover of $\bP_{B}$
by a locally finite family of balls $\hat{B}_{k}=\Ball{\tilde{\eta}_{k}}{\hat{p}_{k}}$,
where $\left(\hat{p}_{k}\right)_{k\in\N}\subset\bP_{B}$, and where
(\ref{eq:min-dist-eta}) holds. Looking into the proof of Theorem
\ref{thm:delta-M-rho-covering} we can assume w.l.o.g. that $\left(x_{k}\right)_{k\in\N}\subset\left(\hat{p}_{k}\right)_{k\in\N}$
by suitably bounding $\eta$.

Furthermore, we find for $B_{k}\cap\hat{B}_{j}\neq\emptyset$ that
\[
\tilde{\eta}_{j}+\tilde{\rho}_{k}\geq\left|\hat{p}_{j}-p_{k}\right|>4\tilde{\eta}_{j}\quad\Rightarrow\quad\tilde{\eta}_{j}\leq\frac{1}{3}\tilde{\rho}_{k}\text{ and }\left|\hat{p}_{j}-p_{k}\right|\leq\frac{4}{3}\tilde{\rho}_{k}\,.
\]
Next, for such $p_{k}$ we consider all $B_{i}$ such that $p_{i}\in\Ball{4\tilde{\rho}_{k}}{p_{k}}$
and since $\hat{p}_{j}\not\in B_{i}$ for all such $i$, we infer
$\dist\of{\hat{p}_{j},\partial\bP}\geq\tilde{\rho}(p_{i})$ and hence
by Lemma \ref{lem:rho-p-lsc}
\[
\tilde{\eta}_{j}\geq\frac{1}{4}\tilde{\rho}_{i}=2^{-7}\rho_{i}\geq2^{-7}\frac{1-2\frac{1}{8}}{1-\frac{1}{8}}\rho_{k}>\frac{3}{14}\tilde{\rho}_{k}\,.
\]
Finally, $\Ball{\fr_{k}}{y_{k}}\cap\Ball{\frac{1}{8}\tilde{\eta}_{j}}{\hat{p}_{j}}=\emptyset$
follows from $\frac{12}{14}\tilde{\rho}_{k}\leq4\tilde{\eta}_{j}\leq\left|\hat{p}_{j}-p_{k}\right|$.

To see (\ref{eq:min-dist-eta-3}) let $x\in\bP$ and let $\hat{p_{j}}$
such that $\tilde{\eta}_{j}$ is maximal among all $\hat{B}_{j}$
with $x\in\hat{B}_{j}$. Let $\hat{p}_{i}$ with $x\in\hat{B}_{i}\cap\hat{B}_{j}$
and observe that both $\left|\hat{p}_{i}-\hat{p}_{j}\right|$ and
$\tilde{\eta}_{i}$ are bounded from below and above by a multiple
of $\tilde{\eta}_{j}$. If $x\in\hat{B}_{i}\cap\hat{B}_{k}\cap\hat{B}_{j}$,
$\left|\hat{p}_{i}-\hat{p}_{k}\right|$ is bounded from above and
below by $\tilde{\eta}_{i}$, hence by $\tilde{\eta}_{j}$. This provides
a uniform bound on $\#\left\{ j\in\N:\;x\in\Ball{\tilde{\eta}_{j}}{\hat{p}_{j}}\right\} $.
The second part of (\ref{eq:min-dist-eta-3}) follows in an analogue
way.
\end{proof}
\begin{defn}[Neighbors]
\label{def:Y-Partial-Y}\nomenclature[neighbors]{$x\sim y$}{$x$ and $y$ are neighbors, see Definition \ref{Y-Partial-Y}}Under
the assumptions and notations of Lemma \ref{lem:cover-iso-mixing-delta-M},
for two points $y_{1},y_{2}\in\mathring{\X}\cup\Y_{\partial\X}$ let
$x_{1}=x(y_{1})$, $x_{2}=x(y_{2})$. We say that $y_{1}$ and $y_{2}$
are neighbors, written $y\sim y_{2}$, if $\Ball{\tilde{\eta}(x_{1})}{x_{1}}\cap\Ball{\tilde{\eta}(x_{2})}{x_{2}}\neq\emptyset$.
This implies a definition of ``neighbor'' for $x_{1},x_{2}\in\mathring{\X}$
. For $x\in\partial\X$ and $y\in\Y_{\partial\X}$ we write $x\sim y$
if $x=x(y)$. We denote by $\G_{0}(\bP,\X)$, $\G_{0}(\bP)$ or simply
$\G(\bP)$ the graph on $\mathring{\X}\cup\Y_{\partial X}\cup\partial\X$
generated by $\sim$. \nomenclature[G(P)]{$\G(\bP,\X),\,\G(\bP) $}{Graph constructed from $\bP$, see Definition \ref{def:Y-Partial-Y}.}
\end{defn}

\begin{rem}
\label{rem:def:Y-Partial-Y}a) Every $y\in\Y_{\partial\X}$ has a
neighbor $x\in\partial\X$.

b) Besides $y(x)$, points $x\in\partial\X$ have no other neighbors.
\end{rem}

\paragraph*{The admissible paths}

We will see below that $\G_{0}(\bP)$ is admissible if $\bP$ is connected.
Besides $\G_{0}(\bP)$ we introduce further (reduced) graphs on $\X$,
which are based on continuous paths. For two points $x,y\in\bP$ we
denote 
\[
\bP_{0}(x,y):=\left\{ f\in C([0,1];\bP):\;f(0)=x,\,f(1)=y\right\} \,.
\]

\begin{defn}
\label{def:simple-border}\nomenclature[simple]{simple}{Definition \ref{def:simple-border}}Using
the notation of Lemma \ref{lem:cover-iso-mixing-delta-M}, the graph
\begin{align*}
\G_{\simple}(\bP) & :=\left\{ \left(y_{1},y_{2}\right)\in\G_{\fl}(\bP)\,:\;\left(y_{1},y_{2}\right)\not\in\Y_{\partial\X}^{2}\right\} 
\end{align*}
 is the subset of $\G(\bP)$ where all elements $(y_{1},y_{2})$ and
$(x(y_{1}),x(y_{2}))$ are removed for which $y_{1},y_{2}\in\Y_{\partial\X}$.
Furthermore, if $y_{k}\in\Y_{\partial\X}$ with $p_{k}=x(y_{k})\in\partial\X$
has a neighbor $\hat{p}_{j}\in\mathring{\X}$ such that $y_{k}$ and
$\hat{p}_{j}$ are \emph{not }connected through a path which lies
in $\Ball{3\tilde{\rho}(p_{k})}{p_{k}}\cap\bP$, then $(y(p_{k}),\hat{p}_{j})$,
$(\hat{p}_{j},y(p_{k}))$ are removed.

We write $\G_{\ast}(\bP)$ for either $\G_{0}(\bP)$, $\G_{\fl}(\bP)$,
$\G_{\simple}(\bP)$ or any other subset of $\G_{0}(\bP)$ which is
connected.
\end{defn}

\begin{lem}
\label{lem:gamma-connect}Assume $\left(\X,\G_{\fl}(\bP)\right)$
is connected, assume $y\in\Y_{\partial\X}$ and $y_{1}\sim y$. Then
there exists $\gamma\in C\of{[0,1]\times\Ball{\frac{\fr}{16}}0;\,\bP\cap\Ball{3\tilde{\rho}(x(y))}{x(y)}}$
such that $\gamma(\cdot,x)$ is a path from $y+\frac{16}{\fr}\fr(y)x$
to $y_{1}+\frac{16}{\fr}\fr(y_{1})x$, for two points $x_{1},x_{2}\in\Ball{\frac{\fr}{16}}0$
it holds either $\gamma(\cdot,x_{1})\cap\gamma(\cdot,x_{2})=\emptyset$
or $\gamma(\cdot,x_{1})\subset\gamma(\cdot,x_{2})$ or $\gamma(\cdot,x_{1})\supset\gamma(\cdot,x_{2})$
and there exist constants $c_{1},c_{2},c_{3}$ depending only on the
dimension but not on $y$ or $y_{1}$ such that
\begin{align*}
 & \forall t\in[0,1]\;\Ball{c_{1}\min\left\{ \fr(y),\fr(y_{1})\right\} }{\gamma(t,0)}\subset\gamma\of{[0,1]\times\Ball{\frac{\fr}{16}}0}\\
 & \forall x\in\Ball{\frac{\fr}{16}}0\;\Length\gamma(t,x)\leq c_{2}\left|y-y_{1}\right|\,.
\end{align*}
We denote $\gamma$ as $\gamma[y,y_{1}]$.
\end{lem}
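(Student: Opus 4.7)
The plan is to first construct a single ``backbone'' path $\gamma_0:[0,1]\to\bP\cap\Ball{3\tilde\rho(x(y))}{x(y)}$ from $y$ to $y_1$, then thicken it into a tube parametrised by $x\in\Ball{\fr/16}{0}$. I split into cases according to whether $y_1\in\mathring\X$ or $y_1\in\Y_{\partial\X}$. If $y_1\in\mathring\X$ then by Definition \ref{def:Y-Partial-Y} the balls $\Ball{\tilde\rho(x(y))}{x(y)}$ and $\Ball{\tilde\eta(\hat p_j)}{\hat p_j}$ overlap; estimates (\ref{eq:min-dist-eta-2}) give $\tilde\eta_j\leq\tfrac13\tilde\rho_k$, so a straight segment from $y$ to a suitable point in $\Ball{\fr_k}{y_k}$, glued to a straight segment ending at $\hat p_j=y_1$, can be engineered to remain in $\bP$ by using the local $(\delta,M)$-Lipschitz graph representation near $p_k$ from Lemma \ref{lem:uniform-extension-lemma}: the points $y_k,\hat p_j$ both lie on the ``inside'' side of the graph and the cone fit of Lemma \ref{lem:small-ball-in-cone} provides room of order $\fr_k$ around $y_k$. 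If $y_1\in\Y_{\partial\X}$, the edge $y\sim y_1$ was retained in $\G_\fl(\bP)$ by Definition \ref{def:simple-border} precisely because there exists \emph{some} continuous path in $\Ball{2\tilde\rho_k}{p_k}\cap\bP$ (or $\Ball{2\tilde\rho_1}{p_1}\cap\bP$) joining $y$ and $y_1$; after a standard reparameterisation and a small perturbation using Lemma \ref{lem:small-ball-in-cone} one obtains a Lipschitz $\gamma_0$ satisfying $\dist(\gamma_0(t),\partial\bP)\geq c_1\min\{\fr(y),\fr(y_1)\}$ for all $t$.

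Next I would produce the tube. Using the coordinates provided by Lemma \ref{lem:uniform-extension-lemma} in the balls $\Ball{\fr_k}{y_k}$, there is a bi-Lipschitz chart mapping $\Ball{\fr(y)}{y}$ onto a Euclidean ball $\Ball{\fr/16}{0}$ (after scaling by $\fr/(16\fr(y))$), and likewise near $y_1$. Along the central segment I choose a smooth, strictly decreasing-then-increasing radius function $r(t)$ interpolating between $\fr(y)$, $c_1\min\{\fr(y),\fr(y_1)\}$, and $\fr(y_1)$, and then define
\[
 \gamma(t,x):=\gamma_0(t)+r(t)\,\Phi_t(x),\qquad x\in\Ball{\fr/16}{0},
\]
where $\Phi_t$ is a smooth family of orthogonal isomorphisms of $\R^d$ interpolating between the two local charts at $t=0$ and $t=1$, rotating with the tangent of $\gamma_0$ in between. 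The length bound $\mathrm{Length}\gamma(\cdot,x)\le c_2|y-y_1|$ follows because $\gamma_0$ has length comparable to $|y-y_1|$ by construction, and the perturbation $r(t)\Phi_t(x)$ has derivative bounded by a constant times $\max\{\fr(y),\fr(y_1)\}\leq|y-y_1|$ on the scale of neighbours in $\G_0$ (this is the key estimate, using (\ref{eq:min-dist-eta}) and (\ref{eq:min-dist-eta-2})). Containment in $\bP\cap\Ball{3\tilde\rho(x(y))}{x(y)}$ is secured by the factor $c_1$ chosen smaller than the cone-fit constant from Lemma \ref{lem:small-ball-in-cone}, and by $|y-y_1|\le 2\tilde\rho(x(y))$ together with $r(t)\leq\fr(y)\ll\tilde\rho(x(y))$.

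The nesting/disjointness property ``$\gamma(\cdot,x_1)\cap\gamma(\cdot,x_2)=\emptyset$ or one contained in the other'' is immediate from the explicit product form $\gamma(t,x)=\gamma_0(t)+r(t)\Phi_t(x)$: for fixed $t$ the map $x\mapsto\gamma(t,x)$ is a linear bijection, and if $\gamma(t_1,x_1)=\gamma(t_2,x_2)$ then $t_1=t_2$ (since the level sets of the ``tube coordinate'' $t$ are disjoint $(d-1)$-slabs by the monotonicity of $\gamma_0$) and hence the two curves either coincide or the entire curves stay at distance $\ge r(t)|\Phi_t(x_1)-\Phi_t(x_2)|$; scaling $x\mapsto x+\lambda(x_1-x_2)$ then gives the nested/disjoint dichotomy. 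The main obstacle is the geometric step of keeping the backbone $\gamma_0$ at distance $\gtrsim\min\{\fr(y),\fr(y_1)\}$ from $\partial\bP$: in the ``boundary-boundary'' case $y,y_1\in\Y_{\partial\X}$ the raw path given by $\G_\fl$ is only continuous, and has to be pushed inward using the Lipschitz graph charts of Lemma \ref{lem:uniform-extension-lemma} together with the uniform cone condition of Lemma \ref{lem:small-ball-in-cone}; all other ingredients are routine.
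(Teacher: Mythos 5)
Your overall strategy (construct a backbone and thicken it into a tube) is broadly similar in spirit to the paper's, and your discussion of the backbone — treating the cases $y_1\in\mathring\Y$ and $y_1\in\Y_{\partial\X}$ separately and using Lemma \ref{lem:small-ball-in-cone} to keep it away from $\partial\bP$ — is reasonable. The paper, however, does not use a product-form tube at all: it works in the local Lipschitz graph coordinates near $p_k=x(y)$ and \emph{projects} $\Ball{\fr(y)}{y}$, $\Ball{\fr(y_1)}{y_1}$ and the raw path $\tilde\gamma$ onto the sphere $x(y)+2\tilde\rho(x(y))\S^{d-1}$, then reconstructs $\gamma$ from these projections. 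The reason this distinction matters is the crucial nesting/disjointness property, and this is where your proposal has a genuine gap.

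Your claim that the dichotomy ``$\gamma(\cdot,x_1)\cap\gamma(\cdot,x_2)=\emptyset$ or one contains the other'' is ``immediate from the explicit product form'' is false. You argue that $\gamma(t_1,x_1)=\gamma(t_2,x_2)$ forces $t_1=t_2$ because ``the level sets of the tube coordinate $t$ are disjoint $(d-1)$-slabs.'' But in your construction $\Phi_t$ is a linear isomorphism of $\Rd$ and $x$ ranges over the full $d$-dimensional ball $\Ball{\fr/16}{0}$, so the level set $\gamma(t,\Ball{\fr/16}{0})=\gamma_0(t)+r(t)\Phi_t(\Ball{\fr/16}{0})$ is a $d$-dimensional ball — it is not a $(d-1)$-dimensional slab, and the one-parameter family of these $d$-balls cannot possibly be pairwise disjoint by a volume count. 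Consequently there is no ``time function'' separating the slices, and for a product-form tube two curves $\gamma(\cdot,x_1),\gamma(\cdot,x_2)$ with $x_1\neq x_2$ will in general intersect transversally (a concrete $\R^2$ example: $\gamma_0(t)=(t,0)$, $r\equiv1$, $\Phi_t(a,b)=(b,a)$, then $\gamma(\cdot,(a,b_1))$ and $\gamma(\cdot,(a,b_2))$ overlap on a whole interval). The paper's sphere-projection construction avoids this because the tube there is built out of vertical (graph-normal) segments going down to the sphere, a spherical arc, and vertical segments going back up; vertical segments are either parallel (hence disjoint) or lie on the same vertical line (hence nested), and the arcs on the sphere coincide exactly when the vertical projections coincide. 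This radial/vertical foliation is precisely what produces the nested-or-disjoint structure, and it is the content your proposal is missing. You would need to replace the product form with a construction that produces such a foliation, e.g. by parameterising the tube via the graph coordinates $(\tilde x,\phi(\tilde x)-s)$ and making all variation in the $s$ (vertical) direction give nested curves while variation in $\tilde x$ gives disjoint ones.
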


\begin{proof}
Let $\tilde{\gamma}\in\bP_{0}(y,y_{1})$. If $\tilde{y}\in\Y_{\partial\X}$
we infer from Lemma \ref{lem:properties-local-rho-convering} below
that $\Ball{\frac{1}{2}\tilde{\rho}(\tilde{y})}{\tilde{y}}\subset\Ball{3\tilde{\rho}(x(y))}{x(y)}$.
We recall that $\partial\bP\cap\Ball{3\tilde{\rho}(x(y))}{x(y)}$
is a graph $(\cdot,\phi(\cdot))$ of a Lipschitz continuous function
$\phi:\R^{d-1}\to\R$ and that both $\Ball{\fr(y)}y$ and $\Ball{\fr(y_{1})}{y_{1}}$
as well as $\tilde{\gamma}([0,1])$ lie below that graph. We project
$\Ball{\fr(y)}y$ and $\Ball{\fr(y_{1})}{y_{1}}$ as well as $\tilde{\gamma}([0,1])$
onto the sphere $x(y)+2\tilde{\rho}(x(y))\S^{d-1}$, which still do
not intersect with the graph of $\phi$. From here we may construct
$\gamma$ satisfying the claimed estimates. Since $\Ball{\fr(y)}y\subset\Ball{\frac{1}{2}\tilde{\rho}(x(y))}{x(y)}$
and $\Ball{\fr(y_{1})}{y_{1}}\subset\Ball{\frac{1}{2}\tilde{\rho}(x(y_{1}))}{x(y_{1})}$
and $\left|y-y_{1}\right|>\frac{15}{16}\min\left\{ \tilde{\rho}(x(y_{1})),\tilde{\rho}(x(y))\right\} $,
we conclude that the constants can be chosen independently from $y$.

If $y\in\mathring{\Y}$ we can proceed analogously.
\end{proof}
\begin{lem}[$\G_{0}(\bP)$ is admissible]
\label{lem:existence-discrete-path} Under the assumptions and notations
of Lemma \ref{lem:cover-iso-mixing-delta-M} for every $y_{0},y_{1}\in\Y$
there exists a discrete path from $y_{0}$ to $y_{1}$ in $\left(\X,\G_{0}(\bP)\right)$.
\end{lem}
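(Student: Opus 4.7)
My approach is to reduce the discrete connectivity statement to the topological connectedness of $\bP$ and then to discretize a continuous path using the covering properties guaranteed by Lemma~\ref{lem:cover-iso-mixing-delta-M}.

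First, I would reduce to the case $y_0, y_1 \in \mathring{\Y} = \mathring{\X} \cup \Y_{\partial\X}$. By Remark~\ref{rem:def:Y-Partial-Y}, each $p_k \in \partial\X$ has the unique neighbour $y(p_k) = y_k \in \Y_{\partial\X}$ in $\G_0(\bP)$, so any candidate path between boundary vertices can always be pre/suffixed with the single edge $p_k \sim y_k$. To each remaining $y \in \mathring{\Y}$ I attach a representative point $z(y) \in \bP$: $z(\hat{p}_j) := \hat{p}_j$ and $z(y_k) := y_k$, noting that by Corollary~\ref{cor:cover-boundary} the latter satisfies $y_k \in \Ball{\fr_k}{y_k} \subset B_k \cap \bP$. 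Since $\bP$ is an open connected subset of $\Rd$ it is path-connected, so there exists $\gamma \in C([0,1];\bP)$ with $\gamma(0) = z(y_0)$ and $\gamma(1) = z(y_1)$.

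Next I would discretize $\gamma$ along the cover. Lemma~\ref{lem:cover-iso-mixing-delta-M} asserts that $(B_k)_{k} \cup (\hat{B}_j)_{j}$ is an open cover of $\bP$, and the image $\gamma([0,1])$ is compact; extract a finite subcover $C_1,\dots,C_N$ that includes the balls associated to $y_0$ and $y_1$ (and which contain $z(y_0)$ resp.\ $z(y_1)$). Set $t_0 := 0$ and, as long as $t_\ell < 1$, pick $j(\ell)$ with $\gamma(t_\ell) \in C_{j(\ell)}$ and define
\[
t_{\ell+1} := \sup\bigl\{t \in [t_\ell,1] : \gamma([t_\ell,t]) \subset C_{j(\ell)}\bigr\}\,.
\]
If $t_{\ell+1} < 1$, continuity of $\gamma$ forces $\gamma(t_{\ell+1}) \in \overline{C_{j(\ell)}}$, and since $\gamma(t_{\ell+1}) \in \bP$ it must simultaneously lie in some other open ball $C_{j(\ell+1)}$ of the subcover; because $\gamma$ approaches $\gamma(t_{\ell+1})$ from inside $C_{j(\ell)}$ and $C_{j(\ell+1)}$ is open, the intersection $C_{j(\ell)} \cap C_{j(\ell+1)}$ is non-empty. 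The recursion terminates after at most $N$ steps because no index is revisited.

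Finally, I would translate the chain of balls into a discrete path in $\G_0(\bP)$ by associating each $B_k$ with $y_k \in \Y_{\partial\X}$ and each $\hat{B}_j$ with $\hat{p}_j \in \mathring{\X}$. Consecutive non-empty ball intersections are, by Definition~\ref{def:Y-Partial-Y}, exactly the adjacency relation on $\mathring{\X} \cup \Y_{\partial\X}$, so the chain is a discrete path in $\G_0(\bP)$ from the representative of $y_0$ to that of $y_1$. The principal technical obstacle is bookkeeping at the boundary: one must ensure that the substitution of ball centres by representatives in $\Y_{\partial\X}$ never breaks the adjacency relation and that each transition point $\gamma(t_{\ell+1})$ produces a genuinely open two-fold intersection, not a mere contact on $\partial C_{j(\ell)}$. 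Both are handled by openness of the cover and by the fact that $\gamma$ stays strictly inside $\bP$; no quantitative input is needed beyond what Lemma~\ref{lem:cover-iso-mixing-delta-M} already provides.
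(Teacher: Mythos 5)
Your proof takes essentially the same approach as the paper's: use path-connectedness of the open connected set $\bP$ to produce a continuous path, cover its compact image by finitely many of the balls from Lemma~\ref{lem:cover-iso-mixing-delta-M}, and discretize by tracking transitions between overlapping balls, which by Definition~\ref{def:Y-Partial-Y} encode precisely the adjacency $\sim$ on $\mathring{\X}\cup\Y_{\partial\X}$. You are in fact slightly more careful than the paper at the endpoints: you explicitly reduce the case $y_0\in\partial\X$ (or $y_1\in\partial\X$) to $\mathring{\Y}$ via the unique edge $p_k\sim y_k$, whereas the paper just writes $\gamma(0)=y_0$ even though $\partial\X\subset\partial\bP\not\subset\bP$; your reduction via $z(y)$ is the clean fix.

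One remark on the termination step: the claim that the recursion ends ``after at most $N$ steps because no index is revisited'' is not quite right as stated. With an arbitrary choice of $j(\ell)$ at each step, a ball may be re-entered (e.g.\ a chain $C_1,C_2,C_1$), so the index-count bound does not hold; indeed, with a poor choice of $j(\ell)$ near an accumulation point the recursion could in principle stall. This is easily repaired — e.g.\ choose $j(\ell)$ via a Lebesgue number of the finite subcover so that $t_{\ell+1}-t_\ell$ is bounded below, or select $j(\ell)$ maximizing $t_{\ell+1}$ — and the revisit itself is harmless for the conclusion, since a discrete path is still a discrete path if it repeats a vertex. The paper's own proof glosses over this same point (``one may hence iteratively continue''), so this is a shared minor informality rather than a gap peculiar to your argument.
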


\begin{proof}
Since $\bP$ is connected, there exists a continuous path $\gamma:\,[0,1]\to\bP$
with $\gamma(0)=y_{0}$, $\gamma(1)=y_{1}$. Since $\gamma\of{[0,1]}$
is compact, it is covered by a finite family of balls $\Ball{\tilde{\eta}(y)}y$,
$y\in\Y$. If $\gamma\of{[0,1]}\subset\Ball{\tilde{\eta}(y_{0})}{y_{0}}$
the statement is obvious. Otherwise there exists a maximal interval
$[0,a)$, $a<1$, such that $\gamma\of{[0,a)}\subset\Ball{\tilde{\eta}(y_{0})}{y_{0}}$,
$\gamma(a)\not\in\Ball{\tilde{\eta}(y_{0})}{y_{0}}$and there exists
$y\not=y_{0}$ such that for some $\eps>0$ $\gamma\of{(a-\eps,a+\eps)}\subset\Ball{\tilde{\eta}(y_{0})}{y_{0}}\cap\Ball{\tilde{\eta}(y)}y$.
One may hence iteratively continue with $y_{0}':=y$ on the interval
$[a,1]$.
\end{proof}
Hence, every two points in $\Y$ can be connected by a discrete path.
However, the choice of the path is not unique, there might be even
infinitely many with arbitrary large deviation from the ``shortest''
path. Luckily, it turns out that it suffices to provide a deterministically
constructed finite family of paths.
\begin{defn}[Admissible paths on $\G_{\ast}(\bP)$]
\label{def:admissible-path-gen-sit} Let $\bP\subset\Rd$ be open,
connected and locally connected with $\G_{\ast}(\bP)$ such that the
assumptions of Lemma \ref{lem:cover-iso-mixing-delta-M} are satisfied.
\nomenclature[AX]{$\Apaths(y,x)$}{Admissible paths from $y\in\Y\setminus\{x\}$ to $x\in\X_r$, see Definition, \ref{def:admissible-path}, ref{def:admissible-path-gen-sit}}
Let $x\in\X_{r}$. We call any family of paths which connect $y\in\Y\backslash\{x\}$
to $x$ \emph{admissible}, if it is generated by a deterministic algorithm
that terminates after a finite number of steps. Hence, an \emph{admissible
path} from $y$ to $x$ in $\G_{\ast}(\bP)$ is a path $\left(x_{1},\dots,x_{k}\right)$
with $x_{1}=y$, $x_{k}=x$ generated according to this algorithm.
We denote the set of admissible paths from $y$ to $x$ by $\Apaths_{\ast}(y,x)$.
\end{defn}

\begin{notation}
\label{nota:Apaths}Let $x_{j}\in\X_{\fr}$, $p_{i}\in\Y_{\partial\X}$
and $Y=\left(y_{1},\dots,y_{N}\right)\in\Apaths\of{p_{i},x_{j}}$.
Recalling (\ref{eq:r-of-y}), for $x\in\Ball{\fr_{1}}0$ we define
$Y_{0}(x)$ the set of paths connecting $y_{1}+x$, $y_{2}+\frac{\fr(y_{2})}{\fr_{1}}x$,
... $y_{N}+\frac{\fr(y_{N})}{\fr_{1}}x$ chosen as straight line if
$y_{i},y_{i+1}\in\Y_{0}$ and $\gamma(\cdot,x)$ from Lemma \ref{lem:gamma-connect}
else and 
\[
Y_{0}\of{\Ball{\fr_{1}}0}:=\bigcup_{x\in\Ball{\fr_{1}}0}Y(x)\,.
\]
In what follows, we are usually working with the latter expression
and hence introduce for simplicity of notation the identification
$Y\equiv Y_{0}\of{\Ball{\fr_{i}}0}$. In this way, $Y$ is an open
set and the characteristic function $\chi_{Y}\in L^{1}(\Rd)$ is integrable
as Lemma the next Lemma \ref{lem:radius-admissible-pahts} will reveal.
Finally, by Lemma \ref{lem:cover-iso-mixing-delta-M} there exists
$C>1$ such that independent from $x_{j}$, $p_{i}$ and $x\in\Ball{\fr_{i}}0$
it holds \nomenclature[Length]{$\Length(Y)$}{Length of an admissible path $Y$, see \eqref{eq:def-Length-apath}}
\begin{equation}
\frac{1}{C}\Length(Y(x))\leq\Length(Y):=\Length(Y(0))\leq C\Length(Y(x))\,.\label{eq:def-Length-apath}
\end{equation}
\end{notation}

\begin{rem}
1. Every path admissible in the sense of \ref{def:admissible-path-gen-sit}
is admissible in the sense of \ref{def:admissible-path}. This follows
from Lemma \ref{lem:gamma-connect} and the fact that for $y,\tilde{y}\in\Y_{0}$
with $y\sim\tilde{y}$ it holds $\Ball{\frac{1}{4}\tilde{\eta}(y)}y\subset\Ball{4\tilde{\eta}(\tilde{y})}{\tilde{y}}\subset\bP$.

2. A particular family of admissible paths is given by the shortest
distance. In particular, if $x\in\X_{r}$ and $y\in\Y\backslash\{x\}$
we define the shortest paths as 
\begin{align*}
\Apaths_{\short}(y,x):=\arg\min\left\{ \sum_{i=1}^{k}\left|x_{i+1}-x_{i}\right|:\right.& \,\left(x_{1},\dots,x_{k}\right)\text{path in }\G_{\ast}(\bP)\,, \\ &\,\left.\,k\in\N,\,x_{1}=y,\,x_{k}=x\right\} 
\end{align*}
\end{rem}

\paragraph*{Construct a finite family}

In what follows, we will construct a class of admissible paths on
$\G_{\ast}(\bP)$ which does not rely on the metric graph distance.
We study the discrete Laplacian $\cL_{\ast}:\,L^{2}(\Y)\to L^{2}(\Y)$
on an admissible graph $\G_{\ast}(\bP)$ given by
\[
\left(\cL_{\ast}u\right)(x):=-\sum_{(y,x)\in\G_{\ast}(\bP)}\frac{1}{\left|x-y\right|^{2}}\left(u(y)-u(x)\right)\,.
\]
It is well known that $\cL$ is a discrete version of an elliptic
second order operator, see \cite{biskup2011recent,flegel2019homogenization,heida2020consistency}
and references therein. This may be quickly verified for the ``classical''
choice $\Y=h\Zd$ with $x\sim y$ iff $\left|x-y\right|=h$ (using
Taylor expansion and the limit $h\to0$).

The discrete Laplacian is connected to the following discrete Poincar\'e
inequality.
\begin{lem}
\label{lem:discrete-Poincare}Let $\bP\subset\Rd$ be open, connected
and satisfy the assumptions of Lemma \ref{lem:cover-iso-mixing-delta-M},
let $\left(\X,\G_{\ast}(\bP)\right)$ be admissible and let $0\in\mathring{\Y}$.
Writing 
\begin{align*}
H_{0}\of{\Y} & :=\left\{ u:\Y\to\R\,:\;\forall y\in\partial\Y:\,\,u(y)=0\right\} \,,\\
H_{0}\of{\Y\cap\Ball R0} & :=\left\{ u\in H_{0}\of{\Y}\,:\;\forall y\not\in\Ball R0:\,\,u(y)=0\right\} \,.
\end{align*}
There exists $R_{0}>0$ and $C_{R_{0}}>0$ such that for every $R>R_{0}$
the following discrete Poincaré estimate holds:
\begin{equation}
\forall u\in H_{0}(\Y):\quad u(0)^{2}\leq C_{R_{0}}\sum_{\substack{y_{1},y_{2}\in\Y\cap\Ball R0\\
y_{1}\sim y_{2}
}
}\frac{\left(u(y_{1})-u(y_{2})\right)^{2}}{\left|y_{1}-y_{2}\right|^{2}}\,.\label{eq:lem:existence-cLu-equiv-delta-x-help-1}
\end{equation}
\end{lem}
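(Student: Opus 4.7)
The plan is to run a standard telescoping argument along a single fixed path from $0$ to the Dirichlet boundary. Because $\bP$ is locally $(\delta,M)$-regular, the boundary $\partial\bP$ is non-empty, so $\partial\X\subset\partial\Y$ contains at least one point $p_*$. By definition of admissibility, $\G_\ast(\bP)$ is connected, and Lemma \ref{lem:existence-discrete-path} (or connectedness alone) yields a finite discrete path $y_1=0,\,y_2,\dots,y_{N-1}=y(p_*),\,y_N=p_*$ with $y_i\sim y_{i+1}$ in $\G_\ast(\bP)$. I would define
\[
R_0 := 1 + \max_{1\le i\le N}\left|y_i\right|,\qquad C_{R_0} := \sum_{i=1}^{N-1}\left|y_{i+1}-y_i\right|^2,
\]
both finite and independent of $u$ and of $R$.

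For any $R>R_0$ and any $u\in H_0(\Y)$, since $u(p_*)=u(y_N)=0$, I telescope and factor each difference with its edge length:
\[
u(0) \;=\; \sum_{i=1}^{N-1}\bigl(u(y_i)-u(y_{i+1})\bigr) \;=\; \sum_{i=1}^{N-1}\left|y_{i+1}-y_i\right|\cdot\frac{u(y_i)-u(y_{i+1})}{\left|y_{i+1}-y_i\right|}.
\]
The Cauchy--Schwarz inequality then gives
\[
u(0)^2 \;\le\; C_{R_0}\sum_{i=1}^{N-1}\frac{\bigl(u(y_i)-u(y_{i+1})\bigr)^2}{\left|y_{i+1}-y_i\right|^2}.
\]
Since every vertex $y_i$ lies in $\Ball{R_0}{0}\subset\Ball{R}{0}$ and each pair $(y_i,y_{i+1})$ is an edge of $\G_\ast(\bP)$, the sum on the right is dominated by the full Dirichlet form over neighboring pairs in $\Y\cap\Ball{R}{0}$, which yields \eqref{eq:lem:existence-cLu-equiv-delta-x-help-1}.

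There is no real obstacle: the argument only needs the existence of one finite path from $0$ to $\partial\Y$ (guaranteed by connectedness of $\G_\ast(\bP)$ and non-emptiness of $\partial\X$), and the sum of its squared edge lengths is a legitimate constant. The fact that $C_{R_0}$ may be large is irrelevant because it depends only on the geometry of $\bP$ near the origin, not on $u$ or on $R$; the monotonicity of the right-hand side in $R$ is what makes the estimate uniform for all $R>R_0$.
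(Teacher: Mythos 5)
Your proof is correct. It replaces the paper's one-line ``contradiction argument'' by a direct constructive one: fix a single finite discrete path $0 = y_1 \sim y_2 \sim \dots \sim y_N = p_\ast$ ending at a Dirichlet vertex $p_\ast\in\partial\Y$ (such a path exists because $\G_\ast(\bP)$ is connected by admissibility, and $\partial\Y=\partial\X\neq\emptyset$ in the setting of Lemma \ref{lem:cover-iso-mixing-delta-M}), telescope $u(0)=\sum_{i}(u(y_i)-u(y_{i+1}))$, and apply Cauchy--Schwarz. The two arguments ultimately rest on the same key ingredient --- connectedness produces a finite path from $0$ to $\partial\Y$ --- but your direct version avoids any appeal to compactness or extraction of a limit, and it yields explicit constants $R_0 = 1 + \max_i|y_i|$ and $C_{R_0}=\sum_i|y_{i+1}-y_i|^2$ in terms of the chosen path, which is cleaner and more informative. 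The monotonicity of the Dirichlet form in $R$ then extends the estimate from $R_0$ to all $R>R_0$ exactly as you say.

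One small imprecision worth flagging: ``because $\bP$ is locally $(\delta,M)$-regular, the boundary is non-empty'' is not a formal implication (an open set can be vacuously locally $(\delta,M)$-regular with $\partial\bP=\emptyset$). What actually guarantees $\partial\Y\neq\emptyset$ here is that the assumptions of Lemma \ref{lem:cover-iso-mixing-delta-M} include a non-trivial covering family $\partial\X=(p_k)_{k\in\N}\subset\partial\bP$ from Corollary \ref{cor:cover-boundary}; this is the correct source for the existence of $p_\ast$. This does not affect the validity of the argument.
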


\begin{proof}
This is straight forward from a contradiction argument (using
connectedness of $\left(\X,\G_{\ast}(\bP)\right)$).
\end{proof}
For the following result we introduce the notation: 
\[
\text{For }x\in\mathring{\Y}\text{ define }\delta_{x}(y):=\begin{cases}
0 & \text{ if }x\neq y\\
1 & \text{ if }x=y
\end{cases}\,.
\]

\begin{lem}[A discrete maximum principle]
\label{lem:existence-cLu-equiv-delta-x}Let $\bP\subset\Rd$ be open,
connected and satisfy the assumptions of Lemma \ref{lem:cover-iso-mixing-delta-M},
let $\left(\X,\G_{\ast}(\bP)\right)$ be admissible and let $x\in\mathring{\Y}$.
Then the equation 
\begin{equation}
\begin{aligned}\of{\cL_{\ast}u}(y)+\left|y-x\right|u(y) & =\delta_{x}(y) &  & \text{ for }y\in\mathring{\Y}\\
u(y) & =0 &  & \text{ for }y\in\partial\Y
\end{aligned}
\label{eq:lem:existence-cLu-equiv-delta-x}
\end{equation}
has a unique solution which satisfies $u(y)>0$ for all $y\in\mathring{\Y}$
and attains its unique local (and thus global) maximum in $x$. Furthermore,
$u(y)\to0$ as $\left|y\right|\to\infty$ and for $C_{R_{0}}>0$ from
Lemma \ref{lem:discrete-Poincare} it holds 
\begin{equation}
u(x)+\sum_{\substack{\left(y_{1},y_{2}\right)\in\G_{\ast}(\bP)}
}\frac{1}{\left|y_{1}-y_{2}\right|^{2}}\left(u(y_{1})-u(y_{2})\right)^{2}+\sum_{y\in\Y}\left|x-y\right|u(y)^{2}\leq5C_{R_{0}}\,.\label{eq:lem:existence-cLu-equiv-delta-x-2}
\end{equation}
\end{lem}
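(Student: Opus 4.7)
The plan is to treat equation \eqref{eq:lem:existence-cLu-equiv-delta-x} as the Euler--Lagrange equation of the strictly convex functional
\[
J(u) \;=\; \tfrac{1}{2}\sum_{(y_1,y_2)\in\G_{\ast}(\bP)}\frac{\left(u(y_1)-u(y_2)\right)^{2}}{\left|y_1-y_2\right|^{2}}\;+\;\sum_{y\in\Y}\left|y-x\right|\,u(y)^{2}\;-\;2\,u(x)
\]
on $H_{0}(\Y)$, and to obtain existence by Galerkin/truncation on the finite-dimensional subspaces $H_{0}(\Y\cap\Ball R0)$ and passing $R\to\infty$. Testing the equation with the solution $u$ itself yields the fundamental identity
\[
\sum_{(y_1,y_2)\in\G_{\ast}(\bP)}\frac{\left(u(y_1)-u(y_2)\right)^{2}}{\left|y_1-y_2\right|^{2}}\;+\;2\sum_{y\in\Y}\left|y-x\right|\,u(y)^{2}\;=\;2u(x),
\]
and the discrete Poincar\'e inequality (Lemma~\ref{lem:discrete-Poincare}), centered at $x$ rather than $0$ by an obvious translation of the argument, gives $u(x)^{2}\le C_{R_{0}}\cdot 2u(x)$, hence $u(x)\le 2C_{R_{0}}$. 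Combining this with the identity above yields the desired bound \eqref{eq:lem:existence-cLu-equiv-delta-x-2}, and the uniform energy control in $R$ also supplies the compactness needed to pass to the limit in the Galerkin scheme. Uniqueness then follows by testing the difference of two solutions with itself and using strict positivity of the absorption $|y-x|$ on $\Y\setminus\{x\}$.

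To establish $u\ge 0$, I would test the equation with the negative part $u_{-}(y):=\max\{0,-u(y)\}$. The elementary inequality $(a-b)(a_{-}-b_{-})\le -(a_{-}-b_{-})^{2}$ combined with $u_{+}u_{-}=0$ and $\delta_{x}(y)u_{-}(y)\ge 0$ yields
\[
\tfrac{1}{2}\sum_{(y_1,y_2)\in\G_{\ast}(\bP)}\frac{\left(u_{-}(y_1)-u_{-}(y_2)\right)^{2}}{\left|y_1-y_2\right|^{2}}\;+\;\sum_{y\in\Y}\left|y-x\right|\,u_{-}(y)^{2}\;\le\;-u_{-}(x)\le 0,
\]
which forces $u_{-}\equiv 0$ because the absorption coefficient is strictly positive away from $x$. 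For strict positivity one applies the discrete strong maximum principle: if $u(y_{0})=0$ at some $y_{0}\in\mathring{\Y}\setminus\{x\}$, then since $(\cL_{\ast}u)(y_{0})+|y_{0}-x|u(y_{0})=0$ and $u\ge 0$, every neighbor $z\sim y_{0}$ satisfies $u(z)=0$; connectedness of $\G_{\ast}(\bP)$ propagates this to $u\equiv 0$ on $\mathring{\Y}$, which contradicts $(\cL_{\ast}u)(x)=1$. The same mechanism shows that $u$ cannot have an interior local maximum at any $y_{0}\neq x$, because there $(\cL_{\ast}u)(y_{0})\ge 0$ while $|y_{0}-x|u(y_{0})>0$, forcing the sum to be strictly positive, contrary to $\delta_{x}(y_{0})=0$.

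Finally, the decay $u(y)\to 0$ as $|y|\to\infty$ follows from the absorption term in \eqref{eq:lem:existence-cLu-equiv-delta-x-2}: the series $\sum_{y\in\Y}|x-y|\,u(y)^{2}$ converges, and since every $y\in\Y$ lies in a ball of uniformly controlled radius $\fr(y)$ (Notation~\ref{nota:Y}) with local boundedness of the number of neighbors from Lemma~\ref{lem:cover-iso-mixing-delta-M}, the weight $|x-y|$ grows linearly while the multiplicity of $y$'s in any annulus grows at most polynomially; standard summability arguments then force $u(y)\to 0$. The main technical obstacle is the application of Lemma~\ref{lem:discrete-Poincare}, whose formulation is anchored at the origin: one must verify that the constant $C_{R_{0}}$ can be taken uniform when centering at an arbitrary $x\in\mathring{\Y}$, which is where the connectedness and admissibility of $\G_{\ast}(\bP)$ enter decisively through the existence of a boundary vertex at controlled graph-distance from $x$.
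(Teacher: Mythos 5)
Your proof takes essentially the same route as the paper: truncate to $H_0(\Y\cap\Ball R0)$, solve the finite problem (the paper via a coercive bilinear form, you via minimization of the equivalent strictly convex energy $J$, which is the same thing), use the discrete Poincar\'e inequality together with the energy identity to extract a uniform bound and pass $R\to\infty$, then read off the decay from the absorption term. The one genuine variation is the nonnegativity argument: you test the equation with $u_-$ and use the elementary inequality for $(a-b)(a_--b_-)$, whereas the paper argues directly on the approximants $u_R$ by noting that a strictly negative minimum of $u_R$ at $y_0\in\mathring\Y$ would force $(\cL_\ast u_R)(y_0)+|y_0-x|u_R(y_0)<0$, a contradiction. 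Both work; yours is a bit cleaner to carry over to the infinite graph, and you also spell out the discrete strong maximum principle (propagation of a zero along the connected graph) to get strict positivity, a point the paper leaves implicit.

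Your closing worry about the Poincar\'e constant's dependence on the base point is a non-issue for this particular lemma: $x\in\mathring\Y$ is \emph{fixed} in the hypotheses, the paper simply sets $x=0$ w.l.o.g., and the constant $C_{R_0}$ in the conclusion \eqref{eq:lem:existence-cLu-equiv-delta-x-2} is explicitly the one from Lemma \ref{lem:discrete-Poincare} and is allowed to depend on the chosen base point. Uniformity in $x$ only becomes relevant in later applications, not here. (Also, the factor $5$ in \eqref{eq:lem:existence-cLu-equiv-delta-x-2} depends on whether the edge sum is over ordered or unordered pairs; your bookkeeping produces a slightly larger multiple of $C_{R_0}$, but this is immaterial to the statement.)
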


\begin{proof}
W.l.o.g. let $x=0$ and write $y_{1}\sim y_{2}$ iff $\left(y_{1},y_{2}\right)\in\G_{\ast}(\bP)$.
Using the notation of Lemma \ref{lem:discrete-Poincare} and $B_{R}:=\Ball R0$
and $B_{R}^{\complement}:=\Rd\setminus\Ball R0$ we divide the proof
in three parts.

\textbf{Approximation:} We consider the problem
\begin{equation}
\cL_{\ast}u_{R}+\left|\cdot\right|u=\delta_{0}\,,\qquad u_{R}(y)=0\text{ for }y\in\partial\Y,\,\text{and }y\in\Y\cap B_{R}^{\complement}\,.\label{eq:lem:existence-cLu-equiv-delta-x-help-2}
\end{equation}
Putting $v(x)=0$ for $v\in H_{0}\of{\Y\cap B_{R}}$ and all $x\not\in B_{R}$,
we find 
\begin{equation}
\sum_{y\in\mathring{\Y}\cap B_{R}}v(y)\,\cL_{\ast}u(y)=\sum_{z\sim y}\frac{1}{\left|y-z\right|^{2}}\left(u(y)-u(z)\right)\left(v(y)-v(z)\right)\,,\label{eq:discrete-part-int}
\end{equation}
which is a strictly positive definite bilinear symmetric form on $\R^{\mathring{\Y}\cap B_{R}}$.
Hence, there exists a unique solution $u_{R}$ to (\ref{eq:lem:existence-cLu-equiv-delta-x-help-2}).

Since $\Y\cap B_{R}$ is finite, $u_{R}$ attains a maximum and a
minimum. If $u_{R}$ attains a local maximum in $y$, it holds $\cL_{\ast}u_{R}(y)\geq0$
and if $u_{R}$ attains a local miminum in $y$ it holds $\cL_{\ast}u_{R}(y)\leq0$.
If $u_{R}$ attains negative values, it has a negative minimum in
$y_{0}\in\mathring{\Y}$ and hence $\of{\cL_{\ast}u}(y_{0})+\left|y_{0}-x\right|u(y_{0})<0$,
a contradiction. Thus, $u_{R}>0$ in every $y\not\in\partial\Y$.
Furthermore, because of (\ref{eq:lem:existence-cLu-equiv-delta-x-help-2})
$u_{R}$ can attain a local maximum only in $0$.

\textbf{Passage $R\to\infty$:} Using Lemma \ref{lem:discrete-Poincare},
for some large enough $R_{0}\in\R$ we find the following estimate,
which holds for every $R>R_{0}$ due to (\ref{eq:lem:existence-cLu-equiv-delta-x-help-2})
and (\ref{eq:lem:existence-cLu-equiv-delta-x-help-1}) applied to
$R_{0}$
\begin{align}
\sum_{y\in\mathring{\X}\cap B_{R}}\left(u_{R}(y)\,\cL u_{R}(y)+\left|y\right|u(y)^{2}\right) & =\sum_{z\sim y}\frac{\left(u_{R}(y)-u_{R}(z)\right)^{2}}{\left|y-z\right|^{2}}+\sum_{y\in\mathring{\X}\cap B_{R}}\left|y\right|u(y)^{2}\nonumber \\
 & =\sum_{y\in\mathring{\X}\cap B_{R}}u_{R}(y)\,\delta_{0}(y)\leq u(0)\leq2C_{R_{0}}+\frac{1}{2C_{R_{0}}}u_{R}(0)^{2}\nonumber \\
 & \leq2C_{R_{0}}+\frac{1}{2}\sum_{\substack{y_{1},y_{2}\in\Y\cap B_{R_{0}}\\
y_{1}\sim y_{2}
}
}\frac{\left(u_{R}(y_{1})-u_{R}(y_{2})\right)^{2}}{\left|y_{1}-y_{2}\right|^{2}}\label{eq:lem:existence-cLu-equiv-delta-x-help-3}
\end{align}
Together with (\ref{eq:discrete-part-int}), the latter yields a uniform
estimate for all $R>R_{0}$. In particular (due to a Cantor argument),
there exists a subsequence $u_{R'}$ such that $u_{R'}(y)\to u(y)$
converges for every $y\in\Y$ as $R'\to\infty$. Evidently, $u$ solves
(\ref{eq:lem:existence-cLu-equiv-delta-x}), is non-negative, attains
its maximum in $0$ and satisfies the estimate (\ref{eq:lem:existence-cLu-equiv-delta-x-2}).
The limit $u(y)\to0$ as $y\to\infty$ follows from (\ref{eq:lem:existence-cLu-equiv-delta-x-2})
and (\ref{eq:lem:existence-cLu-equiv-delta-x-help-3}). $u$ has a
unique local maximum in $0$ for the same reason as for $u_{R}$.

\textbf{Uniqueness of $u$:} Finally, let $u$ and $\tilde{u}$ be
two solutions such that $v=u-\tilde{u}$ satisfies 
\[
\begin{aligned}\of{\cL_{\ast}v}(y)+\left|y-x\right|v(y) & =0 &  & \text{ for }y\in\Y\setminus\Y_{\partial}\\
v(y) & =0 &  & \text{ for }y\in\Y_{\partial}
\end{aligned}
\,.
\]
Multiplying the above equation with $v$ and summing over all $y$,
we find
\[
\sum_{y\in\mathring{\X}}\left(v(y)\,\cL_{\ast}v(y)+\left|y\right|v(y)^{2}\right)=0\,,
\]
which implies $v=0$.
\end{proof}
\begin{defn}
\label{def:admissible-harmonic-path}Let $x\in\X_{r}$, let $u_{x}$
be the solution of (\ref{eq:lem:existence-cLu-equiv-delta-x}) and
$y\in\Y\backslash\{x\}$. An \emph{admissible harmonic path} from
$y$ to $x$ in $\G_{\ast}(\bP)$ is a path $\left(x_{1},\dots,x_{k}\right)$
with $x_{1}=y$, $x_{k}=x$ such that $u_{x}(x_{i+1})\geq u_{x}(x_{i})$.
We denote the set of admissible harmonic paths from $y$ to $x$ by
$\Apaths_{\ast}(y,x)$. If $\G_{\ast}(\bP)=\G_{0}(\bP)=\G(\bP)$ we
simply write $\Apaths(y,x)$. Note that 
\[
\Apaths(y,x)\supseteq\Apaths_{\ast}(y,x)\,.
\]
\end{defn}

\begin{lem}
\label{lem:radius-admissible-pahts}Let $\bP\subset\Rd$ be open,
connected and satisfy the assumptions of Lemma \ref{lem:cover-iso-mixing-delta-M}.
Let $\left(\Y,\G_{\ast}(\bP)\right)$ be admissible and let $x\in\mathring{\Y}$
and $y\in\Y$. There exists $R>0$, depending on $\bP$, $x$ and
$y$ such that every admissible harmonic path $\left(x_{1},\dots,x_{k}\right)\in\Apaths_{\ast}(y,x)$
from $y$ to $x$ lies in $\Ball Rx$. If $C_{0},C>0$ are the natural
numbers such that for every $y\in\Y$ it holds $C_{0}\leq\#\left\{ z\sim y:\,z\in\Y\right\} \leq C$
(which exist due to Lemma \ref{lem:cover-iso-mixing-delta-M}) then
we can choose \nomenclature[Mc]{$\sfR_{0}(x,y)$}{Equation \eqref{eq:lem:radius-admissible-pahts}}
\begin{equation}
\forall y\in\Y:\quad R\leq\sfR_{0}(x,y):=C\frac{u(x)}{u\of y}\label{eq:lem:radius-admissible-pahts}
\end{equation}
\end{lem}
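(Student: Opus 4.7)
My plan rests on combining the definition of admissible harmonic path with the decay properties of $u$ proved in Lemma \ref{lem:existence-cLu-equiv-delta-x}. By Definition \ref{def:admissible-harmonic-path}, any admissible harmonic path $(x_{1},\dots,x_{k}) \in \Apaths_{\ast}(y,x)$ satisfies $u(x_{1}) \leq u(x_{2}) \leq \dots \leq u(x_{k})$ with $x_{1}=y$ and $x_{k}=x$. Hence every intermediate vertex $x_{i}$ lies in the sublevel set $S_{y} := \{z \in \Y : u(z) \geq u(y)\}$, and the lemma reduces to showing $S_{y} \subset \Ball{\sfR_{0}(x,y)}{x}$. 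The qualitative existence of a finite $R$ is immediate: Lemma \ref{lem:existence-cLu-equiv-delta-x} gives $u(z)\to 0$ as $|z|\to\infty$, so $S_{y}$ is finite and therefore bounded.

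To obtain the explicit quantitative bound $R \leq C\, u(x)/u(y)$, my plan is a barrier–comparison argument. I would construct an explicit supersolution of the equation \eqref{eq:lem:existence-cLu-equiv-delta-x} on $\mathring{\Y}\setminus\{x\}$ of the form
\[
v(z) \;:=\; \frac{2A\, u(x)}{A + |x-z|},
\]
where the constant $A$ depends only on $d$, $\fr$, and the uniform neighbor bound $C$ from Lemma \ref{lem:cover-iso-mixing-delta-M}. Using the triangle inequality $\bigl||x-z|-|x-z'|\bigr|\leq |z-z'|$, a direct expansion of $v(z)-v(z')$ and the comparabilities \eqref{eq:min-dist-eta}--\eqref{eq:min-dist-eta-2} between edge length and local scale $\tilde\eta$ lets me choose $A$ large enough so that
\[
(\cL_{\ast} v)(z) + |z-x|\, v(z) \;\geq\; 0 \qquad \text{for every } z \in \mathring{\Y}\setminus\{x\}.
\]

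Since $v(x) = 2u(x) > u(x)$, $v \geq 0 = u$ on $\partial\Y$, and $v-u \to 0$ at infinity, the discrete maximum principle (identical to the uniqueness step in the proof of Lemma \ref{lem:existence-cLu-equiv-delta-x}: a negative interior minimum of $v-u$ at $z_{0}\neq x$ would give $(\cL_{\ast}(v-u))(z_{0}) \leq 0$ and $|z_{0}-x|(v-u)(z_{0}) < 0$, contradicting the supersolution inequality) yields $u \leq v$ throughout $\Y$. Evaluating at $z \in S_{y}$ gives $u(y) \leq u(z) \leq 2A\, u(x)/(A+|x-z|)$, which rearranges to $|x-z| \leq 2A\, u(x)/u(y) - A \leq 2A\, u(x)/u(y)$; taking $C := 2A$ proves the bound.

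The main obstacle will be the pointwise supersolution verification. The graph $\G_{\ast}(\bP)$ has highly variable edge lengths — edges within $\mathring{\Y}$ near $\X_{\fr}$ are of order $\fr$, but edges in $\Y_{\partial\X}$ can be arbitrarily short — so each term $|z-z'|^{-2}$ in the discrete Laplacian must be controlled individually. Fortunately \eqref{eq:min-dist-eta}--\eqref{eq:min-dist-eta-2} make $|z-z'|$ comparable to $\tilde\eta(z)$ and $\tilde\eta(z')$, and the uniform bound $\#\{z':z'\sim z\}\leq C$ from Lemma \ref{lem:cover-iso-mixing-delta-M} bounds the total number of summands. Together these reduce the estimate to a purely dimensional inequality, so that the threshold $A_{0}$ depends only on $d$, $\fr$, and $C$, independently of the positions of $x$ and $y$ in the geometry.
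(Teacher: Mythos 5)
Your proposal takes a genuinely different route from the paper. You build a global radial barrier $v(z) = 2Au(x)/(A+|x-z|)$, verify it is a supersolution, and conclude $u\leq v$ by the comparison principle; the bound on $|x-z|$ then follows for any $z$ in the superlevel set $\{u\geq u(y)\}$. The paper's proof is considerably shorter and more local: it simply evaluates the discrete equation \eqref{eq:lem:existence-cLu-equiv-delta-x} at a path vertex $x_{j}$, observes that $u(x_{j})\geq u(y)$ (monotonicity of admissible harmonic paths) and $u(z)\leq u(x)$ for every vertex $z$, and rearranges the resulting algebraic relation to get $|x_{j}-x|\leq C\,u(x)/u(y)-C_{0}$. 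No barrier function and no comparison theorem are needed; the bound falls out of the equation at a single point.

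There is a real concern about the feasibility of your supersolution verification which you acknowledge but, in my view, dismiss too quickly. The expansion $v(z')-v(z)=\nabla v(z)\cdot(z'-z)+O(|z'-z|^{2})$ has a first-order term of size $O(|z'-z|)$. Divided by $|z-z'|^{2}$, this is $O(1/|z-z'|)$, which diverges as $z$ approaches $\partial\bP$ because edge lengths there are comparable to $\tilde\eta(z)\to 0$. On a symmetric lattice the first-order contributions cancel and only the $O(1)$ second-order term survives, but on the irregular graph $\G_{\ast}(\bP)$ there is no reason for $\sum_{z'\sim z}\frac{z'-z}{|z-z'|^{2}}$ to vanish, so $|\cL_{\ast}v(z)|$ can genuinely blow up like $\tilde\eta(z)^{-1}$. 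At such a vertex the competing term $|z-x|\,v(z)$ is finite, so the claimed inequality $\cL_{\ast}v(z)+|z-x|v(z)\geq 0$ can fail no matter how large $A$ is chosen, unless you further restrict to a subclass of $z$ or exploit cancellation structure the proposal does not establish. The reduction to ``a purely dimensional inequality depending only on $d,\fr,C$'' therefore does not hold as stated; the bad dependence is on the local scale $\tilde\eta(z)$, not just on the combinatorial degree bound $C$. The paper avoids this entirely by never estimating the discrete Laplacian of an auxiliary function, only the equation satisfied by $u$ itself at a path point, where the sign structure (weighted average of $u(z)\leq u(x)$ against $u(x_{j})\geq u(y)$) is all that is needed.
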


\begin{proof}
Let us recall that $u(z)>0$ for every $z\not\in\partial\Y$ by Lemma
\ref{lem:existence-cLu-equiv-delta-x}. Again we write $x\sim y$
if $(x,y)\in\G_{\ast}(\bP)$.

For an admissible path $\left(x_{1},\dots,x_{k}\right)$ from $y=x_{1}$
to $x=x_{k}$ it follows $u\of{x_{j}}\geq u\of y>0$ for every $j>1$.
On the other hand 
\[
\of{\cL_{\ast}u}\of{x_{j}}+\left|x_{j}-x\right|u\of{x_{j}}=0
\]
Let us further recall, that with $C_{0}$ and $C$ independent from
$y$. Given $u(y)$ we can therefore conclude the necessary condition
\[
\left(C_{0}+\left|x_{j}-x\right|\right)u\of{x_{j}}-\sum_{z\sim x_{j}}u(z)\leq0\,.
\]
On the other hand, it holds $u(z)\leq u(x)$. This implies that the
left hand side of the last inequality is bounded from below by 
\[
\left(C_{0}+\left|x_{j}-x\right|\right)u\of{x_{j}}-Cu(x).
\]
Hence we conclude (\ref{eq:lem:radius-admissible-pahts}) from 
\[
\left|x_{j}-x\right|\leq C\frac{u(x)}{u\of y}-C_{0}\,.
\]
\end{proof}
The most important and concluding result in this context is the following,
which states that the set of admissible paths is not empty and the
$\G(\bP)$ is connected:
\begin{thm}[Admissible $\G_{\ast}(\bP)$ are connected through admissible harmonic
paths]
\label{thm:Ex-adm-path} Let $\bP\subset\Rd$ be open, connected
and let $\bP$ as well as $(\Y,\G_{\ast}(\bP))$ satisfy the assumptions
of Lemma \ref{lem:existence-cLu-equiv-delta-x}. Then for $x\in\mathring{\Y}$
let $u_{x}$ be the solution of (\ref{eq:lem:existence-cLu-equiv-delta-x})
and for $y\in\Y$ let $x_{1}:=y$. As long as $x_{i}\neq x$ select
iteratively $x_{i+1}\in\left\{ z\in\Y:\,z\sim x_{i},\,u_{x}(z)>u_{x}(x_{i})\right\} $.
Then this algorithm terminates after finite steps, i.e. there exists
$i\in\N$ such that $x_{i}=x$. In particular $\G_{\ast}(\bP)$ is
connected via admissible paths.
\end{thm}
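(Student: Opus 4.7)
}

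My plan is to combine the maximum principle for $u_x$ (Lemma \ref{lem:existence-cLu-equiv-delta-x}) with the radius bound for admissible harmonic paths (Lemma \ref{lem:radius-admissible-pahts}) and local finiteness of $\Y$ (from the covering in Lemma \ref{lem:cover-iso-mixing-delta-M}). The algorithm produces a strictly increasing sequence of values $u_x(x_i)$, so the $x_i$ are pairwise distinct; showing that it cannot run forever then amounts to showing that it cannot get stuck at some $x_i\neq x$ and that only finitely many $x_i$ can be produced.

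First I would show that the selection rule never encounters an empty set as long as $x_i\neq x$. Suppose for contradiction that at some step $i$ we have $x_i\neq x$ but $u_x(z)\leq u_x(x_i)$ for every neighbor $z\sim x_i$ in $\G_{\ast}(\bP)$. Then $x_i$ is a local maximum of $u_x$ on the admissible graph. By Lemma \ref{lem:existence-cLu-equiv-delta-x} the unique (hence global) local maximum of $u_x$ on $\mathring{\Y}$ is attained exactly at $x$, and $u_x$ vanishes on $\partial\Y$ while being strictly positive on $\mathring{\Y}$, so $x_i$ can be neither in $\partial\Y$ nor in $\mathring{\Y}\setminus\{x\}$, a contradiction. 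Thus the algorithm may always be continued as long as $x_i\neq x$.

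Next I would bound the support of the sequence. The partial path $(x_1,\dots,x_i)$ produced by the algorithm satisfies $u_x(x_j)\geq u_x(x_1)=u_x(y)$ for every $j\leq i$. Inspecting the proof of Lemma \ref{lem:radius-admissible-pahts}, the bound $|x_j-x|\leq \sfR_0(x,y)=C\,u_x(x)/u_x(y)$ uses only this monotonicity together with equation \eqref{eq:lem:existence-cLu-equiv-delta-x} and the upper bound $u_x(z)\leq u_x(x)$, none of which require the path to already terminate at $x$. Hence every $x_i$ lies in the ball $\overline{\Ball{\sfR_0(x,y)}{x}}$. Since the covering in Lemma \ref{lem:cover-iso-mixing-delta-M} is locally finite, the intersection $\Y\cap\overline{\Ball{\sfR_0(x,y)}{x}}$ is a finite set. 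Combined with the strict monotonicity $u_x(x_1)<u_x(x_2)<\cdots$, which forces all $x_i$ to be pairwise distinct, we conclude that the algorithm must terminate after finitely many steps. By the first paragraph it can terminate only at $x_i=x$, which proves the theorem and establishes that every $y\in\Y$ is connected to $x$ by an admissible harmonic path, hence that $\G_{\ast}(\bP)$ is connected via such paths.

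The main obstacle I expect is the verification that the radius estimate of Lemma \ref{lem:radius-admissible-pahts} is genuinely applicable to partial paths; this requires reading its proof rather than merely its statement, but since that proof only exploits $u_x(x_j)\geq u_x(y)$ and the elliptic equation at $x_j$, the extension is routine. The rest is essentially a discrete ``gradient ascent converges to the maximum'' argument on a locally finite graph.
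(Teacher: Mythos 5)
Your proposal is correct and takes essentially the same approach as the paper: bound the reach of the iterative process via Lemma \ref{lem:radius-admissible-pahts}, use local finiteness of $\Y$ to conclude only finitely many points are reachable, combine with strict monotonicity of $u_x$ along the sequence, and invoke the unique local maximum from Lemma \ref{lem:existence-cLu-equiv-delta-x} to force termination at $x$. Your write-up is in fact more careful than the published proof, in particular in checking explicitly that the algorithm never gets stuck at a non-maximum and that the radius estimate extends from completed admissible harmonic paths to the partial paths produced by the algorithm—two points the paper passes over silently.
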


\begin{proof}
According to Lemma \ref{lem:radius-admissible-pahts}, the number
of points that can be reached by the iterative process is finite,
i.e. the algorithm will stop when $x_{i}$ is a local maximum of $u_{x}$.
But this is given by $x_{i}=x$ according to Lemma \ref{lem:existence-cLu-equiv-delta-x}.
\end{proof}

\section{\label{sec:Extension-and-Trace-d-M}Extension and Trace Properties
from $\left(\delta,M\right)$-Regularity}

\subsection{\label{subsec:5-Preliminaries}Preliminaries}

For this whole section, let $\bP$ be a locally $\left(\delta,M\right)$-regular
open set and let $\delta$ be bounded by $\fr>0$ and satisfy (\ref{eq:lem:properties-delta-M-regular-3}).
In view of Corollary \ref{cor:cover-boundary}, there exists a complete
covering of $\partial\bP$ by balls $\Ball{\tilde{\rho}\left(p_{k}\right)}{p_{k}}$,
$\left(p_{k}\right)_{k\in\N}$, where $\tilde{\rho}\of p:=2^{-5}\rho\of p$.
We define with $\tilde{\rho}_{k}:=\tilde{\rho}\of{p_{k}}$, $\hat{\rho}_{k}:=\hat{\rho}\of{p_{k}}$\nomenclature[A1]{$A_{1,k},A_{2,k},A_{3,k}$}{Equation \eqref{eq:A123-k}}
given in Lemma \ref{lem:rho-p-lsc}
\begin{equation}
A_{1,k}:=\Ball{\tilde{\rho}_{k}}{p_{k}}\,,\quad A_{2,k}:=\Ball{3\tilde{\rho}_{k}}{p_{k}}\,,\quad A_{3,k}:=\Ball{\frac{\hat{\rho}_{k}}{8}}{p_{k}}\label{eq:A123-k}
\end{equation}
and recall (\ref{eq:lem:local-delta-M-construction-estimate-1}),
which we apply to $\delta$ in order to obtain the measurable function
\begin{equation}
\tilde{\delta}\of x:=\tilde{\hat{\rho}}(x)=\min\left\{ \hat{\rho}\of{\tilde{x}}\,:\;\tilde{x}\in\partial\bP\,\text{s.t. }x\in\Ball{\frac{1}{8}\hat{\rho}\of{\tilde{x}}}{\tilde{x}}\right\} \,.\label{eq:tilde-delta}
\end{equation}
Similarly, in view of (\ref{eq:lem:local-delta-M-construction-estimate-1b}),
we define the measurable function\nomenclature[delta-tilde]{$\tilde\delta$}{Equation \eqref{eq:tilde-delta}}\nomenclature[M-tilde]{$\tilde M$}{Equation \eqref{eq:tilde-M}}
\begin{align}
\tilde{M}(x) & :=M_{[\frac{1}{8}\hat{\rho}],\Rd}(x)+1=\max\left\{ M_{[\frac{1}{8}\hat{\rho}]}(\tilde{x})+1\,:\;\tilde{x}\in\partial\bP\,\text{s.t. }x\in\overline{\Ball{\frac{1}{8}\hat{\rho}\of{\tilde{x}}}{\tilde{x}}}\right\} \,,\label{eq:tilde-M}
\end{align}
Here we have used the convention $\max\emptyset=\min\emptyset=0$.
\begin{rem}
a) In view of Lemma \ref{lem:M-eta} we recall Remark \ref{rem:difference-M-eta-M-eta}
on the difference between $M_{[\eta]}$ and $M_{\eta}$ and additionally
remark that $\rmM_{[\frac{\hat{\rho}}{8}]}{\left(x\right)}+1\leq\tilde{M}_{\hat{\rho}}(x)$
for every $x\in\partial\bP$.

b) We could equally work with $\delta$ replacing $\hat{\rho}$. However,
Lemma \ref{lem:rho-p-lsc} suggests that the natural choice is $\hat{\rho}$.
\end{rem}

Additionally introduce (recalling (\ref{eq:def-M-set-A}))\nomenclature[Mk]{$M_k,\,M_{\fr,k}$ }{$k\in\N$, $\fr>0$ \eqref{eq:notation-M}}
\begin{equation}
\fm_{k}:=\fm_{[\frac{1}{8}\hat{\rho}]}(p_{k},\frac{1}{4}\tilde{\rho})\,,\qquad\tilde{M}_{k}:=\tilde{M}(p_{k})\,,\qquad M_{k}:=M\of{p_{k},\frac{1}{8}\hat{\rho}(p_{k})}\label{eq:notation-M}
\end{equation}
We further recall that there exists $\fr_{k}=\frac{\tilde{\rho}_{k}}{32\left(1+\fm_{k}\right)}$,
\nomenclature[mk]{$\fm_{k}:=\fm\of{p_{k},\tilde{\rho}_{k}/4}$}{Section \ref{subsec:5-Preliminaries}}
and $y_{k}$ such that 
\[
B_{k}:=\Ball{\fr_{k}}{y_{k}}\subset\bP\cap\Ball{\frac{1}{8}\tilde{\rho}_{k}}{p_{k}}
\]

\begin{lem}
\label{lem:properties-local-rho-convering}For two balls $A_{1,k}\cap A_{1,j}\neq\emptyset$
either $A_{1,k}\subset A_{2,j}$ or $A_{1,j}\subset A_{2,k}$ and
\begin{equation}
A_{1,k}\cap A_{1,j}\neq\emptyset\quad\Rightarrow\quad\Ball{\frac{1}{2}\tilde{\rho}_{k}}{p_{k}}\subset A_{2,j}\text{ and }\Ball{\frac{1}{2}\tilde{\rho}_{j}}{p_{j}}\subset A_{2,k}\,.\label{eq:lem:properties-local-rho-convering-1}
\end{equation}
Furthermore, there exists a constant $C$ depending only on the dimension
$d$ and some $\hat{d}\in[0,d]$ such that 
\begin{align}
\forall k\quad &  & \#\left\{ j\,:\;A_{1,j}\cap A_{1,k}\neq\emptyset\right\} +\#\left\{ j\,:\;A_{2,j}\cap A_{2,k}\neq\emptyset\right\}  & \leq C\,,\label{eq:lem:properties-local-rho-convering-2}\\
\forall x\quad &  & \#\left\{ j\,:\;x\in A_{1,j}\right\} +\#\left\{ j\,:\;x\in A_{2,j}\right\}  & \leq C+1\,,\label{eq:lem:properties-local-rho-convering-3}\\
\forall x\quad &  & \#\left\{ j:\,x\in\overline{\Ball{\frac{1}{8}\hat{\rho}_{j}}{p_{j}}}\right\}  & <C\tilde{M}(x)^{\hat{d}}\,.\label{eq:lem:properties-local-rho-convering-4}
\end{align}
Finally, there exist non-negative functions $\phi_{0}$ and $\left(\phi_{k}\right)_{k\in\N}$
such that for $k\geq1$: $\support\phi_{k}\subset A_{1,k}$, $\phi_{k}|_{B_{j}}\equiv0$
for $k\neq j$. Further, $\phi_{0}\equiv0$ on all $B_{k}$ and on
$\partial\bP$ and $\sum_{k=0}^{\infty}\phi_{k}\equiv1$ and there
exists $C$ depending only on $d$ such that for all $j\in\N\cup\left\{ 0\right\} $,
$k\in\N$ it holds and 
\begin{equation}
x\in A_{1,k}\quad\Rightarrow\quad\left|\nabla\phi_{j}(x)\right|\leq C\tilde{\rho}_{k}^{-1}\,.\label{eq:bound-nabla-phi-j}
\end{equation}
\end{lem}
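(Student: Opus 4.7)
I would treat the five assertions in order, relying on Corollary~\ref{cor:cover-boundary} and the Lipschitz properties of $\rho,\hat\rho$ from Section~\ref{subsec:Microscopic-Regularity}, and finish with a telescoping partition-of-unity construction.

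For the inclusion statement and (\ref{eq:lem:properties-local-rho-convering-1}) the triangle inequality applied to Corollary~\ref{cor:cover-boundary} suffices: if $A_{1,k}\cap A_{1,j}\neq\emptyset$ and (w.l.o.g.) $\tilde\rho_k\leq\tilde\rho_j$, then $|p_k-p_j|\leq\tfrac{31}{15}\tilde\rho_k$ and $\tilde\rho_j\leq\tfrac{16}{15}\tilde\rho_k$, so for $x\in\Ball{\tilde\rho_k/2}{p_k}$ we compute $|x-p_j|\leq\tfrac{77}{30}\tilde\rho_j<3\tilde\rho_j$; the same calculation starting from $x\in\overline{A_{1,k}}$ produces $A_{1,k}\subset A_{2,j}$.

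The neighbour bounds (\ref{eq:lem:properties-local-rho-convering-2})--(\ref{eq:lem:properties-local-rho-convering-3}) follow from a volume-packing argument: all centres $p_j$ with $A_{1,j}\cap A_{1,k}\neq\emptyset$ lie in $\overline{\Ball{\tfrac{31}{15}\tilde\rho_k}{p_k}}$ and have pairwise distances $\gtrsim\tilde\rho_k$ (either $\geq\tfrac{1}{2}\max\{\tilde\rho_i,\tilde\rho_j\}$ from Corollary~\ref{cor:cover-boundary} when $A_{1,i}\cap A_{1,j}\neq\emptyset$, or $\geq\tilde\rho_i+\tilde\rho_j$ otherwise), so a standard volume count yields a constant depending only on $d$; the pointwise version follows by applying (\ref{eq:lem:properties-local-rho-convering-2}) to any $A_{1,k}\ni x$. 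For (\ref{eq:lem:properties-local-rho-convering-4}) I would reduce to Lipschitz-surface packing: fixing $x$ and $J:=\{j:\,x\in\overline{\Ball{\hat\rho_j/8}{p_j}}\}$, Lipschitz continuity of $\hat\rho$ (Lemma~\ref{lem:rho-p-lsc}) makes the $\hat\rho_j$, $j\in J$, comparable to a common value $\hat\rho_\ast$, and the corresponding $p_j$ lie on $\partial\bP\cap\Ball{O(\hat\rho_\ast)}{x}$, which by the definition of $\tilde M(x)$ is a Lipschitz graph of constant $\leq\tilde M(x)-1$. Since $\tilde\rho_j\gtrsim\rho_j\gtrsim\hat\rho_\ast/\tilde M(x)$, Corollary~\ref{cor:cover-boundary} forces $|p_i-p_j|\gtrsim\hat\rho_\ast/\tilde M(x)$, and a $(d{-}1)$-dimensional packing on the graph gives $\#J\lesssim\tilde M(x)^{d-1}(1+\tilde M(x))\lesssim\tilde M(x)^d$, so $\hat d=d$ is admissible.

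The main technical obstacle is the partition of unity in (5), because $\phi_j\equiv1$ on $B_j$, $\phi_j\equiv0$ on $B_k$ for $k\neq j$, $\phi_0\equiv0$ on $\partial\bP\cup\bigcup B_k$, and the gradient bound $|\nabla\phi_j|\leq C\tilde\rho_k^{-1}$ on $A_{1,k}$ must all hold simultaneously with a purely dimensional constant. My plan is to first invoke Theorem~\ref{thm:delta-M-rho-covering} with a larger exponent $K$ to refine the boundary cover so that $\partial\bP\subset\bigcup_k\Ball{\tilde\rho_k/8}{p_k}$ with the same comparability and bounded-overlap conclusions as Corollary~\ref{cor:cover-boundary}; then pick smooth cutoffs $\chi_k\in C_c^\infty(\Ball{7\tilde\rho_k/30}{p_k})\subset C_c^\infty(A_{1,k})$ with $\chi_k\equiv1$ on $\Ball{\tilde\rho_k/8}{p_k}$, $0\leq\chi_k\leq1$, and $|\nabla\chi_k|\leq C\tilde\rho_k^{-1}$; and finally glue by
\[
\phi_k:=\chi_k\prod_{j<k}(1-\chi_j)\quad(k\geq1),\qquad\phi_0:=\prod_{k\geq1}(1-\chi_k),
\]
locally finite by (\ref{eq:lem:properties-local-rho-convering-3}). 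The telescoping identity yields $\sum_{k\geq0}\phi_k\equiv1$, $\phi_0$ vanishes on $\partial\bP\cup\bigcup B_k$ because every such point lies in some $\Ball{\tilde\rho_k/8}{p_k}$ on which $\chi_k\equiv1$, and the essential geometric input — used to get $\chi_k|_{B_j}\equiv0$ for $j\neq k$ — is that whenever $B_j\cap A_{1,k}\neq\emptyset$ one has, by Corollary~\ref{cor:cover-boundary}, $|p_j-p_k|\geq\tfrac{1}{2}\tilde\rho_k$, so together with $|y_j-p_j|\leq\tilde\rho_j/8$ and $\fr_j\leq\tilde\rho_j/8\leq\tfrac{2}{15}\tilde\rho_k$ we obtain $\dist(B_j,p_k)\geq\tfrac{7}{30}\tilde\rho_k$, which forces $B_j$ to be disjoint from $\support\chi_k\subset\Ball{7\tilde\rho_k/30}{p_k}$. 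The gradient bound (\ref{eq:bound-nabla-phi-j}) then reduces, via (\ref{eq:lem:properties-local-rho-convering-2}) and the comparability $\tilde\rho_j\sim\tilde\rho_k$ on overlaps, to the individual bound $|\nabla\chi_j|\leq C\tilde\rho_j^{-1}\sim C\tilde\rho_k^{-1}$.
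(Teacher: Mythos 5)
Your arguments for (\ref{eq:lem:properties-local-rho-convering-1})--(\ref{eq:lem:properties-local-rho-convering-4}) coincide in substance with the paper's volume-packing proof, and you arrive at the same exponent $\hat d=d$. The partition-of-unity step, however, has a genuine gap. Your telescoping $\phi_0=\prod_{k\geq1}(1-\chi_k)$ can only vanish at a point if some $\chi_k$ equals $1$ there, so $\phi_0\equiv 0$ on $\partial\bP$ requires $\partial\bP\subset\bigcup_k\{\chi_k=1\}$. But by your own disjointness computation the support of $\chi_k$ must stay inside $\Ball{7\tilde\rho_k/30}{p_k}$, hence so does $\{\chi_k=1\}$; and Corollary \ref{cor:cover-boundary} only guarantees a cover of $\partial\bP$ by the full open balls $A_{1,k}=\Ball{\tilde\rho_k}{p_k}$, not by the much smaller $\Ball{\tilde\rho_k/8}{p_k}$. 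You noticed this and proposed to rerun Theorem \ref{thm:delta-M-rho-covering} with a larger $K$, but doing so produces a new, denser set of centres and new radii, i.e.\ it changes $p_k$, $\tilde\rho_k$ and therefore $A_{1,k}$, $A_{2,k}$, $A_{3,k}$, $B_k$, $\fr_k$ --- objects fixed once and for all in the preliminaries of Section \ref{subsec:5-Preliminaries} with $\tilde\rho:=2^{-5}\rho$ and used throughout Sections \ref{subsec:Microscopic-Extension-dm}--\ref{sec:Construction-of-Macroscopic-1}. The refinement thus proves a version of the lemma about a different collection of balls than the one the statement and its later applications refer to.

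The paper sidesteps the cover issue by a normalisation rather than a telescoping product: it sets $\tilde\phi_0(x):=\dist\bigl(x,\,\partial\bP\cup\bigcup_k B_k\bigr)$, radial bumps $\tilde\phi_k$ supported in $A_{1,k}$ (modified to vanish on $B_j$ for $j\neq k$), and $\phi_k:=\tilde\phi_k\big/\bigl(\tilde\phi_0+\sum_j\tilde\phi_j\bigr)$. Then $\phi_0$ vanishes on $\partial\bP\cup\bigcup B_k$ simply because the numerator $\tilde\phi_0$ does, while the denominator is strictly positive on $\partial\bP$ because the full open balls $A_{1,k}$ do cover $\partial\bP$ and each $\tilde\phi_k>0$ in the interior of $A_{1,k}$. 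The identity $\phi_k|_{B_k}\equiv1$ follows since all other summands of the denominator vanish on $B_k$, and the gradient bound (\ref{eq:bound-nabla-phi-j}) follows from the finite-overlap estimate (\ref{eq:lem:properties-local-rho-convering-2}) exactly as you observe for the telescoping. If you want to keep a product-type construction you would have to insert an additional distance-to-$\partial\bP$ cut-off into $\phi_0$ and redistribute the mass among the $\phi_k$, which amounts to a roundabout version of the paper's normalisation.
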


\begin{rem}
\label{rem:lem:properties-local-rho-convering}We usually can improve
$\hat{d}$ to at least $\hat{d}=d-1$. To see this assume $\partial\bP$
is locally connected. Then all points $p_{i}$ lie on a $d-1$-dimensional
plane and we can thus improve the argument in the following proof
to $\hat{d}=d-1$.
\end{rem}

\begin{proof}
(\ref{eq:lem:properties-local-rho-convering-1}) follows from (\ref{eq:cor:cover-boundary-h1})$_{2}$.

Let $k\in\N$ be fixed. By construction in Corollary \ref{cor:cover-boundary},
every $A_{1,j}$ with $A_{1,j}\cap A_{1,k}\neq\emptyset$ satisfies
$\tilde{\rho}_{j}\geq\frac{1}{2}\tilde{\rho}_{k}$ and hence if $A_{1,j}\cap A_{1,k}\neq\emptyset$
and $A_{1,i}\cap A_{1,k}\neq\emptyset$ we find $\left|p_{j}-p_{i}\right|\geq\frac{1}{4}\tilde{\rho}_{k}$
and $\left|p_{j}-p_{k}\right|\leq3\tilde{\rho}_{k}$. This implies
(\ref{eq:lem:properties-local-rho-convering-2})--(\ref{eq:lem:properties-local-rho-convering-3})
for $A_{1,j}$ and the statement for $A_{2,j}$ follows analogously.

For two points $p_{i}$, $p_{j}$ such that $x\in A_{3,i}\cap A_{3,j}$
it holds due to the triangle inequality $\left|p_{i}-p_{j}\right|\leq\max\left\{ \frac{1}{4}\hat{\rho}_{i},\frac{1}{4}\hat{\rho}_{j}\right\} $.
Let $\X(x):=\left\{ p_{i}\in\X:\,x\in\overline{\Ball{\frac{1}{8}\hat{\rho}_{i}}{p_{i}}}\right\} $
and choose $\tilde{p}(x)=\tilde{p}\in\X(x)$ such that $\hat{\rho}_{\text{m}}:=\hat{\rho}\of{\tilde{p}}$
is maximal. Then $\X(x)\subset\Ball{\frac{1}{4}\hat{\rho}_{\text{m}}}{\tilde{p}}$
and every $p_{i}\in\X(x)$ satisfies $\hat{\rho}_{\text{m}}>\hat{\rho}_{i}>\frac{1}{3}\hat{\rho}_{\text{m}}$.
Correspondingly, $\tilde{\rho}_{i}>\frac{1}{3}\hat{\rho}_{\text{m}}2^{-5}\tilde{M}_{i}^{-1}$
for all such $p_{i}$. In view of (\ref{eq:cor:cover-boundary-h1})
this lower local bound of $\tilde{\rho}_{i}$ implies a lower local
bound on the mutual distance of the $p_{i}$. Since this distance
is proportional to $\hat{\rho}_{\text{m}}\tilde{M}_{i}^{-1}$, and
since $\hat{\rho}_{\text{m}}>\hat{\rho}_{i}>\frac{1}{3}\hat{\rho}_{\text{m}}$,
this implies (\ref{eq:lem:properties-local-rho-convering-4}) with
$\hat{d}=d$. This is by the same time the upper estimate on $\hat{d}$.

Let $\phi:\,\R\to\R$ be symmetric, smooth, monotone on $(0,\infty)$
with $\phi'\leq2$ and $\phi=0$ on $(1,\infty)$. For each $k$ we
consider a radially symmetric smooth function $\tilde{\phi}_{k}\of x:=\phi\of{\frac{\left|x-p_{k}\right|^{2}}{\tilde{\rho}_{k}}}$
and an additional function $\tilde{\phi}_{0}\left(x\right)=\dist\of{\,x,\,\partial\bP\cup\bigcup_{k}\Ball{\fr_{k}}{y_{k}}\,}$.
In a similar way we may modify $\tilde{\phi}_{k}$ such that $\tilde{\phi}_{k}|_{B_{j}}\equiv0$
for $j\neq k$. Then we define $\phi_{k}:=\tilde{\phi}/\left(\tilde{\phi}_{0}+\sum_{j}\tilde{\phi}_{j}\right)$.
Note that by construction of $\fr_{k}$ and $y_{k}$ we find $\phi_{k}|_{B_{k}}\equiv1$
and $\sum_{k\geq1}\phi_{k}\equiv1$ on $\partial\bP$.

Estimate (\ref{eq:bound-nabla-phi-j}) follows from (\ref{eq:lem:properties-local-rho-convering-2}).
\end{proof}

\subsection{\label{subsec:Microscopic-Extension-dm}Extension Estimate Through
$\left(\delta,M\right)$-Regularity of $\partial\protect\bP$}

By Lemmas \ref{lem:rho-p-lsc} and \ref{lem:uniform-extension-lemma}
the local extension operator 
\begin{equation}
\cU_{k}:\,\,W^{1,p}\of{\bP\cap A_{3,k}}\,\to\,W^{1,p}\of{\Ball{\frac{1}{8}\rho_{k}}{p_{k}}\backslash\bP}\,\hookrightarrow\,W^{1,p}\of{A_{2,k}\backslash\bP}\label{eq:very-local-extension}
\end{equation}
is linear continuous with bounds 
\begin{align}
\left\Vert \nabla\cU_{k}u\right\Vert _{L^{p}(A_{2,k}\backslash\bP)} & \leq14M_{k}\left\Vert \nabla u\right\Vert _{L^{p}\left(A_{3,k}\cap\bP\right)}\,,\label{eq:lem:local-delta-M-extension-estimate-h2}\\
\left\Vert \cU_{k}u\right\Vert _{L^{p}(A_{2,k}\backslash\bP)} & \leq7\left\Vert u\right\Vert _{L^{p}\left(A_{3,k}\cap\bP\right)}\,,\label{eq:lem:local-delta-M-extension-estimate-h2-b}
\end{align}
and for constants $c$ we find 
\begin{equation}
\left\Vert c-\cU_{k}c\right\Vert _{L^{p}(A_{2,k}\backslash\bP)}=0\,.\label{eq:lem:local-delta-M-extension-estimate-h2-c}
\end{equation}

\begin{defn}
\label{def:cU-Q}For every $\bQ\subset\Rd$ let $\tau_{i}u:=\frac{1}{\left|\Ball{\fr_{i}}{y_{i}}\right|}\int_{\Ball{\fr_{i}}{y_{i}}}u$
and 
\begin{align*}
\cU_{\bQ}:\,C^{1}\of{\overline{\bP\cap\Ball{\frac{\fr}{2}}{\bQ}}} & \to C^{1}\of{\overline{\bQ\backslash\bP}}\,,\\
u & \mapsto\chi_{\bQ\backslash\bP}\sum_{k}\phi_{k}\left(\cU_{k}\of{u-\tau_{k}u}+\tau_{k}u\right)
\end{align*}
where $\cU_{k}$ are the extension operators on $A_{3,k}$ given by
Lemma \ref{lem:uniform-extension-lemma}, respectively (\ref{eq:very-local-extension})--(\ref{eq:lem:local-delta-M-extension-estimate-h2-c}).
Furthermore, we observe 
\begin{equation}
\cU_{\bQ}=\tilde{\cU}_{\bQ}+\hat{\cU}_{\bQ}\,,\qquad\text{with}\qquad\tilde{\cU}_{\bQ}u:=\chi_{\bQ\backslash\bP}\sum_{k}\phi_{k}\,\cU_{k}\of{u-\tau_{k}u}\,,\quad\hat{\cU}_{\bQ}u:=\chi_{\bQ\backslash\bP}\sum_{k}\phi_{k}\,\tau_{k}u\label{eq:def:cU-Q-2}
\end{equation}
\end{defn}

For two points $p_{i}$ and $p_{j}$ such that $A_{1,i}\cap A_{1,j}\neq\emptyset$
we find 
\begin{align}
\left|\tau_{i}u-\tau_{j}u\right|^{r} & =\left(\left|\Ball{\frac{\fr_{j}}{2}}0\right|^{-1}\left|\int_{\Ball{\frac{\fr_{j}}{2}}{x_{j}}}\left(u(\cdot)-\tau_{i}u\right)\right|\right)^{r}\nonumber \\
 & \leq\left|\Ball{\frac{\fr_{j}}{2}}0\right|^{-1}\int_{\Ball{\frac{\fr_{j}}{2}}{x_{j}}}\left|u(\cdot)-\tau_{i}u\right|^{r}\leq\left|\Ball{\frac{\fr_{j}}{2}}0\right|^{-1}\int_{A_{1,i}}\left|u(\cdot)-\tau_{i}u\right|^{r}\nonumber \\
 & \leq\left|\Ball{\frac{\fr_{j}}{2}}0\right|^{-1}\rho_{i}^{r}\int_{\conv\left(A_{1,i}\cup A_{1,j}\right)}\left|\nabla\cU_{i}u\right|^{r}\leq\left|\Ball{\frac{\fr_{j}}{2}}0\right|^{-1}\rho_{i}^{r}\int_{A_{2,i}}\left|\nabla\cU_{i}u\right|^{r}\,.\label{eq:tau-i-tau-j-difference-estimate}
\end{align}
The latter expression is not symmetric in $i,j$. Hence we can play
a bit with the indices in order to optimize our estimates below. We
have seen that $\fr_{j}\simeq\rho_{j}M_{j}^{-1}$, and hence we expect
in view of (\ref{eq:lem:local-delta-M-extension-estimate-h2})
\begin{equation}
\left|\tau_{i}u-\tau_{j}u\right|^{r}\leq CM_{j}^{d}\tilde{\rho}_{j}^{-d}\tilde{\rho}_{i}^{r}\int_{A_{3,i}\cap\bP}\left|\nabla u\right|^{r}\,.\label{eq:assu:M-alpha-bound-rough}
\end{equation}
However, this needs not to be the optimal estimate. Instead of the
general and restrictive estimate (\ref{eq:assu:M-alpha-bound-rough}),
we make the following Assumption:
\begin{assumption}
\label{assu:M-alpha-bound}There exists $\alpha\in[0,d]$ and $C>0$
such that for every $k$ it holds $\fr_{k}\geq C\hat{\rho}_{k}M_{k}^{-\frac{\alpha}{d}}$.
In particular, for two points $p_{i},p_{j}\in\partial\Y$ with $p_{i}\sim p_{j}$
it holds
\begin{equation}
\left|\tau_{i}u-\tau_{j}u\right|^{r}\leq C\tilde{\rho_{j}}^{-d}M_{j}^{\alpha}\tilde{\rho}_{i}^{r}\int_{A_{3,i}\cap\bP}\left|\nabla u\right|^{r}\,.\label{eq:assu:M-alpha-bound-improved}
\end{equation}
\end{assumption}

In order to formulate our main results we define the general sets\nomenclature[Rd]{$\Rd_1,\,\Rd_3$}{\eqref{eq:def-Rd1-Rd3}}
\begin{equation}
\Rd_{1}:=\bigcup_{k}A_{1,k}\,,\qquad\Rd_{3}:=\bigcup_{k}A_{3,k}\label{eq:def-Rd1-Rd3}
\end{equation}
 and for every bounded set $\bQ\subset\Rd$ we define\nomenclature[Q1]{$\bQ_1,\,\bQ_3$}{\eqref{eq:def-Q1-Q3}}
\begin{equation}
\bQ_{1}:=\bQ\cap\Rd_{1}\,,\qquad\bQ_{3}:=\bQ\cap\Rd_{3}\,.\label{eq:def-Q1-Q3}
\end{equation}

\begin{lem}
\label{lem:local-delta-M-extension-estimate} Let $\bP\subset\Rd$
be a locally $\left(\delta,M\right)$-regular open set with delta
bounded by $\fr>0$ and let Assumption \ref{assu:M-alpha-bound} hold
and let $\hat{d}$ be the constant from (\ref{eq:lem:properties-local-rho-convering-4}).
Then for every bounded open $\bQ\subset\Rd$, $1\leq r<p$ the operators
\begin{align*}
\tilde{\cU}_{\bQ},\hat{\cU}_{\bQ}:\,W^{1,p}\left(\bP\cap\Ball{\frac{\fr}{2}}{\bQ}\right) & \to W^{1,r}\left(\bQ\backslash\bP\right)
\end{align*}
are linear, well defined and satisfy 
\begin{align}
\norm{\nabla\tilde{\cU}_{\bQ}u}_{L^{r}\left(\bQ\backslash\bP\right)}^{r} & \leq C_{0}\left(\frac{1}{\left|\bQ\right|}\int_{\Ball{\frac{\fr}{2}}{\bQ}\cap\bP}\tilde{M}^{\frac{p\left(\hat{d}+1\right)}{p-r}}\right)^{\frac{p-r}{p}}\norm{\nabla u}_{L^{p}\left(\bP\cap\Ball{\frac{\fr}{2}}{\bQ}\right)}^{\frac{r}{p}}\label{lem:local-delta-M-extension-estimate-estim-1}\\
\norm{\nabla\hat{\cU}_{\bQ}u}_{L^{r}\left(\bQ\backslash\bP\right)}^{r} & \leq C_{0}\left(\frac{1}{\left|\bQ\right|}\int_{\Ball{\frac{\fr}{2}}{\bQ}\cap\bP}\tilde{M}^{\frac{p\left(\hat{d}+\alpha\right)}{p-r}}\right)^{\frac{p-r}{p}}\norm{\nabla u}_{L^{p}\left(\bP\cap\Ball{\frac{\fr}{2}}{\bQ}\right)}^{\frac{r}{p}}\label{lem:local-delta-M-extension-estimate-estim-1-a}\\
 & \quad+C_{0}\frac{1}{\left|\bQ\right|}\int_{\Ball{\frac{\fr}{2}}{\bQ}\backslash\bP}\left|\nabla\phi_{0}\right|^{r}\sum_{\substack{j\neq0:\,\partial_{l}\phi_{j}\partial_{l}\phi_{0}<0}
}\frac{\left|\partial_{l}\phi_{j}\right|}{D_{l+}}\left|\tau_{j}u\right|^{r}\,,\\
\norm{\cU_{\bQ}u}_{L^{r}\left(\bQ\backslash\bP\right)}^{r} & \leq C_{0}\left(\frac{1}{\left|\bQ\right|}\int_{\Ball{\frac{\fr}{2}}{\bQ}\cap\bP}\tilde{M}^{\frac{p\hat{d}}{p-r}}\right)^{\frac{p-r}{p}}\norm u_{L^{p}\left(\Ball{\frac{\fr}{2}}{\bQ}\right)}^{\frac{r}{p}}\label{lem:local-delta-M-extension-estimate-estim-1-b}
\end{align}
where $D_{l+}:=\sum_{\substack{j\neq0:\,\partial_{l}\phi_{j}\partial_{l}\phi_{0}<0}
}\left|\partial_{l}\phi_{j}\right|$. Furthermore, for constant functions $x\mapsto c\in\R$ it holds
\begin{equation}
\norm{c-\cU_{\bQ}c}_{L^{r}\left(\bQ\backslash\bP\right)}\leq\left|c\right|\left|\bQ\backslash\bP\right|^{\frac{1}{r}}\,.\label{eq:lem:local-delta-M-extension-estimate-8}
\end{equation}
\end{lem}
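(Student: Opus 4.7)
My plan is to prove the three bounds separately by the same microscopic-to-global strategy: apply the product rule, estimate each local piece via Lemma~\ref{lem:uniform-extension-lemma} and a Poincar\'e inequality on the ball $A_{3,k}$, convert each per-cell $L^p$-bound to an $L^r$-bound by H\"older on a set of volume $\lesssim\tilde{\rho}_k^d$, and finally sum over $k$ using the finite-overlap property (\ref{eq:lem:properties-local-rho-convering-3}) combined with the pointwise counting bound (\ref{eq:lem:properties-local-rho-convering-4}) in terms of $\tilde{M}^{\hat d}$.

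For $\tilde{\cU}_\bQ u$: the product rule yields
\[
\nabla\tilde{\cU}_\bQ u=\chi_{\bQ\setminus\bP}\sum_k\bigl[\nabla\phi_k\,\cU_k(u-\tau_k u)+\phi_k\nabla\cU_k(u-\tau_k u)\bigr].
\]
By the finite overlap of the $A_{1,k}$, $A_{2,k}$, the $r$-th power integral over $\bQ\setminus\bP$ splits as a sum of integrals over $A_{2,k}\setminus\bP$. On each $A_{2,k}$, estimates (\ref{eq:bound-nabla-phi-j}) give $|\nabla\phi_k|\lesssim\tilde{\rho}_k^{-1}$, and (\ref{eq:lem:local-delta-M-extension-estimate-h2})--(\ref{eq:lem:local-delta-M-extension-estimate-h2-b}) together with the Poincar\'e inequality Lemma~\ref{lem:Poincare-ball} (with respect to the interior ball $B_k$ of radius $\fr_k\simeq\tilde{\rho}_k M_k^{-\alpha/d}$ inside $A_{3,k}$) give the two local bounds
\[
\|\cU_k(u-\tau_k u)\|_{L^p(A_{2,k})}\lesssim\tilde{\rho}_k\,M_k^{\beta}\|\nabla u\|_{L^p(A_{3,k}\cap\bP)},\qquad\|\nabla\cU_k(u-\tau_k u)\|_{L^p(A_{2,k})}\lesssim M_k\|\nabla u\|_{L^p(A_{3,k}\cap\bP)},
\]
with $\beta$ coming from the Poincar\'e-scaling. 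Together with $|\nabla\phi_k|\lesssim\tilde{\rho}_k^{-1}$ on $A_{1,k}$, the $\tilde{\rho}_k$-factor cancels in the first contribution, and both $L^p$-bounds are passed to $L^r$ on the $d$-dimensional set $A_{2,k}$ via H\"older, picking up a volumetric factor $\tilde{\rho}_k^{d(p-r)/p}$. Summing over $k$ and using discrete H\"older with exponents $p/(p-r)$ and $p/r$,
\[
\sum_k M_k^{a}\Bigl(\int_{A_{3,k}\cap\bP}|\nabla u|^p\Bigr)^{r/p}\le\Bigl(\sum_k M_k^{ap/(p-r)}|A_{3,k}|\Bigr)^{(p-r)/p}\Bigl(\sum_k\int_{A_{3,k}\cap\bP}|\nabla u|^p\Bigr)^{r/p},
\]
and converting $\sum_k M_k^{ap/(p-r)}|A_{3,k}|$ to $\int\tilde{M}^{ap/(p-r)}$ via (\ref{eq:lem:properties-local-rho-convering-4}) (which contributes the extra $\hat d$ in the exponent), one arrives at (\ref{lem:local-delta-M-extension-estimate-estim-1}) with the exponent $p(\hat d+1)/(p-r)$.

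For $\hat{\cU}_\bQ u=\chi_{\bQ\setminus\bP}\sum_{k\ge1}\phi_k\tau_k u$, only $\nabla\phi_k$ contributes, so $\nabla\hat{\cU}_\bQ u=\sum_{k\ge1}\nabla\phi_k\,\tau_k u$. Using $\sum_{k\ge0}\nabla\phi_k\equiv0$, I rewrite this component-wise by splitting the index set according to the sign of $\partial_l\phi_j\partial_l\phi_0$: on the set $K_-$ where signs are opposite one forms convex combinations of differences $\tau_ju-\tau_iu$ (using the weights $|\partial_l\phi_j|/D_{l+}$ that appear in the statement), and the remaining $\nabla\phi_0$-piece constitutes the irreducible bulk term. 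Each difference $|\tau_j u-\tau_i u|^r$ is controlled by Assumption~\ref{assu:M-alpha-bound} via (\ref{eq:assu:M-alpha-bound-improved}), which gives the bound (\ref{lem:local-delta-M-extension-estimate-estim-1-a}) by the same H\"older/summation argument as above, but with $M_k^\alpha$ replacing $M_k$ and thus producing the exponent $\hat d+\alpha$. Finally, $\|\cU_\bQ u\|_{L^r}$ is bounded directly: by (\ref{eq:lem:local-delta-M-extension-estimate-h2-b}), $\|\cU_k(u-\tau_k u)\|_{L^p(A_{2,k})}\le7\|u-\tau_k u\|_{L^p(A_{3,k}\cap\bP)}$, and adding back $\sum_k\phi_k\tau_k u$ gives a zeroth-order estimate involving $\|u\|_{L^p}$, with only the volumetric H\"older conversion contributing the $\tilde{M}^{p\hat d/(p-r)}$-weight; the constants-case (\ref{eq:lem:local-delta-M-extension-estimate-8}) is immediate from (\ref{eq:lem:local-delta-M-extension-estimate-h2-c}) and $\sum_k\phi_k=1$.

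The main obstacle is the bookkeeping: tracking how the powers of $\tilde{\rho}_k$, $M_k$, the volumetric factor from H\"older, and the covering-number $\tilde{M}^{\hat d}$ combine, and especially handling the $\phi_0$ residual for $\hat{\cU}_\bQ u$ in a way that produces the precisely stated formula with the weights $|\partial_l\phi_j|/D_{l+}$. The rest (product rule, Poincar\'e, finite overlap, and discrete H\"older) is routine.
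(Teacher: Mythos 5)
Your proposal follows essentially the same route as the paper's proof: product rule to split $\tilde{\cU}_{\bQ}$ into the $\nabla\phi_k$ and $\phi_k\nabla\cU_k$ contributions, local control via the uniform extension Lemma~\ref{lem:uniform-extension-lemma} and the scaled Poincar\'e Lemma~\ref{lem:scaled-poincare}, the covering count (\ref{eq:lem:properties-local-rho-convering-4}) to absorb $\sum_k\chi_{A_{3,k}}$ into a power of $\tilde M$, and H\"older to pass from $r$ to $p$; the sign-splitting trick for $\nabla\hat{\cU}_{\bQ}u=\sum_k\nabla\phi_k\,\tau_ku$ is exactly Lemma~\ref{lem:conv-sum-0}, and the residual $\partial_l\phi_0$ block is the irreducible term in (\ref{lem:local-delta-M-extension-estimate-estim-1-a}), controlled via (\ref{eq:assu:M-alpha-bound-improved}).

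One small organizational difference: the paper keeps the local contributions at the $L^r$ level, collects them into a single integral of $\bigl(\sum_i\chi_{A_{3,i}}\tilde M\bigr)\left|\nabla u\right|^r$, and applies H\"older once at the very end, which is what produces the stated exponent $p(\hat d+1)/(p-r)$; you instead do a per-cell H\"older (picking up a volume factor per cell) and then a discrete H\"older over $k$. Both routes are workable, but your displayed discrete H\"older is not correct as written: with unweighted exponents $p/(p-r)$ and $p/r$ the left sum $\sum_k M_k^a\bigl(\int_{A_{3,k}\cap\bP}|\nabla u|^p\bigr)^{r/p}$ only gives $\bigl(\sum_k M_k^{ap/(p-r)}\bigr)^{(p-r)/p}$, without the bare $|A_{3,k}|$ factor; that factor must come from the per-cell volumetric weight on the left-hand side (i.e., the terms should already carry a $|A_{2,k}|^{(p-r)/p}$), which you mention in words but dropped from the display. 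Also, the ``$\beta$'' from the Poincar\'e scaling is left unspecified; to match the paper it is absorbed into $\tilde M_k$ together with the $14M_k$ from (\ref{eq:lem:local-delta-M-extension-estimate-h2}). These are bookkeeping imprecisions, not conceptual gaps, and you flag them as such.
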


The second term in (\ref{lem:local-delta-M-extension-estimate-estim-1-a})
imposes severe problems, as we will see in Sections \ref{sec:Construction-of-Macroscopic-1},
\ref{subsec:The-Issue-of-Connectedness} or even in Lemma \ref{lem:first-estim-nabla-phi-0}
below.
\begin{lem}
\label{lem:conv-sum-0}Let $\alpha_{i}$, $u_{i}$, $i=1\dots n$,
be a family of real numbers such that $\sum_{i}\alpha_{i}=0$ and
let $\alpha_{+}:=\sum_{i:\,\alpha_{i}>0}\alpha_{i}$. Then 
\[
\sum_{i}\alpha_{i}u_{i}=\sum_{i:\,\alpha_{i}>0}\sum_{j:\,\alpha_{j}<0}\frac{\alpha_{i}\left|\alpha_{j}\right|}{\alpha_{+}}\left(u_{i}-u_{j}\right)\,.
\]
\end{lem}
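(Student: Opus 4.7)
The plan is to prove this by a direct algebraic manipulation, exploiting the fact that the constraint $\sum_i \alpha_i = 0$ forces the positive and negative parts of the sequence $(\alpha_i)$ to balance in magnitude. Specifically, splitting the index set into $I_+ := \{i : \alpha_i > 0\}$, $I_- := \{i : \alpha_i < 0\}$, and $I_0 := \{i : \alpha_i = 0\}$, the constraint implies
\[
\alpha_+ = \sum_{i \in I_+} \alpha_i = \sum_{j \in I_-} |\alpha_j|,
\]
and terms in $I_0$ contribute nothing to either side of the identity.

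The key trick is then to insert a factor of $1$ written as $\alpha_+^{-1} \sum_{j \in I_-} |\alpha_j|$ in front of the positive part of $\sum_i \alpha_i u_i$, and a factor of $1$ written as $\alpha_+^{-1} \sum_{i \in I_+} \alpha_i$ in front of the negative part. First I would write
\[
\sum_i \alpha_i u_i \;=\; \sum_{i \in I_+} \alpha_i u_i \,-\, \sum_{j \in I_-} |\alpha_j| u_j,
\]
then replace each single sum by a double sum using the balancing identity above:
\[
\sum_{i \in I_+} \alpha_i u_i = \sum_{i \in I_+}\sum_{j \in I_-} \frac{\alpha_i |\alpha_j|}{\alpha_+} u_i, \qquad \sum_{j \in I_-} |\alpha_j| u_j = \sum_{i \in I_+}\sum_{j \in I_-} \frac{\alpha_i |\alpha_j|}{\alpha_+} u_j.
\]
Subtracting yields exactly the claimed formula. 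The degenerate case $\alpha_+ = 0$, which forces $\alpha_i = 0$ for all $i$ so that both sides vanish, should be handled separately as a trivial preliminary remark.

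There is no real obstacle here: the statement is a pure bookkeeping identity and the only care needed is noting that $\alpha_+ = 0$ must be excluded from the division but trivially satisfies the identity.
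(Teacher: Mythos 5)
Your proof is correct and follows essentially the same route as the paper: split $\sum_i \alpha_i u_i$ into positive and negative parts and insert the factor $1 = \alpha_+^{-1}\sum_{j\in I_-}|\alpha_j| = \alpha_+^{-1}\sum_{i\in I_+}\alpha_i$ into each. Your explicit handling of the degenerate case $\alpha_+=0$ is a small extra bit of care that the paper leaves implicit.
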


\begin{proof}
\begin{align*}
\sum_{i}\alpha_{i}u_{i} & =\sum_{i:\,\alpha_{i}>0}\alpha_{i}u_{i}+\sum_{j:\,\alpha_{j}<0}\alpha_{j}u_{j}\\
 & =\sum_{i:\,\alpha_{i}>0}\alpha_{i}\sum_{j:\,\alpha_{j}<0}\frac{-\alpha_{j}}{\alpha_{+}}u_{i}+\sum_{j:\,\alpha_{j}<0}\alpha_{j}\sum_{i:\,\alpha_{i}>0}\frac{\alpha_{i}}{\alpha_{+}}u_{j}\\
 & =\sum_{i:\,\alpha_{i}>0}\sum_{j:\,\alpha_{j}<0}\frac{\alpha_{i}\left|\alpha_{j}\right|}{\alpha_{+}}\left(u_{i}-u_{j}\right)\,.
\end{align*}
\end{proof}

\begin{proof}[Proof of Lemma \ref{lem:local-delta-M-extension-estimate}]
 For shortness of notation (and by abuse of notation) we write
\[
\fint_{\bP\cap\bQ}g:=\frac{1}{\left|\bQ\right|}\int_{\bP\cap\bQ}g\,,\qquad\fint_{\bQ\backslash\bP}g:=\frac{1}{\left|\bQ\right|}\int_{\bQ\backslash\bP}g
\]
and similar for integrals over $\Ball{\frac{\fr}{2}}{\bQ}\cap\bP$
and $\Ball{\frac{\fr}{2}}{\bQ}\backslash\bP$.

\textbf{Step 1: }We note that $\tilde{\rho}_{k}\leq\frac{1}{8}\delta_{k}$
as well as $\sqrt{4M_{k}^{2}+2}\leq2\tilde{M}_{k}$. The integral
over $\nabla\left(\tilde{\cU}_{\bQ}u\right)$ can be estimated via
\begin{gather}
\fint_{\bQ\backslash\bP}\left|\nabla\sum_{i\neq0}\phi_{i}\cU_{i}\left(u-\tau_{i}u\right)\right|^{r}\leq C_{r}\left(I_{1}+I_{2}\right)\label{eq:lem:dm-general-estimate-1}\\
I_{1}=\fint_{\bQ\backslash\bP}\left|\sum_{i\neq0}\cU_{i}\left(u-\tau_{i}u\right)\nabla\phi_{i}\right|^{r}\,,\qquad I_{2}:=\fint_{\bQ\backslash\bP}\left|\sum_{i\neq0}\phi_{i}\nabla\cU_{i}\left(u-\tau_{i}u\right)\right|^{r}\,.\nonumber 
\end{gather}
(\ref{eq:bound-nabla-phi-j}) together with Jensen's yields 
\begin{align*}
\fint_{\bQ\backslash\bP}\left|\sum_{i\neq0}\cU_{i}\left(u-\tau_{i}u\right)\nabla\phi_{i}\right|^{r} & \leq C\,\sum_{i\neq0}\fint_{\bQ\backslash\bP}\left|\cU_{i}\left(u-\tau_{i}u\right)\right|^{r}\delta_{i}^{-r}\chi_{A_{1,i}}\\
 & \leq C\,\sum_{i\neq0}\fint_{\bQ}\chi_{A_{2,i}}\left|\nabla\cU_{i}\left(u-\tau_{i}u\right)\right|^{r}\\
 & \leq C\,\sum_{i\neq0}\tilde{M}_{i}\fint_{\bQ\cap\bP}\chi_{A_{3,i}}\left|\nabla u\right|^{r}
\end{align*}
where we used Lemma \ref{lem:scaled-poincare} with $\frac{R}{r}=3$
and inequality (\ref{eq:lem:local-delta-M-extension-estimate-h2}).
In a similar way, we conclude 
\begin{align*}
&\fint_{\bQ\backslash\bP}\left|\sum_{i\neq0}\phi_{i}\nabla\cU_{i}\left(u-\tau_{i}u\right)\right|^{r} \\
\qquad\qquad & \leq\fint_{\bQ\backslash\bP}\sum_{i\neq0}\phi_{i}\left|\nabla\cU_{i}\left(u-\tau_{i}u\right)\right|^{r}\leq\fint_{\bQ\backslash\bP}\sum_{i\neq0}\chi_{A_{1,i}}\left|\nabla\cU_{i}\left(u-\tau_{i}u\right)\right|^{r}\\
 \qquad\qquad& \leq C\,\sum_{i\neq0}\fint_{\bQ}\chi_{A_{2,i}}\left|\nabla\cU_{i}\left(u-\tau_{i}u\right)\right|^{r}\leq C\,\sum_{i\neq0}\tilde{M}_{i}\fint_{\Ball{\frac{\fr}{2}}{\bQ}\cap\bP}\chi_{A_{3,i}}\left|\nabla u\right|^{r}\,.
\end{align*}
It only remains to estimate $\sum_{i}\chi_{A_{3,i}}(x)$. Inequality
(\ref{eq:lem:properties-local-rho-convering-4}) yields 
\begin{align}
\sum_{i\neq0}\tilde{M}_{i}\fint_{\bQ\cap\bP}\chi_{A_{3,i}}\left|\nabla u\right|^{r} & \leq\fint_{\Ball{\frac{\fr}{2}}{\bQ}\cap\bP}\sum_{i\neq0}\chi_{A_{3,i}}\tilde{M}\left|\nabla u\right|^{r}\nonumber \\
 & \leq\left(\fint_{\Ball{\frac{\fr}{2}}{\bQ}\cap\bP}\left(\sum_{i\neq0}\chi_{A_{3,i}}\right)^{\frac{p}{p-r}}\tilde{M}^{\frac{p}{p-r}}\right)^{\frac{p-r}{p}}\left(\fint_{\Ball{\frac{\fr}{2}}{\bQ}\cap\bP}\left|\nabla u\right|^{p}\right)^{\frac{r}{p}}\nonumber \\
 & \leq\left(\fint_{\Ball{\frac{\fr}{2}}{\bQ}\cap\bP}\tilde{M}^{\frac{p\left(\hat{d}+1\right)}{p-r}}\right)^{\frac{p-r}{p}}\left(\fint_{\Ball{\frac{\fr}{2}}{\bQ}\cap\bP}\left|\nabla u\right|^{p}\right)^{\frac{r}{p}}\,.\label{eq:thm:dm-Ext-sto-connected-help-5-0}
\end{align}

\textbf{Step2:} We now study $\hat{\cU}_{\bQ}$ and use Lemma \ref{lem:conv-sum-0}
which yields
\begin{equation}
\sum_{j}\partial_{l}\phi_{j}=0\quad\Rightarrow\quad D_{l+}:=\sum_{j:\,\partial_{l}\phi_{j}>0}\partial_{l}\phi_{j}=-\sum_{j:\,\partial_{l}\phi_{j}<0}\partial_{l}\phi_{j}\label{eq:dm-sum-partial-Phi-1}
\end{equation}
that
\begin{align}
\fint_{\bQ\backslash\bP}\left|\nabla\sum_{j}\phi_{j}\tau_{j}u\right|^{r} & \leq C\sum_{l=1}^{d}\fint_{\bQ\backslash\bP}\left|\sum_{j}\partial_{l}\phi_{j}\tau_{j}u\right|^{r}\nonumber \\
 & \leq C\sum_{l=1}^{d}\fint_{\bQ\backslash\bP}\left|\sum_{\substack{i\neq0:\,\partial_{l}\phi_{i}>0}
}\sum_{\substack{j\neq0:\,\partial_{l}\phi_{j}<0}
}\frac{\partial_{l}\phi_{i}\left|\partial_{l}\phi_{j}\right|}{D_{l+}}\left|\tau_{i}u-\tau_{j}u\right|\right|^{r}+I_{3}\,,\label{eq:thm:dm-Ext-sto-connected-help-6-0}
\end{align}
where 
\begin{equation}
I_{3}=C\sum_{l=1}^{d}\fint_{\bQ\backslash\bP}\left|\sum_{\substack{j\neq0:\,\partial_{l}\phi_{j}\partial_{l}\phi_{0}<0}
}\frac{\left|\partial_{l}\phi_{0}\right|\left|\partial_{l}\phi_{j}\right|}{D_{l+}}\left|\tau_{j}u\right|\right|^{r}\,.\label{eq:thm:dm-Ext-sto-connected-help-def-I-3}
\end{equation}
Since in (\ref{eq:thm:dm-Ext-sto-connected-help-def-I-3}) $\sum_{\substack{j\neq0:\,\partial_{l}\phi_{j}\partial_{l}\phi_{0}<0}
}\left|\partial_{l}\phi_{j}\right|=D_{l+}$ we obtain 
\begin{align*}
I_{3} & =C\sum_{l=1}^{d}\fint_{\bQ\backslash\bP}\left|\partial_{l}\phi_{0}\right|^{r}\sum_{\substack{j\neq0:\,\partial_{l}\phi_{j}\partial_{l}\phi_{0}<0}
}\frac{\left|\partial_{l}\phi_{j}\right|}{D_{l+}}\left|\tau_{j}u\right|^{r}\,.
\end{align*}
We will now derive an estimate on $\left|\tau_{i}u-\tau_{j}u\right|$.
For this reason, denote $l_{ij}$ the line from $x_{i}$ to $x_{j}$
and by $\Ball{\frac{\fr}{2}}{l_{ij}}$ the set of all points with
distance less than $\frac{\fr}{2}$ to $l_{ij}$. We exploit the fact
that every term in the sum on the right hand side of (\ref{eq:thm:dm-Ext-sto-connected-help-6-0})
appears only once and introduce 
\[
E_{l}(x)=\left\{ (i,j)\,:\;\partial_{l}\phi_{i}\partial_{l}\phi_{j}<0\,\text{and }\fr_{i}<\fr_{j}\text{ or }(\fr_{i}=\fr_{j}\text{ and }i<j)\right\} \,.
\]
We make use of (\ref{eq:assu:M-alpha-bound-improved}) and successively
apply Jensen's inequality, $\left|\nabla\phi_{i}\right|\leq C\tilde{\rho}_{i}^{-1}$,
$\frac{1}{C}\rho_{i}\leq\rho_{j}\leq C\rho_{i}$ and $\left|\Ball{\frac{\fr_{j}}{2}}0\right|^{-1}\leq\tilde{M}_{i}^{d}\left|A_{1,i}\right|^{-1}$
to obtain 
\begin{align*}
S:= & \left|\sum_{i:\,\partial_{l}\phi_{i}>0}\sum_{j:\,\partial_{l}\phi_{j}<0}\frac{\partial_{l}\phi_{i}\left|\partial_{l}\phi_{j}\right|}{D_{l+}}\left|\tau_{i}u-\tau_{j}u\right|\right|^{r}=\left|\sum_{\left(i,j\right)\in E_{l}}\frac{\left|\partial_{l}\phi_{i}\right|\left|\partial_{l}\phi_{j}\right|}{D_{l+}}\left|\tau_{i}u-\tau_{j}u\right|\right|^{r}\\
 & \leq\sum_{\left(i,j\right)\in E_{l}}\frac{\tilde{\rho}_{i}^{-r}\left|\partial_{l}\phi_{j}\right|}{D_{l+}}C\tilde{\rho_{j}}^{-d}M_{j}^{\alpha}\tilde{\rho}_{i}^{r}\int_{A_{3,i}\cap\bP}\left|\nabla u\right|^{r}\,.
\end{align*}
Hence we find 
\begin{align}
\fint_{\bQ\backslash\bP}\left|\nabla\sum_{j}\phi_{j}\tau_{j}u\right|^{r} & \apprle C\sum_{l=1}^{d}\sum_{\left(i,j\right)\in E_{l}}\frac{\tilde{\rho}_{i}^{-r}\left|\partial_{l}\phi_{j}\right|}{D_{l+}}C\tilde{\rho_{j}}^{-d}M_{j}^{\alpha}\tilde{\rho}_{i}^{r}\int_{A_{3,i}\cap\bP}\left|\nabla u\right|^{r}\,.\nonumber \\
 & \apprle C\frac{1}{\left|\bQ\right|}\sum_{i}\tilde{M}_{i}^{\alpha}\int_{A_{3,i}\cap\bP}\left|\nabla u\right|^{r}\label{eq:thm:dm-Ext-sto-connected-help-6-2}
\end{align}
Similar to (\ref{eq:thm:dm-Ext-sto-connected-help-5-0}) we may conclude
(\ref{lem:local-delta-M-extension-estimate-estim-1-a}).

\textbf{Step 3:} We observe with Jensen's inequality and the fact
that $\cU_{i}$ are linear with $\cU_{i}c=c$ for constants $c$ that
\begin{align*}
\fint_{\bQ\backslash\bP}\left|\sum_{i}\phi_{i}\,\cU_{i}\left(u-\tau_{i}u\right)+\phi_{i}\tau_{i}u\right|^{r} & \leq\fint_{\bQ\backslash\bP}\sum_{i}\phi_{i}\,\left(\cU_{i}u\right)^{r}\leq\fint_{\bQ\backslash\bP}\sum_{i}\phi_{i}\,\left(\cU_{i}u\right)^{r}\\
 & \leq\fint_{\bQ\backslash\bP}\sum_{i}\chi_{A_{1,i}}\,\left(\cU_{i}u\right)^{r}\\
 & \leq7\fint_{\Ball{\frac{\fr}{2}}{\bQ}\cap\bP}\sum_{i}\chi_{A_{3,i}}\,u^{r}
\end{align*}
From here we may proceed as in (\ref{eq:thm:dm-Ext-sto-connected-help-5-0})
to conclude .
\end{proof}
\begin{lem}
\label{lem:first-estim-nabla-phi-0} Let $\bP\subset\Rd$ be a locally
$\left(\delta,M\right)$-regular open set with delta bounded by $\fr>0$
and let Assumption \ref{assu:M-alpha-bound} hold and let $\hat{d}$
be the constant from (\ref{eq:lem:properties-local-rho-convering-4}).
Then for every bounded open $\bQ\subset\Rd$, $1\leq r<p_{0}<p_{1}<p$
\begin{multline*}
\frac{1}{\left|\bQ\right|}\int_{\Ball{\frac{\fr}{2}}{\bQ}\backslash\bP}\left|\nabla\phi_{0}\right|^{r}\sum_{\substack{j\neq0:\,\partial_{l}\phi_{j}\partial_{l}\phi_{0}<0}
}\frac{\left|\partial_{l}\phi_{j}\right|}{D_{l+}}\left|\tau_{j}u\right|^{r}\\
\leq C\left(\frac{1}{\left|\bQ\right|}\int_{\Ball{\frac{\fr}{2}}{\bQ}\backslash\bP}\left|\nabla\phi_{0}\right|^{\frac{rp_{0}}{p_{0}-r}}\tilde{M}^{2-d}\right)^{\frac{p_{0}-r}{p_{0}}}\left(\frac{1}{\left|\bQ\right|}\int_{\Ball{\frac{\fr}{2}}{\bQ}\backslash\bP}\tilde{M}^{\frac{p_{1}\left(d-2\right)(p_{0}-r)}{r(p_{1}-p_{0})}}\right)^{r\frac{p_{1}-p_{0}}{p_{1}p_{0}}}\\
\left(\frac{1}{\left|\bQ\right|}\int_{\Ball{\frac{\fr}{2}}{\bQ}}\tilde{M}^{\frac{\alpha p_{1}p}{p-p_{1}}}\right)^{r\frac{p-p_{1}}{pp_{1}}}\left(\frac{1}{\left|\bQ\right|}\int_{\Ball{\frac{\fr}{2}}{\bQ}}\left|u\right|^{p}\right)^{\frac{r}{p}}
\end{multline*}
and
\begin{multline*}
\frac{1}{\left|\bQ\right|}\int_{\Ball{\frac{\fr}{2}}{\bQ}\backslash\bP}\left|\nabla\phi_{0}\right|^{r}\sum_{\substack{j\neq0:\,\partial_{l}\phi_{j}\partial_{l}\phi_{0}<0}
}\frac{\left|\partial_{l}\phi_{j}\right|}{D_{l+}}\left|\tau_{j}u\right|^{r}\\
\leq C\left(\frac{1}{\left|\bQ\right|}\int_{\Ball{\frac{\fr}{2}}{\bQ}\backslash\bP}\left|\nabla\phi_{0}\right|^{\frac{rp_{0}}{p_{0}-r}}\right)^{\frac{p_{0}-r}{p_{0}}}\left(\frac{1}{\left|\bQ\right|}\int_{\Ball{\frac{\fr}{2}}{\bQ}}\tilde{M}^{\frac{\alpha p_{0}p}{p-p_{0}}}\right)^{r\frac{p-p_{0}}{pp_{0}}}\left(\frac{1}{\left|\bQ\right|}\int_{\Ball{\frac{\fr}{2}}{\bQ}}\left|u\right|^{p}\right)^{\frac{r}{p}}
\end{multline*}
\end{lem}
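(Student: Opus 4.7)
The plan is to combine three ingredients: (i)~a pointwise Jensen estimate on $|\tau_j u|^r$ coupled with Assumption~\ref{assu:M-alpha-bound}, (ii)~the uniformly bounded overlap of the cover $\{A_{1,j}\}$ from \eqref{eq:lem:properties-local-rho-convering-3}, and (iii)~a multi-factor Hölder inequality whose reciprocal exponents are exactly those appearing in the outer powers of the target.

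First I would bound each average via Hölder on $B_j$,
\[
|\tau_j u|^r \leq |B_j|^{-r/p}\left(\int_{B_j}|u|^p\right)^{r/p},
\]
and use Assumption~\ref{assu:M-alpha-bound} to write $|B_j|^{-r/p}\leq C\tilde{M}_j^{\alpha r/p}\hat{\rho}_j^{-dr/p}$. Substituting this into the expression for $I$, exchanging sum and integral, and using \eqref{eq:lem:properties-local-rho-convering-3} to control the number of indices $j$ with $x\in A_{1,j}$, reduces the task to estimating an integral of the form
\[
\frac{C}{|\bQ|}\int_{\Ball{\frac{\fr}{2}}{\bQ}\setminus\bP}|\nabla\phi_0|^r\,\tilde M(x)^{\alpha r}\,h(x)\,\mathcal{G}(x)^{r/p}\,dx,
\]
where $h(x)$ absorbs a boundary-graph counting factor, controlled by $\tilde M(x)^{d-2}$ via the same surface-area argument used in Step~4 of the proof of Lemma~\ref{lem:delta-tilde-construction-estimate} (or more crudely by $\tilde M(x)^{\hat d}$ from \eqref{eq:lem:properties-local-rho-convering-4}), and $\mathcal{G}(x)=\sum_{j:\,x\in A_{1,j}}\int_{B_j}|u|^p$ is a local average of $|u|^p$ that integrates, via bounded multiplicity of $\{B_j\}$, to a constant multiple of $\int_{\Ball{\frac{\fr}{2}}{\bQ}}|u|^p$.

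For the first target I then apply a four-factor Hölder inequality with reciprocal exponents $\tfrac{p_0-r}{p_0}+\tfrac{r(p_1-p_0)}{p_0 p_1}+\tfrac{r(p-p_1)}{p\,p_1}+\tfrac{r}{p}=1$, decomposing the integrand pointwise as
\[
\bigl(|\nabla\phi_0|^r\,\tilde M^{\frac{(2-d)(p_0-r)}{p_0}}\bigr)\cdot\bigl(\tilde M^{\frac{(d-2)(p_0-r)}{p_0}}\bigr)\cdot\bigl(\tilde M^{\alpha r}\bigr)\cdot \mathcal{G}^{r/p},
\]
whose product reconstructs $|\nabla\phi_0|^r\,\tilde M^{\alpha r}\,\mathcal{G}^{r/p}$ thanks to the cancellation $(2-d)+(d-2)=0$. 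Raising each factor to the corresponding conjugate exponent yields the four integrals on the right-hand side of the first estimate. The second estimate follows from a three-factor Hölder obtained by merging the first two factors (equivalently, by using the cruder $\tilde M^{\hat d}$-counting with $\hat d=d$, hence dropping the $\tilde M^{2-d}/\tilde M^{d-2}$ cancellation), so that the conjugate exponent of the $\tilde M^{\alpha r}$-factor becomes $\tfrac{pp_0}{r(p-p_0)}$.

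The main obstacle I anticipate is the exponent arithmetic: verifying that the pointwise decomposition reproduces the integrand (in particular the $\tilde M^{2-d}/\tilde M^{d-2}$ cancellation), and that the $\tilde M^{\alpha r}$ factor aligns with its conjugate exponent to produce exactly $\tilde M^{\alpha p_1 p/(p-p_1)}$ for the first estimate and $\tilde M^{\alpha p_0 p/(p-p_0)}$ for the second. These identities are elementary but delicate, and the accounting of the local versus global averages of $|u|^p$ also needs to be tracked carefully.
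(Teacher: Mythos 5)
Your proposal follows a genuinely different route from the paper's proof, and is worth comparing carefully. The paper keeps the scalar quantities $\left|\tau_{j}u\right|$ through three \emph{iterated} binary H\"older applications (with H\"older pairs $\frac{p_0}{p_0-r}$ and $\frac{p_0}{r}$, then $\frac{p_1}{p_1-p_0}$ and $\frac{p_1}{p_0}$, then $\frac{p}{p-p_1}$ and $\frac{p}{p_1}$), using the fact that $\sum_j|\partial_l\phi_j|/D_{l+}=1$ to bring Jensen into the sum, and only converts $\left|\tau_j u\right|^{p_1}$ into a local $L^{p_1}$-integral of $u$ at the \emph{very end}, via Jensen's inequality $\left|\tau_j u\right|^{p_1}\leq\left|B_j\right|^{-1}\int_{B_j}\left|u\right|^{p_1}$ together with the bounded multiplicity of the cover. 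You instead localize at the very \emph{start}, via H\"older on $B_j$, converting $\left|\tau_j u\right|^r$ directly into an $L^p$-average, and then perform a single four-factor H\"older. The exponent identity $\frac{p_0-r}{p_0}+\frac{r(p_1-p_0)}{p_0 p_1}+\frac{r(p-p_1)}{p\,p_1}+\frac{r}{p}=1$ you wrote is correct, and the telescoping decomposition with the $\tilde M^{2-d}/\tilde M^{d-2}$ cancellation reproduces each of the four conjugate exponents in the first displayed estimate. Structurally this is equivalent to iterated binary H\"older, so the idea is sound in principle.

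There is, however, a concrete internal gap in the plan. Your opening step yields $\left|\tau_j u\right|^r\leq\left|B_j\right|^{-r/p}\left(\int_{B_j}\left|u\right|^p\right)^{r/p}$ and, via Assumption~\ref{assu:M-alpha-bound}, $\left|B_j\right|^{-r/p}\leq C\tilde M_j^{\alpha r/p}\hat\rho_j^{-dr/p}$: the resulting pointwise $\tilde M$-power is $\alpha r/p$, not $\alpha r$. But your four-factor decomposition requires the integrand to carry $\tilde M^{\alpha r}$ so that raising to the conjugate exponent $\frac{p p_1}{r(p-p_1)}$ produces exactly $\tilde M^{\alpha p_1 p/(p-p_1)}$. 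These exponents differ by a factor of $p$, and the plan supplies no mechanism to close that gap (you flag it as ``delicate but elementary'' without verifying). A related accounting subtlety is that the paper's final Jensen step carries the \emph{full} $\left|B_j\right|^{-1}$, whose $\tilde\rho_j^{-d}$-part cancels exactly against the cover volume $\tilde\rho_j^d$ coming from integrating $|\partial_l\phi_j|/D_{l+}$ over $A_{1,j}$; your early-H\"older step carries only $\left|B_j\right|^{-r/p}$, leaving a residual $\tilde\rho_j^{d(1-r/p)}$ and extra $M_j$-factors from translating between $\hat\rho_j$ and $\tilde\rho_j$, which your sketch does not track. Before claiming the stated inequality, I'd either adopt the paper's late-Jensen ordering (keeping $\left|\tau_j u\right|^{p_1}$ through the first two H\"older applications, then converting) or, if you insist on early localization, carry out the $\tilde M$- and $\tilde\rho$-exponent bookkeeping explicitly to see what right-hand side your method actually yields.
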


\begin{proof}
We observe with H\"older and Jensens inequality on $\Rd$ 
and in the sum $\sum_{\substack{j\neq0:\,\partial_{l}\phi_{j}\partial_{l}\phi_{0}<0}
}\frac{\left|\partial_{l}\phi_{j}\right|}{D_{l+}}=1$ respectively that 
\begin{multline*}
\frac{1}{\left|\bQ\right|}\int_{\Ball{\frac{\fr}{2}}{\bQ}\backslash\bP}\left|\nabla\phi_{0}\right|^{r}\sum_{\substack{j\neq0:\,\partial_{l}\phi_{j}\partial_{l}\phi_{0}<0}
}\frac{\left|\partial_{l}\phi_{j}\right|}{D_{l+}}\left|\tau_{j}u\right|^{r}\leq\left(\frac{1}{\left|\bQ\right|}\int_{\Ball{\frac{\fr}{2}}{\bQ}\backslash\bP}\left|\nabla\phi_{0}\right|^{\frac{rp_{0}}{p_{0}-r}}\tilde{M}^{2-d}\right)^{\frac{p_{0}-r}{p_{0}}}\\
\left(\frac{1}{\left|\bQ\right|}\int_{\Ball{\frac{\fr}{2}}{\bQ}\backslash\bP}\tilde{M}^{\frac{1}{r}\left(d-2\right)(s-r)}\sum_{\substack{j\neq0:\,\partial_{l}\phi_{j}\partial_{l}\phi_{0}<0}
}\frac{\left|\partial_{l}\phi_{j}\right|}{D_{l+}}\left|\tau_{j}u\right|^{p_{0}}\right)^{\frac{r}{p_{0}}}\,.
\end{multline*}
Applying the same trick again we find 
\begin{align*}
 & \frac{1}{\left|\bQ\right|}\int_{\Ball{\frac{\fr}{2}}{\bQ}\backslash\bP}\tilde{M}^{\frac{1}{r}\left(d-2\right)(p_{0}-r)}\sum_{\substack{j\neq0:\,\partial_{l}\phi_{j}\partial_{l}\phi_{0}<0}
}\frac{\left|\partial_{l}\phi_{j}\right|}{D_{l+}}\left|\tau_{j}u\right|^{p_{0}}\\
 & \qquad\qquad\leq\left(\frac{1}{\left|\bQ\right|}\int_{\Ball{\frac{\fr}{2}}{\bQ}\backslash\bP}\tilde{M}^{\frac{p_{1}\left(d-2\right)(p_{0}-r)}{r(p_{1}-p_{0})}}\right)^{\frac{p_{1}-p_{0}}{p_{1}}}\left(\frac{1}{\left|\bQ\right|}\int_{\Ball{\frac{\fr}{2}}{\bQ}\backslash\bP}\sum_{\substack{j\neq0:\,\partial_{l}\phi_{j}\partial_{l}\phi_{0}<0}
}\frac{\left|\partial_{l}\phi_{j}\right|}{D_{l+}}\left|\tau_{j}u\right|^{p_{1}}\right)^{\frac{p_{0}}{p_{1}}}
\end{align*}
From the definition of $\tau_{j}$ and (\ref{eq:lem:properties-local-rho-convering-3})
we find 
\begin{align*}
 & \frac{1}{\left|\bQ\right|}\int_{\Ball{\frac{\fr}{2}}{\bQ}\backslash\bP}\sum_{\substack{j\neq0:\,\partial_{l}\phi_{j}\partial_{l}\phi_{0}<0}
}\frac{\left|\partial_{l}\phi_{j}\right|}{D_{l+}}\left|\tau_{j}u\right|^{p_{1}}\\
 & \qquad\qquad\leq\frac{1}{\left|\bQ\right|}\sum_{p_{j}\in\Ball{\frac{\fr}{4}}{\bQ}}\tilde{\rho}_{j}^{d}\frac{M_{j}^{\alpha p_{1}}}{\tilde{\rho}_{j}^{d}}\int_{\Ball{\tilde{\rho}_{j}}{p_{j}}}\left|u\right|^{p_{1}}\\
 & \qquad\qquad\leq\frac{1}{\left|\bQ\right|}\int_{\Ball{\frac{\fr}{2}}{\bQ}}\tilde{M}^{\alpha p_{1}}\left|u\right|^{p_{1}}\\
 & \qquad\qquad\leq\left(\frac{1}{\left|\bQ\right|}\int_{\Ball{\frac{\fr}{2}}{\bQ}}\tilde{M}^{\frac{\alpha p_{1}p}{p-p_{1}}}\right)^{\frac{p-p_{1}}{p}}\left(\frac{1}{\left|\bQ\right|}\int_{\Ball{\frac{\fr}{2}}{\bQ}}\left|u\right|^{p}\right)^{\frac{p_{1}}{p}}\,.
\end{align*}
\end{proof}

\subsection{Traces on $\left(\delta,M\right)$-Regular Sets}
\begin{thm}
\label{thm:uniform-trace-estimate-1}Let $\bP\subset\Rd$ be a locally
$\left(\delta,M\right)$-regular open set, $\frac{1}{8}>\fr>0$ and
let $\bQ\subset\Rd$ be a bounded open set and let $1\leq r<p_{0}<p$.
Then the trace operator $\cT$ satisfies for every $u\in W_{\loc}^{1,p}\of{\bP}$
\[
\frac{1}{\left|\bQ\right|}\int_{\bQ\cap\partial\bP}\left|\cT u\right|^{r}\leq C\left(\frac{1}{\left|\bQ\right|}\int_{\Ball{\frac{1}{4}}{\bQ}\cap\bP}\left|u\right|^{p}+\left|\nabla u\right|^{p}\right)^{\frac{r}{p}}
\]
where for some constant $C_{0}$ depending only on $p_{0}$, $p$
and $r$ and $d$ and for $\eta\in\left\{ \rho,\hat{\rho},\delta\right\} $
one may chose between 
\begin{align}
C & =C_{0}\left(\frac{1}{\left|\bQ\right|}\int_{\Ball{\frac{1}{4}\fr}{\bQ}\cap\partial\bP}\eta^{-\frac{1}{p_{0}-r}}\right)^{\frac{p_{0}-r}{p_{0}}}\left(\frac{1}{\left|\bQ\right|}\int_{\Ball{\frac{1}{4}\fr}{\bQ}\cap\bP}\tilde{M}_{[\frac{1}{8}\eta],\Rd}^{\left(\frac{1}{p_{0}}+1\right)\frac{p}{p-p_{0}}}\right)^{\frac{p-p_{0}}{p_{0}p}}\,,\label{eq:lem:uniform-trace-estimate-1-1}\\
C & =C_{0}\left(\frac{1}{\left|\bQ\right|}\int_{\Ball{\frac{1}{4}\fr}{\bQ}\cap\partial\bP}\left(\eta M_{[\frac{1}{16}\eta],\Rd}\right)^{-\frac{1}{p-r}}\right)^{\frac{p-r}{p}}\,.
\end{align}
\end{thm}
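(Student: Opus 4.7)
The plan is to reduce the global trace bound to the local inequality of Lemma \ref{lem:basic-trace}, summed over the cover of $\partial\bP$ provided by Corollary \ref{cor:cover-boundary}, and then isolate the $W^{1,p}$-norm of $u$ via a Hölder decomposition whose exponents are chosen to match the two claimed forms of $C$.

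First I would fix $\eta\in\{\rho,\hat\rho,\delta\}$ and take the locally finite covering $\bigl(\Ball{\tilde\rho_k}{p_k}\bigr)_{k\in\N}$ of $\partial\bP\cap\bQ$ from Corollary \ref{cor:cover-boundary}, with $\tilde\rho_k=2^{-5}\rho(p_k)$ (and analogously for the other two choices of $\eta$). By (\ref{eq:lem:properties-local-rho-convering-3}) this cover has uniformly bounded overlap. Applying Lemma \ref{lem:basic-trace} on each ball with $\delta=\tilde\rho_k$ and Lipschitz constant $M_k$, and raising the bound to the $r$-th power, yields
\[
\int_{\partial\bP\cap\Ball{\tilde\rho_k}{p_k}}|\cT u|^{r} \;\leq\; C_{r,p}\,\tilde\rho_k^{\frac{d(p-r)}{p}-1}\,\tilde M_k^{1+r}\Bigl(\int_{\bP\cap\Ball{2\tilde\rho_k}{p_k}}|u|^p+|\nabla u|^p\Bigr)^{r/p}.
\]
Summing over $k$ reduces the theorem to estimating sums $\sum_k \tilde\rho_k^{\alpha}\tilde M_k^{\beta}\,A_k^{r/p}$, with $A_k:=\int_{\bP\cap\Ball{2\tilde\rho_k}{p_k}}(|u|^p+|\nabla u|^p)$.

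To obtain (\ref{eq:lem:uniform-trace-estimate-1-1}) I would apply Hölder with the three conjugate exponents $\tfrac{p_0-r}{p_0}$, $\tfrac{r(p-p_0)}{p_0 p}$, $\tfrac{r}{p}$, whose sum equals $1$, distributing the factor $\tilde\rho_k^{d(p-r)/p-1}\tilde M_k^{1+r}$ so that the first slot carries $\tilde\rho_k^{-1/(p_0-r)}$-weights to be integrated against $\partial\bP$, the second carries $\tilde M_k^{(1/p_0+1)p/(p-p_0)}$-weights to be integrated against $\bP$, and the third collects $A_k$, which by bounded overlap sums to $\|u\|_{W^{1,p}(\Ball{\fr/4}{\bQ}\cap\bP)}^p$ raised to $r/p$. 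For the alternative form I would instead use a two-factor Hölder with exponents $\tfrac{p-r}{p}$ and $\tfrac{r}{p}$, keeping $\tilde\rho_k$ and $\tilde M_k$ combined in the single surface weight $(\tilde\rho_k\tilde M_k)^{-1/(p-r)}$.

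The main obstacle is the bookkeeping that converts the discrete sums $\sum_k \tilde\rho_k^{\alpha}\tilde M_k^{\beta}$ into the displayed integrals over $\partial\bP$ and $\bP$. Here I would mimic the argument of Lemma \ref{lem:delta-tilde-construction-estimate}: the surface measure $\cH^{d-1}(\partial\bP\cap\Ball{\tilde\rho_k}{p_k})$ is comparable to $\tilde\rho_k^{d-1}(1+M_k)^{2-d}$ and the bulk volume of $\Ball{2\tilde\rho_k}{p_k}$ is comparable to $\tilde\rho_k^{d}$, so by bounded overlap
\[
\sum_k \tilde\rho_k^{\alpha}\tilde M_k^{\beta} \;\lesssim\; \int_{\Ball{\fr/4}{\bQ}\cap\partial\bP}\eta^{\alpha-d+1}\tilde M_{[\eta/8],\Rd}^{\beta+d-2}\,,
\]
and similarly for the volume-weighted counterpart. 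Choosing the Hölder exponents so that these conversions produce exactly $\eta^{-1/(p_0-r)}$ and $\tilde M^{(1/p_0+1)p/(p-p_0)}$ (respectively the combined weight $(\eta M_{[\eta/16],\Rd})^{-1/(p-r)}$) is a direct exponent computation that absorbs the $d(p-r)/p-1$ and $1+r$ coming from Lemma \ref{lem:basic-trace} together with the $d-1$ and $2-d$ arising from the surface/bulk comparison. Once this arithmetic is carried out, dividing by $|\bQ|$ produces both inequalities exactly as stated.
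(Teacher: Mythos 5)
There is a genuine gap. You apply Lemma \ref{lem:basic-trace} at the exponent pair $(r,p)$, which consumes the Sobolev trace gain and leaves the $d$-dependent factor $\tilde\rho_k^{\frac{d(p-r)}{p}-1}$ in the local estimate. After the surface/bulk conversions this dimension-dependent exponent cannot be redistributed across the Hölder slots to hit the theorem's targets $\eta^{-1/(p_0-r)}$ and $\tilde M^{(\frac{1}{p_0}+1)\frac{p}{p-p_0}}$ for arbitrary $p_0\in(r,p)$. Concretely: with the conversion $\sum_k\tilde\rho_k^{\alpha}\tilde M_k^{\beta}\lesssim\int_{\partial\bP}\eta^{\alpha-d+1}\tilde M^{\beta+d-2}$ on the first slot, the corresponding bulk conversion $\sum_k\lesssim\int_{\bP}(\cdot)\,\eta^{-d}$ on the second slot, and the constraints that the first slot carry no $\tilde M$ and the second no $\eta$, the total $\eta$-budget balances only when $\frac{1-r}{p_{0}}=0$, i.e. only for $r=1$. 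For $r>1$ your three-way (and two-way) split is over-determined: there are four exponent conditions (surface $\eta$, surface $\tilde M$, bulk $\eta$, bulk $\tilde M$) but only two free parameters in the distribution of $\tilde\rho_k^{\frac{d(p-r)}{p}-1}\tilde M_k^{1+r}$, and the constraints are incompatible once the $d$-dependent power from the $(r,p)$-trace estimate is present.

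The fix (and the paper's route) is to invert the order of operations: perform the Hölder decomposition on the covered surface integral \emph{before} invoking the local trace estimate. Writing $\int_{\partial\bP}\lvert\cT u\rvert^r\lesssim\sum_k\int_{B_k}\eta_k^{-r/p_0}\cdot\eta_k^{r/p_0}\lvert\cT_k u\rvert^r$ and applying Hölder with exponents $\frac{p_0}{p_0-r}$ and $\frac{p_0}{r}$ produces the weight $\eta^{-1/(p_0-r)}$ by hand; then one applies Lemma \ref{lem:basic-trace} to the second factor at the \emph{diagonal} level $(p_0,p_0)$, which yields only the clean, dimension-free factor $\eta_k^{-1}$ whose product with the $\eta_k^{+1}$ left in that slot cancels exactly. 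A final Hölder with exponents $\frac{p}{p_0}$ and $\frac{p}{p-p_0}$ then separates $\tilde M$ from $\lvert u\rvert^p+\lvert\nabla u\rvert^p$. The essential idea you are missing is that the local trace estimate must be used at an exponent pair chosen so that the resulting $\delta$-power is $d$-independent, while the $\eta^{-1/(p_0-r)}$ factor is manufactured by Hölder, not by Sobolev trace.
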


\begin{proof}
Using Theorem \ref{thm:delta-M-rho-covering}, we cover $\partial\bP$
by balls $B_{k}=\Ball{\frac{1}{16}\eta\of{p_{k}}}{p_{k}}$ with $\left(p_{k}\right)_{k\in\N}\subset\partial\bP$
and define $\hat{B}_{k}=\Ball{\frac{1}{8}\eta\of{p_{k}}}{p_{k}}$
and $M_{k}=M_{[\frac{1}{16}\eta]}(p_{k})$. Like for (\ref{eq:lem:properties-local-rho-convering-3})
we can show that the covering with both $B_{k}$ and $\hat{B}_{k}$
is locally uniformly bounded by a constant $C$. Due to Lemma \ref{lem:basic-trace}
we find locally
\begin{equation}
\norm{\cT u}_{L^{p_{0}}(\partial\bP\cap B_{k})}\leq C_{p_{0},p_{0}}\eta^{-\frac{1}{p_{0}}}\sqrt{4M_{k}^{2}+2}^{\frac{1}{p_{0}}+1}\norm u_{W^{1,p_{0}}\left(\hat{B}_{k}\right)}\,.\label{eq:lem:uniform-trace-estimate-2-2}
\end{equation}
If $\phi_{k}$ is a partition of $1$ on $\partial\bP$ with respective
support $B_{k}$ we obtain 
\begin{multline*}
\frac{1}{\left|\bQ\right|}\int_{\bQ\cap\partial\bP}\left|\sum_{k}\phi_{k}\cT_{k}u\right|^{r}\\
\leq\left(\frac{1}{\left|\bQ\right|}\int_{\Ball{\frac{1}{4}}{\bQ}\cap\partial\bP}\sum_{k}\chi_{B_{k}}\eta_{k}^{-\frac{1}{p_{0}-r}}\right)^{\frac{p_{0}-r}{p_{0}}}\left(\frac{1}{\left|\bQ\right|}\sum_{k}\int_{\Ball{\frac{1}{4}}{\bQ}\cap\partial\bP}\chi_{B_{k}}\eta_{k}\left|\cT_{k}u\right|^{p_{0}}\right)^{\frac{r}{p_{0}}}
\end{multline*}
which yields by the uniform local bound of the covering, $\tilde{\eta}$
defined in Lemma \ref{lem:delta-tilde-construction-estimate}, twice
the application of (\ref{eq:lem:local-delta-M-construction-estimate-2})
and (\ref{eq:lem:uniform-trace-estimate-2-2})
\begin{align*}
\frac{1}{\left|\bQ\right|}\int_{\bQ\cap\partial\bP}\left|\sum_{k}\phi_{k}\cT_{k}u\right|^{r} & \leq\left(\frac{1}{\left|\bQ\right|}\int_{\bQ\cap\partial\bP}\eta^{-\frac{1}{p_{0}-r}}\right)^{\frac{p_{0}-r}{p_{0}}}\cdot\\
 & \qquad\cdot\left(\frac{1}{\left|\bQ\right|}\int_{\bQ\cap\bP}\sum_{k}\chi_{\hat{B}_{k}}\sqrt{4M_{k}^{2}+2}^{\frac{1}{p_{0}}+1}\left(\left|\nabla u\right|^{p_{0}}+\left|u\right|^{p_{0}}\right)\right)^{\frac{r}{p_{0}}}\,.
\end{align*}
With H\"olders inequality, the last estimate leads to (\ref{eq:lem:uniform-trace-estimate-1-1}).
The second estimate goes analogue since the local covering by $A_{2,k}$
is finite.
\end{proof}

\section{\label{sec:Construction-of-Macroscopic}Construction of Macroscopic
Extension Operators I: General Considerations}

\begin{figure}
 \begin{minipage}[c]{0.5\textwidth} \includegraphics[width=6cm]{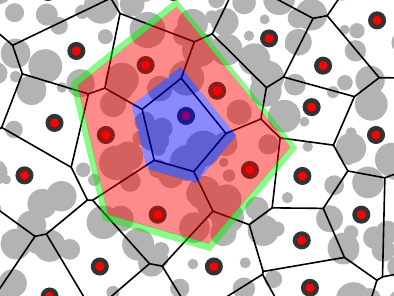}\end{minipage}\hfill   \begin{minipage}[c]{0.45\textwidth}\caption{\label{fig:sketch-extension}Gray: a Poisson ball process. Black balls:
balls of radius $\protect\fr>0$. Red Balls: radius $\frac{\protect\fr}{2}$.
The Voronoi tessellation is generated from the centers of the red
balls. The existence of such tessellations is discussed in Section
\ref{subsec:Mesoscopic-Regularity}. Blue region: $\protect\fA_{1,k}$
according to Assumption \ref{assu:mesoscopic-regular}. Red region:
$\protect\fA_{2,k}$. Green region: an alternative choice of $\protect\fA_{3,k}$.}
\end{minipage}
\end{figure}

In this section, we provide the extension results which answer the
question of the existence of such uniformly bounded families of operators
up to the issue of quantifying \emph{connectedness}. We will discuss
what we mean by that in Section \ref{subsec:The-Issue-of-Connectedness}.
In Section \ref{subsec:Extension-for-Statistically} we provide a
first attempt to from the point of view of continuous PDE, which is
- in some sense - a tautology. However, verifying the conditions of
Theorem \ref{thm:final-estimate} in a computer based approach (for
real life geometries) leads to a discretization of an elliptic second
order operator. Therefore, in Section \ref{sec:Construction-of-Macroscopic-1}
we use the construction of Section \ref{subsec:Connectedness-of--Regular-sets}
to introduce a quantity which can be directly calculated from a numerical
algorithm.

\subsection{Extension for Voronoi Tessellations}
\begin{assumption}
\label{assu:mesoscopic-voronoi}Let $\bP$ be an open set and let
$\X_{\fr}=\left(x_{i}\right)_{i=\in\N}$ have mutual distance $\left|x_{i}-x_{k}\right|>2\fr$
if $i\neq k$ and with $\Ball{\frac{\fr}{2}}{x_{i}}\subset\bP$ for
every $i\in\N$ (e.g. $\X_{\fr}(\bP)$, see (\ref{eq:def-X_r})).
We construct from $\X_{\fr}$ a Voronoi tessellation and denote by
$G_{i}:=G(x_{i})$ the Voronoi cell corresponding to $x_{i}$ with
diameter $d_{i}$. We denote $\fA_{1,i}:=\Ball{\frac{\fr}{2}}{G_{i}}$
and \nomenclature[Af1]{$\fA_{1,i}:=\Ball{\frac{\fr}{2}}{G_{i}}$}{Assumption \ref{assu:mesoscopic-voronoi}}
\nomenclature[Mc]{$\cM_i$}{Equation \eqref{eq:def-mean-cM}}
\begin{equation}
\cM_{i}u:=\left|\Ball{\frac{\fr}{16}}0\right|^{-1}\int_{\Ball{\frac{\fr}{16}}{x_{i}}}u\,.\label{eq:def-mean-cM}
\end{equation}
Let $\tilde{\Phi}_{0}\in C^{\infty}(\R;[0,1])$ be monotone decreasing
with $\tilde{\Phi}_{0}'>-\frac{4}{\fr}$, $\tilde{\Phi}_{0}(x)=1$
if $x\leq0$ and $\tilde{\Phi}_{0}(x)=0$ for $x\geq\frac{\fr}{2}$.
We define on $\Rd$ the functions 
\begin{equation}
\tilde{\Phi}_{i}(x):=\tilde{\Phi}_{0}\left(\dist\left(x,G_{i}\right)\right)\quad\text{and}\quad\Phi_{i}(x):=\tilde{\Phi}_{i}(x)\left(\sum_{j}\tilde{\Phi}_{j}(x)\right)^{-1}\,.\label{eq:def-Phi-i}
\end{equation}
\end{assumption}

Lemma \ref{lem:Iso-cone-geo-estimate}.2) implies 
\begin{equation}
\forall x\in\Ball{\frac{\fr}{2}}{G_{i}}\,:\quad\#\left\{ k\,:\;x\in\fA_{1,k}\right\} \leq\left(\frac{4d_{i}}{\fr}\right)^{d}\label{eq:assu:mesoscopic-regular-1}
\end{equation}
and thus (\ref{eq:def-Phi-i}) yields for some $C$ depending only
on $\tilde{\Phi}_{0}$ that
\begin{equation}
\left|\nabla\Phi_{i}\right|\leq Cd_{i}^{d}\quad\text{and}\quad\forall k:\,\left|\nabla\Phi_{k}\right|\chi_{\fA_{1,i}}\leq Cd_{i}^{d}\,.\label{eq:estiamte-nabla-Phi-i}
\end{equation}

\begin{defn}[Weak Neighbors]
\label{def:neighbors}\nomenclature[neighbors weak]{$x_i\sim\sim x_j$}{$x_i$ and $x_j$ are weak neighbors or weakly connected, see Definition \ref{Y-Partial-Y}}Under
the Assumption \ref{assu:mesoscopic-voronoi}, two points $x_{i}$
and $x_{j}$ are called to be  weakly connected (or weak neighbors),
written $i\sim\sim j$ or $x_{i}\sim\sim x_{j}$ if $\Ball{\frac{\fr}{2}}{G_{i}}\cap\Ball{\frac{\fr}{2}}{G_{j}}\neq\emptyset$.
For $\bQ\subset\Rd$ open we say $\fA_{1,j}\sim\sim\bQ$ if $\Ball{\frac{\fr}{2}}{\fA_{1,j}}\cap\bQ\neq\emptyset$.
We then define \nomenclature[XrQ]{$\X_\fr(\bQ)$}{\eqref{eq:Xr-Q-Q-simsim}}\nomenclature[Qsim]{$\bQ^{\sim\sim}$}{\eqref{eq:Xr-Q-Q-simsim}}
\begin{equation}
\X_{\fr}(\bQ):=\left\{ x_{j}\in\X_{\fr}:\;\fA_{1,j}\sim\sim\bQ\neq\emptyset\right\} \,,\quad\bQ^{\sim\sim}:=\bigcup_{\fA_{1,j}\sim\sim\bQ}\fA_{1,j}\,.\label{eq:Xr-Q-Q-simsim}
\end{equation}
\end{defn}

Let $\bP$ be locally $\left(\delta,M\right)$-regular and satisfy
Assumption \ref{assu:mesoscopic-voronoi}. Then we can construct continuous
local extension operators $\cU_{G_{j}}:\,W^{1,p}\of{\Ball{\fr}{G_{i}}}\to W^{1,r}\of{\Ball{\frac{\fr}{2}}{G_{i}}}$
from Lemma \ref{lem:local-delta-M-extension-estimate}. These can
be glued together via 
\[
\cU_{\bQ}u:=\sum_{j}\Phi_{j}\left(\cU_{G_{j}}\left(u-\cM_{j}u\right)+\cM_{j}u\right)\,.
\]
However, using the partition of unity from Lemma \ref{lem:properties-local-rho-convering}
and the definition of $U_{G_{i}}$ from (\ref{eq:def:cU-Q-2}) we
obtain 
\[
\cU_{\bQ}u=\sum_{j}\Phi_{j}\left(\sum_{i}\phi_{i}\left[\cU_{i}\left(u-\cM_{j}u-\tau_{i}\left(u-\cM_{j}u\right)\right)+\tau_{i}\left(u-\cM_{j}u\right)\right]+\cM_{j}u\right)\,.
\]
Using $\tau_{i}\cM_{j}u=\cM_{j}u$ the latter yields
\begin{align*}
\cU_{\bQ}u & =\sum_{j}\Phi_{j}\left(\sum_{i}\phi_{i}\left[\cU_{i}\left(u-\tau_{i}u\right)+\tau_{i}u-\cM_{j}u\right]+\cM_{j}u\right)\\
 & =\sum_{i}\sum_{j}\Phi_{j}\left(\phi_{i}\,\left(\cU_{i}\of{u-\tau_{i}u}+\tau_{i}u-\cM_{j}u\right)+\cM_{j}u\right)\,,
\end{align*}
where we used that $\cU_{i}$ maps constants onto constants via the
identity. Note that $$\cU_{\bQ}u\neq\sum_{i}\phi_{i}\left[\cU_{i}\left(u-\tau_{i}u\right)+\tau_{i}u\right]\,,$$
as $\sum_{i\neq0}\phi_{i}\neq1$ in most points.
\begin{thm}[Extensions for locally regular, isotropic cone mixing geometries]
\label{thm:Fine-meso-estimate}Let the open set $\bP$ be locally
$(\delta,M)$-regular, $\delta$ bounded by $\frac{\fr}{2}>0$, and
satisfy Assumptions \ref{assu:M-alpha-bound}, \ref{assu:mesoscopic-voronoi}
and $\hat{d}$ be the constant from (\ref{eq:lem:properties-local-rho-convering-4}),.
Let $1<r<s<t<p<+\infty$ and $s<p_{0}\leq p$ with $1-\frac{\hat{d}}{r}\geq\frac{\hat{d}}{s}$.

Recalling (\ref{eq:notation-M}) and defining $\bQ_{\fr}:=\Ball{\fr}{\bQ}$
as well as 
\begin{align}
\cU u & :=\sum_{i}\sum_{j}\Phi_{j}\left(\phi_{i}\,\left(\cU_{i}\of{u-\tau_{i}u}+\tau_{i}u-\cM_{j}u\right)+\cM_{j}u\right)\label{eq:def-global-U}
\end{align}
the following estimates hold:
\begin{align}
\frac{1}{\left|\bQ\right|}\int_{\bQ\backslash\bP}\left|\nabla\cU u\right|^{r} & \leq C_{0}\left(\frac{1}{\left|\bQ\right|}\int_{\bQ_{\fr}\cap\bP}\tilde{M}^{\frac{p\left(\hat{d}+\alpha\right)}{p-r}}\right)^{\frac{p-r}{p}}\norm{\nabla u}_{L^{p}\left(\bP\cap\Ball{\frac{\fr}{2}}{\bQ_{\fr}}\right)}^{\frac{r}{p}}\nonumber \\
 & \quad+\frac{1}{\left|\bQ\right|}\int_{\bQ^{\sim\sim}}\left|f\of u\right|^{r}\label{eq:thm:Fine-meso-estimate-1}\\
\frac{1}{\left|\bQ\right|}\int_{\bQ\backslash\bP}\left|\cU u\right|^{r} & \leq C_{0}\left(\frac{1}{\left|\bQ\right|}\int_{\bQ_{\fr}\cap\bP}\tilde{M}^{\frac{p\hat{d}}{p-r}}\right)^{\frac{p-r}{p}}\cdot\left(\frac{1}{\left|\bQ\right|}\int_{\bP\cap\bQ_{\fr}}\left|u\right|^{p}\right)^{\frac{r}{p}}\nonumber \\
 & \qquad+C_0 \left(\frac{1}{\left|\bQ\right|}\int_{\bQ\cap\bP}\left(\sum_{x_{j}\in\X_{\fr}(\bQ)}\chi_{G_{j}}\left|\fA_{1,j}\right|\right)^{\frac{p}{p-r}}\right)^{\frac{p-r}{p}}\cdot\left(\frac{1}{\left|\bQ\right|}\int_{\bP\cap\bQ_{\fr}}\left|u\right|^{p}\right)^{\frac{r}{p}}\,,\label{eq:thm:Fine-meso-estimate-2}
\end{align}
where 
\begin{align*}
f\of u & =\sum_{l=1}^{d}\sum_{k:\,\partial_{l}\Phi_{k}>0}\sum_{j:\,\partial_{l}\Phi_{j}<0}\frac{\partial_{l}\Phi_{k}\left|\partial_{l}\Phi_{j}\right|}{D_{l+}^{\Phi}}\left(2-\phi_{0}\right)\left(\cM_{k}u-\cM_{j}u\right)\\
 & \quad-\sum_{l=1}^{d}\sum_{\substack{i\neq0:\,\partial_{l}\phi_{i}\partial_{l}\phi_{0}<0}
}\sum_{j}\frac{\partial_{l}\phi_{0}\left|\partial_{l}\phi_{i}\right|}{D_{l+}}\Phi_{j}\left(\tau_{i}u-\cM_{j}u\right)\,.
\end{align*}
with functions
\[
D_{l+}:=\sum_{\substack{j\neq0:\,\partial_{l}\phi_{j}\partial_{l}\phi_{0}<0}
}\left|\partial_{l}\phi_{j}\right|\,,\qquad D_{l+}^{\Phi}:=\sum_{\substack{j\neq0:\,\partial_{l}\Phi_{j}<0}
}\left|\partial_{l}\Phi_{j}\right|\,.
\]
\end{thm}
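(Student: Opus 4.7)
The strategy is a structural combination of the microscopic extension estimate of Lemma \ref{lem:local-delta-M-extension-estimate} with the mesoscopic partition of unity $\{\Phi_j\}$. The whole point is to exploit the two cancellations $\sum_j\nabla\Phi_j\equiv 0$ and $\sum_i\nabla\phi_i\equiv 0$, together with Lemma \ref{lem:conv-sum-0}, to rewrite the non-local error as a sum of the pairwise differences $\cM_k u-\cM_j u$ and $\tau_i u-\cM_j u$, which are then collected into the single abstract connectivity function $f(u)$.

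First I would split $\cU u=A+B$ with
\[
A:=\sum_{i,j}\Phi_j\,\phi_i\,\cU_i(u-\tau_iu),\qquad B:=\sum_{i,j}\Phi_j\,\phi_i\,(\tau_iu-\cM_ju)+\sum_j\Phi_j\,\cM_ju,
\]
using $\sum_i\phi_i\equiv 1$ to collapse the $\cM_ju$ contribution. Applying the product rule, $\nabla A$ produces three kinds of terms, namely $\Phi_j\phi_i\,\nabla\cU_i(u-\tau_iu)$, $\Phi_j\,\cU_i(u-\tau_iu)\nabla\phi_i$ and $\phi_i\,\cU_i(u-\tau_iu)\nabla\Phi_j$. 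Since both partitions have bounded overlap (the mesoscopic one by \eqref{eq:assu:mesoscopic-regular-1}, the microscopic one by \eqref{eq:lem:properties-local-rho-convering-3}), and since on each $\fA_{1,i}$ the mesoscopic gradient $|\nabla\Phi_j|$ is dominated by the microscopic $|\nabla\phi_i|\sim\tilde{\rho}_i^{-1}$ up to a factor controlled by $\tilde{M}$, these three contributions reduce to the same bulk estimate as in the proof of Lemma \ref{lem:local-delta-M-extension-estimate}. Together with Assumption \ref{assu:M-alpha-bound} and H\"older's inequality with exponents $p/r$ and $p/(p-r)$, they yield exactly the $\tilde{M}^{p(\hat{d}+\alpha)/(p-r)}$-term of \eqref{eq:thm:Fine-meso-estimate-1}.

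The subtle piece is $\nabla B$. Coordinatewise, I would invoke $\sum_j\partial_l\Phi_j=0$ and $\sum_{i}\partial_l\phi_i=0$ and apply Lemma \ref{lem:conv-sum-0} to convert each sum into a sum over pairs of opposite sign, weighted by $\partial_l\Phi_k|\partial_l\Phi_j|/D_{l+}^\Phi$ and $\partial_l\phi_i|\partial_l\phi_j|/D_{l+}$ respectively. Inserting these identities into $\nabla B$ produces precisely the two types of differences $\cM_ku-\cM_ju$ and $\tau_iu-\cM_ju$ which appear in $f(u)$: the first from the mesoscopic antisymmetrization of $\cM_j u$, the second from the microscopic one applied to $\tau_i u$ against the mesoscopic reference $\cM_j u$. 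The factor $(2-\phi_0)$ is due to the accounting of contributions on $\support\phi_0$ (where only the mesoscopic partition carries weight) versus $\support\phi_i$ for $i\ge 1$ (where both $\tau_iu$ and the collapsed $\cM_ju$ contribute); the restriction $\partial_l\phi_0\partial_l\phi_i<0$ in the second sum of $f(u)$ isolates exactly those microscopic pairs for which the $\phi_0$-correction does not cancel, in complete analogy with inequality \eqref{eq:thm:dm-Ext-sto-connected-help-def-I-3} in the proof of Lemma \ref{lem:local-delta-M-extension-estimate}.

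Estimate \eqref{eq:thm:Fine-meso-estimate-2} is established along the same lines but without derivatives. Writing $\cU u$ as a convex combination of the $\cU_i u$ and of the $\cM_j u$ (modulo the $\phi_0$-correction), the first piece is controlled via \eqref{lem:local-delta-M-extension-estimate-estim-1-b} applied on every $\fA_{1,j}$, yielding the $\tilde{M}^{p\hat{d}/(p-r)}$-weighted term. The second piece reduces via Jensen's inequality to $|\cM_ju|^r\le|\Ball{\fr/16}{0}|^{-1}\int_{\Ball{\fr/16}{x_j}}|u|^r$, which after multiplication by $\chi_{G_j}|\fA_{1,j}|$, summation over $x_j\in\X_\fr(\bQ)$ and one application of H\"older with exponents $p/r$ and $p/(p-r)$ produces the second term on the right-hand side of \eqref{eq:thm:Fine-meso-estimate-2}. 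The main obstacle throughout is purely bookkeeping: one must track the interplay between $\phi_0$, the two partitions of unity, and the two cancellation identities so that the final error is expressible through the single function $f(u)$ with the precise weights $D_{l+}^\Phi$, $D_{l+}$ and $(2-\phi_0)$; once this decomposition is in place, all remaining estimates are routine.
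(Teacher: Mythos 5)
Your proposal follows the same route as the paper: rewrite $\cU u$ into the ``microscopic extension'' part plus the ``transport of means'' part, handle the former by Lemma~\ref{lem:local-delta-M-extension-estimate}, and convert the latter into pairwise differences via Lemma~\ref{lem:conv-sum-0} applied once to $\{\partial_l\Phi_j\}$ and once to $\{\partial_l\phi_i\}$, collecting the leftover connectivity terms into $f(u)$. The treatment of \eqref{eq:thm:Fine-meso-estimate-2} (estimate (\ref{lem:local-delta-M-extension-estimate-estim-1-b}) plus Jensen on $|\cM_ju|^r$ and one H\"older) also matches the paper.

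Two expository slips are worth correcting, though neither is structural. First, the $\nabla\Phi_j$ contribution to $\nabla A$ is not ``dominated by'' $|\nabla\phi_i|$ up to a $\tilde M$-factor --- it simply vanishes, since $\sum_j\Phi_j\equiv1$ collapses $A$ to $\tilde\cU_\bQ u=\sum_i\phi_i\,\cU_i(u-\tau_iu)$; trying to estimate $\phi_i\cU_i(u-\tau_iu)\nabla\Phi_j$ termwise without this cancellation would produce unwanted factors of $d_j^d$ from \eqref{eq:estiamte-nabla-Phi-i}. Second, Assumption~\ref{assu:M-alpha-bound} is not used on $\nabla A$: the $\tilde\cU$-estimate in Lemma~\ref{lem:local-delta-M-extension-estimate} only yields exponent $\hat d+1$ there. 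The $\hat d+\alpha$ exponent in the first line of \eqref{eq:thm:Fine-meso-estimate-1} comes from the $\tau_ku-\tau_iu$ differences produced when Lemma~\ref{lem:conv-sum-0} is applied to the $\nabla\phi_i$-sum inside $\nabla B$, which is precisely where \eqref{eq:assu:M-alpha-bound-improved} enters (compare \eqref{eq:thm:dm-Ext-sto-connected-help-6-2}). So the ``well-controlled'' first line of \eqref{eq:thm:Fine-meso-estimate-1} collects a $\hat d+1$ contribution from $\nabla A$ and a $\hat d+\alpha$ contribution from the antisymmetrized part of $\nabla B$, while the rest of $\nabla B$ is what becomes $f(u)$.
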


\begin{proof}
 Let us note that on $\bQ$ it holds 
\begin{align}
\cU u: & =\sum_{i}\phi_{i}\,\cU_{i}\of{u-\tau_{i}u}+\sum_{x_{j}\in\X_{\fr}(\bQ)}\sum_{i}\Phi_{j}\phi_{i}\left(\tau_{i}u-\cM_{j}u\right)+\sum_{x_{j}\in\X_{\fr}(\bQ)}\Phi_{j}\cM_{j}u\label{eq:thm:Fine-meso-estimate-help-1}\\
 & =\sum_{i}\phi_{i}\,\left(\cU_{i}\of{u-\tau_{i}u}+\tau_{i}u\right)+\sum_{x_{j}\in\X_{\fr}(\bQ)}\Phi_{j}\phi_{0}\cM_{j}u\,.\label{eq:thm:Fine-meso-estimate-help-2}
\end{align}
We first observe in (\ref{eq:thm:Fine-meso-estimate-help-2}) that
\begin{align*}
\frac{1}{\left|\bQ\right|}\int_{\bQ}\left|\sum_{x_{j}\in\X_{\fr}(\bQ)}\Phi_{j}\phi_{0}\cM_{j}u\right|^{r} & \leq\frac{1}{\left|\bQ\right|}\int_{\bQ}\sum_{x_{j}\in\X_{\fr}(\bQ)}\Phi_{j}\phi_{0}\left|\cM_{j}u\right|^{r}\leq\frac{1}{\left|\bQ\right|}\int_{\bQ}\sum_{x_{j}\in\X_{\fr}(\bQ)}\chi_{\fA_{1,j}}\left|\cM_{j}u\right|^{r}\\
 & \leq\frac{1}{\left|\bQ\right|}\sum_{x_{j}\in\X_{\fr}(\bQ)}\left|\fA_{1,j}\right|\left|\S^{d-1}\right|\left(\frac{\fr}{2}\right)^{d}\int_{\Ball{\frac{\fr}{2}}{x_{i}}}\left|u\right|^{r}\\
 & \leq\frac{1}{\left|\bQ\right|}\int_{\bQ}\left|u\right|^{r}\sum_{x_{j}\in\X_{\fr}(\bQ)}\chi_{G_{j}\cap\bP}\left|\fA_{1,j}\right|\left|\S^{d-1}\right|\left(\frac{\fr}{2}\right)^{d}
\end{align*}

From the last inequality and Lemma \ref{lem:local-delta-M-extension-estimate}
we obtain (\ref{eq:thm:Fine-meso-estimate-2}). Furthermore, the first
term on the right hand side of (\ref{eq:thm:Fine-meso-estimate-help-1})
with Lemma \ref{lem:local-delta-M-extension-estimate} provides the
first line of (\ref{eq:thm:Fine-meso-estimate-1}).

In what follows, we write for simplicity $\sum_{x_{j}\in\X_{\fr}(\bQ)}=\sum_{j}$
but have in mind the respective meaning. The same holds for $\sum_{k:\,\partial_{l}\Phi_{k}>0}$.

Concerning the second term in (\ref{eq:thm:Fine-meso-estimate-help-1}),
we observe 
\begin{multline*}
\nabla\sum_{x_{j}\in\X_{\fr}(\bQ)}\sum_{i\in\N}\Phi_{j}\phi_{i}\left(\tau_{i}u-\cM_{j}u\right)\\
	=\sum_{j}\sum_{i\in\N}\phi_{i}\left(\tau_{i}u-\cM_{j}u\right)\nabla\Phi_{j}+\sum_{j}\sum_{i\in\N}\Phi_{j}\left(\tau_{i}u-\cM_{j}u\right)\nabla\phi_{i}\,,
\end{multline*}
and obtain with help of Lemma \ref{lem:conv-sum-0} and $\sum_{j}\nabla\Phi_{j}(x)=0$
for $x\in\bQ$
\begin{align*}
 & \sum_{j}\sum_{i\in\N}\phi_{i}\left(\tau_{i}u-\cM_{j}u\right)\nabla\Phi_{j}\\
 & \qquad=\sum_{l=1}^{d}\sum_{k:\,\partial_{l}\Phi_{k}>0}\sum_{j:\,\partial_{l}\Phi_{j}<0}\frac{\partial_{l}\Phi_{k}\left|\partial_{l}\Phi_{j}\right|}{D_{l+}^{\Phi}}\left(\sum_{i\in\N}\phi_{i}\left(\tau_{i}u-\cM_{k}u\right)-\sum_{i\in\N}\phi_{i}\left(\tau_{i}u-\cM_{j}u\right)\right)\\
 & \qquad=\sum_{l=1}^{d}\sum_{k:\,\partial_{l}\Phi_{k}>0}\sum_{j:\,\partial_{l}\Phi_{j}<0}\frac{\partial_{l}\Phi_{k}\left|\partial_{l}\Phi_{j}\right|}{D_{l+}^{\Phi}}\left(1-\phi_{0}\right)\left(\cM_{k}u-\cM_{j}u\right)\,.
\end{align*}
Similarly, we use Lemma \ref{lem:conv-sum-0} together with $\sum_{i}\nabla\phi_{i}=-\nabla\phi_{0}$
and find
\begin{align*}
 & \sum_{j}\sum_{i\in\N}\Phi_{j}\left(\tau_{i}u-\cM_{j}u\right)\nabla\phi_{i}\,,\\
 & \qquad=\sum_{l=1}^{d}\sum_{k:\,\partial_{l}\phi_{k}>0}\sum_{i:\,\partial_{l}\phi_{i}<0}\frac{\partial_{l}\phi_{k}\left|\partial_{l}\phi_{i}\right|}{D_{l+}}\left(\sum_{j}\Phi_{j}\left(\tau_{k}u-\cM_{j}u\right)-\sum_{j}\Phi_{j}\left(\tau_{i}u-\cM_{j}u\right)\right)\\
 & \qquad\quad-\sum_{l=1}^{d}\sum_{\substack{i\neq0:\,\partial_{l}\phi_{j}\partial_{l}\phi_{0}<0}
}\frac{\partial_{l}\phi_{0}\left|\partial_{l}\phi_{i}\right|}{D_{l+}}\Phi_{j}\left(\tau_{i}u-\cM_{j}u\right)\\
 & \qquad=\sum_{l=1}^{d}\sum_{k:\,\partial_{l}\phi_{k}>0}\sum_{j:\,\partial_{l}\phi_{i}<0}\frac{\partial_{l}\phi_{k}\left|\partial_{l}\phi_{i}\right|}{D_{l+}}\left(\tau_{k}u-\tau_{i}u\right)\\
 & \qquad\quad-\sum_{l=1}^{d}\sum_{\substack{i\neq0:\,\partial_{l}\phi_{j}\partial_{l}\phi_{0}<0}
}\frac{\partial_{l}\phi_{0}\left|\partial_{l}\phi_{i}\right|}{D_{l+}}\Phi_{j}\left(\tau_{i}u-\cM_{j}u\right)\,,
\end{align*}
where the first term on the right hand side can be estimated like
in Lemma \ref{lem:local-delta-M-extension-estimate}. Finally, from
a similar calculation using Lemma \ref{lem:conv-sum-0} it is now
obvious for the third term in (\ref{eq:thm:Fine-meso-estimate-help-1})
that
\begin{align*}
\sum_{j}\cM_{j}u\nabla\Phi_{j} & \leq\sum_{l=1}^{d}\sum_{k:\,\partial_{l}\Phi_{k}>0}\sum_{j:\,\partial_{l}\Phi_{j}<0}\frac{\partial_{l}\Phi_{k}\left|\partial_{l}\Phi_{j}\right|}{D_{l+}^{\Phi}}\left(\cM_{k}u-\cM_{j}u\right)\,.
\end{align*}
\end{proof}

\subsection{\label{subsec:The-Issue-of-Connectedness}The Issue of Connectedness}

In Theorem \ref{thm:Fine-meso-estimate} we discovered the integral
$\int_{\bQ}\left|f\of u\right|^{r}$ as part of the estimate for $\frac{1}{\left|\bQ\right|}\int_{\bQ\backslash\bP}\left|\nabla\cU u\right|^{r}$,
where we recall that $f$ was given through
\begin{align*}
f\of u & =\sum_{l=1}^{d}\sum_{k:\,\partial_{l}\Phi_{k}>0}\sum_{j:\,\partial_{l}\Phi_{j}<0}\frac{\partial_{l}\Phi_{k}\left|\partial_{l}\Phi_{j}\right|}{D_{l+}^{\Phi}}\left(2-\phi_{0}\right)\left(\cM_{k}u-\cM_{j}u\right)\\
 & \quad-\sum_{l=1}^{d}\sum_{\substack{i\neq0:\,\partial_{l}\phi_{i}\partial_{l}\phi_{0}<0}
}\sum_{j}\frac{\partial_{l}\phi_{0}\left|\partial_{l}\phi_{i}\right|}{D_{l+}}\Phi_{j}\left(\tau_{i}u-\cM_{j}u\right)\,.
\end{align*}
We seek for an interpretation of the two sums appearing on the right
hand side. The first one is related to the difference of mean values
around $x_{k}$ and $x_{j}$ in case they are weak neighbors, i.e.
$x_{k}\sim\sim x_{j}$. In Theorem \ref{thm:Rough-meso-estimate}
below we provide a rough estimate on this part in terms of $\tau_{i}u-\cM_{j}u$
but on a larger area. In the present section, we first want to ``isolate''
$\left|\tau_{i}u-\cM_{j}u\right|$ and $\left|\cM_{k}u-\cM_{j}u\right|$
from the other geometric properties of $\bP$. In Section \ref{sec:Construction-of-Macroscopic-1}
we will see how these quantities are related to the connectivity of
$\bP$.
\begin{lem}
\label{lem:6-4}Under Assumptions \ref{assu:M-alpha-bound}, \ref{assu:mesoscopic-voronoi}
and using the notation of Theorem \ref{thm:Fine-meso-estimate} let
$\left(f_{j}\right)_{j\in\N}$ be non-negative and have support $\support f_{j}\supset\Ball{\frac{\fr}{2}}{x_{j}}$
and let $\sum_{j\in\N}f_{j}\equiv1$. \\
Writing $\X(\bQ):=\left\{ x_{j}:\;\support f_{j}\cap\bQ\neq\emptyset\right\} $,
for every $l=1,\dots d$ and $r<\tilde{s}<s<p$ it holds 
\begin{multline*}
\left|\frac{1}{\left|\bQ\right|}\int_{\bQ_{\fr}}\sum_{\substack{i\neq0:\,\partial_{l}\phi_{i}\partial_{l}\phi_{0}<0}
}\sum_{j}\frac{\partial_{l}\phi_{0}\left|\partial_{l}\phi_{i}\right|}{D_{l+}}f_{j}\left(\tau_{i}u-\cM_{j}u\right)\right|\\
\leq\left(\frac{1}{\left|\bQ\right|}\int_{\bP\cap\bQ_{\fr}\cap\Rd_{3}}\left|\partial_{l}\phi_{0}\right|^{\frac{sr}{s-r}}\right)^{\frac{s-r}{s}}\left(\frac{1}{\left|\bQ\right|}\int_{\bP\cap\bQ_{\fr}}\sum_{\substack{i\neq0:\,\partial_{l}\phi_{i}\partial_{l}\phi_{0}<0}
}\sum_{x_{j}\in\X(\bQ)}f_{j}\frac{\left|\partial_{l}\phi_{i}\right|}{D_{l+}}\left|\tau_{i}u-\cM_{j}u\right|^{s}\right)^{\frac{r}{s}}
\end{multline*}
and
\begin{multline*}
\left|\frac{1}{\left|\bQ\right|}\int_{\bQ_{\fr}}\sum_{\substack{i\neq0:\,\partial_{l}\phi_{i}\partial_{l}\phi_{0}<0}
}\sum_{j}\frac{\partial_{l}\phi_{0}\left|\partial_{l}\phi_{i}\right|}{D_{l+}}f_{j}\left(\tau_{i}u-\cM_{j}u\right)\right|\\
\leq\left(\frac{1}{\left|\bQ\right|}\int_{\bP\cap\bQ_{\fr}\cap\Rd_{3}}\left|\partial_{l}\phi_{0}\right|^{\frac{\tilde{s}r}{\tilde{s}-r}}\tilde{M}^{2-d}\right)^{\frac{\tilde{s}-r}{\tilde{s}}}\left(\frac{1}{\left|\bQ\right|}\int_{\Ball{\frac{\fr}{2}}{\bQ}\backslash\bP}\tilde{M}^{\frac{p_{1}\left(d-2\right)(\tilde{s}-r)}{r(s-\tilde{s})}}\right)^{r\frac{s-\tilde{s}}{\tilde{s}s}}\\
\left(\frac{1}{\left|\bQ\right|}\int_{\bP\cap\bQ_{\fr}}\sum_{\substack{i\neq0:\,\partial_{l}\phi_{i}\partial_{l}\phi_{0}<0}
}\sum_{x_{j}\in\X(\bQ)}f_{j}\frac{\left|\partial_{l}\phi_{i}\right|}{D_{l+}}\left|\tau_{i}u-\cM_{j}u\right|^{s}\right)^{\frac{r}{s}}\,.
\end{multline*}
\end{lem}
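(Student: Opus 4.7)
The strategy is to peel off, via successive applications of Jensen's and Hölder's inequalities, the geometric factors (those involving $\partial_l\phi_0$ and $\tilde{M}$) from the analytic factor $|\tau_i u - \cM_j u|$, in a way that yields exactly the exponents $\tfrac{sr}{s-r}$ (respectively $\tfrac{\tilde s r}{\tilde s-r}$), $\tfrac{s-r}{s}$ and $\tfrac{r}{s}$ advertised in the statement. The key structural observation is that the weights $w_{ij} := \tfrac{|\partial_l\phi_i|}{D_{l+}}\,f_j$ form a convex combination: by definition of $D_{l+}$ and the property $\sum_j f_j \equiv 1$, one has $\sum_{i,j} w_{ij}(x) = 1$ pointwise on $\{x:\,\partial_l\phi_0(x)\neq 0\}$. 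Hence the triangle inequality and Jensen applied to the convex map $t\mapsto|t|^r$ (and to $t\mapsto|t|^{s/r}$) give, for $Y:=\sum_{i,j}w_{ij}(\tau_i u-\cM_j u)$,
\begin{equation*}
|Y|^r\;\le\;\sum_{i,j}w_{ij}\,|\tau_i u-\cM_j u|^r\,,\qquad \Bigl(\sum_{i,j}w_{ij}|\tau_i u-\cM_j u|^r\Bigr)^{s/r}\;\le\;\sum_{i,j}w_{ij}|\tau_i u-\cM_j u|^s\,.
\end{equation*}

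For the first estimate, one now raises the (centred) integrand to the $r$-th power — the natural scale in which the two sides of the inequality match dimensionally, in accordance with the way this lemma is invoked later within $\tfrac{1}{|\bQ|}\int|f(u)|^r$ of Theorem \ref{thm:Fine-meso-estimate} — and applies Hölder with the conjugate exponents $\tfrac{s}{s-r}$ and $\tfrac{s}{r}$. The first Hölder factor combines the exponent $r$ of $|\partial_l\phi_0|$ with $\tfrac{s}{s-r}$ to produce $|\partial_l\phi_0|^{sr/(s-r)}$ integrated with the outer exponent $\tfrac{s-r}{s}$. The second factor, after invoking the second Jensen inequality above, collapses to $\sum_{i,j}w_{ij}|\tau_iu-\cM_ju|^s$ raised to the exponent $\tfrac{r}{s}$. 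Observing that $|\partial_l\phi_0|$ is supported in $\bP\cap\Rd_3$ (this is where $\tilde\phi_0$ changes between its plateaus, by the construction in Lemma \ref{lem:properties-local-rho-convering}) and that the integrand on the second factor lives in $\bP\cap\bQ_\fr$, one obtains the first claimed inequality.

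For the second estimate I would introduce a compensating factor $\tilde M^{2-d}\cdot \tilde M^{d-2}\equiv 1$ in the $|\partial_l\phi_0|$-term and perform a three-fold Hölder at the intermediate level $\tilde s\in(r,s)$. Concretely, split the $|\partial_l\phi_0|^r$-piece as $\bigl(|\partial_l\phi_0|^r\tilde M^{(2-d)r/\tilde s}\bigr)\cdot\tilde M^{(d-2)r/\tilde s}$ and apply Hölder with the triple of exponents $\bigl(\tfrac{\tilde s}{\tilde s-r},\tfrac{\tilde s s}{r(s-\tilde s)},\tfrac{s}{r}\bigr)$, whose reciprocals sum to $1$; the first slot carries $|\partial_l\phi_0|^r\tilde M^{(2-d)r/\tilde s}$ and thus produces $|\partial_l\phi_0|^{\tilde sr/(\tilde s-r)}\tilde M^{2-d}$ under the integral, the second slot carries the compensating power of $\tilde M$ leading to the exponent $\tfrac{p_1(d-2)(\tilde s-r)}{r(s-\tilde s)}$ as stated, and the third slot produces the familiar $\sum w_{ij}|\tau-\cM|^s$-piece after the Jensen step used in the first part.

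The main obstacle in writing this out rigorously is purely bookkeeping: verifying that the three Hölder exponents are each $\ge 1$ (which follows from $r<\tilde s<s<p$), checking that the resulting powers of $\tilde M$ cancel exactly as claimed, and keeping careful track of the three supports ($\bP\cap\bQ_\fr\cap\Rd_3$ for the $\phi_0$-integral, the whole $\bQ_\fr$ for the auxiliary $\tilde M$-integral, and $\bP\cap\bQ_\fr$ for the oscillation integral). None of these steps is conceptually deeper than what was already done in Lemma \ref{lem:first-estim-nabla-phi-0}; the present lemma is just a version adapted to the mesoscopic weight $f_j$ replacing the partition $\tau_j u$ averages used there.
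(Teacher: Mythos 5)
Your proposal follows the paper's argument essentially line for line: you identify the convex-combination weights $w_{ij}=\tfrac{|\partial_l\phi_i|}{D_{l+}}f_j$, pull $|\partial_l\phi_0|^r$ out after a first Jensen step, then apply Hölder with conjugate exponents $\tfrac{s}{s-r}$ and $\tfrac{s}{r}$ and a second Jensen inequality $(\sum w_{ij}|\cdot|^r)^{s/r}\le\sum w_{ij}|\cdot|^s$, which is exactly the chain of inequalities in the paper; for the second estimate your three-fold Hölder with exponents $\bigl(\tfrac{\tilde s}{\tilde s-r},\tfrac{\tilde ss}{r(s-\tilde s)},\tfrac{s}{r}\bigr)$ and the $\tilde M^{2-d}\cdot\tilde M^{d-2}$ splitting matches what the paper delegates to the (analogous) proof of Lemma \ref{lem:first-estim-nabla-phi-0}.
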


\begin{proof}
We find from H\"older's and Jensen's inequality
\begin{align*}
 & \frac{1}{\left|\bQ\right|}\int_{\bP\cap\bQ}\left|\sum_{\substack{i\neq0:\,\partial_{l}\phi_{i}\partial_{l}\phi_{0}<0}
}\sum_{j}\frac{\partial_{l}\phi_{0}\left|\partial_{l}\phi_{i}\right|}{D_{l+}}f_{j}\left(\tau_{i}u-\cM_{j}u\right)\right|^{r}\\
 & \qquad\leq\frac{1}{\left|\bQ\right|}\int_{\bP\cap\bQ}\left|\partial_{l}\phi_{0}\right|^{r}\sum_{\substack{i\neq0:\,\partial_{l}\phi_{i}\partial_{l}\phi_{0}<0}
}\sum_{j}\frac{\left|\partial_{l}\phi_{i}\right|}{D_{l+}}f_{j}\left|\tau_{i}u-\cM_{j}u\right|^{r}\\
 & \qquad\leq\left(\frac{1}{\left|\bQ\right|}\int_{\bP\cap\bQ\cap\Rd_{3}}\left|\partial_{l}\phi_{0}\right|^{\frac{sr}{s-r}}\right)^{\frac{s-r}{s}}\left(\frac{1}{\left|\bQ\right|}\int_{\bP\cap\bQ}\sum_{\substack{i\neq0:\,\partial_{l}\phi_{i}\partial_{l}\phi_{0}<0}
}\sum_{j}f_{j}\frac{\left|\partial_{l}\phi_{i}\right|}{D_{l+}}\left|\tau_{i}u-\cM_{j}u\right|^{s}\right)^{\frac{r}{s}}\,.
\end{align*}
The other inequality can be derived similarly, see also the proof
of Lemma \ref{lem:first-estim-nabla-phi-0}.
\end{proof}
\begin{lem}
\label{lem:6-5}Under Assumptions \ref{assu:M-alpha-bound}, \ref{assu:mesoscopic-voronoi}
for every $l=1,\dots d$ it holds
\begin{multline*}
\frac{1}{\left|\bQ\right|}\int_{\bP\cap\bQ}\left|\sum_{k:\,\partial_{l}\Phi_{k}>0}\sum_{j:\,\partial_{l}\Phi_{j}<0}\frac{\partial_{l}\Phi_{k}\left|\partial_{l}\Phi_{j}\right|}{D_{l+}^{\Phi}}\left(2-\phi_{0}\right)\left(\cM_{k}u-\cM_{j}u\right)\right|^{r}\\
\leq\left(\frac{1}{\left|\bQ\right|}\int_{\bP\cap\bQ}\left(\sum_{j:\,\partial_{l}\Phi_{j}<0}d_{j}^{\frac{r(d-1)+drs}{s-r}}\chi_{\nabla\Phi_{j}\neq0}\right)^{\frac{s}{s-r}}\right)^{\frac{s-r}{s}}\frac{1}{\left|\bQ\right|}\sum_{\substack{x_{k}\sim\sim x_{j}\\
x_{k},x_{j}\in\X_{\fr}(\bQ)
}
}\left|\cM_{k}u-\cM_{j}u\right|^{s}
\end{multline*}
\end{lem}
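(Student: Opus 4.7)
The proof proceeds by combining a pointwise Hölder estimate on the double sum with an integral Hölder that separates the geometric data from the mean-value differences.

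\textbf{Step 1 (pointwise Hölder).} At each $x\in\bP\cap\bQ$, regard the inner expression as a weighted sum over pairs $(k,j)$ with weights $w_{kj}(x):=\partial_l\Phi_k(x)|\partial_l\Phi_j(x)|/D_{l+}^\Phi(x)$, which satisfy $\sum_{k,j}w_{kj}=D_{l+}^\Phi$. Note $w_{kj}\neq0$ forces $\nabla\Phi_k(x),\nabla\Phi_j(x)\neq0$ and hence $x_k\sim\sim x_j$. Since $|2-\phi_0|\le 2$, the sum-version Hölder inequality with exponents $s$ and $s/(s-1)$ yields
\begin{align*}
\left|\sum_{k,j}w_{kj}(2-\phi_{0})(\cM_{k}u-\cM_{j}u)\right|^{r}\leq 2^{r}(D_{l+}^{\Phi})^{r(s-1)/s}\left(\sum_{k,j}w_{kj}\,|\cM_{k}u-\cM_{j}u|^{s}\right)^{r/s}.
\end{align*}

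\textbf{Step 2 (integral Hölder with a geometric weight).} Integrate the pointwise bound over $\bP\cap\bQ$ and apply Hölder with exponents $s/(s-r)$ and $s/r$, inserting a positive balancing weight $\alpha(x)$:
\begin{align*}
\int_{\bP\cap\bQ}|S_{l}|^{r} \leq C\Bigl(\int_{\bP\cap\bQ}\alpha^{\frac{s}{s-r}}(D_{l+}^{\Phi})^{\frac{r(s-1)}{s-r}}\Bigr)^{\frac{s-r}{s}}\Bigl(\int_{\bP\cap\bQ}\alpha^{-\frac{s}{r}}\sum_{k,j}w_{kj}|\cM_{k}u-\cM_{j}u|^{s}\Bigr)^{\frac{r}{s}}.
\end{align*}
The weight $\alpha$ is chosen so that the $(k,j)$-integrals $\int \alpha^{-s/r}w_{kj}$ are uniformly of order one for pairs $k\sim\sim j$; this uses that $\supp\nabla\Phi_j$ is a thin shell near $\partial G_j$ of measure bounded by a constant times $d_j^{d-1}$, whereas $|\partial_l\Phi_j|\leq C d_j^d$ by \eqref{eq:estiamte-nabla-Phi-i}. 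With this choice the second factor collapses to a constant multiple of $\sum_{x_k\sim\sim x_j}|\cM_ku-\cM_ju|^s$ restricted to indices in $\X_{\fr}(\bQ)$.

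\textbf{Step 3 (matching the exponent).} In the first factor of Step 2, use $D_{l+}^{\Phi}\leq C\sum_{j:\partial_l\Phi_j<0}d_j^{d}\chi_{\nabla\Phi_j\neq0}$ from \eqref{eq:estiamte-nabla-Phi-i} and combine it with the chosen $\alpha^{s/(s-r)}$. The balancing in Step 2 forces $\alpha^{s/(s-r)}$ to contribute a factor proportional to a power of $\sum_j d_j^{d-1}\chi_{\nabla\Phi_j\neq 0}$ (from the shell volume), while $(D_{l+}^{\Phi})^{r(s-1)/(s-r)}$ contributes the factor with exponent $dr(s-1)/(s-r)$ in $d_j$. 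Adding the two exponents inside the bracket $\bigl(\sum_j d_j^{(\cdot)}\chi_{\nabla\Phi_j\neq 0}\bigr)^{s/(s-r)}$ gives exactly $\gamma=\tfrac{r(d-1)+drs}{s-r}$: indeed $(d-1)\cdot\tfrac{r}{s-r}+d\cdot\tfrac{r(s-1)+(s-r)}{s-r}$ collapses to $\tfrac{r(d-1)+drs}{s-r}$ after the arithmetic. Finally, in the second factor, the uniform bound $\int\alpha^{-s/r}w_{kj}\leq C$ reduces the $r/s$-power of a discrete sum to the first power (the remaining $r/s$ exponent is absorbed by the constant $C$ using Young's inequality and the fact that one may pull additional weights into the first factor at the cost of adjusting $\alpha$).

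\textbf{Main obstacle.} The crux is the simultaneous bookkeeping in Steps 2--3: one must choose $\alpha(x)$ so that (i) the weighted integrals $\int\alpha^{-s/r}w_{kj}$ are uniformly bounded over weak-neighbor pairs (to extract the clean discrete sum of $|\cM_ku-\cM_ju|^s$), and (ii) the power of $d_j$ that accumulates in the first integral is precisely $\tfrac{r(d-1)+drs}{s-r}$. Both the thin-shell volume of $\supp\nabla\Phi_j$ (of order $d_j^{d-1}$, responsible for the $(d-1)$ in the exponent) and the $L^\infty$-bound on $|\nabla\Phi_j|$ (of order $d_j^d$, responsible for the $ds$ term) must be traded against the $(s-1)/(s-r)$ power arising from the pointwise Hölder; any miscount in the thin-shell volume alters the exponent.
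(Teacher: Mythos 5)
Your decomposition is genuinely different from the paper's: you apply a single pointwise Hölder that pulls out $(D_{l+}^\Phi)^{r(s-1)/s}$ and raises the inner power to $s$ in one step, whereas the paper first retains the per-pair weights $|\partial_l\Phi_k|^r|\partial_l\Phi_j|/D_{l+}^\Phi$ with $r$-powers inside the double sum, applies a discrete Hölder with a scalar weight $d_j^\alpha$, then bounds $|\partial_l\Phi_k|^{sr/(s-r)}|\partial_l\Phi_j|\le C|\partial_l\Phi_k|\,d_j^{dsr/(s-r)}$ so that $\sum_k\partial_l\Phi_k/D_{l+}^\Phi=1$ collapses the $k$-sum, before the final integral Hölder. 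Your route is more streamlined, but Step 3 contains a concrete error that makes the proof fail as written.

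The arithmetic claim is false: $(d-1)\cdot\tfrac{r}{s-r}+d\cdot\tfrac{r(s-1)+(s-r)}{s-r}=\tfrac{r(d-1)+d(rs+s-2r)}{s-r}$, which exceeds the target $\tfrac{r(d-1)+drs}{s-r}$ by $\tfrac{d(s-2r)}{s-r}$; they agree only when $s=2r$. The source of the discrepancy is twofold. First, the balancing weight is underaccounted: to make $\int\alpha^{-s/r}w_{kj}\lesssim1$ you must absorb not only the shell volume $|\supp\nabla\Phi_j|\lesssim d_j^{d-1}$ but also the pointwise size $w_{kj}\le|\partial_l\Phi_j|\lesssim d_j^d$ from \eqref{eq:estiamte-nabla-Phi-i}, giving $\int w_{kj}\lesssim d_j^{2d-1}$ and hence $\alpha\gtrsim d_j^{r(2d-1)/s}$ on $\supp\nabla\Phi_j$; thus $\alpha^{s/(s-r)}$ contributes exponent $(2d-1)\tfrac{r}{s-r}$, not $(d-1)\tfrac{r}{s-r}$. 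Second, the factor $(D_{l+}^\Phi)^{r(s-1)/(s-r)}$ contributes exponent $d\cdot\tfrac{r(s-1)}{s-r}$, and the extra $(s-r)$ in your numerator has no source. With both corrections one indeed gets $(2d-1)\tfrac{r}{s-r}+d\tfrac{r(s-1)}{s-r}=\tfrac{r(d-1)+drs}{s-r}$, so your approach is salvageable; but you would then still need a separate argument to combine the product of the two resulting sums over $j$ into the single bracket $\bigl(\sum_j d_j^{\frac{r(d-1)+drs}{s-r}}\chi_{\nabla\Phi_j\neq0}\bigr)^{s/(s-r)}$ appearing in the statement, which the paper avoids by keeping the geometric product intact and dominating it in one step.
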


\begin{proof}
For this we observe with help of (\ref{eq:estiamte-nabla-Phi-i})
and with Lemma \ref{lem:Iso-cone-geo-estimate}.2) 
\begin{align}
\forall x:\quad\sup_{k}\left|\partial_{l}\Phi_{k}\right|(x) & \leq\sup\left\{ \left|\nabla\Phi_{k}(x)\right|\,:\;x\in\Ball{\frac{\fr}{2}}{G_{k}}\right\} \nonumber \\
 & \leq C\sup\left\{ d_{k}^{d}\,:\;x\in G_{k}\right\} \,,\label{eq:thm:all-in-help-1}\\
\sup_{x\in\Ball{\frac{\fr}{2}}{G_{j}}}\left|\partial_{l}\Phi_{j}\right|(x) & \leq Cd_{j}^{d}\,.\label{eq:thm:all-in-help-2}
\end{align}
We write 
\[
I:=\frac{1}{\left|\bQ\right|}\int_{\bP\cap\bQ}\left|\sum_{k:\,\partial_{l}\Phi_{k}>0}\sum_{j:\,\partial_{l}\Phi_{j}<0}\frac{\partial_{l}\Phi_{k}\left|\partial_{l}\Phi_{j}\right|}{D_{l+}^{\Phi}}\left(2-\phi_{0}\right)\left(\cM_{k}u-\cM_{j}u\right)\right|^{r}
\]
and find 
\begin{align*}
I & \leq C\frac{1}{\left|\bQ\right|}\int_{\bP\cap\bQ}\sum_{k:\,\partial_{l}\Phi_{k}>0}\sum_{j:\,\partial_{l}\Phi_{j}<0}\frac{\left|\partial_{l}\Phi_{k}\right|^{r}\left|\partial_{l}\Phi_{j}\right|}{D_{l+}^{\Phi}}\left|\cM_{k}u-\cM_{j}u\right|^{r}\\
 & \leq CC\frac{1}{\left|\bQ\right|}\int_{\bP\cap\bQ}\left(\sum_{k:\,\partial_{l}\Phi_{k}>0}\sum_{j:\,\partial_{l}\Phi_{j}<0}\frac{d_{j}^{\alpha\frac{s}{s-r}}\left|\partial_{l}\Phi_{k}\right|^{\frac{sr}{s-r}}\left|\partial_{l}\Phi_{j}\right|}{D_{l+}^{\Phi}}\right)^{\frac{s-r}{s}}\cdot\dots\\
 & \qquad\dots\cdot\left(\sum_{k:\,\partial_{l}\Phi_{k}>0}\sum_{j:\,\partial_{l}\Phi_{j}<0}\frac{d_{j}^{-\alpha\frac{s}{r}}\left|\partial_{l}\Phi_{j}\right|}{D_{l+}^{\Phi}}\left|\cM_{k}u-\cM_{j}u\right|^{s}\right)^{\frac{r}{s}}\,.
\end{align*}
Now we make use of (\ref{eq:thm:all-in-help-1}) and once more of
Lemma \ref{lem:Iso-cone-geo-estimate}.2) to obtain for the first
bracket on the right hand side an estimate of the form 
\[
\left|\partial_{l}\Phi_{k}\right|^{\frac{sr}{s-r}}\left|\partial_{l}\Phi_{j}\right|\leq\left|\partial_{l}\Phi_{k}\right|\left|\partial_{l}\Phi_{k}\right|^{\frac{sr}{s-r}-1}\left|\partial_{l}\Phi_{j}\right|\leq C\left|\partial_{l}\Phi_{k}\right|d_{j}^{d\frac{sr-s+r}{s-r}}d_{j}^{d}\leq C\left|\partial_{l}\Phi_{k}\right|d_{j}^{d\frac{sr}{s-r}}\,,
\]
which implies 
\begin{align*}
\sum_{k:\,\partial_{l}\Phi_{k}>0}\sum_{j:\,\partial_{l}\Phi_{j}<0}\frac{d_{j}^{\alpha\frac{s}{s-r}}\left|\partial_{l}\Phi_{k}\right|^{\frac{sr}{s-r}}\left|\partial_{l}\Phi_{j}\right|}{D_{l+}^{\Phi}} & \leq C\sum_{k:\,\partial_{l}\Phi_{k}>0}\sum_{j:\,\partial_{l}\Phi_{j}<0}\frac{d_{j}^{\alpha\frac{s}{s-r}}d_{j}^{\frac{dsr}{s-r}}\left|\partial_{l}\Phi_{k}\right|}{D_{l+}^{\Phi}}\\
 & \leq C\sum_{j:\,\partial_{l}\Phi_{j}<0}d_{j}^{\alpha\frac{s}{s-r}}d_{j}^{\frac{dsr}{s-r}}\chi_{\nabla\Phi_{j}\neq0}\,,
\end{align*}
where we used $\sum\left|\partial_{l}\Phi_{k}\right|=D_{l+}^{\Phi}$.
We make use of $\alpha=rs^{-1}(d-1)$ in the above estimates and H\"older's
inequality to find
\begin{align*}
 & \frac{1}{\left|\bQ\right|}\int_{\bP\cap\bQ}\left|\sum_{k:\,\partial_{l}\Phi_{k}>0}\sum_{j:\,\partial_{l}\Phi_{j}<0}\frac{\partial_{l}\Phi_{k}\left|\partial_{l}\Phi_{j}\right|}{D_{l+}^{\Phi}}\left(2-\phi_{0}\right)\left(\cM_{k}u-\cM_{j}u\right)\right|^{r}\\
 & \qquad\leq C\left(\frac{1}{\left|\bQ\right|}\int_{\bP\cap\bQ}\left(\sum_{j:\,\partial_{l}\Phi_{j}<0}d_{j}^{\frac{r(d-1)+drs}{s-r}}\chi_{\nabla\Phi_{j}\neq0}\right)^{\frac{s}{s-r}}\right)^{\frac{s-r}{s}}\cdot\dots\\
 & \qquad\qquad\dots\cdot\left(\frac{1}{\left|\bQ\right|}\int_{\bP\cap\bQ}\sum_{k:\,\partial_{l}\Phi_{k}>0}\sum_{j:\,\partial_{l}\Phi_{j}<0}\frac{d_{j}^{1-d}\left|\partial_{l}\Phi_{j}\right|}{D_{l+}^{\Phi}}\left|\cM_{k}u-\cM_{j}u\right|^{s}\right)^{\frac{r}{s}}\,.
\end{align*}
Since $\int_{\bP\cap\bQ}\frac{d_{j}^{1-d}\left|\partial_{l}\Phi_{j}\right|}{D_{l+}^{\Phi}}\leq C$
for some $C>0$ independent from $j$, we obtain 
\begin{align*}
 & \frac{1}{\left|\bQ\right|}\int_{\bP\cap\bQ}\sum_{k:\,\partial_{l}\Phi_{k}>0}\sum_{j:\,\partial_{l}\Phi_{j}<0}\frac{d_{j}^{1-d}\left|\partial_{l}\Phi_{j}\right|}{D_{l+}^{\Phi}}\left|\cM_{k}u-\cM_{j}u\right|^{s}\\
 & \qquad\leq\frac{1}{\left|\bQ\right|}\sum_{x_{k}\sim\sim x_{j}}\left|\cM_{k}u-\cM_{j}u\right|^{s}\,.
\end{align*}
\end{proof}

\subsection{Estimates Related to Mesoscopic Regularity of the Geometry}
\begin{assumption}[Mesoscopic Regularity]
\label{assu:mesoscopic-regular} Under the Assumption \ref{assu:mesoscopic-voronoi}
and introducing the notation $\cI_{i}:=\left\{ x_{j}\in\X_{\fr}\,:\;\cH^{d-1}\left(\partial G_{i}\cap\partial G_{j}\right)\geq0\right\} $
we construct $\fA_{2,i}$ and $\fA_{3,i}$ from $\fA_{1,i}$ by \nomenclature[Af23]{$\fA_{2,i},\,\fA_{3,i}$}{\eqref{eq:def-fA-2-3}}
\begin{equation}
\fA_{2,i}:=\Ball{2d_{i}}{\fA_{1,i}}\,,\qquad\fA_{3,i}:=\Ball{2d_{i}+\fr}{\fA_{2,i}}\,.\label{eq:def-fA-2-3}
\end{equation}
We infer from Lemma \ref{lem:local-delta-M-extension-estimate}
that $\cU:\,W^{1,p}\of{\fA_{3,i}}\to W^{1,r}\of{\fA_{2,i}}$ is continuous
with the estimate and constants given by Lemma \ref{lem:local-delta-M-extension-estimate}.
\end{assumption}

\begin{thm}[Extensions for mesoscopic regular, isotropic cone mixing geometries]
\label{thm:Rough-meso-estimate} Let $\bP(\omega)$ be an open connected
set and let Assumption \ref{assu:mesoscopic-regular} hold. Let $\bP$
be locally $(\delta,M)$-regular and satisfy Assumptions \ref{assu:M-alpha-bound},
\ref{assu:mesoscopic-voronoi} and $\hat{d}$ be the constant from
(\ref{eq:lem:properties-local-rho-convering-4}). Then for almost
every $\omega$ it holds: for every $l=1,\dots,d$ and $1\leq r<s,\tilde{s}<p$:
\begin{align}
 & \frac{1}{\left|\bQ\right|}\int_{\bQ}\left|\sum_{k:\,\partial_{l}\Phi_{k}>0}\sum_{j:\,\partial_{l}\Phi_{j}<0}\frac{\partial_{l}\Phi_{k}\left|\partial_{l}\Phi_{j}\right|}{D_{l+}^{\Phi}}\left(2-\phi_{0}\right)\left|\cM_{j}u-\cM_{k}u\right|\right|^{r}\nonumber \\
 & \qquad\leq C\of{\bP(\omega)}\left(\frac{1}{\left|\bQ\right|}\int_{\bQ\cap\bP}\left|\nabla u\right|^{p}\right)^{\frac{r}{p}}\label{eq:thm:Rough-meso-estimate-1}\\
 & \qquad\quad+C_{P}\of{\bP(\omega)}\sum_{l}\left(\frac{1}{\left|\bQ\right|}\int_{\bQ\backslash\bP}\sum_{k}\frac{\chi_{\fA_{3,k}}}{\fa}\left|\nabla\phi_{0}\right|^{\tilde{s}}\sum_{\substack{j\neq0:\,\partial_{l}\phi_{j}\partial_{l}\phi_{0}<0}
}\frac{\left|\partial_{l}\phi_{j}\right|}{D_{l+}}\left|\tau_{j}u-\cM_{k}u\right|^{\tilde{s}}\right)^{\frac{r}{\tilde{s}}}\nonumber 
\end{align}
where with $P(x)=x^{d(2r-1)+r}(x^{r+1}+x^{d+1})$, $\fa:=\sum_{k}\chi_{\fA_{3,k}}$
and it holds 
\begin{align*}
C\of{\bP(\omega)} & =\left(\frac{C}{\left|\bQ\right|}\int_{\bP\cap\bQ}\left(\sum_{k}P\of{d_{k}}\chi_{\fA_{3,k}}\right)^{\frac{p}{p-s}}\right)^{\frac{p-s}{p}}\left(\frac{C}{\left|\bQ\right|}\int_{\bP\cap\bQ}\tilde{M}^{\frac{2p\hat{d}}{s-r}}\right)^{\frac{s-r}{p}}\\
C_{P}\of{\bP(\omega)} & =\left(\frac{1}{\left|\bQ\right|}\int_{\bQ\backslash\bP}\sum_{k}P\of{d_{k}}^{\frac{\tilde{s}}{\tilde{s}-r}}\fa^{\frac{\tilde{s}}{\tilde{s}-r}}\frac{\chi_{\fA_{3,k}}}{\fa}\right)^{\frac{\tilde{s}-r}{\tilde{s}}}
\end{align*}
\end{thm}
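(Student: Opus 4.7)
The plan is to build on Lemma \ref{lem:6-5}, which already reduces the left-hand side of (\ref{eq:thm:Rough-meso-estimate-1}) to controlling the weighted sum $\sum_{x_k\sim\sim x_j}|\cM_k u - \cM_j u|^s$ together with a factor polynomial in the cell diameters $d_j$. The remaining deterministic task is therefore to estimate, for each pair of weakly connected centers $x_k\sim\sim x_j$, the mean difference $|\cM_k u - \cM_j u|^s$ by local gradient quantities plus the \emph{boundary-averaging defect} that cannot in general be eliminated without a quantitative connectedness hypothesis, and which is precisely what produces the second term on the right-hand side.

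For each such pair I would exploit that, by definition, $\fA_{2,k}=\Ball{2d_k}{\fA_{1,k}}$ contains both $\Ball{\fr/2}{x_k}$ and $\Ball{\fr/2}{x_j}$. Applying the local construction of Lemma \ref{lem:local-delta-M-extension-estimate} to the function $u-\cM_k u$ on the domain $\fA_{2,k}$ yields an extension $\cU_k u := \cM_k u + \tilde{\cU}_{\fA_{2,k}}(u-\cM_k u) + \hat{\cU}_{\fA_{2,k}}(u-\cM_k u)$ defined on all of $\fA_{2,k}$ and coinciding with $u$ on $\bP\cap\fA_{2,k}$; in particular $\cM_k u$ and $\cM_j u$ are means of $\cU_k u$ over the respective small balls. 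The scaled Poincar\'e inequality of Lemma \ref{lem:scaled-poincare} applied on $\fA_{2,k}$ with radius ratio $d_k/(\fr/2)$ then gives
\[
|\cM_j u - \cM_k u|^s \;\leq\; \fint_{\Ball{\fr/2}{x_j}}\!|\cU_k u - \cM_k u|^s \;\leq\; \fC_{s,s}\!\left(d_k,\tfrac{\fr}{2}\right)\,\frac{1}{|\Ball{\fr/2}{x_j}|}\int_{\fA_{2,k}}|\nabla\cU_k u|^s,
\]
and here $\fC_{s,s}(d_k,\fr/2)$ is where the polynomial factor $d_k^{s+1}+d_k^{d+1}$ inside $P$ first enters.

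It remains to bound $\int|\nabla\cU_k u|^s$ by splitting off $\tilde{\cU}$ and $\hat{\cU}$ via Lemma \ref{lem:local-delta-M-extension-estimate}. The $\tilde{\cU}$-contribution gives a clean bound in terms of $\tilde{M}^{\hat d+1}$ and $|\nabla u|$ on $\fA_{3,k}\cap\bP$, which after summation over $k$ (using the finite-overlap (\ref{eq:assu:mesoscopic-regular-1}), the neighbor-count (\ref{eq:estim-diam-Voronoi-2}), and H\"older at the pair $(s,p)$ to introduce the weight $\tilde{M}^{2p\hat d/(s-r)}$ and the higher-integrability $\|\nabla u\|_{L^p}$) produces the first term on the right-hand side with constant $C(\bP(\omega))$. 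The $\hat{\cU}$-contribution is precisely $|\nabla\phi_0|^{\tilde s}\sum_i\frac{|\partial_l\phi_i|}{D_{l+}}|\tau_i(u-\cM_k u)|^{\tilde s}=|\nabla\phi_0|^{\tilde s}\sum_i\frac{|\partial_l\phi_i|}{D_{l+}}|\tau_i u-\cM_k u|^{\tilde s}$ (since $\tau_i$ is linear and fixes constants), and a further H\"older at the pair $(\tilde s,s)$ absorbs the polynomial geometric weight $P(d_k)^{\tilde s/(\tilde s-r)}$ together with the overlap factor $\fa=\sum_k\chi_{\fA_{3,k}}$ into the constant $C_P(\bP(\omega))$. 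The main obstacle throughout is not any single inequality but the bookkeeping: the exponents in $P(x)=x^{d(2r-1)+r}(x^{r+1}+x^{d+1})$ arise from combining the Poincar\'e scaling $(d_k/\fr)^{s+1}+(d_k/\fr)^{d+1}$, the neighbor-count $(4d_k/\fr)^d$ that appears when passing from pair-sums to single-$k$ sums, and the conjugate exponents demanded by the two successive H\"older inequalities needed to align the indices $r<\tilde s,s<p$; ergodicity itself plays no role in the deterministic estimate and enters only through the ergodic theorem when one passes from the almost-sure pointwise bound to statistical control of $C(\bP(\omega))$ and $C_P(\bP(\omega))$.
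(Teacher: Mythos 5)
Your second half (Poincar\'e on $\fA_{2,k}$ applied to the extension, splitting into the $\tilde{\cU}$- and $\hat{\cU}$-contributions via Lemma \ref{lem:local-delta-M-extension-estimate}, then two H\"older steps) is indeed the engine of the paper's argument. The genuine gap is your first reduction: the paper does \emph{not} route Theorem \ref{thm:Rough-meso-estimate} through Lemma \ref{lem:6-5} (that lemma belongs to the alternative, path-based branch culminating in Theorem \ref{thm:strech-main-thm}), and starting from it you cannot recover the stated inequality. First, Lemma \ref{lem:6-5} bounds the degree-$r$ left-hand side by a weight times the pair sum $\frac{1}{|\bQ|}\sum_{x_k\sim\sim x_j}|\cM_ku-\cM_ju|^s$ \emph{to the first power}; bounding that pair sum by local gradients as you propose therefore ends with $\bigl(\frac{1}{|\bQ|}\int_{\bQ\cap\bP}|\nabla u|^p\bigr)^{s/p}$ and with the boundary defect at power one, whereas (\ref{eq:thm:Rough-meso-estimate-1}) has powers $r/p$ and $r/\tilde{s}$; the stated estimate is homogeneous of degree $r$ in $u$ on both sides, and this homogeneity is lost after the detour. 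Second, the constants come out wrong: Lemma \ref{lem:6-5} has already spent the $\nabla\Phi$-bound and the neighbour count to produce the weight $\bigl(\frac{1}{|\bQ|}\int_{\bP\cap\bQ}(\sum_j d_j^{\frac{r(d-1)+drs}{s-r}}\chi_{\nabla\Phi_j\neq0})^{\frac{s}{s-r}}\bigr)^{\frac{s-r}{s}}$ (the constant $C_3$ of Theorem \ref{thm:strech-main-thm}), which does not occur in $C(\bP(\omega))$, and your Poincar\'e step at exponent $s$ produces $d_k^{s}(d_k^{s+1}+d_k^{d+1})$, not the $d_k^{r}(d_k^{r+1}+d_k^{d+1})$ contained in $P(x)=x^{d(2r-1)+r}(x^{r+1}+x^{d+1})$ --- you even assert that the Poincar\'e scaling yields $(d_k/\fr)^{s+1}+(d_k/\fr)^{d+1}$, which contradicts the $x^{r+1}$ in the target. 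Since these weights are exactly what the moment conditions (Assumption \ref{assu:dirichlet-extension}, Theorem \ref{thm:Main-Theorem-3}) are imposed on, an estimate of ``the same flavour'' with different weights does not prove the theorem as stated. A smaller point: Lemma \ref{lem:6-5} controls the integral over $\bP\cap\bQ$, while (\ref{eq:thm:Rough-meso-estimate-1}) integrates over all of $\bQ$.

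The paper instead works directly on the left-hand side at exponent $r$: using the neighbour count (\ref{eq:estim-diam-Voronoi-2}) in the elementary form $(\sum_k f_k)^r\le\sum_k\alpha_k^{r-1}f_k^r$ with $\alpha_k=(4d_k/\fr)^d$, together with $|\nabla\Phi_k|\le\chi_{\fA_{1,k}}(4d_k/\fr)^d$, it reduces to $\sum_k(4d_k/\fr)^{d(2r-1)}\chi_{\fA_{1,k}}\sum_j\frac{|\partial_l\Phi_j|}{D_{l+}^{\Phi}}|\cM_ju-\cM_ku|^r$; then $|\cM_ju-\cM_ku|^r$ is bounded by the average of $|u-\cM_ku|^r$ over $\Ball{\fr/2}{x_j}$, the $j$-sum collapses (the weights sum to one) to $\int_{\fA_{2,k}}|u-\cM_ku|^r$, the scaled Poincar\'e inequality of Lemma \ref{lem:scaled-poincare} at exponent $r$ gives $C_k=Cd_k^r(d_k^{r+1}+d_k^{d+1})$, Lemma \ref{lem:local-delta-M-extension-estimate} applied with the pair $(r,s)$ produces the $\tilde{M}^{\frac{2s\hat{d}}{s-r}}$-weight plus the $\nabla\phi_0$-defect, and only then the two H\"older inequalities with exponents $p/s$ and $\tilde{s}/(\tilde{s}-r)$ yield $C(\bP(\omega))$ and $C_P(\bP(\omega))$. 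If you drop the appeal to Lemma \ref{lem:6-5} and run your own second half at exponent $r$ in exactly this order, your outline becomes the paper's proof.
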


\begin{rem}
A combination with Lemma \ref{lem:6-4} is possible.
\end{rem}

\begin{proof}
We make use of (\ref{eq:assu:mesoscopic-regular-1}) as well as the
following observation: for each $k=1,\dots K$ let $\alpha_{k}\geq K$.
Then 
\[
\left(\sum_{k=1}^{K}f_{k}\right)^{r}\leq K^{r-1}\sum_{k=1}^{K}f_{k}^{r}\leq\sum_{k=1}^{K}\alpha_{k}^{r-1}f_{k}^{r}\,.
\]
Hence 
\begin{align*}
 & \left|\sum_{k:\,\partial_{l}\Phi_{k}>0}\sum_{j:\,\partial_{l}\Phi_{j}<0}\frac{\partial_{l}\Phi_{k}\left|\partial_{l}\Phi_{j}\right|}{D_{l+}^{\Phi}}\left(2-\phi_{0}\right)\left|\cM_{j}u-\cM_{k}u\right|\right|^{r}\\
 & \qquad\qquad\leq\sum_{k:\,\partial_{l}\Phi_{k}>0}\sum_{j:\,\partial_{l}\Phi_{j}<0}\left(\frac{4d_{k}}{\fr}\right)^{d(r-1)}\frac{\left|\partial_{l}\Phi_{k}\right|^{r}\left|\partial_{l}\Phi_{j}\right|}{D_{l+}^{\Phi}}2\left|\cM_{j}u-\cM_{k}u\right|^{r}
\end{align*}
Given $\left|\nabla\Phi_{k}\right|\leq\chi_{\fA_{1,k}}\left(\frac{4d_{k}}{\fr}\right)^{d}$
we hence find an estimate by 
\[
\sum_{k:\,\partial_{l}\Phi_{k}>0}\sum_{j:\,\partial_{l}\Phi_{j}<0}\left(\frac{4d_{k}}{\fr}\right)^{d(2r-1)}\chi_{\fA_{1,k}}\frac{\left|\partial_{l}\Phi_{j}\right|}{D_{l+}^{\Phi}}2\left|\cM_{j}u-\cM_{k}u\right|^{r}\,.
\]
Next, we obtain 
\[
\left|\cM_{j}u-\cM_{k}u\right|^{r}\leq\left|\fr^{d}\S^{d-1}\right|^{-1}\int_{\Ball{\frac{\fr}{2}}{x_{j}}}\left|u-\cM_{k}u\right|^{r}
\]
and thus 
\begin{align*}
 & \left|\sum_{k:\,\partial_{l}\Phi_{k}>0}\sum_{j:\,\partial_{l}\Phi_{j}<0}\frac{\partial_{l}\Phi_{k}\left|\partial_{l}\Phi_{j}\right|}{D_{l+}^{\Phi}}\left(2-\phi_{0}\right)\left|\cM_{j}u-\cM_{k}u\right|\right|^{r}\\
 & \qquad\leq2\left|\fr^{d}\S^{d-1}\right|^{-1}\sum_{k}\sum_{j:\,\partial_{l}\Phi_{j}<0}\left(\frac{4d_{k}}{\fr}\right)^{d(2r-1)}\chi_{\fA_{1,k}}\frac{\left|\partial_{l}\Phi_{j}\right|}{D_{l+}^{\Phi}}\int_{\Ball{\frac{\fr}{2}}{x_{j}}}\left|u-\cM_{k}u\right|^{r}\\
 & \qquad\leq2\left|\fr^{d}\S^{d-1}\right|^{-1}\sum_{k}\left(\frac{4d_{k}}{\fr}\right)^{d(2r-1)}\chi_{\fA_{1,k}}\int_{\fA_{2,k}}\left|u-\cM_{k}u\right|^{r}\\
 & \qquad\leq2\left|\fr^{d}\S^{d-1}\right|^{-1}\sum_{k}\left(\frac{4d_{k}}{\fr}\right)^{d(2r-1)}\chi_{\fA_{1,k}}C_{k}\int_{\fA_{2,k}}\left|\nabla\cU u\right|^{r}\\
 & \qquad\leq C\sum_{k}\left(\frac{4d_{k}}{\fr}\right)^{d(2r-1)}\chi_{\fA_{1,k}}C_{k}\hat{C}_{k,r,s}\left(\frac{1}{\left|\fA_{3,k}\right|}\int_{\fA_{3,k}\cap\bP}\left|\nabla u\right|^{s}\right)^{\frac{r}{s}}\\
 & \qquad+C\sum_{k}\left(\frac{4d_{k}}{\fr}\right)^{d(2r-1)}\chi_{\fA_{1,k}}C_{k}\frac{C_{0}}{\left|\fA_{3,k}\right|}\int_{\fA_{3,k}\backslash\bP}\left|\nabla\phi_{0}\right|^{r}\sum_{\substack{j\neq0:\,\partial_{l}\phi_{j}\partial_{l}\phi_{0}<0}
}\frac{\left|\partial_{l}\phi_{j}\right|}{D_{l+}}\left|\tau_{j}u-\cM_{k}u\right|^{r}
\end{align*}
where according to Lemmas \ref{lem:scaled-poincare} and \ref{lem:local-delta-M-extension-estimate}
for some $C$ depending only on $r$ and $\fr$:
\begin{align*}
C_{k} & =Cd_{k}^{r}\left(d_{k}^{r+1}+d_{k}^{d+1}\right)\,,\\
\hat{C}_{k,r,s} & =\left(\fint_{\fA_{3,k}\cap\bP}\tilde{M}^{\frac{2s\hat{d}}{s-r}}\right)^{\frac{s-r}{s}}\,.
\end{align*}
We integrate with respect to $\bQ$ and obtain with $P(x)=x^{d(2r-1)+r}(x^{r+1}+x^{d+1})$
\begin{align}
 & \frac{1}{\left|\bQ\right|}\int_{\bQ}\sum_{k}\left(\frac{4d_{k}}{\fr}\right)^{d(2r-1)}\chi_{\fA_{1,k}}C_{k}\hat{C}_{k,r,s}\left(\frac{1}{\left|\fA_{3,k}\right|}\int_{\fA_{3,k}\cap\bP}\left|\nabla u\right|^{s}\right)^{\frac{r}{s}}\nonumber \\
 & \qquad\leq\sum_{k}\left(\frac{4d_{k}}{\fr}\right)^{d(2r-1)}\left|\fA_{1,k}\right|C_{k}\hat{C}_{k,r,s}\left(\frac{1}{\left|\fA_{3,k}\right|}\int_{\fA_{3,k}\cap\bP}\left|\nabla u\right|^{s}\right)^{\frac{r}{s}}\label{eq:thm:Rough-meso-estimate-help-1}\\
 & \qquad\leq C\left(\frac{1}{\left|\bQ\right|}\sum_{k}P\of{d_{k}}\frac{\left|\fA_{1,k}\right|}{\left|\fA_{3,k}\right|}\int_{\fA_{3,k}\cap\bP}\left|\nabla u\right|^{s}\right)^{\frac{r}{s}}\left(\frac{1}{\left|\bQ\right|}\sum_{k}P\of{d_{k}}\frac{\left|\fA_{1,k}\right|}{\left|\fA_{3,k}\right|}\int_{\fA_{3,k}\cap\bP}\tilde{M}^{\frac{2s\hat{d}}{s-r}}\right)^{\frac{s-r}{s}}\,.\nonumber 
\end{align}
For the measurable function $g=\tilde{M}^{\frac{2s\hat{d}}{s-r}}$
on $\Rd$ we find for every $\frac{1}{\tilde{p}}+\frac{1}{\tilde{q}}=1$
\begin{align}
 & \frac{C}{\left|\bQ\right|}\sum_{k}P\of{d_{k}}\frac{\left|\fA_{1,k}\right|}{\left|\fA_{3,k}\right|}\int_{\fA_{3,k}\cap\bP}g(x)\,\d x\nonumber \\
 & \qquad\leq\frac{C}{\left|\bQ\right|}\int_{\bP\cap\bQ}g(x)\sum_{k}P\of{d_{k}}\chi_{\fA_{3,k}}(x)\,\d x\nonumber \\
 & \qquad\leq\left(\frac{C}{\left|\bQ\right|}\int_{\bP\cap\bQ}g^{\tilde{p}}\right)^{\frac{1}{\tilde{p}}}\left(\frac{C}{\left|\bQ\right|}\int_{\bP\cap\bQ}\left(\sum_{k}P\of{d_{k}}\chi_{\fA_{3,k}}\right)^{\tilde{q}}\right)^{\frac{1}{\tilde{q}}}\,.\label{eq:thm:Rough-meso-estimate-help-2}
\end{align}
For the remaining expression note that
\begin{align}
 & \frac{1}{\left|\bQ\right|}\int_{\bQ}\sum_{k}\left(\frac{4d_{k}}{\fr}\right)^{d(2r-1)}\chi_{\fA_{1,k}}C_{k}\frac{C_{0}}{\left|\fA_{3,k}\right|}\int_{\fA_{3,k}\backslash\bP}\left|\nabla\phi_{0}\right|^{r}\sum_{\substack{j\neq0:\,\partial_{l}\phi_{j}\partial_{l}\phi_{0}<0}
}\frac{\left|\partial_{l}\phi_{j}\right|}{D_{l+}}\left|\tau_{j}u-\cM_{k}u\right|^{r}\nonumber \\
 & \qquad\leq\frac{1}{\left|\bQ\right|}\int_{\bQ\backslash\bP}\sum_{k}P\of{d_{k}}\chi_{\fA_{3,k}}\left|\nabla\phi_{0}\right|^{r}\sum_{\substack{j\neq0:\,\partial_{l}\phi_{j}\partial_{l}\phi_{0}<0}
}\frac{\left|\partial_{l}\phi_{j}\right|}{D_{l+}}\left|\tau_{j}u-\cM_{k}u\right|^{r}\label{eq:thm:Rough-meso-estimate-help-3}
\end{align}
We denote the right hand side of (\ref{eq:thm:Rough-meso-estimate-help-3})
by $I_{1}$. Using $\fa$ we obtain from H\"older's inequality together
with Jensen's inequality 
\begin{equation}
I_{l}\leq C_{P}\left(\frac{1}{\left|\bQ\right|}\int_{\bQ\backslash\bP}\sum_{k}\frac{\chi_{\fA_{3,k}}}{\fa}\left|\nabla\phi_{0}\right|^{\tilde{s}}\sum_{\substack{j\neq0:\,\partial_{l}\phi_{j}\partial_{l}\phi_{0}<0}
}\frac{\left|\partial_{l}\phi_{j}\right|}{D_{l+}}\left|\tau_{j}u-\cM_{k}u\right|^{\tilde{s}}\right)^{\frac{r}{\tilde{s}}}\,,\label{eq:thm:Rough-meso-estimate-help-4}
\end{equation}
where $C_{P}$ and $\fa$ are defined in the statement and where we
used $\sum_{k}\frac{\chi_{\fA_{3,k}}}{\fa}\equiv1$ and $\sum\frac{\left|\partial_{l}\phi_{j}\right|}{D_{l+}}\equiv1$.

Taking together (\ref{eq:thm:Rough-meso-estimate-help-1})--(\ref{eq:thm:Rough-meso-estimate-help-4})
we conclude for $\tilde{p}=\frac{p}{s}$ and with boundedness $0<c<\frac{\left|\fA_{1,k}\right|}{\left|\fA_{3,k}\right|}<C<\infty$.
\end{proof}

\subsection{\label{subsec:Extension-for-Statistically}Extension for Statistically
Harmonic Domains}
\begin{defn}
\label{def:statis-mixing}A random geometry $\bP(\omega)$ is statistically
$s$-harmonic if there exist constants $C_{k}>0$, $k\in\N$ and sets
$\fA_{4,k}\supset\fA_{3,k}$ such that for every $x_{k}\in\X_{\omega}$
\[
\int_{\fA_{3,k}\cap\Rd_{3}\cap\bP}\left|u-\cM_{k}u\right|^{s}\leq\int_{\fA_{4,k}\cap\bP}C_{k}\left|\nabla u\right|^{s}\,.
\]
\end{defn}

\begin{thm}
\label{thm:final-estimate}Let $\bP(\omega)$ be a stationary ergodic
random open set which is $\left(\delta,M\right)$-regular, isotropic
cone mixing for $\fr>0$ and $f(R)$, statistically $s$-harmonic
and let Assumption \ref{assu:mesoscopic-regular} hold. Then for every
$l=1,\dots,d$ and $1\leq r<s<p$ and every $1<\alpha,\tilde{p}<\infty$
it holds 
\begin{align*}
 & \frac{1}{\left|\bQ\right|}\int_{\bQ\backslash\bP}\sum_{k}P\of{d_{k}}\chi_{\fA_{3,k}}\left|\nabla\phi_{0}\right|^{r}\sum_{\substack{j\neq0:\,\partial_{l}\phi_{j}\partial_{l}\phi_{0}<0}
}\frac{\left|\partial_{l}\phi_{j}\right|}{D_{l+}}\left|\tau_{j}u-\cM_{k}u\right|^{r}\\
 & \qquad\leq\left(\frac{C}{\left|\bQ\right|}\int_{\bP\cap\bQ\cap\Rd_{3}}\left(\sum_{k}P\of{d_{k}}\chi_{\fA_{3,k}}\right)^{\frac{\tilde{p}}{\tilde{p}-1}}\right)^{\frac{\left(s-1\right)\left(\tilde{p}-1\right)}{\tilde{p}s}}\left(\frac{C}{\left|\bQ\right|}\int_{\partial\bP\cap\bQ}\tilde{\delta}^{1-\alpha r\tilde{p}\frac{s}{s-r}}\right)^{\frac{1}{\alpha\tilde{p}}}\cdot\\
 & \qquad\quad\left(\frac{C}{\left|\bQ\right|}\int_{\bP\cap\bQ\cap\Rd_{3}}\left(\left(\tilde{M}^{d+r}\right)^{\tilde{p}\frac{s}{s-r}}\tilde{M}^{\frac{\left(d-2\right)}{\alpha}}\right)^{\frac{\alpha}{\alpha-1}}\right)^{\frac{\alpha-1}{\alpha\tilde{p}}}\cdot\left(\frac{1}{\left|\bQ\right|}\int_{\bQ}\left|\nabla u\right|^{p}\right)^{\frac{s}{p}}\cdot\\
 & \qquad\quad\left(\frac{1}{\left|\bQ\right|}\int_{\bQ}\left(\sum_{k}P\of{d_{k}}\chi_{\fA_{4,k}}C_{k}\right)^{\frac{p}{p-s}}\right)^{\frac{p-s}{ps}r}
\end{align*}
\end{thm}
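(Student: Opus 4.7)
The starting point is to eliminate $|\tau_j u - \cM_k u|^r$ in favour of an average of $|u - \cM_k u|^r$ over $\fA_{3,k}\cap\bP$, which will then be upgraded to an $L^s$ quantity and bounded via the statistical $s$-harmony. Fix an index $k$ with $\chi_{\fA_{3,k}}(x)\neq 0$, and observe that the sum over $j$ is restricted to indices with $\partial_l\phi_j\neq 0$; for such $j$ the ball $B_{\fr_j}(y_j)$ defining $\tau_j u$ lies within $\fA_{3,k}$ (since the supports of the $\phi_j$ lie inside $A_{1,j}\subset\fA_{2,k}$). Using Jensen's inequality on the ball $B_{\fr_j}(y_j)$ together with the lower bound $\fr_j\gtrsim \hat{\rho}_j\tilde{M}_j^{-\alpha/d}$ from Assumption~\ref{assu:M-alpha-bound} and $\sum_j|\partial_l\phi_j|/D_{l+}\equiv 1$, one gets the pointwise bound
\[
\sum_{j:\,\partial_l\phi_j\partial_l\phi_0<0}\frac{|\partial_l\phi_j|}{D_{l+}}\,|\tau_j u - \cM_k u|^r \;\le\; C\,\tilde{M}^{d+r}(x)\fint_{\fA_{3,k}\cap\bP}|u-\cM_k u|^r\,,
\]
where the factor $\tilde{M}^{d+r}$ absorbs the geometric mismatch between the ball $B_{\fr_j}(y_j)$ and the support of $\phi_j$ (cf.~the proof of Lemma~\ref{lem:local-delta-M-extension-estimate}).

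Next, apply Jensen/H\"older in the index $s>r$ to get $\bigl(\fint_{\fA_{3,k}\cap\bP}|u-\cM_k u|^s\bigr)^{r/s}$, and then invoke statistical $s$-harmony (Definition~\ref{def:statis-mixing}) to obtain
\[
\int_{\fA_{3,k}\cap\Rd_3\cap\bP}|u-\cM_k u|^s \;\le\; C_k\int_{\fA_{4,k}\cap\bP}|\nabla u|^s\,.
\]
Plugging back, the quantity $I_l$ on the left hand side of the theorem is pointwise dominated by
\[
\sum_k P(d_k)\chi_{\fA_{3,k}}\,|\nabla\phi_0|^r\,\tilde{M}^{d+r}\,\Bigl(\fint_{\fA_{4,k}\cap\bP}C_k|\nabla u|^s\Bigr)^{r/s}\,.
\]

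Now integrate over $\bQ\setminus\bP$ and apply H\"older three times. A first H\"older with exponents $p/s$ and $p/(p-s)$ moves $|\nabla u|^s$ up to $|\nabla u|^p$, producing the factor $\bigl(\int(\sum_k P(d_k)\chi_{\fA_{4,k}}C_k)^{p/(p-s)}\bigr)^{(p-s)r/(ps)}$ and the factor $(\int|\nabla u|^p)^{s/p}$ (after carefully redistributing the weights $P(d_k)\chi_{\fA_{3,k}}$, which are bounded pointwise by $\sum_k P(d_k)\chi_{\fA_{3,k}}$). A second H\"older with exponent $\tilde{p}$ separates the cell weight $\sum_k P(d_k)\chi_{\fA_{3,k}}$ (yielding the first bracket with exponent $(s-1)(\tilde{p}-1)/(\tilde{p}s)$ after raising to the appropriate power) from the geometric weight $|\nabla\phi_0|^r \tilde{M}^{d+r}$. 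A third H\"older with exponent $\alpha$ splits this geometric weight into the $\tilde{M}$-part and the $|\nabla\phi_0|^r\tilde{M}^{(d-2)/\alpha}$-part; the latter is then reduced to a boundary integral of $\tilde{\delta}^{1-\alpha r\tilde{p}s/(s-r)}$ via Lemma~\ref{lem:delta-tilde-construction-estimate}, using that $|\nabla\phi_0|\le C\tilde{\delta}^{-1}$ on $\bQ\setminus\bP$ and the bound $M_{[\eta/8]}^r\le \tilde{M}^r$.

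The main obstacle is purely bookkeeping: the three H\"older applications must be carried out so that the sum of the exponents in the product on the right hand side is exactly $1$, that the weights redistribute consistently across the overlapping sets $\fA_{3,k}$ (whose multiplicity is uniformly bounded thanks to Lemma~\ref{lem:Iso-cone-geo-estimate}.2) and Assumption~\ref{assu:mesoscopic-regular}), and that the extension from $\fA_{3,k}\cap\Rd_3\cap\bP$ to $\fA_{3,k}\cap\bP$ required by statistical $s$-harmony is absorbed into a harmless modification of the $\tilde{M}$-weight. Once the exponents are matched, the estimate of the theorem follows directly by substitution; all remaining steps are applications of Jensen's and H\"older's inequalities together with the local covering bounds of Lemma~\ref{lem:properties-local-rho-convering} and Lemma~\ref{lem:delta-tilde-construction-estimate}.
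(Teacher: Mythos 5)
Your first displayed inequality is the weak link and it does not hold. Jensen on the ball $\Ball{\fr_j}{y_j}$ gives
\[
|\tau_j u-\cM_k u|^r\le\fint_{\Ball{\fr_j}{y_j}}|u-\cM_k u|^r = |\Ball{\fr_j}{y_j}|^{-1}\int_{\Ball{\fr_j}{y_j}}|u-\cM_k u|^r,
\]
so to arrive at $\fint_{\fA_{3,k}\cap\bP}|u-\cM_k u|^r$ you need to trade $|\Ball{\fr_j}{y_j}|^{-1}\sim \fr_j^{-d}$ for $|\fA_{3,k}\cap\bP|^{-1}\sim d_k^{-d}$, which introduces an uncontrolled factor of order $(d_k/\fr_j)^d$. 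Using $\fr_j\gtrsim\hat\rho_j\tilde M_j^{-\alpha/d}$ this is $\sim(d_k/\hat\rho_j)^d\tilde M_j^\alpha$; the ratio $d_k/\hat\rho_j$ is a mesoscopic-over-microscopic length ratio and is \emph{not} bounded by any power of $\tilde M$. Consequently the pointwise domination you assert, with only $\tilde M^{d+r}$ as the geometric weight, is false, and the error propagates into the second displayed pointwise bound (the one with $(\fint_{\fA_{4,k}\cap\bP}C_k|\nabla u|^s)^{r/s}$): compared with the theorem's right-hand side, which has an un-normalised $\int_\bQ(\sum_k P(d_k)\chi_{\fA_{4,k}}C_k)^{p/(p-s)}$, your averaging over $\fA_{4,k}$ injects extra, uncancelled powers of $d_k^{-d}$.

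The paper's argument does not attempt a pointwise domination of this kind. It keeps the $j$-dependent normalisation $\rho_j^{-d}M_j^d$ in place, first integrates the factor $\chi_{A_{1,j}}\chi_{\fA_{3,k}}$ over $\bQ\setminus\bP$ (which yields $|A_{1,j}|\sim\rho_j^d$ and exactly cancels $\rho_j^{-d}$), and only then rewrites the surviving expression $\sum_{j,k}\rho_j^{-r}M_j^d\int_{\Ball{\fr_j}{y_j}\cap\fA_{3,k}}|u-\cM_k u|^r$ as a single spatial integral and applies H\"older in $s/(s-r)$, $s/r$. Only after that step can one exploit the mutual disjointness of the $\Ball{\fr_j}{y_j}$ inside $\Rd_3$ (giving $\sum_j\chi_{\Ball{\fr_j}{y_j}}\le\chi_{\Rd_3}$) before invoking statistical $s$-harmony. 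The remaining $\rho_j^{-r}$ is what ultimately feeds the boundary integral of $\tilde\delta$, not a pointwise $\tilde M$-factor. Your later H\"older/$\alpha$-split, and the use of Lemma~\ref{lem:delta-tilde-construction-estimate}, are in the right spirit, but the $\tilde M^{(d-2)/\alpha}$ factor in the theorem originates from that lemma's boundary covering (the $M_{[\eta/4]}^{d-2}$ term), not from splitting $\tilde M^{d+r}$ across the H\"older exponent. To repair the argument you would need to abandon the pointwise bound and follow the integrate-first, disjointness-then-H\"older scheme.
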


\begin{proof}
We make use of $\left|\nabla\phi_{0}\right|\leq C\rho_{j}$ on $A_{1,j}$
as well as the definition of $\tau_{j}u$ to obtain that the latter
expression is bounded by (compare also with the calculation leading
to (\ref{eq:assu:M-alpha-bound-rough}))
\begin{align*}
 & \frac{1}{\left|\bQ\right|}\int_{\bQ\backslash\bP}\sum_{k}P\of{d_{k}}\chi_{\fA_{3,k}}\left|\nabla\phi_{0}\right|^{r}\sum_{\substack{j\neq0:\,\partial_{l}\phi_{j}\partial_{l}\phi_{0}<0}
}\frac{\left|\partial_{l}\phi_{j}\right|}{D_{l+}}\left|\tau_{j}u-\cM_{k}u\right|^{r}\\
 & \qquad\leq\frac{1}{\left|\bQ\right|}\int_{\bQ\backslash\bP}\sum_{k}P\of{d_{k}}\chi_{\fA_{3,k}}\sum_{\substack{j\neq0}
}\rho_{j}^{-r}\rho_{j}^{-d}M_{j}^{d}\chi_{A_{1,j}}\int_{\Ball{\fr_{j}}{y_{j}}\cap\fA_{3,k}}\left|u-\cM_{k}u\right|^{r}\\
 & \qquad\leq\frac{1}{\left|\bQ\right|}\sum_{k}P\of{d_{k}}\sum_{\substack{j\neq0}
}\rho_{j}^{-r}M_{j}^{d}\int_{\Ball{\fr_{j}}{y_{j}}\cap\fA_{3,k}}\left|u-\cM_{k}u\right|^{r}\\
 & \qquad\leq\frac{1}{\left|\bQ\right|}\int_{\bQ}\sum_{k}P\of{d_{k}}\sum_{\substack{\substack{j\neq0}
}
}\rho_{j}^{-r}M_{j}^{d}\chi_{\Ball{\fr_{j}}{y_{j}}\cap\fA_{3,k}}\left|u-\cM_{k}u\right|^{r}\\
 & \qquad\leq\left(\frac{1}{\left|\bQ\right|}\int_{\bQ}\sum_{k}P\of{d_{k}}\sum_{\substack{j\neq0}
}\left(\rho_{j}^{-r}M_{j}^{d}\right)^{\frac{s}{s-r}}\chi_{\Ball{\fr_{j}}{y_{j}}\cap\fA_{3,k}}\right)^{\frac{s-r}{s}}\\
 & \qquad\qquad\left(\frac{1}{\left|\bQ\right|}\int_{\bQ}\sum_{k}P\of{d_{k}}\sum_{\substack{j\neq0}
}\chi_{\Ball{\fr_{j}}{y_{j}}\cap\fA_{3,k}}\left|u-\cM_{k}u\right|^{s}\right)^{\frac{r}{s}}\,.
\end{align*}
We use that $\Ball{\fr_{j}}{y_{j}}$ are mutually disjoint and $\Ball{\fr_{j}}{y_{j}}\subset\Rd_{3}$
to find

\begin{align*}
 & \frac{1}{\left|\bQ\right|}\int_{\bQ}\sum_{k}P\of{d_{k}}\sum_{\substack{\substack{j\neq0}
}
}\chi_{\Ball{\fr_{j}}{y_{j}}\cap\fA_{3,k}}\left|u-\cM_{k}u\right|^{s}\\
 & \qquad\leq\frac{1}{\left|\bQ\right|}\int_{\bQ}\sum_{k}P\of{d_{k}}\chi_{\fA_{3,k}\cap\Rd_{3}}\left|u-\cM_{k}u\right|^{s}\\
 & \qquad\leq\frac{1}{\left|\bQ\right|}\int_{\bQ}\sum_{k}P\of{d_{k}}\chi_{\fA_{4,k}}C_{k}\left|\nabla u\right|^{s}
\end{align*}
where we have used the statistical $s$-connectedness. Similar to
the proof of Theorem \ref{thm:Rough-meso-estimate} we observe that
\begin{multline*}
\left(\frac{1}{\left|\bQ\right|}\int_{\bQ}\sum_{k}P\of{d_{k}}\sum_{\substack{\substack{j\neq0}
}
}\left(\rho_{j}^{-r}M_{j}^{d}\right)^{\frac{s}{s-r}}\,\chi_{\Ball{\fr_{j}}{y_{j}}\cap\fA_{3,k}}\d x\right)\\
\leq\left(\frac{C}{\left|\bQ\right|}\int_{\bP\cap\bQ\cap\Rd_{3}}\left(\tilde{\rho}^{-r}M_{j}^{d}\right)^{\tilde{p}\frac{s}{s-r}}\right)^{\frac{1}{\tilde{p}}}\left(\frac{C}{\left|\bQ\right|}\int_{\bP\cap\bQ\cap\Rd_{3}}\left(\sum_{k}P\of{d_{k}}\chi_{\fA_{3,k}}\right)^{\tilde{q}}\right)^{\frac{1}{\tilde{q}}}
\end{multline*}
and 
\begin{align*}
 & \frac{1}{\left|\bQ\right|}\int_{\bQ}\sum_{k}P\of{d_{k}}\chi_{\fA_{4,k}}C_{k}\left|\nabla u\right|^{s}\\
 & \qquad\leq\left(\frac{1}{\left|\bQ\right|}\int_{\bQ}\left(\sum_{k}P\of{d_{k}}\chi_{\fA_{4,k}}C_{k}\right)^{\frac{p}{p-s}}\right)^{\frac{p-s}{p}}\left(\frac{1}{\left|\bQ\right|}\int_{\bQ}\left|\nabla u\right|^{p}\right)^{\frac{s}{p}}\,.
\end{align*}
Finally, Lemma \ref{lem:delta-tilde-construction-estimate} yields
with $\tilde{\rho}\geq C\tilde{\delta}/\tilde{M}$
\begin{multline*}
\frac{C}{\left|\bQ\right|}\int_{\bP\cap\bQ\cap\Rd_{3}}\left(\tilde{\rho}^{-r}\tilde{M}^{d}\right)^{\tilde{p}\frac{s}{s-r}}\\
\leq\left(\frac{C}{\left|\bQ\right|}\int_{\partial\bP\cap\bQ}\tilde{\delta}^{1-\alpha r\tilde{p}\frac{s}{s-r}}\right)^{\frac{1}{\alpha}}\left(\frac{C}{\left|\bQ\right|}\int_{\bP\cap\bQ\cap\Rd_{3}}\left(\left(\tilde{M}^{d+r}\right)^{\tilde{p}\frac{s}{s-r}}\tilde{M}^{\frac{\left(d-2\right)}{\alpha}}\right)^{\frac{\alpha}{\alpha-1}}\right)^{\frac{\alpha-1}{\alpha}}
\end{multline*}
\end{proof}

\section{\label{sec:Construction-of-Macroscopic-1}Construction of Macroscopic
Extension Operators II: Admissible Paths}

In this section, we will use admissible paths on connected sets in
order to estimate the (so far uncontrolled) terms $\left|\tau_{i}u-\cM_{j}u\right|$ in
Theorems \ref{thm:Fine-meso-estimate} and \ref{thm:Rough-meso-estimate}
in terms of $\nabla u$.

Knowing there exists an admissible path (by Theorem \ref{thm:Ex-adm-path}),
it remains to deal with the non-uniqueness of the path. Note there
is no clear distinction which puts one path in favor of others. While
this could be seen as a drawback, it can also be considered as an
opportunity, since it allows to distribute the ``weight'' of integration
along the paths more uniformly among the total volume. This is the
basic idea of this section.

\subsection{\label{subsec:Connected-Preliminaries}Preliminaries}

Given an open connected set $\bP$ and a countable family of points
$\X_{\fr}$ satisfying Assumption \ref{assu:mesoscopic-voronoi} we
extend the covering $A_{1,j}$ resp. $A_{2,j}$ of $\partial\bP$
from Section \ref{subsec:5-Preliminaries} (e.g. (\ref{eq:A123-k}))
to the inner of $\bP$ using Lemma \ref{lem:cover-iso-mixing-delta-M}.
In this context, we remind the reader of (\ref{eq:def:Y-Partial-Y})
and Definition \ref{def:Y-Partial-Y} and introduce the notation 
\[
A_{1}(y)=\begin{cases}
A_{1,k} & \text{if }y=p_{k}\in\X_{\partial}\\
\Ball{\tilde{\eta}\of y}y & \text{if }y\in\mathring{\Y}
\end{cases}\,,\quad A_{2}(y)=\begin{cases}
A_{2,k} & \text{if }y=p_{k}\in\X_{\partial}\\
\Ball{3\frac{1}{2}\tilde{\eta}\of y}y & \text{if }y\in\mathring{\Y}
\end{cases}\,.
\]
We find the following
\begin{lem}
\label{lem:bounded-num-a2}There exists $C>0$ independent from $\bP$
such that for every $x\in\bP$ 
\[
\#\left\{ y\in\mathring{\Y}:\;x\in A_{2}(y)\right\} \leq C\,.
\]
\end{lem}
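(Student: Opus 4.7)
My plan is to count separately the contributions from $y \in \mathring{\X}$ and from $y \in \Y_{\partial\X}$, reducing each to finite-overlap estimates already established in Section \ref{subsec:Connectedness-of--Regular-sets}.

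For $y = \hat{p}_j \in \mathring{\X}$, where $A_2(y) = \Ball{3.5\tilde{\eta}(y)}{y}$ with $\tilde{\eta} = \tfrac{1}{4}\min\{\dist(\cdot,\partial\bP),2\fr\}$, I observe that if $x \in A_2(\hat{p}_j) \cap A_2(\hat{p}_i)$ then $|\hat{p}_j - \hat{p}_i| \leq 3.5(\tilde{\eta}(\hat{p}_j) + \tilde{\eta}(\hat{p}_i))$, and the $\tfrac{1}{4}$-Lipschitz property of $\tilde{\eta}$ forces $\tilde{\eta}(\hat{p}_j)$ and $\tilde{\eta}(\hat{p}_i)$ to be comparable (the inequality $|a-b| \leq \tfrac{7}{8}(a+b)$ restricts the ratio to $[1/15,15]$). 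Consequently every such $\hat{p}_j$ lies in a ball around $x$ of radius proportional to $\tilde{\eta}_{\max} := \max_j \tilde{\eta}(\hat{p}_j)$. The construction of Theorem \ref{thm:delta-M-rho-covering}, on which Lemma \ref{lem:cover-iso-mixing-delta-M} is built, guarantees pairwise separations of the centers proportional to their $\tilde{\eta}$ values (see (\ref{eq:min-dist-eta}) when balls overlap and the level-by-level construction otherwise). A standard volume/packing argument in $\Rd$ then yields a bound depending only on the dimension.

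For $y = y_k \in \Y_{\partial\X}$: since $y_k \in \Ball{\tilde{\rho}_k/8}{p_k} \cap \bP$ and $\tilde{\eta}(y_k) \leq \dist(y_k,\partial\bP)/4 \leq \tilde{\rho}_k/32$, the ball $A_2(y_k)$ is contained in a mild enlargement of $A_{2,k} = \Ball{3\tilde{\rho}_k}{p_k}$. Hence $\#\{k : x \in A_2(y_k)\}$ is dominated by $\#\{k : x \in A_{2,k}\}$, which is uniformly bounded by (\ref{eq:lem:properties-local-rho-convering-3}) of Lemma \ref{lem:properties-local-rho-convering}.

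Adding the two dimensional bounds gives the desired universal $C$. The only delicate point---which I expect to be the main (minor) obstacle---is verifying that the packing constant from the first step and the comparability ratios are truly independent of $\bP$; this follows because they come solely from the fixed Lipschitz constant $\tfrac{1}{4}$ and the metric geometry of $\Rd$.
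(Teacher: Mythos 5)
Your proof is correct and takes essentially the same route as the paper: both reduce the claim to finite-overlap packing estimates built on the comparability of the local scales and lower bounds on mutual separations of centers established in Sections \ref{subsec:Microscopic-Regularity} and \ref{subsec:Connectedness-of--Regular-sets}. A small organizational improvement in your version: you split $\mathring\Y$ into $\mathring\X$ and $\Y_{\partial\X}$, handling the first by the $\tfrac14$-Lipschitz comparability of $\tilde\eta$ together with the separation guaranteed by (\ref{eq:min-dist-eta}) (or non-overlap of the $\hat B_k$ otherwise) and the second by the clean containment $A_2(y_k)\subset A_{2,k}$ (indeed $A_2(y_k)\subset A_{1,k}$, since $\tilde\eta(y_k)\leq\tilde\rho_k/32$), which reduces directly to (\ref{eq:lem:properties-local-rho-convering-3}). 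The paper instead lumps all of $\mathring\Y$ into its second step under the comparison $\eta(y)\approx\eta(x)$ and invokes ``the mutual minimal distance of neighboring points in terms of $\tilde\eta$'' without distinguishing that this lower bound is stated only for the $\hat p_k\in\mathring\X$; your explicit split sidesteps that ambiguity.
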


\begin{proof}
For two points $p_{i},p_{j}\in\partial\X$ such that $x\in A_{2,i}\cap A_{2,j}$
it holds due to the triangle inequality 
\begin{equation}
\left|p_{i}-p_{j}\right|\leq\left|x-p_{j}\right|+\left|p_{i}-x\right|\leq3\left(\tilde{\rho_{i}}+\tilde{\rho}_{j}\right)\leq\max\left\{ 6\tilde{\rho}_{i},6\tilde{\rho}_{j}\right\} \,.\label{eq:lem:bounded-num-a2-1}
\end{equation}
Let $\X_{\partial}(x):=\left\{ p_{i}\in\partial\X:\,x\in\overline{\Ball{3\tilde{\rho}_{i}}{p_{i}}}\right\} $
and choose $\tilde{p}\in\X_{\partial}(x)$ such that $\tilde{\rho}_{\text{m}}:=\tilde{\rho}\of{\tilde{p}}$
is maximal. Then $\X_{\partial}(x)\subset\Ball{6\tilde{\rho}_{\text{m}}}{\tilde{p}}$
by (\ref{eq:lem:bounded-num-a2-1}) and every $p_{i}\in\X_{\partial}(x)$
satisfies $\tilde{\rho}_{\text{m}}>\tilde{\rho}_{i}>\frac{1}{3}\tilde{\rho}_{\text{m}}$
(Lemma \ref{lem:eta-lipschitz}). In view of (\ref{eq:cor:cover-boundary-h1})
this lower local bound of $\tilde{\rho}_{i}$ implies a lower local
bound on the mutual distance of the $p_{i}$. Since this distance
is proportional to $\tilde{\rho}_{\text{m}}$, and since $\tilde{\rho}_{\text{m}}>\tilde{\rho}_{i}>\frac{1}{3}\tilde{\rho}_{\text{m}}$,
this implies for some constant $C>0$ independent of $x$ or $\bP$
that
\[
\#\left\{ y\in\partial\X:\;x\in A_{2}(y)\right\} \leq C\,.
\]
Now let $y\in\mathring{\Y}\backslash\partial\X$ and $x\in A_{2}(y)=\Ball{\frac{7}{8}\eta}y$.
We show 
\[
\eta(y)<8\eta(x)<16\eta(y)\,.
\]
For the first inequality, observe that $\eta(x)\leq\frac{1}{8}\eta(y)$
is equivalent with $\dist\of{x,\partial\bP}\leq\frac{1}{8}\dist\of{y,\partial\bP}$
and hence
\begin{align*}
 &  & \dist\of{y,\partial\bP} & \leq\dist\of{x,\partial\bP}+\left|x-y\right|\\
 &  &  & \leq\frac{1}{8}\dist\of{y,\partial\bP}+\left|x-y\right|\\
 & \Rightarrow & \left|x-y\right| & \geq\frac{7}{8}\dist\of{y,\partial\bP}\,.
\end{align*}
For the second inequality, assume $\tilde{\eta}(y)<\tilde{\eta}(x)$.
Then $y$ lies closer to the boundary than $x$ and $x\in A_{2}(y)$
implies 
\[
\eta(x)=\dist\left(x,\partial\bP\right)\leq\dist(y,\partial\bP)+\left|x-y\right|\leq\eta(y)+\frac{7}{2}\tilde{\eta}(y)\leq2\eta(y)\,.
\]
The mutual minimal distance of neighboring points in terms of $\tilde{\eta}$
now implies for some $C$ independent from $x$ and $\bP$ 
\[
\#\left\{ y\in\mathring{\Y}\backslash\partial\X:\;x\in A_{2}(y)\right\} \leq C\,.
\]
\end{proof}

\begin{defn}
Let $\G_{\ast}(\bP)$ be a connected sub-graph of $\G_{0}(\bP)$.
Let $x_{i}\in\X_{\fr}$ and $u_{i}:=u_{x_{i}}$ be the solution of
the discrete Laplace equation (\ref{eq:lem:existence-cLu-equiv-delta-x})
for $x=x_{i}$ on the graph $\G_{\ast}(\bP)$. For every $z\in\mathring{\Y}\backslash\{x_{i}\}$
let 
\[
\O_{\ast,i}(z):=\left\{ \tilde{y}\in\mathring{\Y}:\,u_{i}\of{\tilde{y}}>u_{i}(z)\right\} 
\]
 the neighbors corresponding the \emph{outgoing branches of admissible
paths through $y$}, and we assign to each $\tilde{y}\in\O_{\ast,i}(z)$
the weight $w_{\ast}\of{z,\tilde{y}}=w_{\ast,1,2}\of{z,\tilde{y}}$
of the branch $\of{z,\tilde{y}}$ where either 
\begin{align*}
w_{\ast,1}\of{z,\tilde{y}} & =\left(u_{i}\of{\tilde{y}}-u_{i}(z)\right)/\left(\sum_{y\in\O_{\ast,i}(z)}\left(u_{i}\of y-u_{i}(z)\right)\right)\,,\\
w_{\ast,2}\of{z,\tilde{y}} & =\#\O_{\ast,i}(z)^{-1}\,.
\end{align*}
For $Y=\left(y_{1},\dots y_{N}\right)\in\Apaths_{\ast}\of{p_{j},x_{i}}$
we define the weight of the path $Y$ by 
\[
W_{\ast}(Y):=W_{\ast}\of{y_{1},\dots y_{N}}:=\prod_{i=1}^{N-1}w_{\ast}\of{y_{i},y_{i+1}}\,.
\]
\end{defn}

\begin{rem}
We observe 
\[
\sum_{Y\in\Apaths_{\ast}\of{p_{j},x_{i}}}W_{\ast}(Y)=1\,.
\]
This holds by induction along the path and different branches since
in every $z\in\mathring{\Y}\backslash\{x_{i}\}$ it holds $\sum_{y\in\O_{\ast,i}(z)}w_{\ast}(z,y)=1$.
\end{rem}

\subsection{Extension for Connected Domains}

In this section, we discuss how the graphs built in Section \ref{subsec:Connectedness-of--Regular-sets}
can be used to derive estimates on $f(u)$ given in Theorem \ref{thm:Fine-meso-estimate}.
The remaining constant on the right hand side is given in terms of
the balls $\Ball{\fr_{i}}{p_{i}}$ and length of the paths between
$p_{i}$ and $x_{j}$ or $x_{j}$ and $x_{k}$ respectively. Although
one could go even more into details and try to generally decouple
these effects, this is not helpful for our examples in Section \ref{sec:Sample-Geometries}
below. Hence we leave the results of this section as they are but
encourage further investigation in the future.

\subsubsection*{The idea}

We first consider the case of a general graph $\left(\Y,\G(\bP)\right)$
on $\bP$ and do not claim that paths in the classes $\Apaths$ are
fully embedded into $\bP$. In particular, we drop for a moment the
concept of local connectivity and we allow paths to intersect with
$\Rd\backslash\bP$. Let $x_{j}\in\X_{\fr}$, $p_{i}\in\Y_{\X\partial}$
and $Y=\left(y_{1},\dots,y_{N}\right)\in\Apaths\of{p_{i},x_{j}}$.
In the following short calculation, one may think of $\widetilde{\nabla u}$
as a function related to $\nabla\left(\cU u\right)$, though the following
calculations will reveal that it is not exactly what we mean. Nevertheless,
recalling Notation \ref{nota:Apaths} for $Y(x)$ and $Y=\bigcup_{x}Y(x)$
it holds 
\begin{align}
\left|\tau_{i}u-\cM_{j}u\right|^{s} & =\left|\frac{1}{\left|\Ball{\fr_{i}}0\right|}\int_{\Ball{\fr_{i}}0}u(x+p_{i})-\frac{1}{\left|\Ball{\frac{\fr}{16}}{x_{j}}\right|}\int_{\Ball{\frac{\fr}{16}}0}u(x+x_{j})\right|^{s}\nonumber \\
 & =\left|\frac{1}{\left|\Ball{\frac{\fr}{16}}0\right|}\int_{\Ball{\frac{\fr}{16}}0}\left(u\of{\frac{16}{\fr}\fr_{i}(x+p_{i})}-u(x+x_{j})\right)\right|^{s}\nonumber \\
 & \leq\sum_{Y\in\Apaths\of{p_{i},x_{j}}}W(Y)\left|\Ball{\frac{\fr}{16}}0\right|^{-1}\int_{\Ball{\fr_{i}}0}\left|\int_{Y(x)}\left|\widetilde{\nabla u}\right|\right|^{s}\d x\nonumber \\
 & \leq C\sum_{Y\in\Apaths\of{p_{i},x_{j}}}W(Y)\left|\Ball{\frac{\fr}{16}}0\right|^{-1}\int_{Y}\left|\widetilde{\nabla u}\right|^{s}\Length\of Y^{\frac{s-1}{s}}\,.\label{eq:estimate-rough-general}
\end{align}
Since $\widetilde{\nabla u}$ is related to $\nabla\cU u$, the latter
formula reveals that the terms $\left|\tau_{i}u-\cM_{j}u\right|^{s}$
may lead to an ``entanglement'' of $\tilde{M}_{\hat{\rho}}$ and
the properties of the paths $\Apaths$. In what follows, we will resolve
the latter calculation in more details to prepare this discussion.

In what follows, we will make use of $Y=\left(y_{1}=p_{i},\dots y_{N}=x_{j}\right)$
and 
\[
u\left(\frac{16}{\fr}\fr_{i}(x+p_{i})\right)-u(x+x_{j})=\sum_{k=1}^{N-1}u\of{\frac{16}{\fr}\fr(y_{k})x+y_{k}}-u\of{\frac{16}{\fr}\fr(y_{k+1})x+y_{k+1}}\,,
\]
and we write $Y(y_{k},y_{k+1},x)$ for the straight line segment connecting
$\frac{16}{\fr}\fr(y_{k})x+y_{k}$ with $\frac{16}{\fr}\fr(y_{k+1})x+y_{k+1}$.
We distinguish 4 cases:

\emph{Case $y_{k},y_{k+1}\in\Y_{\partial\X}$}: According to Lemma
\ref{lem:properties-local-rho-convering} it holds $\Ball{\fr(y_{k+1})}{y_{k+1}}\subset A_{2}(y_{k})$
and if $\cU_{k}:\,W^{1,p}(A_{3,k})\to W^{1,r}(A_{2,k})$ is the corresponding
local extension operator it holds 
\[
u\of{\frac{16}{\fr}\fr(y_{k})x+y_{k}}-u\of{\frac{16}{\fr}\fr(y_{k+1})x+y_{k+1}}\leq\int_{Y(y_{k},y_{k+1},x)}\nabla\cU_{k}u\,.
\]
\emph{Case $y_{k}\in\Y_{\partial\X}$, $y_{k+1}\in\mathring{\Y}$}:
According to Lemma \ref{lem:cover-iso-mixing-delta-M} it holds $\Ball{\fr(y_{k+1})}{y_{k+1}}\subset A_{2}(y_{k})$
and if $\cU_{k}:\,W^{1,p}(A_{3,k})\to W^{1,r}(A_{2,k})$ is the corresponding
local extension operator it holds 
\[
u\of{\frac{16}{\fr}\fr(y_{k})x+y_{k}}-u\of{\frac{16}{\fr}\fr(y_{k+1})x+y_{k+1}}\leq\int_{Y(y_{k},y_{k+1},x)}\nabla\cU_{k}u\,.
\]
\emph{Case $y_{k+1}\in\Y_{\partial\X}$, $y_{k}\in\mathring{\Y}$}:
According to Lemma \ref{lem:cover-iso-mixing-delta-M} it holds $\Ball{\fr(y_{k})}{y_{k}}\subset A_{2}(y_{k+1})$
and if $\cU_{k+1}:\,W^{1,p}(A_{3,k+1})\to W^{1,r}(A_{2,k+1})$ is
the corresponding local extension operator it holds 
\[
u\of{\frac{16}{\fr}\fr(y_{k})x+y_{k}}-u\of{\frac{16}{\fr}\fr(y_{k+1})x+y_{k+1}}\leq\int_{Y(y_{k},y_{k+1},x)}\nabla\cU_{k+1}u\,.
\]
\emph{Case $y_{k},y_{k+1}\in\mathring{\Y}$}: According to Lemma \ref{lem:cover-iso-mixing-delta-M}
it holds $\Ball{\fr(y_{k})}{y_{k}}\subset A_{2}(y_{k+1})\subset\bP$
and 
\[
u\of{\frac{16}{\fr}\fr(y_{k})x+y_{k}}-u\of{\frac{16}{\fr}\fr(y_{k+1})x+y_{k+1}}\leq\int_{Y(y_{k},y_{k+1},x)}\nabla u\,.
\]

However, in case of local connectivity, we face a simpler situation.
In case $y_{k},y_{k+1}\in\mathring{\Y}$ we can use the above estimates
while in the other cases, we can use the Lemma \ref{lem:gamma-connect}.

\subsubsection*{Locally connected $\protect\bP$}

In what follows, we consider $\G_{\ast}(\bP)=\G_{\fl}(\bP)$ (see
Definition \ref{def:simple-border}) with a suitable family of admissible
paths $\Apaths_{\fl}$, and we also recall $Y(x)$ from Notation \ref{nota:Apaths}.
We repeat the calculations of (\ref{eq:estimate-rough-general}) in
view of Lemma \ref{lem:gamma-connect}. In particular, if $\tilde{y}\sim y$
are connected via a path $\gamma$ in $\Ball{3\tilde{\rho}(x(y))}{x(y)}$,
which additionally has the property that the corresponding tube exists,
then the length of $\gamma$ is bounded by $C\left|y-\tilde{y}\right|$,
where $C$ is determined by the dimension. Hence we have
\begin{align}
\left|\tau_{i}u-\cM_{j}u\right|^{s} & =\left|\frac{1}{\left|\Ball{\fr_{i}}0\right|}\int_{\Ball{\fr_{i}}0}u(x+p_{i})-\frac{1}{\left|\Ball{\frac{\fr}{16}}{x_{j}}\right|}\int_{\Ball{\frac{\fr}{16}}0}u(x+x_{j})\right|^{s}\nonumber \\
 & =\left|\frac{1}{\left|\Ball{\frac{\fr}{16}}0\right|}\int_{\Ball{\frac{\fr}{16}}0}\left(u\of{\frac{16}{\fr}\fr_{i}(x+p_{i})}-u(x+x_{j})\right)\right|^{s}\nonumber \\
 & \leq\sum_{Y\in\Apaths_{\fl}\of{p_{i},x_{j}}}W(Y)\frac{1}{\left|\Ball{\frac{\fr}{16}}0\right|}\int_{\Ball{\fr_{i}}0}\left|\int_{Y(x)}\left|\nabla u\right|\right|^{s}\d x\nonumber \\
 & \leq C\sum_{Y\in\Apaths_{\fl}\of{p_{i},x_{j}}}W(Y)\frac{1}{\left|\Ball{\frac{\fr}{16}}0\right|}\int_{Y}\left|\nabla u\right|^{s}\Length\of Y^{\frac{s-1}{s}}\,.\label{eq:estimate-flate-geo-path}
\end{align}
The last calculation is at the heart of the results in this section.
In what follows, we adopt the situation of Lemma \ref{lem:6-4}:
\begin{lem}
\label{lem:locally-flat-1}Let $\bP$ be locally connected. Under
Assumptions \ref{assu:M-alpha-bound}, \ref{assu:mesoscopic-voronoi}
and using the notation of Theorem \ref{thm:Fine-meso-estimate} let
$\left(f_{j}\right)_{j\in\N}$ be non-negative and have support $\support f_{j}\supset\Ball{\frac{\fr}{2}}{x_{j}}$
and let $\sum_{j\in\N}f_{j}\equiv1$. Let $\G_{\ast}(\bP)=\G_{\fl}(\bP)$
(see Definition \ref{def:simple-border}) with a suitable family of
admissible paths $\Apaths_{\fl}$. Writing $\X(\bQ):=\left\{ x_{j}:\;\support f_{j}\cap\bQ\neq\emptyset\right\} $
\[
Y_{\mathrm{all\,paths}}^{\mathrm{local}}(\bQ):=\bigcup_{x_{j}\in\X(\bQ)}\bigcup_{p_{i}\in\support f_{j}\cap\Y_{\partial\X}}\bigcup_{Y\in\Apaths_{\fl}\of{p_{i},x_{j}}}\chi_{Y}
\]
$\chi_{f_{j}}(x):=\left(x\in\support f_{j}\right)$ and for every
$l=1,\dots d$ it holds 
\begin{align*}
 & \frac{1}{\left|\bQ\right|}\int_{\bP}\sum_{\substack{i\neq0:\,\partial_{l}\phi_{i}\partial_{l}\phi_{0}<0}
}\sum_{x_{j}\in\X(\bQ)}f_{j}\frac{\left|\partial_{l}\phi_{i}\right|}{D_{l+}}\left|\tau_{i}u-\cM_{j}u\right|^{s}\\
 & \qquad\leq C\left(\frac{1}{\left|\bQ\right|}\int_{Y_{\mathrm{all\,paths}}^{\mathrm{local}}(\bQ)}\left|\nabla u\right|^{p}\right)^{\frac{s}{p}}\\
 & \qquad\qquad\left(\frac{1}{\left|\bQ\right|}\int_{\Rd}\left(\sum_{x_{j}\in\X(\bQ)}\sum_{i}\chi_{f_{j}}(p_{i})\tilde{\rho}_{i}^{d}\sum_{Y\in\Apaths_{\fl}\of{p_{i},x_{j}}}\chi_{Y}W(Y)\,\Length\of Y^{\frac{s-1}{s}}\right)^{\frac{p}{p-s}}\right)^{\frac{p-s}{p}}
\end{align*}
\end{lem}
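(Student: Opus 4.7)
The starting point is the pointwise estimate \eqref{eq:estimate-flate-geo-path} established in the preceding discussion: since $\bP$ is locally connected and $\Apaths_{\fl}$ consists of admissible paths whose tubes lie entirely inside $\bP$ (Lemma \ref{lem:gamma-connect}), one has for every $p_i\in\Y_{\partial\X}$ and $x_j\in\X_{\fr}$
\[
\left|\tau_i u-\cM_j u\right|^{s}
\leq C\sum_{Y\in\Apaths_{\fl}(p_i,x_j)} W(Y)\int_{Y}|\nabla u|^{s}\,\Length(Y)^{\frac{s-1}{s}}.
\]
This is the only place where local connectedness is used; the remainder of the proof is purely combinatorial/Hölder.

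The plan is to substitute this bound into the left hand side, interchange summation and integration over $\bP$, and isolate the factor depending on $\nabla u$ via a Hölder step. Writing $\chi_Y$ for the characteristic function of the tube $Y$ and exchanging the $x$-integral over $\bP$ with the $z$-integral hidden inside $\int_Y|\nabla u|^s$, the left hand side is dominated by
\[
\frac{C}{|\bQ|}\int_{\Rd}|\nabla u(z)|^{s}\,K(z)\,dz,
\]
where
\[
K(z):=\sum_{i}\sum_{x_j\in\X(\bQ)}\;\sum_{Y\in\Apaths_{\fl}(p_i,x_j)}\chi_{Y}(z)\,W(Y)\,\Length(Y)^{\frac{s-1}{s}}\int_{\bP}f_j(x)\,\frac{|\partial_l\phi_i(x)|}{D_{l+}(x)}\,dx.
\]

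The next step is to bound the inner $x$-integral by $C\,\tilde{\rho}_i^{d}\,\chi_{f_j}(p_i)$. This uses $0\leq f_j\leq 1$, $|\partial_l\phi_i|/D_{l+}\leq 1$, together with the facts that $\phi_i$ is supported in $A_{1,i}=\Ball{\tilde{\rho}_i}{p_i}$ (so the integral is bounded by $|A_{1,i}|\simeq\tilde{\rho}_i^{d}$) and that the integral vanishes unless $A_{1,i}$ meets $\support f_j$, which, in view of the size relation $\tilde{\rho}_i\ll\fr$ and $\support f_j\supset\Ball{\fr/2}{x_j}$, implies $p_i\in\support f_j$, i.e. $\chi_{f_j}(p_i)=1$. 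Substituting this bound produces
\[
K(z)\leq C\sum_{x_j\in\X(\bQ)}\sum_{i}\chi_{f_j}(p_i)\,\tilde{\rho}_i^{d}\sum_{Y\in\Apaths_{\fl}(p_i,x_j)}\chi_Y(z)\,W(Y)\,\Length(Y)^{\frac{s-1}{s}},
\]
which is exactly the expression that appears inside the second bracket of the desired estimate.

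Finally, Hölder's inequality with conjugate exponents $p/s$ and $p/(p-s)$ applied to $\int_{\Rd}|\nabla u|^{s}K$ separates $|\nabla u|^p$ from the geometric weight, and the support of $K$ is by construction contained in $Y_{\mathrm{all\,paths}}^{\mathrm{local}}(\bQ)$, giving the first bracket as $\bigl(|\bQ|^{-1}\int_{Y_{\mathrm{all\,paths}}^{\mathrm{local}}(\bQ)}|\nabla u|^{p}\bigr)^{s/p}$ and the second as $\bigl(|\bQ|^{-1}\int_{\Rd}K^{p/(p-s)}\bigr)^{(p-s)/p}$. The main technical point to check carefully is the bound on $\int_{\bP}f_j|\partial_l\phi_i|/D_{l+}$ so that the factor $\tilde{\rho}_i^{d}\chi_{f_j}(p_i)$ appears with a constant independent of $i$, $j$; everything else is exchange of summation/integration and Hölder.
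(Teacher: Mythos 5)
Your proof takes essentially the same approach as the paper's, which is itself very terse (a single displayed inequality passing from the left-hand side to the weighted sum $\sum_{i,j}\chi_{f_j}(p_i)\tilde\rho_i^d\sum_Y W(Y)\int_Y|\nabla u|^s\Length(Y)^{(s-1)/s}$, followed by ``which leads to the result'', i.e.\ the Hölder step you spell out). Your write-up correctly identifies the two ingredients -- the pointwise bound \eqref{eq:estimate-flate-geo-path} from local connectedness and the Hölder separation of $|\nabla u|^p$ from the geometric weight -- and makes the paper's suppressed bookkeeping (the Fubini exchange, the bound $\int_\bP f_j|\partial_l\phi_i|/D_{l+}\lesssim\tilde\rho_i^d\chi_{f_j}(p_i)$, the support containment in $Y^{\mathrm{local}}_{\mathrm{all\,paths}}(\bQ)$) explicit.
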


\begin{proof}
We find 
\begin{align*}
 & \frac{1}{\left|\bQ\right|}\int_{\bP\cap\bQ_{\fr}}\sum_{\substack{i\neq0:\,\partial_{l}\phi_{i}\partial_{l}\phi_{0}<0}
}\sum_{x_{j}\in\X(\bQ)}f_{j}\frac{\left|\partial_{l}\phi_{i}\right|}{D_{l+}}\left|\tau_{i}u-\cM_{j}u\right|^{s}\\
 & \qquad\leq\frac{C}{\left|\bQ\right|}\sum_{\substack{i\neq0:\,\partial_{l}\phi_{i}\partial_{l}\phi_{0}<0}
}\sum_{x_{j}\in\X(\bQ)}\chi_{f_{j}}(p_{i})\tilde{\rho}_{i}^{d}\sum_{Y\in\Apaths_{\fl}\of{p_{i},x_{j}}}W(Y)\int_{Y}\left|\nabla u\right|^{s}\Length\of Y^{\frac{s-1}{s}}
\end{align*}
which leads to the result.
\end{proof}
And finally, we provide an estimate for the remaining term in Lemma
\ref{lem:6-5}. The proof is similar to the last Lemma.
\begin{lem}
\label{lem:locally-flat-2}Let $\bP$ be locally connected. Under
Assumptions \ref{assu:M-alpha-bound}, \ref{assu:mesoscopic-voronoi}
it holds for 
\[
Y_{\mathrm{all\,paths}}^{\mathrm{global}}(\bQ):=\bigcup_{\substack{x_{k}\sim\sim x_{j}\\
x_{k},x_{j}\in\X_{\fr}(\bQ)
}
}\bigcup_{Y\in\Apaths_{\fl}\of{x_{k},x_{j}}}\chi_{Y}
\]
that
\begin{align*}
 & \frac{1}{\left|\bQ\right|}\sum_{\substack{x_{k}\sim\sim x_{j}\\
x_{k},x_{j}\in\X_{\fr}(\bQ)
}
}\left|\cM_{k}u-\cM_{j}u\right|^{s}\leq C\left(\frac{1}{\left|\bQ\right|}\int_{Y_{\mathrm{all\,paths}}^{\mathrm{global}}(\bQ)}\left|\nabla u\right|^{p}\right)^{\frac{s}{p}}\\
 & \qquad\qquad\left(\frac{1}{\left|\bQ\right|}\int_{\Rd}\left(\sum_{\substack{x_{k}\sim\sim x_{j}\\
x_{k},x_{j}\in\X_{\fr}(\bQ)
}
}\sum_{Y\in\Apaths_{\fl}\of{x_{k},x_{j}}}W(Y)\Length\of Y^{\frac{s-1}{s}}\right)^{\frac{p}{p-s}}\right)^{\frac{p-s}{p}}\,.
\end{align*}
\end{lem}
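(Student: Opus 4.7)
The plan is to follow the blueprint of Lemma \ref{lem:locally-flat-1}, with the important simplification that the endpoints $x_k,x_j\in\X_\fr$ are interior cell-centers so both $\Ball{\fr/2}{x_k},\Ball{\fr/2}{x_j}\subset\bP$; hence no boundary extension operator is needed, and the calculation (\ref{eq:estimate-flate-geo-path}) applies directly with $\nabla u$ in place of $\nabla\cU u$.

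First I would fix a weakly connected pair $x_k\sim\sim x_j$ in $\X_\fr(\bQ)$ and estimate $|\cM_k u-\cM_j u|^s$. Writing
\[
\cM_k u-\cM_j u=\frac{1}{|\Ball{\fr/16}{0}|}\int_{\Ball{\fr/16}{0}}\bigl(u(x_k+x)-u(x_j+x)\bigr)\,dx,
\]
and using Notation \ref{nota:Apaths} together with the tube structure from Lemma \ref{lem:gamma-connect}, for each $Y\in\Apaths_\fl(x_k,x_j)$ and each $x\in\Ball{\fr/16}{0}$ the translated path $Y(x)$ lies in $\bP$. Since $\sum_{Y\in\Apaths_\fl(x_k,x_j)}W(Y)=1$, Jensen's inequality gives
\[
|\cM_k u-\cM_j u|^s\le\sum_{Y\in\Apaths_\fl(x_k,x_j)}W(Y)\,\frac{1}{|\Ball{\fr/16}{0}|}\int_{\Ball{\fr/16}{0}}\Bigl|\int_{Y(x)}|\nabla u|\Bigr|^s dx.
\]

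Next, I apply Hölder's inequality on the inner line integral to pick up the factor $\Length(Y)^{(s-1)/s}$ in front of $\int_{Y(x)}|\nabla u|^s$, and use Notation \ref{nota:Apaths} (in particular the comparability in \eqref{eq:def-Length-apath} and the fact that $Y=\bigcup_x Y(x)$ has bounded multiplicity as a family of tubes) to convert the combined integral over $x\in\Ball{\fr/16}{0}$ and the line integrals into an integral over the open tube $Y\subset\bP$. This yields, up to a universal constant,
\[
|\cM_k u-\cM_j u|^s\le C\sum_{Y\in\Apaths_\fl(x_k,x_j)}W(Y)\,\Length(Y)^{\frac{s-1}{s}}\int_{Y}|\nabla u|^s.
\]

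Summing over all weakly connected pairs $x_k\sim\sim x_j$ with $x_k,x_j\in\X_\fr(\bQ)$, dividing by $|\bQ|$, and combining the resulting pointwise density with a final Hölder inequality on $\Rd$ with exponents $p/s$ and $p/(p-s)$ separates the $|\nabla u|^p$ factor from the geometric ``path-density'' factor
\[
\sum_{\substack{x_k\sim\sim x_j\\ x_k,x_j\in\X_\fr(\bQ)}}\sum_{Y\in\Apaths_\fl(x_k,x_j)}W(Y)\,\Length(Y)^{\frac{s-1}{s}}\chi_Y.
\]
Since the gradient term is supported inside $Y_{\mathrm{all\,paths}}^{\mathrm{global}}(\bQ)\cap\bP$, this yields exactly the stated inequality.

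The main obstacle I expect is book-keeping, rather than a deep step: one must carefully justify the passage from the line-integral representation to the tube-integral representation (using that $Y(x)\mapsto Y$ is essentially a bi-Lipschitz change of variables with uniformly controlled Jacobian thanks to \eqref{eq:def-Length-apath} and the finite overlap of tubes from Lemma \ref{lem:bounded-num-a2}), and to verify that the implicit constant $C$ depends only on the dimension and on $\fr$, not on the specific pair $(x_k,x_j)$ or on $\bQ$. Once this is done, the estimate follows from Hölder as sketched, with no further nonlinear ingredients.
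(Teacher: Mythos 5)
Your proof is correct and follows exactly the route the paper intends: the paper omits a detailed argument for this lemma (its proof reads only ``The proof is similar to the last Lemma''), and the last Lemma (Lemma \ref{lem:locally-flat-1}) proceeds precisely as you do — the path decomposition of (\ref{eq:estimate-flate-geo-path}), H\"older in the line integral to extract $\Length(Y)^{(s-1)/s}$, conversion to a tube integral, and a final H\"older with exponents $p/s$ and $p/(p-s)$ to split off the $|\nabla u|^p$ factor supported on $Y_{\mathrm{all\,paths}}^{\mathrm{global}}(\bQ)$. Your observation that here both endpoints are interior cell-centers, so the radii are a uniform $\fr/16$ and no near-boundary local extension operator is involved, is the only real simplification relative to (\ref{eq:estimate-flate-geo-path}), and you use it correctly; the one cosmetic remark is that (\ref{eq:estimate-flate-geo-path}) already uses $\nabla u$ rather than $\nabla\cU u$ (local connectedness keeps the paths in $\bP$), so the substitution you flag is already in place in the paper.
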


\subsection{\label{subsec:Statistical-Strech-Factor}Statistical Stretch Factor
for Locally Connected Geometries}
\begin{defn}
Let $\bP\subset\Rd$ be an open set with $\X_{\fr}$ satisfying Assumption
\ref{assu:mesoscopic-voronoi}. Generalizing the notation of Lemma
\ref{lem:radius-admissible-pahts} and recalling the Notation \ref{nota:Apaths}
let for $x\in\X_{\fr}$ and $y\in\Y$ and a family of admissible paths
$\Apaths(y,x)$
\[
\sfR_{0}(x,y):=\inf\left\{ R>0:\;\bigcup_{Y\in\Apaths(y,x)}Y\subset\Ball Rx\right\} \,.
\]
For an open set $\fA$ with $x\in\fA$ we denote 
\[
\sfR_{0}(x,\fA):=\sup_{y\in\Y\cap\fA}\sfR_{0}(x,y)\,.
\]
\end{defn}

\begin{thm}
\label{thm:strech-main-thm}Let the Assumptions of Theorem \ref{thm:Fine-meso-estimate}
hold and let $\bP$ be locally connected. . For every $x_{j}\in\X_{\fr}$
let 
\[
\sfS_{j}:=\sfS(x_{j}):=d_{j}^{-1}\sup_{p_{i}\in\Y\cap\fA_{2,j}}\sup_{Y\in\Apaths_{\fl}\of{p_{i},x_{j}}}\Length\of Y\,.
\]
Defining $\sfR_{0}(x_{j}):=\sfR_{0}(x_{j},\fA_{2,j})$ and 
\begin{equation}
\Apaths(\bQ):=\bigcup_{x_{j}\in\bQ^{\sim\sim}}\Ball{\sfR_{0}(x_{j})}{x_{j}}\label{eq:thm:strech-main-thm-1}
\end{equation}
it holds 
\[
\frac{1}{\left|\bQ\right|}\int_{\bQ^{\sim\sim}}\left|f\of u\right|^{r}\leq C_{1}(C_{2}+C_{3})\left(\int_{\Apaths(\bQ)}\left|\nabla u\right|^{p}\right)^{\frac{r}{p}}\,,
\]
where for some $s\in(r,p)$
\begin{align*}
C_{1} & =\left(\frac{1}{\left|\bQ\right|}\int_{\Apaths(\bQ)}\left(\sum_{x_{j}\in\X_{\fr}(\bQ)}\chi_{\Ball{\sfR_{0}(x_{j},\fA_{2,j})}{x_{j}}}d_{j}^{d+\frac{s-1}{s}}\sfS_{j}^{\frac{s-1}{s}}\right)^{\frac{p}{p-s}}\right)^{\frac{p-s}{p}}\\
 & \leq\left(\frac{1}{\left|\bQ\right|}\int_{\Apaths(\bQ)}\left(\sum_{x_{j}\in\X_{\fr}(\bQ)}\chi_{\Ball{\sfS_{j}d_{j}}{x_{j}}}d_{j}^{d+\frac{s-1}{s}}\sfS_{j}^{\frac{s-1}{s}}\right)^{\frac{p}{p-s}}\right)^{\frac{p-s}{p}}\\
C_{2} & =\left(\frac{1}{\left|\bQ\right|}\int_{\bP\cap\bQ_{\fr}\cap\Rd_{3}}\left|\partial_{l}\phi_{0}\right|^{\frac{sr}{s-r}}\right)^{\frac{s-r}{s}}\,,\\
C_{3} & =\left(\frac{1}{\left|\bQ\right|}\int_{\bP\cap\bQ}\left(\sum_{j:\,\partial_{l}\Phi_{j}<0}d_{j}^{\frac{r(d-1)+drs}{s-r}}\chi_{\nabla\Phi_{j}\neq0}\right)^{\frac{s}{s-r}}\right)^{\frac{s-r}{s}}\,.
\end{align*}
\end{thm}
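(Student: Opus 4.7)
The strategy is to decompose $f(u)$ into the two terms appearing in Theorem~\ref{thm:Fine-meso-estimate} and estimate each via the locally connected analogues of Lemmas~\ref{lem:6-4}--\ref{lem:6-5}, replacing the generic ``difference of averages'' by path-integrals of $\nabla u$ along the admissible paths, and then using the stretch factor $\sfS_j$ to bound the length of those paths.

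First I would split $f(u) = f_1(u) + f_2(u)$ with
\[
f_1(u) = \sum_{l} \sum_{k,j} \frac{\partial_l\Phi_k |\partial_l\Phi_j|}{D_{l+}^{\Phi}}(2-\phi_0)(\cM_k u - \cM_j u),
\qquad f_2(u) = -\sum_l \sum_{i,j} \frac{\partial_l\phi_0 |\partial_l\phi_i|}{D_{l+}} \Phi_j (\tau_i u - \cM_j u),
\]
and treat them separately. For $f_2$ I would apply Lemma~\ref{lem:6-4} with $f_j := \Phi_j$, which yields a bound of the shape
\[
\frac{1}{|\bQ|} \int_{\bQ^{\sim\sim}} |f_2(u)|^r \leq C_2 \left( \frac{1}{|\bQ|} \int_{\bP \cap \bQ_\fr} \sum_{i, x_j \in \X_\fr(\bQ)} \Phi_j \frac{|\partial_l \phi_i|}{D_{l+}} |\tau_i u - \cM_j u|^s \right)^{r/s},
\]
and then feed this into Lemma~\ref{lem:locally-flat-1}, where the crucial observation is that for every $p_i$ appearing in the sum one has $p_i \in \fA_{2,j}$ (because $\partial_l \phi_i \neq 0$ forces $p_i$ to be close to the support of $\Phi_j$), so that every path $Y \in \Apaths_\fl(p_i, x_j)$ fits inside $\Ball{\sfR_0(x_j)}{x_j} \subset \Apaths(\bQ)$ and satisfies $\Length(Y) \leq \sfS_j d_j$. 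Using $\sum_{Y} W(Y) = 1$ together with these bounds, the sum over paths collapses to a sum over balls $\Ball{\sfR_0(x_j)}{x_j}$ weighted by $d_j^{d+(s-1)/s}\sfS_j^{(s-1)/s}$, which is precisely the factor in $C_1$.

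For $f_1$ I would apply Lemma~\ref{lem:6-5}, which produces a bound in terms of $\sum_{x_k \sim\sim x_j} |\cM_k u - \cM_j u|^s$ with the constant $C_3$ in front; then I would invoke Lemma~\ref{lem:locally-flat-2} to dominate this sum by a weighted integral of $|\nabla u|^p$. Since $x_k \sim\sim x_j$ implies $x_k \in \fA_{2,j}$, every admissible path from $x_k$ to $x_j$ has length at most $\sfS_j d_j$ and stays inside $\Ball{\sfR_0(x_j)}{x_j}$, so the path-length factor $\Length(Y)^{(s-1)/s}$ is again controlled by $(\sfS_j d_j)^{(s-1)/s}$ and the enveloping region by $\Apaths(\bQ)$. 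The weights $W(Y)$ still sum to one over all admissible paths between any fixed pair, hence the double sum $\sum_{x_k \sim\sim x_j} \sum_{Y} W(Y)\Length(Y)^{(s-1)/s}$ is dominated by $\sum_{x_j \in \X_\fr(\bQ)} \chi_{\Ball{\sfR_0(x_j)}{x_j}} d_j^{d+(s-1)/s}\sfS_j^{(s-1)/s}$, since for each $x_j$ the number of weak neighbours is controlled by $(d_j/\fr)^d$ via Lemma~\ref{lem:Iso-cone-geo-estimate}.

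Finally I would combine the two estimates, note that the $C_1$ factor is the same in both (modulo a harmless constant), factor it out, and obtain the stated bound $C_1(C_2+C_3)\left(\int_{\Apaths(\bQ)}|\nabla u|^p\right)^{r/p}$. The main obstacle I anticipate is bookkeeping: the admissible paths for different pairs $(y_1,y_2)$ overlap in a complicated way, and it must be checked that the combinatorial multiplicity coming from a single ball $\Ball{\sfR_0(x_j)}{x_j}$ being hit by many paths (originating at different $p_i$'s or at different neighbouring $x_k$'s) is absorbed into the Jensen/H\"older step, so that no extra factor depending on $\#\cI(x_j)$ or $\sfS_j$ sneaks into the final constant beyond what already appears in $C_1$. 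The trivial upper bound $\sfR_0(x_j)\leq \sfS_j d_j$ used in the second form of $C_1$ follows immediately from the definition of $\sfS_j$ and the fact that every point on an admissible path lies within arc-length at most $\Length(Y)\le\sfS_j d_j$ of $x_j$.
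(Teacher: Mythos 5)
Your proposal follows essentially the same route as the paper: the same decomposition of $f(u)$, the same application of Lemma \ref{lem:6-4} with $f_j=\Phi_j$ followed by Lemma \ref{lem:locally-flat-1} for the $\tau_i u - \cM_j u$ term, Lemma \ref{lem:6-5} followed by Lemma \ref{lem:locally-flat-2} for the $\cM_k u - \cM_j u$ term, and the same use of $W(Y)\leq 1$, $\Length(Y)\leq \sfS_j d_j$, $\sum_Y W(Y)=1$ and Corollary \ref{cor:R0-Sj} to collapse the path sums into the weighted indicator $\chi_{\Ball{\sfR_0(x_j)}{x_j}} d_j^{d+(s-1)/s}\sfS_j^{(s-1)/s}$. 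Your anticipated bookkeeping obstacle is resolved exactly as you suspect: the disjointness of the small balls underlying $\tilde\rho_i^d$ inside $\fA_{1,j}$ (resp.\ the bound $\#\{k:\,x_k\sim\sim x_j\}\lesssim (d_j/\fr)^d$) replaces the sum over $p_i$ (resp.\ over $x_k$) by the single factor $|\fA_{1,j}|$ (resp.\ $|\fA_{2,j}|$), and no extra multiplicity escapes the H\"older step.
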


\begin{defn}
\label{def:thm:strech-main-thm}We call $\sfS_{j}$ the statistical
stretch factor.
\end{defn}

\begin{cor}
\label{cor:R0-Sj}It holds $\sfR_{0}(x_{j})\leq d_{j}\sfS_{j}$.
\end{cor}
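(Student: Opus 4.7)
The plan is to unwind the definitions and observe that the inequality is essentially a statement that ``a tube is no wider than it is long.'' Fix $x_j \in \X_{\fr}$ and any $y \in \Y \cap \fA_{2,j}$; I want to bound $\sfR_0(x_j, y)$ by $d_j \sfS_j$, and then take the supremum over $y$. Pick an arbitrary $Y \in \Apaths_{\fl}(y, x_j)$. Per Notation \ref{nota:Apaths}, the object $Y$ is to be read as the open tube $Y_0(\Ball{\fr_1}{0}) = \bigcup_{x \in \Ball{\fr_1}{0}} Y_0(x)$, where $Y_0(0) = (y_1,\dots,y_N)$ is the centerline connecting $y_1 = y$ to $y_N = x_j$.

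First I would handle the centerline $Y_0(0)$. Any point $z$ on $Y_0(0)$ lies on some segment (either a straight line for consecutive points in $\mathring{\Y}$, or on a $\gamma$-curve from Lemma \ref{lem:gamma-connect} for boundary-adjacent transitions). Traversing the remainder of the path from $z$ to $x_j = y_N$ gives $|z - x_j| \leq \Length(Y_0(0)) = \Length(Y)$ by definition of length. Next I would pass from the centerline to the thickened tube: for $x \in \Ball{\fr_1}{0}$, the path $Y_0(x)$ ends at $x_j + \frac{\fr(x_j)}{\fr_1}x \in \Ball{\fr(x_j)}{x_j}$, and by (\ref{eq:def-Length-apath}) its length differs from $\Length(Y)$ by a uniform constant factor. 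Since every $z \in Y_0(x)$ is connected to this endpoint along $Y_0(x)$, one obtains $|z - x_j| \leq \Length(Y_0(x)) + \fr(x_j) \leq C\,\Length(Y)$.

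To absorb the constant $C$ I would note that $\Length(Y) \geq d_j$ whenever the path is nontrivial (since even connecting the endpoint $x_j$ to a point $y \in \fA_{2,j} \setminus G_j$ requires crossing at least the radius of the Voronoi cell, and $\fr(x_j) \leq \tfrac{\fr}{8}$ is dominated by $d_j$ by Lemma \ref{lem:cover-iso-mixing-delta-M}). Consequently the tube width and endpoint cap are absorbed and $z$ lies in $\Ball{\Length(Y)}{x_j}$, possibly after renormalizing the admissible-paths construction so that (\ref{eq:def-Length-apath}) holds with $C = 1$ in this limiting direction. Hence $Y \subset \Ball{\Length(Y)}{x_j}$, which gives $\sfR_0(x_j, y) \leq \sup_{Y \in \Apaths_{\fl}(y, x_j)} \Length(Y)$.

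Taking the supremum over $y \in \Y \cap \fA_{2,j}$ on both sides and invoking the very definition
\[
d_j \sfS_j = \sup_{p_i \in \Y \cap \fA_{2,j}} \sup_{Y \in \Apaths_{\fl}(p_i,\,x_j)} \Length(Y)
\]
closes the argument. The only mildly delicate step is the reduction from the thickened tube to the centerline; this is not a genuine obstacle but rather a bookkeeping point about the tube caps of radius $\fr(x_j)$ and the constant in (\ref{eq:def-Length-apath}), both of which are dominated by $d_j$ and hence by $d_j \sfS_j \geq d_j$.
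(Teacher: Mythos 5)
The paper states this corollary without a proof; the intended argument is the one-line observation that every point reached along a path is at Euclidean distance at most the path length from the terminal vertex $x_j$, and then one takes suprema. Your write-up has the right idea but the constant-absorption step is not correct. If one insists on reading $Y$ as the thickened tube $Y_0\of{\Ball{\fr_1}0}$ of Notation~\ref{nota:Apaths}, then for $z$ on an off-center strand $Y_0(x)$ the triangle inequality gives
\[
\left|z-x_j\right|\le\Length\of{Y_0(x)}+\fr(x_j)\le C\,\Length(Y)+\fr(x_j),
\]
where $C>1$ is the fixed constant from (\ref{eq:def-Length-apath}). A multiplicative factor $C>1$ cannot be eliminated by a lower bound on $\Length(Y)$, and ``renormalizing the admissible-paths construction so that $C=1$'' is not a legitimate step since $C$ is a quantity supplied by Lemma~\ref{lem:cover-iso-mixing-delta-M}, not a free normalization. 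The asserted lower bound $\Length(Y)\ge d_j$ is also unjustified: a vertex $y\in\fA_{2,j}\setminus G_j$ may be as close to $x_j$ as the in-radius of the cell, which is of order $\fr$, not the diameter $d_j$. In fact, under the tube reading the corollary is false with the stated constant: for a straight-line path of length $\Length(Y)=\left|y_1-x_j\right|$ the outer rim of the tube near $y_1$ contains points at distance arbitrarily close to $\left|y_1-x_j\right|+\fr(y_1)>\Length(Y)$ from $x_j$.

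The clean reading, consistent with the wording ``generalizing the notation of Lemma~\ref{lem:radius-admissible-pahts},'' is that $Y$ in the definition of $\sfR_0(x,y)$ refers to the vertex set $\left\{ y_1,\dots,y_N\right\}$ of the discrete path. Then for every $i$ one has
\[
\left|y_i-x_j\right|=\left|y_i-y_N\right|\le\sum_{k=i}^{N-1}\left|y_k-y_{k+1}\right|\le\Length\of{Y_0(0)}=\Length(Y),
\]
since each chord length $\left|y_k-y_{k+1}\right|$ is dominated by the arc length of the corresponding segment (straight line or $\gamma$-curve). Hence $\sfR_0(x_j,y)\le\sup_{Y\in\Apaths_{\fl}(y,x_j)}\Length(Y)$ for every $y\in\Y\cap\fA_{2,j}$, and taking the supremum over $y$ gives $\sfR_0(x_j)\le d_j\sfS_j$ exactly, with no constants to juggle. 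Your proposal, by committing to the tube and then hand-waving the resulting error terms, introduces a gap that the argument cannot close as written.
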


\begin{cor}
\label{cor:support-extension}If $u\in W^{1,p}(\bP)$ satisfies $u\equiv0$
on $\Rd\backslash\bQ$ then $\cU$ has support on $\Apaths(\bQ)$.
\end{cor}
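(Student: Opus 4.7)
The plan is to trace how $\cU u$ depends on values of $u$, using the explicit formula \eqref{eq:def-global-U}. Applying $\sum_{i\ge 0}\phi_i\equiv 1$ and $\sum_j\Phi_j\equiv 1$ one obtains the rearrangement used in \eqref{eq:thm:Fine-meso-estimate-help-2},
\[
\cU u \;=\; \sum_{i\neq 0}\phi_i\bigl(\cU_i(u-\tau_i u)+\tau_i u\bigr)\;+\;\phi_0\sum_j\Phi_j\,\cM_j u,
\]
so that $\cU u$ is built only from three types of local operations: the microscopic extension $\cU_i(u-\tau_i u)$ which reads $u$ on $A_{3,i}\subset\Ball{\hat{\rho}_i/8}{p_i}$, the boundary average $\tau_i u=\fint_{\Ball{\fr_i}{y_i}}u$, and the mesoscopic mean $\cM_j u=\fint_{\Ball{\fr/16}{x_j}}u$.

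Fix $z$ with $\cU u(z)\neq 0$. Since $\sum_j\Phi_j\equiv 1$ on $\Rd$, there is an index $j$ with $\Phi_j(z)>0$, hence $z\in\fA_{1,j}$. I would first show that $\fA_{1,j}\sim\sim\bQ$, so that $x_j\in\X_\fr(\bQ)\subset\bQ^{\sim\sim}$. If the contribution comes from $\phi_0(z)\Phi_j(z)\cM_j u$, then $\cM_j u\neq 0$ forces $\Ball{\fr/16}{x_j}\cap\bQ\neq\emptyset$, hence $\fA_{1,j}\cap\bQ\neq\emptyset$. Otherwise some $\phi_i(z)\neq 0$ with $i\neq 0$ places $z\in A_{1,i}$, and then $\cU_i(u-\tau_i u)(z)+\tau_i u$ can only be nonzero if $u$ is nonzero on $A_{3,i}$ or on $\Ball{\fr_i}{y_i}$. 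Because these balls are centered within distance $\le\tilde{\rho}_i$ from $z$ and have radii bounded by $\hat{\rho}_i/8$ or $\fr_i$ respectively, all of them lie inside $\Ball{\fr/2}{z}\subset\Ball{\fr/2}{\fA_{1,j}}$. Since $u\equiv 0$ outside $\bQ$ this forces $\Ball{\fr/2}{\fA_{1,j}}\cap\bQ\neq\emptyset$, so $\fA_{1,j}\sim\sim\bQ$.

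To conclude, I verify the geometric inclusion $\fA_{1,j}\subset\Ball{\sfR_0(x_j)}{x_j}$, which together with $x_j\in\bQ^{\sim\sim}$ places $z\in\Apaths(\bQ)$. By definition $\sfR_0(x_j)=\sup_{y\in\Y\cap\fA_{2,j}}\sfR_0(x_j,y)$, and since every admissible tube from $y$ to $x_j$ contains the initial ball $\Ball{\fr(y)}{y}$ one has $\sfR_0(x_j,y)\ge|y-x_j|$. Because $\fA_{2,j}=\Ball{2d_j}{\fA_{1,j}}\supset\Ball{2d_j}{x_j}$ and $\Y$ covers $\overline{\bP}\cap\fA_{2,j}$ with balls of radius $\le\fr/2$ (Lemma \ref{lem:cover-iso-mixing-delta-M}), one finds $y\in\Y\cap\fA_{2,j}$ with $|y-x_j|\ge 2d_j-\fr/2$; combined with $d_j\ge 2\fr$ this yields $\sfR_0(x_j)\ge d_j+\fr/2\ge\sup_{\zeta\in\fA_{1,j}}|\zeta-x_j|$. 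The main obstacle in the plan is precisely this last geometric step: the inclusion relies on $\Y$ being dense enough near the outer boundary of $\fA_{2,j}$, which can be delicate for geometries in which $\overline{\bP}$ only sparsely populates $\fA_{2,j}\setminus\fA_{1,j}$, and a slight enlargement of $\Apaths(\bQ)$ may be needed in pathological cases.
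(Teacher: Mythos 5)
Your proof follows the same route as the paper: show that $\cU u$ has support in $\bigcup_{j\sim\sim\bQ}\fA_{1,j}$, and then show that this union is contained in $\Apaths(\bQ)$. The paper's own proof consists of a single displayed assertion, $\Apaths(\bQ)\supset\bigcup_{j\sim\sim\bQ}\fA_{1,j}$, leaving both steps implicit, so what you are doing is filling in the details the paper omits. The first part of your argument is correct and cleanly executed: tracing which balls each of the three local operations reads $u$ from, observing that all of them lie in $\Ball{\fr/2}{z}$ once $z\in A_{1,i}$ or $\cM_j u\neq 0$, and concluding that $\cU u(z)\neq 0$ forces $\Ball{\fr/2}{\fA_{1,j}}\cap\bQ\neq\emptyset$, i.e. $\fA_{1,j}\sim\sim\bQ$.

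The geometric inclusion in your last paragraph is exactly the point the paper glosses over, and your self-criticism there is warranted rather than excessive. For $\fA_{1,j}\subset\Ball{\sfR_0(x_j)}{x_j}$ one must have $\sfR_0(x_j)\geq\sup_{\zeta\in\fA_{1,j}}|\zeta-x_j|$, a quantity that can approach $d_j+\fr/2$. Your claim that there exists $y\in\Y\cap\fA_{2,j}$ with $|y-x_j|\geq 2d_j-\fr/2$ is the weak link: $\Y$ covers $\overline{\bP}$, not $\fA_{2,j}$, so if $\bP\cap\fA_{2,j}$ is concentrated near $x_j$ (a large Voronoi cell around a thin piece of $\bP$), no such $y$ exists and $\sfR_0(x_j)$, being the supremum over paths starting in $\Y\cap\fA_{2,j}$, can stay well below $d_j+\fr/2$. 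The same unproven inclusion is used in the paper's proof of Theorem \ref{thm:ergodic-Apaths} in the guise of the assertion $n\bQ\subset\Apaths(n\bQ)$, so this is a genuine imprecision in the source rather than only a gap in your argument. A clean repair is to enlarge the definition \eqref{eq:thm:strech-main-thm-1} to $\Apaths(\bQ):=\bigcup_{x_j\in\bQ^{\sim\sim}}\Ball{\max\{\sfR_0(x_j),\,d_j+\fr/2\}}{x_j}$: the corollary then becomes immediate after your first step, the bound of Corollary \ref{cor:R0-Sj} still gives $\max\{\sfR_0(x_j),d_j+\fr/2\}\leq d_j\max\{\sfS_j,2\}$, and Theorem \ref{thm:ergodic-Apaths} is unaffected since the additional radius is dominated by the $d_j$-controlled terms already present there.
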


\begin{proof}
This follows since 
\[
\Apaths(\bQ)\supset\bigcup_{j\sim\sim\bQ}\fA_{1,j}\,.
\]
\end{proof}

\begin{proof}[Proof of Theorem \ref{thm:strech-main-thm}]
 With regard to Lemma \ref{lem:locally-flat-1}, we observe that
$f_{j}=\Phi_{j}$ with $\X(\bQ)=\X_{\fr}(\bQ)$ and $\chi_{f_{j}}(p_{i})=1$
only if $p_{i}\in\fA_{1,j}$. Furthermore, $W(Y)\leq1$ and we define
\[
L_{j}:=\sup_{p_{i}\in\Y\cap\fA_{2,j}}\sup_{Y\in\Apaths_{\fl}\of{p_{i},x_{j}}}\Length\of{Y_{\fl}}\,.
\]
Hence we find for given $x_{j}$ using Corollary \ref{cor:R0-Sj}:
\begin{align*}
\sum_{i}\chi_{f_{j}}(p_{i})\tilde{\rho}_{i}^{d}\sum_{Y\in\Apaths_{\fl}\of{p_{i},x_{j}}}\chi_{Y_{\fl}}W(Y)\,\Length\of{Y_{\fl}}^{\frac{s-1}{s}} & \leq\chi_{\Ball{\sfR_{0}(x_{j},\fA_{1,j})}{x_{j}}}\left|\fA_{1,j}\right|L_{j}^{\frac{s-1}{s}}\\
 & \leq\chi_{\Ball{\sfS_{j}d_{j}}{x_{j}}}\left|\fA_{1,j}\right|L_{j}^{\frac{s-1}{s}}\,.
\end{align*}
Also with regard to Lemma \ref{lem:locally-flat-1} we find for given
$x_{j}$
\begin{align*}
\sum_{\substack{x_{k}\sim\sim x_{j}\\
x_{k}\in\X_{\fr}(\bQ)
}
}\sum_{Y\in\Apaths_{\fl}\of{x_{k},x_{j}}}W(Y)\Length\of{Y_{\fl}}^{\frac{s-1}{s}} & \leq\chi_{\Ball{\sfR_{0}(x_{j},\fA_{2,j})}{x_{j}}}\left|\fA_{2,j}\right|L_{j}^{\frac{s-1}{s}}\\
 & \leq\chi_{\Ball{\sfS_{j}d_{j}}{x_{j}}}\left|\fA_{2,j}\right|L_{j}^{\frac{s-1}{s}}\,.
\end{align*}
The statement now follows from the definition of $\sfS_{j}$, Lemmas
\ref{lem:6-4} and \ref{lem:6-5}.
\end{proof}
Finally, the following result allows us to estimate the difference
of $\bQ$ and $\Apaths(\bQ)$.
\begin{thm}
\label{thm:ergodic-Apaths}Let the Assumptions of Theorem \ref{thm:Fine-meso-estimate}
hold, let $\bQ$ have a $C^{1}$-boundary and let $\Apaths(\bQ)$
be given by (\ref{eq:thm:strech-main-thm-1}). Furthermore, let $\sfR_{0}$
be ergodic such that for every $\eps>0$ 
\begin{equation}
\lim_{n\to\infty}\sum_{k=1}^{\infty}\left(1+\eps\right)^{k}\E\of{\sfR_{0}(x_{j})\geq(1+\eps)^{k}n}=0\,.\label{eq:thm:ergodic-Apaths}
\end{equation}
Then
\[
\lim_{n\to\infty}\frac{\left|n\bQ\right|}{\left|\Apaths\left(n\bQ\right)\right|}\to1\,.
\]
\end{thm}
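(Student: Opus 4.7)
The containment $n\bQ\subseteq\Apaths(n\bQ)$ comes for free: every $y\in n\bQ$ lies in the Voronoi cell $G_j$ of a unique point $x_j\in\X_\fr$, and because $\fA_{1,j}=\Ball{\fr/2}{G_j}\cap n\bQ\neq\emptyset$ we have $x_j\in\X_\fr(n\bQ)$; moreover $|y-x_j|\le d_j\le \sfR_0(x_j)$ (since admissible paths emanating from points of $\fA_{2,j}\supset G_j$ must already reach across the cell), so $y\in\Ball{\sfR_0(x_j)}{x_j}\subseteq\Apaths(n\bQ)$. Thus it suffices to show that $|\Apaths(n\bQ)\setminus n\bQ|/|n\bQ|\to 0$.

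For the excess I would write
\[
|\Apaths(n\bQ)\setminus n\bQ|\;\le\;\sum_{x_j\in\X_\fr(n\bQ)}\bigl|\Ball{\sfR_0(x_j)}{x_j}\setminus n\bQ\bigr|,
\]
and observe that a summand is nonzero only if $\dist(x_j,\partial(n\bQ))\le \sfR_0(x_j)+d_j+\fr\le 2\sfR_0(x_j)+\fr$ (using that $x_j\in\X_\fr(n\bQ)$ forces $\dist(x_j,n\bQ)\le d_j+\fr/2$ and that $d_j\le\sfR_0(x_j)$). I would then decompose dyadically according to $\sfR_0(x_j)\in[(1+\eps)^{k-1},(1+\eps)^k)$ and split each ball volume into the "cap sticking out" (bounded by $C\,(1+\eps)^{k(d-1)}\cdot\min((1+\eps)^k,R_{\partial})$ via the $C^1$-regularity of $\partial\bQ$, which is where the $C^1$-hypothesis enters) and the trivial bound $C(1+\eps)^{kd}$ for balls whose radius exceeds the local reach of $\partial(n\bQ)$.

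Next I would take expectations and invoke Campbell's formula together with the Palm distribution of the jointly stationary marked process $(x_j,\sfR_0(x_j))$: the expected number of points $x_j$ with $\sfR_0(x_j)\ge(1+\eps)^{k-1}$ lying in a strip of width $R$ around $\partial(n\bQ)$ is at most $\lambda_*\,\P^*(\sfR_0\ge(1+\eps)^{k-1})$ times the strip volume, which the $C^1$-assumption controls by $C_{\bQ}(Rn^{d-1}+R^d)$. Dividing by $|n\bQ|\asymp n^d$ and summing the dyadic contributions, one recovers exactly the tail sum appearing in the hypothesis, with the rescaling $(1+\eps)^k\mapsto(1+\eps)^k n$ produced by splitting $k$ at the threshold $k_0$ with $(1+\eps)^{k_0}\asymp n$ and reindexing. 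The hypothesis thus forces the expectation to vanish as $n\to\infty$.

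Finally, to pass from expectation to an almost-sure statement I would use ergodicity of $\sfR_0$ (jointly with $\X_\fr$) and the ergodic theorem for marked point processes (Theorem \ref{thm:Ergodic-Theorem-ran-meas}, applied inside thickened boundary layers treated as a difference of two convex averaging sequences $n\bQ_+$ and $n\bQ_-$) to replace empirical sums by their Palm expectations uniformly across the dyadic levels that contribute. The main obstacle I anticipate is precisely this last step: the "strip around $\partial(n\bQ)$" is not a convex averaging sequence, so the ergodic theorem cannot be quoted directly on it; one instead writes the strip as a difference of two comparable convex sets, controls the overshoot via the summability hypothesis, and truncates the dyadic sum at level $k_0$ to prevent the divergence of the $R^d$-cap contributions for $R\gg n$.
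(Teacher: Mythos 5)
Your proposal is structurally aligned with the paper's proof: establish $n\bQ\subseteq\Apaths(n\bQ)$, bound the excess by a sum over boundary-near points, decompose dyadically, and use the ergodic theorem to convert to Palm expectations so the tail-sum hypothesis (\ref{eq:thm:ergodic-Apaths}) can be invoked. Two genuine differences. First, the paper bounds the excess by \emph{full} ball volumes, $\sum_{x_j\in\X_{n\bQ}\cup\X_{n\bQ^\complement}}\left|\Ball{\sfR_0(x_j)}{x_j}\right|$, not by the caps sticking out of $n\bQ$; the cap-volume refinement you build around $C^1$-regularity (the factor $(1+\eps)^{k(d-1)}\min((1+\eps)^k,R_\partial)$) is never needed, and as stated it would anyway be unreliable for a merely $C^1$ boundary, where there is no quantitative curvature or reach bound that turns ``cap volume $\lesssim R^{d-1}\cdot$ overhang'' into a uniform inequality. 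The hypothesis (\ref{eq:thm:ergodic-Apaths}) is calibrated so that the cruder full-ball estimate is already enough. Second, you stratify by the $\sfR_0$-level $\sfR_0(x_j)\in[(1+\eps)^{k-1},(1+\eps)^k)$, whereas the paper stratifies spatially into the multiplicative shells $\bQ_{n,k}^\eps:=\left((1+\eps)^kn\bQ\right)\setminus\left((1+\eps)^{k-1}n\bQ\right)$ and uses the implication that a ball from shell $k\geq 2$ can only contribute if $\sfR_0(x_j)\geq(1+\eps)^{k-1}n$; both reduce to the same one-dimensional tail sum, but the paper's organization makes the ergodic theorem application more direct. Your final concern about the strip/shell not being a convex averaging sequence is correct, and the fix you propose (write it as a difference of two convex averaging sequences) is exactly what the paper's shell decomposition amounts to. In sum, the plan is sound, but the cap-geometry ingredient is an unnecessary complication, and you should not present it as the place where the $C^1$-hypothesis is essential.
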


\begin{rem}
Condition (\ref{eq:thm:ergodic-Apaths}) is satisfied if e.g. $\E\of{\sfR_{0}(x_{j})\geq r_{0}}\leq r_{0}^{-a}$
for some $a>1$ as then 
\[
\sum_{k=1}^{\infty}\left(1+\eps\right)^{k}\E\of{\sfR_{0}(x_{j})\geq(1+\eps)^{k}n}\leq\frac{1}{n^{\alpha}}\sum_{k=1}^{\infty}\left(\frac{1}{\left(1+\eps\right)^{a-1}}\right)^{k}\,.
\]
\end{rem}

\begin{proof}
Since $n\bQ\subset\Apaths(n\bQ)$ we have to estimate the excess mass
of $\Apaths(n\bQ)$ over $\left|n\bQ\right|$. If we define 
\begin{align*}
\X_{n\bQ} & :=\left\{ x_{j}\in\X_{\fr}\cap\bQ:\,\Ball{\sfR_{0}(x_{j})}{x_{j}}\backslash\left(n\bQ\right)\neq\emptyset\right\} \,,\\
\X_{n\bQ^{\complement}} & :=\left\{ x_{j}\in\X_{\fr}\backslash\bQ:\,\Ball{\fr}{\fA_{1,j}}\cap\left(n\bQ\right)\neq\emptyset\right\} \,,
\end{align*}
we find 
\[
\left|\Apaths(n\bQ)\backslash\left(n\bQ\right)\right|\leq\sum_{x_{j}\in\X_{n\bQ}\cup\X_{n\bQ^{\complement}}}\left|\Ball{\sfR_{0}(x_{j})}{x_{j}}\right|\,,
\]
and we thus derive an estimate on the contribution from $\X_{n\bQ}$
and $\X_{n\bQ^{\complement}}$ respectively.

Let $\eps>0$. Then for $\bQ_{n,k}^{\eps}:=\left((1+\eps)^{k}n\bQ\right)\backslash\left((1+\eps)^{k-1}n\bQ\right)$
\begin{align*}
\sum_{x_{j}\in\X_{n\bQ^{\complement}}}\left|\Ball{\sfR_{0}(x_{j},\fA_{2,j})}{x_{j}}\right| & \leq\sum_{x_{j}\in\X_{\fr}\cap\bQ_{n,1}^{\eps}}\left|\Ball{\sfR_{0}(x_{j})}{x_{j}}\right|+\sum_{k=2}^{\infty}\sum_{\substack{x_{j}\in\X_{\fr}\cap\bQ_{n,k}^{\eps}\\
d(x_{j})\geq(1+\eps)^{k-1}n
}
}\left|\Ball{\sfR_{0}(x_{j})}{x_{j}}\right|\\
 & \leq\sum_{x_{j}\in\X_{\fr}\cap\bQ_{n,1}^{\eps}}\left|\Ball{\sfR_{0}(x_{j})}{x_{j}}\right|+\sum_{k=2}^{\infty}\sum_{\substack{x_{j}\in\X_{\fr}\cap\bQ_{n,k}^{\eps}\\
\sfR_{0}(x_{j})\geq(1+\eps)^{k-1}n
}
}\left|\Ball{\sfR_{0}(x_{j})}{x_{j}}\right|\\
 & \leq\sum_{x_{j}\in\X_{\fr}\cap\bQ_{n,1}^{\eps}}\left|\Ball{\sfR_{0}(x_{j})}{x_{j}}\right|+\sum_{k=2}^{\infty}\sum_{\substack{x_{j}\in\X_{\fr}\cap\bQ_{n,k}^{\eps}\\
\sfR_{0}(x_{j})\geq(1+\eps)n
}
}\left|\Ball{\sfR_{0}(x_{j})}{x_{j}}\right|
\end{align*}
Due to the ergodic theorem, we obtain for every $n_{0}\in\N$
\begin{align*}
\frac{1}{\left|n\bQ\right|}\sum_{\substack{x_{j}\in\X_{\fr}\cap\bQ_{n,k}^{\eps}\\
\sfR_{0}(x_{j})\geq(1+\eps)^{k-1}n
}
}\left|\Ball{\sfR_{0}(x_{j})}{x_{j}}\right| & \leq\frac{1}{\left|n\bQ\right|}\sum_{\substack{x_{j}\in\X_{\fr}\cap\bQ_{n,k}^{\eps}\\
\sfR_{0}(x_{j})\geq(1+\eps)^{k-1}n_{0}
}
}\left|\Ball{\sfR_{0}(x_{j})}{x_{j}}\right|\\
 & \to\left(\left(1+\eps\right)^{k}-\left(1+\eps\right)^{k-1}\right)\E\of{\sfR_{0}(x_{j})\geq(1+\eps)^{k-1}n_{0}}\\
 & \leq\eps\left(1+\eps\right)^{k-1}\E\of{\sfR_{0}(x_{j})\geq(1+\eps)^{k-1}n_{0}}
\end{align*}
and similarly 
\[
\lim_{n\to\infty}\frac{1}{\left|n\bQ\right|}\sum_{x_{j}\in\X_{\fr}\cap\bQ_{n,1}^{\eps}}\left|\Ball{\sfR_{0}(x_{j})}{x_{j}}\right|=\eps\E(\sfR_{0})\,.
\]
Since the above estimates hold for every $\eps$ and every $n_{0}$,
we find
\[
\frac{1}{\left|n\bQ\right|}\sum_{x_{j}\in\X_{n\bQ^{\complement}}}\left|\Ball{\sfR_{0}(x_{j},\fA_{2,j})}{x_{j}}\right|\to0\,.
\]
In a similar way, we prove
\[
\frac{1}{\left|n\bQ\right|}\sum_{x_{j}\in\X_{n\bQ}}\left|\Ball{\sfR_{0}(x_{j},\fA_{2,j})}{x_{j}}\right|\to0\,.
\]
\end{proof}

\section{\label{sec:Sample-Geometries}Sample Geometries}

\subsection{Boolean Model for the Poisson Ball Process}

Recalling Example \ref{exa:poisson-point-proc} we consider a Poisson
point process $\X_{\pois}(\omega)=\left(x_{i}(\omega)\right)_{i\in\N}$
with intensity $\lambda$ (recall Example \ref{exa:poisson-point-proc}).
To each point $x_{i}$ a random ball $B_{i}=\Ball 1{x_{i}}$ is assigned
and the family $\B:=\left(B_{i}\right)_{i\in\N}$ is called the Poisson
ball process. We then denote $\bP\left(\omega\right):=\Rd\backslash\overline{\bigcup_{i}B_{i}}$
and seek for a corresponding uniform extension operator. The following
argumentation will be strongly based on the so called void probability.
This is the probability $\P_{0}(A)$ to not find any point of the
point process in a given open set $A$ and is given by (\ref{eq:PoisonPointPoc-Prob})
i.e. $\P_{0}(A):=e^{-\lambda\left|A\right|}$.

The void probability for the ball process is given accordingly by
\[
\P_{0}(A):=e^{-\lambda\left|\overline{\Ball 1A}\right|}\,,\qquad\overline{\Ball 1A}:=\left\{ x\in\Rd\,:\;\dist\of{x,A}\leq1\right\} \,,
\]
which is the probability that no ball intersects with $A\subset\Rd$.
\begin{thm}
Let $\bP\left(\omega\right):=\bigcup_{i}B_{i}(\omega)$ and define
\begin{align*}
\tilde{\delta}\of x & :=\min\left\{ \delta\of{\tilde{x}}\,:\;\tilde{x}\in\partial\bP\,\text{s.t. }x\in\Ball{\frac{1}{8}\delta\of{\tilde{x}}}{\tilde{x}}\right\} \,,\\
\tilde{\hat{\rho}}\of x & :=\min\left\{ \hat{\rho}\of{\tilde{x}}\,:\;\tilde{x}\in\partial\bP\,\text{s.t. }x\in\Ball{\frac{1}{8}\hat{\rho}\of{\tilde{x}}}{\tilde{x}}\right\} \,,
\end{align*}
where $\min\emptyset:=0$ for convenience. Then $\partial\bP$ is
almost surely locally $\left(\delta,M\right)$ regular and for every
$\gamma<1$, $\beta<d+2$ and $1\leq r<2$ and $2\frac{sr}{2(s-1)-sr}\leq d+2$
it holds
\[
\E\of{\delta^{-\gamma}}+\E\of{\tilde{\delta}^{-\gamma-1}}+\E\of{\tilde{M}^{\beta}}+\E\of{\tilde{\hat{\rho}}^{-\frac{rs}{s-1}}}<\infty\,.
\]
Furthermore, it holds $\hat{d}\le d-1$ and $\alpha=0$ in inequalities
(\ref{eq:lem:properties-local-rho-convering-4}) and (\ref{eq:assu:M-alpha-bound-improved}).
The same holds if $\bP\left(\omega\right):=\Rd\backslash\overline{\bigcup_{i}B_{i}}(\omega)$
with $\alpha$ replaced by $d$.
\end{thm}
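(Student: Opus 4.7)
The plan is to reduce everything to Palm-theoretic computations on the Poisson process. By Slivnyak's theorem, the Palm distribution of the Poisson ball process at a typical boundary point $p \in \partial\bP$ consists of a distinguished ball $B_0 \ni p$ (with inward normal uniformly distributed on $\S^{d-1}$) together with an independent Poisson process of additional ball centres of intensity $\lambda$, conditioned not to cover $p$, i.e.\ restricted to $\{x \in \Rd : |x-p| \geq 1\}$. All quantities on $\partial\bP$ will be analysed through this conditional Poisson structure and the Mecke--Palm formula.

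First I would establish almost sure local $(\delta,M)$-regularity. Almost surely no two ball centres are at exact distance $2$ and no point lies on three distinct spheres simultaneously, so each $p \in \partial\bP$ lies on finitely many $\partial B_i$ meeting transversally, which yields a positive $\delta(p)$ and a finite Lipschitz constant $M(p)$. For the bound on $\E(\delta^{-\gamma})$ I would note that, under the Palm law, the event $\{\delta(p) < r\}$ is essentially governed by the existence of another ball centre $x_j$ in the tubular shell $\{x : 1 < |x - p| < 1+r\}$, whose Poisson probability is $\lesssim r$. Thus
\begin{equation*}
\E_{\mu_{\Gamma,\P}}(\delta^{-\gamma}) \;=\; \gamma \int_0^{\fr} r^{-\gamma-1}\, \P_{\mathrm{Palm}}(\delta(p) < r)\, dr \;<\; \infty
\end{equation*}
whenever $\gamma < 1$. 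The bound on $\E(\tilde M^\beta)$ will be the core analytic step: the Lipschitz constant $M(p)$ becomes large only when $p$ lies near a transversal intersection of two nearly tangent spheres; parametrising near-tangency by $\epsilon = 2 - |x_i - x_j| > 0$ gives $M \sim \epsilon^{-1/2}$, and integrating $\epsilon\,d\epsilon$ against the $(d-1)$-dimensional intersection curve and the remaining ball-centre parameters produces a decay $\P_{\mathrm{Palm}}(M(p) > N) \lesssim N^{-(d+2)}$, giving $\E(\tilde M^\beta) < \infty$ for every $\beta < d+2$.

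Next I would transfer these Palm bounds to the ambient quantities on $\Rd$ via Lemma \ref{lem:delta-tilde-construction-estimate}, which gives
\begin{equation*}
\int_{\bQ} \tilde\eta^{-\alpha} \;\lesssim\; \int_{\Ball{\fr/4}{\bQ}\cap\partial\bP} \eta^{1-\alpha}\, M_{[\eta/4]}^{d-2}
\end{equation*}
for $\eta \in \{\delta, \hat\rho\}$. Applied to $\tilde\delta^{-\gamma-1}$ this yields a Palm integral of $\delta^{-\gamma} M^{d-2}$, which by H\"older is finite for $\gamma < 1$ once $d-2 < d+2$. For $\tilde{\hat\rho}^{-rs/(s-1)}$ I would use $\hat\rho \asymp \delta / (1+M)$ and H\"older once more; the algebraic constraint $2\tfrac{sr}{2(s-1)-sr} \leq d+2$, equivalent to $\tfrac{rs}{s-1} \leq \tfrac{2(d+2)}{d+4}$, is precisely what balances the $\delta^{-a}$ and $M^{a+d-2}$ factors against the Palm decay exponents of $\delta$ and $M$ when $a = rs/(s-1)$.

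For the geometric constants I would argue: $\hat d \leq d-1$ follows from Remark \ref{rem:lem:properties-local-rho-convering}, because the intersection skeleton of $\partial\bP$ is supported on $(d-1)$-dimensional submanifolds, so the relevant covering count is controlled by surface area rather than volume. For $\alpha = 0$ in the case $\bP = \bigcup_i B_i$: every $p \in \partial B_i$ admits an interior ball $\Ball{c\hat\rho(p)}{y} \subset B_i \subset \bP$ with $y$ a distance $c\hat\rho(p)$ inward along the normal $x_i - p$, and the constant $c$ is independent of $M$ because the ambient radius of curvature of $B_i$ at $p$ is always $1$; hence $\fr_k \geq c\hat\rho_k$ and Assumption \ref{assu:M-alpha-bound} holds with $\alpha = 0$. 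For the complement case $\bP = \Rd \setminus \overline{\bigcup_i B_i}$ the inner ball must instead fit inside the cone at $p$ delimited by $\partial\bP$, and Lemma \ref{lem:small-ball-in-cone} yields only $\fr_k \asymp \hat\rho_k / (1+M_k)$, forcing $\alpha = d$. The hardest part will be the sharp tail analysis of $M(p)$ at near-tangent double intersections: identifying the correct scaling $M \sim \epsilon^{-1/2}$ uniformly in the angular position along the intersection curve, and checking that the Palm density contributes the missing factors from the remaining $d-1$ directions to produce the precise exponent $d+2$.
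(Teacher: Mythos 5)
The proposal captures the correct key mechanisms — Poisson void probabilities, the near-tangency parametrization $M\sim\xi^{-1/2}$ with $\delta_0\sim\sqrt\xi$, the annular/shell integration over $\xi$, the geometric distinction between fitting an interior ball inside a sphere (union, $\alpha=0$) versus inside a thin wedge (complement, $\alpha=d$), and the $(d-1)$-dimensional/flat local structure giving $\hat d\leq d-1$. These all match the paper's proof.

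Where you deviate is in how you transfer from boundary (Palm) quantities to the bulk quantities $\tilde\delta$ and $\tilde{\hat\rho}$, and the deviation introduces a gap. You propose to route everything through Lemma \ref{lem:delta-tilde-construction-estimate}, which converts $\int_{\bQ}\tilde\eta^{-\alpha}$ into a boundary integral of $\eta^{1-\alpha}\,M_{[\eta/4]}^{d-2}$. This inserts the Jacobian factor $M^{d-2}$, and you then claim the resulting $\int\delta^{-\gamma}M^{d-2}$ ``by H\"older is finite for $\gamma<1$ once $d-2<d+2$''. That step does not work: with the marginal bounds $\E(\delta^{-\sigma})<\infty$ for $\sigma<1$ and $\E(M^{\beta})<\infty$ for $\beta<d+2$, a H\"older split $\int\delta^{-\gamma p'}{}^{1/p'}\int M^{(d-2)q'}{}^{1/q'}$ with $1/p'+1/q'=1$ forces $q'<\tfrac{d+2}{d-2}$ and hence $\gamma<1-\tfrac{d-2}{d+2}=\tfrac{4}{d+2}$, which is strictly less than $1$ for every $d>2$. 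The paper sidesteps this entirely: it decomposes $\tilde\delta^{-\gamma-1}\leq\sum_i\chi_{\tilde\delta_i>0}\tilde\delta_i^{-\gamma-1}$ \emph{ball by ball} and exploits that $\partial B_i$ is a sphere of bounded curvature (not a worst-case cone), so the surface-to-volume transfer carries no $M^{d-2}$ factor at all; one then only needs the Palm bound on $\delta^{-\gamma}$. If you insist on the general transfer lemma, the argument can still be repaired, but not by H\"older — you would have to use the \emph{joint} structure $\delta M\asymp 1$ at the creases (i.e.\ $\delta\sim\sqrt\xi$, $M\sim\xi^{-1/2}$ simultaneously), so that $\delta^{-\gamma}M^{d-2}\sim\xi^{-(\gamma+d-2)/2}$ integrates against the Palm density $\xi^{d/2}\,d\xi$ to $\int\xi^{(2-\gamma)/2}d\xi<\infty$, together with a separate bound for the non-intersection case where $M\sim1$. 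The $\tilde{\hat\rho}$ step has the same issue (and a minor slip: the transfer gives $\delta^{1-a}M^{a+d-2}$, not $\delta^{-a}M^{a+d-2}$); the paper instead applies H\"older directly to the \emph{bulk} expectations $\E(\tilde\delta^{-\tilde s})$ and $\E(\tilde M^{ap})$ already established, which is what produces the stated constraint $\tfrac{2sr}{2(s-1)-sr}\leq d+2$. Finally, a cosmetic point: the intersection locus $\partial B_i\cap\partial B_j$ is $(d-2)$-dimensional, not $(d-1)$-dimensional as you write.
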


\begin{rem}
We observe that the union of balls has better properties than the
complement.
\end{rem}

\begin{proof}
We study only $\bP\left(\omega\right):=\bigcup_{i}B_{i}(\omega)$
since $\Rd\backslash\overline{\bigcup_{i}B_{i}}(\omega)$ is the complement
sharing the same boundary. Hence, in case $\bP(\omega)=\Rd\backslash\overline{\bigcup_{i}B_{i}}(\omega)$,
all calculations remain basically the same. However, in the first
case, we assume that $\fr(y_{k})=\frac{1}{4}\tilde{\rho}(y_{k})$
, which we cannot assume in the other case, where $\fr(y_{k})$ is
proportional to $\tilde{\rho}_{k}\tilde{M}_{k}^{-1}$. This is the
reason for the different $\alpha$ in the two cases.

In what follows, we use that the distribution of balls is mutually
independent. That means, given a ball around $x_{i}\in\X_{\pois}$,
the set $\X_{\pois}\backslash\left\{ x_{i}\right\} $ is also a Poisson
process. W.l.o.g. , we assume $x_{i}=x_{0}=0$ with $B_{0}:=\Ball 10$.
First we note that $p\in\partial B_{0}\cap\partial\bP$ if and only
if $p\in\partial B_{0}\backslash\bP$, which holds with probability
$\P_{0}\of{\Ball 1p}=\P_{0}\of{B_{0}}$. This is a fixed quantity,
independent from $p$.

Now assuming $p\in\partial B_{0}\backslash\bP$, the distance to the
closest ball besides $B_{0}$ is denoted 
\[
r(p)=\dist(p,\partial\bP\backslash\partial B_{0})
\]
with a probability distribution 
\[
\P_{\dist}(r):=\P_{0}\of{\Ball{1+r}p}/\P_{0}\of{\Ball 1p}\,.
\]
It is important to observe that $\partial B_{0}$ is $r$-regular
in the sense of Lemma \ref{lem:eta-lipschitz}. Another important
feature in view of Lemma \ref{lem:properties-delta-M-regular} is
$r(p)<\Delta(p)$. In particular, $\delta(p)>\frac{1}{2}r(p)$ and
$\partial B_{0}$ is $(\delta,1)$-regular in case $\delta<\sqrt{\frac{1}{2}}$.
Hence, in what follows, we will derive estimates on $r^{-\gamma}$,
which immediately imply estimates on $\delta^{-\gamma}$.

\textbf{Estimate on $\gamma$:} A lower estimate for the distribution
of $r(p)$ is given by 
\begin{equation}
\P_{\dist}(r):=\P_{0}\of{\Ball{1+r}p}/\P_{0}\of{\Ball 1p}\approx1-\lambda\left|\S^{d-1}\right|r\,.\label{eq:PP-lower-estim-delta}
\end{equation}
This implies that almost surely for $\gamma<1$
\[
\limsup_{n\to\infty}\frac{1}{(2n)^{d}}\int_{(-n,n)^{d}\cap\partial\bP}r(p)^{-\gamma}\,\d\cH^{d-1}(p)<\infty\,,
\]
i.e. $\E\of{\delta^{-\gamma}}<\infty$.

\textbf{Intersecting balls:} Now assume there exists $x_{i}$, $i\neq0$
such that $p\in\partial B_{i}\cap\partial B_{0}$. W.l.o.g. assume
$x_{i}=x_{1}:=(2x,0,\dots,0)$ and $p=\left(\sqrt{1-x^{2}},0,\dots,0\right)$.
Then 
\[
\delta(p)\leq\delta_{0}(p):=2\sqrt{1-x^{2}}
\]
 and $p$ is at least $M(p)=\frac{x}{\sqrt{1-x^{2}}}$-regular. Again,
a lower estimate for the probability of $r$ is given by (\ref{eq:PP-lower-estim-delta})
on the interval $(0,\delta_{0})$. Above this value, the probability
is approximately given by $\lambda\left|\S^{d-1}\right|\delta_{0}$
(for small $\delta_{0}$i.e.~$x\approx1$). We introduce as a new
variable $\xi=1-x$ and obtain from $1-x^{2}=\xi(1+x)$ that 
\begin{equation}
\delta_{0}\leq C\xi^{\frac{1}{2}}\quad\text{and}\quad M(p)\leq C\xi^{-\frac{1}{2}}\,.\label{eq:delta-0-xi}
\end{equation}

\textbf{No touching:} At this point, we observe that $M$ is almost
surely locally finite. Otherwise, we would have $x=1$ and for every
$\eps>0$ we had $x_{1}\in\Ball{2+\eps}{x_{0}}\backslash\Ball{2-\eps}{x_{0}}$.
But 
\[
\P_{0}\of{\Ball{2+\eps}{x_{0}}\backslash\Ball{2-\eps}{x_{0}}}\approx1-\lambda2\left|\S^{d-1}\right|\eps\;\to\;1\qquad\text{as }\eps\to0\,.
\]
Therefore, the probability that two balls ``touch'' (i.e. that $x=1$)
is zero. The almost sure local boundedness of $M$ now follows from
the countable number of balls.

\textbf{Extension to $\tilde{\delta}$:} We again study each ball
separately. Let $p\in\partial B_{0}\backslash\overline{\bP}$ with
tangent space $T_{p}$ and normal space $N_{p}$. Let $x\in N_{p}$
and $\tilde{p}\in\partial B_{0}$ such that $x\in\Ball{\frac{1}{8}\delta(\tilde{p})}{\tilde{p}}$,
then also $p\in\Ball{\frac{1}{8}\delta(\tilde{p})}{\tilde{p}}$ and
$\delta(p)\in(\frac{7}{8},\frac{7}{6})\delta(\tilde{p})$ and $\delta(\tilde{p})\in(\frac{7}{8},\frac{7}{6})\delta(p)$
by Lemma \ref{lem:eta-lipschitz}. Defining 
\[
\tilde{\delta}_{i}\of x:=\min\left\{ \delta\of{\tilde{x}}\,:\;\tilde{x}\in\partial B_{i}\backslash\bP\,\text{s.t. }x\in\Ball{\frac{1}{8}\delta\of{\tilde{x}}}{\tilde{x}}\right\} \,,
\]
we find 
\[
\tilde{\delta}^{-\gamma}\leq\sum_{i}\chi_{\tilde{\delta}_{i}>0}\tilde{\delta}_{i}^{-\gamma}\,.
\]
Studying $\delta_{0}$ on $\partial B_{0}$ we can assume $M\leq M_{0}$
in (\ref{eq:lem:delta-tilde-construction-estimate-1}) and we find
\[
\int_{\bP}\chi_{\tilde{\delta}_{0}>0}\tilde{\delta}_{0}^{-\gamma-1}\leq C\int_{\partial B_{0}\backslash\bP}\delta^{-\gamma}\,.
\]
Hence we find
\[
\int_{\bP}\tilde{\delta}^{-\gamma-1}\leq\sum_{i}\int_{\bP}\chi_{\tilde{\delta}_{i}>0}\tilde{\delta}_{i}^{-\gamma-1}\leq\sum_{i}C\int_{\partial B_{i}\backslash\bP}\delta^{-\gamma}\,.
\]

\textbf{Estimate on $\beta$:} For two points $x_{i},x_{j}\in\X_{\pois}$
let $\mathrm{Circ}_{ij}:=\partial B_{i}\cap\partial B_{j}$ and $\Ball{\frac{1}{8}\tilde{\delta}}{\mathrm{Circ}_{ij}}:=\bigcup_{p\in\mathrm{Circ}_{ij}}\Ball{\frac{1}{8}\tilde{\delta}(p)}p$.
For the fixed ball $B_{i}=B_{0}$ we write $\mathrm{Circ}_{0j}$ and
obtain $\left|\mathrm{Circ}_{0j}\right|\leq C\delta_{0}^{d}$ with
$\delta_{0}$ from (\ref{eq:delta-0-xi}). Therefore, we find 
\[
\int_{\mathrm{Circ}_{0j}}(1+M(p))^{\beta}\leq\delta_{0}^{d}(1+M(p))^{\beta}\leq C\xi^{-\frac{1}{2}(\beta-d)}\,.
\]

We now derive an estimate for $\E\of{\int_{\Ball{1+\fr}0}\tilde{M}^{\beta}}.$

To this aim, let $q\in(0,1)$. Then $x\in\Ball{2-q^{k+1}}0\backslash\Ball{2-q^{k}}0$
implies $\xi\geq q^{k+1}$ and 
\begin{align*}
\int_{\Ball{1+\fr}0}\tilde{M}^{\beta} & \leq C+\sum_{k=1}^{\infty}\sum_{x_{j}\in\Ball{2-q^{k+1}}0\backslash\Ball{2-q^{k}}0}\int_{\mathrm{Circ}_{0j}}(1+M(p))^{\beta}\\
 & \leq C+\sum_{k=1}^{\infty}\sum_{x_{j}\in\Ball{2-q^{k+1}}0\backslash\Ball{2-q^{k}}0}C\left(q^{k+1}\right)^{-\frac{1}{2}(\beta-d)}
\end{align*}
The only random quantity in the latter expression is $\#\left\{ x_{j}\in\Ball{2-q^{k+1}}0\backslash\Ball{2-q^{k}}0\right\} $.
Therefore, we obtain with $\E\of{\X(A)}=\lambda\left|A\right|$ that
\begin{align*}
\E\of{\int_{\Ball{1+\fr}0}\tilde{M}^{\beta}} & \leq C\left(1+\sum_{k=1}^{\infty}\left(q^{k}-q^{k+1}\right)\left(q^{k+1}\right)^{-\frac{1}{2}(\beta-d)}\right)\\
 & \leq C\left(1+\sum_{k=1}^{\infty}\left(q^{k}\right)^{-\frac{1}{2}(\beta-d-2)}\right)\,.
\end{align*}
Since the point process has finite intensity, this property carries
over to the whole ball process and we obtain the condition $\beta<d+2$
in order for the right hand side to remain bounded.

\textbf{Estimate on $\tilde{\gamma}$:} We realize that $\tilde{\hat{\rho}}\geq\frac{\tilde{\delta}}{\tilde{M}}\geq\frac{\tilde{r}}{\tilde{M}}$.
Hence we obtain from H\"older's inequality

\[
\E\of{\tilde{\hat{\rho}}^{-\frac{rs}{s-1}}}\leq\E\of{\tilde{\delta}^{-\tilde{s}}}^{\frac{1}{q}}\E\of{\tilde{M}^{\frac{sr}{(s-1)}p}}^{\frac{1}{p}}\,,
\]
where $\tilde{s}=\frac{rs}{s-1}q$ and $\frac{1}{p}+\frac{1}{q}=1$.
From the right hand side of the last inequality, we infer boundedness
of the first expectation value for $\tilde{s}<2$ implying $q<\frac{2(s-1)}{sr}$.
Since we have to require $q>1$, this implies $r<2$ and $s>\frac{2}{2-r}$.
On the other hand, we know that the second expectation is finite if
$\frac{sr}{(s-1)}p<d+2$. For $q=\frac{2(s-1)}{sr}$, we obtain the
lower bound for $p=\frac{q}{q-1}$ and hence we conclude the sufficient
condition 
\[
2\frac{1}{\frac{2(s-1)}{sr}-1}\leq d+2\,,
\]
which implies our claim.

\textbf{Estimate on $\hat{d}$:} We have to estimate the local maximum
number of $A_{3,k}$ overlapping in a single point in terms of $\tilde{M}$.
We first recall that $\hat{\rho}(p)\approx8\tilde{M}(p)\tilde{\rho}(p)$.
Thus large discrepancy between $\hat{\rho}$ and $\tilde{\rho}$ occurs
in points where $\tilde{M}$ is large. This is at the intersection
of at least two balls. Despite these ``cusps'', the set $\partial\bP$
consists locally on the order of $\hat{\rho}$ of almost flat parts.
Arguing like in Lemma \ref{lem:properties-local-rho-convering} resp.
Remark \ref{rem:lem:properties-local-rho-convering} this yields $\hat{d}\leq d-1$.

\textbf{Estimate on $\alpha$:} Given two points $y_{1},y_{2}$ with
radii $\fr(y_{1})$, $\fr(y_{2})$, $\B_{y_{i}}:=\Ball{\fr(y_{i})}{y_{i}}$
and $\cM_{y_{i}}u:=\left|\B_{y_{1}}\right|^{-1}\int_{\B_{y_{1}}}u$
we find 
\[
\left|\cM_{y_{1}}u-\cM_{y_{2}}u\right|\leq\frac{\left|y_{1}-y_{2}\right|+\left|\left(\frac{\fr\of{y_{2}}}{\fr\of{y_{1}}}-1\right)\fr\of{y_{1}}\right|}{\left|\B_{y_{1}}\right|}\int_{\conv\of{\B_{y_{1}}\cup\B_{y_{2}}}}\left|\nabla u\right|\,.
\]
By our initial assumptions on $\fr(y_{i})$ we prove our claim on
$\alpha$.
\end{proof}
It remains to verify bounded average connectivity of the Boolean set
$\bP\left(\omega\right):=\bigcup_{i}B_{i}(\omega)$ or its complement.
In what follows we restrict to the Boolean set and use the following
result.
\begin{thm}
\label{thm:large-dev-pois}\cite{yao2011large}Let $\bP$ have a connected
component and let $\G(\X_{\pois})$ be the graph on $\X_{\pois}$
constructed from $x\sim y$ iff $\Ball 1x\cap\Ball 1y\neq\emptyset$.
Let $\tilde{\bP}$ be the connected component of $\bP$ and $\tilde{\X}_{\pois}:=\X_{\pois}\cap\tilde{\bP}$.
For $x,y\in\tilde{\X}_{\pois}$ let $d(x,y)$ be the graph distance.
Then for every $\eps>0$ there exists $\mu>1$, $\nu>0$ such that
\[
\P\of{\frac{d(x,y)}{\mu\left|x-y\right|}\not\in(1-\eps,1+\eps)}\leq e^{-\nu\left|x-y\right|}\,.
\]
\end{thm}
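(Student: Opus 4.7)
The plan is to establish a shape theorem for the chemical distance and then upgrade the almost sure convergence to exponential concentration by means of a renormalization (block) argument. First I would verify that the chemical distance $d$ is well-defined and subadditive in the sense that $d(x,z) \leq d(x,y) + d(y,z)$ whenever $x,y,z$ lie in the same cluster, and that $d(x,y) \geq |x-y|/2$ since each graph edge corresponds to a pair of overlapping unit balls (so edges have Euclidean length at most $2$). Combining this with the stationarity and ergodicity of the Poisson process and Kingman's subadditive ergodic theorem applied along rational directions, one obtains a deterministic constant $\mu = \mu(\lambda) \in [1/2,\infty)$ and a shape theorem: conditional on $x,y$ belonging to the (unique) infinite cluster, $d(x,y)/|x-y| \to \mu$ almost surely as $|x-y| \to \infty$. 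That $\mu > 1$ (rather than merely $\mu \geq 1/2$) follows from the fact that edges are chords through unit balls, giving a strict loss compared with the Euclidean distance along any path with more than one edge; a more careful convexity argument, together with the positive density of Poisson points, yields the strict inequality.

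Next I would carry out the renormalization step, which is the main mechanism for passing from almost sure convergence to exponential concentration. Partition $\Rd$ into cubes $B_z = Lz + [0,L)^d$ with $z \in \Zd$ and $L$ large. Declare a cube $B_z$ \emph{good} if it contains a dense connected subcluster that is linked through each face of $B_z$ to analogous subclusters in the neighboring cubes by a path in $\G(\X_{\pois})$ of graph length at most $cL$, for a suitable constant $c$. Since the Boolean model is supercritical by assumption and this "goodness" depends only on the Poisson configuration in a bounded neighborhood of $B_z$, standard results in continuum percolation (Penrose–Pisztora–style block arguments) show that, for $L$ sufficiently large, the field $\{\chi_{\text{good}}(z)\}_{z \in \Zd}$ stochastically dominates a supercritical Bernoulli site percolation with parameter arbitrarily close to $1$, and that cubes are bad with probability decaying exponentially in $L^d$.

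With these ingredients, both tail estimates follow by essentially classical machinery. For the upper tail $\P(d(x,y) > (1+\eps)\mu |x-y|) \leq e^{-\nu|x-y|}$, build a path in $\G(\X_{\pois})$ by first producing a path of good cubes from the cube containing $x$ to the cube containing $y$; the shape theorem for supercritical Bernoulli percolation shows that one can do this with Euclidean length at most $(1+\eps/2)|x-y|$ except on an event of probability $\leq e^{-\nu|x-y|}$, and on each good cube the chemical length is controlled by $cL$ per step, giving the $(1+\eps)\mu$ bound. For the lower tail $\P(d(x,y) < (1-\eps)\mu|x-y|) \leq e^{-\nu|x-y|}$, use the same block construction in reverse: any graph path from $x$ to $y$ must traverse a corresponding sequence of cubes, and exponential bounds for the number of "shortcut" cubes (i.e. cubes where the microscopic path uses fewer than the expected number of edges) give the result, essentially via a BK–Reimer type inequality applied to the renormalized field.

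The main obstacle will be the renormalization step: producing the coupling between the good-cube process and supercritical Bernoulli percolation with exponential tails on the bad events, in a way that is compatible with a uniform constant $\mu$ independent of the choice of $x,y$. This is where the continuum nature of the Poisson process complicates matters compared with discrete percolation, because one must simultaneously control geometric features (existence of a large cluster in each cube) and connectivity across faces. Once this coupling is in hand, together with the strict inequality $\mu > 1$ proved in the shape theorem, the exponential bounds of the theorem follow in a routine fashion from subadditivity and standard large-deviation arguments for supercritical Bernoulli percolation.
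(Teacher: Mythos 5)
The paper does not prove Theorem~\ref{thm:large-dev-pois}; it imports the result by citation to \cite{yao2011large} (Yao, Chen, Wen) and immediately builds on it in Lemma~\ref{lem:factor-poisson}. So there is no internal proof to compare against. Your sketch does reproduce the broad strategy one actually uses for this kind of statement --- Kingman's subadditive ergodic theorem to pin down a time constant, followed by an Antal--Pisztora style renormalization to upgrade almost-sure convergence to exponential concentration --- and that is indeed the structure of the argument in the cited reference. In that sense the proposal is on target.

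One concrete point worth flagging: your justification for $\mu>1$ does not work as written. In the theorem, $d(x,y)$ is the \emph{graph distance} on $\G(\X_{\pois})$, i.e.\ a hop count, not a Euclidean path length. You correctly observe that each edge joins two points at Euclidean distance at most $2$, which gives $d(x,y)\geq|x-y|/2$ and hence $\mu\geq 1/2$, but the remark that ``edges are chords through unit balls, giving a strict loss compared with the Euclidean distance'' is only an argument that a Euclidean path length exceeds $|x-y|$; it says nothing about the number of edges. In fact, for large intensity $\lambda$ the hop-count time constant should approach $1/2$, not stay above $1$, so the inequality $\mu>1$ is either special to the low-intensity regime, a convention issue in the cited source, or a typo in the paper's restatement of it --- and either way it cannot be derived from the triangle-inequality/chord argument you give. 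Since nothing downstream (Lemma~\ref{lem:factor-poisson}) actually uses $\mu>1$ rather than $\mu>0$, this does not affect the rest of the paper, but it should not be passed off as a proved consequence of your shape-theorem step.
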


The latter result enables us to prove the following.
\begin{lem}
\label{lem:factor-poisson}Using the notation of Theorem \ref{thm:large-dev-pois},
let $x,y\in\tilde{\X}_{\pois}$ and $a>2$. Then
\[
\P\of{d(x,y)\geq4\mu a\left|x-y\right|\left(1+\eps\right)}\leq2e^{-\frac{\nu}{2}a\left|x-y\right|}\,.
\]
\end{lem}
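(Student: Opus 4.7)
I would prove the bound by a triangle-inequality argument with an auxiliary Poisson point. Setting $r:=|x-y|$, the first step is to produce $z\in\tilde{\X}_{\pois}$ such that
$$\tfrac{ar}{2}\leq |x-z|\leq 2ar,\qquad \tfrac{ar}{2}\leq |z-y|\leq 2ar,$$
for example by placing $z$ near a deterministic reference point on the perpendicular bisector of $\overline{xy}$ at Euclidean distance of order $ar$ from its midpoint. The hypothesis $a>2$ ensures that this bisector point lies comfortably outside balls of radius $r$ around $x$ and $y$, which is what permits the lower bound $|x-z|,|z-y|\geq ar/2$.

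\textbf{Key estimate.} With such a $z$ in hand, I would apply Theorem~\ref{thm:large-dev-pois} (with the same $\eps$ as in the statement) separately to the pairs $(x,z)$ and $(z,y)$. For each pair one obtains
$$\P\bigl(d(x,z)>\mu(1+\eps)|x-z|\bigr)\;\leq\;e^{-\nu|x-z|}\;\leq\;e^{-(\nu/2)ar},$$
and analogously for $(z,y)$. A union bound then shows that, outside an exceptional event of probability at most $2e^{-(\nu/2)ar}$, both Yao-type bounds hold simultaneously. On this good event the triangle inequality yields
$$d(x,y)\;\leq\;d(x,z)+d(z,y)\;\leq\;\mu(1+\eps)\bigl(|x-z|+|z-y|\bigr)\;\leq\;4\mu\,a\,r(1+\eps),$$
which contradicts the event whose probability we want to estimate. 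Thus $\P(d(x,y)\geq 4\mu a r(1+\eps))\leq 2e^{-(\nu/2)ar}$, the prefactor $2$ being exactly what the union bound over the two legs provides.

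\textbf{Main obstacle.} The delicate point is the first step: the intermediate $z$ must be chosen inside the infinite component $\tilde{\X}_{\pois}$, not merely in $\X_{\pois}$, for the graph distance $d(\cdot,\cdot)$ to be defined. In the supercritical Boolean regime, $\tilde{\X}_{\pois}$ has positive density, so a deterministic ball of radius comparable to $r$ around the chosen bisector point contains a point of $\tilde{\X}_{\pois}$ with probability exponentially close to $1$ in $(ar)^d$. Since $(ar)^d$ dominates $ar$ once $ar$ is even moderately large, the probability that no such $z$ exists is negligible compared to $e^{-(\nu/2)ar}$ and can be absorbed into the prefactor; the regime where $r$ is so small that these estimates degenerate can be handled separately using the trivial bound $d(x,y)\geq 1$ for $x\neq y$. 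Apart from this existence step the argument is just a two-pair application of the Yao large-deviation estimate.
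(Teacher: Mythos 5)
Your argument is essentially the same as the paper's: produce an auxiliary point $z$ with both $|x-z|$ and $|z-y|$ of order $a|x-y|/2$, apply Theorem \ref{thm:large-dev-pois} to the two legs $(x,z)$ and $(z,y)$, and use the triangle inequality plus a union bound, so that the event $d(x,y)\geq 4\mu a|x-y|(1+\eps)$ forces at least one leg to have an atypical stretch. The only cosmetic difference is your placement of $z$ (near the perpendicular bisector) versus the paper's (in a dyadic annulus $\Ball{2^{n+1}|x-y|}{x}\setminus\Ball{2^{n}|x-y|}{x}$ with $a\in[2^{n},2^{n+1})$); you also flag more explicitly than the paper does that $z$ must lie in the infinite component $\tilde{\X}_{\pois}$ and that the (exponentially small) probability that no such $z$ exists must be accounted for, which is a genuine, if minor, loose end in the paper's own proof.
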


In other words, the probability that the distance between $x$ and
$y$ on the grid is stretched by more than $5\mu a$ is decreasing
exponentially in $a$.
\begin{proof}
Let $x,y\in\tilde{\X}_{\pois}$. Let $a>2$ and let $n\in\N$ such
that $a\in[2^{n},2^{n+1})$. With probability $1-\exp\of{-\lambda\left|\S^{d-1}\right|\of{2^{dn+d}-2^{d}}\left|x-y\right|^{d}}>\frac{1}{2}$
there exists $z\in\Ball{2^{n+1}\left|x-y\right|}x\backslash\Ball{2^{n}\left|x-y\right|}x$.
For such $z$ it holds 
\begin{gather*}
2^{n}\left|x-y\right|\leq\left|z-x\right|<2^{n+1}\left|x-y\right|\\
2^{n}\left|x-y\right|\leq\left|z-y\right|<\left(2^{n+1}+1\right)\left|x-y\right|
\end{gather*}
 In particular, we obtain for $a_{n+1}:=2^{n+1}+1$
\begin{align*}
\frac{d(x,y)}{4\mu a\left|x-y\right|}\leq\frac{d(x,y)}{2\mu a_{n+1}\left|x-y\right|} & \leq\frac{d(x,z)}{2\mu a_{n+1}\left|x-y\right|}+\frac{d(z,y)}{2\mu a_{n+1}\left|x-y\right|}\\
 & \leq\frac{d(x,z)}{2\mu\left|x-z\right|}+\frac{d(z,y)}{2\mu\left|z-y\right|}
\end{align*}
Hence, assuming $1+\eps\leq\frac{d(x,y)}{4\mu a\left|x-y\right|}$
we find that at least one of the conditions $\frac{d(x,z)}{\mu\left|x-z\right|}\geq1+\eps$
or $\frac{d(z,y)}{\mu\left|z-y\right|}\geq1+\eps$ has to hold, which
implies 
\[
\P\of{1+\eps\leq\frac{d(x,y)}{4\mu a\left|x-y\right|}}\leq\P\of{\frac{d(x,z)}{\mu\left|x-z\right|}\geq1+\eps\text{ or }\frac{d(z,y)}{\mu\left|z-y\right|}\geq1+\eps}\,.
\]
Now it holds under the condition that $z$ exists 
\[
\P\of{\frac{d(x,z)}{\mu\left|x-z\right|}\geq1+\eps\text{ or }\frac{d(z,y)}{\mu\left|z-y\right|}\geq1+\eps}<e^{-\nu\left|x-z\right|}+e^{-\nu\left|y-z\right|}<2e^{-\nu2^{n}\left|x-y\right|}<2e^{-\frac{\nu}{2}a\left|x-y\right|}\,,
\]
which implies the statement.
\end{proof}
We construct a suitable graph $\left(\Y,\G(\bP)\right)$. For this
we choose $\X_{\fr}:=\X_{\fr}(\bP)$ according to Lemma \ref{lem:X-r-stationary}
and define 
\[
\Y_{\pois}=\Y_{\partial\X}\cup\partial\X\cup\X_{\fr}\cup\X_{\pois}\,.
\]
For $\Y_{\partial\X}$ and $\partial\X$ we choose the standard neighborhood
relation. Furthermore, we say for $y\in\Y_{\partial\X}$ and $x\in\X_{\fr}$
that $y\sim x$ iff there exists $\tilde{x}\in\X_{\pois}$ with $x,y\in\Ball 1{\tilde{x}}$
and for $x\in\X_{\fr}$, $\tilde{x}\in\X_{\pois}$ we say $x\sim\tilde{x}$
iff $x\in\Ball 1{\tilde{x}}$. This graph is called $\G_{\pois}$.
\begin{thm}
Let $\bP$ be the connected component of $\bigcup_{i}B_{i}(\omega)$.
Then $\bP$ is locally connected and for $\left(\Y_{\pois},\G_{\pois}\right)$
we find for every $\gamma>0$ that $\E\of{\sfS_{j}^{\gamma}}\leq\infty$.
\end{thm}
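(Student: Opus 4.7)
The plan is to split the claim into local connectedness of $\bP$ (a purely geometric verification) and a tail estimate on $\sfS_j$, from which $\E\of{\sfS_j^\gamma}<\infty$ follows by integrating $\gamma s^{\gamma-1}\P\of{\sfS_j>s}$. The quantitative ingredient throughout is Lemma \ref{lem:factor-poisson}.

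For local connectedness I would fix weakly adjacent $y_1,y_2\in\Y_{\partial\X}$ with corresponding boundary points $p_1,p_2$ satisfying $\Ball{\tilde\rho_1}{p_1}\cap\Ball{\tilde\rho_2}{p_2}\neq\emptyset$. Each $p_\ell$ lies on a sphere $\partial B_{k_\ell}$ of the ball process, and the estimate $|p_1-p_2|\leq\tilde\rho_1+\tilde\rho_2$ is small compared with the unit radius, forcing either $k_1=k_2$ or $|x_{k_1}-x_{k_2}|<2$. In either case $\of{B_{k_1}\cup B_{k_2}}\cap\Ball{2\tilde\rho_1}{p_1}\subset\bP$ is pathwise connected, so the edge $(y_1,y_2)$ is preserved under the reduction $\G_0\rightsquigarrow\G_\fl$. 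Combined with the hypothesis that $\bP$ is connected, this shows $\of{\Y_\pois,\G_\fl(\bP)}$ is connected.

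For the tail estimate I would decompose any path $Y\in\Apaths_\fl(p_i,x_j)$ with $p_i\in\Y\cap\fA_{2,j}$ into three pieces: an initial excursion of length $\leq 2$ from $p_i\in\partial B_{\tilde x_{\mathrm{in}}}$ to the centre $\tilde x_{\mathrm{in}}\in\X_\pois$; a mesoscopic chain through centres $\tilde x_{\mathrm{in}}=z_0,z_1,\ldots,z_N=\tilde x_{\mathrm{out}}$ whose consecutive balls overlap (each segment of Euclidean length $\leq 2$); and a final segment of length $\leq 1$ from $\tilde x_{\mathrm{out}}$ to $x_j$. Thus $\Length(Y)\leq 5+2N$ and $N$ is bounded by the Gilbert-graph distance $d_{\G(\X_\pois)}\of{\tilde x_{\mathrm{in}},\tilde x_{\mathrm{out}}}$. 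Since $p_i\in\fA_{2,j}$ forces $|\tilde x_{\mathrm{in}}-\tilde x_{\mathrm{out}}|\leq 3d_j+2\leq Cd_j$, applying Lemma \ref{lem:factor-poisson} with $a=(sd_j-5)/\of{8\mu(1+\eps)|\tilde x_{\mathrm{in}}-\tilde x_{\mathrm{out}}|}$ (which exceeds $2$ for $s$ large enough) cancels the Euclidean distance in the exponent and gives $\P\of{\Length(Y)>sd_j}\leq 2\exp\of{-\nu'(sd_j-5)}$ with $\nu'>0$ independent of $j$. A union bound over the candidate pairs $(p_i,\tilde x_{\mathrm{in}})$ associated to $\fA_{2,j}$, whose cardinality is dominated by a Poisson count on $\Ball{Cd_j}{x_j}$ and hence has finite exponential moments, together with $d_j\geq\fr$, converts this pointwise bound into $\P\of{\sfS_j>s}\leq Ce^{-cs}$ for positive constants $C,c$, which integrates to $\E\of{\sfS_j^\gamma}<\infty$ for every $\gamma>0$.

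The main obstacle will be two intertwined technicalities. First, consecutive mesoscopic ball centres in $\G(\X_\pois)$ are joined in $\G_\pois$ only via an intermediate $\X_\fr$-point lying in the lens $B_{z_i}\cap B_{z_{i+1}}$; existence of a point of $2\fr\Zd$ at distance $\geq\fr$ from $\partial\bP$ inside the overlap is not guaranteed edge by edge. This is handled by showing that each single $\G(\X_\pois)$-step can be replaced by a $\G_\pois$-chain of bounded random length with exponential tails, the extra factor being absorbed into $\nu'$. Second, the union-bound cardinality scales like $d_j^d$, and $d_j$ itself has only the tail governed by the isotropic cone mixing function $f$; this is dealt with by conditioning on the Poisson configuration inside $\Ball{Cd_j}{x_j}$ and exploiting Poisson concentration, so that the polynomial-in-$d_j$ prefactor is dominated by the exponential bound. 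A minor adaptation is that $x_j\in\X_\fr$ need not lie in $\X_\pois$, but identifying $x_j$ with the centre of its enclosing ball costs only an additive constant in the length estimate.
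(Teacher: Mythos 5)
Your approach matches the paper's: decompose an admissible path from a boundary point to $x_{j}$ into a short initial segment, a mesoscopic chain through Poisson centres, and a short final segment; bound the middle piece by the Gilbert-graph distance on $\X_{\pois}$; apply Lemma \ref{lem:factor-poisson} to get exponential tails; and absorb the $O(d_{j}^{d})$ union-bound prefactor using the subexponential tail of $d_{j}$. You additionally spell out the local-connectedness verification and the $\X_{\fr}$-intermediate-point issue, both of which the paper's very terse proof handles implicitly.
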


\begin{proof}
We write $a=\fr^{-1}$. Let $x_{1}\in\X_{\fr}$ with diameter $d_{1}$
of the Voronoi cell and let $\X_{\fr,1}:=\X_{\fr}\cap\Ball{3d_{1}}x$.
We can chose $\X_{\pois,1}\subset\X_{\pois}\cap\Ball{3d_{1}+a\fr}x$
with $\#\X_{\pois,1}=\#\X_{\fr,1}$ such that $\X_{\fr,1}\subset\Ball 1{\X_{\pois,1}}$.
Note in particular, that $\#\X_{\pois,1}\leq Cd_{1}^{d}$. Now let
$y\in\Y_{\partial\X}\cap\Ball{3d}x$ and let $Y=(y_{1},\dots,y_{k})\in\Apaths(x,y)$.
If $y_{1}=y$, then $y_{2}\in\X_{\fr,1}$ and, w.l.o.g., $y_{3},y_{k-1}\in\X_{\pois,1}$.
For the graph distances it holds 
\begin{align*}
d(x,y) & \leq d\of{x,y_{k-1}}+d\of{y_{k-1},y_{3}}+d\of{y_{2},y_{3}}+d\of{y_{1},y_{2}}\\
 & \leq a\fr+d\of{y_{k-1},y_{3}}+a\fr+4\sqrt{d}\,\fr\,.
\end{align*}
In case $d\of{y_{k-1},y_{3}}\leq1$ we conclude with $d(x,y)\leq\left(3a+4\sqrt{d}\right)\fr\leq\left(3a+4\sqrt{d}\right)d_{1}$.
If $d\of{y_{k-1},y_{3}}\geq4\sqrt{d}$ we obtain $d(x,y)\leq4d\of{y_{k-1},y_{3}}$.

Hence, because $\#\X_{\pois,1}\leq Cd_{1}^{d}$, it only remains to
observe that Lemma \ref{lem:factor-poisson} yields an exponential
decrease for the probability of large stretch factors for $d\of{y_{k-1},y_{3}}$.
\end{proof}

\subsection{\label{subsec:Matern-Process}Delaunay Pipes for a Matern Process}

For two points $x,y\in\Rd$, we denote 
\[
P_{r}(x,y):=\left\{ y+z\in\Rd:\,0\leq z\cdot(x-y)\leq\left|x-y\right|^{2},\,\left|z-z\cdot(x-y)\frac{x-y}{\left|x-y\right|}\right|<r\right\} \,,
\]
the cylinder (or pipe) around the straight line segment connecting
$x$ and $y$ with radius $r>0$.

Recalling Example \ref{exa:poisson-point-proc} we consider a Poisson
point process $\X_{\pois}(\omega)=\left(x_{i}(\omega)\right)_{i\in\N}$
with intensity $\lambda$ (recall Example \ref{exa:poisson-point-proc})
and construct a hard core Matern process $\X_{\mat}$ by deleting
all points with a mutual distance smaller than $d\fr$ for some $\fr>0$
(refer to Example \ref{exa:Matern}). From the remaining point process
$\X_{\mat}$ we construct the Delaunay triangulation $\D(\omega):=\D(X_{\mat}(\omega))$
and assign to each $(x,y)\in\D$ a random number $\delta(x,y)$ in
$(0,\fr)$ in an i.i.d. manner from some probability distribution
$\delta(\omega)$. We finally define 
\[
\bP(\omega):=\bigcup_{(x,y)\in\D(\omega)}P_{\delta(x,y)}(x,y)\bigcup_{x\in\X_{\mat}}\Ball{\fr}x
\]
 the family of all pipes generated by the Delaunay grid ``smoothed''
by balls with the fix radius $\fr$ around each point of the generating
Matern process.

Since the Matern process is mixing and $\delta$ is mixing, Lemma
\ref{lem:erg-and-mix-is-erg} yields that the whole process is still
ergodic.
\begin{rem}
The family of balls $\Ball{\fr}x$ can also be dropped from the model.
However, this would imply we had to remove some of the points from
$\X_{\mat}$ for the generation of the Voronoi cells. This would cause
technical difficulties which would not change much in the result,
as the probability for the size of Voronoi cells would still decrease
subexponentially.
\end{rem}

\begin{lem}
\label{lem:matern-delaunay-distribution}$\X_{\mat}$ is a point process
for $\bP(\omega)$ that satisfies Assumption \ref{assu:mesoscopic-voronoi}
and $\bP$ is isotropic cone mixing for $\X_{\mat}$ with exponentially
decreasing $f(R)\leq Ce^{-R^{d}}$. Furthermore, assume there exists
$C_{\delta},a_{\delta}>0$ such that $\P(\delta(x,y)<\delta_{0})\leq C_{\delta}e^{-a_{\delta}\frac{1}{\delta_{0}}}$,
then $\P(\tilde{M}>M_{0})\leq Ce^{-aM_{0}}$ for some $C,a>0$.
\end{lem}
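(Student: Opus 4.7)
The plan has three parts, matching the three assertions of the lemma.

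Part (1) is immediate from the construction. The Matern thinning guarantees pairwise distances $>d\fr\geq2\fr$ between distinct points of $\X_{\mat}$, and since $\Ball{\fr}{x}\subset\bP(\omega)$ for each $x\in\X_{\mat}(\omega)$, we have $\Ball{\fr/2}{x}\subset\bP(\omega)$, which is exactly what Assumption~\ref{assu:mesoscopic-voronoi} demands.

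For part (2), I would verify Definition~\ref{def:iso-cone-mix} directly rather than routing through Criterion~\ref{cri:stat-erg-ball}. The Matern process inherits from the underlying Poisson process a positive intensity bounded below by $\lambda\,e^{-\lambda|\Ball{d\fr}{0}|}$: indeed, a Poisson point $x$ survives the thinning whenever no other Poisson point lies in $\Ball{d\fr}{x}$, an event of positive probability. Since the volume of $\cone_{e,\alpha,R}(0)$ grows like $R^d$, a standard void-probability estimate (applied to the Poisson process, and then transferred to $\X_{\mat}$ since the cone can be tiled by boxes of side $d\fr$ on which Matern retention events are independent of the rest) gives
\[
\P\of{\X_{\mat}\cap\cone_{e,\alpha,R}(0)=\emptyset}\leq C\,e^{-c\,R^{d}}
\]
for constants $C,c>0$. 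A union bound over $e\in\bE$ yields $f(R)\leq 2d\,C\,e^{-c\,R^{d}}$, which after absorbing constants is the claimed bound.

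Part (3) is where the real work lies, and I would approach it through a local geometric analysis of $\partial\bP$. Away from junctions between a pipe and one of its endpoint balls, $\partial\bP$ consists either of a portion of a sphere of radius $\fr$ or of a cylinder of radius $\delta(x,y)\leq\fr$; in either case the associated $\hat{\rho}$ and Lipschitz constant are bounded by absolute constants independent of $\delta$. At a junction, the boundary is locally the union of the ball surface $\partial\Ball{\fr}{x}$ and the cylindrical pipe wall, meeting transversally in a circle of radius $\delta(x,y)$. A direct computation in a local frame adapted to the pipe axis shows that at a point near this junction circle one has $\hat{\rho}\sim\delta(x,y)$ and that the Lipschitz constant of $\partial\bP$ at scale $\hat{\rho}$ is bounded by $C\fr/\delta(x,y)$. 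Consequently, for a point $p\in\partial\bP$ with $\tilde{M}(p)>M_{0}$, $p$ must lie within a bounded multiple of $\fr$ of a Matern center $x$ that has an incident Delaunay edge $(x,y)$ with $\delta(x,y)<c\fr/M_{0}$ for some universal geometric constant $c>0$. To conclude, I would use part~(2) to argue that the Delaunay degree of $x$ (equivalently, the number of Delaunay edges incident at $x$) has a tail decaying faster than any polynomial, since it is controlled by the diameter of the Voronoi cell of $x$, which by (2) has exponential tail. A union bound over this number of incident edges together with the hypothesis $\P(\delta<c\fr/M_{0})\leq C_{\delta}e^{-a_{\delta}M_{0}/(c\fr)}$ then delivers $\P(\tilde{M}>M_{0})\leq C\,e^{-aM_{0}}$ for some $a\in(0,a_{\delta}/(c\fr))$.

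The main obstacle is the geometric statement $\hat{\rho}\sim\delta$ and local Lipschitz constant $\leq C\fr/\delta$ at pipe–ball junctions, particularly at Matern centers where several pipes with possibly disparate radii emanate with possibly acute mutual angles; here some bookkeeping is required to show that the worst local Lipschitz constant is indeed controlled by the smallest incident $\delta$. The Matern hard-core constraint $d\fr$ forces a deterministic lower bound on the angles between Delaunay edges at a fixed vertex, which prevents arbitrarily thin wedges from forming between two pipes of comparable radius and reduces the analysis to the single-pipe case considered above.
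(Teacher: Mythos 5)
Parts (1) and (2) of your plan are fine and essentially the paper's own argument: the paper also exploits that Matern retention events in boxes separated by the hard-core distance are independent, so the void probability of a cone decays like $e^{-cR^{d}}$, and a union bound over the directions in $\bE$ gives $f(R)\leq Ce^{-R^{d}}$.

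Part (3) has a genuine gap. Your reduction rests on the claim that the hard-core constraint $d\fr$ forces a \emph{deterministic} lower bound on the angle between two Delaunay edges incident at a vertex, so that $\tilde{M}$ is controlled by the smallest incident pipe radius $\delta$ alone. This is false: a Delaunay neighbor can be arbitrarily far away, and an edge of length $\approx d\fr$ can coexist at the same vertex with a much longer edge $(x,z)$ at an arbitrarily small angle whenever the surrounding region happens to be empty of Matern points; the minimal distance only yields $\sin(\alpha/2)\geq d\fr/(2\max\{\left|x-y\right|,\left|x-z\right|\})$, which degenerates as the edge length grows. At such a junction the boundary of $P_{\delta(x,y)}(x,y)\cup P_{\delta(x,z)}(x,z)$ has local Lipschitz constant of order $1/\sin\alpha$ even if both radii are of order $\fr$, so $\tilde{M}(p)>M_{0}$ does \emph{not} imply a nearby edge with $\delta<c\fr/M_{0}$, and your union bound over small-$\delta$ edges misses exactly these configurations. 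This angle contribution is where the paper's proof does its real work: it shows by an explicit computation with the bisecting planes that an angle below $\alpha_{0}$ forces the Voronoi diameter to satisfy $d_{x}\geq C/\sin\alpha_{0}$ (see (\ref{eq:lem:matern-delaunay-distribution-1})), and then combines $M\approx1/\sin\alpha$ with the (sub)exponential tail of $d_{x}$ obtained from the isotropic cone mixing part, together with the hypothesis on $\P(\delta<\delta_{0})$, to conclude $\P(\tilde{M}>M_{0})\leq Ce^{-aM_{0}}$. To repair your argument you need such a quantitative link between small Delaunay angles and large Voronoi cells (or an equivalent substitute); conditioning on the pipe radii alone cannot give the claimed tail.
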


\begin{proof}
\emph{Isotropic cone mixing:} For $x,y\in2d\fr\Zd$ the events $\left(x+[0,1]^{d}\right)\cap\X_{\mat}$
and $\left(y+[0,1]^{d}\right)\cap\X_{\mat}$ are mutually independent.
Hence
\[
\P\of{\left(k2dr\,[-1,1]^{d}\right)\cap\X_{\mat}=\emptyset}\leq\P\of{[-1,1]^{d}\cap\X_{\mat}=\emptyset}^{k^{d}}\,.
\]
Hence the open set $\bP$ is isotropic cone mixing for $\X=\X_{\mat}$
with exponentially decaying $f(R)\leq Ce^{-R^{d}}$.

\emph{Estimate on $\delta$:} There exists $C>0$ such that $\bP$
is $\left(\delta(x,y),C\delta(x,y)^{-1}\right)$-regular in every
$x\in\partial P_{\delta(x,y)}(x,y)$. Since the distribution of $\delta(x,y)$
is independent from $x$ and $y$, this implies that $\P(\delta<\delta_{0})\leq C_{\delta}e^{-a_{\delta}\frac{1}{\delta_{0}}}$.

\emph{Estimate on the distribution of $M$:} By definition of the
Delaunay triangulation, two pipes intersect only if they share one
common point $x\in\X_{\mat}$.

Given three points $x,y,z\in\X_{\mat}$ with $x\sim y$ and $x\sim z$,
the highest local Lipschitz constant on $\partial\left(P_{\delta(x,y)}(x,y)\cup P_{\delta(x,z)}(x,z)\right)$
is attained in 
\[
\tilde{x}=\arg\max\left\{ \left|x-\tilde{x}\right|:\,\tilde{x}\in\partial P_{\delta(x,y)}(x,y)\cap\partial P_{\delta(x,z)}(x,z)\right\} \,.
\]
It is bounded by 
\[
\max\left\{ \arctan\left(\frac{1}{2}\sphericalangle\left((x,y),(x,z)\right)\right),\,\frac{1}{\delta(x,y)},\,\frac{1}{\delta(x,z)}\right\} \,,
\]
where $\alpha:=\sphericalangle\left((x,y),(x,z)\right)$ in the following
denotes the angle between $(x,y)$ and $(x,z)$, see Figure \ref{fig:Voronoi-delaunay-proof}.
If $d_{x}$ is the diameter of the Voronoi cell of $x$, we show that
a necessary (but not sufficient) condition that the angle $\alpha$
can be smaller than some $\alpha_{0}$ is given by 
\begin{equation}
d_{x}\geq C\frac{1}{\sin\alpha_{0}}\,,\label{eq:lem:matern-delaunay-distribution-1}
\end{equation}
where $C>0$ is a constant depending only on the dimension $d$. Since
for small $\alpha$ we find $M\approx\frac{1}{\sin\alpha}$, and since
the distribution for $d_{x}$ decays subexponentially, also the distribution
for $M$ decays subexponentially.
\begin{figure}
 \begin{minipage}[c]{0.6\textwidth} \includegraphics[width=6.5cm]{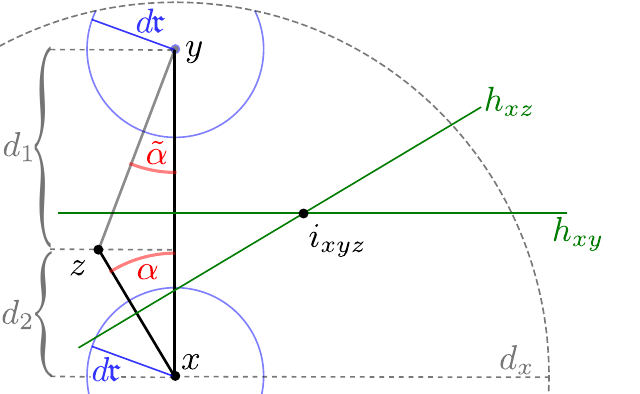}\end{minipage}\hfill   \begin{minipage}[c]{0.35\textwidth}\caption{\label{fig:Voronoi-delaunay-proof}Sketch of the proof of Lemma \ref{lem:matern-delaunay-distribution}
and estimate (\ref{eq:lem:matern-delaunay-distribution-1})\emph{.}}
\end{minipage}
\end{figure}

\emph{Proof of (\ref{eq:lem:matern-delaunay-distribution-1}):} Given
an angle $\alpha>0$ and $x\in\X_{\mat}$ we derive a lower bound
for the diameter of $G(x)$ such that for two neighbors $y,z$ of
$x$ it can hold $\sphericalangle\left((x,y),(x,z)\right)\leq\alpha$.
With regard to Figure \ref{fig:Voronoi-delaunay-proof}, we assume
$\left|x-y\right|\geq\left|x-z\right|$.

Writing $d_{x}:=d(x)$ the diameter of $G(x)$ and $\tilde{\alpha}=\sphericalangle\left((x,z),(z,y)\right)$,
w.l.o.g let $y=(d_{1}+d_{2},0,\dots,0)$, where $d_{1}+d_{2}<d_{x}$
and $d_{1}=\left|y-z\right|\cos\tilde{\alpha}$. Hence we can assume that $z$ takes the form 
$z=(d_{2},-\left|y-z\right|\sin\tilde{\alpha},0\dots0)$ and in what
follows, we focus on the first two coordinates only. The boundaries
between the cells $x$ and $z$ and $x$ and $y$ lie on the planes
\[
h_{xz}(t)=\frac{1}{2}z+t\left(\begin{array}{c}
\left|y-z\right|\sin\tilde{\alpha}\\
d_{2}
\end{array}\right)\,,\quad h_{xy}(s)=\frac{1}{2}y+s\left(\begin{array}{c}
0\\
1
\end{array}\right)
\]
respectively. The intersection of these planes has the first two coordinates
\[
i_{xyz}:=\left(\frac{d_{1}+d_{2}}{2},-\frac{1}{2}\left|y-z\right|\sin\tilde{\alpha}+\frac{1}{2}\frac{d_{1}d_{2}}{\left|y-z\right|\sin\tilde{\alpha}}\right)
\]
. Using the explicit form of $d_{2}$, the latter point has the distance
\[
\left|i_{xyz}\right|^{2}=\frac{1}{4}\left|y-z\right|^{2}+\frac{1}{4}d_{2}^{2}+\frac{1}{4}\frac{d_{2}^{2}\cos^{2}\tilde{\alpha}}{\sin^{2}\tilde{\alpha}}
\]
to the origin $x=0$. Using $\left|y-z\right|\sin\tilde{\alpha}=\left|z\right|\sin\alpha$
and $d_{2}=\left|y\right|-\left|z\right|\cos\alpha$ we obtain 
\[
\left|i_{xyz}\right|^{2}=\frac{1}{4}\left(\left|y-z\right|^{2}\left(1+\frac{\left(\left|y\right|-\left|z\right|\cos\alpha\right)^{2}\cos^{2}\tilde{\alpha}}{\left|z\right|^{2}\sin^{2}\alpha}\right)+\left(\left|y\right|-\left|z\right|\cos\alpha\right)^{2}\right)\,.
\]
Given $y$, the latter expression becomes small for $\left|y-z\right|$
small, with the smallest value being $\left|y-z\right|=d\fr$. But
then 
\[
\cos^{2}\tilde{\alpha}=1-\sin^{2}\tilde{\alpha}=1-\frac{\left(\left|z\right|\sin\alpha\right)^{2}}{\left|y-z\right|^{2}}
\]
and hence the distance becomes 
\[
\left|i_{xyz}\right|^{2}=\frac{1}{4}\left(\left(d\fr\right)^{2}\left(1+\frac{\left(\left|y\right|-\left|z\right|\cos\alpha\right)^{2}\left(\left(d\fr\right)^{2}+\left|z\right|^{2}\sin^{2}\alpha\right)}{\left(d\fr\right)^{2}\left|z\right|^{2}\sin^{2}\alpha}\right)+\left(\left|y\right|-\left|z\right|\cos\alpha\right)^{2}\right)\,.
\]
We finally use $\left|y\right|=\left|z\right|\cos\alpha-\sqrt{\left(d\fr\right)^{2}-\left|z\right|^{2}\sin^{2}\alpha}$
and obtain 
\[
\left|i_{xyz}\right|^{2}=\frac{1}{4}\left(\left(d\fr\right)^{2}\left(1+\frac{\left(\left(d\fr\right)^{4}-\left|z\right|^{4}\sin^{4}\alpha\right)}{\left(d\fr\right)^{2}\left|z\right|^{2}\sin^{2}\alpha}\right)+\left(\left(d\fr\right)^{2}-\left|z\right|^{2}\sin^{2}\alpha\right)\right)\,.
\]
The latter expression now needs to be smaller than $d_{x}$. We observe
that the expression on the right hand side decreases for fixed $\alpha$
if $\left|z\right|$ increases.

On the other hand, we can resolve $\left|z\right|(y)=\left|y\right|\cos\alpha-\sqrt{\left|y\right|^{2}\sin^{2}\alpha+\left(d\fr\right)^{2}}$.
From the conditions $\left|y\right|\leq d_{x}$ and $\left|i_{xyz}\right|\leq d_{x}$,
we then infer (\ref{eq:lem:matern-delaunay-distribution-1}).
\end{proof}
\begin{lem}
Let $\Y$ be constructed from Lemma \ref{lem:cover-iso-mixing-delta-M}
for $\X_{\fr}=\X_{\mat}$ with the corresponding standard graph $\G_{\simple}(\bP)$
(see Definition \ref{def:simple-border}). Let the admissible paths
$\Apaths(y,x)$, $x\in\X_{\fr}$, $y\in\Y$, be the set of shortest
paths on the graph between $x$ and $y$. Then there exists $C>0$
such that for every $x_{j}\in\X_{\fr}$ it holds $\sfR_{0}(x_{j},\fA_{2,j})/d_{j}+\sfS_{j}\leq C$.
In particular, for every $1<s<p$ it holds 
\[
\lim_{n\to\infty}\frac{1}{\left|n\bQ\right|}\int_{\Apaths(n\bQ)}\left(\sum_{x_{j}\in\X_{\fr}(\bQ)}\chi_{\Ball{\sfR_{0}(x_{j},\fA_{2,j})}{x_{j}}}d_{j}^{d+\frac{s-1}{s}}\sfS_{j}^{\frac{s-1}{s}}\right)^{\frac{p}{p-s}}<\infty\,.
\]
\end{lem}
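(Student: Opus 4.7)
The plan is to split the statement into two pieces: a deterministic geometric bound $\sfR_{0}(x_{j},\fA_{2,j})/d_{j}+\sfS_{j}\leq C$ valid for every $x_{j}\in\X_{\fr}$ with $C$ depending only on the dimension $d$ and $\fr$, and a probabilistic integral bound that is then a direct application of Lemma~\ref{lem:estim-E-fa-fb}. The deterministic step is the main content; the integral estimate is essentially a corollary.

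For the geometric bound, I would fix $x_{j}\in\X_{\fr}=\X_{\mat}$ and $y\in\Y\cap\fA_{2,j}$ and construct an admissible path from $y$ to $x_{j}$ of length at most $Cd_{j}$ lying in $\Ball{Cd_{j}}{x_{j}}$. By Notation~\ref{nota:Y}, either $y\in\Y_{\partial\X}$, in which case the associated boundary point $x(y)\in\partial\bP$ sits on the lateral surface of a pipe $P_{\delta(x_{a},x_{b})}(x_{a},x_{b})$ with $x_{a},x_{b}\in\X_{\mat}$ and $|x_{a}-y|\leq C\fr+C\tilde{\rho}(x(y))\leq Cd_{j}$; or $y\in\mathring{\X}$, and the construction of $\mathring{\X}$ in Lemma~\ref{lem:cover-iso-mixing-delta-M} places $y$ inside a pipe or ball around some $x_{a}\in\X_{\mat}$ with $|x_{a}-y|\leq Cd_{j}$. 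In either case one ``enters'' the Delaunay graph on $\X_{\mat}$ in at most a constant number of $\G_{\simple}(\bP)$-steps, and the remaining task is to bound the shortest-path length in the Delaunay graph between $x_{a}$ and $x_{j}$ by $C|x_{a}-x_{j}|\leq Cd_{j}$. This is the spanner property: I would establish it here by a greedy-routing argument that exploits the Matern hard-core distance $d\fr$ between points (which uniformly bounds edge lengths from below) together with the angular lower bound implicit in Lemma~\ref{lem:matern-delaunay-distribution} (equation~\eqref{eq:lem:matern-delaunay-distribution-1}), so that each routing step shrinks the residual Euclidean distance by a fixed fraction and contributes a length proportional to that shrinkage; summing a geometric series yields a constant-factor spanner bound. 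Because admissible paths are shortest paths, the constructed path above dominates them and we obtain $\sfR_{0}(x_{j},\fA_{2,j})\leq Cd_{j}$ and $\sfS_{j}\leq C$.

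For the integral estimate, with the geometric bound in hand the integrand is dominated by
\[
\left(\sum_{x_{j}\in\X_{\fr}(\bQ)}\chi_{\Ball{Cd_{j}}{x_{j}}}\,d_{j}^{d+\frac{s-1}{s}}\,C^{\frac{s-1}{s}}\right)^{\frac{p}{p-s}},
\]
so the quantity under the limit is exactly of the form treated by Lemma~\ref{lem:estim-E-fa-fb} with the enlargement factor $n(x)\equiv C$ and the ``stretch'' $s(x)\equiv C$ both deterministic. The ergodic theorem (Theorem~\ref{thm:Ergodic-Theorem-ran-meas-2}) then identifies the limit as an expectation that, via \eqref{eq:lem:estim-E-fa-fb-1}, is bounded by
\[
C\sum_{k=1}^{\infty}(k+1)^{\alpha(d,p,s)}\,\bigl(f(ck)-f(c(k+1))\bigr)
\]
for an explicit polynomial exponent $\alpha(d,p,s)$. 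By Lemma~\ref{lem:matern-delaunay-distribution} the Voronoi diameters satisfy $\P(d_{j}>D)\leq f(CD)\leq Ce^{-cD^{d}}$, so this series converges for any $1<s<p$ and the limit is finite almost surely.

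The principal obstacle is the deterministic Delaunay spanner property in general dimension. In $d=2$ this is a classical theorem with explicit constant; in $d\geq 3$ the cleanest route is the greedy-routing argument sketched above, where one must check that the combination of the Matern exclusion radius with the angle control in Lemma~\ref{lem:matern-delaunay-distribution} suffices to guarantee a dimensional progress rate at each step, independently of $\omega$. Once this is in place, everything else is routine bookkeeping with Lemma~\ref{lem:estim-E-fa-fb}.
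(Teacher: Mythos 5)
Your overall structure mirrors the paper exactly: a deterministic geometric bound $\sfR_{0}(x_{j},\fA_{2,j})/d_{j}+\sfS_{j}\leq C$ followed by routine bookkeeping through Lemma~\ref{lem:estim-E-fa-fb} and an ergodic argument. You correctly identify the spanner-type bound as the crux. However, your justification of that bound via greedy routing has a genuine gap. Equation~\eqref{eq:lem:matern-delaunay-distribution-1} gives $d_{x}\geq C/\sin\alpha_{0}$, equivalently $\sin\alpha_{0}\geq C/d_{x}$; this is \emph{not} a uniform angular lower bound but one that degrades as the local Voronoi cell grows. A greedy-routing argument driven by it would therefore yield a stretch factor scaling like $\max_{x\in\fA_{2,j}}d_{x}$, which is not controlled by $d_{j}$ alone: nothing prevents a point $x_{a}\in\fA_{2,j}$ from having a Voronoi cell much larger than $G_{j}$. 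So your ``each step shrinks by a fixed fraction'' claim is not justified, and the series would not telescope to a dimensional constant.

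The argument that actually works (and that the paper's terse proof is implicitly invoking) exploits the special structure of the pipe geometry rather than a general Delaunay-spanner heuristic. The admissible paths here are shortest paths, and the pipes are \emph{straight} cylinders connecting Delaunay-adjacent points of $\X_{\mat}$. If $y\in\Y\cap\Ball{\fr/2}{\fA_{1,j}}$ lies on a pipe $P_{\delta}(x_{a},x_{b})$, then that segment enters a neighbourhood of $G_{j}$; the empty-circumball property of the Delaunay triangulation with $x_{j}$ on the boundary of the relevant circumball forces the circumradius $\leq d_{j}$, hence $|x_{a}-x_{j}|,|x_{b}-x_{j}|\leq 2d_{j}$ and moreover $(x_{a},x_{j})\in\D(\X_{\mat})$. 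The short path is then: traverse the pipe $[x_{a},x_{b}]$ from $y$ to $x_{a}$ (length $\leq 2d_{j}$, since the pipe is a straight line segment), then take the direct pipe $[x_{a},x_{j}]$ to $x_{j}$ (length $\leq 2d_{j}+C\fr$). For $x_{k}\sim\sim x_{j}$ the same circumball reasoning gives a direct pipe of length $\leq 2d_{j}$. Since admissible paths are shortest paths they can only be shorter, giving $\sfR_{0}(x_{j},\fA_{2,j})\leq Cd_{j}$ and $\sfS_{j}\leq C$ without any angular estimate. This is an argument you should substitute for the greedy-routing sketch; the angular bound in Lemma~\ref{lem:matern-delaunay-distribution} is used in that lemma only for the distribution of $\tilde M$, not for path control.

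A smaller omission: in the integral estimate, the normalisation is $1/|n\bQ|$ but the domain of integration is $\Apaths(n\bQ)\supsetneq n\bQ$. To pass to the almost-sure limit you need Theorem~\ref{thm:ergodic-Apaths} to show $|n\bQ|/|\Apaths(n\bQ)|\to 1$, in addition to the ergodic theorem and Lemma~\ref{lem:estim-E-fa-fb}; the paper invokes this explicitly and your proposal does not.
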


\begin{proof}
Since the admissible paths are the shortest paths, there exists $C>0$
such that for every $Y\in\Apaths(y,x_{j})$, $x_{j}\in\X_{\fr}$,
$y\in\Y\cap\Ball{\frac{\fr2}{}}{\fA_{1,j}}$ it holds $\Length Y\leq C\left|x_{j}-y\right|$.
Furthermore, for $x_{k}\in\X_{\fr}$ with $x_{k}\sim\sim x_{j}$ we
find $\left|x_{k}-x_{j}\right|\leq2d_{j}$ and since $x_{k}$ and
$x_{j}$ are connected through a path lying inside $\Ball{2d_{j}}{x_{j}}$
possibly crossing other $x_{i}\in\fA_{2,j}\cap\Ball{2d_{j}}{x_{j}}$
we can assume for the same $C$ that for every $Y\in\Apaths(x_{k},x_{j})$,
$x_{k}\sim\sim x_{j}$ it holds $\Length Y\leq C\left|x_{k}-x_{j}\right|$.
This provides a uniform bound on $\sfR_{0}(x_{j},\fA_{2,j})+\sfS_{j}d_{j}\leq Cd_{j}$.
The lemma now follows from Lemma \ref{lem:estim-E-fa-fb} and Theorem
\ref{thm:ergodic-Apaths}.
\end{proof}

\section{\label{sec:Sobolev-spaces-on}Sobolev Spaces on the Probability Space
$(\Omega,\protect\P)$}

Based on Assumption \ref{assu:separable}, we want to achieve a better
understanding of the mapping $f\mapsto f_{\omega}$. For this we make
the following basic assumption throughout this section.
\begin{assumption}
\label{assu:Omega-mu-tau-fundmental}Let $\left(\Omega,\sigma,\P\right)$
be a probability space satisfying Assumption \ref{assu:separable}
and let $\tau$ be a dynamical system on $\Omega$ in the sense of
Definition \ref{Def:Omega-mu-tau}.
\end{assumption}

For the introduction of traces of $W^{1,p}(\Omega)$-functions below
we will need the following (uncommon) stronger assumption. It is motivated
by Theorem \ref{thm:Main-THM-Sto-Geo}, which states that we can assume
$\Omega$ to be a separable metric space.
\begin{assumption}
\label{assu:Omega-mu-tau-fundmental-strong}Let $\left(\Omega,\sigma,\P\right)$
be a probability space satisfying Assumption \ref{assu:separable}
and let $\tau$ be a dynamical system on $\Omega$ in the sense of
Definition \ref{Def:Omega-mu-tau}. Furthermore, let $\Omega$ be
a separable metric space such that $\sigma$ is the completion of
the Borel algebra $\borelB(\Omega)$ under the construction of the
Lebesgue space $L^{1}(\Omega;\P)$.
\end{assumption}

Assumption \ref{assu:Omega-mu-tau-fundmental-strong} will pay of
due to the second part of the following lemma, which is a fundamental
property of separable $\sigma$-algebras.
\begin{lem}
\label{lem:Lp-dense}Let $(A,\Sigma,\mu)$ be a measure space with
a countably generated $\sigma$-algebra $\Sigma$. Then for every
$1\leq p<\infty$ the space $L^{p}(A;\mu)$ is separable. If $A$
is a separable metric space and $\Sigma$ the completion of the Borel
algebra with respect to $\mu$ then $C_{b}(A)\hookrightarrow L^{p}(\Omega;\mu)$
densely and continuously, where $C_{b}(\Omega)$ are the bounded continuous
functions on $\Omega$.
\end{lem}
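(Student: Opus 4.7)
\textbf{Proof plan for Lemma \ref{lem:Lp-dense}.} The plan is to treat the two assertions separately and in the order stated, using in both cases the standard scheme "simple functions are dense in $L^p$, so it suffices to approximate characteristic functions".

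For the first (separability) part, I would start from a countable family $\mathcal{F}=\{F_n\}_{n\in\N}$ generating $\Sigma$ and pass to the algebra $\mathcal{A}$ generated by $\mathcal{F}$ under finite unions, finite intersections, and complements; this algebra is still countable. The candidate countable dense set in $L^p(A;\mu)$ is then
\[
\mathcal{S}:=\left\{\sum_{i=1}^{N}q_i\chi_{A_i}\,:\;N\in\N,\;q_i\in\Q,\;A_i\in\mathcal{A},\,\mu(A_i)<\infty\right\}.
\]
Density is reduced, by the standard truncation/layer-cake argument, to showing that for every $B\in\Sigma$ with $\mu(B)<\infty$ and every $\eps>0$ there is $A\in\mathcal{A}$ with $\mu(B\triangle A)<\eps$. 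This is the content of the classical approximation theorem: the class of $B\in\Sigma$ for which such approximants exist is a monotone class containing $\mathcal{A}$, hence equals $\sigma(\mathcal{A})=\Sigma$. (Because $\P$ is a probability measure in our application, the $\sigma$-finiteness issue is trivial; in the general setting one restricts to the $\sigma$-finite part.)

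For the second (density of $C_b$) part, continuity of the embedding is immediate from $\|f\|_{L^p(A;\mu)}\leq\|f\|_\infty\mu(A)^{1/p}$ since $\mu(A)<\infty$. For density, by what was proved in the first part it suffices to approximate $\chi_B$ in $L^p$ by elements of $C_b(A)$ for every Borel set $B\subset A$. Here I would use the fact that on a separable metric space every finite Borel measure is outer regular and inner regular by closed sets: given $\eps>0$ one picks a closed $F\subset B$ and an open $U\supset B$ with $\mu(U\setminus F)<\eps$, and then forms the explicit Urysohn function
\[
\phi(x):=\frac{d(x,A\setminus U)}{d(x,F)+d(x,A\setminus U)}\,,
\]
which belongs to $C_b(A)$, takes values in $[0,1]$, equals $1$ on $F$ and $0$ on $A\setminus U$. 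Then $\lvert\phi-\chi_B\rvert\leq\chi_{U\setminus F}$, so $\|\phi-\chi_B\|_{L^p}^p\leq\mu(U\setminus F)<\eps$, which is the required approximation.

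The only mildly delicate point is the regularity statement used in the second part (outer/inner regularity of finite Borel measures on separable metric spaces); this is a standard result which, if needed, is proved by the same monotone-class scheme as in the first part, starting from the fact that closed sets are themselves intersections of open neighborhoods and applying a $\pi$-$\lambda$ argument. The extension from Borel sets to $\Sigma$ (the $\mu$-completion of the Borel algebra) is automatic, since any element of $\Sigma$ differs from a Borel set by a $\mu$-null set. No obstacle beyond carefully writing down these two monotone-class reductions is expected.
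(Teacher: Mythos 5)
The paper states this lemma without proof, treating it as a standard fact, so there is no proof in the text to compare against; what you have written is the canonical argument.

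Your two-step scheme is the right one: approximate measurable indicators by indicators from the countable algebra generated by the generators of $\Sigma$ (giving a countable dense set of rational simple functions), and then, in the metric setting, approximate Borel indicators by Urysohn functions built from the metric via inner and outer regularity. The Urysohn function you wrote down is well defined because $F$ and $A\setminus U$ are disjoint closed sets, and the estimate $\lvert\phi-\chi_{B}\rvert\leq\chi_{U\setminus F}$ gives exactly the required $L^{p}$ bound. The reduction from the completion $\Sigma$ to the Borel $\sigma$-algebra is also handled correctly, since every set in the completion differs from a Borel set only by a $\mu$-null set and hence has the same indicator in $L^{p}$.

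One point deserves to be stated more forcefully than your parenthetical remark. As written, the lemma is false without a finiteness hypothesis: for counting measure on $\R$ with the Borel $\sigma$-algebra (which is countably generated), $L^{p}$ is isometric to $\ell^{p}(\R)$ and is not separable, so ``restricting to the $\sigma$-finite part'' does not rescue the general statement. Similarly, the inclusion $C_{b}(A)\hookrightarrow L^{p}(A;\mu)$ in the second half requires $\mu(A)<\infty$ before it even makes sense (otherwise the constant function $1$ is in $C_{b}(A)$ but not in $L^{p}$); you use $\mu(A)^{1/p}$ in the continuity estimate but do not flag the hypothesis. In the paper's application $\mu$ is always a probability (or Palm) measure, so no harm is done, but a careful statement should add $\sigma$-finiteness in the first sentence and finiteness in the second. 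Your description of the closure argument as a ``monotone class'' is a minor imprecision — for a finite measure the collection $\{B:\forall\eps\,\exists A\in\mathcal{A},\ \mu(B\triangle A)<\eps\}$ is in fact already a $\sigma$-algebra containing $\mathcal{A}$, which is the cleanest way to phrase it — but this does not affect correctness.
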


The following lemma is a fundamental observation which will be frequently
used throughout the rest of this work. It relies on the following
notation. For $f:\Omega\to X$, $X$ a metric space, and $\omega\in\Omega$
we define the \emph{realization} $f_{\omega}$ of $f$ as
\[
f_{\omega}:\Rd\to X\,,\qquad x\mapsto f\of{\tau_{x}\omega}\,.
\]
Then we find the following behavior.
\begin{lem}
\label{lem:f-omega-in-Lp}Let Assumption \ref{assu:Omega-mu-tau-fundmental}
hold and let $f\in L^{p}(\Omega)$ for $1\leq p\leq\infty$. Then
for almost every $\omega\in\Omega$ and for every bounded domain $\bQ$
it holds $f_{\omega}\in L^{p}(\bQ)$.
\end{lem}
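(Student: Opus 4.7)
The plan is to reduce the statement to a straightforward Fubini computation, taking care at the end to choose a countable exhaustion so that the $\P$-null exceptional set does not depend on $\bQ$.

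First I would handle the case $1\leq p<\infty$. The evaluation map $A:\Rd\times\Omega\to\Omega$, $(x,\omega)\mapsto\tau_x\omega$, is measurable by Definition \ref{Def:Omega-mu-tau}(iii), so $(x,\omega)\mapsto|f(\tau_x\omega)|^p$ is $\borelB(\Rd)\otimes\sF$-measurable. For a bounded Borel set $\bQ\subset\Rd$, Tonelli's theorem together with the measure-preserving property $\P(\tau_{-x}B)=\P(B)$ yields
\[
\int_\Omega\!\int_\bQ |f(\tau_x\omega)|^p\,\d x\,\d\P(\omega)
=\int_\bQ\!\int_\Omega |f(\tau_x\omega)|^p\,\d\P(\omega)\,\d x
=|\bQ|\,\|f\|_{L^p(\Omega)}^p<\infty.
\]
Hence there exists a $\P$-null set $N_\bQ\subset\Omega$ such that $f_\omega\in L^p(\bQ)$ for every $\omega\notin N_\bQ$.

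The subtlety is that a priori the null set depends on $\bQ$, while the statement asks for \emph{every} bounded domain. Here I would fix the countable family $\bQ_n:=\Ball{n}{0}$, $n\in\N$, and put $N:=\bigcup_{n\in\N}N_{\bQ_n}$, which is still $\P$-null. For any $\omega\notin N$ and any bounded domain $\bQ$, pick $n$ large enough that $\bQ\subset\bQ_n$; then
\[
\int_\bQ |f_\omega|^p\,\d x\leq\int_{\bQ_n}|f_\omega|^p\,\d x<\infty,
\]
giving $f_\omega\in L^p(\bQ)$.

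For $p=\infty$, write $C:=\|f\|_{L^\infty(\Omega)}$ and let $N_0:=\{\omega\in\Omega:|f(\omega)|>C\}$, a $\P$-null set. Applying Tonelli to $\chi_{N_0}\circ A$ on $\bQ_n\times\Omega$ gives, again by measure preservation,
\[
\int_\Omega\bigl|\{x\in\bQ_n:\tau_x\omega\in N_0\}\bigr|\,\d\P(\omega)
=\int_{\bQ_n}\P(N_0)\,\d x=0,
\]
so there is a $\P$-null set $\tilde N_n$ outside of which $|f_\omega(x)|\leq C$ for $\lebesgueL$-a.e. $x\in\bQ_n$. Taking $\tilde N:=\bigcup_n\tilde N_n$ we conclude $\|f_\omega\|_{L^\infty(\bQ)}\leq C$ for every $\omega\notin\tilde N$ and every bounded $\bQ$.

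The argument is essentially an exercise in Fubini, and the only real ``work'' lies in the exhaustion trick to make the null set independent of $\bQ$; no appeal to separability of $\sF$ or the stronger Assumption \ref{assu:Omega-mu-tau-fundmental-strong} is needed here. I do not anticipate any genuine obstacle.
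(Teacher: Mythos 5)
Your proof for $1\le p<\infty$ is exactly the paper's argument: Tonelli plus the measure-preserving property of $(\tau_x)$. The one genuine refinement you add, and which the paper leaves implicit, is the countable exhaustion $\bQ_n=\Ball n0$ so that the $\P$-null exceptional set is independent of $\bQ$; this is needed because the lemma asserts ``for almost every $\omega$, for every $\bQ$'' in that order, and it is good that you made it explicit.

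For $p=\infty$ you diverge from the paper, and arguably improve on it. The paper says only that ``the statement follows since $\int_{\bQ}|f(\tau_x\omega)|^{p}\,\d x$ exists for every $p<\infty$.'' Read literally this is insufficient---$f_\omega\in L^p(\bQ)$ for all finite $p$ does not imply $f_\omega\in L^\infty(\bQ)$ (e.g.\ $\log(1/|x|)$ on a ball). The paper's one-liner can be rescued by noting the quantitative bound $\|f_\omega\|_{L^p(\bQ)}\le|\bQ|^{1/p}\|f\|_{L^\infty(\Omega)}$ simultaneously for a countable set of $p$'s, and then letting $p\to\infty$; but this has to be supplied by the reader. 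Your approach---applying Tonelli to $\chi_{N_0}\circ A$ where $N_0=\{|f|>\|f\|_{L^\infty(\Omega)}\}$ and using $\P(\tau_{-x}N_0)=\P(N_0)=0$---is more direct and self-contained, yielding $\|f_\omega\|_{L^\infty(\bQ)}\le\|f\|_{L^\infty(\Omega)}$ outright. In short: same skeleton, with the $p=\infty$ case handled by a cleaner and more rigorous route.
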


\begin{proof}
For $1\leq p<\infty$ observe that 
\begin{align*}
\lebesgueL(\bQ)\int_{\Omega}\left|f(\omega)\right|^{p}\d\P(\omega) & =\int_{\bQ}\int_{\Omega}\left|f(\omega)\right|^{p}\d\P(\omega)\d x=\int_{\bQ}\int_{\Omega}\left|f(\tau_{x}\omega)\right|^{p}\d\P(\omega)\d x\\
 & =\int_{\Omega}\int_{\bQ}\left|f(\tau_{x}\omega)\right|^{p}\d x\,\d\P(\omega)\,.
\end{align*}
From Fubini's theorem it follows that $\int_{\bQ}\left|f(\tau_{x}\omega)\right|^{p}\d x$
exists for a.e. $\omega\in\Omega$. For $p=\infty$ the statement
follows since $\int_{\bQ}\left|f(\tau_{x}\omega)\right|^{p}\d x$
exists for every $p<\infty$.
\end{proof}

\subsection{The Semigroup $\protect\rmT$ on $L^{p}(\Omega)$ and its Generators}

For every $x\in\Rd$ we define the mapping 
\[
\rmT(x):\,f\mapsto\rmT(x)f\,,
\]
through $\rmT(x)f(\omega):=f(\tau_{x}\omega)$. This mapping is well
defined for every measurable function $f:\,\Omega\to\R$. Moreover,
we have the following properties.
\begin{lem}
\label{lem:T-strongly-continuous}Let Assumption \ref{assu:Omega-mu-tau-fundmental}
hold. For every $1\leq p<\infty$, the family $\left(\rmT(x)\right)_{x\in\Rd}$
is a strongly continuous unitary group on $L^{p}(\Omega)$.
\end{lem}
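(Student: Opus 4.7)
The plan is to verify the three defining features of a strongly continuous group of isometries in order: (i) the group law, (ii) the isometric property (the ``unitary'' content at $p=2$), and (iii) strong continuity, the last being the actual content of the statement. The group law $\rmT(x+y)=\rmT(x)\rmT(y)$ and $\rmT(0)=\id$ follows directly by evaluating on $\omega$ and invoking $\tau_{x}\circ\tau_{y}=\tau_{x+y}$ and $\tau_{0}=\id$ from Definition \ref{Def:Omega-mu-tau}. The isometry $\|\rmT(x)f\|_{L^{p}(\Omega)}=\|f\|_{L^{p}(\Omega)}$ follows from $\P\circ\tau_{-x}=\P$ by verifying the identity first on simple functions and extending by density. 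In particular the family is uniformly bounded on $L^{p}(\Omega)$, so strong continuity reduces to strong continuity at $x=0$ on a dense subset.

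For continuity at $0$ I would use a mollification argument. For $\varepsilon>0$ and $f\in L^{p}(\Omega)$ set
\[
f_{\varepsilon}(\omega):=\frac{1}{|\Ball{\varepsilon}{0}|}\int_{\Ball{\varepsilon}{0}}f(\tau_{y}\omega)\,\d y\,,
\]
well defined by joint measurability of $(y,\omega)\mapsto f(\tau_{y}\omega)$ (Definition \ref{Def:Omega-mu-tau}(iii)) together with Lemma \ref{lem:f-omega-in-Lp}; Minkowski's integral inequality combined with (ii) yields $\|f_{\varepsilon}\|_{L^{p}(\Omega)}\leq\|f\|_{L^{p}(\Omega)}$. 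The two substeps are (a) continuity of $x\mapsto\rmT(x)f_{\varepsilon}$ in $L^{p}(\Omega)$ for each fixed $\varepsilon$, and (b) $f_{\varepsilon}\to f$ in $L^{p}(\Omega)$ as $\varepsilon\to 0$.

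Substep (a) is quick: a change of variable gives
\[
\rmT(x)f_{\varepsilon}(\omega)-f_{\varepsilon}(\omega)=\frac{1}{|\Ball{\varepsilon}{0}|}\int\bigl(\chi_{\Ball{\varepsilon}{0}+x}(z)-\chi_{\Ball{\varepsilon}{0}}(z)\bigr)f(\tau_{z}\omega)\,\d z\,,
\]
and Minkowski's integral inequality combined with $\|\rmT(z)f\|_{L^{p}}=\|f\|_{L^{p}}$ yields the bound $\|\rmT(x)f_{\varepsilon}-f_{\varepsilon}\|_{L^{p}}\leq|\Ball{\varepsilon}{0}|^{-1}\|f\|_{L^{p}}\,\bigl|\Ball{\varepsilon}{0}\,\triangle\,(\Ball{\varepsilon}{0}+x)\bigr|$, which vanishes as $x\to 0$. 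Substep (b) is the crucial one, and my route is pointwise a.e.\ convergence plus convergence of $L^{p}$-norms. Joint measurability of $(y,\omega)\mapsto f(\tau_{y}\omega)$ and Lemma \ref{lem:f-omega-in-Lp} place the set $E=\{(y,\omega):y\text{ is a Lebesgue point of }y'\mapsto f(\tau_{y'}\omega)\}$ in the product $\sigma$-algebra, and the Lebesgue differentiation theorem applied slice-wise in $\omega$ combined with Fubini yields $(\lebesgueL\times\P)(E^{c})=0$. Using $f(\tau_{y+z}\omega)=f(\tau_{z}(\tau_{y}\omega))$, the event ``$y$ is a Lebesgue point of $f_{\omega}$'' transfers to ``$0$ is a Lebesgue point of $f_{\tau_{y}\omega}$'', and, combined with $\tau$-invariance of $\P$ integrated over a fixed ball $\Ball{R}{0}$, this gives $\P(A)=1$ for $A=\{\omega:0\text{ is a Lebesgue point of }f_{\omega}\}$. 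Hence $f_{\varepsilon}(\omega)\to f(\omega)$ $\P$-almost surely; combined with $\|f_{\varepsilon}\|_{L^{p}}\leq\|f\|_{L^{p}}$ and Fatou's lemma this forces $\|f_{\varepsilon}\|_{L^{p}}\to\|f\|_{L^{p}}$, and pointwise a.e.\ convergence together with convergence of $L^{p}$-norms upgrades to $L^{p}$-norm convergence (a standard consequence of Fatou applied to $2^{p-1}(|f_{\varepsilon}|^{p}+|f|^{p})-|f_{\varepsilon}-f|^{p}$).

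A triangle-inequality split $\|\rmT(x)f-f\|_{L^{p}}\leq 2\|f-f_{\varepsilon}\|_{L^{p}}+\|\rmT(x)f_{\varepsilon}-f_{\varepsilon}\|_{L^{p}}$, in which the factor $2$ comes from using (ii) on the first term, then delivers strong continuity at $0$, and the group law propagates it to every $x\in\Rd$. The single technical crux is the transfer step in (b): lifting an ``a.e.\ $y$'' Lebesgue-point statement in $\Rd$ to an ``a.e.\ $\omega$'' statement at the distinguished base point $0$. This is where joint measurability of the evaluation map and the measure-preservation of $\tau$ interact in an essential way, and it is the reason why a purely topological density argument (which would require the extra structure of Assumption \ref{assu:Omega-mu-tau-fundmental-strong}) can be bypassed.
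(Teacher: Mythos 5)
Your proof is correct, but it takes a genuinely different route from the paper. The paper's proof of strong continuity avoids any mollification: it writes
\[
\left\Vert \rmT(x)f-f\right\Vert _{L^{p}(\Omega)}^{p}=\int_{\Omega}\int_{\Y}\left|f_{\omega}(x+y)-f_{\omega}(y)\right|^{p}\d y\,\d\P(\omega)
\]
by inserting an average over a unit cube (harmless, since the inner $\Omega$-integral is constant in $y$ by measure preservation), then invokes Lemma \ref{lem:f-omega-in-Lp} to know that $f_{\omega}\in L_{\loc}^{p}(\Rd)$ for a.e.\ $\omega$, uses continuity of translation in $L^{p}$ on each realization, and concludes by Lebesgue dominated convergence in $\omega$ with the dominating function $2^{p}\int_{2\Y}\left|f_{\omega}\right|^{p}$. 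Your route instead reduces to a dense family of mollified functions $f_{\varepsilon}$, which is the more standard semigroup-theoretic strategy but which forces you to prove $f_{\varepsilon}\to f$ in $L^{p}(\Omega)$ \emph{without} strong continuity already in hand. Note that the paper proves exactly this convergence later, in Lemma \ref{lem:Properties-sI-delta}, but there it \emph{uses} the present lemma via the convolution inequality and strong continuity of $\rmT$ --- so quoting that result would be circular. You correctly avoid the circle by proving the mollifier convergence independently through the Lebesgue differentiation theorem, the product-measure transfer of Lebesgue points, and a Scheffé-type upgrade from a.e.\ convergence plus norm convergence to $L^{p}$-convergence. Both arguments are sound and use only Assumption \ref{assu:Omega-mu-tau-fundmental}; the paper's is shorter because it exploits the translation realization directly, whereas yours buys a dense-subset structure at the cost of the Lebesgue-point bookkeeping. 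One small remark: your transfer step quietly uses that for at least one fixed $y_{0}$ the slice $\{\omega:y_{0}\text{ not a Lebesgue point of }f_{\omega}\}$ is $\P$-null (Fubini gives this for a.e.\ $y_{0}$, one suffices), then shifts by $\tau_{y_{0}}$; it would be worth saying this explicitly, as the statement ``integrated over a fixed ball'' obscures that a single good $y_{0}$ already closes the argument.
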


\begin{proof}
Every $\rmT(x)$ is linear on $L^{p}(\Omega)$ and the group property
follows from $\left(\rmT(x)\rmT(y)f\right)(\omega)=f(\tau_{x}\tau_{y}\omega)=\rmT(x+y)f(\omega)$.
Since $\tau_{x}$ is measure preserving, we find $\left\Vert f\right\Vert _{L^{p}(\Omega)}=\left\Vert \rmT(x)f\right\Vert _{L^{p}(\Omega)}$
and hence $\rmT(x)$ is unitary.

In order to prove the strong continuity, observe 
\begin{align*}
\left\Vert \rmT(x)f-f\right\Vert _{L^{p}(\Omega)}^{p} & =\int_{\Omega}\left|f(\tau_{x}\omega)-f(\omega)\right|^{p}\d\P(\omega)\\
 & =\int_{\Omega}\int_{\Y}\left|f(\tau_{x+y}\omega)-f(\tau_{y}\omega)\right|^{p}\d y\,\d\P(\omega)\\
 & =\int_{\Omega}\int_{\Y}\left|f_{\omega}(x+y)-f_{\omega}(y)\right|^{p}\d y\,\d\P(\omega)\,,
\end{align*}
where we used that $\tau_{y}$ preserves measure and Fubini's theorem.
By Lemma \ref{lem:f-omega-in-Lp} $f_{\omega}\in L_{\loc}^{p}(\Rd)$
for almost every $\omega\in\Omega$ and for such $\omega$ it holds
$\lim_{|h|\to0}\norm{f_{\omega}-f_{\omega}(\cdot+h)}_{L^{p}(4\Y)}=0$.
Furthermore, for $|x|<\frac{1}{2}$ we have 
\[
\int_{\Y}\left|f_{\omega}(x+y)-f_{\omega}(y)\right|^{p}\d y<2p\int_{2\Y}\left|f_{\omega}(y)\right|^{p}\d y\,.
\]
Thus, the Lebesgue dominated convergence theorem yields 
\[
\sup_{|x|<t}\left\Vert \rmT(x)f-f\right\Vert _{L^{p}(\Omega)}^{p}\to0\qquad\mbox{as }t\to0\,.
\]
\end{proof}
For $i=1,\dots,d$, let $\be_{i}$ be the $i$-th canonical basis
vector in $\Rd$. Since $\rmT(x)$ define a strongly continuous group
we can draw the conclusion that the operators $\rmT_{i}(t)f:=\rmT(t\be_{i})f$,
define $d$ independent one-parameter strongly continuous semigroups
on $L^{p}(\Omega)$ that commute with each other and jointly generate
$\left(\rmT(x)\right)_{x\in\Rd}$ on $L^{p}(\Omega)$. Each of these
one-parameter groups has a generator $\rmD_{i}$ defined by 
\[
\rmD_{i}f(\omega)=\lim_{t\to0}\frac{\rmT_{i}(t)f(\omega)-f(\omega)}{t}=\lim_{t\to0}\frac{f(\tau_{t\be_{i}}\omega)-f(\omega)}{t}\,.
\]
The expression $\rmD_{i}f$ is called $i$-th derivative of $f$ and
is skew adjoint:
\[
\int_{\Omega}g\rmD_{i}f\d\P=-\int_{\Omega}f\rmD_{i}g\d\P\,.
\]
The joint domain of all $\rmD_{i}$ in $L^{p}(\Omega)$ is denote
\[
W^{1,p}(\Omega):=\left\{ f\in L^{p}(\Omega)\,|\;\forall i=1,\dots,d\,:\,\,\rmD_{i}f\in L^{p}(\Omega)\right\} \,,
\]
with the natural norm
\[
\left\Vert f\right\Vert _{W^{1,p}(\Omega)}:=\left\Vert f\right\Vert _{L^{p}(\Omega)}+\sum_{i=1}^{d}\left\Vert \rmD_{i}f\right\Vert _{L^{p}(\Omega)}\,.
\]
In case $p=2$, this is a Hilbert space with scalar product
\[
\left\langle f,g\right\rangle _{W^{1,2}(\Omega)}^{2}:=\int_{\Omega}fg\d\P+\sum_{i=1}^{d}\int_{\Omega}\rmD_{i}f\rmD_{i}g\,\d\P\,.
\]
We finally denote $\rmD_{\omega}f:=\left(\rmD_{1}f,\dots,\rmD_{d}f\right)^{T}$
the gradient with respect to $\omega$ and by $-\diver_{\omega}$
the adjoint of $\rmD_{\omega}$. Sometimes we write $\nabla_{\omega}f:=\rmD_{\omega}f$
to underline the gradient aspect. Similar to distributional derivatives
in $\Rd$, we may define $\rmD_{\omega}^{k}f$ through iterated application
of $\rmD_{\omega}$ and 
\[
W^{k,p}(\Omega):=\left\{ f\in L^{p}(\Omega)\,|\;\forall j=1,\dots,k\,:\,\,\rmD_{\omega}^{j}f\in L^{p}(\Omega)^{d^{j}}\right\} \,.
\]
In case Assumption \ref{assu:Omega-mu-tau-fundmental-strong} holds,
we denote 
\[
C_{b}^{1}(\Omega):=\left\{ f\in C_{b}(\Omega):\,\nabla f\in C_{b}(\Omega;\Rd)\right\} \,.
\]

\begin{lem}
For every $f\in W^{1,p}(\Omega)$ for almost every $\omega\in\Omega$
it holds $f_{\omega}\in W_{\loc}^{1,p}(\Rd)$. In particular, for
every bounded domain $\bQ\subset\Rd$ it holds
\begin{equation}
\forall\psi\in C_{c}^{1}(\bQ)\,:\qquad\int_{\bQ}f_{\omega}\partial_{i}\psi=-\int_{\bQ}\psi\left(\rmD_{i}f\right)_{\omega}\,.\label{eq:partial-integr-realization}
\end{equation}
\end{lem}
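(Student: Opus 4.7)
The plan is to show that the abstract generator $\rmD_i f$ on $\Omega$ produces precisely the classical weak derivative $\partial_i f_\omega$ on $\Rd$ for almost every realization, and from there the identity \eqref{eq:partial-integr-realization} is exactly the usual integration by parts on $\Rd$. The local integrability of $f_\omega$ and $(\rmD_i f)_\omega$ for almost every $\omega$ and every bounded $\bQ$ is already provided by Lemma~\ref{lem:f-omega-in-Lp}, so only the weak differentiability remains.

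The key computation I would carry out first exploits translation invariance to move a finite difference from $\psi$ onto $f_\omega$. For any $\psi\in C_c^1(\bQ)$ and $t$ small enough that $\support\psi\subset\bQ-t\be_i$, a change of variables $y=x+t\be_i$ together with the identity $f_\omega(x-t\be_i)=f(\tau_{-t\be_i}\tau_x\omega)=(\rmT_i(-t)f)_\omega(x)$ gives
\[
\int_\bQ f_\omega(x)\,\frac{\psi(x+t\be_i)-\psi(x)}{t}\,\d x \;=\; \int_\bQ \frac{(\rmT_i(-t)f)_\omega(x)-f_\omega(x)}{t}\,\psi(x)\,\d x.
\]
As $t\to 0$, the left-hand side tends to $\int_\bQ f_\omega\,\partial_i\psi$ for a.e.\ $\omega$ by the local integrability of $f_\omega$ together with the uniform convergence and uniform support of the difference quotients of $\psi$. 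The right-hand side is where the nontrivial work sits.

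For the right-hand side, by the definition of $\rmD_i$ as the generator of $(\rmT_i(t))_{t\in\R}$ (Lemma~\ref{lem:T-strongly-continuous}), the quotient $g_t:=\tfrac{\rmT_i(-t)f-f}{t}+\rmD_i f$ converges to $0$ in $L^p(\Omega)$ as $t\to 0$. Applying Fubini exactly as in Lemma~\ref{lem:f-omega-in-Lp},
\[
\int_\Omega \int_\bQ \bigl|(g_t)_\omega(x)\bigr|^p \,\d x\,\d\P(\omega) \;=\; |\bQ|\,\|g_t\|_{L^p(\Omega)}^p \;\longrightarrow\; 0,
\]
so along some subsequence $t_n\to 0$, $(g_{t_n})_\omega\to 0$ in $L^p(\bQ)$ for almost every $\omega$. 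This lets me pass to the limit in the $L^p$-$L^{p'}$ duality and obtain, for that subsequence,
\[
\int_\bQ f_\omega\,\partial_i\psi \;=\; -\int_\bQ (\rmD_i f)_\omega\,\psi
\]
for a.e.\ $\omega$. The main technical obstacle is that the exceptional $\P$-null set a priori depends on $\psi$ and on $\bQ$. This is handled by Lemma~\ref{lem:Lp-dense}: $C_c^1(\bQ)$ is separable in $W^{1,\infty}$, so fix a countable dense family $\{\psi_k\}\subset C_c^1(\bQ)$ and a countable exhaustion $\bQ_m\nearrow\Rd$ by bounded Lipschitz domains; the subsequence argument then yields a single $\P$-null set off which the identity holds for every $\psi_k$ and every $i$, and density of $\{\psi_k\}$ in $C_c^1(\bQ_m)$ combined with $f_\omega,(\rmD_i f)_\omega\in L^p_{\loc}(\Rd)$ extends the identity to all $\psi\in C_c^1(\bQ_m)$, hence to all bounded $\bQ$. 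Once \eqref{eq:partial-integr-realization} is established it reads exactly as the definition of the weak derivative $\partial_i f_\omega=(\rmD_i f)_\omega$ in $L^p_{\loc}(\Rd)$, giving $f_\omega\in W^{1,p}_{\loc}(\Rd)$.
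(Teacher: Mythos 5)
Your proof is correct, but it takes a genuinely different route from the paper. Both proofs first exploit the change of variables on $\Rd$ to convert the finite difference of $\psi$ into a finite difference of $f_\omega$, and both recognize this as the action of the semigroup $\rmT_i$ on $\omega$-space. The difference lies in how the limit $t\to 0$ is passed to a statement holding for almost every $\omega$. The paper dualizes against an arbitrary test function $g\in L^{q}(\Omega)$: it shows $\int_\Omega g\,\bigl(\int_\Rd f_\omega\partial_i\psi\bigr)\d\P=-\int_\Omega g\,\bigl(\int_\Rd(\rmD_i f)_\omega\psi\bigr)\d\P$ for all $g$ (after a further substitution $\omega\mapsto\tau_{-x}\omega$ that moves the $x$-translation onto $g$), so the two inner integrals agree as elements of $L^{p}(\Omega)$, hence a.e.\ $\omega$ for each fixed $\psi$; a countable dense family of $\psi$'s and a density argument then finish. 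You instead keep $\omega$ fixed, bound $(g_t)_\omega=\bigl(\tfrac{\rmT_i(-t)f-f}{t}+\rmD_if\bigr)_\omega$ in $L^{p}(\bQ)$ via Fubini and the $L^{p}(\Omega)$-convergence $g_t\to0$, and extract a subsequence $t_n\to0$ along which $(g_{t_n})_\omega\to0$ in $L^{p}(\bQ)$ for a.e.\ $\omega$, from which the identity follows directly for all $\psi$ at once. Your route is more elementary and makes the a.e.\ statement explicit without the duality step; the paper's route avoids subsequence extraction and is slightly tighter. One small observation: in your argument the exceptional null set depends only on $\bQ$ (through the subsequence for the $L^p(\bQ)$-convergence of $(g_{t_n})_\omega$), not on $\psi$, because once $(g_{t_n})_\omega\to 0$ in $L^p(\bQ)$ and $f_\omega\in L^p_{\loc}$, the passage to the limit works uniformly over $\psi\in C_c^1(\bQ)$; so the countable density step in $\psi$ is unnecessary and only the exhaustion $\bQ_m\nearrow\Rd$ (with a diagonal subsequence) is really needed. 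This does not affect correctness.
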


\begin{proof}
Let $\psi\in C_{c}^{\infty}(\Rd)$ and let $g\in L^{q}(\Omega)$,
$\frac{1}{p}+\frac{1}{q}=1$. Using Lebesgue's dominated convergence
theorem it follows:
\begin{align*}
\int_{\Omega}g(\omega)\int_{\Rd}f_{\omega}\partial_{i}\psi\d\lebesgueL\d\P(\omega) & =\int_{\Omega}g(\omega)\int_{\Rd}f_{\omega}(x)\lim_{t\to0}\frac{\psi(x+t\be_{i})-\psi(x)}{t}\d x\d\P(\omega)\\
 & =\lim_{t\to0}\int_{\Omega}g(\omega)\int_{\Rd}f_{\omega}(x)\frac{\psi(x+t\be_{i})-\psi(x)}{t}\d x\d\P(\omega)\\
 & =\lim_{t\to0}\int_{\Omega}g(\omega)\int_{\Rd}\psi(x+t\be_{i})\frac{f_{\omega}(x)-f_{\omega}(x+t\be_{i})}{t}\d x\d\P(\omega)\,.
\end{align*}
Since $\tau_{x}$ preserves measure, we obtain 
\begin{align*}
\int_{\Omega}\int_{\Rd}f_{\omega}\partial_{i}\psi\d\lebesgueL\d\P(\omega) & =\lim_{t\to0}\int_{\Rd}\int_{\Omega}g(\tau_{-x}\omega)\psi(x+t\be_{i})\frac{f(\omega)-\rmT_{i}f(\omega)}{t}\d x\d\P(\omega)\\
 & =\lim_{t\to0}\int_{\Omega}\frac{f(\omega)-\rmT_{i}f(\omega)}{t}\int_{\Rd}g(\tau_{-x}\omega)\psi(x+t\be_{i})\d x\d\P(\omega)\\
 & =-\int_{\Rd}\int_{\Omega}g(\tau_{-x}\omega)\psi(x)\rmD_{i}f(\omega)\d x\d\P(\omega)\\
 & =-\int_{\Omega}g(\omega)\int_{\Rd}\left(\rmD_{i}f\right)_{\omega}\psi\d\lebesgueL\d\P(\omega)\,.
\end{align*}
Using a countable dense subset $\left(\psi_{i}\right)_{i\in\N}\subset L^{q}(\Rd)$,
$\psi_{i}\in C_{c}^{\infty}(\Rd)$ and a suitable family of testfunctions
$\left(g_{j}\right)_{j\in\N}\subset L^{q}(\Omega)$, we obtain that
for almost every $\omega\in\Omega$ equation (\ref{eq:partial-integr-realization})
holds for every $\psi_{i}$. Hence, by density, it holds for all $\psi\in C_{c}^{1}(\bQ)$.
\end{proof}
\begin{lem}
\label{lem:Omega-convolution}Let $1\leq p<\infty$ and let $\eta\in C_{c}^{\infty}(\Rd)$.
For every $f\in L^{p}(\Omega)$ let 
\[
\left(\eta\ast f\right)(\omega):=\int_{\Rd}\eta(x)f\of{\tau_{x}\omega}\d x\,.
\]
Then for every $k\in\N$ it holds $\eta\ast f\in W^{k,p}(\Omega)$
with $\rmD_{i}\of{\eta\ast f}=(\partial_{i}\eta)\ast f$ and almost
every realization of $\sI_{\delta}f$ is an element of $C^{\infty}(\Rd)$.
Furthermore, the estimates
\begin{equation}
\left\Vert \eta\ast f\right\Vert _{L^{p}(\Omega)}^{p}\leq\left\Vert \eta\right\Vert _{L^{1}(\Rd)}\left\Vert f\right\Vert _{L^{p}(\Omega)}\,,\qquad\left\Vert \rmD_{i}\of{\eta\ast f}\right\Vert _{L^{p}(\Omega)}^{p}\leq\left\Vert \partial_{i}\eta\right\Vert _{L^{1}(\Rd)}\left\Vert f\right\Vert _{L^{p}(\Omega)}\label{eq:lem:Omega-convolution-1}
\end{equation}
hold and we have $\rmD_{i}\left(\eta\ast f\right)=\eta\ast\rmD_{i}f$.
\end{lem}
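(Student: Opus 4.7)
The plan is to prove each of the four assertions in turn, relying throughout on two facts established earlier in the excerpt: each $\rmT(x)$ is a measure-preserving unitary on $L^p(\Omega)$ (Lemma~\ref{lem:T-strongly-continuous}), and almost every realization $f_\omega$ lies in $L^p_{\loc}(\Rd)$ (Lemma~\ref{lem:f-omega-in-Lp}).

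First I would establish well-definedness and the $L^p(\Omega)$-bound. Writing $(\eta\ast f)(\omega)=\int_{\Rd}\eta(x)(\rmT(x)f)(\omega)\,\d x$, Minkowski's integral inequality together with $\norm{\rmT(x)f}_{L^p(\Omega)}=\norm f_{L^p(\Omega)}$ gives
\[
\norm{\eta\ast f}_{L^p(\Omega)}\leq\int_{\Rd}|\eta(x)|\,\norm{\rmT(x)f}_{L^p(\Omega)}\,\d x=\norm\eta_{L^1(\Rd)}\norm f_{L^p(\Omega)},
\]
which both makes sense of $\eta\ast f$ as an element of $L^p(\Omega)$ and yields the first estimate in \eqref{eq:lem:Omega-convolution-1}. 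For the smoothness of realizations, a change of variables gives
\[
(\eta\ast f)_\omega(x)=\int_{\Rd}\eta(y)\,f_\omega(x+y)\,\d y=\int_{\Rd}\eta(z-x)\,f_\omega(z)\,\d z,
\]
which is the standard $\Rd$-convolution of the locally $L^p$ function $f_\omega$ with a $C_c^\infty$ kernel; hence $(\eta\ast f)_\omega\in C^\infty(\Rd)$ for $\P$-a.e.\ $\omega$.

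Next I would compute $\rmD_i(\eta\ast f)$ via the generator of $\rmT_i$. Substituting $y=x+t\be_i$ in the integral defining $\rmT_i(t)(\eta\ast f)$ yields
\[
\frac{\rmT_i(t)(\eta\ast f)-(\eta\ast f)}{t}(\omega)=\int_{\Rd}\frac{\eta(y-t\be_i)-\eta(y)}{t}\,f(\tau_y\omega)\,\d y.
\]
Pick $R$ with $\support\eta\subset\Ball R0$. For $|t|<1$ the integrand is dominated pointwise by $\norm{\nabla\eta}_\infty\chi_{\Ball{R+1}0}(y)\,|f(\tau_y\omega)|$, and the $L^p(\Omega)$-norm of the majorant is finite by the first step applied to $\norm{\nabla\eta}_\infty\chi_{\Ball{R+1}0}$. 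Dominated convergence in $L^p(\Omega)$ then identifies the limit as $(\partial_i\eta)\ast f$ (with the sign convention of the lemma), so $\rmD_i(\eta\ast f)=(\partial_i\eta)\ast f$, and the second estimate in \eqref{eq:lem:Omega-convolution-1} is the first estimate applied to $\partial_i\eta\in C_c^\infty(\Rd)$. Iterating this argument on $\partial_i\eta,\partial_i\partial_j\eta,\dots$ gives $\eta\ast f\in W^{k,p}(\Omega)$ for every $k$.

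Finally, for the commutation identity $\rmD_i(\eta\ast f)=\eta\ast\rmD_i f$ on $W^{1,p}(\Omega)$, I would observe that a direct computation using $\tau_{x}\tau_{t\be_i}=\tau_{x+t\be_i}$ and Fubini shows $\rmT_i(t)(\eta\ast f)=\eta\ast\rmT_i(t)f$ in $L^p(\Omega)$ for every $t$; thus the bounded linear operator $C_\eta\colon g\mapsto\eta\ast g$ commutes with the strongly continuous semigroup $\rmT_i$, and hence with its generator on $\dom(\rmD_i)\supset W^{1,p}(\Omega)$. I expect the main obstacle to be the rigorous justification of differentiation under the integral in Step~3; it is resolved precisely by the uniform bound $\norm{\rmT(y)f}_{L^p(\Omega)}=\norm f_{L^p(\Omega)}$ combined with compact support of $\eta$, which furnishes the dominating function required for Lebesgue's theorem in $L^p(\Omega)$.
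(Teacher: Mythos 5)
Your proof is correct and covers the same ground as the paper's, but step one takes a genuinely different route. To obtain $\norm{\eta\ast f}_{L^p(\Omega)}\leq\norm\eta_{L^1}\norm f_{L^p(\Omega)}$, the paper averages the $p$-th power of $(\eta\ast f)(\tau_y\omega)$ over growing cubes $(-k,k)^d$, applies the Euclidean Young/convolution inequality pathwise to the realizations $f_\omega$, and lets $k\to\infty$ to absorb the boundary error $\left(\frac{k+1}{k}\right)^d\to1$. You instead read $\eta\ast f=\int_{\Rd}\eta(x)\,\rmT(x)f\,\d x$ as a Bochner integral and invoke Minkowski's integral inequality together with the isometry $\norm{\rmT(x)f}_{L^p}=\norm f_{L^p}$. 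Your version is more direct and avoids any limiting argument; the paper's version keeps everything scalar-valued and makes the link to the realizations explicit. Both give the same bound. The remaining steps (differentiation under the integral via dominated convergence with a compactly supported majorant, $C^\infty$-regularity of realizations by rewriting the integral as a Euclidean convolution, and the commutation $\rmD_i(\eta\ast f)=\eta\ast\rmD_i f$ via commutation of the bounded operator $g\mapsto\eta\ast g$ with the group $\rmT_i$) match the paper's proof in substance, with your step 3 a bit more explicit about the dominating function and your step 4 phrased abstractly in terms of generators rather than exchanging a limit. One small caveat you share with the paper: with the stated definition $(\eta\ast f)(\omega)=\int\eta(x)f(\tau_x\omega)\,\d x$ the substitution actually yields $\frac{\eta(y-t\be_i)-\eta(y)}{t}\to -\partial_i\eta(y)$, so the formula should properly read $\rmD_i(\eta\ast f)=-(\partial_i\eta)\ast f$; you flagged this (``with the sign convention of the lemma'') but it is worth noting the sign is a pre-existing slip in the statement, harmless for the estimate since $\norm{\partial_i\eta}_{L^1}$ is unaffected.
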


\begin{proof}
Let $k\in\N$ and observe 
\begin{align*}
\left\Vert \eta\ast f\right\Vert _{L^{p}(\Omega)}^{p} & =\int_{\Omega}(2k)^{-d}\int_{(-k,k)^{d}}\left|\left(\eta\ast f\right)(\tau_{y}\omega)\right|^{p}\d y\,\d\P(\omega)\\
 & \leq\int_{\Omega}(2k)^{-d}\int_{(-k,k)^{d}}\int_{\Rd}\left|\eta(x)f(\tau_{y+x}\omega)\right|^{p}\d x\,\d y\,\d\P(\omega)\,.
\end{align*}
Due to the convolution inequality we have 
\begin{align*}
\left\Vert \eta\ast f\right\Vert _{L^{p}(\Omega)}^{p} & \leq\left\Vert \eta\right\Vert _{L^{1}(\Rd)}(2k)^{-d}\int_{\Omega}\left\Vert f_{\omega}\right\Vert _{L^{p}((-k-1,k+1)^{d})}^{p}\d\P(\omega)\\
 & \leq\left\Vert \eta\right\Vert _{L^{1}(\Rd)}\left(\frac{k+1}{k}\right)^{-d}\int_{\Omega}\left|f(\omega)\right|^{p}\d\P(\omega)
\end{align*}
and since $k$ is arbitrary, the we obtain $\left\Vert \eta\ast f\right\Vert _{L^{p}(\Omega)}^{p}\leq\left\Vert \eta\right\Vert _{L^{1}(\Rd)}\left\Vert f\right\Vert _{L^{p}(\Omega)}$,
the first part of (\ref{eq:lem:Omega-convolution-1}).

In order to show $\sI_{\delta}f\in W^{k,p}(\Omega)$ observe 
\[
\frac{1}{t}\left(\eta\ast f(\tau_{t\be_{i}}\omega)-\eta\ast f(\omega)\right)=\int_{\Rd}\frac{1}{t}\left(\eta(x+t\be_{i})-\eta(x)\right)f(\tau_{x}\omega)\,.
\]
Taking the limit $t\to0$ in $L^{p}(\Omega)$ on both sides using
Lebesgue's dominated convergence theorem implies 
\begin{equation}
\rmD_{i}\left(\eta\ast f\right)=\int_{\Rd}\partial_{i}\eta(x)f(\tau_{x}\omega)\,,\label{eq:prop-si-delta-help-b}
\end{equation}
and hence $\rmD_{i}\left(\sI_{\delta}f\right)\in L^{p}(\Omega)$ with
$\rmD_{i}\of{\eta\ast f}=(\partial_{i}\eta)\ast f$ and the second
part of (\ref{eq:lem:Omega-convolution-1}) follows. Equation (\ref{eq:prop-si-delta-help-b})
also shows that 
\[
\left(\eta\ast f\right)(\tau_{y}\omega)=\int_{\Rd}\eta(x)f\left(\tau_{x+y}\omega\right)\d x=\int_{\Rd}\eta(x-y)f\left(\tau_{x}\omega\right)\d x
\]
and hence almost every realization of $\eta\ast f$ has $C^{\infty}$-regularity.
Furthermore, (\ref{eq:prop-si-delta-help-b}) implies 
\begin{align*}
\rmD_{i}\left(\sI_{\delta}f\right) & =\lim_{t\to0}\frac{1}{t}\left(\left(\eta\ast f\right)(\tau_{t\be_{i}}\omega)-\left(\eta\ast f\right)(\omega)\right)\\
 & =\eta\ast\lim_{t\to0}\frac{1}{t}\left(f(\tau_{t\be_{i}}\omega)-f(\omega)\right)\\
 & =\eta\ast\rmD_{i}f\,,
\end{align*}
where we used continuity of $f\mapsto\eta\ast f$ and strong convergence
of $\frac{1}{t}\left(f(\tau_{t\be_{i}}\omega)-f(\omega)\right)\to\rmD_{i}f$.
\end{proof}
Similar to $L^{p}(\Rd)$- and Sobolev spaces on $\Rd$, we can introduce
a family of smoothing operators. Let $\left(\eta_{\delta}\right)_{\delta>0}$
be a standard sequence of mollifiers which are symmetric w.r.t. $0$
and define 
\begin{equation}
\sI_{\delta}:\,L^{p}(\Omega)\to L^{p}(\Omega)\,,\qquad\sI_{\delta}f(\omega):=\int_{\Rd}\eta_{\delta}(x)f\left(\tau_{x}\omega\right)\d x\,.\label{eq:eta-delta}
\end{equation}

\begin{lem}
\label{lem:Properties-sI-delta}For every $\delta>0$, $1\leq p<\infty$,
the operator $\sI_{\delta}$ is unitary and selfadjoint. For every
$f\in L^{p}(\Omega)$, $k\in\N$ it holds $\sI_{\delta}f\in W^{k,p}(\Omega)$,
$\sI_{\delta}f\to f$ strongly in $L^{p}(\Omega)$, and almost every
realization of $\sI_{\delta}f$ is an element of $C^{\infty}(\Rd)$.
Finally, for $f\in W^{1,p}(\Omega)$ it holds 
\begin{equation}
\lim_{\delta\to0}\left\Vert \sI_{\delta}f-f\right\Vert _{W^{1,p}(\Omega)}=0\label{eq:lem:Properties-sI-delta}
\end{equation}
and $\rmD_{i}\sI_{\delta}f=\sI_{\delta}\rmD_{i}f$.
\end{lem}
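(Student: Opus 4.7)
\textbf{Plan of proof for Lemma \ref{lem:Properties-sI-delta}.} The key observation is that $\sI_\delta f = \eta_\delta \ast f$ in the sense of Lemma \ref{lem:Omega-convolution}, so many of the claims reduce immediately to that lemma. First I would note that since $\eta_\delta \in C_c^\infty(\Rd)$, every partial derivative $\partial^\alpha \eta_\delta$ is again in $C_c^\infty(\Rd)$; iterating Lemma \ref{lem:Omega-convolution} then gives $\rmD^\alpha \sI_\delta f = (\partial^\alpha \eta_\delta) \ast f \in L^p(\Omega)$ for every multiindex $\alpha$, hence $\sI_\delta f \in W^{k,p}(\Omega)$ for all $k\in\N$. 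The smoothness of almost every realization and the identity $\rmD_i \sI_\delta f = \sI_\delta \rmD_i f$ for $f\in W^{1,p}(\Omega)$ are likewise immediate from that lemma.

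For the claim that $\sI_\delta$ is bounded (the ``unitary'' statement, which I interpret as a contraction since $\eta_\delta \geq 0$ with $\int \eta_\delta = 1$) I would invoke the first estimate of (\ref{eq:lem:Omega-convolution-1}): $\|\sI_\delta f\|_{L^p(\Omega)} \leq \|\eta_\delta\|_{L^1(\Rd)} \|f\|_{L^p(\Omega)} = \|f\|_{L^p(\Omega)}$. Self-adjointness (or, more precisely, the $L^p$--$L^q$ duality identity $\int g\,\sI_\delta f\,\d\P = \int f\,\sI_\delta g\,\d\P$) follows from Fubini together with the measure-preserving property of $\tau_x$ and the symmetry $\eta_\delta(x) = \eta_\delta(-x)$: substituting $\omega \mapsto \tau_{-x}\omega$ in the double integral and then $x \mapsto -x$ swaps the roles of $f$ and $g$.

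The main content of the lemma is the strong convergence $\sI_\delta f \to f$ in $L^p(\Omega)$. Since $\int_{\Rd}\eta_\delta = 1$, I can write
\[
\sI_\delta f(\omega) - f(\omega) = \int_{\Rd} \eta_\delta(x)\bigl(\rmT(x)f(\omega) - f(\omega)\bigr)\,\d x,
\]
and Minkowski's integral inequality yields
\[
\norm{\sI_\delta f - f}_{L^p(\Omega)} \leq \int_{\Rd} \eta_\delta(x)\, \norm{\rmT(x) f - f}_{L^p(\Omega)}\,\d x \leq \sup_{|x|\leq \delta}\norm{\rmT(x) f - f}_{L^p(\Omega)},
\]
using that $\support\eta_\delta \subset \Ball{\delta}{0}$. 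The right-hand side tends to $0$ as $\delta \to 0$ by the strong continuity of $(\rmT(x))_{x\in\Rd}$ on $L^p(\Omega)$ (Lemma \ref{lem:T-strongly-continuous}). This is the only step that genuinely requires analytic input beyond Lemma \ref{lem:Omega-convolution}, and I expect it to be the main obstacle only in the sense of choosing the right approximate-identity argument; the estimates are standard.

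Finally, the $W^{1,p}$-convergence (\ref{eq:lem:Properties-sI-delta}) follows by applying the $L^p$-convergence just proved to $f$ and to each $\rmD_i f \in L^p(\Omega)$ separately, in combination with the commutation $\rmD_i \sI_\delta f = \sI_\delta \rmD_i f$ established in the first paragraph. Summing over $i=1,\dots,d$ then gives $\norm{\sI_\delta f - f}_{W^{1,p}(\Omega)} \to 0$, completing the proof.
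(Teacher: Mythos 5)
Your proposal is correct and follows essentially the same route as the paper: the non-convergence claims are delegated to Lemma~\ref{lem:Omega-convolution}, selfadjointness is obtained from the symmetry of $\eta_\delta$ together with $\tau$-invariance of $\P$, and the strong $L^p$-convergence is an approximate-identity estimate reduced to the strong continuity of $\rmT(x)$ (Lemma~\ref{lem:T-strongly-continuous}), with the $W^{1,p}$-convergence following by commuting $\rmD_i$ with $\sI_\delta$. The only cosmetic difference is that you use Minkowski's integral inequality where the paper uses Jensen's inequality to pull the $p$-th power inside the convolution; both are equivalent for this purpose, and your remark that ``unitary'' should really read ``contraction'' is a fair reading of the intended (and proved) statement.
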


\begin{proof}
The selfadjointness follows from the definition of $\sI_{\delta}$,
symmetry of $\eta_{\delta}$ and invariance of $\P$ w.r.t. $\tau_{x}$.
All other parts except for (\ref{eq:lem:Properties-sI-delta}) follow
from Lemma \ref{lem:Omega-convolution}.

Finally, observe that the the convolution inequality and the strong
continuity of $\rmT(x)$ yield 
\begin{align*}
\int_{\Omega}\left|\sI_{\delta}f-f\right|^{p} & =\int_{\Omega}\left|\int_{\Rd}\eta_{\delta}(x)\left(f(\tau_{x}\omega)-f(\omega)\right)\d x\right|^{p}\d\P(\omega)\\
 & \leq\int_{\Omega}\left\Vert \eta_{\delta}\right\Vert _{L^{1}(\Rd)}^{p}\int_{[-\delta,\delta]^{d}}\left|\left(f(\tau_{x}\omega)-f(\omega)\right)\right|^{p}\d x\,\d\P(\omega)\\
 & \leq\int_{[-\delta,\delta]^{d}}\int_{\Omega}\left|\left(f(\tau_{x}\omega)-f(\omega)\right)\right|^{p}\d\P(\omega)\,\d x\\
 & \to0\,.
\end{align*}
Since $\rmD_{i}\sI_{\delta}f=\sI_{\delta}\rmD_{i}f$, it also holds
$\rmD_{i}\sI_{\delta}f\to\rmD_{i}f$ strongly in $L^{p}(\Omega)$.
\end{proof}

\subsection{Gradients and Solenoidals}

We denote by $L_{\loc}^{p}(\Rd;\Rd)$ the set of measurable functions
$f:\,\Rd\to\Rd$ such that $f|_{\bU}\in L^{p}(\bU;\Rd)$ for every
bounded domain $\bU$ and we define 
\begin{align*}
L_{\pot,\loc}^{p}(\Rd) & :=\left\{ u\in L_{\loc}^{2}(\Rd;\Rd)\,\,|\,\,\forall\bU\,\,\mbox{bounded domain, }\exists\varphi\in H^{1}(\bU)\,:\,u=\nabla\varphi\right\} \,,\\
L_{\sol,\loc}^{p}(\Rd) & :=\left\{ u\in L_{\loc}^{2}(\Rd;\Rd)\,\,|\,\,\int_{\Rd}u\cdot\nabla\varphi=0\,\,\forall\varphi\in C_{c}^{1}(\Rd)\right\} \,.
\end{align*}

\begin{rem}
\label{rem:Lppot-invariant}The space $L_{\pot,\loc}^{p}(\Rd)$ is
invariant under convolution. This follows immediately from the fact
that if $u=\nabla\varphi$ locally, then $\eta_{\delta}\ast u=\nabla\left(\eta_{\delta}\ast\varphi\right)$.
\end{rem}

Recalling the notation for a realization $u_{\omega}(x):=u(\tau_{x}\omega)$
for $u\in L^{p}(\Omega)$, we can then define corresponding spaces
on $\Omega$ through
\begin{align}
L_{\pot}^{p}(\Omega) & :=\left\{ u\in L^{p}(\Omega;\Rd)\,:\,u_{\omega}\in L_{\pot,\loc}^{p}(\Rd)\,\,\mbox{for }\P-\mbox{a.e. }\omega\in\Omega\right\} \,,\nonumber \\
L_{\sol}^{p}(\Omega) & :=\left\{ u\in L^{p}(\Omega;\Rd)\,:\,u_{\omega}\in L_{\sol,\loc}^{p}(\Rd)\,\,\mbox{for }\P-\mbox{a.e. }\omega\in\Omega\right\} \,,\label{eq:sto-Lp-pot-sol-omega-general-1}\\
\cV_{\pot}^{p}(\Omega) & :=\left\{ u\in L_{\pot}^{p}(\Omega)\,:\,\int_{\Omega}u\,\d\P=0\right\} \,.\nonumber 
\end{align}
The spaces $L_{\pot}^{p}(\Omega)$ and $W^{1,p}(\Omega)$ are connected
as the following theorem shows.
\begin{thm}
\label{thm:Lp-W1p-structure}For $1<p,q<\infty$ with $\frac{1}{p}+\frac{1}{q}=1$
the spaces $\cV_{\pot}^{p}(\Omega)$ and $L_{\sol}^{p}(\Omega)$ are
closed and it holds
\begin{equation}
\left(\cV_{\pot}^{p}(\Omega)\right)^{\bot}=L_{\sol}^{q}(\Omega)\,,\qquad\left(L_{\sol}^{p}(\Omega)\right)^{\bot}=\cV_{\pot}^{q}(\Omega)\label{eq:ortho-relation-pot-sol-omega}
\end{equation}
in the sense of duality. Furthermore, $W^{1,p}(\Omega)$ lies densely
in $L^{p}(\Omega)$ and 
\begin{equation}
\cV_{\pot}^{p}(\Omega)=\mathrm{closure}_{L^{p}}\left\{ \rmD u\,|\;u\in W^{1,p}(\Omega)\right\} \,.\label{eq:sto-Lp-pot-alternative-def}
\end{equation}
\end{thm}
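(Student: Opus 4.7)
The plan is to prove the four assertions in the order: closedness of the two spaces; density of $W^{1,p}(\Omega)$ in $L^p(\Omega)$; the characterization \eqref{eq:sto-Lp-pot-alternative-def}; and finally the duality identities \eqref{eq:ortho-relation-pot-sol-omega}, which will drop out of the characterization. Throughout I would exploit the realization map $f \mapsto f_\omega$ together with the convolution identities of Lemmas~\ref{lem:Omega-convolution} and~\ref{lem:Properties-sI-delta}.

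For closedness, if $u_n \to u$ in $L^p(\Omega;\Rd)$, a Fubini argument using $\|u\|_{L^p(\Omega;\Rd)}^{p} = \int_\Omega \|u_\omega\|_{L^p(\I^d;\Rd)}^p \, \d\P$ together with stationarity yields a subsequence for which $(u_{n_k})_\omega \to u_\omega$ in $L^p_{\loc}(\Rd;\Rd)$ for $\P$-a.e.\ $\omega$. The realization conditions ``$u_\omega$ is a distributional gradient on every ball'' and ``$u_\omega$ is distributionally divergence-free'' are both closed under $L^p_{\loc}$-convergence, and the mean-zero constraint is closed in $L^1(\Omega)$; hence $\cV^p_{pot}(\Omega)$ and $L^p_{sol}(\Omega)$ are closed. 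Density of $W^{1,p}(\Omega)$ in $L^p(\Omega)$ is immediate from Lemma~\ref{lem:Properties-sI-delta}, since $\sI_\delta f \in W^{k,p}(\Omega)$ for every $k \in \N$ and $\sI_\delta f \to f$ strongly in $L^p(\Omega)$.

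For the characterization \eqref{eq:sto-Lp-pot-alternative-def}, set $V^p_{pot}(\Omega) := \overline{\{\rmD \varphi : \varphi \in W^{1,p}(\Omega)\}}^{L^p}$. The inclusion $V^p_{pot} \subset \cV^p_{pot}$ is a direct consequence of $(\rmD_i \varphi)_\omega(x) = \partial_{x_i}\varphi_\omega(x)$ (established earlier in the section) together with $\int_\Omega \rmD_i \varphi \, \d\P = 0$ from $\tau$-invariance, combined with the closedness just proved. The substance of the reduction is the annihilator identity $(V^p_{pot})^\bot = L^q_{sol}(\Omega)$. For the ``$\supset$'' direction, Lemma~\ref{lem:Omega-convolution} gives $\rmD \sI_\delta g = (\nabla \eta_\delta) \ast g$; Fubini together with $\tau$-invariance of $\P$ then transforms
\begin{equation*}
  \int_\Omega v \cdot \rmD \sI_\delta g \, \d\P \;=\; \int_\Omega g(\omega)\,(\eta_\delta \ast \diver v_\omega)(0)\,\d\P,
\end{equation*}
which vanishes for $v \in L^q_{sol}(\Omega)$ because $\diver v_\omega = 0$ distributionally. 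Density of $\{\sI_\delta g\}$ in $W^{1,p}(\Omega)$ extends the identity to all $\varphi \in W^{1,p}(\Omega)$. For ``$\subset$'', reading the same computation backwards across all $g \in L^p(\Omega)$ forces $\diver(\eta_\delta \ast v_\omega)(0)=0$ $\P$-a.s.~for every $\delta$; stationarity and smoothness of the mollified field promote this to identically zero divergence on $\Rd$, and $\delta \searrow 0$ delivers $v_\omega \in L^q_{\sol,\loc}(\Rd)$. Reflexivity of $L^p(\Omega;\Rd)$ together with the bipolar theorem then gives $(L^q_{sol})^\bot = V^p_{pot}$.

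The main obstacle is the reverse inclusion $\cV^p_{pot} \subset V^p_{pot}$, which by the bipolar theorem is equivalent to the orthogonality $\cV^p_{pot}(\Omega) \perp L^q_{sol}(\Omega)$ under the $L^p$--$L^q$ pairing on $\Omega$. This does not follow mechanically from the realization-level orthogonality on $\Rd$, since the potential $\Phi_\omega$ of $u_\omega$ need not be stationary. My approach is to mollify to $\sI_\delta u$ with smooth realization $\nabla \Phi_\omega^\delta$, rewrite $\int_\Omega \sI_\delta u \cdot v \, \d\P$ as the $\P$-average of spatial means over $B_R$ (allowed by stationarity), and integrate by parts against $v_\omega$ to reduce matters to the surface term $|B_R|^{-1}\int_{\partial B_R} \Phi^\delta_\omega (v_\omega \cdot \nu)\,\d\cH^{d-1}$. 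The crux is then to show this boundary contribution vanishes as $R\to\infty$; this rests on the classical sublinear growth of potentials of stationary $L^p$ gradient fields with zero mean (a corrector estimate in the spirit of \cite{papanicolaou1979boundary,kozlov1979averaging}) combined with the ergodic theorem applied to $|v_\omega|$ on spheres. Passing $\delta \searrow 0$ then yields the orthogonality, hence $\cV^p_{pot} = V^p_{pot}$, and both identities of \eqref{eq:ortho-relation-pot-sol-omega} are immediate from the annihilator computation already carried out.
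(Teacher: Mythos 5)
Your proposal is correct in its overall architecture, but it routes around the hard step quite differently from the paper. The paper organizes the proof as: (i) show $\cV^p_{\pot}\perp L^q_{\sol}$ for all $p$ via the corrector argument (pick $\omega$ so the ergodic theorem holds, let $\nabla u^\eps=\v^\eps$ with zero average, use Poincar\'e plus compact embedding to get $u^\eps\to0$ strongly in $L^p(\bQ)$, and test against $\psi\in C^\infty_c(\bQ)$ so no boundary term appears); (ii) prove the reverse inclusion $(\cV^2_{\pot})^\bot\subseteq L^2_{\sol}$ by the Hilbert-space orthogonal decomposition combined with mollification ($\sI_\delta$ commutes with $\rmD_\omega$, $\cV^2_{\pot}$ is $\sI_\delta$-invariant, and $\diver_\omega\sI_\delta\tilde u=0$ follows); (iii) transfer to $p\in[1,2]$ by the inclusion $L^q\cap L^2_{\sol}\subset L^q_{\sol}$, which by duality covers $p\in[2,\infty)$ too; (iv) finally establish $V^{\bot}=L^q_{\sol}$ for $V=\{\rmD u:u\in W^{1,p}\}$ by the same mollification trick. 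You instead prove the annihilator identity $(V^p_{\pot})^\bot=L^q_{\sol}$ in both directions first, then close the gap $\cV^p_{\pot}\subset V^p_{\pot}$ for all $p$ simultaneously by showing $\cV^p_{\pot}\perp L^q_{\sol}$ and invoking the bipolar theorem. That orthogonality is logically the same fact as the paper's Step~1, and both rest on corrector sublinearity via the ergodic theorem, but your mechanism (integrate by parts over $B_R$ and kill a surface term as $R\to\infty$) is more technical than the paper's compact-support testing: the trace of $v_\omega\in L^q_{\loc}$ on $\partial B_R$ is not defined pointwise, and you would need a sublinearity estimate for $\Phi^\delta_\omega$ averaged over spheres rather than over balls. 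A smooth cutoff $\psi_R$ supported on $B_{R+1}$ with $\psi_R\equiv1$ on $B_R$ turns the surface term into a bulk term over the annulus and avoids both problems; with that modification your route closes. What your version buys is avoiding the detour through the $p=2$ Hilbert case and the subsequent $[1,2]$-then-duality transfer; what it costs is a more delicate boundary/annulus estimate where the paper's argument has none. Your direct Fubini-subsequence proof of closedness of $\cV^p_{\pot}$ and $L^p_{\sol}$ is also a genuine (and cleaner) alternative to the paper, which obtains closedness of $\cV^p_{\pot}$ only implicitly through the identity (\ref{eq:sto-Lp-pot-alternative-def}).
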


\begin{proof}
The density of $W^{1,p}(\Omega)$ in $L^{p}(\Omega)$ follows from
Lemma \ref{lem:Properties-sI-delta}. We furthermore observe that
$\cV_{\pot}^{p}(\Omega)$ is invariant with respect to $\sI_{\delta}$.
In fact, let $u\in\cV_{\pot}^{p}(\Omega)$ and consider $\omega\in\Omega$
such that $u_{\omega}\in L_{\pot,\loc}^{p}(\Rd)$. Then 
\[
\left(\sI_{\delta}u\right)_{\omega}(x)=\int_{\Rd}\eta_{\delta}(y)u\left(\tau_{x+y}\omega\right)\d y
\]
and hence $\left(\sI_{\delta}u\right)_{\omega}\in L_{\pot,\loc}^{p}(\Rd)$
due to Remark \ref{rem:Lppot-invariant}. Furthermore, the space $L_{\sol}^{p}(\Omega)$
is closed as can be seen from the continuity of the expression
\[
L^{p}(\Omega;\Rd)\to\R\,,\qquad u\mapsto\int_{\Omega}g(\omega)\int_{(-1,1)^{d}}u(\tau_{x}\omega)\cdot\nabla\varphi(x)\,\d x\,\d\P(\omega)\,,
\]
where $\varphi\in W^{1,q}(\Rd)$ and $g\in L^{q}(\Omega)$ are arbitrary.

It remains to show (\ref{eq:ortho-relation-pot-sol-omega}), (\ref{eq:sto-Lp-pot-alternative-def})
and closedness of $\cV_{\pot}^{p}(\Omega)$.

Step 1: We first show that $\cV_{\pot}^{p}(\Omega)$ and $L_{\sol}^{q}(\Omega)$
are mutually orthogonal in the sense of duality. Let $\v\in\cV_{\pot}^{p}(\Omega)$
and $p\in L_{{\rm sol}}^{q}(\Omega)$ and chose $\omega\in\Omega$
such that for $\v^{\eps}(x)=\v(\tau_{\frac{x}{\eps}}\omega)$, $p^{\eps}(x)=p(\tau_{\frac{x}{\eps}}\omega)$
and $\v^{\eps}\cdot p^{\eps}$ the ergodic theorem \ref{thm:Main-ergodic Theorem - extended}
holds. Thus, we get $\v^{\eps}\cdot p^{\eps}\weakto\E(v\cdot p|\sI)$
weakly in $L_{\loc}^{1}(\Rd)$. It remains to show that $\v^{\eps}\cdot p^{\eps}\weakto^{\ast}0$.
Since $\v\in L_{\pot}^{p}(\Omega)$, we find for every $\eps>0$ some
$\ue\in W^{1,p}(\bQ)$ such that $\nabla\ue=\v^{\eps}$ and $\int_{\bQ}\ue=0$.
By the ergodic theorem $\nabla\ue=\v^{\eps}\weakto^{\ast}\E(\v|\sI)=0$
and $\ue\weakto u$ has average $0$. Due to the Poincaré inequality
and the compact embedding $W^{1,p}(\bQ)\hookrightarrow L^{p}(\bQ)$,
we find $\ue\to0$ strongly in $L^{p}(\bQ)$. Therefore, for all $\psi\in C_{c}^{\infty}(\bQ)$,
we find 
\[
\int_{\bQ}\psi\ve\cdot p^{\eps}\,dx=\int_{\bQ}\psi p^{\eps}\cdot\nabla\ue\,dx=-\int_{\bQ}\ue p^{\eps}\cdot\nabla\psi\,dx\to0\quad\mbox{for }\eps\to0\,.
\]
Therefore, we obtain 
\begin{equation}
L_{{\rm sol}}^{q}(\Omega)\subset\left(\cV_{\pot}^{p}(\Omega)\right)^{\bot}\quad\mbox{and}\quad\cV_{\pot}^{p}(\Omega)\subset\left(L_{{\rm sol}}^{q}(\Omega)\right)^{\bot}\,.\label{eq:Lpot-sol-orth-proof-1}
\end{equation}

Step 2: We prove (\ref{eq:ortho-relation-pot-sol-omega}) and closedness
of $\cV_{\pot}^{p}(\Omega)$ in case $p=2$. From Step 1 we know that
$L_{{\rm sol}}^{2}(\Omega)\subset\left(\cV_{\pot}^{2}(\Omega)\right)^{\bot}$
and it remains to show that $\left(\cV_{\pot}^{2}(\Omega)\right)^{\bot}\subseteq L_{\sol}^{2}(\Omega)$.
Let $u\in L^{2}(\Omega;\Rd)$ and use the decomposition $u=u_{\pot}+\tilde{u}$
where $u_{\pot}\in\cV_{\pot}^{2}(\Omega)$ and $\tilde{u}\in\left(\cV_{\pot}^{2}(\Omega)\right)^{\bot}$.
Since $\sI_{\delta}$ is symmetric and $\cV_{\pot}^{2}(\Omega)$ is
invariant with respect to $\sI_{\delta}$, we observe that 
\[
\forall v\in\cV_{\pot}^{2}(\Omega)\,:\quad\left\langle \sI_{\delta}\tilde{u},\,v\right\rangle =\left\langle \tilde{u},\,\sI_{\delta}v\right\rangle =0
\]
and hence $\sI_{\delta}\tilde{u}\in\left(\cV_{\pot}^{2}(\Omega)\right)^{\bot}$.
In particular, for every $\eps>0$ and every $\phi\in L^{2}(\Omega)$
it holds
\[
0=\left\langle \sI_{\delta}\tilde{u},\,\rmD_{\omega}\sI_{\eps}\phi\right\rangle =-\left\langle \diver_{\omega}\sI_{\delta}\tilde{u},\,\sI_{\eps}\phi\right\rangle 
\]
and as $\eps\to0$ it holds
\[
0=-\left\langle \diver_{\omega}\sI_{\delta}\tilde{u},\,\phi\right\rangle \,.
\]
Since $\phi\in L^{2}(\Omega)$ was arbitrary, this implies $\sum_{i}\rmD_{i}\sI_{\delta}\tilde{u}=0$
almost everywhere, i.e. $\sI_{\delta}\tilde{u}\in L_{\sol}^{2}(\Omega)$.
Since $\sI_{\delta}\tilde{u}\to\tilde{u}$ as $\delta\to0$, the closedness
of $L_{\sol}^{2}(\Omega)$ implies $\tilde{u}\in L_{\sol}^{2}(\Omega)$.
Hence $L_{\sol}^{2}(\Omega)\supset\cV_{\pot}^{2}(\Omega)^{\bot}$
and Step 1 implies $L_{\sol}^{2}(\Omega)=\cV_{\pot}^{2}(\Omega)^{\bot}$
and closedness of $\cV_{\pot}^{2}(\Omega)$. 

Step 3: For $p\in[1,2]$ we deduce from Step 2
\begin{equation}
\left(\cV_{\pot}^{p}(\Omega)\right)^{\bot}\subseteq L^{q}(\Omega;\Rd)\cap\left(\cV_{\pot}^{2}(\Omega)\right)^{\bot}=L^{q}(\Omega;\Rd)\cap L_{\sol}^{2}(\Omega)\subseteq L_{\sol}^{q}(\Omega)\,.\label{eq:Lpot-sol-orth-proof-2}
\end{equation}
Interchanging the role of $\cV_{\pot}$ and $L_{\sol}$ yields 
\begin{equation}
\left(L_{\sol}^{p}(\Omega)\right)^{\bot}\subseteq\cV_{\pot}^{q}(\Omega)\,.\label{eq:Lpot-sol-orth-proof-3}
\end{equation}
Inclusions (\ref{eq:Lpot-sol-orth-proof-1}), (\ref{eq:Lpot-sol-orth-proof-2})
and (\ref{eq:Lpot-sol-orth-proof-3}) imply (\ref{eq:ortho-relation-pot-sol-omega}).

Step 4: For $1<p<\infty$ we denote 
\[
V:=\left\{ \rmD\phi\,|\;\phi\in W^{1,p}(\Omega)\right\} \subset L_{\pot}^{p}(\Omega)\,.
\]
Let $u\in L^{q}(\Omega;\Rd)$ satisfy 
\[
\forall\phi\in W^{1,p}(\Omega)\,:\qquad\left\langle u,\,\rmD_{\omega}\phi\right\rangle =0\,.
\]
According to Lemma \ref{lem:Properties-sI-delta}, $\rmD_{i}$ and
$\sI_{\delta}$ commute for $\phi\in W^{1,p}(\Omega)$. Furthermore,
$\sI_{\delta}\phi\in W^{1,p}(\Omega)$ and hence 
\[
0=\left\langle u,\,\rmD_{\omega}\sI_{\delta}\phi\right\rangle =\left\langle u,\,\sI_{\delta}\rmD_{\omega}\phi\right\rangle =-\left\langle \diver_{\omega}\sI_{\delta}u,\,\phi\right\rangle \,.
\]
Since $\phi\in W^{1,p}(\Omega)$ was arbitrary and $W^{1,p}(\Omega)$
is dense in $L^{p}(\Omega$), it follows $\diver_{\omega}\sI_{\delta}u=0$,
which implies $u\in L_{\sol}^{q}(\Omega)$ by closedness of $L_{\sol}^{q}(\Omega)$.
To conclude, we have shown $L_{\sol}^{q}(\Omega)=\left(\cV_{\pot}^{p}(\Omega)\right)^{\bot}\subseteq V^{\bot}\subseteq L_{\sol}^{q}(\Omega)$,
and hence (\ref{eq:sto-Lp-pot-alternative-def}).
\end{proof}

\subsection{Stampaccia's Lemma}
\begin{lem}[Stampaccia]
\label{lem:Stampaccia}Let $G:\,\R\to\R$ be Lipschitz continuous
and let $u\in W^{1,p}(\Omega)$. Then $G\circ u\in W^{1,p}(\Omega)$.
\end{lem}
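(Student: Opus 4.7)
My plan is to establish the chain-rule identity $\rmD_i(G\circ u)=G'(u)\rmD_i u$ in $L^p(\Omega)$, where $G'$ exists Lebesgue-almost everywhere on $\R$ with $\|G'\|_\infty\leq L$ by Rademacher's theorem. The $L^p$-integrability and measurability of $G\circ u$ are immediate: $|G(s)|\leq|G(0)|+L|s|$ yields $G\circ u\in L^p(\Omega)$, and continuity of $G$ gives measurability.

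First I would treat the smooth auxiliary case $G\in C^1(\R)$ with bounded derivative. Writing the difference quotient as
\[
\frac{G(u(\tau_{t\be_i}\omega))-G(u(\omega))}{t}=\Bigl(\int_0^1 G'\bigl(s u(\tau_{t\be_i}\omega)+(1-s)u(\omega)\bigr)\,ds\Bigr)\cdot\frac{u(\tau_{t\be_i}\omega)-u(\omega)}{t},
\]
the strong continuity of $\rmT_i(t)$ on $L^p(\Omega)$ (Lemma~\ref{lem:T-strongly-continuous}) implies $u(\tau_{t\be_i}\cdot)\to u$ and $t^{-1}(u(\tau_{t\be_i}\cdot)-u)\to\rmD_i u$ in $L^p(\Omega)$. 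Continuity of $G'$ and $\|G'\|_\infty\leq L$ deliver pointwise $\P$-a.e.\ convergence of the integral factor to $G'(u)$ with uniform bound $L$, and a generalized dominated convergence argument shows the product converges to $G'(u)\rmD_i u$ in $L^p(\Omega)$. Hence $G\circ u\in W^{1,p}(\Omega)$ with $\rmD_i(G\circ u)=G'(u)\rmD_i u$.

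For general Lipschitz $G$ I would set $G_\eps:=G\ast\eta_\eps$ on $\R$ with $\eta_\eps$ a standard one-dimensional mollifier. Each $G_\eps\in C^1(\R)$ is $L$-Lipschitz, $G_\eps\to G$ uniformly on $\R$, and $G_\eps'\to G'$ pointwise outside a fixed Lebesgue null set $N\subset\R$. The smooth case yields $\rmD_i(G_\eps\circ u)=G_\eps'(u)\rmD_i u$, and uniform convergence of $G_\eps$ gives $G_\eps\circ u\to G\circ u$ in $L^p(\Omega)$. To pass to the limit in the gradient I would invoke the null-set lemma $\rmD_i u=0$ $\P$-a.e.\ on $u^{-1}(N)$; combined with pointwise convergence $G_\eps'(u)\rmD_i u\to G'(u)\rmD_i u$ $\P$-a.e.\ outside $u^{-1}(N)$ and the dominating function $L|\rmD_i u|\in L^p(\Omega)$, dominated convergence yields $L^p$-convergence of the gradients. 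Closedness of the generator $\rmD_i$ on its domain then gives $G\circ u\in W^{1,p}(\Omega)$ with the asserted chain rule, completing the argument.

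The main obstacle is the null-set lemma. I would establish it by transferring the classical Sobolev result (if $v\in W^{1,p}_{\loc}(\Rd)$ and $|N|=0$ then $\nabla v=0$ a.e.\ on $v^{-1}(N)$) to the probability space via realizations: for $\P$-a.e.\ $\omega$, $u_\omega\in W^{1,p}_{\loc}(\Rd)$ with weak derivative $(\rmD_i u)_\omega$ (identity~\eqref{eq:partial-integr-realization}), so $(\rmD_i u)_\omega(x)=0$ for Lebesgue-a.e.\ $x$ with $u_\omega(x)\in N$. Integrating $\chi_{u^{-1}(N)}|\rmD_i u|$ over a unit cube in $\Rd$ against $\omega$, applying Fubini, and using invariance of $\P$ under $\tau_x$ forces $\rmD_i u=0$ $\P$-a.e.\ on $u^{-1}(N)$.
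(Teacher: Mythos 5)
Your proof is correct, but takes a genuinely different route from the paper's. The paper argues by bounding the difference quotients $\frac{1}{h}(T_{h\be_i}G(u)-G(u))$ in $L^p(\Omega)$ via the Lipschitz constant of $G$, extracting a weak limit $w_i$ from the resulting bounded family by reflexivity, and then verifying $w$ is the weak derivative of $G(u)$ by testing against $\varphi\in W^{1,q}(\Omega)$ and using skew-adjointness of $\rmD_i$; this gives membership in $W^{1,p}(\Omega)$ without ever identifying $w_i$. Your approach mollifies $G$ instead, establishes a genuine chain rule $\rmD_i(G_\eps\circ u)=G_\eps'(u)\rmD_i u$ in the smooth case, and then passes to the limit by combining closedness of the generator with a null-set lemma for $\rmD_i u$, which you transfer from the classical Euclidean statement via realizations, Fubini, and $\tau$-invariance of $\P$ (relying on~\eqref{eq:partial-integr-realization} to identify $(\rmD_i u)_\omega$ with the distributional derivative $\partial_i u_\omega$). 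The transfer argument is sound, and your route is more work, but it buys the explicit chain-rule identity $\rmD_i(G\circ u)=G'(u)\rmD_i u$ --- a fact the paper's remark after the lemma explicitly declines to prove, saying it ``goes beyond the scope of this chapter.'' The paper's argument, by contrast, is shorter because it is content with existence of the derivative and sidesteps the identification entirely.
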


\begin{proof}
Let $u\in W^{1,p}(\Omega)$. It holds 
\begin{align*}
\limsup_{h\to0}\norm{\frac{T_{h\be_{i}}G(u)-G(u)}{h}}_{L^{p}(\Omega)} & =\limsup_{h\to0}\norm{\frac{G\left(T_{h\be_{i}}u\right)-G(u)}{h}}_{L^{p}(\Omega)}\\
 & \leq\limsup_{h\to0}\norm{\frac{G\left(T_{h\be_{i}}u\right)-G\left(u\right)}{T_{h\be_{i}}u-u}}_{\infty}\norm{\frac{T_{h\be_{i}}u-u}{h}}_{L^{p}(\Omega)}\\
 & \leq\norm{G'}_{\infty}\norm{\rmD_{i}u}_{L^{p}(\Omega)}\,.
\end{align*}
Hence, we find that there exists $w_{i}\in L^{p}(\Omega)$ such that
$\frac{1}{h}\left(T_{h\be_{i}}G(u)-G(u)\right)\weakto w_{i}$ weakly
along a further subsequence. Testing this limit with a function $\varphi\in W^{1,q}(\Omega)$,
we obtain that $w=(w_{i})_{i=1\dots d}$ is the weak derivative of
$G(u)$ as
\begin{align*}
\int_{\Omega}w_{i}\varphi\d\P & =\lim_{h\to0}\int_{\Omega}\frac{1}{h}\left(T_{h\be_{i}}G(u)-G(u)\right)\varphi\d\P\\
 & =-\lim_{h\to0}\int_{\Omega}\frac{1}{h}\left(T_{h\be_{i}}\varphi-\varphi\right)G(u)\d\P=-\int_{\Omega}G(u)\rmD_{i}\varphi\d\P\,.
\end{align*}
 
\end{proof}
\begin{rem}
Lemma \ref{lem:Stampaccia} is well known in Sobolev theory in $\Rd$
and is due to Stampaccia. It can be found for example in the book
by Evans \cite{Evans2010}. Stampaccia \cite{stampaccia1964equations}
also showed for functions $u\in W^{1,p}(\Rd)$ that $\nabla\left(G\circ u\right)=G'(u)\nabla u$.
However, to proof such a result in the case of general $\Omega$ goes
beyond the scope of this chapter.
\end{rem}

\begin{thm}
For every $1\leq p<\infty$ the embedding $W^{1,\infty}(\Omega)\hookrightarrow W^{1,p}(\Omega)$
is dense. In particular, 
\[
\cV_{\pot}^{p}(\Omega)=\mathrm{closure}_{L^{p}}\left\{ \rmD u\,|\;u\in W^{1,\infty}(\Omega)\right\} \,.
\]
\end{thm}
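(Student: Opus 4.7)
My plan is to approximate any $u \in W^{1,p}(\Omega)$ by first truncating the values of $u$ to make it bounded, and then mollifying the truncation to bound the gradient as well. A diagonal selection then produces a sequence in $W^{1,\infty}(\Omega)$ converging to $u$ in $W^{1,p}(\Omega)$, and the claim on $\cV_{\pot}^p(\Omega)$ follows from Theorem \ref{thm:Lp-W1p-structure} by continuity of the gradient map.

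For the truncation, fix a family $(G_k)_{k\in\N}\subset C^2(\R)$ with $G_k(s)=s$ on $[-k,k]$, $|G_k|\leq k+1$, $\|G_k'\|_\infty\leq 1$, and $G_k'=1$ on $[-k,k]$. Setting $u_k:=G_k\circ u$, Lemma \ref{lem:Stampaccia} gives $u_k\in W^{1,p}(\Omega)$ with $\|u_k\|_{L^\infty(\Omega)}\leq k+1$. The step beyond Lemma \ref{lem:Stampaccia}, which only provides a norm bound, is the chain rule $\rmD_i u_k=G_k'(u)\,\rmD_i u$: by the mean value theorem applied pointwise,
\[
\frac{G_k(\rmT(h\be_i)u)-G_k(u)}{h}=G_k'(\xi_h)\,\frac{\rmT(h\be_i)u-u}{h},
\]
with $\xi_h$ between $u$ and $\rmT(h\be_i)u$. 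Along a subsequence $h_n\to 0$ both the difference quotients $(\rmT(h_n\be_i)u-u)/h_n\to\rmD_i u$ and the shifts $\rmT(h_n\be_i)u\to u$ converge a.s.\ (after passing to a further subsequence), so $\xi_{h_n}\to u$ a.s., and by continuity and uniform boundedness of $G_k'$ together with dominated convergence on the multiplier I obtain strong $L^p$-convergence of the right-hand side to $G_k'(u)\,\rmD_i u$. By uniqueness of the weak $L^p$-limit identified in the proof of Lemma \ref{lem:Stampaccia} this coincides with $\rmD_i u_k$. A final dominated convergence argument using $G_k'(u)\to 1$ a.s.\ and $|G_k'(u)\rmD_i u|\leq|\rmD_i u|$ yields $u_k\to u$ in $W^{1,p}(\Omega)$.

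For the mollification, fix $k$ and set $v_{k,\delta}:=\sI_\delta u_k$; Lemma \ref{lem:Properties-sI-delta} gives $v_{k,\delta}\to u_k$ in $W^{1,p}(\Omega)$ as $\delta\to 0$. The bound $|v_{k,\delta}(\omega)|\leq\|\eta_\delta\|_{L^1(\Rd)}\|u_k\|_{L^\infty(\Omega)}\leq k+1$ is immediate, and computing the derivative as in the proof of Lemma \ref{lem:Omega-convolution}, now exploiting $u_k\in L^\infty$, yields
\[
|\rmD_i v_{k,\delta}(\omega)|=\left|\int_{\Rd}\partial_i\eta_\delta(x)\,u_k(\tau_x\omega)\,dx\right|\leq\|\partial_i\eta_\delta\|_{L^1(\Rd)}\|u_k\|_{L^\infty(\Omega)}.
\]
Hence $v_{k,\delta}\in W^{1,\infty}(\Omega)$, and a diagonal selection $\delta_k\to 0$ with $\|v_{k,\delta_k}-u_k\|_{W^{1,p}(\Omega)}<1/k$ produces $v_{k,\delta_k}\in W^{1,\infty}(\Omega)$ with $v_{k,\delta_k}\to u$ in $W^{1,p}(\Omega)$. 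The identity $\cV_{\pot}^p(\Omega)=\mathrm{closure}_{L^p}\{\rmD u:u\in W^{1,\infty}(\Omega)\}$ then follows from Theorem \ref{thm:Lp-W1p-structure} by continuity of $\rmD:W^{1,p}(\Omega)\to L^p(\Omega;\Rd)$. The main obstacle is the chain rule, flagged in the remark after Lemma \ref{lem:Stampaccia} as delicate for general Lipschitz $G$; choosing smooth $G_k$ with $C^1$ derivative reduces the matter to the mean value theorem and subsequential a.s.\ convergence of difference quotients, which is the least painful route I see.
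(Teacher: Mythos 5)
Your proof follows the paper's truncate-then-mollify architecture, but it supplies a step that the paper's own proof leaves unjustified. The paper truncates with $u_k := \max\{-k,\min\{k,u\}\}$, invokes Lemma \ref{lem:Stampaccia}, and then asserts that ``$u_k\to u$ as $k\to\infty$'', implicitly in $W^{1,p}(\Omega)$. But Lemma \ref{lem:Stampaccia} only delivers the bound $\|\rmD_i u_k\|_{L^p}\leq\|G'\|_\infty\|\rmD_i u\|_{L^p}$, not the chain rule, and the remark following that lemma explicitly declines to prove $\rmD_\omega(G\circ u)=G'(u)\rmD_\omega u$ ``in the case of general $\Omega$''. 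Without that identity, the strong convergence $\rmD_i u_k\to\rmD_i u$ in $L^p$ does not follow from what is written, so the paper's proof has a genuine gap exactly where you say it does. Your mean-value-theorem argument repairs it cleanly: a.s.\ convergence along a subsequence of the difference quotients identifies the strong $L^p$-limit of $G_k'(\xi_{h_n})\cdot\bigl(\rmT(h_n\be_i)u-u\bigr)/h_n$ as $G_k'(u)\rmD_i u$, and since Lemma \ref{lem:Stampaccia} already guarantees $u_k\in W^{1,p}(\Omega)$, i.e.\ that the full limit $\rmD_i u_k$ exists, the two must coincide. The switch from the hard truncation to smooth $G_k\in C^2$ with $\|G_k'\|_\infty\leq1$ and $G_k'\equiv1$ on $[-k,k]$ is exactly the right move: it keeps $G_k'$ continuous, so $G_k'(\xi_{h_n})\to G_k'(u)$ follows directly from $\xi_{h_n}\to u$ a.s., and dominated convergence then gives $u_k\to u$ in $W^{1,p}(\Omega)$; the corner of $\max\{-k,\min\{k,\cdot\}\}$ would have made the pointwise identification delicate. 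Your mollification step and the final passage to $\cV_{\pot}^p(\Omega)$ coincide with the paper's. In short, this is the paper's proof with the missing chain rule correctly established, and I see no remaining gaps.
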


\begin{proof}
Let $u\in W^{1,p}(\Omega)$ and let $k\in\N$. By Lemma \ref{lem:Stampaccia}
we obtain that the function $u_{k}:=\max\left\{ -k,\min\left\{ k,u\right\} \right\} $
satisfies $u_{k}\in W^{1,p}(\Omega)$ and $\norm{u_{k}}_{\infty}\leq k$.
Since $u_{k}\to u$ as $k\to\infty$, it remains to show that $u_{k}$
can be approximated by functions in $W^{1,\infty}(\Omega)$. To see
this, note that for $u_{k}^{\delta}:=\sI_{\delta}u_{k}$ it holds
\begin{align*}
\lim_{t\to0}\frac{1}{t}\left(\sI_{\delta}u_{k}(\tau_{t\be_{i}}\omega)-\sI_{\delta}u_{k}(\omega)\right) & =\lim_{t\to0}\int_{\Rd}\frac{1}{t}\left(\eta_{\delta}(x+t\be_{i})-\eta_{\delta}(x)\right)u_{k}(\tau_{x}\omega)\\
 & =\int_{\Rd}u_{k}(\tau_{x}\omega)\partial_{i}\eta_{\delta}(x)\,.
\end{align*}
and since $\eta_{\delta}\in C_{c}^{\infty}(\Rd)$ we find $u_{k}^{\delta}\in W^{1,\infty}(\Omega)$.
Since $u_{k}^{\delta}\to u_{k}$ in $W^{1,p}(\Omega)$ as $\delta\to0$
by Lemma \ref{lem:Properties-sI-delta}, the theorem is proved.
\end{proof}
The last Theorem has an important implication for the existence of
suitable countable and dense family of functions.
\begin{thm}
\label{thm:dense-functions-Omega} Let Assumption \ref{assu:Omega-mu-tau-fundmental}
hold. For every $1\leq p<\infty$ there exists a countable dense family
of functions $\left(u_{k}\right)_{k\in\N}\subset W^{1,p}(\Omega)$
such that $\left(u_{k}\right)_{k\in\N}\subset W^{1,\infty}(\Omega)$
and $\left(u_{k}\right)_{k\in\N}$ is stable under addition and scalar
multiplication with $q\in\Q$. Furthermore, every $u_{k}$ has almost
surely bounded and continuously differentiable realizations with $\norm{u_{k,\omega}}_{W^{1,\infty}(\Rd)}\leq\norm{u_{k}}_{W^{1,\infty}(\Omega)}$.
If additionally Assumption \ref{assu:Omega-mu-tau-fundmental-strong}
holds, then $\left(u_{k}\right)_{k\in\N}$ can be chosen such that
for every $k$ it holds $u_{k}\in C_{b}^{1}(\Omega)$, $\nabla_{\omega}u_{k}\in C_{b}(\Omega)$.
\end{thm}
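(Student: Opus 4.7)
\textbf{Proof plan for Theorem \ref{thm:dense-functions-Omega}.}
The plan is to build the family in three layers: first get a countable dense set in $L^{p}(\Omega)$, then improve regularity via truncation plus mollification to land in $W^{1,\infty}(\Omega)$ with well-behaved realizations, and finally close under $\Q$-linear combinations. Since Assumption \ref{assu:separable} forces $\sF$ to be countably generated, Lemma \ref{lem:Lp-dense} provides a countable dense family $(v_{j})_{j\in\N}\subset L^{p}(\Omega)$. This is the starting seed from which everything else will be manufactured.

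Next I would apply to each $v_{j}$ the truncation $v_{j}^{k}:=\max\{-k,\min\{k,v_{j}\}\}$, which is bounded by $k$, lies in $L^{p}(\Omega)$ and, by Stampacchia's Lemma \ref{lem:Stampaccia}, preserves $W^{1,p}$-regularity whenever $v_{j}$ has it; moreover, $v_{j}^{k}\to v_{j}$ in $L^{p}(\Omega)$ as $k\to\infty$. Then mollify with rational radius: set $w_{j,k,\delta}:=\sI_{\delta}v_{j}^{k}$ for $\delta\in\Q_{>0}$. By Lemma \ref{lem:Omega-convolution}, $w_{j,k,\delta}\in W^{m,\infty}(\Omega)$ for every $m$, with explicit bounds
\[
\|w_{j,k,\delta}\|_{L^{\infty}(\Omega)}\le k\|\eta_{\delta}\|_{L^{1}}=k,\qquad\|\rmD_{i}w_{j,k,\delta}\|_{L^{\infty}(\Omega)}\le k\|\partial_{i}\eta_{\delta}\|_{L^{1}},
\]
so $w_{j,k,\delta}\in W^{1,\infty}(\Omega)$. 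For the realization estimate, the crucial observation is that $(w_{j,k,\delta})_{\omega}(x)=\int_{\R^{d}}\eta_{\delta}(y)\,v_{j}^{k}(\tau_{x+y}\omega)\,\d y$ is a $C^{\infty}$-function of $x$ for every $\omega$, with classical derivatives $\partial_{i}(w_{j,k,\delta})_{\omega}(x)=(\rmD_{i}w_{j,k,\delta})_{\omega}(x)$; since $\tau$ is measure preserving, both the function and its gradient are essentially bounded uniformly in $x$ by the corresponding norms on $\Omega$, and so there is a null set outside which $\|(w_{j,k,\delta})_{\omega}\|_{W^{1,\infty}(\R^{d})}\le\|w_{j,k,\delta}\|_{W^{1,\infty}(\Omega)}$.

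Now enumerate $\{w_{j,k,\delta}\}_{j,k\in\N,\,\delta\in\Q_{>0}}$ and define $(u_{k})_{k\in\N}$ to be the smallest collection containing this enumeration and closed under addition and scalar multiplication by $\Q$; countability is preserved. Density in $W^{1,p}(\Omega)$ follows from the previous theorem in the excerpt (density of $W^{1,\infty}(\Omega)$ in $W^{1,p}(\Omega)$) combined with Lemma \ref{lem:Properties-sI-delta} (mollification converges in $W^{1,p}$) and density of $(v_{j})$ in $L^{p}(\Omega)$: for $u\in W^{1,p}(\Omega)$, first approximate by a truncation, then mollify, and finally approximate the mollified truncation via the $(v_{j})$ using that $\sI_{\delta}$ is bounded on $L^{p}(\Omega)$. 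The a.s.\ bound on realizations and closure under $\Q$-operations are automatic since a countable union of null sets remains null.

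For the reinforced conclusion under Assumption \ref{assu:Omega-mu-tau-fundmental-strong}, start the construction from a countable dense family $(v_{j})\subset C_{b}(\Omega)$, which exists by Lemma \ref{lem:Lp-dense}. The truncations $v_{j}^{k}$ remain in $C_{b}(\Omega)$. The main point is that $\sI_{\delta}$ maps $C_{b}(\Omega)$ into $C_{b}^{1}(\Omega)$ with gradient in $C_{b}(\Omega)$: continuity of $\omega\mapsto\sI_{\delta}v_{j}^{k}(\omega)=\int\eta_{\delta}(x)v_{j}^{k}(\tau_{x}\omega)\,\d x$ and of its derivatives $\int\partial_{i}\eta_{\delta}(x)v_{j}^{k}(\tau_{x}\omega)\,\d x$ follows from dominated convergence, using continuity of $(x,\omega)\mapsto\tau_{x}\omega$ (Theorem \ref{thm:Main-THM-Sto-Geo}), continuity and boundedness of $v_{j}^{k}$, and compact support of $\eta_{\delta}$ and $\partial_{i}\eta_{\delta}$. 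The main obstacle I anticipate is bookkeeping: verifying that the realization bound $\|u_{k,\omega}\|_{W^{1,\infty}(\R^{d})}\le\|u_{k}\|_{W^{1,\infty}(\Omega)}$ is genuinely pointwise almost surely (and not only essentially so), which requires combining the $C^{\infty}$-regularity of realizations of $\sI_{\delta}$-images with the identification $\partial_{i}(\sI_{\delta}v)_{\omega}=(\rmD_{i}\sI_{\delta}v)_{\omega}$ and invariance of $\P$ under $\tau$.
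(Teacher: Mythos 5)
Your proposal is correct and follows essentially the same truncate-then-mollify-then-take-$\Q$-span construction as the paper. The only real variation is that you seed from an $L^{p}(\Omega)$-dense family (the direct output of Lemma \ref{lem:Lp-dense}) rather than from a $W^{1,p}(\Omega)$-dense family as the paper does; this forces you to close the density gap via the continuity of $\sI_{\delta}$ as a map $L^{p}(\Omega)\to W^{1,p}(\Omega)$ for fixed $\delta$, supplied by the estimates in Lemma \ref{lem:Omega-convolution}. That is exactly what you do, though the phrase ``using that $\sI_{\delta}$ is bounded on $L^{p}(\Omega)$'' somewhat undersells it: what you need, and what you have, is the bound $\norm{\rmD_{i}\sI_{\delta}f}_{L^{p}(\Omega)}\leq\norm{\partial_{i}\eta_{\delta}}_{L^{1}(\Rd)}\norm f_{L^{p}(\Omega)}$, so that once $v_{j}^{M}$ approximates the truncation $u_{M}$ in $L^{p}$ the mollified versions track each other in $W^{1,p}$. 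One more small point worth preserving: your $L^{\infty}$-bound on the derivative, $\norm{\rmD_{i}w_{j,k,\delta}}_{\infty}\leq k\norm{\partial_{i}\eta_{\delta}}_{L^{1}}$, is in fact the correct version (the paper misprints $\norm{\partial_{i}\eta}_{\infty}$ there). Finally, your explicit verification that $\sI_{\delta}$ sends $C_{b}(\Omega)$ into $C_{b}^{1}(\Omega)$ via dominated convergence and continuity of $(x,\omega)\mapsto\tau_{x}\omega$ is exactly the content the paper compresses into ``direct consequence of the construction,'' and spelling it out is a genuine improvement in readability; the truncation step in your strong case is redundant since elements of $C_{b}(\Omega)$ are already bounded, but it does no harm.
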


\begin{proof}
Let $\left(v_{k}\right)_{k\in\N}\subset W^{1,p}(\Omega)$ be dense.
Then for every $k$ consider $v_{k,n}:=\max\left\{ -n,\min\left\{ n,v_{k}\right\} \right\} $
and for $m\in\N$ define $v_{k,n,m}:=\sI_{\frac{1}{m}}v_{k,n}=\eta_{\frac{1}{m}}\ast v_{k,n}$.
Then $\norm{\rmD_{i}v_{k,n,m}}_{\infty}\leq\norm{\partial_{i}\eta_{\frac{1}{m}}}_{\infty}\norm{v_{k,n}}_{\infty}$.
Moreover, for every $\eps>0$ and every $\phi\in W^{1,p}(\Omega)$
there exists $k$ with $\norm{v_{k}-\phi}_{W^{1,p}(\Omega)}\leq\frac{\eps}{3}$,
$n$ with $\norm{v_{k}-v_{k,n}}_{W^{1,p}(\Omega)}\leq\frac{\eps}{3}$
and $m$ with $\norm{v_{k,n}-v_{k,n,m}}_{W^{1,p}(\Omega)}\leq\frac{\eps}{3}$.
Based on the countable family $\left(v_{k,n,m}\right)_{k,n,m\in\N}$,
we find that 
\[
\left(u_{k}\right)_{k\in\N}:=\left\{ \sum_{k,n,m=1}^{N}\lambda_{k,n,m}v_{k,n,m}:\;\lambda_{k,n,m}\in\Q,\,N\in\N\right\} 
\]
satisfies all demanded properties.

If Assumption \ref{assu:Omega-mu-tau-fundmental-strong} holds we
find $\left(c_{l}\right)_{l\in\N}\subset C_{b}(\Omega)\cap L^{p}(\Omega)$
dense in $L^{p}(\Omega)$. For every $v_{k}$ like above and every
$\delta>0$ we observe by Lemma \ref{lem:Omega-convolution} that
\begin{align*}
\norm{\eta_{\delta}\ast(v_{k}-c_{l})}_{L^{p}(\Omega)}&\leq\norm{v_{k}-c_{l}}_{L^{p}(\Omega)}\,,\\
\norm{\rmD_{i}\of{\eta_{\delta}\ast(v_{k}-c_{l})}}_{L^{p}(\Omega)}&\leq\norm{\partial_{i}\eta_{\delta}}_{L^{1}(\Rd)}\norm{v_{k}-c_{l}}_{L^{p}(\Omega)}\,.
\end{align*}
Hence the family $\left(c_{l,j}\right)_{l,j\in\N}:=\left(\eta_{\frac{1}{j}}\ast c_{l}\right)_{l,j\in\N}$
is countable and dense $W^{1,p}(\Omega)$. From here we can proceed
similarly with the modification that $c_{l,j}$ are already in $W^{1,\infty}(\Omega)$.
Based on the countable family $\left(c_{l,j}\right)_{l,j\in\N}$,
we find that 
\[
\left(u_{k}\right)_{k\in\N}:=\left\{ \sum_{l,j=1}^{N}\lambda_{l,j}c_{l,j}:\;\lambda_{l,j}\in\Q,\,N\in\N\right\} 
\]
satisfies all demanded properties.

The bound $\norm{u_{k,\omega}}_{W^{1,\infty}(\Rd)}\leq\norm{u_{k}}_{W^{1,\infty}(\Omega)}$
and continuous differentiability of realizations are a direct consequence
of the construction of $u_{k}$.
\end{proof}

\subsection{Traces and Extensions}

For the remainder of this section, we make the following assumption.
\begin{assumption}
\label{assu:General-Assump-Tr-Ext}Under the Assumption \ref{assu:Omega-mu-tau-fundmental}
let $\bP(\omega)$ be a random open set with boundary $\Gamma(\omega):=\partial\bP(\omega)$
such that $\Gamma(\omega)$ is a random closed set. The corresponding
prototypes $\bP,\Gamma\subset\Omega$ in the sense of Theorem \ref{thm:Main-THM-Sto-Geo}
have Palm measures $\chi_{\bP}\P$ and $\mugammapalm$ respectively.
\end{assumption}

We then introduce the following function spaces.
\begin{defn}
\label{def:W1p-bP}Under the Assumption \ref{assu:General-Assump-Tr-Ext}
we introduce for $1\leq p\leq\infty$ the space
\begin{align*}
W^{1,p}(\bP) & :=\left\{ u\in L^{p}(\bP;\P):\;\text{for a.e. }\omega\text{ holds }u_{\omega}\in W_{\loc}^{1,p}(\bP(\omega))\text{ and }\right.\\
 & \phantom{:)=}\left.\qquad\text{there exists }\rmD u\in L^{p}(\bP)^{d}\text{ s.t. for a.e. }\omega:\;\nabla u_{\omega}=\left(\rmD u\right)_{\omega}\right\} \,,\\
\norm u_{W^{1,p}(\bP)} & :=\norm u_{L^{p}(\bP)}+\norm{\rmD u}_{L^{p}(\bP)}\,.
\end{align*}
\end{defn}

Based on Definition \ref{def:W1p-bP}, we also introduce the following
properties of $\bP$ and $\Gamma$.
\begin{defn}
We say for the corresponding prototypes $\bP,\Gamma\subset\Omega$
in the sense of Theorem \ref{thm:Main-THM-Sto-Geo} that
\begin{enumerate}
\item $\bP$ has the weak $(r,p)$-extension property for $1\leq r\leq p$
if  Assumption \ref{assu:Omega-mu-tau-fundmental} holds and there
exists a continuous linear operator $\cU_{\Omega}:W^{1,p}(\bP)\to W^{1,r}(\Omega)$
such that $\of{\cU_{\Omega}u}|_{\bP}=u$. In this context, we define
\begin{align*}
W^{1,r,p}(\Omega,\bP) & :=\left\{ u\in W^{1,r}(\Omega):\;u|_{\bP}\in L^{p}(\bP),\,\rmD_{\omega}u\in L^{p}(\bP;\Rd)\right\} \,,\\
\cV_{\pot}^{p}(\bP) & :=\mathrm{closure}_{L^{p}}\left\{ \rmD u\,|\;u\in W^{1,p}(\bP)\right\} \,,\\
\cV_{\pot}^{r,p}(\bP) & :=\left\{ \rmD u\in\cV_{\pot}^{r}(\Omega)\,|\;\rmD u\in\cV_{\pot}^{p}(\bP)\right\} \,.
\end{align*}
\item $\bP$ has the strong $(r,p)$-extension property for $1\leq r\leq p$
if  Assumption \ref{assu:Omega-mu-tau-fundmental} holds and there
exists a continuous linear operator $\cU_{\Omega}:W^{1,p}(\bP)\to W^{1,r}(\Omega)$
such that $\of{\cU_{\Omega}u}|_{\bP}=u$ and such that 
\[
\norm{\rmD_{\omega}\cU_{\Omega}u}_{L^{r}(\Omega)}\leq C\norm{\rmD_{\omega}u}_{L^{p}(\Omega)}\,.
\]
\item $\Gamma$ has the strong $(r,p)$-trace property for $1\leq r\leq p$
if  Assumption \ref{assu:Omega-mu-tau-fundmental-strong} holds and
there exists a continuous linear operator $\cT_{\Omega}:W^{1,1,p}(\Omega)\to L^{r}(\Gamma;\mugammapalm)$
such that for every $u\in C_{b}(\Omega)$ it holds $\cT_{\Omega}u=u|_{\Gamma}$
in the sense of $\mugammapalm$.
\end{enumerate}
\end{defn}

We already mention at this point a very important property which holds
for $\bP=\Omega$, but which we are not able to reproduce for general
$\bP$ in this work. Hence we formulate it as a conjecture, and will
avoid to use it in the remainder of this work. Fortunately, it turns
out to be non-essential up to uniqueness properties of the homogenized
problem in Section \ref{sec:Homogenization-of-p-Laplace}.
\begin{conjecture}
\label{conj:decomposition}If $\bP$ has the strong extension property
it holds 
\[
\Rd\cap\cV_{\pot}^{p}(\bP)=\emptyset\,.
\]
\end{conjecture}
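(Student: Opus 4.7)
The conjecture claims that any constant vector $\xi \in \Rd$, viewed as the deterministic element $\omega \mapsto \xi$ of $L^p(\bP;\Rd)$, lies in $\cV_{\pot}^p(\bP)$ only if $\xi = 0$. My plan is to transport such a $\xi$ along the strong extension into $\cV_{\pot}^r(\Omega)$, and then invoke the classical sublinearity of mean-zero stochastic potentials to force $\xi = 0$.

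Suppose $\xi \in \Rd \cap \cV_{\pot}^p(\bP)$ and pick $u_n \in W^{1,p}(\bP)$ with $\rmD u_n \to \xi$ in $L^p(\bP;\Rd)$. Setting $v_n := \cU_\Omega u_n \in W^{1,r}(\Omega)$, the strong-extension estimate $\norm{\rmD v_n}_{L^r(\Omega)} \le C\norm{\rmD u_n}_{L^p(\bP)}$ yields a sequence bounded in $L^r(\Omega;\Rd)$. A subsequence satisfies $\rmD v_n \weakto w$ weakly in $L^r(\Omega;\Rd)$; since $\rmD v_n|_\bP = \rmD u_n$ converges strongly to $\xi$ on $\bP$, we read off $w|_\bP = \xi$, and skew-adjointness of $\rmD_i$ gives $\E(\rmD v_n) = 0$, hence $\E(w) = 0$. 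Mazur's theorem together with (\ref{eq:sto-Lp-pot-alternative-def}) identifies $w$ as an element of the norm-closed set $\cV_{\pot}^r(\Omega)$.

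For $\P$-a.e.\ $\omega$ write $w_\omega = \nabla\phi_\omega$ with $\phi_\omega \in W^{1,r}_\loc(\Rd)$ normalized by $\phi_\omega(0) = 0$. The rescaled functions $\phi^R_\omega(x) := R^{-1}\phi_\omega(Rx)$ have $\nabla\phi^R_\omega(x) = w_\omega(Rx)$, which, by the general Lebesgue ergodic theorem \ref{thm:general-lebesgue-ergodic-thm} applied to the mean-zero field $w$, converges weakly to $\E(w) = 0$ in $L^r(B_1;\Rd)$; a Poincar\'e--Wirtinger bound on $B_1$ then yields $\phi^R_\omega \weakto 0$ in $W^{1,r}(B_1)$ and, by Rellich compactness, $\phi^R_\omega \to 0$ strongly in $L^r(B_1)$. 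On the rescaled set $R^{-1}\bP(\omega) \cap B_1$, whose Lebesgue density approaches $\P(\bP) > 0$ by stationarity and ergodicity, one simultaneously has $\phi^R_\omega(x) = \xi \cdot x + R^{-1} c_{R,x}$, where $c_{R,x}$ is constant on each connected component of $\bP(\omega)$ through $Rx$. Comparing the strong $L^r_\loc$-limit $0$ with the explicit linear profile $\xi \cdot x + R^{-1}c_{R,x}$ on a set of positive density, together with the normalization at the origin, forces $\xi = 0$.

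The main obstacle sits precisely in the last step: it implicitly needs that the per-component constants $c_{R,x}$ can be chosen in a stationary, globally sublinear way across the many connected components of $\bP(\omega)$. Without such a control, the $R^{-1}c_{R,x}$ terms could conspire to cancel $\xi \cdot x$ without forcing $\xi$ itself to vanish; removing this obstruction would require either an a.s.\ connectedness hypothesis on $\bP$ (so that $c_{R,x}$ reduces to a single $c_R$ with $c_R/R \to 0$ by the normalization), or the quantitative connectivity developed in Section \ref{sec:Construction-of-Macroscopic-1}, which converts per-component constants into controlled fluctuations via admissible paths and the discrete maximum principle of Lemma \ref{lem:existence-cLu-equiv-delta-x}. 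This is precisely the difficulty that prevents promoting the conjecture to a theorem in the present work.
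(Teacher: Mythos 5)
The paper labels this statement a \emph{Conjecture} and does not prove it; it is invoked only conditionally, e.g.\ to recover uniqueness of minimizers in the remark after Theorem~\ref{thm:Final-homogenization-Theorem}. There is therefore no paper proof to compare your attempt against, and you yourself have, correctly, declined to claim a complete proof.

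The route you propose is the natural one: push the would-be constant $\xi\in\cV_{\pot}^{p}(\bP)$ through $\cU_\Omega$, extract via Mazur and (\ref{eq:sto-Lp-pot-alternative-def}) a mean-zero $w\in\cV_{\pot}^{r}(\Omega)$ with $w|_{\bP}=\xi$, build potentials $\phi_\omega$ for $w_\omega$, rescale to $\phi_\omega^R:=R^{-1}\phi_\omega(R\,\cdot\,)$, and use Theorem~\ref{thm:general-lebesgue-ergodic-thm} together with Poincar\'e--Wirtinger and Rellich to conclude $\phi_\omega^R\to 0$ strongly in $L^r_{\loc}$. The small slip that pointwise normalization $\phi_\omega(0)=0$ is not meaningful for $r\le d$ is easily repaired by centering with $\fint_{\Ball 10}\phi_\omega=0$ instead. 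The genuine gap is precisely where you flag it: on the rescaled set $R^{-1}\bP(\omega)\cap\Ball 10$ one has $\phi_\omega^R(x)=\xi\cdot x+R^{-1}c_{R,x}$ with $c_{R,x}$ constant on each connected component of $\bP(\omega)$, and when $\bP(\omega)$ has many components nothing in the argument forces these constants to be $o(R)$. Indeed, a choice $c_i\approx-\xi\cdot y_i$ on the component near $y_i$ cancels the linear profile exactly while leaving $\xi\neq 0$, and the strong extension property as stated does not obviously forbid such a $w$ (it only controls the $L^r(\Omega)$-size of the extended gradient, not the per-component potential offsets). Closing the gap would require either a.s.\ connectedness of $\bP(\omega)$ --- in which case all the $c_{R,x}$ collapse to a single $R$-independent constant and the argument terminates --- or a quantitative bound on the fluctuations of per-component constants of the type the admissible-path machinery of Section~\ref{sec:Construction-of-Macroscopic-1} is designed to deliver. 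Your diagnosis of the obstruction is correct and consistent with the paper's decision to leave the statement as an open conjecture.
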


\begin{thm}
\label{thm:weak-trace}Let Assumptions \ref{assu:Omega-mu-tau-fundmental-strong}
and \ref{assu:Trace} hold for the random open set $\bP(\omega)$
with $1\leq r<p$ and let $\tau$ be ergodic. Then $\Gamma$ has the
strong $(r,p)$-trace property.
\end{thm}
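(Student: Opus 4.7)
The plan is to define $\cT_\Omega$ first on the dense subset $C_b^1(\Omega)$ of Theorem~\ref{thm:dense-functions-Omega} by setting $\cT_\Omega u := u|_\Gamma$ (which makes sense pointwise because $u$ is continuous on $\Omega$ and $\Gamma\subset\Omega$), to prove a continuity estimate of the form
\[
\norm{\cT_\Omega u}_{L^r(\Gamma;\mugammapalm)}\leq C\norm u_{W^{1,1,p}(\Omega,\bP)}
\]
for all $u\in C_b^1(\Omega)$, and then to extend $\cT_\Omega$ by density. The heart of the argument is to transfer the quenched, spatial trace bound of Theorem~\ref{thm:uniform-trace-estimate-1} (equivalently Theorem~\ref{thm:Main-Thm-1}) to the probability space via the Campbell formula and the ergodic theorem.

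First I would fix a bounded Lipschitz domain $\bQ\subset\Rd$ with $\Ball10\subset\bQ$ and apply Theorem~\ref{thm:uniform-trace-estimate-1} on $n\bQ$ to the realization $u_\omega$ of $u\in C_b^1(\Omega)$. This gives, for a.e.~$\omega$ and every $n\in\N$,
\[
\frac{1}{\left|n\bQ\right|}\int_{n\bQ\cap\partial\bP(\omega)}\left|u_\omega\right|^{r}\leq C_n(\omega)\left(\frac{1}{\left|n\bQ\right|}\int_{\Ball{\fr/4}{n\bQ}\cap\bP(\omega)}\left|u_\omega\right|^{p}+\left|\nabla u_\omega\right|^{p}\right)^{r/p},
\]
where $C_n(\omega)$ is the explicit constant in~(\ref{eq:lem:uniform-trace-estimate-1-1}), built out of spatial averages of $\eta^{-1/(p_0-r)}$ over $n\bQ\cap\partial\bP(\omega)$ and of $\tilde M_{[\frac18\eta],\Rd}^{(1/p_0+1)p/(p-p_0)}$ over $n\bQ\cap\bP(\omega)$.

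Next I would let $n\to\infty$. Since $\tau$ is ergodic and Assumption~\ref{assu:Trace} ensures that all integrands are in the appropriate $L^1$, Theorems~\ref{thm:Ergodic-Theorem-ran-meas-2} and~\ref{thm:general-lebesgue-ergodic-thm} (applied once to $\mu_{\omega,\Gamma}$ with Palm measure $\mugammapalm$, and once to the Lebesgue measure restricted to $\bP(\omega)$) give, for $\omega$ in a set of full probability,
\[
\frac{1}{\left|n\bQ\right|}\int_{n\bQ\cap\partial\bP(\omega)}\left|u_\omega\right|^{r}\;\longrightarrow\;\int_{\Gamma}\left|u\right|^{r}\d\mugammapalm,
\]
\[
\frac{1}{\left|n\bQ\right|}\int_{\Ball{\fr/4}{n\bQ}\cap\bP(\omega)}\left(\left|u_\omega\right|^{p}+\left|\nabla u_\omega\right|^{p}\right)\;\longrightarrow\;\int_{\bP}\left|u\right|^{p}+\left|\rmD_\omega u\right|^{p}\d\P,
\]
while $C_n(\omega)$ converges to a deterministic constant $C$ that is precisely the product of the finite quantities controlled by Assumption~\ref{assu:Trace}. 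Choosing $\omega$ in the intersection of the three full-measure sets (where the three limits exist simultaneously) and passing to the limit yields the desired estimate on $C_b^1$.

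Finally I would extend $\cT_\Omega$ by continuity. By Theorem~\ref{thm:dense-functions-Omega} (the part using Assumption~\ref{assu:Omega-mu-tau-fundmental-strong}) the space $C_b^1(\Omega)$ is dense in $W^{1,p}(\Omega)$, and a standard truncation/mollification argument based on $\sI_\delta$ and Stampacchia's lemma extends the density to $W^{1,1,p}(\Omega,\bP)$ in the corresponding graph norm. The uniform estimate then allows $\cT_\Omega$ to be extended as a bounded operator into $L^r(\Gamma;\mugammapalm)$, with $\cT_\Omega u=u|_\Gamma$ for $u\in C_b(\Omega)$ by construction. The main obstacle is organizing the three simultaneous ergodic limits so that the exceptional null set does not depend on the particular $u\in C_b^1(\Omega)$; this is handled by working first with the countable dense family from Theorem~\ref{thm:dense-functions-Omega}, intersecting the countably many null sets, and then extending back to all of $C_b^1$ by the $W^{1,\infty}$--bound on representatives.
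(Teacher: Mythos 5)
Your proof is correct, but it takes a genuinely different route from the paper's. The paper proceeds \emph{annealed}: it immediately rewrites $\int_{\Gamma}|u|^{r}\,\d\mugammapalm$ as $\frac{1}{|\bQ_{n}|}\E\int_{\bQ_{n}\cap\partial\bP(\omega)}|\cT u_{\omega}|^{r}$ via the Campbell formula, inserts the spatial trace estimate of Theorem~\ref{thm:uniform-trace-estimate-1} (with the $\omega$-dependent constant $C_{\omega}$ of Theorem~\ref{thm:Main-Thm-1}, which is already uniform in $n$) \emph{inside} the expectation, applies H\"older in $\omega$ with exponents $p/(p-r)$ and $p/r$ to separate the constant, and finally lets $n\to\infty$ in the expectations. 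You, by contrast, proceed \emph{quenched}: you fix $\omega$ in a full-measure set, apply the spatial trace bound with the $n$-dependent constant $C_{n}(\omega)$, and then invoke the ergodic theorems (Theorems~\ref{thm:Ergodic-Theorem-ran-meas-2} and~\ref{thm:general-lebesgue-ergodic-thm}) three times simultaneously — once for the $\mu_{\Gamma}$-integral, once for the bulk integral, and once for each spatial average that enters $C_{n}(\omega)$ — and observe that the pointwise limit gives an $\omega$-independent inequality. Both arguments work. The paper's version side-steps any concern about intersecting exceptional null sets across test functions by integrating out $\omega$ immediately, at the price of the H\"older split $\E(C_{\omega}^{p/(p-r)})$. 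Your version avoids that H\"older step entirely (the constant emerges directly as a deterministic ergodic limit), but you must manage the countably many null sets explicitly — which you do correctly by running the argument over the countable dense family of Theorem~\ref{thm:dense-functions-Omega} first. One small simplification you overlooked: the density of $W^{1,\infty}(\Omega)\cap C_{b}^{1}(\Omega)$ in $W^{1,1,p}(\Omega,\bP)$ is supplied directly by Lemma~\ref{lem:dense-family-w1rp}, so the ad-hoc truncation/mollification remarks at the end are unnecessary. You should also note (as the paper implicitly does) that the ergodic theorem for $\mu_{\Gamma}$ requires $\mugammapalm(\Omega)<\infty$, which follows from the boundedness of $\eta$ together with the first condition in Assumption~\ref{assu:Trace}.
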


In order to prove Theorem \ref{thm:weak-trace} we first need the
following lemma.
\begin{lem}
\label{lem:dense-family-w1rp}Let Assumption \ref{assu:Omega-mu-tau-fundmental}
hold and let $1\leq r<p$, then there exists a family $\left(u_{k}\right)_{k\in\N}\subset W^{1,\infty}(\Omega)$
which is dense in $W^{1,r,p}(\Omega,\bP)$. If Assumption \ref{assu:Omega-mu-tau-fundmental-strong}
holds then we can additionally assume $\left(u_{k}\right)_{k\in\N}\subset W^{1,\infty}(\Omega)\cap C_{b}^{1}(\Omega)$.
In both cases $\left(u_{k}\right)_{k\in\N}$ is stable under addition
and scalar multiplication with $q\in\Q$. Furthermore, every $u_{k}$
has almost surely bounded and continuously differentiable realizations
with $\norm{u_{k,\omega}}_{W^{1,\infty}(\Rd)}\leq\norm{u_{k}}_{W^{1,\infty}(\Omega)}$
\end{lem}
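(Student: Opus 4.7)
The plan is to mirror the proof of Theorem \ref{thm:dense-functions-Omega}, but with the richer norm
\[
\norm{u}_{W^{1,r,p}(\Omega,\bP)}=\norm{u}_{W^{1,r}(\Omega)}+\norm{u|_{\bP}}_{L^{p}(\bP)}+\norm{\rmD u|_{\bP}}_{L^{p}(\bP;\Rd)},
\]
and to handle the extra $L^{p}(\bP)$ terms using stationarity and a careful combination of Stampaccia truncation with mollification. First, $W^{1,r,p}(\Omega,\bP)$ embeds isometrically into the separable product space $W^{1,r}(\Omega)\times L^{p}(\bP)\times L^{p}(\bP;\Rd)$ (separability of each factor by Lemma \ref{lem:Lp-dense} and Assumption \ref{assu:separable}), hence it is itself separable. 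Therefore it suffices to show that $W^{1,\infty}(\Omega)$ is dense in $W^{1,r,p}(\Omega,\bP)$ and then close off the resulting countable family under rational linear combinations, exactly as in Theorem \ref{thm:dense-functions-Omega}.

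For the density, I would first truncate. Given $u\in W^{1,r,p}(\Omega,\bP)$ and $n\in\N$, set $u_{n}:=\max\{-n,\min\{n,u\}\}$. Lemma \ref{lem:Stampaccia} applied with $p=r$ yields $u_{n}\in W^{1,r}(\Omega)\cap L^{\infty}(\Omega)$ with $\norm{u_{n}}_{\infty}\leq n$; in fact the proof of that lemma (together with the identity $\rmD u_{n}=\chi_{\{|u|<n\}}\rmD u$ obtained by passing to realizations and applying the classical Stampaccia theorem on $\Rd$) gives the pointwise bound $|\rmD u_{n}|\leq|\rmD u|$. Dominated convergence then shows $u_{n}\to u$ in $W^{1,r,p}(\Omega,\bP)$ as $n\to\infty$: the function-value pieces from $|u_{n}|\leq|u|$ and $u\in L^{r}(\Omega)\cap L^{p}(\bP)$, the gradient pieces from $|\rmD u_{n}|\leq|\rmD u|$ with $\rmD u\in L^{r}(\Omega;\Rd)$ and $\rmD u|_{\bP}\in L^{p}(\bP;\Rd)$, together with pointwise convergence $\rmD u_{n}\to\rmD u$ a.e.

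For each fixed $u_{n}$ I then mollify: by Lemma \ref{lem:Omega-convolution} the function $\sI_{\delta}u_{n}$ lies in $W^{1,\infty}(\Omega)$ because $\norm{\sI_{\delta}u_{n}}_{\infty}\leq\norm{u_{n}}_{\infty}\leq n$ and $\norm{\rmD_{i}\sI_{\delta}u_{n}}_{\infty}=\norm{(\partial_{i}\eta_{\delta})\ast u_{n}}_{\infty}\leq n\norm{\partial_{i}\eta_{\delta}}_{L^{1}(\Rd)}$. For convergence $\sI_{\delta}u_{n}\to u_{n}$ in $W^{1,r,p}(\Omega,\bP)$ as $\delta\to 0$, Lemma \ref{lem:Properties-sI-delta} directly handles the $W^{1,r}(\Omega)$ part and (because $u_{n}$ is bounded and hence in $L^{p}(\Omega)$) also the $L^{p}(\Omega)\supset L^{p}(\bP)$ convergence of the values. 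The hard part will be the gradient convergence in $L^{p}(\bP)$, since $\rmD u_{n}$ is only guaranteed to lie in $L^{r}(\Omega)\cap L^{p}(\bP)$ and not in $L^{p}(\Omega)$. My plan there is to combine the pointwise subsequential convergence $\sI_{\delta_{k}}\rmD u_{n}\to\rmD u_{n}$ a.e.\ (inherited from the $L^{r}(\Omega)$ convergence) with a uniform integrability bound obtained via Jensen's inequality and selfadjointness of $\sI_{\delta}$: for any $\delta>0$
\[
\int_{\bP}|\sI_{\delta}\rmD u_{n}|^{p}\,d\P\leq\int\sI_{\delta}\of{\chi_{\bP}}\,|\rmD u_{n}|^{p}\,d\P,
\]
and then conclude by Vitali's convergence theorem after checking that $\sI_{\delta}(\chi_{\bP})\to\chi_{\bP}$ in $L^{1}(\Omega)$ together with the domination $|\rmD u_{n}|\leq|\rmD u|$ to preserve the integrability on the slightly enlarged support.

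Finally, starting from a countable set $(v_{k})_{k\in\N}$ that is dense in $W^{1,r,p}(\Omega,\bP)$ (which exists by separability), the truncation--mollification construction above produces a countable family $(v_{k,n,\delta})\subset W^{1,\infty}(\Omega)$, and taking rational linear combinations yields $(u_{k})_{k\in\N}$ with the required closure properties. The pointwise bound $\norm{u_{k,\omega}}_{W^{1,\infty}(\Rd)}\leq\norm{u_{k}}_{W^{1,\infty}(\Omega)}$ follows verbatim from the argument in Theorem \ref{thm:dense-functions-Omega}, since each $u_{k}$ is a finite rational combination of mollifications of bounded functions and so almost every realization is bounded and continuously differentiable. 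Under the stronger Assumption \ref{assu:Omega-mu-tau-fundmental-strong}, I would additionally invoke the $C_{b}(\Omega)$-density in $L^{p}(\Omega)$ from Lemma \ref{lem:Lp-dense} to start with continuous approximants $c_{l}\in C_{b}(\Omega)$; then $\eta_{1/j}\ast c_{l}\in C_{b}^{1}(\Omega)\cap W^{1,\infty}(\Omega)$ by Lemma \ref{lem:Omega-convolution}, and the same rational-combination construction produces a family inside $W^{1,\infty}(\Omega)\cap C_{b}^{1}(\Omega)$, exactly as in the corresponding part of Theorem \ref{thm:dense-functions-Omega}.
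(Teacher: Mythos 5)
Your overall strategy—truncation via Stampaccia followed by mollification, then rational linear combinations—is essentially the same route the paper takes (it simply invokes the proof of Theorem \ref{thm:dense-functions-Omega} together with the inclusions $W^{1,r}(\Omega)\supset W^{1,r,p}(\Omega,\bP)\supset W^{1,p}(\Omega)$). You are more careful than the paper in tracking the extra $L^p(\bP)$-parts of the norm, and you correctly flag the $L^p(\bP)$-convergence of the mollified gradient as the delicate point. However, your proposed fix does not close the gap.

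The inequality
\[
\int_{\bP}\left|\sI_{\delta}\rmD u_{n}\right|^{p}\,\d\P\;\leq\;\int_{\Omega}\left(\sI_{\delta}\chi_{\bP}\right)\left|\rmD u_{n}\right|^{p}\,\d\P
\]
is correct (Jensen plus selfadjointness of $\sI_{\delta}$), but the right-hand side need not be finite: $\sI_{\delta}\chi_{\bP}$ is supported on a $\delta$-fattening of $\bP$, while $\left|\rmD u_{n}\right|^{p}$ is only known to be integrable on $\bP$, not on any neighbourhood. The Vitali/uniform-integrability argument you sketch is therefore unavailable. This is a genuine obstruction, not merely a slack bound. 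Take $\Omega=\T$, $\bP=[0,1/2)$ and $u(\omega)=\max\{\omega-1/2,0\}^{\beta}$ (suitably smoothed near $\omega=1$) with $1-1/r<\beta<1-1/p$, a nonempty range since $r<p$. Then $u\in W^{1,r,p}(\Omega,\bP)$ with $\rmD u|_{\bP}=0$, but a direct computation gives $\norm{\sI_{\delta}\rmD u}_{L^{p}(\bP)}^{p}\sim\delta^{\,p\beta+1-p}\to\infty$ as $\delta\to0$, and the right-hand side $\int_{\Omega}\left(\sI_{\delta}\chi_{\bP}\right)\left|\rmD u\right|^{p}\,\d\P$ equals $+\infty$. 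Truncation does not help since $u$ is already bounded. So $\sI_{\delta}u\not\to u$ in $W^{1,r,p}(\Omega,\bP)$, and mollification alone cannot produce the required density.

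What is missing is a mechanism that tames $\rmD u$ on $\Omega\setminus\bP$ near $\partial\bP$ before mollifying. In the concrete example a cutoff $v_{s}(\omega):=\phi_{s}(\omega-1/2)\,u(\omega)$, with a ramp $\phi_{s}$ vanishing on $(-\infty,s)$ and equal to one on $(2s,\infty)$, is Lipschitz and converges to $u$ in $W^{1,r,p}(\Omega,\bP)$ as $s\to0$; abstractly one would need an analogous cutoff built from $\eta_{\delta}\ast\chi_{\bP}$ and its hulls $\bP_{\pm c\delta}$. This step is absent from your argument. To be fair, it is also absent from the paper's two-line proof, which deduces the lemma from the chain of inclusions without addressing the $L^{p}(\bP)$-convergence of the mollified gradient.
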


\begin{proof}
By Theorem \ref{thm:dense-functions-Omega} there exists $\left(u_{k}\right)_{k\in\N}\subset W^{1,\infty}(\Omega)$
which is at the same time dense in $W^{1,r}(\Omega)$ and $W^{1,p}(\Omega)$.
The statement now follows from $W^{1,r}(\Omega)\supset W^{1,r,p}(\Omega,\bP)\supset W^{1,p}(\Omega)$.
If Assumption \ref{assu:Omega-mu-tau-fundmental-strong} holds Theorem
\ref{thm:dense-functions-Omega} yields $\left(u_{k}\right)_{k\in\N}\subset W^{1,\infty}(\Omega)\cap C_{b}^{1}(\Omega)$.
\end{proof}

\begin{proof}[Proof of Theorem \ref{thm:weak-trace}]
 Let $\left(u_{k}\right)_{k\in\N}\subset W^{1,\infty}(\Omega)\cap C_{b}^{1}(\Omega)$
be dense in $W^{1,1,p}(\Omega,\bP)$ according to Theorem \ref{thm:dense-functions-Omega}.
For each $u\in\left(u_{k}\right)_{k\in\N}$ the function $u|_{\Gamma}$
is well defined. Writing $\bQ_{n}:=\left[-n,n\right]^{d}$ and using
Theorem \ref{thm:uniform-trace-estimate-1} as well as the Ergodic
Theorems we find
\begin{align*}
\int_{\Gamma}\left|u\right|^{r}\d\mugammapalm & =\frac{1}{(2n)^{d}}\int_{\bQ_{n}}\int_{\Gamma}\left|u\right|^{r}\d\mugammapalm=\frac{1}{\left|\bQ_{n}\right|}\E\int_{\bQ_{n}\cap\partial\bP(\omega)}\left|\cT u_{\omega}\right|^{r}\\
 & \leq\E\of{C_{\omega}\left(\frac{1}{\left|\bQ_{n}\right|}\int_{\bQ_{n+1}\cap\bP(\omega)}\left|u_{\omega}\right|^{p}+\left|\nabla u_{\omega}\right|^{p}\right)^{\frac{r}{p}}}\\
 & \leq\E\of{C_{\omega}^{\frac{p}{p-r}}}^{\frac{p-r}{p}}\,\E\of{\frac{1}{\left|\bQ_{n}\right|}\int_{\bQ_{n+1}\cap\bP(\omega)}\left|u_{\omega}\right|^{p}+\left|\nabla u_{\omega}\right|^{p}}^{\frac{r}{p}}\\
 & \to\E\of{C_{\omega}^{\frac{p}{p-r}}}^{\frac{p-r}{p}}\,\left(\int_{\Omega}\left|u\right|^{p}+\left|\nabla_{\omega}u\right|^{p}\d\P\right)^{\frac{r}{p}}
\end{align*}
as $n\to\infty$. Using the definition of $C_{\omega}$ in Theorem
\ref{thm:uniform-trace-estimate-1} we conclude.
\end{proof}
A generalization of Theorem \ref{thm:weak-trace} to the general case
of Assumption \ref{assu:Omega-mu-tau-fundmental} is difficult, since
the trace property does not apply for general $L^{\infty}$-functions,
even in $\Rd$. However, for the sake of homogenization, there exists
a workaround.
\begin{defn}
\label{def:weak-trace-property}We say for the corresponding prototypes
$\bP,\Gamma\subset\Omega$ in the sense of Theorem \ref{thm:Main-THM-Sto-Geo}
that $\Gamma$ has the weak $(r,p)$-trace property for $1\leq r\leq p$
if  Assumption \ref{assu:Omega-mu-tau-fundmental} holds and for
every family of functions $\left(u_{k}\right)_{k\in\N}\subset W^{1,\infty}(\Omega)$
according to Lemma \ref{lem:dense-family-w1rp} which is dense in
$W^{1,1,p}(\Omega,\bP)$ there exists a continuous linear operator
$\cT_{\Omega}:\,W^{1,1,p}(\Omega,\bP)\to L^{r}(\Gamma;\mugammapalm)$
such that for almost every $\omega\in\Omega$ and every $u_{k}$ it
holds $\left(\cT_{\Omega}u_{k}\right)_{\omega}=\cT u_{k,\omega}$
on $\Gamma(\omega)$.
\end{defn}

\begin{thm}
\label{thm:weak-trace-general}Let Assumption \ref{assu:Omega-mu-tau-fundmental}
hold, let $\tau$ be ergodic and let $\Gamma(\omega)$ be almost surely
locally $\left(\delta,M\right)$-regular satisfying Assumption \ref{assu:Trace}.
Then $\Gamma$ has the weak $(r,p)$-trace property.
\end{thm}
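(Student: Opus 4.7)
The plan is to follow essentially the same scheme as in the proof of Theorem~\ref{thm:weak-trace}, but to bypass the topological assumption on $\Omega$ by working exclusively with realizations. The starting point is a countable family $(u_k)_{k\in\N}\subset W^{1,\infty}(\Omega)$ that is dense in $W^{1,1,p}(\Omega,\bP)$, provided by Lemma~\ref{lem:dense-family-w1rp} (applied under the weaker Assumption~\ref{assu:Omega-mu-tau-fundmental}). Crucially, each $u_k$ has almost surely a Lipschitz continuous realization $u_{k,\omega}\in W^{1,\infty}(\Rd)$ with $\norm{u_{k,\omega}}_{W^{1,\infty}(\Rd)}\leq\norm{u_k}_{W^{1,\infty}(\Omega)}$, so the classical trace $\cT u_{k,\omega}$ on $\Gamma(\omega)$ is well defined pointwise.

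The first step is to fix once and for all, for each $u_k$, a pointwise representative $\bar u_k$ such that $\bar u_k(\tau_x\omega)=u_{k,\omega}(x)$ holds for $\P$-a.e.\ $\omega$ and \emph{every} $x\in\Rd$. Concretely one can set $\bar u_k(\omega):=\lim_{\delta\to 0}\sI_\delta u_k(\omega)$ wherever the limit exists, which by Lemma~\ref{lem:Properties-sI-delta} and Lebesgue differentiation on the Lipschitz realization is $\P$-a.e.\ and which automatically intertwines with the shift. Then $\bar u_k$ is defined $\mugammapalm$-a.e.\ on $\Gamma$ (using that $\mugammapalm$ is concentrated on the prototype $\Gamma\subset\Omega$ by Theorem~\ref{thm:Main-THM-Sto-Geo}), and we set $\cT_\Omega u_k:=\bar u_k|_\Gamma$. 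By construction, $(\cT_\Omega u_k)_\omega=\cT u_{k,\omega}$ on $\Gamma(\omega)$ for a.e.\ $\omega$, which is exactly the compatibility required by Definition~\ref{def:weak-trace-property}.

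The second step is the quantitative estimate. Writing $\bQ_n:=[-n,n]^d$ and using Campbell's formula together with Theorem~\ref{thm:uniform-trace-estimate-1}, one computes exactly as in the proof of Theorem~\ref{thm:weak-trace}:
\begin{align*}
\int_\Gamma\left|\cT_\Omega u_k\right|^r\d\mugammapalm
&=\frac{1}{|\bQ_n|}\E\int_{\bQ_n\cap\partial\bP(\omega)}\left|\cT u_{k,\omega}\right|^r\d\cH^{d-1}\\
&\leq\E\left(C_\omega\left(\frac{1}{|\bQ_n|}\int_{\bQ_{n+1}\cap\bP(\omega)}\left(|u_{k,\omega}|^p+|\nabla u_{k,\omega}|^p\right)\right)^{r/p}\right),
\end{align*}
and then H\"older's inequality separates the two factors. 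By the ergodic theorem (Theorem~\ref{thm:Ergodic-Theorem-ran-meas-2} applied to the integrand built from $\tilde M$, $\rho$, etc., appearing in Theorem~\ref{thm:uniform-trace-estimate-1}) the first factor converges to a deterministic constant that is finite precisely because of Assumption~\ref{assu:Trace}; the second factor converges to $\norm{u_k}_{W^{1,1,p}(\Omega,\bP)}^r$ by Theorem~\ref{thm:general-lebesgue-ergodic-thm}. Passing to the limit $n\to\infty$ therefore yields a uniform bound $\norm{\cT_\Omega u_k}_{L^r(\Gamma,\mugammapalm)}\leq C\norm{u_k}_{W^{1,1,p}(\Omega,\bP)}$ with $C$ independent of $k$.

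The third step extends $\cT_\Omega$ by continuity and linearity from the dense $\Q$-linear span of $(u_k)$ to all of $W^{1,1,p}(\Omega,\bP)$. The main obstacle is really the first step: without the separable metric structure of Assumption~\ref{assu:Omega-mu-tau-fundmental-strong} we have no intrinsic notion of continuity on $\Omega$, so we cannot define $\cT_\Omega u_k$ as a restriction of a continuous function on $\Omega$. Instead we must commit to the specific mollified representative $\bar u_k$ and verify that it is $\mugammapalm$-measurable and that the intertwining $\bar u_k(\tau_x\omega)=u_{k,\omega}(x)$ holds \emph{everywhere} in $x$ for a fixed full-$\P$-measure set of $\omega$; this is what ties the a.e.-defined object $u_k\in L^\infty(\Omega)$ to the classical pointwise trace on $\Gamma(\omega)$ and makes the weak trace property of Definition~\ref{def:weak-trace-property} meaningful under the weaker assumption.
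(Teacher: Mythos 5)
Your proposal follows essentially the same route as the paper's proof: define $\cT_\Omega u_k$ pointwise via the a.s.\ Lipschitz realization $u_{k,\omega}$, control the norm via the Campbell/Palm formula, Theorem~\ref{thm:uniform-trace-estimate-1} and the ergodic theorem exactly as in Theorem~\ref{thm:weak-trace}, and extend by density. The only variation is in certifying $\mugammapalm$-measurability: you pin down a pointwise representative via the mollifiers $\sI_\delta u_k$, whereas the paper writes $\cT_\Omega u_k=\inf_\delta\chi_{\Gamma_\delta}u_k$ using the geometric fattening $\Gamma_\delta(\omega)=\Ball{\delta}{\Gamma(\omega)}$ and the RACS structure from Theorem~\ref{thm:Main-THM-Sto-Geo}; both devices serve the same purpose (one small point worth adding to your argument is that the $\P$-null set where the mollified limit fails is $\tau$-invariant, which is what makes it $\mugammapalm$-null).
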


\begin{proof}
We define $\cT_{\Omega}u_{k}$ pointwise in $\omega$ through $\left(\cT_{\Omega}u_{k}\right)_{\omega}=\cT u_{k,\omega}$
and observe that $\cT_{\Omega}$ is bounded by the argument in the
proof of Theorem \ref{thm:weak-trace}. It thus remains to show that
$\cT_{\Omega}u_{k}$ is measurable, because then, we can simply extend
$\cT_{\Omega}$ to $W^{1,1,p}(\Omega,\bP)$.

We use Lemma \ref{lem:contin-geom-Ops} and obtain that $\Gamma_{\delta}(\omega):=\Ball{\delta}{\Gamma(\omega)}$
is a RACS with prototype $\Gamma_{\delta}$ due to Theorem \ref{thm:Main-THM-Sto-Geo}.
We observe that $\Gamma=\bigcap_{\delta}\Gamma_{\delta}$ as well
as (by definition) $\cT_{\Omega}u_{k}=\inf_{\delta}\chi_{\Gamma_{\delta}}u_{k}$,
hence $\cT_{\Omega}u_{k}$ is measurable.
\end{proof}
We will now turn our focus to the extension properties. We start with
an important implication by the strong extension property.
\begin{thm}
Let Assumption \ref{assu:Omega-mu-tau-fundmental} hold, let $\tau$
be ergodic and let $\bP$ have the strong $(r,p)$-extension property.
Then the operator $\cU_{\Omega}$ can be extended to a continuous
operator $\cU_{\Omega}:\,\cV_{\pot}^{p}(\bP)\to\cV_{\pot}^{r,p}(\Omega,\bP)$.
More precisely we can identify $\cV_{\pot}^{p}(\bP)$ with 
\begin{align*}
\tilde{\cV}_{\pot}^{p}(\bP) & =\mathrm{closure}_{L^{r,p}(\Omega,\P)}\left\{ \cU_{\Omega}\rmD_{\omega}u:\,u\in W^{1,p}(\Omega)\right\} \,,\\
 & =\mathrm{closure}_{L^{r,p}(\Omega,\P)}\left\{ \cU_{\Omega}\rmD_{\omega}u:\,u\in W^{1,r,p}(\Omega;\bP)\right\} \,,\\
\norm{\xi}_{L^{r,p}(\Omega,\P)} & =\norm{\xi}_{L^{r}(\Omega)}+\norm{\xi}_{L^{p}(\bP)}\,.
\end{align*}
This means that for $\phi\in\cV_{\pot}^{p}(\bP)$ and $\tilde{\phi}\in\tilde{\cV}_{\pot}^{p}(\bP)$
it holds $\tilde{\phi}|_{\bP}=\phi$ iff $\tilde{\phi}=\cU_{\Omega}\phi$.
\end{thm}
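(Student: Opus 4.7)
The plan is to define the extension on the dense subset $\{\rmD u : u \in W^{1,p}(\bP)\} \subset \cV_{\pot}^{p}(\bP)$ by the natural formula $\cU_{\Omega}(\rmD u) := \rmD_{\omega}(\cU_{\Omega}u)$ and then extend it by density/continuity to the full closure. The strong $(r,p)$-extension property is what makes both the well-definedness and the continuity immediate: if $u_{1},u_{2}\in W^{1,p}(\bP)$ satisfy $\rmD u_{1}=\rmD u_{2}$, then
\[
\norm{\rmD_{\omega}\of{\cU_{\Omega}u_{1}-\cU_{\Omega}u_{2}}}_{L^{r}(\Omega)}\leq C\norm{\rmD\of{u_{1}-u_{2}}}_{L^{p}(\bP)}=0\,,
\]
so $\rmD_{\omega}\cU_{\Omega}u_{1}=\rmD_{\omega}\cU_{\Omega}u_{2}$ and the map is well-defined on the subspace of gradients. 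The same inequality gives the operator norm bound, so a standard density argument extends it to a continuous linear operator $\cU_{\Omega}:\cV_{\pot}^{p}(\bP)\to L^{r}(\Omega;\Rd)$. Because $\cV_{\pot}^{r}(\Omega)$ is closed in $L^{r}(\Omega;\Rd)$ by Theorem \ref{thm:Lp-W1p-structure} and contains every $\rmD_{\omega}\cU_{\Omega}u$, the image lies in $\cV_{\pot}^{r}(\Omega)$.

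Next I would verify that the extension lands in $\cV_{\pot}^{r,p}(\Omega,\bP)$ and that the stated bijection holds. For $u\in W^{1,p}(\bP)$ the identity $(\cU_{\Omega}u)|_{\bP}=u$ gives $\rmD_{\omega}(\cU_{\Omega}u)|_{\bP}=\rmD u$, and this ``restriction equals the source'' identity is preserved in the limit because restriction from $L^{r}(\Omega;\Rd)$ to $L^{r}(\bP;\Rd)$ is continuous while $\cV_{\pot}^{p}(\bP)\hookrightarrow L^{p}(\bP)\hookrightarrow L^{r}(\bP)$. Hence for every $\phi\in\cV_{\pot}^{p}(\bP)$ one has $(\cU_{\Omega}\phi)|_{\bP}=\phi$, which simultaneously shows (i) $\cU_{\Omega}\phi\in\cV_{\pot}^{r,p}(\Omega,\bP)$ (its restriction to $\bP$ is in $\cV_{\pot}^{p}(\bP)$), and (ii) the map $\phi\mapsto\cU_{\Omega}\phi$ is injective, with inverse given by the restriction map. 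This proves the characterization $\tilde{\phi}|_{\bP}=\phi\Leftrightarrow\tilde{\phi}=\cU_{\Omega}\phi$.

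For the two equal descriptions of $\tilde{\cV}_{\pot}^{p}(\bP)$, the inclusion of $\{\cU_{\Omega}\rmD_{\omega}u:u\in W^{1,p}(\Omega)\}$ into $\{\cU_{\Omega}\rmD_{\omega}u:u\in W^{1,r,p}(\Omega,\bP)\}$ is automatic from $W^{1,p}(\Omega)\subset W^{1,r,p}(\Omega,\bP)$. For the reverse inclusion at the level of closures, I would use Lemma \ref{lem:dense-family-w1rp} to approximate any $u\in W^{1,r,p}(\Omega,\bP)$ by $u_{k}\in W^{1,\infty}(\Omega)\subset W^{1,p}(\Omega)$ simultaneously in both the $W^{1,r}(\Omega)$ and $W^{1,p}(\bP)$ senses, so that $\rmD_{\omega}u_{k}\to\rmD_{\omega}u$ in $L^{r}(\Omega)$ and $\rmD u_{k}|_{\bP}\to\rmD u|_{\bP}$ in $L^{p}(\bP)$; by continuity of $\cU_{\Omega}$ and of restriction, $\cU_{\Omega}\rmD u_{k}\to\cU_{\Omega}\rmD u$ in the combined $L^{r,p}$-norm. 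Finally, the coincidence with the image $\cU_{\Omega}(\cV_{\pot}^{p}(\bP))$ follows because the gradients of $W^{1,p}(\bP)$ (and hence of $W^{1,p}(\Omega)$ by restriction, which is norm-dense among the relevant objects in $L^{p}(\bP)$) are dense in $\cV_{\pot}^{p}(\bP)$ by definition, and $\cU_{\Omega}$ is continuous.

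The main obstacle I anticipate is the passage from $W^{1,p}(\Omega)$ to $W^{1,r,p}(\Omega,\bP)$ in the density argument for the two closure descriptions, because functions in $W^{1,r,p}(\Omega,\bP)$ need not lie in $W^{1,p}(\Omega)$ (they are only $L^{r}$-regular on $\Omega\setminus\bP$). The right tool is Lemma \ref{lem:dense-family-w1rp}, which produces a countable family in $W^{1,\infty}(\Omega)$ simultaneously dense in $W^{1,r}(\Omega)$ and $W^{1,p}(\Omega)$, hence dense in $W^{1,r,p}(\Omega,\bP)$; combined with the bounded operator $\cU_{\Omega}$ and the closedness of $\cV_{\pot}^{r}(\Omega)$, this closes the argument without requiring a quantitative extension estimate in the opposite direction.
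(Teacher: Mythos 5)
Your proposal is correct and follows essentially the same route as the paper: define the extension on gradients via $\cU_{\Omega}\rmD u := \rmD_{\omega}\cU_{\Omega}u$, use the strong $(r,p)$-extension bound for well-definedness and continuity, invoke closedness of $\cV_{\pot}^{r}(\Omega)$ from Theorem \ref{thm:Lp-W1p-structure}, use the restriction identity $(\cU_{\Omega}\phi)|_{\bP}=\phi$ for the injectivity/characterization, and apply Lemma \ref{lem:dense-family-w1rp} to reconcile the two closure descriptions. You fill in the details that the paper's terse proof leaves implicit (in particular, why restrictions from $W^{1,p}(\Omega)$ suffice in the first closure, which requires combining the extension with the simultaneous density), and you correctly identify this as the one genuine obstacle.
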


\begin{proof}
The first part follows immediately from the definition of the spaces
and of the strong extension property.

For the second part, remark that $\cU_{\Omega}W^{1,p}(\bP)\subset W^{1,r,p}(\Omega,\bP)$
and $(\cU_{\Omega}\phi)|_{\bP}=\phi$. Furthermore, $W^{1,p}(\Omega)$
is dense in $W^{1,r,p}(\Omega,\bP)$ by Lemma \ref{lem:dense-family-w1rp}.
Finally, $\cU_{\Omega}\cU_{\Omega}\phi=\cU_{\Omega}\phi$ and for
$\phi\in\cV_{\pot}^{p}(\bP)$ and $\tilde{\phi}\in\tilde{\cV}_{\pot}^{p}(\bP)$
it holds $\tilde{\phi}|_{\bP}=\phi$ iff $\tilde{\phi}=\cU_{\Omega}\phi$.
\end{proof}
\begin{thm}
\label{thm:weak-extension-property}Let Assumption \ref{assu:Omega-mu-tau-fundmental}
hold, let $\tau$ be ergodic and let $\Gamma(\omega)$ be almost surely
locally $\left(\delta,M\right)$-regular satisfying Assumption \ref{assu:local-extension}
for $1<r<p_{0}<p_{1}<p$. Then $\Gamma$ has the weak $(r,p)$-extension
property.
\end{thm}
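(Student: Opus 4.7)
The plan is to lift the pathwise extension operator of Theorem~\ref{thm:Main-Thm-2} (constructed in (\ref{eq:def:cU-Q-2}) from the local operators of Lemma~\ref{lem:local-delta-M-extension-estimate}) to an operator on the probability space, and then to pass from the pathwise bound to an expectation bound via the ergodic theorem. First I will approximate: Lemma~\ref{lem:dense-family-w1rp} produces a countable family $(u_k)_{k\in\N}\subset W^{1,\infty}(\Omega)$ dense in $W^{1,r,p}(\Omega,\bP)$, whose realizations $u_{k,\omega}$ are a.s.\ Lipschitz on $\Rd$. Hence $u_{k,\omega}\lvert_{\bP(\omega)}\in W^{1,p}(\bP(\omega)\cap\Ball{\fr}{n\bQ})$ for every $n$ and every bounded Lipschitz $\bQ$, and one can form $\cU u_{k,\omega}$ using the pathwise construction of (\ref{eq:def:cU-Q-2}).

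Second I will define $(\cU_\Omega u_k)(\omega):=(\cU u_{k,\omega})(0)$ and verify stationarity: because the construction of the covering $(p_j(\omega))$ of $\partial\bP(\omega)$ in Theorem~\ref{thm:delta-M-rho-covering}, the associated balls $A_{3,k}$, the partition of unity $(\phi_k)$, the local extensions $\cU_k$, and the averages $\tau_k u$ are all built from translation‑equivariant geometric data on $\partial\bP(\omega)$, we can fix a selection rule (e.g.\ the greedy enumeration of Theorem~\ref{thm:delta-M-rho-covering} applied along a fixed $\Q^{d}$‑ordering of candidate centres) that commutes with the action of $\tau_x$. Consequently $(\cU u_{k,\omega})(x)=(\cU u_{k,\tau_x\omega})(0)$ for a.e.\ $x$, so $(\cU_\Omega u_k)_\omega=\cU u_{k,\omega}$ almost surely, which in particular yields $(\cU_\Omega u_k)\lvert_\bP=u_k\lvert_\bP$. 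Measurability of $\omega\mapsto(\cU u_{k,\omega})(0)$ follows, as in the proof of Theorem~\ref{thm:weak-trace-general}, from Lemma~\ref{lem:contin-geom-Ops} and Lemma~\ref{lem:lower-semi-cont-measures} applied to the (finitely many) local summands that are nonzero at $0$, together with the RACS structure of $\partial\bP$; an approximation by $\Gamma_\delta:=\partial\Ball{\delta}{\bP}$ and passage $\delta\to 0$ gives the claim.

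Third, to obtain continuity I will apply Theorem~\ref{thm:Main-Thm-2} to each realization on $\bQ=\Ball{1}{0}$ and $n\bQ$, which gives, with a random constant $C(\omega)$,
\[
\frac{1}{n^d|\bQ|}\int_{n\bQ}\!\bigl(|\cU u_{k,\omega}|^r+|\nabla\cU u_{k,\omega}|^r\bigr)\leq C(\omega)\Bigl(\frac{1}{n^d|\bQ|}\int_{\bP(\omega)\cap\Ball{\fr}{n\bQ}}\!|u_{k,\omega}|^p+|\nabla u_{k,\omega}|^p\Bigr)^{r/p}.
\]
By the ergodic theorem (Theorem~\ref{thm:general-lebesgue-ergodic-thm}) the left hand side converges to $\E(|\cU_\Omega u_k|^r+|\rmD_\omega\cU_\Omega u_k|^r)$ for a.e.\ $\omega$ and the integral on the right converges to $\E((|u_k|^p+|\rmD u_k|^p)\chi_\bP)$. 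Combined with Assumption~\ref{assu:local-extension}, which is exactly the finiteness of $\E(C(\omega))$ extracted from Lemmas~\ref{lem:local-delta-M-extension-estimate}, \ref{lem:first-estim-nabla-phi-0}, \ref{lem:delta-tilde-construction-estimate}, this yields a uniform bound
\[
\|\cU_\Omega u_k\|_{W^{1,r}(\Omega)}\leq C\,\|u_k\|_{W^{1,r,p}(\Omega,\bP)}.
\]
By density of $(u_k)$ in $W^{1,r,p}(\Omega,\bP)\supset W^{1,p}(\bP)$ and completeness of $W^{1,r}(\Omega)$, $\cU_\Omega$ extends continuously to all of $W^{1,p}(\bP)$, and the identity $(\cU_\Omega u)\lvert_\bP=u$ persists since on $\bP(\omega)$ the pathwise operator acts as the identity in (\ref{eq:def:cU-Q-2}).

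The main obstacle is the second step: organising the pathwise construction so that it is both \emph{stationary} in $\omega$ (for the realisations of $\cU_\Omega u_k$ to match $\cU u_{k,\omega}$) and \emph{measurable} in $\omega$. Translation equivariance of the covering and of $(\phi_k,\Phi_j,\cM_j,\tau_k)$ is not automatic from Theorem~\ref{thm:delta-M-rho-covering} because that theorem only asserts existence; this has to be upgraded to a deterministic, shift‑commuting selection rule, and measurability then follows from the continuity of $A\mapsto\overline{\Ball{\delta}{A}}$ and $A\mapsto A+x$ on $\closedsets(\Rd)$ (Lemma~\ref{lem:contin-geom-Ops}) together with the fact that at any point only boundedly many terms in the defining sums are active, a bound controlled by the locally integrable quantity $\tilde M$. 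Once this technical layer is in place, the rest of the argument is essentially the ergodic upgrade of the quantitative pathwise estimate already established in Section~\ref{sec:Extension-and-Trace-d-M}.
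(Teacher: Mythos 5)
Your proposal takes a genuinely different route from the paper. The paper proves Theorem~\ref{thm:weak-extension-property} in Section~\ref{subsec:Domains-with-holes} by first observing (Theorem~\ref{thm:weak-uniform-ext}) that $\bP^{\eps}(\omega)$ has the weak uniform $(r,p)$-extension property, then applying $\cU_{\eps}$ to the oscillating realizations $u_k(\tau_{x/\eps}\omega)$ and passing to the two-scale limit via Lemma~\ref{lem:hole-exp-grad-ts}. The two-scale limit $\tilde u_k\in L^p(\bQ;W^{1,r}(\Omega))$ is then taken to be $\cU_\Omega u_k$, and the bound (\ref{eq:two-scale-limit-estimate}) together with the ergodic identification of the limiting norms gives continuity directly, after which density finishes. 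The essential advantage of this route is that it completely sidesteps the question of whether the pathwise operator $\cU$ can be made translation-equivariant: the two-scale limit is, by construction, a measurable function on $\bQ\times\Omega$, and the ergodic theorem is invoked only through the two-scale convergence machinery which has already been set up.

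Your approach instead attempts a direct pathwise lifting, setting $(\cU_\Omega u_k)(\omega)=(\cU u_{k,\omega})(0)$ and then arguing stationarity, measurability and an expectation bound via the ergodic theorem. This is conceptually cleaner in that it would produce an explicit stationary operator, but as written it has a genuine gap at precisely the step you flag. The covering of Theorem~\ref{thm:delta-M-rho-covering} is built by iterating over the dyadic scales $\eta_k=(1-\delta)^k$ and over the translates $\eta_k\tilde Q_{z,i}$ of a fixed lattice centered at the origin; that construction does \emph{not} commute with the shifts $\tau_x$ for generic $x\in\Rd$, so the identity $(\cU u_{k,\omega})(x)=(\cU u_{k,\tau_x\omega})(0)$ fails for the covering selected by that algorithm. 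Your suggestion to ``apply the greedy enumeration along a fixed $\Q^d$-ordering'' does not resolve this, because a fixed ordering of candidate centres is itself origin-anchored and therefore still breaks equivariance. To make your route rigorous you would need an additional lemma producing an $(\omega,x)$-jointly measurable, translation-equivariant selection of the points $(p_j)$, the radii and the partition of unity — in effect a stationary marked point process on $\partial\bP(\omega)$ refining Theorem~\ref{thm:delta-M-rho-covering}. That is a real technical layer which the paper never needs, precisely because it routes through two-scale limits instead. The remainder of your argument — measurability via Lemmas~\ref{lem:contin-geom-Ops} and \ref{lem:lower-semi-cont-measures}, the ergodic upgrade of Theorem~\ref{thm:Main-Thm-2} using Theorem~\ref{thm:general-lebesgue-ergodic-thm}, and the density/completeness step — is sound conditional on that missing lemma.
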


\begin{thm}
\label{thm:strong-extension-property}Let Assumption \ref{assu:Omega-mu-tau-fundmental}
hold, let $\tau$ be ergodic and let $\Gamma(\omega)$ be almost surely
locally $\left(\delta,M\right)$-regular satisfying Assumption \ref{assu:dirichlet-extension}
for $1<r<p_{0}<p_{1}<p$. Then $\Gamma$ has the strong $(r,p)$-extension
property.
\end{thm}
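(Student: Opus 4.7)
The plan is to build $\cU_\Omega$ by realization. Assumption \ref{assu:dirichlet-extension} puts us exactly in the setting of Theorem \ref{thm:Main-Theorem-4}, so for almost every $\omega$ the deterministic operator $\cU$ from formula (\ref{eq:def-global-U}), applied to the geometry $\bP(\omega)$, produces a map $u_\omega \mapsto \cU u_\omega$ sending $W^{1,p}_{\loc}(\bP(\omega))$ into $W^{1,r}_{\loc}(\Rd)$ with quantitative control on every cube $n\bQ$. The strategy is to define $\cU_\Omega u$ by the formula $(\cU_\Omega u)(\tau_x\omega):=(\cU u_\omega)(x)$ and then verify (a) that this is consistent (i.e.\ the right-hand side depends only on $\tau_x\omega$), (b) that $\omega\mapsto (\cU_\Omega u)(\omega)$ is measurable, and (c) that the quantitative estimate of Theorem \ref{thm:Main-Theorem-4} passes to the probabilistic side via the ergodic theorem, delivering a \emph{deterministic} constant.

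Step (a) follows because every ingredient of (\ref{eq:def-global-U}) — the local covering $(p_k,y_k,\tilde\rho_k,\fr_k)$, the partition of unity $(\phi_i,\Phi_j)$, the Voronoi tessellation induced by $\X_\fr(\omega)$, the averages $\tau_i u$ and $\cM_j u$, and the local extensions $\cU_i$ — is constructed in a translation covariant way from $\bP(\omega)$; shifting $\omega$ by $\tau_x$ shifts every geometric object by $x$ and so $x\mapsto (\cU u_\omega)(x)$ is a stationary random field. For step (b), I would first work on the countable dense family $(u_k)_{k\in\N}\subset W^{1,\infty}(\Omega)\cap C^1_b(\Omega)$ produced by Lemma \ref{lem:dense-family-w1rp}: each realization $(u_k)_\omega$ is Lipschitz, and the measurability of $\omega\mapsto (\cU (u_k)_\omega)(0)$ reduces to measurability of the random covering and of the random Voronoi tessellation, which is handled exactly as in Lemma \ref{lem:contin-geom-Ops} and Theorem \ref{thm:weak-trace-general} (approximating $\Gamma(\omega)$ by the RACS $\Gamma_\delta(\omega)=\Ball{\delta}{\Gamma(\omega)}$ and passing to the intersection).

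For step (c), I apply Theorem \ref{thm:Main-Theorem-4} on $n\bQ$ with $\bQ$ the unit cube, obtaining
\begin{equation*}
\frac{1}{n^d|\bQ|}\int_{n\bQ}\left|\nabla\cU u_\omega\right|^r
\;\leq\; C(\omega)\left(\frac{1}{n^d|\bQ|}\int_{\bP(\omega)\cap n\bQ}\left|\nabla u_\omega\right|^p\right)^{r/p}.
\end{equation*}
The integrand on the left is stationary in $x$, so Theorem \ref{thm:general-lebesgue-ergodic-thm} makes the left-hand side converge $\P$-a.s.\ to $\int_\Omega|\rmD_\omega\cU_\Omega u|^r\,d\P$ as $n\to\infty$; similarly the right-hand side converges to $C(\omega)\bigl(\int_\bP|\rmD_\omega u|^p\,d\P\bigr)^{r/p}$. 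The constant $C(\omega)$ itself is assembled in the proof of Theorem \ref{thm:Main-Theorem-4} from ergodic averages of the stationary quantities $\tilde M^{\beta}$, the diameters $d_j$, the stretch factors $\sfS_j$, etc., whose finiteness of expectation is exactly what Assumption \ref{assu:dirichlet-extension} together with Lemma \ref{lem:estim-E-fa-fb} guarantee; under the ergodicity of $\tau$ these averages converge almost surely to deterministic constants, so that $C(\omega)$ may be replaced by a deterministic $C>0$.

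The hard part will be step (b), the joint measurability of the pointwise construction: the graph-theoretic admissible paths and the harmonic auxiliary functions of Section \ref{sec:Construction-of-Macroscopic-1} are defined through an iterative selection procedure, and one must check that this procedure can be performed measurably in $\omega$. My plan is to work on the countable dense family $(u_k)_k$ first, observing that for each fixed $u_k$ the extension $\cU(u_k)_\omega$ depends on $\omega$ only through finitely many geometric random variables on each bounded domain (the local $(\tilde\rho_j,\fr_j,y_j)$, the finite part of the Voronoi graph, and the solutions of the finite-dimensional Poisson equations (\ref{eq:lem:existence-cLu-equiv-delta-x}) restricted to $\Ball R 0$), all of which are jointly measurable in $\omega$ by Lemmas \ref{lem:contin-geom-Ops} and \ref{lem:lower-semi-cont-measures}. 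Once $\cU_\Omega$ is defined and bounded on this dense family in the sense of the combined norm of $W^{1,r,p}(\Omega,\bP)$, the estimate from step (c) allows extension by density to a continuous linear operator $\cU_\Omega:W^{1,p}(\bP)\to W^{1,r}(\Omega)$; the restriction identity $(\cU_\Omega u)|_\bP=u$ is preserved in the limit because it holds on $(u_k)_k$ by construction of the pointwise operator $\cU$.
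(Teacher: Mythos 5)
Your plan takes a genuinely different route from the paper's (which introduces a smooth cutoff $\phi$, applies the uniform Dirichlet extension property of $\bP^{\eps}$ to $\phi u^{\eps}$, and obtains $\cU_{\Omega}$ as a two-scale limit in Lemma~\ref{lem:ts-extension-conv}.2). Unfortunately, your direct pointwise construction breaks at step~(a). The extension operator~$\cU$ from~(\ref{eq:def-global-U}) is \emph{not} translation covariant: the boundary covering produced by Theorem~\ref{thm:delta-M-rho-covering} is built inductively over the fixed lattice boxes $\eta_{k}\tilde{Q}_{z,i}$, $z\in\Zd$, and the point process $\X_{\fr}(\omega)=2\fr\Zd\cap\bP_{-\fr}(\omega)$ is anchored to the lattice $2\fr\Zd$, so it is stationary only under shifts in $2\fr\Zd$, not under arbitrary $\tau_{a}$. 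Consequently the partition of unity $(\phi_{i},\Phi_{j})$ and the averages $\tau_{i}u,\cM_{j}u$ are \emph{not} translated by $a$ when $\omega$ is replaced by $\tau_{a}\omega$. Thus $(\cU u_{\omega})(x)$ is not a function of $\tau_{x}\omega$ alone, the field $x\mapsto(\cU u_{\omega})(x)$ is not stationary, and the definition $(\cU_{\Omega}u)(\tau_{x}\omega):=(\cU u_{\omega})(x)$ in step~(a) is ill-posed, which defeats the ergodic-averaging argument in step~(c) as well.

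There is a second, independent gap in step~(c): you apply Theorem~\ref{thm:Main-Theorem-4} to the realization $u_{\omega}$ on $n\bQ$, but that theorem's estimate (\ref{eq:thm:Main-Theorem-4-2}) is stated only for $u\in W_{0,\partial\bQ}^{1,p}(\bP(\omega)\cap n\bQ)$, i.e.\ functions that vanish on $\partial(n\bQ)$. Realizations $u_{\omega}$ of $W^{1,p}(\bP)$-functions do not satisfy this Dirichlet condition, and the condition is not cosmetic: it is what allows the connectivity estimates of Section~\ref{sec:Construction-of-Macroscopic-1} to close up and give a bound on $\nabla\cU u$ by $\nabla u$ alone rather than by $u$. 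The paper handles both obstructions at once: it multiplies $u^{\eps}$ by a cutoff $\phi\in C_{c}^{\infty}(\bQ)$ so that $\phi u^{\eps}\in W_{0,\partial\bQ}^{1,p}(\bQ\cap\bP^{\eps}(\omega))$ and Theorem~\ref{thm:uniform-Dir-ext-Thm} applies, and it then extracts $\cU_{\Omega}\rmD_{\omega}u:=\int_{\bQ}\rmD_{\omega}\tilde{u}$ from a two-scale limit $\tilde{u}$ (Lemma~\ref{lem:hole-exp-grad-ts}), which produces a measurable object automatically and makes no appeal to translation covariance of the deterministic operator. If you want to salvage a direct approach, you would need to randomize the lattice anchors (both in the covering theorem and in $\X_{\fr}$) to restore covariance and then introduce a cutoff before invoking Theorem~\ref{thm:Main-Theorem-4}; both are nontrivial reworkings of Sections~\ref{subsec:Local--Regularity} and~\ref{subsec:Mesoscopic-Regularity}.
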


We will prove Theorems \ref{thm:weak-extension-property} and \ref{thm:strong-extension-property}
in Section \ref{subsec:Domains-with-holes} using homogenization theory.

\subsection{The Outer Normal Field of $\protect\bP$}
\begin{thm}
\label{thm:normal-field}Let Assumptions \ref{assu:Omega-mu-tau-fundmental-strong}
and \ref{assu:General-Assump-Tr-Ext} hold and let $\Gamma$ have
the strong $(r,p)$-trace property for $1<r<p$. Let $\tau$ be ergodic,
let $\Gamma(\omega)$ be almost surely locally $\left(\delta,M\right)$-regular
and let $\nu_{\Gamma(\omega)}$ be the outer normal of $\bP(\omega)$
on $\Gamma(\omega)$. Then there exists a measurable function $\nu_{\Gamma}:\,\Gamma\to\S^{d-1}$
such that almost surely $\nu_{\Gamma(\omega)}(x)=\nu_{\Gamma}(\tau_{x}\omega)$.
Furthermore, for $f\in C_{b}^{1}(\Omega;\Rd)$ and $\phi\in C_{b}^{1}(\Omega)$
it holds 
\begin{equation}
\int_{\bP}\diver_{\omega}(f\phi)\,\d\P=\int_{\Gamma}\phi f\cdot\nu_{\Gamma}\,\d\mugammapalm\,.\label{eq:thm:normal-field}
\end{equation}
If $\Gamma$ satisfies the weak $(1,p)$-extension property, the equation
(\ref{eq:thm:normal-field}) extends to $\phi\in W^{1,1,p}(\Omega,\bP)$
and $f\in C_{b}^{1}(\Omega;\Rd)$ or to $f\in W^{1,1,p}(\Omega,\bP)^{d}$
and $\phi\in C_{b}^{1}(\Omega)$.
\end{thm}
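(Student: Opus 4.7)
The plan is to define $\nu_\Gamma$ pointwise on the prototype $\Gamma\subset\Omega$ by $\nu_\Gamma(\omega):=\nu_{\Gamma(\omega)}(0)$, to derive the divergence identity via a ``freeze $\omega$, integrate against a bump in $\Rd$'' argument combined with the Campbell formula, and to extend to $W^{1,1,p}(\Omega,\bP)$ by density and continuity of the trace. For a.e.\ $\omega\in\Gamma$ the origin lies on a Lipschitz graph by the local $(\delta,M)$-regularity, so the outer unit normal $\nu_{\Gamma(\omega)}(0)$ exists (extend arbitrarily, e.g.\ by $\be_1$, on the exceptional null set). The stationarity $\nu_{\Gamma(\omega)}(x)=\nu_\Gamma(\tau_x\omega)$ is immediate from $\chi_{\Gamma(\omega)}(x)=\chi_\Gamma(\tau_x\omega)$ and the fact that the outer normal depends only on the local germ of $\bP$ at the base point. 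Measurability of $\nu_\Gamma$ on $\Gamma$ I would obtain by exhibiting it as an a.e.\ pointwise limit of measurable functions on $\Omega$ built from the mollified indicator of $\bP$: by Lemma \ref{lem:Omega-convolution} each vector $\rmD_\omega(\eta_\delta*\chi_\bP)(\omega)$ is measurable in $\omega$, and for $\omega\in\Gamma$ a suitable normalization of these vectors converges to $-\nu_\Gamma(\omega)$ as $\delta\to 0$ by the local Lipschitz structure of $\Gamma(\omega)$ at the origin.

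For the identity (\ref{eq:thm:normal-field}) fix $\eta\in C_c^\infty(\Rd)$ with $\int_\Rd\eta=1$, and use stationarity of $\P$ together with $(\rmD_i g)_\omega(x)=\partial_i g_\omega(x)$ to write
\begin{equation*}
\int_\bP \diver_\omega(f\phi)\,\d\P \;=\; \int_\Omega\int_\Rd \eta(x)\,\chi_{\bP(\omega)}(x)\,\diver_x(f\phi)_\omega(x)\,\d x\,\d\P(\omega).
\end{equation*}
For $C_b^1(\Omega)$ inputs the realizations $(f\phi)_\omega$ are uniformly Lipschitz on $\Rd$, and for a.e.\ $\omega$ the set $\bP(\omega)$ is locally $(\delta,M)$-regular, so the classical Gauss--Green formula applied with the compactly supported weight $\eta$ yields
\begin{equation*}
\int_{\bP(\omega)}\eta\,\diver_x(f\phi)_\omega\,\d x = -\int_{\bP(\omega)}\nabla\eta\cdot(f\phi)_\omega\,\d x + \int_{\Gamma(\omega)}\eta\,(f\phi)_\omega\cdot\nu_{\Gamma(\omega)}\,\d\cH^{d-1}.
\end{equation*}
Integrating in $\omega$ and reapplying stationarity, the bulk remainder collapses to $-\bigl(\int_\Rd \nabla\eta\bigr)\cdot\int_\bP f\phi\,\d\P=0$, while the surface term, by the Campbell formula applied to $(x,\omega)\mapsto\eta(x)\phi(\omega)f(\omega)\cdot\nu_\Gamma(\omega)$ and the stationarity of $\nu_\Gamma$, evaluates to $\bigl(\int_\Rd\eta\bigr)\int_\Gamma\phi f\cdot\nu_\Gamma\,\d\mugammapalm=\int_\Gamma\phi f\cdot\nu_\Gamma\,\d\mugammapalm$.

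For the extension to $\phi\in W^{1,1,p}(\Omega,\bP)$ with $f\in C_b^1(\Omega;\Rd)$, I would apply Lemma \ref{lem:dense-family-w1rp} with $r=1$ to obtain a sequence $(u_k)\subset C_b^1(\Omega)\cap W^{1,\infty}(\Omega)$ with $u_k\to\phi$ in $W^{1,1,p}(\Omega,\bP)$. By the strong $(r,p)$-trace property the traces satisfy $\cT_\Omega u_k\to\cT_\Omega\phi$ in $L^r(\Gamma;\mugammapalm)$, so the right-hand side of the already proved identity for $u_k$ converges (using $f\cdot\nu_\Gamma\in L^\infty(\Gamma)$ and finiteness of $\mugammapalm$ under Assumption \ref{assu:Trace}), while the left-hand side converges because $f$ and $\diver_\omega f$ are bounded and $\phi,\rmD_\omega\phi\in L^p(\bP)\subset L^1(\bP)$. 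The weak $(1,p)$-extension property enters here implicitly by ensuring that $W^{1,1,p}(\Omega,\bP)$ is the natural target space into which functions from $W^{1,p}(\bP)$ lift; the symmetric case $f\in W^{1,1,p}(\Omega,\bP)^d$, $\phi\in C_b^1(\Omega)$ is analogous after interchanging roles. The main obstacle in this plan is the measurability of $\nu_\Gamma$ together with a null-set-free justification of the realizationwise Gauss--Green step, while the compactly supported bump $\eta$ with $\int\eta=1$ is the standard device that exploits stationarity to eliminate the auxiliary bounded domain that would otherwise appear in the Euclidean identity.
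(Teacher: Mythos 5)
Your overall architecture — construct $\nu_\Gamma$, establish the divergence identity by a ``realizationwise Gauss--Green plus Campbell'' argument using a bump $\eta$ with $\int\eta=1$, then extend by density using the strong trace property — is the same as the paper's, and the Campbell/stationarity computations you sketch are correct. The difference is in how $\nu_\Gamma$ is produced and shown to be $\mugammapalm$-measurable, and this is where your version needs more work than the paper's.

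You define $\nu_\Gamma(\omega):=\nu_{\Gamma(\omega)}(0)$ pointwise and argue measurability by exhibiting $\nu_\Gamma$ as a $\mugammapalm$-a.e.\ pointwise limit of normalized vectors $\rmD_\omega(\eta_\delta*\chi_\bP)(\omega)$. This requires two things you do not supply: (i) a canonical everywhere-defined representative of $\rmD_\omega(\eta_\delta*\chi_\bP)$, since $\Gamma$ has $\P$-measure zero (Theorem~\ref{thm:Main-THM-Sto-Geo}(d)) so the $L^p(\Omega;\P)$-element has no trace there; this is actually fine because the explicit integral $\int_\Rd\nabla\eta_\delta(x)\chi_\bP(\tau_x\omega)\,\d x$ is defined at every $\omega$. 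And (ii), more seriously, the claimed pointwise convergence is a Lebesgue differentiation statement for the surface measure: writing $\rmD_\omega(\eta_\delta*\chi_\bP)(\omega)=-\int_{\Gamma(\omega)}\eta_\delta\,\nu_{\Gamma(\omega)}\,\d\cH^{d-1}$, you need $0$ to be a Lebesgue point of $\nu_{\Gamma(\omega)}$ for $\mugammapalm$-a.e.\ $\omega\in\Gamma$. For a fixed realization this holds $\cH^{d-1}$-a.e.\ on a Lipschitz boundary, but transferring that null set to a $\mugammapalm$-null set requires a Palm-theoretic argument (Campbell formula applied to the indicator of the exceptional set). You flag this obstacle at the end, which is the right instinct.

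The paper sidesteps all of this by never taking pointwise limits: it shows that $f\mapsto\lim_{\delta\to0}\int_\Omega f\,\rmD_\omega\chi_\delta$ defines an $\Rd$-valued measure $\widetilde{\mugammapalm}$ on $\Gamma$ whose total variation is dominated by $\mugammapalm$, and then obtains $\nu_\Gamma$ as the Radon--Nikodym density $\d\widetilde{\mugammapalm}/\d\mugammapalm$. This constructs $\nu_\Gamma$ directly as a $\mugammapalm$-measurable function with $|\nu_\Gamma|\le 1$ $\mugammapalm$-a.e., and the identity $\nu_{\Gamma(\omega)}(x)=\nu_\Gamma(\tau_x\omega)$ then follows because $\widetilde{\mugammapalm}$ is by construction the Palm measure of $\nabla\chi_{\bP(\omega)}=-\nu_{\Gamma(\omega)}\,\cH^{d-1}\llcorner\Gamma(\omega)$. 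The Radon--Nikodym route buys you measurability and stationarity simultaneously without invoking any differentiation theorem, at the cost of proving $|\widetilde{\mugammapalm}|\ll\mugammapalm$ (which is immediate from the weak-$*$ compactness of the approximants). If you want to keep your pointwise construction, you would need to add the Palm-transfer of the Lebesgue-point set; otherwise the Radon--Nikodym approach is the cleaner fix. The rest of your argument — the bump-function identity, vanishing of the bulk term since $\int_\Rd\nabla\eta=0$, and the density/continuity passage to $W^{1,1,p}(\Omega,\bP)$ — is correct and matches the paper.
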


\begin{proof}
For $\delta>0$ define $\chi_{\delta}(\omega):=\left(\eta_{\delta}\ast\chi_{\bP}\right)(\omega)$.
We observe that 
\begin{equation}
\left|\rmD_{\omega}\chi_{\delta}\right|(\tau_{x}\omega)=\left|\rmD_{\omega}(\eta_{\delta}\ast\chi_{\bP})\right|(\tau_{x}\omega)=\left|\eta_{\delta}\ast(\rmD_{\omega}\chi_{\bP})(\tau_{\cdot}\omega)\right|(x)=\left|\eta_{\delta}\ast\nabla\chi_{\bP(\omega)}\right|(x)\,,\label{eq:thm:normal-field-help-1}
\end{equation}
and hence for almost every $\omega$ we have $\left|\rmD_{\omega}\chi_{\delta}\right|\to\left|\nabla\chi_{\bP(\omega)}\right|=\hausdorffH^{d-1}(\Gamma(\omega)\cap\,\cdot\,)$
weakly. Then for $\varphi\in C_{c}^{\infty}(\Rd)$ and $f\in C_{b}(\Omega)$
it holds by the Palm formula and (\ref{eq:thm:normal-field-help-1})
\begin{align*}
\int_{\Rd}\varphi\int_{\Omega}f\left|\rmD_{\omega}\chi_{\delta}\right| & =\int_{\Omega}\int_{\Rd}f(\tau_{x}\omega)\varphi(x)\left|\rmD_{\omega}\chi_{\delta}\right|(\tau_{x}\omega)\,\d x\,\d\P(\omega)\\
 & =\int_{\Omega}\int_{\Rd}f(\tau_{x}\omega)\varphi(x)\left|\eta_{\delta}\ast\nabla\chi_{\bP(\omega)}\right|(x)\,\d x\,\d\P(\omega)\\
 & \leq\int_{\Omega}\int_{\Rd}f(\tau_{x}\omega)\varphi(x)\left(\left|\nabla\chi_{\bP(\omega)}\right|\of{\Ball{\delta}{\support\varphi}}\right)\,\d x\,\d\P(\omega)\,,
\end{align*}
where $\left|\nabla\chi_{\bP(\omega)}\right|=\hausdorffH^{d-1}(\,\cdot\,\cap\Gamma(\omega))=\mugammaomega$.
From the ergodic theorem, the $\P$-almost sure pointwise weak convergence
and the Lebesgue dominated convergence theorem, we conclude
\begin{align*}
\int_{\Rd}\varphi\int_{\Omega}f\left|\rmD_{\omega}\chi_{\delta}\right| & \to\int_{\Omega}\int_{\Rd}f(\tau_{x}\omega)\varphi(x)\,\d\mugammaomega(x)\,\d\P(\omega)\\
 & =\int_{\Rd}\varphi\int_{\Omega}f\d\mugammapalm\,,
\end{align*}
which implies $\int_{\Omega}f\left|\rmD_{\omega}\chi_{\delta}\right|\to\int_{\Omega}f\d\mugammapalm$.
In a similar way, we show $\int_{\Omega}f\rmD_{\omega}\chi_{\delta}\to\int_{\Omega}f\d\widetilde{\mugammapalm}$,
where $\widetilde{\mugammapalm}$ is a $\Rd$-valued measure on $\Gamma$.
Furthermore, for every $\e_{i}$ in the canonical basis of $\Rd$,
$\e_{i}\cdot\widetilde{\mugammapalm}\ll\mugammapalm$, which implies
by the Radon-Nikodym theorem the existence of a measurable $\nu_{\Gamma}$
with values in $\S^{d-1}$ such that $\widetilde{\mugammapalm}=\nu_{\Gamma}\mugammapalm$.
The property $\nu_{\Gamma(\omega)}(x)=\nu_{\Gamma}(\tau_{x}\omega)$
follows from the fact that $\widetilde{\mugammapalm}$ is the Palm
measure of $\nabla\chi_{\bP(\omega)}$.

For $f\in C_{b}^{1}(\Omega;\Rd)$ and $\phi\in C_{b}^{1}(\Omega,\bP)$
and $\varphi\in C_{c}^{\infty}(\Rd)$ it holds 
\begin{align*}
\int_{\Rd}\varphi\int_{\bP}\diver_{\omega}(f\phi) & =\int_{\Omega}\int_{\Rd}\varphi(x)\,\diver(f\phi)_{\omega}\\
 & =\int_{\Omega}\int_{\Gamma(\omega)}\varphi(x)\,\phi_{\omega}f_{\omega}\cdot\nu_{\Gamma(\omega)}\\
 & =\int_{\Rd}\int_{\Gamma}\varphi(x)\,\phi f\cdot\nu_{\Gamma}\d\mugammapalm\,,
\end{align*}
which implies (\ref{eq:thm:normal-field}) by a density argument.
\end{proof}
\begin{defn}
Let $\Gamma$ have the strong $(r,p)$-Trace property for $1<r<p$
and the weak $(1,p)$-extension property. We say that $f\in L^{p}(\bP;\Rd)$
has the weak normal trace $f_{\nu}\in L^{r}(\Gamma)$ and weak divergence
$\diver_{\omega}f\in L^{1}(\bP)$ if for all $\phi\in C_{b}^{1}(\Omega)$
\[
\int_{\bP}\left(\phi\diver_{\omega}f+f\cdot\nabla_{\omega}\phi\right)\,\d\P=\int_{\Gamma}\phi f_{\nu}\,\d\mugammapalm\,.
\]
\end{defn}

\begin{thm}
\label{thm:Harmonic-P}Let Assumptions \ref{assu:Omega-mu-tau-fundmental-strong}
and \ref{assu:General-Assump-Tr-Ext} hold and for some $r\in(1,2)$
let $\Gamma$ have the strong $(r,2)$-Trace property and the weak
$(r,2)$-extension property and let $\Gamma^{\eps}(\omega)$ have
the strong uniform trace property (see Definition \ref{def:r-p-Ex-Tr-prop-weak}
below). Let $\tau$ be ergodic, let $\Gamma(\omega)$ be almost surely
locally $\left(\delta,M\right)$-regular and let $\nu_{\Gamma(\omega)}$
be the outer normal of $\bP(\omega)$ on $\Gamma(\omega)$. Then there
exists $u_{\Omega}\in W^{1,r}(\Omega)\cap W^{1,2}(\bP;\Rd)$, such
that $\nabla_{\omega}u_{\Omega}$ has a weak normal trace $f_{\nu}\in L^{1}(\Gamma)$
and weak divergence $u_{\Omega}$, i.e.
\[
\forall\phi\in C_{b}^{1}(\omega):\quad\int_{\bP}\left(\phi u_{\Omega}+\nabla u_{\Omega}\cdot\nabla_{\omega}\phi\right)\,\d\P=\int_{\Gamma}\phi f_{\nu}\,\d\mugammapalm\,.
\]
\end{thm}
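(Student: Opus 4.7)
The plan is to obtain $u_\Omega$ as the stochastic two-scale limit of weak solutions of a Helmholtz-type Neumann problem on the randomly perforated $\eps$-domain, following the strategy of Zhikov--Piatnitsky. First I fix a bounded Lipschitz $\bQ \subset \Rd$ with $0 \in \bQ$ and a non-trivial datum $g \in C_b(\Omega)$. For every $\eps > 0$ let $u^\eps_\omega \in W^{1,2}(\bQ \cap \eps\bP(\omega))$ be the unique weak solution, provided by Lax--Milgram, of
\begin{equation*}
-\eps^2 \Delta u^\eps + u^\eps = 0 \text{ in } \bQ \cap \eps\bP(\omega), \qquad \eps\, \nabla u^\eps \cdot \nu_{\Gamma^\eps(\omega)} = g(\tau_{\cdot/\eps}\omega) \text{ on } \Gamma^\eps(\omega).
\end{equation*}
Testing with $u^\eps$ and invoking the strong uniform trace property of $\Gamma^\eps(\omega)$ together with Young's inequality produces the $\eps$-uniform bound $\norm{u^\eps}_{L^2(\bQ \cap \eps\bP)}^{2} + \eps^{2}\norm{\nabla u^\eps}_{L^2(\bQ \cap \eps\bP)}^{2} \leq C\norm{g}_{L^\infty(\Omega)}^{2}$.

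Second, applying the weak $(r,2)$-extension operator $\cU_\eps$ granted by the hypothesis on $\Gamma$ (equivalently by Theorems \ref{thm:Main-Thm-2}--\ref{thm:Main-Theorem-4}), set $\tilde u^\eps := \cU_\eps u^\eps \in W^{1,r}(\bQ)$ with uniform bound $\norm{\tilde u^\eps}_{W^{1,r}(\bQ)} \leq C$. By reflexivity and stochastic two-scale compactness there is a subsequence along which $\tilde u^\eps \tsweakto u_\Omega$ and $\nabla \tilde u^\eps \tsweakto \nabla_\omega u_\Omega$ with $u_\Omega \in W^{1,r}(\Omega)$; the additional $L^{2}$-control restricted to $\bQ \cap \eps\bP$ upgrades the limit to $u_\Omega|_\bP \in W^{1,2}(\bP)$. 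I then pass to the two-scale limit in the weak formulation tested against oscillating functions $\psi(x) = \varphi(x)\phi(\tau_{x/\eps}\omega)$, $\varphi \in C_c^\infty(\bQ)$ and $\phi \in C_b^{1}(\Omega)$. The Palm identity for the stationary random surface measures $\mugammaomegaeps \to \mugammapalm$ (Theorem \ref{thm:Main-THM-Sto-Geo}) converts the surface integral into $\int_\bQ \varphi\, dx \cdot \int_\Gamma \phi\, f_\nu\, d\mugammapalm$ for some $f_\nu \in L^{1}(\Gamma;\mugammapalm)$, while the bulk term tends to $\int_\bQ \varphi\, dx \cdot \int_\bP (\phi u_\Omega + \nabla_\omega u_\Omega \cdot \nabla_\omega \phi)\, d\P$. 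Arbitrariness of $\varphi$ then gives the claimed identity, the outer normal $\nu_\Gamma$ being identified through Theorem \ref{thm:normal-field}.

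The principal obstacle is the third step, namely passing rigorously to the limit in the surface integral on $\Gamma^\eps$ and realizing $f_\nu$ as a genuine $L^{1}(\Gamma;\mugammapalm)$-function rather than merely a distribution on $\Omega$. This demands, on one hand, that the $\eps$-uniform trace control coming from the strong uniform trace property of $\Gamma^\eps$ be compatible with stochastic two-scale convergence, so that the traces of $\tilde u^\eps$ on $\Gamma^\eps$ themselves admit a two-scale limit carried on $\Gamma$; on the other hand, that the limit boundary functional $\phi \mapsto \int_\bP (\phi u_\Omega + \nabla_\omega u_\Omega \cdot \nabla_\omega \phi)\, d\P$ on $C_b^{1}(\Omega)$ factor through the trace $\cT_\Omega:\,W^{1,1,2}(\Omega,\bP) \to L^{r}(\Gamma;\mugammapalm)$ guaranteed by the strong $(r,2)$-trace hypothesis, which together with a Radon--Nikodym argument parallel to the one used in Theorem \ref{thm:normal-field} produces the sought density $f_\nu$.
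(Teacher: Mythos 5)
Your scheme matches the paper's: construct an auxiliary screened Poisson (Helmholtz) problem on the perforated $\eps$-domain with a prescribed oscillating Neumann datum on $\Gamma^{\eps}$ (and a boundary condition on $\partial\bQ$, which the paper makes Dirichlet and you leave unspecified), derive the $\eps$-scaled a priori bound, apply the extension operator $\cU_{\eps}$, and pass to the stochastic two-scale limit. The paper simply takes the Neumann datum $\equiv 1$, so your general $g\in C_b(\Omega)$ is a harmless variation.

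However, the ``principal obstacle'' you identify is not actually there, and recognizing this is the point of the construction. Once the Neumann datum is prescribed as $g(\tau_{x/\eps}\omega)$, the surface integral in the weak formulation tested against $\varphi(x)\phi(\tau_{x/\eps}\omega)$ is exactly
\[
\int_{\Gamma^{\eps}\cap\bQ}g(\tau_{x/\eps}\omega)\,\phi(\tau_{x/\eps}\omega)\,\varphi(x)\,\d\mugammaomegaeps(x)\,,
\]
and its limit $\int_{\bQ}\varphi\,\d x\cdot\int_{\Gamma}g\phi\,\d\mugammapalm$ follows immediately from the ergodic theorem for random measures (Theorem \ref{thm:Ergodic-Theorem-ran-meas-2}), since $g\phi\in C_b(\Omega)$ is already a Palm-integrable test function. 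No two-scale limit of traces of $\tilde u^{\eps}$ is required, and no Radon--Nikodym argument is needed: $f_{\nu}=g$ is produced explicitly (the paper obtains $f_{\nu}\equiv1$). Separately, you drop the $\eps$-factors in Step 2: the weak $(r,2)$-extension property controls $\|\cU_{\eps}u^{\eps}\|_{L^{r}}+\eps\|\nabla\cU_{\eps}u^{\eps}\|_{L^{r}}$, not the full $W^{1,r}(\bQ)$-norm, so the correct two-scale statement (this is Lemma \ref{lem:hole-exp-grad-ts}) is $\cU_{\eps}u^{\eps}\tsweakto u_{\Omega}$ and $\eps\nabla\cU_{\eps}u^{\eps}\tsweakto\nabla_{\omega}u_{\Omega}$, not $\nabla\cU_{\eps}u^{\eps}\tsweakto\nabla_{\omega}u_{\Omega}$; this is precisely the scaling regime of Theorem \ref{thm:sto-conver-grad}(3) that places $u_{\Omega}$ in $W^{1,r}(\Omega)$.
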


The last theorem is less trivial than one might think. In particular,
we lack a Poincaré-type inequality on $\Omega$, which is typically
used to prove corresponding results in $\Rd$. We shift the proof
to Section \ref{subsec:Domains-with-holes}.

\section{\label{sec:Two-scale-convergence}Two-Scale Convergence and Application}

As we have already explained in the introduction, there have been
several approaches to the introduction of two-scale convergence in
stochastic homogenization. In this work, we chose a modification of
\cite{heida2017stochastic} because it does not rely on compactness
of the underlying probability space.

\subsection{\label{subsec:General-Setting}General Setting}

For the rest of this work, we consider a stationary random measure
$\omega\to\mu_{\omega}$ with Palm measure $\mupalm$ and we define
\begin{equation}
\muomegaeps(A):=\eps^{d}\muomega\of{\eps^{-1}A}\,.\label{eq:ts-measure}
\end{equation}
For the corresponding Lebesgue spaces we write $L^{p}(\Omega;\mupalm)$
or $L^{p}(\bQ;\muomegaeps)$, where $\bQ\subset\Rd$ is a convex domain
with $C^{1}$-boundary. If $\muomega=\lebesgueL$, i.e. $\mupalm=\P$,
or $\mu_{\omega}=\chi_{\bP(\omega)}\lebesgueL$ we omit the notion
of $\muomegaeps$ and $\mupalm$.

In our applications, $\d\muomega=\chi_{\bP(\omega)}\d\lebesgueL$
for the characteristic function of the prototype $\bP\subset\Omega$
of the random set $\bP(\omega)$ with Palm measure $\chi_{\bP}\P$
or $\d\muomega=\d\mugammaomega:=\chi_{\Gamma(\omega)}\d\hausdorffH^{d-1}$,
with Palm measure located on $\Gamma\subset\Omega$, the prototype
of $\Gamma(\omega):=\partial\bP(\omega)$. If we explicitly study
the latter case, we write $\mugammapalm$ for the Palm measure.

Moreover, in view of (\ref{eq:ts-measure}), we write $\mugammaomegaeps(A):=\eps^{d}\mugammaomega\of{\eps^{-1}A}=\eps\hausdorffH^{d-1}\of{A\cap\eps\Gamma(\omega)}$.
In case of $\muomega=\chi_{\bP(\omega)}\lebesgueL$, we drop the notation
$\muomegaeps$.
\begin{assumption}
\label{assu:TS-general}Let $\left(\Omega,\sigma,\P\right)$ be a
probability space with ergodic dynamical system $\left(\tau_{x}\right)_{\in\Rd}$
in the sense of Definition \ref{Def:Omega-mu-tau}. Let $1<q,p<\infty$
with $\frac{1}{p}+\frac{1}{q}=1$ and 
\[
\Phi_{\cP,q}\subset L^{q}(\Omega;\mupalm)
\]
be a countable dense subset, which is stable under scalar multiplication
and linear combination. Finally, let $\Omega_{\Phi}$ be such that
(\ref{eq:ergodic convergence ran meas2}) holds for all $\varphi\in C(\overline{\bQ})$,
$\omega\in\Omega_{\Phi}$, $f\in\Phi_{\cP,q}$.
\end{assumption}

\begin{rem*}
In some proofs below we will assume w.l.o.g. that some particular,
essentially bounded functions are elements of $\Phi_{\cP,q}$. These
will always be countably many and hence $\Omega_{\Phi}$ has to be
changed only by a set of measure $0$.
\end{rem*}
\begin{defn}
\label{def:two-scale-conv} Let Assumption \ref{assu:TS-general}
hold. Let $\omega\in\Omega_{\Phi}$ and let $\ue\in L^{p}(\bQ;\muomegaeps)$
for all $\eps>0$. We say that $(\ue)$ converges (weakly) in two
scales to $u\in L^{p}(\bQ;L^{p}(\Omega;\mupalm))$ and write $\ue\stackrel{2s}{\weakto}u$
if $\sup_{\eps>0}\norm{\ue}_{L^{p}(\bQ;\muomegaeps)}<\infty$ and
if for every $\psi\in\Phi_{\cP,q}$, $\varphi\in C(\overline{\bQ})$
there holds with $\phi_{\omega,\eps}(x):=\varphi(x)\psi(\tau_{\frac{x}{\eps}}\omega)$
\[
\lim_{\eps\to0}\int_{\bQ}\ue(x)\phi_{\omega,\eps}(x)\d\muomegaeps(x)=\int_{\bQ}\int_{\Omega}u(x,\tilde{\omega})\varphi(x)\psi(\tilde{\omega})\,\d\mupalm(\tilde{\omega})\,\d x\,.
\]
\end{defn}

We note that the definition of two-scale convergence in \cite{heida2017stochastic}
is formulated more generally, in particular for a more general class
of test-functions.
\begin{lem}[\cite{heida2017stochastic} Lemma 4.4-1.]
\label{lem:Existence-ts-lim}Let Assumption \ref{assu:TS-general}
hold. Let $\omega\in\Omega$ and $\ue\in L^{p}(\bQ;\muomegaeps)$
be a sequence of functions such that $\norm{\ue}_{L^{p}(\bQ)}\leq C$
for some $C>0$ independent of $\eps$. Then there exists a subsequence
of $(u^{\eps'})_{\eps'\to0}$ and $u\in L^{p}(\bQ;L^{p}(\Omega;\mupalm))$
such that $u^{\eps'}\stackrel{2s}{\weakto}u$ and 
\begin{equation}
\norm u_{L^{p}(\bQ;L^{p}(\Omega;\mupalm))}\leq\liminf_{\eps'\to0}\norm{u^{\eps'}}_{L^{p}(\bQ;\muomegaeps)}\,.\label{eq:two-scale-limit-estimate}
\end{equation}
\end{lem}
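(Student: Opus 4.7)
The plan is to use a Cantor diagonal extraction against a countable family of admissible test functions, identify the limit as a bounded linear functional on a dense subspace of $L^q(\bQ;L^q(\Omega;\mupalm))$, and invoke Riesz representation to get the two-scale limit $u$.

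First I would fix a countable dense subset $\Psi \subset C(\overline{\bQ})$ (e.g. rational polynomials, which exists because $\overline{\bQ}$ is compact Hausdorff). For every pair $(\varphi,\psi) \in \Psi \times \Phi_{\cP,q}$, set $\phi_{\omega,\eps}(x) := \varphi(x)\psi(\tau_{x/\eps}\omega)$ and observe, using H\"older's inequality,
\begin{equation*}
\left|\int_{\bQ}\ue\,\phi_{\omega,\eps}\,\d\muomegaeps\right| \;\leq\; \|\ue\|_{L^p(\bQ;\muomegaeps)}\,\left(\int_{\bQ}|\varphi(x)|^{q}\,|\psi|^{q}(\tau_{x/\eps}\omega)\,\d\muomegaeps(x)\right)^{1/q}.
\end{equation*}
Because $\omega\in\Omega_\Phi$ and (w.l.o.g., enlarging $\Phi_{\cP,q}$ by countably many functions of the form $|\psi|^q$) Theorem \ref{thm:Ergodic-Theorem-ran-meas-2} applies, the right factor converges to $\|\varphi\|_{L^q(\bQ)}\|\psi\|_{L^q(\Omega;\mupalm)}$ as $\eps\to 0$. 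Together with the uniform bound $\|\ue\|_{L^p(\bQ;\muomegaeps)}\leq C$ this yields a bound uniform in $\eps$ for every fixed test pair. A Cantor diagonal argument over the countable family $\Psi \times \Phi_{\cP,q}$ then gives a subsequence $\eps'\to 0$ along which
\begin{equation*}
T(\varphi,\psi) \;:=\; \lim_{\eps'\to 0}\int_{\bQ}u^{\eps'}\,\varphi(\cdot)\,\psi(\tau_{\cdot/\eps'}\omega)\,\d\mu_{\omega}^{\eps'}
\end{equation*}
exists for all $(\varphi,\psi)\in \Psi\times\Phi_{\cP,q}$.

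Next I would extend $T$ to a continuous bilinear form. By linearity of the integral, $T$ is $\Q$-bilinear on $\Psi\times\Phi_{\cP,q}$ (using stability of $\Phi_{\cP,q}$ under scalar multiplication and linear combination), and the estimate above passes to the limit to give
\begin{equation*}
|T(\varphi,\psi)| \;\leq\; C\,\|\varphi\|_{L^q(\bQ)}\,\|\psi\|_{L^q(\Omega;\mupalm)} \qquad\forall (\varphi,\psi)\in \Psi\times\Phi_{\cP,q},
\end{equation*}
with $C = \liminf_{\eps'\to 0}\|u^{\eps'}\|_{L^p(\bQ;\muomegaeps)}$. Since $\Psi$ is dense in $L^q(\bQ)$ and $\Phi_{\cP,q}$ is dense in $L^q(\Omega;\mupalm)$, the map $T$ extends uniquely to a bounded bilinear form on $L^q(\bQ)\times L^q(\Omega;\mupalm)$ with the same bound $C$. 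Equivalently, identifying algebraic tensors with simple functions and using the isometric identification $L^q(\bQ;L^q(\Omega;\mupalm))\cong L^q(\bQ\times\Omega;\lebesgueL\otimes\mupalm)$, the form $T$ induces a bounded linear functional on the dense subspace spanned by $\Psi\otimes\Phi_{\cP,q}$, hence extends to a bounded linear functional on $L^q(\bQ;L^q(\Omega;\mupalm))$ of norm at most $C$.

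Finally I would apply the Riesz representation theorem for $L^q$-spaces over $\sigma$-finite measure spaces (note $\mupalm$ is finite): there exists a unique $u \in L^p(\bQ;L^p(\Omega;\mupalm))$ with $\|u\|_{L^p(\bQ;L^p(\Omega;\mupalm))} \leq C$ such that
\begin{equation*}
T(\varphi,\psi) \;=\; \int_\bQ\!\int_\Omega u(x,\tilde\omega)\,\varphi(x)\,\psi(\tilde\omega)\,\d\mupalm(\tilde\omega)\,\d x.
\end{equation*}
This gives the defining two-scale identity for all $(\varphi,\psi)\in \Psi\times\Phi_{\cP,q}$. To conclude $u^{\eps'}\tsweakto u$ in the sense of Definition \ref{def:two-scale-conv} for all $\varphi\in C(\overline{\bQ})$ and $\psi\in\Phi_{\cP,q}$, I would approximate $\varphi$ in $\|\cdot\|_\infty$ by $\varphi_n\in\Psi$ and use a standard three-$\eps$ argument: the uniform bound on $\|u^{\eps'}\|_{L^p}$ and the ergodic convergence of $\int_\bQ|\varphi-\varphi_n|^q|\psi|^q(\tau_{\cdot/\eps'}\omega)\d\muomegaeps$ to $\|\varphi-\varphi_n\|_{L^q(\bQ)}^q\|\psi\|_{L^q(\Omega;\mupalm)}^q$ control the error uniformly in $\eps'$. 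The lower semicontinuity \eqref{eq:two-scale-limit-estimate} is precisely $\|u\|\leq C = \liminf\|u^{\eps'}\|$ obtained above. The main obstacle is the careful verification that test functions of product form $\varphi\otimes\psi$ span a dense subspace of $L^q(\bQ;L^q(\Omega;\mupalm))$ with the right operator-norm bound; this is where the assumption that $\Phi_{\cP,q}$ is dense and stable under $\Q$-linear combinations is essential.
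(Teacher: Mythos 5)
Your overall architecture (countable test family, Cantor diagonal, density extension, Riesz representation) is the same as the paper's sketch, which simply defers to the literature. However, there is a genuine gap in your extension step. You establish the bilinear bound
\[
|T(\varphi,\psi)| \;\leq\; C\,\|\varphi\|_{L^{q}(\bQ)}\,\|\psi\|_{L^{q}(\Omega;\mupalm)}
\]
for simple tensors $\varphi\otimes\psi$ and then claim that $T$ ``induces a bounded linear functional on the dense subspace spanned by $\Psi\otimes\Phi_{\cP,q}$, hence extends to a bounded linear functional on $L^q(\bQ;L^q(\Omega;\mupalm))$.'' This does not follow. A bilinear form $B$ satisfying $|B(\varphi,\psi)|\leq C\|\varphi\|_{L^q}\|\psi\|_{L^q}$ controls finite sums only via the projective tensor norm $\sum_i\|\varphi_i\|_{L^q}\|\psi_i\|_{L^q}$, which dominates but is in general strictly larger than $\|\sum_i\varphi_i\otimes\psi_i\|_{L^q(\bQ\times\Omega)}$. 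A concrete counterexample for $q=2$: take $\bQ=\Omega=[0,1]$ with Lebesgue measure and $B(\varphi,\psi)=\int_0^1\varphi\psi$; then $B$ is bounded bilinear by Cauchy--Schwarz, but the associated functional on $L^2(\bQ\times\Omega)$, formally $\phi\mapsto\int_0^1\phi(t,t)\,\d t$, is unbounded (the diagonal is a null set; test against $\phi_n=n^{-1/2}\sum_{k\le n}e_k\otimes e_k$ with $(e_k)$ an ONB).

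The repair is to apply H\"older not termwise but to the whole finite sum directly. For $\phi=\sum_{i}c_i\,\varphi_i\otimes\psi_i$ with $c_i\in\Q$, one has
\[
\left|\int_\bQ u^\eps(x)\,\phi\bigl(x,\tau_{x/\eps}\omega\bigr)\,\d\muomegaeps(x)\right| \;\leq\; \|u^\eps\|_{L^p(\bQ;\muomegaeps)}\left(\int_\bQ\bigl|\phi(x,\tau_{x/\eps}\omega)\bigr|^q\,\d\muomegaeps(x)\right)^{1/q}
\]
and the right factor must be shown to converge to $\|\phi\|_{L^q(\bQ\times\Omega;\lebesgueL\otimes\mupalm)}$. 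This is where the argument becomes nontrivial: $|\phi(\cdot,\cdot)|^q$ is no longer a simple product $\varphi(x)f(\omega)$, so Theorem \ref{thm:Ergodic-Theorem-ran-meas-2} is not directly applicable. One needs either to enlarge $\Phi_{\cP,q}$ so that the relevant countable family of functions $|\phi|^q$ is admissible (which requires the $\psi_i$ to be essentially bounded, and a parameter-dependent version of the ergodic theorem, i.e.\ uniformity in the $x$-variable via continuity of $x\mapsto\phi(x,\cdot)$ in $L^q(\Omega)$), or to restrict a priori to a countable dense set of such ``good'' test functions on $\bQ\times\Omega$. Once $|T(\phi)|\leq C\|\phi\|_{L^q(\bQ\times\Omega)}$ is established on a dense subspace, your density extension and Riesz-representation steps go through unchanged and give the stated lower semicontinuity \eqref{eq:two-scale-limit-estimate}.
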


\begin{proof}[Sketch of proof]
 The proof is standard and has been carried out in various publications
under various assumptions \cite{allaire1992homogenization,heida2011extension,heida2017stochastic,heida2019lambda,Zhikov2006}.
The important point is the separability of $C(\overline{\bQ})$, which
allows to pass to the limit for a countable number of test functions
$\left(\varphi_{k}\right)_{k\in\N}\in C(\overline{\bQ})$ first, and
then apply a density argument.
\end{proof}
Furthermore, we will need the following result on the lower estimate
in homogenization of convex functionals using two-scale convergence,
which was obtained in \cite{heidaNesen2017stochastic}.
\begin{lem}
\label{lem:General-Hom-Convex}Let Assumption \ref{assu:TS-general}
hold and let $\muomega$ be a random measure. Let $f:\,\bQ\times\Omega\times\R^{N}\to\R$
be a convex functional in $\Rd$. For almost all $\omega\in\Omega_{\Phi_{p}}$
the following holds: Let $\ue\in L^{q}(\bQ;\muomegaeps)$ be a sequence
such that $\norm{\ue}_{L^{q}(\bQ;\muomegaeps)}\leq C$ for some $0<C<\infty$
and such that $\ue\stackrel{{\scriptstyle 2s}}{\weakto}u\in L^{q}(\bQ\times\Omega;\lebesgueL\otimes\mupalm)$.
Then, it holds 
\[
\int_{\bQ}\int_{\Omega}f(x,\tilde{\omega},u(x,\tilde{\omega}))\,d\mupalm(\tilde{\omega})\,dx\leq\liminf_{\eps\to0}\int_{\bQ}f(x,\tau_{\frac{x}{\eps}}\omega,\ue(x))\,d\muomegaeps(x)\,.
\]
\end{lem}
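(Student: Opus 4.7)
The plan is to exploit convexity of $f$ in the third variable through the tangent-plane (subgradient) inequality, which converts the nonlinear lower-semicontinuity statement into a linear one to which two-scale convergence and the ergodic theorem apply directly.

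First, I would reduce to a dense countable class of admissible test fields. Using separability of $L^q(\bQ \times \Omega; \lebesgueL \otimes \mupalm)$ and a construction analogous to Theorem \ref{thm:dense-functions-Omega}, I would fix a countable family of ``tensor'' functions $\phi(x,\tilde\omega) = \sum_{i=1}^N \varphi_i(x)\psi_i(\tilde\omega)$ with $\varphi_i \in C(\overline\bQ)$ and $\psi_i \in \Phi_{\cP,q}$. Such $\phi$ are valid integrands against $\muomegaeps$ via $\phi_{\omega,\eps}(x) := \phi(x,\tau_{x/\eps}\omega)$ and, crucially, by Theorem \ref{thm:Ergodic-Theorem-ran-meas-2} the averaged integrals $\int_\bQ g(x,\tau_{x/\eps}\omega)\,\d\muomegaeps$ converge to $\int_\bQ\int_\Omega g\,\d\mupalm\,\d x$ for $\omega$ in a full-measure set $\Omega_\Phi$ (enlarged by a $\P$-null set to accommodate the countably many integrands that will appear).

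Second, by convexity of $f(x,\tilde\omega,\cdot)$ and a measurable selection of the subdifferential $\partial_u f$, for any such $\phi$ there exists $\xi(x,\tilde\omega) \in \partial_u f(x,\tilde\omega,\phi(x,\tilde\omega))$ with the pointwise inequality
\[
f(x,\tilde\omega,u^\eps(x)) \;\geq\; f(x,\tilde\omega,\phi(x,\tilde\omega)) + \xi(x,\tilde\omega)\cdot\bigl(u^\eps(x)-\phi(x,\tilde\omega)\bigr).
\]
Evaluating at $\tilde\omega = \tau_{x/\eps}\omega$ and integrating against $\muomegaeps$ yields three terms on the right. The first, $\int_\bQ f(x,\tau_{x/\eps}\omega,\phi(x,\tau_{x/\eps}\omega))\,\d\muomegaeps$, converges to $\int_\bQ\!\int_\Omega f(x,\tilde\omega,\phi)\,\d\mupalm\,\d x$ by Theorem \ref{thm:Ergodic-Theorem-ran-meas-2}. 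The third, $\int_\bQ \xi\cdot\phi\,\d\muomegaeps$, converges to $\int_\bQ\!\int_\Omega \xi\cdot\phi\,\d\mupalm\,\d x$ by the same ergodic theorem. The middle term, $\int_\bQ \xi(x,\tau_{x/\eps}\omega)\cdot u^\eps(x)\,\d\muomegaeps$, is precisely of the form appearing in Definition \ref{def:two-scale-conv} and converges to $\int_\bQ\!\int_\Omega \xi\cdot u\,\d\mupalm\,\d x$ by the two-scale weak convergence $u^\eps \tsweakto u$, provided $\xi$ is an admissible test field. Combining these gives
\[
\liminf_{\eps\to0}\int_\bQ f(\cdot,\tau_{\cdot/\eps}\omega,u^\eps)\,\d\muomegaeps \;\geq\; \int_\bQ\!\int_\Omega \bigl[f(x,\tilde\omega,\phi) + \xi\cdot(u-\phi)\bigr]\,\d\mupalm\,\d x.
\]

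Third, I would pass to the supremum over the countable family of admissible $\phi$. Since $f(x,\tilde\omega,\cdot)$ is convex and (under the standard $q$-growth assumption implicit in $\ue \in L^q$) locally Lipschitz in its last argument, choosing a sequence $\phi_n \to u$ in $L^q(\bQ\times\Omega;\lebesgueL\otimes\mupalm)$ allows $\int\!\int f(x,\tilde\omega,\phi_n)\,\d\mupalm\,\d x \to \int\!\int f(x,\tilde\omega,u)\,\d\mupalm\,\d x$ while the correction term $\int\!\int \xi_n\cdot(u-\phi_n)\,\d\mupalm\,\d x$ vanishes, yielding the claim.

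The main obstacle is the measurability / admissibility issue in the second step: the subgradient selection $\xi$ need not automatically lie in the algebraic tensor product of $C(\overline\bQ)$ with $\Phi_{\cP,q}$ required by Definition \ref{def:two-scale-conv}. I would handle this by first restricting $\phi$ to a dense subclass for which $\partial_u f(x,\tilde\omega,\phi(x,\tilde\omega))$ is piecewise of the required tensor form (e.g.\ $\phi$ locally constant in $x$, with $\psi_i$ chosen so that $\xi$ is again of tensor type), and then extending by density of such $\phi$ in $L^q(\bQ\times\Omega)$, exploiting that the inequality is stable under $L^q$-approximation because $f^*$ has $p$-growth dual to the $q$-growth of $f$.
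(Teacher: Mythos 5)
The paper does not give a proof of Lemma \ref{lem:General-Hom-Convex}; it is quoted as a result from \cite{heidaNesen2017stochastic}, so there is no internal argument to compare against. Your subgradient/tangent-plane strategy is the standard route and is viable in outline, but you have correctly put your finger on the decisive difficulty and then proposed a fix that does not actually close it.

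The gap is real: the inequality $\int\xi(\cdot,\tau_{\cdot/\eps}\omega)\,u^\eps\,\d\muomegaeps\to\int_\bQ\int_\Omega\xi u\,\d\mupalm\,\d x$ is, per Definition \ref{def:two-scale-conv}, only available for $\xi$ in the countable product class $C(\overline\bQ)\otimes\Phi_{\cP,q}$, and the ergodic convergence of $\int f(\cdot,\tau_{\cdot/\eps}\omega,\phi)\,\d\muomegaeps$ likewise holds only on the $\omega$-set $\Omega_\Phi$ carved out by a countable family of integrands. However, your proposed repair fails: a measurable subgradient selection $\xi(x,\tilde\omega)=\partial_u f(x,\tilde\omega,\phi(x,\tilde\omega))$ is \emph{not} of tensor type even when $\phi$ is piecewise constant, because $\partial_u f(x,\cdot,c)$ is merely a measurable function of $\tilde\omega$ with no reason to lie in $\Phi_{\cP,q}$; one would need to \emph{assume} $\partial_u f$ maps constants into the countable class, which is unjustified. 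What actually closes the gap is a density/$3\eps$ argument exploiting the uniform bound $\norm{u^\eps}_{L^q(\bQ;\muomegaeps)}\leq C$: fix a countable sequence $\phi_n\to u$ in the tensor class, set $\xi_n=\partial_u f(\cdot,\phi_n)$, approximate each $\xi_n$ in $L^p(\bQ\times\Omega)$ by a countable sequence $\xi_{n,\delta}$ from the test class, and split
\[
\Bigl|\int\xi_n u^\eps\,\d\muomegaeps-\int\!\!\int\xi_n u\Bigr|
\leq C\,\norm{(\xi_n-\xi_{n,\delta})(\cdot,\tau_{\cdot/\eps}\omega)}_{L^p(\bQ;\muomegaeps)}
+\Bigl|\int\xi_{n,\delta}u^\eps\,\d\muomegaeps-\int\!\!\int\xi_{n,\delta}u\Bigr|
+\delta\,\norm u_{L^q}\,.
\]
The second term vanishes by two-scale convergence, and the first is controlled in the limit $\eps\to0$ by the ergodic theorem applied to $|\xi_n-\xi_{n,\delta}|^p\in L^1(\Omega;\mupalm)$; since only countably many such integrands appear, the $\P$-null set on which this fails may be absorbed into the construction of $\Omega_\Phi$ (this is exactly the purpose of the remark following Assumption \ref{assu:TS-general}). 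The same device must also be invoked for the terms $\int f(\cdot,\tau_{\cdot/\eps}\omega,\phi_n)\,\d\muomegaeps$ and $\int\xi_n\phi_n\,\d\muomegaeps$. With this replacement of your second-paragraph fix, the argument is sound; you should also make explicit the growth condition on $f$ (e.g.\ $q$-growth, so $\partial_u f$ has $p$-growth) that you used implicitly to place $\xi_n$ in $L^p$ and to justify $\int\!\!\int f(\cdot,\phi_n)\to\int\!\!\int f(\cdot,u)$.
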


\subsection{The ``Right'' Choice of Oscillating Test Functions}

In what follows, we will have to deal with two-scale limits of functions
on $\Rd$, but also on $\bP(\omega)$ or $\Gamma(\omega)$. Hence
we deal with two-scale convergence w.r.t. to $\P$, $\chi_{\bP}\P$
and $\mugammapalm$. In order to keep notation of the set(s) of testfunctions
short and concise, we make the following choice: 
\[
\Phi_{q}=\Phi_{\P,q}=\left(u_{k}\right)_{k\in\N}
\]
is the set countable set of functions $\left(u_{k}\right)_{k\in\N}\subset W^{1,p}(\Omega)\cap W^{1,\infty}(\Omega)$
from Theorem \ref{thm:dense-functions-Omega}. Hence $\left(u_{k}\right)_{k\in\N}$
is dense in $L^{p}(\Omega)$ and $\left(\nabla u_{k}\right)_{k\in\N}$
is dense in $\cV_{\pot}^{p}(\Omega)$ (see Theorem \ref{thm:Lp-W1p-structure}).

If $\Gamma$ has the strong or weak $(r,p)$-trace property, using
Theorem \ref{thm:weak-trace} and \ref{thm:weak-trace-general} we
define 
\[
\Phi_{r,\Gamma}=\cT_{\Omega}\Phi_{p}\cup\tilde{\Phi}_{r,\Gamma}\,,
\]
where $\tilde{\Phi}_{r,\Gamma}$ is dense in $L^{r}(\Gamma,\mugammapalm)$.
In case of Assumption \ref{assu:Omega-mu-tau-fundmental-strong},
we note that $\cT_{\Omega}\Phi_{p}$ is dense in $L^{r}(\Gamma,\mugammapalm)$
because $C_{b}(\omega)$ is dense in $L^{r}(\Gamma,\mugammapalm)$.
However, in case of Assumption \ref{assu:Omega-mu-tau-fundmental}
it is not clear that $\cT_{\Omega}\Phi_{p}$ is dense in $L^{r}(\Gamma,\mugammapalm)$,
which is why $\tilde{\Phi}_{r,\Gamma}$ is needed.

\subsection{Homogenization of Gradients}

In what follows, we introduce two-scale convergence of gradients.
This result has been proven in various work under various assumptions,
see e.g. \cite{allaire1992homogenization} for the periodic case and
\cite{Zhikov2006,neukammvarga2018stochastic,heida2017stochastic}
in the stochastic case. We provide the proof here for self-containedness
of this outline.
\begin{thm}
\label{thm:sto-conver-grad}Under Assumption \ref{assu:TS-general}
for almost every $\omega\in\Omega$ the following holds: 

If $u^{\eps}\in W^{1,p}(\bQ;\Rd)$
for all $\eps$ and if there exists $0<C_{u}<\infty$ with 
\[
\sup_{\eps>0}\norm{u^{\eps}}_{L^{p}(\bQ)}+\eps^{\gamma}\norm{\nabla u^{\eps}}_{L^{p}(\bQ)}<C_{u}
\]
Then there exists $u\in L^{p}(\bQ L^{p}(\Omega;\P))$ such that $\ue\tsweakto u$.
Depending on the choice of $\gamma$, the following holds: 

\begin{enumerate}
\item If $\gamma=0$, then $u\in W^{1,p}(\bQ)$ with $u^{\eps}\weakto u$
weakly in $W^{1,p}(\bQ)$ and there exists $\vel_{1}\in L^{p}(\bQ;\cV_{\pot}^{p}(\Omega))$
such that $\nabla\ue\tsweakto\nablax u+\vel_{1}$weakly in two scales.
\item If $\gamma\in(0,1)$ then $\eps^{\gamma}\nabla\ue\tsweakto\vel_{1}$
for some $\vel_{1}\in L^{p}(\bQ;\cV_{\pot}^{p}(\Omega))$.
\item If $\gamma=1$ then $u\in L^{p}(\bQ;W^{1,p}(\Omega))$ and $\eps\nabla\ue\tsweakto\rmD_{\omega}u$.
\item If $\gamma>1$ then $\eps^{\gamma}\nabla\ue\tsweakto0$.
\end{enumerate}
\end{thm}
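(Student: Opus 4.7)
By Lemma \ref{lem:Existence-ts-lim} I can extract a subsequence and two-scale limits $u\in L^p(\bQ;L^p(\Omega;\P))$ and $\vel\in L^p(\bQ;L^p(\Omega;\P))^d$ with $u^\eps\tsweakto u$ and $\eps^\gamma\nabla u^\eps\tsweakto\vel$. The rest of the argument is a family of integrations by parts against oscillating test functions of the form $\phi_\eps(x)=\varphi(x)\psi(\tau_{x/\eps}\omega)$, with $\varphi\in C_c^1(\bQ)$ and $\psi$ drawn from the dense families $\Phi_p,\Phi_q\subset W^{1,\infty}(\Omega)$ of Theorem \ref{thm:dense-functions-Omega}. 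The key algebraic identity is
\begin{equation*}
\int_\bQ \eps^\gamma\nabla u^\eps\cdot\phi_\eps \;=\; -\eps^\gamma\int_\bQ u^\eps\nabla\varphi\,\psi(\tau_{x/\eps}\omega)\;-\;\eps^{\gamma-1}\int_\bQ u^\eps\varphi(\diver_\omega\psi)(\tau_{x/\eps}\omega)\,.
\end{equation*}
I will now let $\eps\to 0$ and exploit $u^\eps\tsweakto u$ together with the characterisation $(\cV_{\pot}^p(\Omega))^\perp=L^q_\sol(\Omega)$ from Theorem \ref{thm:Lp-W1p-structure}.

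\textbf{Cases 2, 3, 4.} For $\gamma\in(0,1)$ I pick $\psi\in L^q_\sol(\Omega)\cap L^\infty$ (approximated from a countable dense subfamily I enlarge $\Phi_q$ by, which is permissible by Theorem \ref{thm:Lp-W1p-structure} and Lemma \ref{lem:Properties-sI-delta}): the second term vanishes, the first is $O(\eps^\gamma)$, so the limit $\int\!\int\vel\cdot\varphi\psi=0$ shows $\vel\in L^p(\bQ;L^q_\sol(\Omega)^\perp)=L^p(\bQ;\cV_{\pot}^p(\Omega))$. For $\gamma=1$ I test against arbitrary $\psi\in\Phi_q$: the first term vanishes in the limit, the second has a well-defined two-scale limit, giving
\begin{equation*}
\int_\bQ\!\!\int_\Omega\vel\cdot\varphi\psi\,\d\P\,\d x\;=\;-\int_\bQ\!\!\int_\Omega u\,\varphi\,\diver_\omega\psi\,\d\P\,\d x\,,
\end{equation*}
which by density of $\Phi_q$ in $W^{1,q}(\Omega)$ and a duality argument on the generator side means $u(x,\cdot)\in W^{1,p}(\Omega)$ for a.e.\ $x$ with $\rmD_\omega u=\vel$. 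For $\gamma>1$ both terms on the right carry a factor $\eps^{\gamma-1}$ or $\eps^\gamma$, so $\vel=0$.

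\textbf{Case 1.} The $L^p$-bound on $\nabla u^\eps$ gives a weak limit $\tilde u\in W^{1,p}(\bQ)$ along a subsequence. Running the identity above with $\gamma=1$ (applied to the bounded sequence $\eps\nabla u^\eps$, which two-scale converges to $0$ by case 4's scaling argument) forces $\diver_\omega u=0$ in the duality sense, whence $u(x,\omega)=\tilde u(x)$ is independent of $\omega$ by ergodicity (via Theorem \ref{thm:Lp-W1p-structure}, since $u$ must be $\Rd$-invariant mean-zero-gradient). Then, for $\psi\in L^q_\sol(\Omega)$,
\begin{equation*}
\int_\bQ\nabla u^\eps\cdot\varphi\psi(\tau_{x/\eps}\omega)\;=\;-\int_\bQ u^\eps\nabla\varphi\cdot\psi(\tau_{x/\eps}\omega)\;\longrightarrow\;-\int_\bQ\!\!\int_\Omega\tilde u\,\nabla\varphi\cdot\psi\;=\;\int_\bQ\!\!\int_\Omega\nabla\tilde u\cdot\varphi\psi\,,
\end{equation*}
so $\vel-\nabla\tilde u\perp L^q_\sol(\Omega)$ pointwise in $x$, hence $\vel=\nabla_x\tilde u+\vel_1$ with $\vel_1\in L^p(\bQ;\cV_{\pot}^p(\Omega))$ by Theorem \ref{thm:Lp-W1p-structure}.

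\textbf{Main obstacle.} The delicate point is justifying that the test-function classes $\{\varphi(x)\psi(\tau_{x/\eps}\omega):\psi\in\Phi_q\cap L^q_\sol\}$ admit enough divergence-free elements to probe $L^q_\sol(\Omega)$ densely, and that $\Phi_q$ is dense in $W^{1,q}(\Omega)$ for testing against $\diver_\omega$; both are guaranteed by Theorem \ref{thm:dense-functions-Omega} combined with Theorem \ref{thm:Lp-W1p-structure}, once one observes that $\sI_\delta$ preserves $L^q_\sol(\Omega)$ (analogous to Remark \ref{rem:Lppot-invariant}). A secondary subtlety in Case 1 is showing that the two-scale limit of $u^\eps$ is actually independent of $\omega$; this follows cleanly from the $\gamma=1$ reasoning applied to $\eps\nabla u^\eps$, which is the reason for organising the four cases in the order above.
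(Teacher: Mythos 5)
Your proposal is correct and follows essentially the same strategy as the paper: extract two-scale limits via Lemma \ref{lem:Existence-ts-lim}, integrate by parts against oscillating test functions $\varphi(x)\psi(\tau_{x/\eps}\omega)$, and discriminate the cases by choosing either solenoidal $\psi$ (using the orthogonality $\left(\cV_{\pot}^{p}(\Omega)\right)^{\bot}=L_{\sol}^{q}(\Omega)$ from Theorem \ref{thm:Lp-W1p-structure}) or generic $\psi\in W^{1,q}(\Omega)\cap W^{1,\infty}(\Omega)$ from Theorem \ref{thm:dense-functions-Omega}.

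The two presentations differ in organization rather than substance. You centre everything on a single identity carrying the factor $\eps^{\gamma}$ and then read off all four cases, whereas the paper proves cases 2 and 4 by renormalizing ($\tilde u^{\eps}=\eps^{\gamma}u^{\eps}$ resp.\ $\eps^{\gamma-1}u^{\eps}$) and invoking cases 1 and 3. More notably, for case 1 you recover the $\omega$-independence of the two-scale limit $u$ from $\rmD_{\omega}u=0$ together with ergodicity at the level of the infinitesimal generators, whereas the paper proves this separately as Lemma \ref{lem:per-ts-no-grad-omega} by a direct shift-by-$\tau_{a}$ argument at the group level; these two routes are interchangeable (the generator route still needs the mollification step $\sI_{\delta}u$ from Lemma \ref{lem:Properties-sI-delta} to pass from the weak identity $\int_{\Omega}u\,\rmD_{i}\psi=0$ to genuine invariance). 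One small caveat in your writing: the phrase ``the second term vanishes'' for $\psi\in L_{\sol}^{q}(\Omega)$ is literally the definition of $L_{\sol}^{q}$ tested against $\nabla(\varphi u^{\eps})$, not a pointwise evaluation of $\diver_{\omega}\psi$, which need not exist as a function for such $\psi$; this is how the paper writes it and is worth making explicit. With that bookkeeping noted, the argument is sound and reaches the same conclusions.
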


\begin{lem}
\label{lem:per-ts-no-grad-omega}Under Assumption \ref{assu:TS-general}
for almost all $\omega\in\Omega$ the following holds: Let $p>1$
and $\left(\ue\right)_{\eps>0}$ be a sequence of functions satisfying
\begin{equation}
\sup_{\eps>0}\left\Vert \ue\right\Vert _{L^{p}(\bQ)}<+\infty\,,\qquad\lim_{\eps\to0}\eps\left\Vert \nabla\ue\right\Vert _{L^{p}(\bQ)}=0\,.\label{eq:lem:per-ts-no-grad-omega}
\end{equation}
If $\ue\tsweakto u$ along a subsequence, then $u\in L^{p}(\bQ)$
is independent of $\Omega$.
\end{lem}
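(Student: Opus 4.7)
The plan is to exploit the identity $\nabla_x[\psi(\tau_{x/\eps}\omega)]=\eps^{-1}(\rmD_\omega\psi)(\tau_{x/\eps}\omega)$, valid for $\psi\in W^{1,\infty}(\Omega)$ (with the almost surely $C^1$--regular realizations supplied by Theorem~\ref{thm:dense-functions-Omega}), to transfer gradient information on $\Omega$ into $\eps$--scaled gradient information on $\Rd$. The smallness of $\eps\nabla\ue$ then kills everything, forcing the two-scale limit to be $\omega$-invariant; ergodicity upgrades invariance to independence.

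Concretely, fix $\varphi\in C_c^1(\bQ)$, an index $i\in\{1,\dots,d\}$, and a test function $\psi$ from the countable dense family $(\psi_k)_{k\in\N}\subset W^{1,\infty}(\Omega)$ of Theorem~\ref{thm:dense-functions-Omega} whose realizations are almost surely $C^1$ with $\norm{\psi_{k,\omega}}_{W^{1,\infty}(\Rd)}\leq\norm{\psi_k}_{W^{1,\infty}(\Omega)}$. Integration by parts in $\Rd$ yields
\[
\int_\bQ \ue(x)\,\varphi(x)\,(\rmD_i\psi)(\tau_{x/\eps}\omega)\,dx=-\eps\int_\bQ\bigl[\ue\,\partial_i\varphi+\varphi\,\partial_i\ue\bigr]\psi(\tau_{x/\eps}\omega)\,dx.
\]
Using $\norm{\psi}_{L^\infty(\Omega)}<\infty$, the uniform bound $\sup_\eps\norm{\ue}_{L^p(\bQ)}<\infty$, and the hypothesis $\eps\norm{\nabla\ue}_{L^p(\bQ)}\to0$, the right-hand side tends to $0$. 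To interpret the left-hand side as a two-scale test, we enlarge the countable dense family $\Phi_{\cP,q}=\Phi_{\P,q}$ of Assumption~\ref{assu:TS-general} to include the collection $\{\rmD_i\psi_k\}_{i,k}\subset L^\infty(\Omega)\subset L^q(\Omega;\P)$; by the remark following Assumption~\ref{assu:TS-general} this only shrinks $\Omega_\Phi$ by a $\P$--null set. Then Definition~\ref{def:two-scale-conv} applies and $u^\eps\tsweakto u$ gives
\[
\int_\bQ\varphi(x)\int_\Omega u(x,\tilde\omega)\,\rmD_i\psi(\tilde\omega)\,d\P(\tilde\omega)\,dx=0.
\]

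Since $\varphi\in C_c^1(\bQ)$ is arbitrary, this forces $\int_\Omega u(x,\cdot)\,\rmD_i\psi\,d\P=0$ for $\lebesgueL$-a.e.~$x\in\bQ$, every $i$, and every $\psi$ in the countable dense family, hence by density for all $\psi\in W^{1,q}(\Omega)$. Consequently $u(x,\cdot)\in L^p(\Omega)$ has weak derivatives $\rmD_i u(x,\cdot)=0$ for each $i$. Since $\rmD_i$ is the infinitesimal generator of the strongly continuous unitary group $\rmT_i$ on $L^p(\Omega)$ (Lemma~\ref{lem:T-strongly-continuous}), $\rmD_\omega u(x,\cdot)=0$ implies $\rmT(y)u(x,\cdot)=u(x,\cdot)$ for every $y\in\Rd$, i.e.~$u(x,\cdot)$ is $\tau$-invariant. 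Ergodicity (Remark~\ref{rem:trivial-mixing}c) then forces $u(x,\cdot)$ to be $\P$--a.s.\ constant, so $u(x,\omega)=\tilde u(x)$ for some $\tilde u\in L^p(\bQ)$, with $\norm{\tilde u}_{L^p(\bQ)}=\norm{u}_{L^p(\bQ;L^p(\Omega))}$ finite by (\ref{eq:two-scale-limit-estimate}).

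I expect the only real obstacle to be bookkeeping: one must realize the ``a.e.\ $\omega$'' in the statement as the intersection of the full-measure set $\Omega_\Phi$ (for the enlarged $\Phi_{\cP,q}$), the full-measure set where the $C^1$-realizations of the countably many $\psi_k$ behave as expected, and the full-measure set along which $u^\eps\tsweakto u$ subsequentially holds. All three have full $\P$-measure and involve only countable families, so their intersection does too. The passage from ``countably many $\varphi$'' to ``all $\varphi\in C_c^1(\bQ)$'' when extracting the pointwise-in-$x$ identity uses separability of $C_c^1(\bQ)$ in the natural topology.
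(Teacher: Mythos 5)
Your proof is correct, but it follows a genuinely different route from the paper's. The paper establishes $\tau$-invariance of $u(x,\cdot)$ by a \emph{finite-difference} argument: for fixed $a\in\Q^d$ it rewrites $\int_\bQ\int_\Omega(u(x,\tau_a\omega)-u(x,\omega))\varphi\psi$ as a limit of two-scale integrals, translates $\ue$ and $\varphi$ by $\eps a$, and estimates the two resulting error terms using $\|u^\eps(\cdot+\eps a)-u^\eps\|\lesssim\eps\|\nabla\ue\|$ and $\|\varphi(\cdot+\eps a)-\varphi\|\lesssim\eps\|\nabla\varphi\|_\infty$; this directly yields $u(x,\tau_a\omega)=u(x,\omega)$, with no recourse to the generator $\rmD_i$ or to realizations of test functions. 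Your argument is the \emph{infinitesimal} counterpart: you move $\partial_i$ onto the oscillating factor via the chain rule $\nabla_x[\psi(\tau_{x/\eps}\omega)]=\eps^{-1}(\rmD_\omega\psi)(\tau_{x/\eps}\omega)$, integrate by parts in $x$, and conclude $\int_\Omega u(x,\cdot)\,\rmD_i\psi=0$ for all $\psi$. What your version buys is a cleaner conclusion that plugs into the semigroup machinery already set up in Section~\ref{sec:Sobolev-spaces-on}; what it costs is (a) a dependence on the realizations of $\psi$ being $W^{1,\infty}_{\loc}$, which the paper's finite-difference step avoids, and (b) the step ``$\int_\Omega u\,\rmD_i\psi=0$ for all $\psi\in W^{1,q}(\Omega)$ $\Rightarrow$ $u\in D(\rmD_i)$ with $\rmD_i u=0$ $\Rightarrow$ $\rmT_i(t)u=u$''. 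That step is true but is not quite as automatic as you present it: a vanishing weak (dual) derivative identifies $u$ as lying in the domain of the adjoint $(\rmD_i)^*$ with $(\rmD_i)^*u=0$, and one then needs either the skew-adjointness of $\rmD_i$ (asserted in the paper but without a careful domain statement) or, more robustly, a mollification argument via $\sI_\delta$: from $\langle u,\rmD_i\sI_\delta\phi\rangle=0$ for all $\phi\in L^q(\Omega)$ and $\rmD_i\sI_\delta u=(\partial_i\eta_\delta)\ast u$ (Lemma~\ref{lem:Omega-convolution}) one gets $\rmD_i\sI_\delta u=0$ a.e., hence $\sI_\delta u$ is invariant, and one passes $\delta\to0$. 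Spelling out one of those two bridges would close the only soft spot in your write-up.
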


\begin{proof}
We obtain that $\ue\stackrel{2s}{\weakto}u\in L^{p}(\bQ;L^{p}(\Omega))$
along a subsequence. We show that $u$ does not depend on the $\Omega$-coordinate
using ergodicity. We recall that $\tau_{\bullet}$ are all measure
preserving for $\P$. Hence, for any $\varphi\in C_{c}^{\infty}(\bQ)$
and $\psi\in\Phi_{q}$, we find for any $a\in\Q^{d}$ it holds
\begin{align*}
\int_{\bQ}\int_{\Omega}\left(u(x,\tau_{a}\omega)-u(x,\omega)\right) & \varphi(x)\psi(\omega)\,d\P(\omega)\,dx\\
 & =\int_{\bQ}\int_{\Omega}u(x,\omega)\varphi(x)\left(\psi(\tau_{-a}\omega)-\psi(\omega)\right)\,d\P(\omega)\,dx\\
 & =\lim_{\eps\to0}\int_{\bQ}u^{\eps}(x)\varphi(x)\left(\psi(\tau_{\frac{-\eps a+x}{\eps}}\omega)-\psi(\tau_{\frac{x}{\eps}}\omega)\right)\,dx\\
 & =\lim_{\eps\to0}\int_{\bQ}\left(u^{\eps}(x+\eps a)\varphi(x+\eps a)-u^{\eps}(x)\varphi(x)\right)\psi(\tau_{\frac{x}{\eps}}\omega)\,dx\\
 & =\lim_{\eps\to0}\int_{\bQ}u^{\eps}(x+\eps a)\left(\varphi(x+\eps a)-\varphi(x)\right)\psi(\tau_{\frac{x}{\eps}}\omega)\,dx\\
 & \qquad+\lim_{\eps\to0}\int_{\bQ}\left(u^{\eps}(x+\eps a)-u^{\eps}(x)\right)\varphi(x)\psi(\tau_{\frac{x}{\eps}}\omega)\,dx\,.
\end{align*}
The first integral on the right hand side can be easily estimated
through
\[
\norm{\ue}_{L^{p}(\bQ)}\norm{\psi(\tau_{\frac{\cdot}{\eps}}\omega)}_{L^{q}(\bQ)}\eps|a|\norm{\nabla\varphi}_{\infty}\to0\quad\mbox{as }\eps\to0\,.
\]
The second integral can be estimated through 
\[
\norm{\varphi}_{\infty}\int_{\bQ}\left|\int_{0}^{\eps}\nabla u^{\eps}(x+ta)\cdot a\,\d t\right|\,\left|\psi(\tau_{\frac{x}{\eps}}\omega)\right|\,dx\leq\norm{\varphi}_{\infty}\eps\norm{\nabla\ue}_{L^{p}(\bQ)}|a|\norm{\psi(\tau_{\frac{\cdot}{\eps}}\omega)}_{L^{q}(\bQ)}\,.
\]
Due to (\ref{eq:lem:per-ts-no-grad-omega}) the right hand side of
the above inequality converges to $0$. Since $\varphi$ and $\psi$
were arbitrary, we obtain $u(x,\tau_{a}\omega)=u(x,\omega)$ for every
$a\in\Rd$. Hence $u$ is invariant under all translations $\tau_{a}$,
which implies for almost every $x\in\bQ$ that $u(x,\cdot)=const$
by ergodicity of $\tau_{\bullet}$.
\end{proof}
Based on Lemma \ref{lem:per-ts-no-grad-omega} we can now prove Theorem
\ref{thm:sto-conver-grad}.
\begin{proof}[Proof of Theorem \ref{thm:sto-conver-grad}]
\textbf{ }We note that $\ue\tsweakto u\in L^{p}(\bQ;L^{p}(\Omega))$
and $\nabla\ue\tsweakto\vel\in L^{p}(\bQ;L^{p}(\Omega;\Rd))$ along
a subsequence.

\emph{Proof of 1:} We consider a countable set $\Phi_{\sol}\subset L_{\sol}^{q}(\Omega)$
which is dense in $L_{\sol}^{q}(\Omega)$. Then, by definition of
$L_{\sol}^{p}(\Omega)$ we find for all $b\in\Phi_{\sol}$ and all
$\varphi\in C_{c}^{\infty}(\bQ)$
\[
\int_{\bQ}\left(\varphi\nabla\ue+\ue\nabla\varphi\right)\cdot b(\tau_{\frac{\bullet}{\eps}}\omega)\,d\lebesgueL=\int_{\bQ}\nabla\left(\ue\varphi\right)\cdot b(\tau_{\frac{\bullet}{\eps}}\omega)\,d\lebesgueL=0\,.
\]
We take the limit $\eps\to0$ on the left hand side and obtain 
\[
\int_{\bQ}\left(\varphi(x)v(x,\tilde{\omega})+u\nabla\varphi(x)\right)\cdot b(\tilde{\omega})\,d\P(\tilde{\omega})\,dx=0\,.
\]
After integration by parts, this implies 
\[
\int_{\bQ}\varphi(x)\left(\nabla u(x)-v(x,\tilde{\omega})\right)\cdot b(\tilde{\omega})\,d\P(\tilde{\omega})\,dx=0\,.
\]
As $\varphi\in C_{c}^{\infty}(\bQ)$ and $b\in\Phi_{\sol}$ were arbitrary
and since $\Phi_{\sol}\subset L_{\sol}^{q}(\Omega)$ is dense, the
last equation and Lemma \ref{thm:Lp-W1p-structure} imply that $\nabla u(x)-\vel(x,\cdot)\in\cV_{\pot}^{p}(\Omega)$
for almost every $x\in\bQ$.

\emph{Proof of 2:} We apply Part 1 to $\tilde{u}^{\eps}:=\eps^{\gamma}\ue$.
Evidently, $\tilde{u}^{\eps}\tsweakto0$ and hence there exists $\vel_{1}\in L^{p}(\bQ;\cV_{\pot}^{p}(\Omega))$
such that $\eps^{\gamma}\nabla\ue=\nabla\tilde{u}^{\eps}\tsweakto\vel_{1}$.

\emph{Proof of 3:} Let $\psi\in L_{\sol}^{q}(\Omega)$ and $\varphi\in C_{0}^{1}(\bQ)$.
Then we have 
\[
\int_{\bQ}\eps\nabla\ue\cdot\varphi\psi\left(\tau_{\frac{\bullet}{\eps}}\omega\right)\d\lebesgueL=-\int_{\bQ}\ue\psi\left(\tau_{\frac{\bullet}{\eps}}\omega\right)\cdot\eps\nablax\varphi\d\lebesgueL\,.
\]
As $\eps\to\infty$ we obtain 
\[
\int_{\bQ}\int_{\Omega}\vel(x,\tilde{\omega})\cdot\varphi(x)\psi(\tilde{\omega})\,\d\P(\tilde{\omega})\,\d x=0
\]
and since this holds for every $\psi\in L_{\sol}^{q}(\Omega)$ and
$\varphi\in C_{0}^{1}(\bQ)$, we obtain that $\vel(x,\omega)\in L^{p}(\bQ;\cV_{\pot}^{p}(\Omega))$.
Furthermore, for a countable dense family $\psi\in W^{1,p}(\Omega)$
and $\varphi\in C_{0}^{1}(\bQ)$ we obtain 
\[
\int_{\bQ}\eps\partial_{i}\ue(x)\varphi(x)\psi\left(\tau_{\frac{x}{\eps}}\omega\right)\,\d x=-\int_{\bQ}\ue(x)\psi\left(\tau_{\frac{x}{\eps}}\omega\right)\cdot\eps\partial_{i}\varphi(x)\d x-\int_{\bQ}\ue(x)\rmD_{i}\psi\left(\tau_{\frac{x}{\eps}}\omega\right)\varphi(x)\,\d x
\]
and in the limit
\[
\int_{\bQ}\int_{\Y}\vel_{i}(x,y)\cdot\varphi(x)\psi(\omega)\,\d\P(\omega)\,\d x=-\int_{\bQ}\int_{\Y}u(x,y)\rmD_{i}\psi\left(\omega\right)\varphi(x)\,\d\P(\omega)\,\d x\,.
\]
This implies $\vel_{i}=\rmD_{i}u$.

\emph{Proof of 4:} Part 3 implies that $\tilde{u}^{\eps}:=\eps^{\gamma-1}$
satisfies $\tilde{u}^{\eps}\tsweakto0$ and $\eps^{\gamma}\nabla\ue=\eps\nabla\tilde{u}^{\eps}\tsweakto\rmD_{\omega}0=0$.
\end{proof}
Important in the context of convergence of gradients is also the following
recovery lemma, obtained in \cite[Section 2.3]{heida2014stochastic}
for the $L^{2}$-case.
\begin{lem}
\label{lem:vanishing-ergodic-potential} Let Assumption \ref{assu:TS-general}
hold. Let $\v\in\cV_{{\rm pot}}^{p}(\Omega)$, $1<p<\infty$ and let
$\bQ$ be a bounded convex domain. For almost every $\omega$ there
exists $C>0$ such that the following holds: For every $\eps>0$ there
exists a unique $V_{\eps}^{\omega}\in W^{1,p}(\bQ)$ with $\nabla V_{\eps}^{\omega}(x)=\v(\tau_{\frac{x}{\eps}}\omega)$,
$\int_{\bQ}V_{\eps}^{\omega}=0$ and $\|V_{\eps}\|_{W^{1,p}(\bQ)}\leq C\|\v\|_{L_{{\rm pot}}^{p}(\Omega)}$
for all $\eps>0$. Furthermore, 
\[
\lim_{\eps\to0}\|V_{\eps}^{\omega}\|_{L^{p}(\bQ)}=0\,.
\]
\end{lem}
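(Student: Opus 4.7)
The plan is to first construct $V_\eps^\omega$ pathwise, then get a uniform $W^{1,p}$-bound from the ergodic theorem, and finally identify the $L^p$-limit via weak compactness and Rellich.

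\textbf{Construction of $V_\eps^\omega$.} First I would use that, by definition of $\cV_{\pot}^p(\Omega)\subset L_{\pot}^p(\Omega)$, for almost every $\omega$ the realization $\v_\omega(x):=\v(\tau_x\omega)$ lies in $L^p_{\pot,\loc}(\Rd)$. Hence on the bounded domain $\eps^{-1}\bQ$ there exists $\tilde V_\eps\in W^{1,p}(\eps^{-1}\bQ)$ with $\nabla\tilde V_\eps=\v_\omega$; setting $V_\eps^\omega(x):=\eps\tilde V_\eps(x/\eps)-\fint_\bQ\eps\tilde V_\eps(\cdot/\eps)$ gives $\nabla V_\eps^\omega(x)=\v(\tau_{x/\eps}\omega)$ with $\int_\bQ V_\eps^\omega=0$. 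Uniqueness on the connected set $\bQ$ is automatic under the mean-zero condition.

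\textbf{Uniform $W^{1,p}$-bound.} For the gradient, I would apply the general ergodic theorem (Theorem \ref{thm:general-lebesgue-ergodic-thm}) to the function $|\v|^p\in L^1(\Omega)$: for almost every $\omega$ and $\varphi\equiv 1$,
\[
\int_\bQ |\nabla V_\eps^\omega(x)|^p\,dx=\int_\bQ|\v(\tau_{x/\eps}\omega)|^p\,dx\;\to\;|\bQ|\,\E(|\v|^p)=|\bQ|\,\|\v\|_{L^p(\Omega)}^p.
\]
In particular $\|\nabla V_\eps^\omega\|_{L^p(\bQ)}$ is uniformly bounded. The mean-zero Poincar\'e inequality on the bounded convex $\bQ$ then yields $\|V_\eps^\omega\|_{L^p(\bQ)}\leq C\|\nabla V_\eps^\omega\|_{L^p(\bQ)}\leq C\|\v\|_{L^p(\Omega)}$, uniformly in $\eps$.

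\textbf{Passage to the limit.} I would extract a (not relabelled) subsequence with $V_{\eps'}^\omega\weakto V$ weakly in $W^{1,p}(\bQ)$; by Rellich--Kondrachov the convergence is strong in $L^p(\bQ)$, and $\int_\bQ V=0$. To identify $V$, apply the general ergodic theorem again, this time to $\v_i\in L^p(\Omega)$ for each component, with arbitrary test function $\varphi\in L^q(\bQ)$: since $\v\in\cV_{\pot}^p(\Omega)$ satisfies $\E\v=0$ by definition of $\cV_{\pot}^p$, we conclude $\v(\tau_{\cdot/\eps}\omega)\weakto 0$ weakly in $L^p(\bQ;\Rd)$. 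On the other hand, weak $W^{1,p}$-convergence of $V_{\eps'}^\omega$ gives $\nabla V_{\eps'}^\omega\weakto\nabla V$, so $\nabla V=0$ a.e.\ on the connected set $\bQ$, hence $V$ is constant, and the mean-zero condition forces $V=0$.

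\textbf{Conclusion and the main subtlety.} Since the limit $V=0$ is the unique possible accumulation point in $L^p(\bQ)$, the whole sequence satisfies $\|V_\eps^\omega\|_{L^p(\bQ)}\to 0$. The only nontrivial point is the simultaneous availability, for a single null set of $\omega$, of (i) the ergodic theorem for both $|\v|^p$ and each component $\v_i$ against a countable dense family of test functions in $L^q(\bQ)$, and (ii) the $L^p_{\pot,\loc}$-realization property defining $\cV_{\pot}^p(\Omega)$; both are countably many almost-sure conditions, hence can be arranged on a common full-measure set $\Omega_0$. Everything else is a standard Rellich-plus-Poincar\'e argument.
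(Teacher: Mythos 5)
Your proof is correct and follows essentially the same route as the paper's: existence of $V_\eps^\omega$ from the definition of the potential space, the mean-zero Poincar\'e inequality, Rellich compactness to obtain a strong $L^p$-limit, and identification $\nabla V=0$ via the ergodic theorem against test functions, forcing $V=0$. The only quibble is a citation: for the gradient bound you apply Theorem \ref{thm:general-lebesgue-ergodic-thm} to $|\v|^p\in L^1(\Omega)$ against $\varphi\equiv 1$, but that theorem requires $f\in L^{\tilde p}(\Omega)$ for some $\tilde p>1$, so you should instead invoke Theorem \ref{thm:Ergodic-Theorem} or Theorem \ref{thm:Ergodic-Theorem-ran-meas-2}, which only need $f\in L^1(\Omega)$ and $\varphi\in C(\overline{\bQ})$.
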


\begin{proof}[Sketch of Proof, see \cite{heida2014stochastic}]
 By definition of $L_{{\rm pot}}^{p}(\Omega)$ there exists for almost
every $\omega\in\Omega$ a function $V_{\eps}^{\omega}\in W^{1,p}(\bQ)$
with $\nabla V_{\eps}^{\omega}(x)=\v(\tau_{\frac{x}{\eps}}\omega)$,
$\int_{\bQ}V_{\eps}^{\omega}=0$. By a standard contradiction argument,
there exists a constant $C>0$ such that 
\[
\forall V\in W^{1,p}(\bQ):\quad\norm V_{L^{p}(\bQ)}\leq C\left(\norm{\nabla V}_{L^{p}(\bQ)}+\left|\int_{\bQ}V\right|\right)\,.
\]
The last inequality implies that $V_{\eps}^{\omega}\weakto V$ weakly
in $W^{1,p}(\bQ)$ and $V_{\eps}^{\omega}\to V$ strongly in $L^{p}(\bQ)$.
Furthermore, the Ergodic Theorem \ref{thm:general-lebesgue-ergodic-thm}
yields for every $f\in C(\overline{\bQ})$ 
\[
\int_{\bQ}f\cdot\nabla V_{\eps}^{\omega}=\int_{\bQ}f\cdot\v(\tau_{\frac{x}{\eps}}\omega)\to\int_{\bQ}f\cdot\int\v\,\d\P=\int_{\bQ}f\cdot0=0\,.
\]
Hence $\nabla V=0$ and since $\int_{\bQ}V=0$ it follows $V=0$.
\end{proof}

\subsection{Uniform Extension- and Trace-Properties}

For the rest of this section, we make the following assumptions. Under
the Assumptions \ref{assu:Omega-mu-tau-fundmental} and \ref{assu:TS-general}
and using the notations introduced in Section \ref{subsec:General-Setting}
we introduce $\bP^{\eps}(\omega):=\eps\bP(\omega)$, $\bQ_{1}^{\eps}(\omega):=\bQ\cap\bP^{\eps}(\omega)$
and $\Gamma^{\eps}(\Omega):=\bQ\cap\eps\Gamma(\omega)$.

Following (\ref{eq:ts-measure}) we recall the definition
\[
\mugammaomegaeps(A):=\eps^{n}\hausdorffH^{d-1}\of{\frac{A}{\eps}\cap\Gamma(\omega)}=\eps\hausdorffH^{d-1}\of{A\cap\Gamma^{\eps}(\omega)}
\]

\begin{defn}[Uniform Dirichlet extension property]
\label{def:r-p-Ex-Tr-prop} Let $\bQ$ be a bounded open convex domain
with Lipschitz boundary. We say for $1\leq r\leq p$ that $\bP^{\eps}(\omega)$
has the uniform $\left(r,p\right)$-Dirichlet extension property on
$\bQ$ if for almost every $\omega$ there exists $C_{\omega}>0$
and a linear extension operator 
\[
\cU:\,W_{\loc}^{1,p}(\bP(\omega))\to W_{\loc}^{1,p}(\Rd)
\]
such that 
\[
\cU_{\eps}[u](x):=\cU{\left[u\of{\eps\cdot}\right]}\of{\frac{x}{\eps}}
\]
satisfies the following: For every $u\in W_{0,\partial\bQ}^{1,p}(\bQ_{1}^{\eps}(\omega))$
\[
\norm{\nabla\cU_{\eps}u}_{L^{r}(\bQ)}\leq C_{\omega}\norm{\nabla u}_{L^{p}(\bQ_{1}^{\eps}(\omega))}\,,\quad\norm{\cU_{\eps}u}_{L^{r}(\bQ)}\leq C_{\omega}\norm u_{L^{p}(\bQ_{1}^{\eps}(\omega))}
\]
and 
\begin{equation}
\norm{\cU_{\eps}u}_{W^{1,r}(\Rd\backslash\bQ)}\to0\,.\label{eq:def:r-p-Ex-Tr-prop-2}
\end{equation}
\end{defn}

Theorem \ref{thm:ergodic-Apaths} shows that virtually every random
geometry to which the theory of Sections \ref{sec:Extension-and-Trace-d-M}--\ref{sec:Construction-of-Macroscopic-1}
applies has the $\left(r,p\right)$- extension property on bounded
convex $C^{1}$-domains $\bQ$. In particular, we obtain the following
reformulation of Theorem \ref{thm:Main-Theorem-4}.
\begin{thm}
\label{thm:uniform-Dir-ext-Thm}For $1\leq r<\tilde{s}<s<p\leq\infty$
let $\bP(\omega)$ be almost surely $\left(\delta,M\right)$-regular
(Def. \ref{def:loc-del-M-reg}) and isotropic cone mixing for $\fr>0$
and $f(R)$ (Def. \ref{def:iso-cone-mix}) as well as locally connected
and satisfy $\P\of{\sfS>S_{0}}\leq f_{s}(S_{0})$ such that Assumption
\ref{assu:dirichlet-extension} holds. Then for almost every $\omega$
the set $\bP^{\eps}$ has the uniform $\left(r,p\right)$-Dirichlet
extension property on $\bQ$.
\end{thm}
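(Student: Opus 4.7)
The strategy is to deduce the uniform $(r,p)$-Dirichlet extension property directly from Theorem \ref{thm:Main-Theorem-4} by a straightforward scaling argument, while the asymptotic support confinement needed for (\ref{eq:def:r-p-Ex-Tr-prop-2}) follows from Corollary \ref{cor:support-extension} combined with Theorem \ref{thm:ergodic-Apaths}.

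First I would let $\cU$ be the global extension operator furnished by Theorem \ref{thm:Main-Theorem-4} (which applies under the hypotheses of Assumption \ref{assu:dirichlet-extension}), defined on $W_{\loc}^{1,p}(\bP(\omega))$ with values in $W_{\loc}^{1,r}(\Rd)$. Given $\eps>0$ and $u\in W_{0,\partial\bQ}^{1,p}(\bQ_{1}^{\eps}(\omega))$, extending $u$ by zero outside $\bQ$, the function $\tilde u(y):=u(\eps y)$ then lies in $W_{0,\partial(n\bQ)}^{1,p}(\bP(\omega)\cap n\bQ)$ for $n=1/\eps$. Applying the two estimates of Theorem \ref{thm:Main-Theorem-4} to $\tilde u$ and substituting $y=x/\eps$ gives, after the factors of $\eps$ cancel symmetrically between the $L^{r}$-term with $\nabla\cU_{\eps}u(x)=\eps^{-1}(\nabla\cU\tilde u)(x/\eps)$ on the left and $\nabla\tilde u(y)=\eps(\nabla u)(\eps y)$ on the right,
\begin{equation*}
\tfrac{1}{|\bQ|}\int_{\bQ}|\cU_{\eps}u|^{r}\leq C(\omega)\Bigl(\tfrac{1}{|\bQ|}\int_{\bQ_{1}^{\eps}}|u|^{p}\Bigr)^{r/p},\qquad \tfrac{1}{|\bQ|}\int_{\bQ}|\nabla\cU_{\eps}u|^{r}\leq C(\omega)\Bigl(\tfrac{1}{|\bQ|}\int_{\bQ_{1}^{\eps}}|\nabla u|^{p}\Bigr)^{r/p},
\end{equation*}
which are the first two required bounds.

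For the decay property (\ref{eq:def:r-p-Ex-Tr-prop-2}) I would proceed as follows. Since $u$ is extended by zero outside $\bQ$, Corollary \ref{cor:support-extension} implies $\cU\tilde u$ has support in $\Apaths(n\bQ)$, hence $\cU_{\eps}u$ has support in $\eps\Apaths(\tfrac{1}{\eps}\bQ)$. Theorem \ref{thm:ergodic-Apaths} applied to the ergodic statistical-stretch factor yields $|\Apaths(n\bQ)|/|n\bQ|\to 1$, so $|\eps\Apaths(\tfrac{1}{\eps}\bQ)\setminus\bQ|\to 0$ and the support is eventually contained in a fixed enlarged domain $\bQ''\supset\bQ$. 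To turn the shrinking support into an $L^{r}$-convergence I would exploit the slack in Assumption \ref{assu:dirichlet-extension} (its strict inequalities $r<\tilde s<s<p$) to pick $r'\in(r,\tilde s)$ and reapply Theorem \ref{thm:Main-Theorem-4} on $\bQ''$ with exponent $r'$; this yields a uniform-in-$\eps$ bound for $\cU_{\eps}u$ in $W^{1,r'}(\bQ'')$. Writing $A_{\eps}:=\eps\Apaths(\tfrac{1}{\eps}\bQ)\setminus\bQ$, H\"older's inequality then gives
\begin{equation*}
\int_{\Rd\setminus\bQ}\bigl(|\cU_{\eps}u|^{r}+|\nabla\cU_{\eps}u|^{r}\bigr)\leq|A_{\eps}|^{1-r/r'}\bigl(\|\cU_{\eps}u\|_{L^{r'}(\bQ'')}^{r}+\|\nabla\cU_{\eps}u\|_{L^{r'}(\bQ'')}^{r}\bigr)\longrightarrow 0,
\end{equation*}
yielding (\ref{eq:def:r-p-Ex-Tr-prop-2}).

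The main obstacle I anticipate is the verification that the ergodicity hypothesis (\ref{eq:thm:ergodic-Apaths}) of Theorem \ref{thm:ergodic-Apaths} is indeed implied by Assumption \ref{assu:dirichlet-extension}. Since $\sfR_{0}(x_{j})\leq d_{j}\sfS_{j}$ by Corollary \ref{cor:R0-Sj} and the diameter $d_{j}$ and stretch $\sfS_{j}$ are controlled by $f$ and $f_{S}$ respectively, the summability condition $\sum_{k,N}[(N+1)(k+1)]^{\cdots}f(k)f_{S}(N)<\infty$ in Assumption \ref{assu:dirichlet-extension} gives precisely the decay of the tail of $\sfR_{0}$ required by Theorem \ref{thm:ergodic-Apaths} (via Lemma \ref{lem:estim-E-fa-fb}). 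A secondary routine point is that enlarging $\bQ$ to $\bQ''$ does not disturb any hypothesis, since all assumptions are stationary and $\bQ''$ is still a bounded convex Lipschitz domain.
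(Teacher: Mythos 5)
Your proposal agrees with the paper on the structure: both obtain the two Dirichlet bounds by rescaling Theorem~\ref{thm:Main-Theorem-4}, and both invoke Corollary~\ref{cor:support-extension} for the support confinement and Theorem~\ref{thm:ergodic-Apaths} for the measure convergence $|\Apaths(n\bQ)|/|n\bQ|\to 1$. Where you diverge from the paper is in the last step. The paper's mechanism is to notice that the estimates of Theorem~\ref{thm:Main-Theorem-4} in fact hold on the larger set $\Apaths(n\bQ)$ with the normalization $m_{n}:=|\Apaths(n\bQ)|$ in place of $|n\bQ|$ (since the integration region in its proof is really $\Apaths(n\bQ)$ all along), after which the ratio $|n\bQ|/m_{n}\to 1$ closes the argument. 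You instead reach for a H\"older argument with a raised integrability exponent $r'\in(r,\tilde s)$.

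There is a genuine gap in that last step. You justify re-applying Theorem~\ref{thm:Main-Theorem-4} with exponent $r'>r$ by pointing to the \emph{slack} in $r<\tilde s<s<p$, but this slack does not transfer to the moment conditions of Assumption~\ref{assu:dirichlet-extension}. Those conditions get strictly \emph{harder} as $r$ increases: for instance the requirement $\E\of{\rho^{1-\frac{\tilde s r}{\tilde s - r}}}<\infty$ has an exponent $\frac{\tilde s r}{\tilde s - r}$ that blows up as $r\uparrow\tilde s$, and the same monotonicity is present in the $\tilde M$-moment and in the second summability condition (which contains $\frac{r(d-1)+drs}{s-r}$). So a random geometry may satisfy Assumption~\ref{assu:dirichlet-extension} at the given $r$ and fail it at every $r'>r$, and the theorem simply does not grant you the $W^{1,r'}(\bQ'')$-bound you need for the H\"older step. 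If you want to prove the decay in (\ref{eq:def:r-p-Ex-Tr-prop-2}), you should instead follow the paper's route of working with the $m_n$-renormalized estimates on $\Apaths(n\bQ)$; alternatively, if you insist on the H\"older argument, you must add to the hypotheses that Assumption~\ref{assu:dirichlet-extension} holds for some $r'>r$ as well, which is a strictly stronger assumption than stated. The rest of your plan -- the scaling computation, the support observation, the convex-Lipschitz enlargement -- is correct and matches the paper.
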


\begin{proof}
This is almost the statement of Theorem \ref{thm:Main-Theorem-4}
except for (\ref{eq:def:r-p-Ex-Tr-prop-2}). However, for $u\in W_{0,\partial\bQ}^{1,p}(\bP(\omega)\cap n\bQ)$
and $m_{n}:=\left|\Apaths(n\bQ)\right|$ we obtain note that estimates
(\ref{eq:thm:Main-Theorem-4-1})--(\ref{eq:thm:Main-Theorem-4-2})
can be extended to 
{\small \begin{align*}
\frac{1}{m_{n}}\int_{\Apaths(n\bQ)}\left|\cU u\right|^{r} & \leq C(\omega)\left(\frac{1}{m_{n}}\int_{\bP(\omega)\cap\Apaths(n\bQ)}\left|u\right|^{p}\right)^{\frac{r}{p}}=C(\omega)\left(\frac{\left|n\bQ\right|}{m_{n}}\frac{1}{\left|n\bQ\right|}\int_{\bP(\omega)\cap(n\bQ)}\left|u\right|^{p}\right)^{\frac{r}{p}}\,,\\
\frac{1}{m_{n}}\int_{\Apaths(n\bQ)}\left|\cU u\right|^{r} & \leq C(\omega)\left(\frac{1}{m_{n}}\int_{\bP(\omega)\cap\Apaths(n\bQ)}\left|\nabla u\right|^{p}\right)^{\frac{r}{p}}=C(\omega)\left(\frac{\left|n\bQ\right|}{m_{n}}\frac{1}{\left|n\bQ\right|}\int_{\bP(\omega)\cap(n\bQ)}\left|\nabla u\right|^{p}\right)^{\frac{r}{p}}\,,
\end{align*}}
and the statement follows from Theorem \ref{thm:ergodic-Apaths} and
Corollary \ref{cor:support-extension}.
\end{proof}
There exists a weaker notion of extension property, which is for some
applications sufficient.
\begin{defn}[Uniform weak extension property]
\label{def:r-p-Ex-Tr-prop-weak} Let $\bQ$ be a bounded open convex
domain with Lipschitz boundary. We say for $1\leq r\leq p$ that $\bP^{\eps}(\omega)$
has the uniform weak $\left(r,p\right)$-extension property on $\bQ$
if for almost every $\omega$ there exists $C_{\omega}>0$ and a linear
extension operator 
\[
\cU:\,W_{\loc}^{1,p}(\bP(\omega))\to W_{\loc}^{1,p}(\Rd)
\]
such that 
\[
\cU_{\eps}[u](x):=\cU{\left[u\of{\eps\cdot}\right]}\of{\frac{x}{\eps}}
\]
satisfies the following: For every $u\in W_{0,\partial\bQ}^{1,p}(\bQ_{1}^{\eps}(\omega))$
\[
\eps\norm{\nabla\cU_{\eps}u}_{L^{r}(\bQ)}+\norm{\cU_{\eps}u}_{L^{r}(\bQ)}\leq C_{\omega}\of{\eps\norm{\nabla u}_{L^{p}(\Ball{\eps}{\bQ}\cap\bP^{\eps}(\omega))}+\norm u_{L^{p}(\Ball{\eps}{\bQ}\cap\bP^{\eps}(\omega))}}\,.
\]
\end{defn}

\begin{thm}
\label{thm:weak-uniform-ext}For $1\leq r<p_{0}<p_{1}<p<\infty$ let
$\bP(\omega)$ be almost surely $\left(\delta,M\right)$-regular (Def.
\ref{def:loc-del-M-reg}) such that Assumption \ref{assu:local-extension}
holds. Then for almost every $\omega$ the set $\bP^{\eps}$ has the
weak uniform $\left(r,p\right)$-extension property on $\bQ$.
\end{thm}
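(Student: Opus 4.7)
The plan is to derive the weak uniform $(r,p)$-extension property by a straightforward rescaling of the averaged estimate already provided by Theorem~\ref{thm:Main-Thm-2}. Under Assumption~\ref{assu:local-extension}, that theorem applies and yields, for $\P$-a.e.\ $\omega$, a linear extension operator $\cU\colon W^{1,p}_{\loc}(\bP(\omega))\to W^{1,r}_{\loc}(\Rd)$ together with a constant $C(\omega)$ such that for every $n\ge 1$ and every $u\in W^{1,p}(\bP(\omega)\cap\Ball{\fr}{n\bQ})$
\begin{align*}
\frac{1}{n^{d}|\bQ|}\int_{n\bQ}\bigl(|\cU u|^{r}+|\nabla\cU u|^{r}\bigr)
\;\le\;
C(\omega)\left(\frac{1}{n^{d}|\bQ|}\int_{\bP(\omega)\cap\Ball{\fr}{n\bQ}}\bigl(|u|^{p}+|\nabla u|^{p}\bigr)\right)^{r/p}.
\end{align*}
This is the essential analytic input; the theorem to be proved is simply the statement of this inequality after the change of scale $x\mapsto x/\eps$.

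First I would fix $\omega$ in the full-measure set for which the above estimate holds. For $u\in W_{0,\partial\bQ}^{1,p}(\bQ_{1}^{\eps}(\omega))$ I set $\tilde u(y):=u(\eps y)$, which belongs to $W^{1,p}(\bP(\omega)\cap\eps^{-1}\bQ)$ after extension by zero across $\partial(\eps^{-1}\bQ)$ (permissible because $u$ vanishes on $\partial\bQ$). I then apply the estimate to $\tilde u$ with the choice $n=1/\eps$, so that $n\bQ=\eps^{-1}\bQ$, and define $\cU_{\eps}u$ as in Definition~\ref{def:r-p-Ex-Tr-prop-weak}. The substitution $x=\eps y$ converts $dy$ into $\eps^{-d}dx$, changes $\nabla_{y}$ into $\eps\nabla_{x}$, and transforms $\Ball{\fr}{n\bQ}$ into $\Ball{\eps\fr}{\bQ}$, leading to
\begin{align*}
\frac{1}{|\bQ|}\int_{\bQ}\bigl(|\cU_{\eps}u|^{r}+\eps^{r}|\nabla\cU_{\eps}u|^{r}\bigr)
\;\le\;
C(\omega)\left(\frac{1}{|\bQ|}\int_{\bP^{\eps}(\omega)\cap\Ball{\eps\fr}{\bQ}}\bigl(|u|^{p}+\eps^{p}|\nabla u|^{p}\bigr)\right)^{r/p}.
\end{align*}

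Taking $r$-th roots, absorbing the factor $|\bQ|^{1/r-1/p}$ into a new constant (still called $C_{\omega}$), and noting that $u$ extended by zero outside $\bQ$ makes the right-hand side an integral over $\bP^{\eps}(\omega)\cap\bQ$ (and in particular bounded by the integral over $\Ball{\eps}{\bQ}\cap\bP^{\eps}(\omega)$, possibly after enlarging $C_{\omega}$ to accommodate the mismatch between the two buffer radii $\eps$ and $\eps\fr$), one obtains exactly the inequality required by Definition~\ref{def:r-p-Ex-Tr-prop-weak}. The main (minor) obstacle is purely bookkeeping: one must consistently track how the measure $dx$, the gradient, and the ball $\Ball{\fr}{n\bQ}$ transform under the dilation $x\mapsto\eps x$, and handle the extension-by-zero step cleanly so that the buffer on the right-hand side lies inside $\Ball{\eps}{\bQ}$. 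No new analytic argument is required beyond Theorem~\ref{thm:Main-Thm-2}.
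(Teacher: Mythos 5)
Your proposal is correct and follows exactly the route the paper takes: the paper's proof is the one-line statement ``After rescaling, this is the statement of Theorem \ref{thm:Main-Thm-2}.'' Your write-up merely spells out the dilation $x\mapsto x/\eps$, the change of measure and gradient, the extension-by-zero across $\partial\bQ$ (valid because $u\in W^{1,p}_{0,\partial\bQ}$), and the inclusion $\Ball{\eps\fr}{\bQ}\subset\Ball{\eps}{\bQ}$ that the paper leaves implicit.
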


\begin{proof}
After rescaling, this is the statement of Theorem \ref{thm:Main-Thm-2}.
\end{proof}
Similarly to the extension property, we may introduce a uniform trace
property.
\begin{defn}[Uniform trace property]
Let $\bQ$ be a bounded open convex domain with Lipschitz boundary.
We say for $1\leq r\leq p$ that $\Gamma^{\eps}(\omega)$ has the
uniform $\left(r,p\right)$-trace property on $\bQ$ if for almost
every $\omega$ there exists $C_{\omega}>0$ such that the trace operators
\[
\cT_{\eps}:\,W^{1,p}(\Ball{\eps}{\bQ}\cap\bP^{\eps}(\omega))\to L^{r}(\bQ\cap\Gamma^{\eps})
\]
satisfy the estimate
\[
\norm{\cT_{\eps}u}_{L^{r}(\Gamma^{\eps}\cap\bQ)}\leq C_{\omega}\left(\norm u_{L^{p}(\Ball{\eps}{\bQ}\cap\bP^{\eps}(\omega))}+\eps\norm{\nabla u}_{L^{p}(\Ball{\eps}{\bQ}\cap\bP^{\eps}(\omega))}\right)\,.
\]
\end{defn}

\begin{thm}
Let $\bP(\omega)$ be a stationary ergodic random open set which is
almost surely $\left(\delta,M\right)$-regular (Def. \ref{def:loc-del-M-reg})
such that Assumption \ref{assu:Trace} holds. For $1\leq r<p_{0}<p<\infty$
and $\bQ\subset\Rd$ a bounded domain with Lipschitz boundary. Then
for almost every $\omega$ the set $\bP^{\eps}$ has the uniform $\left(r,p\right)$-trace
property on $\bQ$.
\end{thm}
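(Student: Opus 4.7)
The claim follows by rescaling Theorem \ref{thm:uniform-trace-estimate-1} from the microscale to the macroscale and then controlling the resulting constant via the ergodic theorem. Setting $\tilde u(y):=u(\eps y)$ and $\tilde\bQ := \eps^{-1}\bQ$, the change of variables $x = \eps y$ together with the identity $\mugammaomegaeps(A) = \eps\hausdorffH^{d-1}(A\cap\eps\Gamma(\omega))$ yields
\[
|\bQ|^{-1}\norm{\cT_\eps u}_{L^r(\Gamma^\eps\cap\bQ,\mugammaomegaeps)}^r
= |\tilde\bQ|^{-1}\int_{\tilde\bQ\cap\partial\bP(\omega)}|\cT\tilde u|^r\,d\hausdorffH^{d-1},
\]
\[
|\tilde\bQ|^{-1}\bigl(\norm{\tilde u}_{L^p(\bP(\omega)\cap\tilde\bQ)}^p + \norm{\nabla\tilde u}_{L^p(\bP(\omega)\cap\tilde\bQ)}^p\bigr)
= |\bQ|^{-1}\bigl(\norm u_{L^p(\bQ\cap\bP^\eps)}^p + \eps^p\norm{\nabla u}_{L^p(\bQ\cap\bP^\eps)}^p\bigr).
\]
Applying Theorem \ref{thm:uniform-trace-estimate-1} to $\tilde u$ on $\tilde\bQ$ (valid since $\bP(\omega)$ is a.s.\ locally $(\delta,M)$-regular) and substituting the above identities reduces the desired uniform inequality to showing that the constant $C(\omega,\eps)$ produced by that theorem is uniformly bounded in $\eps$.

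\textbf{Bounding the constant via ergodicity.} The constant $C(\omega,\eps)$ in Theorem \ref{thm:uniform-trace-estimate-1} is a product of normalized averages of the form
\[
|\tilde\bQ|^{-1}\int_{\Ball{\fr/4}{\tilde\bQ}\cap\partial\bP(\omega)}\eta^{-\frac{1}{p_0-r}}
\quad\text{and}\quad
|\tilde\bQ|^{-1}\int_{\Ball{\fr/4}{\tilde\bQ}\cap\bP(\omega)}\tilde M_{[\eta/8],\Rd}^{\left(\frac{1}{p_0}+1\right)\frac{p}{p-p_0}},
\]
or the analogous single average from the second alternative, with $\eta\in\{\rho,\hat\rho,\delta\}$. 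Since $\bQ$ is bounded Lipschitz, up to a translation placing $0\in\bQ$ the family $\tilde\bQ_n:=n\bQ$ (with $n=1/\eps$) is a regular convex averaging sequence and $|\Ball{\fr/4}{\tilde\bQ_n}|/|\tilde\bQ_n|\to 1$. The ergodic theorem for random measures (Theorem \ref{thm:Ergodic-Theorem-ran-meas}) applied to the stationary random surface measure $\mugammaomega$ shows that the first average converges almost surely to the finite limit $\int_\Omega\eta^{-1/(p_0-r)}\,d\mugammapalm$; the Lebesgue ergodic theorem (Theorem \ref{thm:general-lebesgue-ergodic-thm}) gives convergence of the second average to $\E(\tilde M_{[\eta/8],\Rd}^{(1/p_0+1)p/(p-p_0)})$. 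Both limits are finite precisely by Assumption \ref{assu:Trace}, so $\sup_\eps C(\omega,\eps)\le C_\omega<\infty$ on a full-probability event; the second alternative of Assumption \ref{assu:Trace} is handled analogously. Combining this uniform bound with the rescaling identities above yields the uniform $(r,p)$-trace property.

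\textbf{Main obstacle.} The only subtle point is that the ergodic theorems are formulated for convex averaging sequences, whereas we integrate over the slightly enlarged layer $\Ball{\fr/4}{\tilde\bQ_n}$. Since the added layer has fixed thickness $\fr/4$ independent of $n$ and volume of order $n^{d-1}=o(|\tilde\bQ_n|)$, it does not affect the ergodic limits; one can make this rigorous either by sandwiching between two convex averaging sequences or by appealing to the more general Theorem \ref{thm:Ergodic-Theorem-ran-meas-2}. All remaining bookkeeping of $\eps$-powers matches precisely thanks to the Palm scaling built into the definition of $\mugammaomegaeps$.
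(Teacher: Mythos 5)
Your proposal is correct and follows essentially the same route as the paper: the paper disposes of this theorem by "after rescaling, this is Theorem \ref{thm:Main-Thm-1}", whose own proof is exactly the combination you spell out, namely the deterministic estimate of Theorem \ref{thm:uniform-trace-estimate-1} plus stationarity, ergodicity and the ergodic theorem, with Assumption \ref{assu:Trace} guaranteeing finiteness of the limiting averages. Your explicit handling of the scaling identities, the absorption of the $\eps$-independent volume factors into $C_\omega$, and the boundary-layer issue for the enlarged set $\Ball{\fr/4}{\tilde\bQ_n}$ just fills in details the paper leaves implicit.
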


\begin{proof}
After rescaling, this is the statement of Theorem \ref{thm:Main-Thm-1}.
\end{proof}

\subsection{\label{subsec:Domains-with-holes}Homogenization on Domains with
Holes}

In what follows, we will naturally deal with two-scale limits of functions
defined solely on $\bQ_{1}^{\eps}$. Hence we introduce the following
definition.
\begin{defn}
\label{def:def-two-scale-holes}Let $1<p\leq\infty$ and $\ue\in L^{p}(\bQ_{1}^{\eps}(\omega))$
for all $\eps>0$. We say that $(\ue)$ converges (weakly) in two
scales to $u\in L^{p}(\bQ;L^{p}(\bP))$ and write $\ue\stackrel{2s}{\weakto}u$
if $\sup_{\eps>0}\norm{\ue}_{L^{2}(\bQ_{1}^{\eps}(\omega))}<\infty$
and if for every $\psi\in\Phi_{q}$ and $\varphi\in C(\overline{\bQ})$
there holds with $\phi_{\omega,\eps}(x):=\varphi(x)\psi(\tau_{\frac{x}{\eps}}\omega)$
\[
\lim_{\eps\to0}\int_{\bQ_{1}^{\eps}}\ue\phi_{\omega,\eps}=\int_{\bQ}\int_{\Omega}\chi_{\bP}u\varphi\psi\,\d\P\,\d\lebesgueL\,.
\]
\end{defn}

The latter definition coincides with Definition \ref{def:two-scale-conv}
for $\d\muomega=\chi_{\bP(\omega)}\d\lebesgueL$, which can be verified
using the ergodic theorem. Hence, we find the following lemma:
\begin{lem}
\label{lem:sto-two-scale-limit-holes}\label{lem:sto-ts-conv-on-holes}
Let $1<p\leq\infty$ and $\ue\in L^{p}(\bQ_{1}^{\eps}(\omega))$ be
a sequence such that $\sup_{\eps>0}\norm{\ue}_{L^{p}(\bQ_{1}^{\eps}(\omega))}<\infty$.
Then there exists $u\in L^{p}(\bQ;L^{p}(\bP))$ and a subsequence
$\eps'\to0$ such that $u^{\eps'}\stackrel{2s}{\weakto}u$.

Furthermore, if $\ue\in L^{p}(\bQ)$ is a sequence such that $\sup_{\eps>0}\norm{\ue}_{L^{p}(\bQ)}<\infty$
and $u^{\eps'}\stackrel{2s}{\weakto}u$ along a subsequence $\eps'\to0$
for some $u\in L^{p}(\bQ;L^{p}(\Omega))$, then $u^{\eps'}\chi_{\bQ_{1}^{\eps'}(\omega)}\stackrel{2s}{\weakto}\chi_{\bP}u$.
\end{lem}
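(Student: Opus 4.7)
The plan is to recognize that Definition \ref{def:def-two-scale-holes} coincides with Definition \ref{def:two-scale-conv} applied to the stationary random measure $\muomega := \chi_{\bP(\omega)}\lebesgueL$, whose Palm measure is $\mupalm = \chi_{\bP}\P$ (cf.\ Section \ref{subsec:Random-measures-and} and Theorem \ref{thm:Main-THM-Sto-Geo}). Indeed, $\muomegaeps(A) = \lebesgueL(A\cap\bP^{\eps}(\omega))$, so for $\phi_{\omega,\eps}(x) = \varphi(x)\psi(\tau_{x/\eps}\omega)$
\[
\int_{\bQ_{1}^{\eps}}\ue\,\phi_{\omega,\eps}\,\d\lebesgueL = \int_{\bQ}\ue\,\phi_{\omega,\eps}\,\d\muomegaeps\,,
\]
and $L^{p}(\Omega;\mupalm) = L^{p}(\bP)$ by definition of the Palm measure on the prototype.

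For the first part, I would extend $\ue$ by zero outside $\bQ_{1}^{\eps}(\omega)$ so that $\ue\in L^{p}(\bQ;\muomegaeps)$ with $\norm{\ue}_{L^{p}(\bQ;\muomegaeps)} = \norm{\ue}_{L^{p}(\bQ_{1}^{\eps}(\omega))}$ uniformly bounded. Applying Lemma \ref{lem:Existence-ts-lim} to the measure $\muomegaeps$ yields a subsequence and a limit $u\in L^{p}(\bQ;L^{p}(\Omega;\mupalm)) = L^{p}(\bQ;L^{p}(\bP))$ with $\ue\tsweakto u$, which, via the identification above, is precisely the convergence in Definition \ref{def:def-two-scale-holes}.

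For the second part, given $\ue\in L^{p}(\bQ)$ with $\ue\tsweakto u$ in the standard sense (Definition \ref{def:two-scale-conv} with $\muomega = \lebesgueL$), I would test $\ue\chi_{\bQ_{1}^{\eps}}$ against $\phi_{\omega,\eps}$ to obtain
\[
\int_{\bQ_{1}^{\eps'}}u^{\eps'}\varphi\,\psi(\tau_{\cdot/\eps'}\omega) = \int_{\bQ}u^{\eps'}\varphi\,\bigl[\chi_{\bP}\psi\bigr](\tau_{\cdot/\eps'}\omega)\,.
\]
If $\chi_{\bP}\psi$ itself were an admissible test function for standard two-scale convergence, then the right-hand side would converge to $\int_{\bQ}\int_{\Omega}u\,\varphi\,\chi_{\bP}\psi\,\d\P\,\d x$, which is the desired limit. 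Since $\chi_{\bP}\psi \in L^{\infty}(\Omega) \subset L^{q}(\Omega)$ and $\Phi_{q}$ is dense in $L^{q}(\Omega)$, I would approximate $\chi_{\bP}\psi$ by $\psi_{n}\in\Phi_{q}$ in $L^{q}(\Omega)$ and control the difference via the uniform $L^{p}(\bQ)$-bound on $u^{\eps'}$ and the ergodic theorem, which yields
\[
\limsup_{\eps'\to 0}\norm{(\psi_{n}-\chi_{\bP}\psi)(\tau_{\cdot/\eps'}\omega)}_{L^{q}(\bQ)} \leq |\bQ|^{1/q}\norm{\psi_{n}-\chi_{\bP}\psi}_{L^{q}(\Omega)}\,,
\]
valid for almost every $\omega$ on the fixed set $\Omega_{\Phi}$ (enlarging by a null set to include each $\psi_{n}$ and $\chi_{\bP}\psi$). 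A standard diagonal argument in $(\eps',n)$ then delivers the claimed convergence.

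The main obstacle is the last step: ensuring that the class $\Phi_{q}$ of admissible test functions can be extended to $\chi_{\bP}\psi$, which genuinely leaves $\Phi_{q}$ because of the multiplication by the characteristic function $\chi_{\bP}$. The density of $\Phi_{q}$ in $L^{q}(\Omega)$ combined with the ergodic theorem (applied to $|\psi_{n}-\chi_{\bP}\psi|^{q}\in L^{1}(\Omega)$) handles this cleanly, but one must carefully enlarge the exceptional set so that the ergodic convergence holds simultaneously along the countable approximating sequence $(\psi_{n})_{n\in\N}$; this is legitimate since countable unions of null sets remain null.
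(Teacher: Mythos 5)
Your proposal is correct and follows the paper's approach: extend $\ue$ by zero and apply Lemma \ref{lem:Existence-ts-lim} with the Palm measure $\chi_{\bP}\P$, then handle the second part by noting that one may test against $\chi_{\bP}\psi$. The only cosmetic difference is that the paper invokes the remark after Assumption \ref{assu:TS-general} to declare ``w.l.o.g.\ $\chi_{\bP}\psi\in\Phi_q$'' (enlarging $\Phi_q$ by countably many functions), whereas you give the underlying $L^q$-approximation argument with the ergodic theorem explicitly --- the same mechanism, written out in full.
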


\begin{proof}
This follows immediately from Lemma \ref{lem:Existence-ts-lim} extending
$\ue$ by $0$ to $\bQ$ and on noting that $\psi\in\Phi_{q}$ implies
w.l.o.g. $\chi_{\bP}\psi\in\Phi_{q}$.
\end{proof}
\begin{lem}
\label{lem:hole-exp-grad-ts}Let $\bP(\omega)$ be a random open domain
such that $\bP^{\eps}(\omega)$ has the weak uniform $\left(r,p\right)$-extension
property on $\bQ$ for $1<r<p<\infty$. Then for almost every $\omega\in\Omega$
the following holds: If $u^{\eps}\in W^{1,p}(\Ball{_{\eps}}{\bQ}\cap\bP^{\eps}(\omega);\Rd)$
for all $\eps$ with 
\[
\sup_{\eps}\left(\norm{\ue}_{L^{p}(\Ball{_{\eps}}{\bQ}\cap\bP^{\eps}(\omega))}+\eps\|\nabla u^{\eps}\|_{L^{p}(\Ball{_{\eps}}{\bQ}\cap\bP^{\eps}(\omega))}\right)<C
\]
 for $C$ independent from $\eps>0$ then there exists a subsequence
denoted by $u^{\eps'}$ and a function $u\in L^{p}(\bQ;W^{1,r}(\Omega))\cap L^{p}(\bQ\times\bP)$
such that
\begin{equation}
\cU_{\eps'}u^{\eps'}\tsweakto u\quad\mbox{and }\quad\eps\nabla\cU_{\eps'}u^{\eps'}\tsweakto\nabla_{\omega}u\label{eq:lem:hole-exp-grad-ts-1}
\end{equation}
as well as
\begin{equation}
u^{\eps'}\tsweakto u\quad\mbox{and }\quad\eps\nabla u^{\eps'}\tsweakto\chi_{\bP}\nabla_{\omega}u\label{eq:lem:hole-exp-grad-ts-2}
\end{equation}
as $\eps\to0$.
\end{lem}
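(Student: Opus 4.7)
The plan is to first extend $u^\eps$ globally on $\bQ$ via the weak uniform extension property, extract a two-scale limit $u$ using Theorem~\ref{thm:sto-conver-grad}, and then match this limit against the better $L^p$-information still available on $\bQ_1^\eps(\omega)$ by means of Lemma~\ref{lem:sto-ts-conv-on-holes}. So first I would apply the weak uniform $(r,p)$-extension property to obtain $\cU_\eps u^\eps \in W^{1,r}(\bQ)$ with
\[
\|\cU_\eps u^\eps\|_{L^r(\bQ)} + \eps\,\|\nabla \cU_\eps u^\eps\|_{L^r(\bQ)} \leq C_\omega\bigl(\|u^\eps\|_{L^p(\Ball{\eps}{\bQ}\cap\bP^\eps(\omega))} + \eps\,\|\nabla u^\eps\|_{L^p(\Ball{\eps}{\bQ}\cap\bP^\eps(\omega))}\bigr) \leq C_\omega C.
\]
Since both $\cU_\eps u^\eps$ and $\eps\nabla \cU_\eps u^\eps$ are uniformly bounded in $L^r(\bQ)$, Theorem~\ref{thm:sto-conver-grad}(3), applied with $r$ in place of $p$ and $\gamma=1$, yields a subsequence $\eps'$ and $u \in L^r(\bQ; W^{1,r}(\Omega))$ with $\cU_{\eps'} u^{\eps'}\tsweakto u$ and $\eps' \nabla \cU_{\eps'} u^{\eps'}\tsweakto \nabla_\omega u$. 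This already establishes~(\ref{eq:lem:hole-exp-grad-ts-1}).

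Next I would invoke Lemma~\ref{lem:sto-ts-conv-on-holes} on $u^\eps$ itself, which is bounded in $L^p(\bQ_1^\eps(\omega))$; a further diagonal extraction produces $v \in L^p(\bQ \times \bP)$ with $u^{\eps'}\tsweakto v$ in the sense of Definition~\ref{def:def-two-scale-holes}. To identify $v$ with $u$ on $\bP$, fix $\varphi \in C(\overline{\bQ})$ and $\psi \in \Phi_q$, and assume without loss of generality that $\chi_\bP \psi \in \Phi_q$ as well (achieved by enlarging $\Phi_q$ by a countable family, altering the exceptional set $\Omega_\Phi$ only by a null set). The identity $u^\eps = \cU_\eps u^\eps$ on $\bP^\eps(\omega)$ gives
\[
\int_{\bQ_1^\eps} u^\eps\,\varphi(x)\,\psi(\tau_{x/\eps}\omega)\,\d x = \int_{\bQ} \cU_\eps u^\eps\,\varphi(x)\,(\chi_\bP \psi)(\tau_{x/\eps}\omega)\,\d x,
\]
and passing to the limit on both sides identifies $v$ with $u$ on $\bQ \times \bP$, hence $u \in L^p(\bQ \times \bP)$.

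The remaining half of~(\ref{eq:lem:hole-exp-grad-ts-2}) then follows from the same matching: since $\eps\nabla u^\eps = \eps\nabla \cU_\eps u^\eps$ pointwise a.e.~on $\bQ_1^\eps(\omega)$, testing against $\varphi(x)\psi(\tau_{x/\eps}\omega)$ and absorbing $\chi_\bP(\tau_{x/\eps}\omega)$ into the oscillating test function reduces the computation to the already established two-scale convergence $\eps' \nabla \cU_{\eps'} u^{\eps'}\tsweakto \nabla_\omega u$, yielding $\eps' \nabla u^{\eps'}\tsweakto \chi_\bP \nabla_\omega u$. The hard part will be the mismatch of integrabilities: the extension provides only $L^r$-control globally, while the sequence itself has strictly better $L^p$-control on the perforated subdomain, so two two-scale limits sitting in different integrability scales must be matched on $\bP$; this relies essentially on the freedom to enlarge $\Phi_q$ by countably many products of the form $\chi_\bP \psi$ and on the pointwise identity $\cU_\eps u^\eps = u^\eps$ inside $\bQ_1^\eps(\omega)$.
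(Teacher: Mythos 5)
Your proposal is correct and follows the same route as the paper's own (quite terse) proof: extend via the weak uniform $(r,p)$-extension property, observe that the resulting uniform $L^r$-bound on $\cU_\eps u^\eps$ and $\eps\nabla\cU_\eps u^\eps$ lets you invoke Theorem~\ref{thm:sto-conver-grad}(3) with $\gamma=1$, and then use the pointwise identity $\cU_\eps u^\eps = u^\eps$ on $\bQ_1^\eps(\omega)$ to transfer the two-scale limit back to the perforated domain. The paper's proof stops after the first two steps and simply asserts that~(\ref{eq:lem:hole-exp-grad-ts-1})--(\ref{eq:lem:hole-exp-grad-ts-2}) follow; your write-up supplies the missing matching argument---applying Lemma~\ref{lem:sto-ts-conv-on-holes} to get a second two-scale limit $v\in L^p(\bQ\times\bP)$, and identifying $v=u$ on $\bP$ by rewriting the test integral with $\chi_\bP\psi$ in place of $\psi$---which is precisely what is needed to see that $u$ enjoys the better $L^p$-integrability over $\bQ\times\bP$ and that~(\ref{eq:lem:hole-exp-grad-ts-2}) holds. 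The observation that $\Phi_q$ can be harmlessly enlarged by countably many products $\chi_\bP\psi$ (modifying $\Omega_\Phi$ by a null set) and that these test functions remain essentially bounded, so duality against the merely $L^r$-bounded extension is still available, is the right way to resolve the integrability mismatch you flag at the end.
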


\begin{proof}
We find 
\begin{align}
 & \sup_{\eps}\left(\norm{\cU_{\eps}\ue}_{L^{r}(\bQ\cap\bP^{\eps}(\omega))}+\eps\|\nabla\cU_{\eps}u^{\eps}\|_{L^{r}(\bQ\cap\bP^{\eps}(\omega))}\right)\nonumber \\
 & \qquad\leq C\sup_{\eps}\left(\norm{\ue}_{L^{p}(\Ball{_{\eps}}{\bQ}\cap\bP^{\eps}(\omega))}+\eps\|\nabla u^{\eps}\|_{L^{p}(\Ball{_{\eps}}{\bQ}\cap\bP^{\eps}(\omega))}\right)\label{eq:lem:hole-exp-grad-ts-help-1}
\end{align}
Theorem \ref{thm:sto-conver-grad} and Definition \ref{def:r-p-Ex-Tr-prop-weak}
imply now for some limit function $u\in L^{r}(\bQ;W^{1,r}(\Omega))$
that (\ref{eq:lem:hole-exp-grad-ts-1}) and (\ref{eq:lem:hole-exp-grad-ts-2})
hold.
\end{proof}
We are now able to provide the:
\begin{proof}[\textbf{\emph{Proof of Theorem \ref{thm:weak-extension-property}}}]
 Theorem \ref{thm:weak-uniform-ext} shows that $\bP^{\eps}(\omega)$
satisfies the uniform weak extension property. Hence, if $\left(u_{k}\right)_{k\in\N}$
is a countable dense subset of $W^{1,p}(\Omega)$, we find a set of
full measure $\tilde{\Omega}\subset\Omega$ such that for every $k\in\N$
and every $\omega\in\tilde{\Omega}$ the realizations $u_{k,\omega}$
are well defined elements of $W_{\loc}^{1,p}(\bP(\omega))$, the extension
operator defined in (\ref{eq:def:cU-Q-2}) is uniformly bounded and
hence $\cU_{\eps}$ defined in Definition \ref{def:r-p-Ex-Tr-prop-weak}
is uniformly bounded, too. We can thus use the two-scale convergence
method as a tool.

Given such $\omega$, we define $\ue(x):=u_{k}\of{\tau_{\frac{x}{\eps}}\omega}$
and by Lemma \ref{lem:hole-exp-grad-ts} we find $\tilde{u}\in L^{p}(\bQ;W^{1,r}(\Omega))\cap L^{p}(\bQ\times\bP)$
such that $\cU_{\eps}\ue\to\tilde{u}_{k}$ and $\eps\nabla\cU_{\eps}\ue\to\nabla_{\omega}\tilde{u}_{k}$.
Furthermore, we find 
\begin{align*}
\norm{\tilde{u}_{k}}_{L^{r}(\bQ\times\Omega)}+\norm{\nabla_{\omega}\tilde{u}_{k}}_{L^{r}(\bQ\times\Omega)} & \leq\liminf_{\eps\to0}\left(\norm{\cU_{\eps}\ue}_{L^{r}(\bQ)}+\eps\norm{\nabla\cU_{\eps}\ue}_{L^{r}(\bQ)}\right)\\
 & \leq C\liminf_{\eps\to0}\left(\norm{\ue}_{L^{p}(\Ball{\eps}{\bQ}\cap\bP^{\eps}(\omega))}+\eps\norm{\nabla\ue}_{L^{p}(\Ball{\eps}{\bQ}\cap\bP^{\eps}(\omega))}\right)\\
 & =C\left(\norm{u_{k}}_{L^{p}(\bQ\times\Omega)}+\norm{\nabla_{\omega}u_{k}}_{L^{p}(\bQ\times\Omega)}\right)\,.
\end{align*}
Since the operator $u_{k}\to\tilde{u}_{k}$ is linear and bounded,
it can be extended to the whole of $W^{1,p}(\bP)$.
\end{proof}

\begin{proof}[\textbf{\emph{Proof of Theorem \ref{thm:Harmonic-P}}}]
 For every $\eps>0$ there exists a unique $\ue$ that solves 
\begin{align*}
-\eps^{2}\nabla\ue+\ue & =0 &  & \text{on }\Ball{\eps}{\bQ}\cap\bP^{\eps}(\Omega)\,,\\
-\eps\nabla\ue\cdot\nu_{\Gamma^{\eps}(\omega)} & =1 &  & \text{on }\Gamma^{\eps}(\omega)\cap\bQ\,,\\
\ue & =0 &  & \text{on }\partial\bQ\,.
\end{align*}
Deriving apriori estimates in the usual way, for some $C>0$ independent
from $\eps$ it holds
\[
\eps\norm{\nabla\ue}_{L^{2}(\Ball{\eps}{\bQ}\cap\bP^{\eps}(\Omega))}+\norm{\ue}_{L^{2}(\Ball{\eps}{\bQ}\cap\bP^{\eps}(\Omega))}\leq C
\]
and thus according to Lemma \ref{lem:hole-exp-grad-ts} we find $u\in L^{p}(\bQ;W^{1,r}(\Omega))\cap L^{p}(\bQ\times\bP)$
such that 
\[
\cU_{\eps'}u^{\eps'}\tsweakto u\quad\mbox{and }\quad\eps\nabla\cU_{\eps'}u^{\eps'}\tsweakto\nabla_{\omega}u
\]
along a subsequence $u^{\eps'}$ which we again denote $\ue$ in the
following, for simplicity. But then for $\phi\in C_{b}^{1}(\Omega)$
and $\psi\in C_{c}^{1}(\bQ)$ it follows
\begin{align*}
\eps\int_{\bQ\cap\Gamma^{\eps}(\omega)}\phi(\tau_{\frac{x}{\eps}}\omega)\psi(x)\,\d\hausdorffH^{d-1}(x) & =-\eps^{2}\int_{\bQ\cap\Gamma^{\eps}(\omega)}\phi(\tau_{\frac{x}{\eps}}\omega)\psi(x)\nabla\ue(x)\cdot\nu_{\Gamma(\omega)}(\tau_{\frac{x}{\eps}}\omega)\,\d\hausdorffH^{d-1}(x)\\
 & =\int_{\bQ_{1}^{\eps}(\omega)}\eps\nabla\ue\cdot\left(\nabla_{\omega}\phi(\tau_{\frac{x}{\eps}}\omega)\psi(x)+\eps\phi(\tau_{\frac{x}{\eps}}\omega)\nabla\psi(x)\right)\,\d x\\
 & \quad+\int_{\bQ_{1}^{\eps}(\omega)}\ue\phi(\tau_{\frac{x}{\eps}}\omega)\psi(x)\,\d x\\
 & \to\int_{\bQ}\int_{\bP}\left(\nabla_{\omega}u\cdot\nabla_{\omega}\phi\psi+u\phi\psi\right)\,.
\end{align*}
Since the left hand side of the above calculation converges to $\int_{\bQ}\int_{\Gamma}\phi\psi\,\d\mugammapalm$
and $\psi$ was arbitrary, we conclude.
\end{proof}

\begin{proof}[\textbf{\emph{Proof of Theorem \ref{thm:strong-extension-property}}}]
Let $\bQ=\Ball 20$ and let $\phi\in C_{c}^{\infty}(\bQ)$ with $\phi|_{\Ball 10}=1$,
$\phi\geq0$. According to Theorem \ref{thm:uniform-Dir-ext-Thm},
$\bP^{\eps}$ has the uniform $(r,p)$-Dirichlet extension property.
The theorem now follows from part 2 of the following Lemma.
\end{proof}
\begin{lem}
\label{lem:ts-extension-conv}Let $\bP(\omega)$ be a random open
domain such that $\bP^{\eps}(\omega)$ has the uniform $\left(r,p\right)$-Dirichlet
extension property on $\bQ$ for $1<r<p<\infty$. Then for almost
every $\omega\in\Omega$ the following holds:
\begin{enumerate}
\item If $u^{\eps}\in W_{0,\partial\bQ}^{1,p}(\bQ\cap\bP^{\eps}(\omega);\Rd)$
for all $\eps$ with $\sup_{\eps}\norm{\ue}_{L^{p}(\bQ_{1}^{\eps}(\omega))}+\|\nabla u^{\eps}\|_{L^{p}(\bQ_{1}^{\eps}(\omega))}<C$
for $C$ independent from $\eps>0$ then there exists a subsequence
denoted by $u^{\eps'}$ and functions $u\in W_{0}^{1,r}(\bQ;\Rd)\cap L^{p}(\bQ)$
and $\v\in L^{r}(\bQ;\cV_{\pot}^{r}(\Omega))$ such that 
\begin{align}
 & u^{\eps'}\tsweakto\chi_{\bP}u\quad\mbox{and }\quad\nabla u^{\eps'}\tsweakto\chi_{\bP}\nabla u+\chi_{\bP}\v\qquad\mbox{as }\eps\to0\,,\label{eq:lem:ts-extension-conv-1}\\
 & \cU_{\eps'}u^{\eps'}\tsweakto u\quad\mbox{and }\quad\nabla\cU_{\eps'}u^{\eps'}\tsweakto\nabla u+\v\qquad\mbox{as }\eps\to0\,.
\end{align}
Furthermore, $\cU_{\eps'}u^{\eps'}\weakto u$ weakly in $W^{1,r}(\bQ))\cap L^{p}(\bQ)$.
\item $\bP$ has the strong $(r,p)$-extension property with $\cU_{\Omega}\phi=\mathrm{ts{-}lim}_{\eps_{\to0}}\cU_{\eps}\phi(\tau_{\frac{x}{\eps}}\omega)$
for $\phi\in W^{1,p}(\bP)$.
\item If $p\geq2$ and the Assumptions of Theorem \ref{thm:Harmonic-P}
are satisfied and $\Gamma^{\eps}(\omega)$ additionally has the uniform
$\left(s,p\right)$-trace property for some $s>1$ then 
\[
\cT_{\eps'}u^{\eps'}\tsweakto u\quad\text{in }L^{s}(\Gamma^{\eps}\cap\bQ;\mugammaomegaeps)\,.
\]
If, even further, $\Gamma^{\eps}(\omega)$ has the uniform $(s,r)$-trace
property with $r$ from Part 1, then 
\begin{equation}
\lim_{\eps\to0}\norm{\cT_{\eps'}u^{\eps'}-\cT_{\eps'}u}_{L^{s}(\Gamma^{\eps'}\cap\bQ;\mugammaomega^{\eps'})}\to0\,.\label{eq:lem:ts-extension-conv-2}
\end{equation}
\end{enumerate}
\end{lem}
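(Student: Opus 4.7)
I first apply the uniform $(r,p)$-Dirichlet extension property to obtain $\cU_\eps u^\eps$ bounded in $W^{1,r}(\bQ)$ with $\|\cU_\eps u^\eps\|_{W^{1,r}(\Rd\setminus\bQ)}\to 0$. Rellich compactness yields a subsequence $\eps'\to 0$ and $u\in W_0^{1,r}(\bQ)$ such that $\cU_{\eps'}u^{\eps'}$ converges weakly in $W^{1,r}(\bQ)$ and strongly in $L^r(\bQ)$ to $u$. Theorem~\ref{thm:sto-conver-grad}(1) applied to the $W^{1,r}$-bounded sequence $\cU_{\eps'}u^{\eps'}$ produces $\v\in L^r(\bQ;\cV_\pot^r(\Omega))$ with $\cU_{\eps'}u^{\eps'}\tsweakto u$ and $\nabla\cU_{\eps'}u^{\eps'}\tsweakto \nabla u+\v$. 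The additional regularity $u\in L^p(\bQ\times\bP)$ comes from the uniform $L^p(\bQ_1^{\eps'}(\omega))$-bound on $u^{\eps'}$ together with Lemma~\ref{lem:Existence-ts-lim}. Finally, since $u^{\eps'}=\chi_{\bQ_1^{\eps'}(\omega)}\cU_{\eps'}u^{\eps'}$ and $\chi_{\bQ_1^{\eps'}(\omega)}(x)=\chi_\bP(\tau_{x/\eps'}\omega)$ is stationary, the second part of Lemma~\ref{lem:sto-ts-conv-on-holes} gives $u^{\eps'}\tsweakto \chi_\bP u$; the analogous reasoning applied to $\nabla u^{\eps'}$ yields the remaining half of (\ref{eq:lem:ts-extension-conv-1}).

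\textbf{Part 2.} My plan here exploits translation equivariance of the construction of $\cU$ from Section~\ref{sec:Construction-of-Macroscopic-1}. The Voronoi tessellation of $\X_\fr$, the covering of $\partial\bP(\omega)$ produced by Corollary~\ref{cor:cover-boundary}, the partition of unity of Lemma~\ref{lem:properties-local-rho-convering}, and the admissible paths provided by Theorem~\ref{thm:Ex-adm-path} are all determined by deterministic, measurable rules depending solely on $\bP(\omega)$, so the resulting $\cU$ commutes with spatial translations. Consequently, for $\phi\in W^{1,p}(\bP)$ with realization $\phi_\omega\in W^{1,p}_\loc(\bP(\omega))$, the map $x\mapsto(\cU\phi_\omega)(x)$ is stationary: there exists a measurable $\tilde\phi:\Omega\to\R$ with $(\cU\phi_\omega)(x)=\tilde\phi(\tau_x\omega)$. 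Setting $\cU_\Omega\phi:=\tilde\phi$, the pointwise estimates of Theorem~\ref{thm:uniform-Dir-ext-Thm} on growing cubes combined with the ergodic theorem produce the required bound $\|\nabla_\omega\cU_\Omega\phi\|_{L^r(\Omega)}\leq C\|\nabla_\omega\phi\|_{L^p(\bP)}$. The formula in the lemma statement follows because $\cU_\eps[\phi(\tau_{(\cdot)/\eps}\omega)](x)=\tilde\phi(\tau_{x/\eps}\omega)$ by equivariance, and the two-scale limit of this stationary oscillation is exactly $\tilde\phi$ viewed as an element of $L^r(\bQ\times\Omega)$ constant in $x$.

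\textbf{Part 3.} I use the uniform $(s,p)$-trace property to get $\cT_\eps u^\eps$ bounded in $L^s(\Gamma^\eps\cap\bQ;\mugammaomegaeps)$ and extract via Lemma~\ref{lem:Existence-ts-lim} a two-scale limit $w\in L^s(\bQ\times\Gamma;\mugammapalm)$. To identify $w$ with $u$, I test against $\phi(\tau_{x/\eps}\omega)\psi(x)$ for $\phi\in\Phi_{r,\Gamma}$ and $\psi\in C(\overline{\bQ})$; using that $\cT_\eps u^\eps$ equals the interior trace of $\cU_\eps u^\eps$ on $\Gamma^\eps$, the strong $L^r(\bQ)$-convergence of $\cU_\eps u^\eps$ to $u$ from Part~1 combines with Theorem~\ref{thm:Ergodic-Theorem-ran-meas-2} for the random surface measure $\mugammaomegaeps$ to pass to the limit, giving $w(x,\tilde\omega)=u(x)$. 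For the strong statement (\ref{eq:lem:ts-extension-conv-2}), the uniform $(s,r)$-trace property bounds $\|\cT_{\eps'}(u^{\eps'}-u)\|_{L^s(\Gamma^{\eps'}\cap\bQ)}$ by $\|u^{\eps'}-u\|_{L^r(\bQ_1^{\eps'}(\omega))}+\eps\|\nabla u^{\eps'}\|_{L^r(\bQ_1^{\eps'}(\omega))}+\eps\|\nabla u\|_{L^r(\bQ)}$; the first term vanishes by Rellich, while the last two are $O(\eps)$ since $\nabla u^{\eps'}$ is uniformly bounded in $L^r$ by Part~1 and $u\in W^{1,r}(\bQ)$.

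\textbf{Main obstacle.} The crux lies in the stationarity claim of Part~2: one must verify that every step of the macroscopic extension construction -- enumeration of covering balls, choice of admissible paths, computation of the discrete harmonic weights -- is carried out in a measurable and genuinely shift-covariant fashion, so that $\cU\phi_\omega$ is the realization of a single measurable function on $\Omega$. Once this equivariance is secured, the ergodic theorem converts the deterministic pointwise estimates of Theorem~\ref{thm:uniform-Dir-ext-Thm} into the required $L^r(\Omega)$-bound. A secondary technical difficulty in Part~3 is that strong $L^r(\bQ)$-convergence in the bulk does not automatically transfer to surface integrals on the $\eps$-rescaled interface $\Gamma^\eps$, so the uniform trace property and the Palm-measure version of the ergodic theorem must cooperate to complete the identification and the quantitative strong convergence.
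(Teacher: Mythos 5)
Your Part 1 follows essentially the paper's route (apply Theorem \ref{thm:sto-conver-grad} to the uniformly $W^{1,r}$-bounded extended sequence, use (\ref{eq:def:r-p-Ex-Tr-prop-2}) for the zero boundary values, and restrict via $\chi_{\bP}$), and your proof of (\ref{eq:lem:ts-extension-conv-2}) under the $(s,r)$-trace property agrees with the paper. The genuine problems are in Part 2 and in the identification step of Part 3.

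In Part 2 your argument hinges on the claim that the macroscopic extension operator is translation-equivariant, so that $(\cU\phi_{\omega})(x)=\tilde\phi(\tau_{x}\omega)$ for a single measurable $\tilde\phi$. This is not justified and, for the construction as given, false in general: the point process $\X_{\fr}(\bP(\omega))=2\fr\Zd\cap\bP_{-\fr}(\omega)$ and the covering algorithm of Theorem \ref{thm:delta-M-rho-covering} (which loops over lattice-anchored cubes $\eta_{k}\tilde{Q}_{z,i}$, $z\in\Zd$) are tied to a fixed origin and at best commute with lattice shifts, not with the full $\Rd$-action, so $\cU\phi_{\omega}$ need not be a stationary realization. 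Moreover, even granting such a $\tilde\phi$, the estimates of Theorems \ref{thm:Main-Theorem-4}/\ref{thm:uniform-Dir-ext-Thm} are Dirichlet estimates, valid only for $u\in W_{0,\partial\bQ}^{1,p}(\bP(\omega)\cap n\bQ)$; a stationary realization does not vanish on $\partial(n\bQ)$, so you cannot apply them on growing cubes without a cutoff whose contribution you then control. This cutoff step is exactly what the paper's proof supplies: it takes $\phi\in C_{c}^{\infty}(\bQ)$, applies $\cU_{\eps}$ to $\phi\,\ue$ with $\ue(x)=u(\tau_{x/\eps}\omega)$, observes that the cutoff terms carry a factor $\eps$, passes to the two-scale limit via Lemma \ref{lem:hole-exp-grad-ts} to get $\tilde u\in L^{p}(\bQ;W^{1,r,p}(\Omega,\bP))$ with $\norm{\rmD_{\omega}\tilde u}\leq C\norm{\rmD_{\omega}u}_{L^{p}(\bP)}$, and defines $\cU_{\Omega}\rmD_{\omega}u:=\int_{\bQ}\rmD_{\omega}\tilde u$, extending by density. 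Your route might be repairable, but as written both the equivariance and the applicability of the Dirichlet estimates are unproven.

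In Part 3 the identification $w=u$ cannot be deduced from strong $L^{r}(\bQ)$ convergence of $\cU_{\eps}\ue$ together with Theorem \ref{thm:Ergodic-Theorem-ran-meas-2}: the surface measures $\mugammaomegaeps$ are singular with respect to Lebesgue measure, $L^{r}(\bQ)$-convergence gives no control of traces on $\Gamma^{\eps}$, and the ergodic theorem applies to a fixed stationary integrand, not to the sequence $\ue$. Under only the $(s,p)$-trace property you also cannot estimate $\cT_{\eps}(\ue-u)$, since $\ue-u$ is small neither in $W^{1,p}$ nor can its $L^{p}$-distance be controlled. This is precisely why the lemma assumes the hypotheses of Theorem \ref{thm:Harmonic-P}: the paper rewrites $\int_{\bQ\cap\Gamma^{\eps}(\omega)}\ue\psi\phi^{\eps}\,\d\mugammaomegaeps$ using the auxiliary function $u_{\Omega}$, whose rescaled normal flux $\eps\nabla_{\omega}u_{\Omega}^{\eps}\cdot\nu_{\Gamma^{\eps}(\omega)}$ is constant on $\Gamma^{\eps}$, integrates by parts to convert the surface integral into bulk integrals of $\ue$ and $\eps\nabla\ue$ against stationary data, and only then passes to the two-scale limit to obtain $\int_{\bQ}\int_{\Gamma}u\,\phi\,\psi\,\d\mugammapalm$. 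Your appeal to a "cooperation" of the uniform trace property and the Palm ergodic theorem leaves exactly this step unproven, and your proposal never uses $u_{\Omega}$ at all, even though its existence is an explicit hypothesis of the statement.
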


\begin{proof}
In what follows, convergences always hold along subsequently chosen
subsequences of $\ue$, which we always relabel by $\ue$.

\emph{Proof of 1:} Let $\frac{1}{r}+\frac{1}{q}=1$. Then Theorem
\ref{thm:sto-conver-grad} and the assumption that (w.l.o.g.) $\chi_{\bP}\Phi_{q}\subset\Phi_{q}$
yields that for some $u\in W^{1,r}(\bQ;\Rd)$ and $v\in L^{r}(\bQ;L_{\pot}^{r}(\Omega))$
\[
\cU_{\eps}\ue\tsweakto u\quad\mbox{and }\quad\nabla\cU_{\eps}\ue\tsweakto\nabla u+v\qquad\mbox{as }\eps\to0\,.
\]
Due to (\ref{eq:def:r-p-Ex-Tr-prop-2}) we find $u\in W_{0}^{1,r}(\bQ;\Rd)$.
This yields (\ref{eq:lem:ts-extension-conv-1}).

Proof of 2: For $u\in W^{1,p}(\bP)$ with $\ue(x):=u\of{\tau_{\frac{x}{\eps}}\omega}$
we find for almost every $\omega$ that $\cU_{\eps}$ from Definition
\ref{def:r-p-Ex-Tr-prop} satisfies 
\begin{align}
\eps\norm{\nabla\cU_{\eps}(\phi\ue)}_{L^{r}(\bQ)} & \leq C\left(\eps\norm{\ue\nabla\phi}_{L^{p}(\bQ\cap\bP^{\eps}(\omega))}+\eps\norm{\phi\nabla\ue}_{L^{p}(\bQ\cap\bP^{\eps}(\omega))}\right)\label{eq:thm:strong-extension-property-help-1}\\
\norm{\cU_{\eps}(\phi\ue)}_{L^{r}(\bQ)} & \leq C\norm{\ue\phi}_{L^{p}(\bQ\cap\bP^{\eps}(\omega))}\nonumber 
\end{align}
As $\eps\to0$, Lemma \ref{lem:hole-exp-grad-ts} yields $\ue\phi\tsweakto\tilde{u}$,
$\nabla\cU_{\eps}(\phi\ue)\tsweakto\rmD_{\omega}\tilde{u}$, where
$\tilde{u}\in L^{p}(\bQ;W^{1,r,p}(\Omega,\bP))$. Moreover, inequality
(\ref{eq:thm:strong-extension-property-help-1}) implies in the limit
that 
\[
\norm{\rmD_{\omega}\tilde{u}}_{L_{\pot}^{r,p}(\Omega,\bP)}\leq C\norm{\rmD_{\omega}u}_{L_{\pot}^{p}(\bP)}\,.
\]
Hence we can set $\cU_{\Omega}\rmD_{\omega}u:=\int_{\bQ}\rmD_{\omega}\tilde{u}$.
By density, this operator extends to $\cV_{\pot}^{p}(\bP)$.

\emph{Proof of 3:} Now let $p\geq2$ and let the Assumptions of Theorem
\ref{thm:Harmonic-P} be satisfied and let $\Gamma^{\eps}(\omega)$
additionally have the uniform $\left(s,p\right)$-trace property for
some $s>1$. If $u_{\Omega}$ is the function from Theorem \ref{thm:Harmonic-P}
we observe for $u_{\Omega}^{\eps}(x):=u_{\Omega}\of{\tau_{\frac{x}{\eps}}\omega}$
for every $\psi\in C_{c}^{\infty}(\bQ)$ and $\phi\in C_{b}^{1}(\Omega)$
with $\phi^{\eps}(x):=\phi\of{\tau_{\frac{x}{\eps}}\omega}$ that
\begin{align*}
\int_{\bQ\cap\Gamma^{\eps}(\omega)}\ue\psi\phi^{\eps}\,\d\mugammaomegaeps & =\eps\int_{\bQ\cap\Gamma^{\eps}(\omega)}\ue\psi\phi^{\eps}\eps\nabla_{\omega}u_{\Omega}^{\eps}\cdot\nu_{\Gamma^{\eps}(\omega)}\,\d\hausdorffH^{d-1}\\
 & =\int_{\bQ\cap\bP^{\eps}(\omega)}\left(\ue\psi\phi^{\eps}u_{\Omega}^{\eps}+\eps\nabla\ue_{\Omega}\cdot\left(\ue\phi^{\eps}\eps\nabla\psi+\psi\phi^{\eps}\eps\nabla\ue+\psi\ue\eps\nabla\phi^{\eps}\right)\right)\\
 & \to\int_{\bQ}\int_{\bP}\left(u\psi\phi u_{\Omega}+\psi u\nabla_{\omega}u_{\Omega}\cdot\nabla_{\omega}\phi\right)\\
 & =\int_{\bQ}\int_{\Gamma}u\psi\,\d\mugammapalm\,.
\end{align*}
Since $\psi$ and $\phi$ were arbitrary and $\nabla_{\omega}(u\psi)=0$
we conclude 
\[
\lim_{\eps\to0}\int_{\bQ\cap\Gamma^{\eps}(\omega)}\ue\psi\phi^{\eps}\,\d\mugammaomegaeps=\int_{\bQ}\int_{\Gamma}u\phi\psi\,.
\]

In order to show (\ref{eq:lem:ts-extension-conv-2}) note that 
\[
\norm{\cT_{\eps}u^{\eps}-\cT_{\eps}u_{\delta}}_{L^{s}(\Gamma^{\eps}\cap\bQ;\mugammaomega^{\eps})}\leq\norm{u^{\eps}-u}_{L^{r}(\Ball{\eps}{\bQ}\cap\bP^{\eps}(\omega))}+\eps\norm{\nabla\left(u^{\eps}-u\right)}_{L^{r}(\Ball{\eps}{\bQ}\cap\bP^{\eps}(\omega))}\,.
\]
Since the first term on the right hand side converges to zero and
$\norm{\nabla\left(u^{\eps}-u\right)}_{L^{r}(\Ball{\eps}{\bQ}\cap\bP^{\eps}(\omega))}$
is bounded, the claim follows.
\end{proof}

\subsection{\label{sec:Homogenization-of-p-Laplace}Homogenization of $p$-Laplace
Equations}
\begin{assumption}
\label{assu:hom}For the rest of this work, let Assumptions \ref{assu:Trace},
\ref{assu:dirichlet-extension} and \ref{assu:Omega-mu-tau-fundmental-strong}
hold for some $1<r<p$ and $p\geq2$. This implies that $\bP$ and
$\Gamma$ satisfy the strong $(r,p)$-extension and the strong $(r,p)$-trace
property, as well as the weak uniform $(r,p)$-extension and the uniform
$(r,p)$-Dirichlet extension property with the uniform $(r,p)$-trace
property. In particular, we can apply all of the above developed theory.
\end{assumption}

In what follows, we will consider the homogenization of the following
functionals:
\[
\cE_{\eps,\omega}(u)=\int_{\bQ^{\eps}(\omega)}\left(\frac{1}{p}\left|\nabla u\right|^{p}+\frac{1}{p}\left|u\right|^{p}-g\,u\right)+\int_{\Gamma^{\eps}(\omega)}F(u(x))\d\mu_{\Gamma(\omega)}^{\eps}(x)\,,
\]
where $F$ is a convex function with $\partial F=f$, $F(\cdot)\geq F_{0}>-\infty$
for some constant $F_{0}\in\R$ and we assume that $\left|\partial F(A)\right|$
is bounded on bounded subsets $A\subset\R$. Note that compared to
(\ref{eq:system-eps-p-Laplace}) we add the term $\left|u\right|^{p}$
in order to reduce technical difficulties. However, we will discuss
how to treat the case of missing $\left|u\right|^{p}$ in Remark \ref{rem:final-remark}.
Minimizers of $\cE_{\eps,\omega}$ satisfy the partial differential
equation system 
\begin{align}
-\diver\left(a\left|\nabla\ue\right|^{p-2}\nabla\ue\right)+\left|u\right|^{p-1} & =g &  & \mbox{on }\bQ_{\tilde{\bP}}^{\eps}(\omega)\,,\nonumber \\
u & =0 &  & \mbox{on }\partial\bQ\,,\label{eq:system-eps-p-Laplace-modified}\\
\left|\nabla\ue\right|^{p-2}\nabla\ue\cdot\nu_{\Gamma^{\eps}(\omega)} & =f(\ue) &  & \mbox{on }\Gamma^{\eps}(\omega)\,.\nonumber 
\end{align}
and we will see that homogenization of the latter system is equivalent
with a two-scale $\Gamma$-convergence of $\cE_{\eps,\omega}$. In
particular, we find the following
\begin{thm}
\label{thm:ts-gamma-conv}Let Assumption \ref{assu:hom} hold. Then,
for almost every $\omega\in\Omega$ and 
\[
\cE(u,\v):=\int_{\bQ}\int_{\bP}\frac{1}{p}\left(\left|\nabla u+\v\right|^{p}+\left|u\right|^{p}\right)-\int_{\bQ}\int_{\bP}g\,u+\int_{\bQ}\int_{\Gamma}F(u)\d\mugammapalm
\]
we find $\cE_{\eps,\omega}\xrightarrow{2s\Gamma}\cE$ in the following
sense

\begin{enumerate}
\item For $\ue\weakto u$ weakly in $L^{p}(\bQ)$, $\ue\in W_{0,\partial\bQ}^{1,p}(\bQ^{\eps}(\omega))$
with $\sup_{\eps}\cE_{\eps,\omega}(\ue)<\infty$, there holds $u\in W_{0}^{1,r}(\bQ)$
and there exists $\v\in L^{r}(\bQ;\cV_{\pot}^{r}(\Omega,\bP))$ such
that $\nabla\ue\tsweakto\chi_{\bP}\cdot\left(\nabla u+\v\right)$
and 
\[
\cE(u,\v)\leq\liminf_{\eps\to0}\cE_{\eps,\omega}(\ue)\,.
\]
\item For each pair $(u,\v)\in W_{0}^{1,r}(\bQ)\times L^{r}(\bQ;\cV_{\pot}^{r}(\Omega))$
with $\cE(u,\v)<+\infty$ there exists a sequence $\ue\in W_{0,\partial\bQ}^{1,p}(\bQ^{\eps}(\omega))$
such that $\ue\weakto\left|\bP\right|u$ weakly in $L^{p}(\bQ)$,
$\cU_{\eps}\ue\weakto u$ weakly in $W^{1,r}(\bQ)$ and $\nabla\ue\tsweakto\chi_{\bP}\cdot\left(\nabla u+\v\right)$
weakly in two scales and 
\[
\cE(u,\v)=\lim_{\eps\to0}\cE_{\eps,\omega}(\ue)\,.
\]
\end{enumerate}
\end{thm}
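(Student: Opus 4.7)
I plan to prove the two parts separately, relying heavily on Lemma \ref{lem:ts-extension-conv} (which packages the extension/trace convergence) and Lemma \ref{lem:General-Hom-Convex} (which packages convex lower semicontinuity under two-scale convergence).

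\textbf{Part 1 (liminf).} First I would extract apriori bounds from the assumption $\sup_\eps\cE_{\eps,\omega}(\ue)<\infty$. Using $F\geq F_0$, a Young/H\"older splitting of the linear term $\int g\ue$, and the ergodic theorem (Theorem \ref{thm:Ergodic-Theorem-ran-meas-2}) applied to $\mugammaomegaeps(\bQ)\to|\bQ|\mugammapalm(\Gamma)$, I would obtain
\[
\sup_\eps\left(\norm{\ue}_{L^p(\bQ_1^\eps(\omega))}+\norm{\nabla\ue}_{L^p(\bQ_1^\eps(\omega))}\right)<\infty.
\]
Since Assumption \ref{assu:hom} guarantees the uniform $(r,p)$-Dirichlet extension property, part~1 of Lemma \ref{lem:ts-extension-conv} yields (for a subsequence) $u\in W_0^{1,r}(\bQ)\cap L^p(\bQ)$ and $\v\in L^r(\bQ;\cV_\pot^r(\Omega))$ with
\[
\ue\tsweakto\chi_{\bP}u,\qquad\nabla\ue\tsweakto\chi_{\bP}(\nabla u+\v),\qquad\cU_\eps\ue\weakto u\text{ in }W^{1,r}(\bQ).
\]
The weak $L^p$-limit of $\ue$ (given in the hypothesis) must then equal $|\bP|u$, identifying $u$ uniquely. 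Applying Lemma \ref{lem:General-Hom-Convex} to $\muomega=\chi_{\bP(\omega)}\lebesgueL$ with the convex integrands $\xi\mapsto\tfrac1p|\xi|^p$ gives the liminf inequality for the bulk energy. The linear term passes to the limit directly by two-scale convergence of $\ue$ against the admissible test function $g\,\chi_{\bP}\in L^q(\bQ\times\Omega)$.

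\textbf{The boundary term} is the delicate part. I would use part~3 of Lemma \ref{lem:ts-extension-conv}, which is applicable since Assumption \ref{assu:hom} entails both the uniform $(r,p)$-trace property for $\Gamma^\eps$ and (via the Assumptions of Theorem~\ref{thm:Harmonic-P}) the harmonic--$\bP$ construction. This delivers $\cT_\eps\ue\tsweakto u$ in $L^s(\Gamma^\eps\cap\bQ;\mugammaomegaeps)$ together with the strong approximation \eqref{eq:lem:ts-extension-conv-2}. Applying Lemma \ref{lem:General-Hom-Convex} now to the random measure $\mu_{\Gamma(\omega)}$ and the convex, bounded-below integrand $F$ gives
\[
\int_\bQ\int_\Gamma F(u)\,\d\mugammapalm\leq\liminf_{\eps\to0}\int_{\Gamma^\eps(\omega)\cap\bQ}F(\cT_\eps\ue)\,\d\mugammaomegaeps,
\]
which combined with the bulk liminf yields $\cE(u,\v)\leq\liminf_\eps\cE_{\eps,\omega}(\ue)$.

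\textbf{Part 2 (recovery).} I would proceed by a standard density/diagonalisation argument. Fix $(u,\v)$ with $\cE(u,\v)<\infty$. By density (Theorem \ref{thm:dense-functions-Omega} and the closure definition $\cV_\pot^r(\Omega,\bP)=\mathrm{cl}\{\rmD u:u\in W^{1,p}(\bP)\}$), approximate $u$ in $W_0^{1,r}(\bQ)\cap L^p(\bQ)$ by $u_\delta\in C_c^\infty(\bQ)$, and $\v$ in $L^r(\bQ;\cV_\pot^r(\Omega))\cap L^p(\bQ;\cV_\pot^p(\bP))$ by simple tensors $\sum_k\varphi_k(x)\,\rmD_\omega w_k$ with $\varphi_k\in C_c^\infty(\bQ)$, $w_k\in W^{1,\infty}(\Omega)\cap C_b^1(\Omega)$. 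Using Lemma \ref{lem:vanishing-ergodic-potential} applied to each $\rmD_\omega w_k$ (whose realisations are potentials on $\Rd$), define
\[
\ue_{\delta,k}(x):=u_\delta(x)+\eps\sum_k\varphi_k(x)\,w_k(\tau_{x/\eps}\omega),
\]
truncated by a cutoff $\eta_\eps\in C_c^\infty(\bQ)$ equal to $1$ outside an $O(\eps)$-neighbourhood of $\partial\bQ$ so that $\eta_\eps\ue_{\delta,k}\in W^{1,p}_{0,\partial\bQ}(\bQ^\eps(\omega))$ after restriction. A direct computation shows
\[
\nabla(\eta_\eps\ue_{\delta,k})\tsweakto\chi_{\bP}\bigl(\nabla u_\delta+\textstyle\sum_k\varphi_k\rmD_\omega w_k\bigr)
\]
in two scales, while the $\eps$-term vanishes strongly in $L^p(\bQ)$ by Lemma \ref{lem:vanishing-ergodic-potential}. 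The bulk energy converges to the expected limit by the ergodic theorem (Theorem \ref{thm:general-lebesgue-ergodic-thm}), and since $w_k$ is bounded continuous, $\cT_\eps(\eta_\eps\ue_{\delta,k})\to u_\delta$ strongly enough in $L^s(\Gamma^\eps;\mugammaomegaeps)$ to pass $F$ through (using continuity of $F$ on bounded sets and dominated convergence via $\mugammaomegaeps(\bQ)\to|\bQ|\mugammapalm(\Gamma)$). A Cantor/Attouch-diagonal argument over $\delta\to0$ and the approximation index $k\to\infty$ then yields the required $\ue$.

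\textbf{Main obstacle.} The most delicate step will be handling the boundary term in the liminf direction: one needs that the sequence of traces $\cT_\eps\ue$ genuinely two-scale converges to a limit independent of $\omega$ (equal to $u(x)$), which requires the strong $(r,p)$-trace property to cooperate with the extension operator so that the bulk $W^{1,r}$ limit determines the trace limit. This is exactly the content of the combination of parts~2 and~3 of Lemma \ref{lem:ts-extension-conv}, and checking that the cutoff $\eta_\eps$ in the recovery sequence does not spoil the boundary convergence (since $\partial\bQ$ may intersect $\Gamma^\eps$) will require a careful estimate of $\int_{\Gamma^\eps\cap(\bQ\setminus\bQ_{-\eps})}|F(\ue)|\,\d\mugammaomegaeps\to 0$, using the local boundedness of $F$ on the range of $u_\delta$ and the ergodic-theorem control of the surface measure of the boundary layer.
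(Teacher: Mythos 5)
Your Part 1 matches the paper's strategy: extract apriori bounds from finiteness of the energy, apply Lemma \ref{lem:ts-extension-conv} (parts 1--3) to identify the two-scale limits of $\ue$, $\nabla\ue$, and $\cT_{\eps}\ue$, and then invoke the convex lower-semicontinuity Lemma \ref{lem:General-Hom-Convex} twice (for $\chi_{\bP(\omega)}\lebesgueL$ and for $\mugammaomega$). This is in substance what the paper does.

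For Part 2, your overall plan (recovery sequence $u_\delta + \eps\sum_k\varphi_k w_k(\tau_{x/\eps}\omega)$ on a dense set, then an Attouch-type diagonal argument) coincides with the paper's Steps~a--c, but there is a genuine gap in the diagonalization. You propose to approximate $u$ in $W_0^{1,r}(\bQ)\cap L^p(\bQ)$ by smooth compactly supported functions $u_\delta$ and then pass $\delta\to 0$, $k\to\infty$. However, convergence $u_\delta\to u$ in $W_0^{1,r}\cap L^p$ does \emph{not} imply $\int_\bQ\int_\Gamma F(u_\delta)\,\d\mugammapalm\to\int_\bQ\int_\Gamma F(u)\,\d\mugammapalm$, because $F$ is only assumed convex with $F\geq F_0$ and locally bounded subdifferential -- it may grow arbitrarily fast, so finiteness of $\int F(u)$ for the (possibly unbounded) $u$ does not transfer to the Sobolev-approximating sequence. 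The paper resolves this by first truncating: it sets $M_F:=\sup F^{-1}(-\infty,M)$, $u_M:=\min\{u,M_F\}$, and $\v_M:=\chi_{(-\infty,M_F)}(u)\,\v$, shows $\cE(u_M,\v_M)\to\cE(u,\v)$ as $M\to\infty$ by dominated convergence, and only \emph{then} approximates the bounded pair $(u_M,\v_M)$ by elements of $(\phi_j)\times S$, where the local Lipschitz control of $F$ makes the boundary term converge. Without this intermediate truncation your diagonal argument cannot be closed for general $(u,\v)$ with $\cE(u,\v)<\infty$.

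Two smaller points: the extra cutoff $\eta_\eps$ near $\partial\bQ$ is redundant, since both $u_\delta$ and $\varphi_k$ are already in $C_c^\infty(\bQ)$, so $\ue\in W_{0,\partial\bQ}^{1,p}(\bQ^\eps(\omega))$ without further truncation and the worried-about boundary-layer estimate on $\Gamma^\eps\cap(\bQ\setminus\bQ_{-\eps})$ never arises. And your invocation of Lemma \ref{lem:vanishing-ergodic-potential} is not what the paper uses to kill the $\eps$-term in the gradient: since $V^\eps=V(x,\tau_{x/\eps}\omega)$ with $V\in\mathrm{span}\,\phi_j u_k$ essentially bounded, the term $\eps\nabla\phi_j\,u_k(\tau_{x/\eps}\omega)$ already vanishes in $L^\infty$, so no two-scale recovery lemma is needed there; Lemma \ref{lem:vanishing-ergodic-potential} addresses a different situation (recovering a $V_\eps$ whose gradient is a prescribed stationary field).
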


\begin{proof}
1. Evidently, 
\[
\int_{\bQ^{\eps}(\omega)}\left(\frac{1}{p}\left|\nabla\ue\right|^{p}+\frac{1}{p}\left|\ue\right|^{p}\right)\leq C\cE_{\eps,\omega}(\ue)
\]
for $C$ independent from $\eps$. Hence the statement follows from
Lemmas \ref{lem:ts-extension-conv} and \ref{lem:General-Hom-Convex}
on particularly noting that $\ue\tsweakto u$ in $L^{s}(\Gamma^{\eps}(\omega);\mugammaomegaeps)$.

2. Step a: Let $\left(u_{k}\right)_{k\in\N}\subset C_{b}^{1}(\Omega)$
be the countable dense family in $W^{1,p}(\Omega)$ according to Theorem
\ref{thm:dense-functions-Omega}. Furthermore, let $\left(\phi_{j}\right)_{j\in\N}\subset C_{c}^{\infty}(\bQ)$
be dense in $W_{0}^{1,p}(\bQ)$. Then the span of the functions $\phi_{j}\nabla_{\omega}u_{k}$
is dense in $L^{r}(\bQ;\cV_{\pot}^{r}(\Omega))$. Writing $S=\mathrm{span}\phi_{j}\nabla_{\omega}u_{k}$
we show statement 2. for $(u,\v)\in\left(\phi_{j}\right)_{j\in\N}\times S$.
However, for such $(u,\v)$ we find $V\in\mathrm{span}\phi_{j}u_{k}$
such that $\v=\nabla_{\omega}V$ and $V^{\eps}(x):=V(x,\tau_{\frac{x}{\eps}}\omega)$
is well defined and measurable for every $\omega$. For simplicity
of notation, we assume $V=\phi_{j}u_{k}$

In particular, we have for $\ue=u+\eps V^{\eps}$ that $\ue\tsweakto u$
and $\nabla\ue=\nabla u+\eps\nabla\phi_{j}\,u_{k}\of{\tau_{\frac{x}{\eps}}\omega}+\phi_{j}\nabla_{\omega}u_{k}\of{\tau_{\frac{x}{\eps}}\omega}$
and hence $\ue\weakto u$ weakly in $W^{1,p}(\bQ)$ and $\nabla\ue\tsweakto\nabla u+\phi_{j}\nabla_{\omega}u_{k}$.
Using essential boundedness of $\nabla\phi_{j}\,u_{k}\of{\tau_{\frac{x}{\eps}}\omega}$,
the ergodic theorem now yields
\begin{align*}
\lim_{\eps\to0}\int_{\bQ^{\eps}(\omega)}\left|\nabla\ue\right|^{p} & =\lim_{\eps\to0}\int_{\bQ}\chi_{\bP}\of{\tau_{\frac{x}{\eps}}\omega}\left|\nabla u+\phi_{j}\nabla_{\omega}u_{k}\of{\tau_{\frac{x}{\eps}}\omega}\right|^{p}\\
 & =\int_{\bQ}\int_{\bP}\left|\nabla u+\v\right|^{p}\,.
\end{align*}
Similarly, we show $\int_{\bQ^{\eps}(\omega)}\left|\ue\right|^{p}\to\int_{\bQ}\int_{\bP}\left|u\right|^{p}$
and $\int_{\bQ^{\eps}(\omega)}g\ue\to\int_{\bQ}\int_{\bP}gu$.

Step b: By Lemma \ref{lem:ts-extension-conv} we find $\cT_{\eps}\ue\tsweakto u$.
Unfortunately, this is not enough to pass to the limit in the integral
$\int_{\Gamma^{\eps}(\omega)}F(u(x))\d\mu_{\Gamma(\omega)}^{\eps}(x)$.
However, we can make use of 
\[
F(u)+\partial F(u)\eps V^{\eps}\leq F(u+\eps V^{\eps})\leq F(u)+\partial F(u+\eps V^{\eps})\eps V^{\eps}\,.
\]
Since $\sup_{\eps}\norm{V^{\eps}}_{\infty}+\norm u_{\infty}<\infty$
we find 
\[
\norm{\partial F(u)}_{\infty}+\sup_{\eps}\norm{\partial F(u+\eps V^{\eps})}_{\infty}\leq C<\infty
\]
and hence 
\[
F(u)-\eps C\leq F(u+\eps V^{\eps})\leq F(u)+\eps C\,.
\]
This implies by the ergodic theorem
\[
\int_{\Gamma^{\eps}(\omega)}F(u+\eps V^{\eps})\d\mu_{\Gamma(\omega)}^{\eps}(x)\to\int_{\bQ}\int_{\Gamma}F(u)\d\mugammapalm\,,
\]
and hence 2. for $(u,\v)\in\left(\phi_{j}\right)_{j\in\N}\times S$.

Step c: We pick up an idea of \cite{duong2013wasserstein}, Proposition
6.2. For general $(u,\v)\in W_{0}^{1,r}(\bQ)\times L^{r}(\bQ;\cV_{\pot}^{r}(\Omega))$
with $\cE(u,\v)<+\infty$ let $(u_{n},\v_{n})\in\left(\phi_{j}\right)_{j\in\N}\times S$
with 
\begin{equation}
\norm{(u,\v)-(u_{n},\v_{n})}_{W_{0}^{1,r}(\bQ)\times L^{r}(\bQ;\cV_{\pot}^{r}(\Omega))}\leq\frac{1}{n}\label{eq:thm:ts-gamma-conv-help-1}
\end{equation}
and 
\begin{equation}
\left|\cE(u,\v)-\cE(u_{n},\v_{n})\right|\leq\frac{1}{n}\,.\label{eq:thm:ts-gamma-conv-help-2}
\end{equation}
We achieve this in the following way: First we introduce $M_{F}:=\sup F^{-1}(-\infty,M)$
and cut $u_{M}:=\min\left\{ u,M_{F}\right\} $. Furthermore, we set
$\v_{M}(x,\omega)=\chi_{(-\infty,M_{F})}(u(x))\,\v(x,\omega)$, i.e.
$u_{M}=M_{F}$ implies $\v=0$. Then $u_{M}$ and $\v_{M}$ are still
in the same respective spaces. Furthermore, as $M\to\infty$ we find
$\cE(u_{M},\v_{M})\to\cE(u,\v)$ by the Lebesgue dominated convergence
theorem. Now, by the properties of $F$, we can approach $\left(u_{M},\v_{M}\right)$
by elements $(u_{M,\delta},\v_{M,\delta})\in\left(\phi_{j}\right)_{j\in\N}\times S$.
By the Lebesgue dominated convergence theorem we get convergence in
the $\left|\cdot\right|^{p}$-terms and using the convexity of $F$
and local boundedness of $\partial F$ like in Step b we show that
$\cE(u_{M,\delta},\v_{M,\delta})\to\cE(u_{M},\v_{M})$. Successively
choosing $M$ and $\delta$, we find $(u_{n},\v_{n})\in\left(\phi_{j}\right)_{j\in\N}\times S$
satisfying \ref{eq:thm:ts-gamma-conv-help-1}--\ref{eq:thm:ts-gamma-conv-help-2}..

We set $\eps_{0}(\omega)=1$ and for each $(u_{n},\v_{n})\in\left(\phi_{j}\right)_{j\in\N}\times S$
we find by Steps a and b for almost every $\omega$ some $\eps_{n}(\omega)\leq\frac{1}{2}\eps_{n-1}(\omega)$
such that for $\eps<\eps_{n}(\omega)$ and $\ue_{n,\omega}=u_{n}(x)+\eps V_{n}(x,\tau_{\frac{x}{\eps}}\omega)$
it holds 
\[
\left|\cE_{\eps,\omega}(u_{n,\omega}^{\eps})-\cE(u_{n},\v_{n})\right|\leq\frac{1}{n}\,.
\]
The set $\tilde{\Omega}\subset\Omega$ such that all $\eps_{n}(\omega)$
are well defined has measure $1$. For such $\omega$ we choose $\ue=\ue_{n,\omega}$
if $\eps\in(\eps_{n+1},\eps_{n})$. Then
\[
\left|\cE_{\eps,\omega}(\ue)-\cE(u,\v)\right|\leq\frac{2}{n}\qquad\text{for }\eps<\eps_{n}\,.
\]
which implies the claim.
\end{proof}
\begin{thm}
\label{thm:Final-homogenization-Theorem}Let Assumption \ref{assu:hom}
hold. Then for almost every $\omega$ the following holds: For every
$\eps>0$ let $\ue_{\mathrm{min}}\in W_{0,\partial\bQ}^{1,p}(\bQ^{\eps}(\omega))$
be the unique minimizer of $\cE_{\eps,\omega}$. Then 
\[
\sup_{\eps>0}\norm{\ue_{\mathrm{min}}}_{W_{0,\partial\bQ}^{1,p}(\bQ^{\eps}(\omega))}+\cE_{\eps,\omega}(\ue_{\mathrm{min}})\leq\infty
\]
and for every subsequence such that $\cU_{\eps}\ue_{\mathrm{min}}\weakto u$
weakly in $L^{p}(\bQ)$ and weakly in $W^{1,r}(\bQ)$ with $\v\in L^{r}(\bQ;\cV_{\pot}^{r}(\Omega,\bP))$
such that $\nabla\ue_{\mathrm{min}}\tsweakto\nabla u+\v$. It further
holds $u\in W_{0}^{1,r}(\bQ)$ and $(u,\v)$ is a global minimizer
of $\cE$ in $W_{0}^{1,r}(\bQ)\times\cV_{\pot}^{r}(\Omega)$.
\end{thm}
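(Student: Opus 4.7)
The plan is to apply the standard machinery of $\Gamma$-convergence: combine the $2s\Gamma$-convergence result (Theorem \ref{thm:ts-gamma-conv}) with suitable a priori bounds coming from the uniform Dirichlet extension property (Theorem \ref{thm:uniform-Dir-ext-Thm}).

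First I would establish the a priori estimate. Testing with $u \equiv 0 \in W_{0,\partial\bQ}^{1,p}(\bQ^{\eps}(\omega))$ gives $\cE_{\eps,\omega}(\ue_{\mathrm{min}}) \leq \cE_{\eps,\omega}(0) = F(0)\mu_{\Gamma(\omega)}^{\eps}(\bQ)$, which is bounded uniformly in $\eps$ by the ergodic theorem applied to the Palm measure $\mugammapalm$. Using the coercivity of $\cE_{\eps,\omega}$ (the terms $\tfrac{1}{p}|\nabla u|^p + \tfrac{1}{p}|u|^p$ and $F \geq F_0$ together with Young's inequality applied to $\int g u$), I obtain
\[
\sup_{\eps>0} \bigl( \|\ue_{\mathrm{min}}\|_{L^p(\bQ^{\eps}(\omega))}^p + \|\nabla \ue_{\mathrm{min}}\|_{L^p(\bQ^{\eps}(\omega))}^p \bigr) < \infty
\]
for almost every $\omega$. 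Applying Theorem \ref{thm:uniform-Dir-ext-Thm} and Lemma \ref{lem:ts-extension-conv} part 1, the sequence $\cU_{\eps}\ue_{\mathrm{min}}$ has a (weakly) $W^{1,r}(\bQ)$-convergent subsequence with limit $u \in W_0^{1,r}(\bQ)$, and there exists $\v \in L^r(\bQ; \cV_{\pot}^r(\Omega,\bP))$ such that $\nabla \ue_{\mathrm{min}} \tsweakto \chi_{\bP}(\nabla u + \v)$; the Dirichlet boundary condition $u|_{\partial\bQ}=0$ comes from property (\ref{eq:def:r-p-Ex-Tr-prop-2}) of the uniform Dirichlet extension.

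Next I would apply the liminf inequality of Theorem \ref{thm:ts-gamma-conv}(1) to conclude $\cE(u,\v) \leq \liminf_{\eps \to 0} \cE_{\eps,\omega}(\ue_{\mathrm{min}})$. To show that $(u,\v)$ is a global minimizer, take any competitor $(\tilde{u},\tilde{\v}) \in W_0^{1,r}(\bQ) \times L^r(\bQ; \cV_{\pot}^r(\Omega,\bP))$ with $\cE(\tilde{u},\tilde{\v}) < \infty$. By Theorem \ref{thm:ts-gamma-conv}(2) there exists a recovery sequence $\tilde{u}^{\eps} \in W_{0,\partial\bQ}^{1,p}(\bQ^{\eps}(\omega))$ with $\cE_{\eps,\omega}(\tilde{u}^{\eps}) \to \cE(\tilde{u},\tilde{\v})$. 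Minimality of $\ue_{\mathrm{min}}$ yields the chain
\[
\cE(u,\v) \leq \liminf_{\eps\to 0} \cE_{\eps,\omega}(\ue_{\mathrm{min}}) \leq \limsup_{\eps \to 0} \cE_{\eps,\omega}(\tilde{u}^{\eps}) = \cE(\tilde{u},\tilde{\v}),
\]
so $(u,\v)$ is a minimizer of $\cE$ on $W_0^{1,r}(\bQ) \times \cV_{\pot}^r(\Omega,\bP)$. The uniqueness of $\ue_{\mathrm{min}}$ at fixed $\eps$ is standard, following from strict convexity of $\cE_{\eps,\omega}$ in $u$ (since $p \geq 2$, the $|u|^p$ term is strictly convex and $F$ is convex).

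The main obstacle I anticipate is subtle rather than technical: in the limit problem the admissible class for the corrector is $\cV_{\pot}^r(\Omega,\bP)$ rather than the a priori larger $\cV_{\pot}^r(\Omega)$ stated in the theorem, and this discrepancy reflects precisely the unresolved Conjecture \ref{conj:decomposition}. Without the orthogonal decomposition $\R^d \cap \cV_{\pot}^p(\bP) = \emptyset$, one cannot in general identify $\v$ as the unique minimizer of the cell problem once $\nabla u$ is prescribed, so the statement of the theorem as a characterization of the homogenized limit is the best available; the proof above gives existence of a minimizer but not uniqueness of $(u,\v)$ at the two-scale level. The recovery step in Theorem \ref{thm:ts-gamma-conv}(2), which relies on the density of smooth stationary potentials and on the boundedness properties of $\partial F$ used to pass to the limit in the nonlinear boundary term, is where the weak-trace and extension machinery assembled in Section \ref{sec:Sobolev-spaces-on} is indispensable.
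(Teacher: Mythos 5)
Your proof is correct, and it is in fact more direct than the paper's. Both proofs begin identically: test with $u\equiv 0$, invoke the ergodic theorem to bound $\cE_{\eps,\omega}(0)$ (hence $\cE_{\eps,\omega}(\ue_{\mathrm{min}})$) uniformly in $\eps$, derive the a priori $W^{1,p}$-bound via coercivity, extract a two-scale limit $(u,\v)$ by Lemma \ref{lem:ts-extension-conv}.1, and apply the liminf inequality of Theorem \ref{thm:ts-gamma-conv}.1. Where you diverge is in concluding global minimality. You directly compare $(u,\v)$ with an arbitrary competitor $(\tilde u,\tilde\v)$ via the recovery sequence of Theorem \ref{thm:ts-gamma-conv}.2, yielding $\cE(u,\v)\leq\cE(\tilde u,\tilde\v)$ in one line. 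The paper instead first establishes the \emph{existence} of a global minimizer of $\cE$: since $\cE$ is not coercive on $W_{0}^{1,r}(\bQ)\times\cV_{\pot}^{r}(\Omega)$ (only the $\chi_{\bP}$-weighted $L^p$-norms enter), the direct method fails a priori, so the paper builds local minimizers $(u_R,\v_R)$ on closed balls of increasing radius, uses the recovery/liminf chain to show $\norm{u_R}_{W_r}+\norm{\v_R}_{\cV_r}\leq C\cE(u_R,\v_R)$ with $C$ independent of $R$, deduces that a global minimizer $(\bar u,\bar\v)$ lies in a fixed ball, and only then identifies the two-scale limit with it (Steps 1--5 of the published proof). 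Your argument sidesteps this ball-by-ball construction entirely because the $\Gamma$-convergence machinery already \emph{produces} the minimizer as the limit point; the paper's longer route buys the additional (unstated) information that $\cE$ attains its infimum, obtained independently of any particular minimizing sequence. Your concluding remarks on $\cV_{\pot}^{r}(\Omega)$ versus $\cV_{\pot}^{r,p}(\Omega,\bP)$, the role of Conjecture \ref{conj:decomposition}, and the consequent lack of uniqueness at the two-scale level accurately reflect the paper's own Remark following the theorem.
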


\begin{rem}
Unfortunately, we are not able to prove uniqueness of homogenized
solution due to a lack of coercivity in the respective case. However,
note that in case Conjecture \ref{conj:decomposition} holds, one
can immediately prove that both $\nabla u\in L^{p}(\bQ)$ and $\v\in L^{p}(\bQ;\cV_{\pot}^{r,p}(\Omega,\bP))$,
which allows to show the uniqueness of the minimizer by a standard
coercivity argument.
\end{rem}

\begin{proof}
In what follows, we denote 
\[
W_{r}:=W_{0}^{1,r}(\bQ),\quad\cV_{r}:=\cV_{\pot}^{r}(\Omega)\,,
\]
and note that every of the following countable steps works for almost
every $\omega$.

\emph{Step 1:} Let $(u,\v)\in W_{\infty}\times\cV_{p}\subset W_{r}\times\cV_{r}$.
Then $\cE(u,\v)<+\infty$ and hence by standard arguments $\cE$ has
a at least one local minimizer $(u_{R},\v_{R})$ on every closed ball
of sufficiently large radius $R$ in $W_{r}\times\cV_{r}$
\[
\Balldimclosed[W_{r}\times\cV_{r}]R0:=\left\{ (u,\v)\in W_{r}\times\cV_{r}:\;\norm u_{W_{r}}+\norm{\v}_{\cV_{r}}\leq R\right\} \,.
\]
By Theorem \ref{thm:ts-gamma-conv}.2 there exists a recovery sequence
$\ue\in W_{0,\partial\bQ}^{1,p}(\bQ^{\eps}(\omega))$ such that $\ue\weakto\left|\bP\right|u_{R}$
weakly in $L^{p}(\bQ)$, $\cU_{\eps}\ue\weakto u_{R}$ weakly in $W^{1,r}(\bQ)$
and $\nabla\ue\tsweakto\chi_{\bP}\cdot\left(\nabla u_{R}+\v_{R}\right)$
weakly in two scales and 
\[
\cE(u_{R},\v_{R})=\lim_{\eps\to0}\cE_{\eps,\omega}(\ue)\,.
\]
\emph{Step 2:} We conclude for the minimizers 
\[
\liminf_{\eps\to0}\norm{\ue_{\mathrm{min}}}_{W_{0,\partial\bQ}^{1,p}(\bQ^{\eps}(\omega))}\leq\liminf_{\eps\to0}\cE_{\eps,\omega}(\ue_{\mathrm{min}})\leq\liminf_{\eps\to0}\cE_{\eps,\omega}(\ue)\leq\cE(u_{R},\v_{R})\,,
\]
which at the same time implies by Theorem \ref{thm:ts-gamma-conv}.1
that $\cU_{\eps}\ue\weakto u$ weakly in $L^{p}(\bQ)$ and $W^{1,r}(\bQ)$
and there exists $\v\in L^{r}(\bQ;\cV_{\pot}^{r}(\Omega,\bP))$ such
that $\nabla\ue\tsweakto\chi_{\bP}\cdot\left(\nabla u+\v\right)$
and 
\begin{align*}
\norm u_{W_{r}}+\norm{\v}_{\cV_{r}} & \leq C\,\cE(u_{R},\v_{R})\,,\\
\cE(u,\v) & \leq\cE(u_{R},\v_{R})\,,
\end{align*}
with $C$ independent from $\left(u_{R},\v_{R}\right)$. Since also
$\norm{\ue}_{W_{0,\partial\bQ}^{1,p}(\bQ^{\eps}(\omega))}\leq\cE(u_{R},\v_{R})$,
we conclude 
\[
\norm{u_{R}}_{W_{r}}+\norm{\v_{R}}_{\cV_{r}}\leq C\,\cE(u_{R},\v_{R})\,,
\]

\emph{Step 3:} Similarly, if $\left(u_{R^{\ast}},\v_{R^{\ast}}\right)$
is a further minimizer on any ball $\Balldimclosed[W_{r}\times\cV_{r}]{R^{\ast}}0$
with $\cE\of{u_{R^{\ast}},\v_{R^{\ast}}}\leq\cE\of{u_{R},\v_{R}}$
we can conclude 
\[
\norm{u_{R^{\ast}}}_{W_{r}}+\norm{\v_{R^{\ast}}}_{\cV_{r}}\leq C\,\cE(u_{R},\v_{R})
\]
from the argument of Step 2 and a suitable recovery sequence.

\emph{Step 4:} Hence, repeating Step 1 among the local minimizers,
there exists a global minimizer $\left(\bar{u},\bar{\v}\right)\in\Balldimclosed[W_{r}\times\cV_{r}]{C\,\cE(u_{R},\v_{R})}0$.

\emph{Step 5:} Repeating the argument of Step 2 we hence find that
every sequence of minimizers of $\cE_{\eps,\omega}$ satisfies the
claim.
\end{proof}
\begin{rem}
\label{rem:final-remark}If the term $\left|u\right|^{p}$ in the
above arguments is dropped, we first need to embed $\cU_{\eps}\ue$
uniformly into $W^{1,s}(\bQ)$. From here, we need $s$ large enough
such that $\bP^{\eps}$ still has the uniform $(r,s)$-trace property.
This will not affect the basic structure of the proofs, however it
makes the presentation more complicated and less readable.
\end{rem}


\section*{Nomenclature}
We use the following notations:

$x\sim y$, $x$ and $y$ are neighbors, see Definition \ref{def:graph-neighbor}

$A_{1,k},A_{2,k},A_{3,k}$, see Equation \eqref{eq:A123-k}

$\cA\left(0,\bP,\rho\right) := \left\{ \left(\tilde{x},{-}x_{d}+2\phi(\tilde{x})\right)\,:\;\left(\tilde{x},x_{d}\right)\in\Ball{\rho}0\backslash\bP\right\} $ (Lemma \ref{lem:uniform-extension-lemma})

$\Apaths(y,x)$, the Admissible paths from $y\in\Y\setminus\{x\}$ to $x\in\X_r$, see Definition \ref{def:admissible-path}

$\Ball{r}{x}$ the Ball around $x$ with radius $r$ (Section \ref{sec:Preliminaries})

$\cone_{\nu,\alpha,R}(x)$ the Cone with apix $x$, direction $\nu$, opening angle $\alpha$ and hight $R$  (Section \ref{sec:Preliminaries})

$\conv A$ the convex hull of $A$ (Section \ref{sec:Preliminaries})

Convex averaging sequence, see Definition \ref{def:convex-averaging-sequence}

$(\delta,M)$-regularity, see Definition \ref{def:loc-del-M-reg}

$\tilde\delta$, see Equation \eqref{eq:tilde-delta}

$\E(f|\sI)$ the Expectation of $f$ wrt. the invariant sets, \eqref{eq:invariant-sets-expectation}

$\E_{\mupalm}(f|\sI)$, the  Expectation of $f$ wrt. $\mupalm$ and the invariant sets, \eqref{eq:invariant-sets-expectation-mup}

Ergodic Theorem, see Theorems \ref{thm:Ergodic-Theorem}, \ref{thm:Ergodic-Theorem-ran-meas}

Ergodicity, see Definition \ref{def:erg-mixing}

$\eta$-regular (local), see Definition \ref{def:eta-regular}

$\eta(x)$, see Equation \eqref{eq:eta-distance}

$\closedsets_{V}$, $\closedsets^{K}$, $(\closedsets(\Rd),\ttopology F)$, see Equations \eqref{eq:closedsets-1}, \eqref{eq:closedsets-2}

$G(x)$ the Voronoi cell with center $x$ (Definition \ref{def:Voronoi})

$\G(\bP,\X),\,\G(\bP) $, the Graph constructed from $\bP$, see Definition \ref{def:Y-Partial-Y}

$\I=[0,1)^d$ the torus (Section \ref{sec:Preliminaries})

$\sI$ the Invariant sets, \eqref{eq:invariant-sets}

Isotropic cone mixing, see Definition \ref{def:iso-cone-mix}

$\Length(Y)$, the Length of an admissible path $Y$, see \eqref{eq:def-Length-apath}

$M(p,\delta)$, see Lemma \eqref{eq:M(delta-p)}

$M_{[\eta]},\,M_{[\eta],A}$ ($A$ a set), see Equation \eqref{eq:def-M-set-A}, a quantity on $\partial\bP$

$\tilde{M}_{\eta}(x)$, see Equation \eqref{eq:lem:local-delta-M-construction-estimate-1b}, a quantity on $\Rd$

$\tilde M$, see Equation \eqref{eq:tilde-M}

$M_k,\,M_{\fr,k}$, see $k\in\N$, $\fr>0$ \eqref{eq:notation-M}

$\fm_{[\eta]}{\left(p,\xi\right)}$, see  Lemma \ref{lem:M-eta}

$\fm_{k}:=\fm\of{p_{k},\tilde{\rho}_{k}/4}$, see Section \ref{subsec:5-Preliminaries}

$\fM(\Rd)$, the Measures on $\Rd$ (Section \ref{subsec:Random-measures-and}) 

Matern process, see Example \ref{exa:Matern}--\ref{exa:Matern-Pois}

Mesoscopic regularity, see Definition \ref{def:meso-regularity}

Mixing, see Definition \ref{def:erg-mixing}

$\bP_{r},\bP_{-r}$ Inner and outer hull of $\bP$ with hight $r$ (Section \ref{sec:Preliminaries})
 
Poisson process, see Example \ref{exa:poisson-point-proc}

$\bQ_1,\,\bQ_3$, see \eqref{eq:def-Q1-Q3}

$\rho(p)= \sup_{r<\delta\of p}r\sqrt{4M_{r}\of p^{2}+2}^{-1}$ \eqref{eq:def-rho-of-p}

$\hat\rho(p)=\inf\left\{ \delta\leq\delta(p)\,:\;\sup_{r<\delta}r\sqrt{4M_{r}\of p^{2}+2}^{-1}=\rho\right\} $ \eqref{eq:def-rhohat-of-p}

$\sfR_{0}(x,y)$, see Equation \eqref{eq:lem:radius-admissible-pahts}

$\Rd_1,\,\Rd_3$, see \eqref{eq:def-Rd1-Rd3}

Random closed sets, see Definition \ref{def:RACS}

$\T=[0,1)^d$ the torus (Section \ref{sec:Preliminaries})

$\tau_x$, Dynamical system (Definitions \ref{def:A-dynamical-system}, \ref{def:A-dynamical-system-Zd}) with respect to $x\in\Rd$ or $x\in\Zd$

$\cU$ for local and global extension operators (Lemma \ref{lem:uniform-extension-lemma})

$\X$, $\Y$ Families of points (Section \ref{sec:Preliminaries})

$\X_r(\omega)=\X_r(\bP(\omega))=2r\Zd\cap \bP_{-r}(\omega)$, \eqref{eq:def-X_r}

$\partial\X,\,\hat\X$, see Notation \ref{nota:XY}

$Y_{\fl}$, see Notation \ref{nota:Apaths}

$\Y_{\partial\X}$, see Notation \ref{nota:XY}

$\mathring{\Y},\,\partial\Y,\,\Y$, see Notation \ref{nota:Y}

\bibliographystyle{plain}
\bibliography{ts-methods}

\end{document}